\newtheorem{thm}{Theorem}[section]
\newtheorem{prop}[thm]{Proposition}
\newtheorem{lem}[thm]{Lemma}
\newtheorem{cor}[thm]{Corollary}
\theoremstyle{definition}
\newtheorem{definition}[thm]{Definition}
\newtheorem{notation}[thm]{Notation}
\newcommand{\cC}{\mathcal{C}} % right half-plane
\theoremstyle{remark}
\newtheorem{rem}[thm]{Remark}
\theoremstyle{question}
\numberwithin{equation}{section}
\newcommand{\C}{\mathbb{C}}  % The complex numbers.
\newcommand{\Z}{\mathbb{Z}}  % The integers numbers.
\newcommand{\N}{\mathbb{N}}  % The natural numbers.
\newcommand{\Julia}{\mathcal{J}}
\newcommand{\ra}{\rightarrow}
\DeclareMathOperator{\diam}{diameter}
\DeclareMathOperator{\length}{length}
\DeclareMathOperator{\dist}{dist} % The distance.
\begin{document}

	%%
	%% The title of the paper goes here.  Edit to your title.
	%%
	
	\title[Dimension 1]{Transcendental Julia sets of Minimal Hausdorff Dimension}
	
	%%
	%% Now edit the following to give your name and address:
	%% 
	
	\author{Jack Burkart and Kirill Lazebnik}
	
	%%
	%% If there is another author uncomment and edit the following.
	%%
	
	%\author{Second Author}
	%\address{Department of Mathematics, University of South Carolina,
	%Columbia, SC 29208}
	%\email{second@math.sc.edu}
	%\urladdr{www.math.sc.edu/$\sim$second}
	
	%%
	%% If there are three of more authors they are added in the obvious
	%% way. 
	%%
	
	%%%
	%%% The following is for the abstract.  The abstract is optional and
	%%% if not used just delete, or comment out, the following.
	%%%
	
	\maketitle
	
	\begin{abstract} We show the existence of transcendental entire functions $f: \mathbb{C} \rightarrow \mathbb{C}$ with Hausdorff-dimension $1$ Julia sets, such that every Fatou component of $f$ has infinite inner connectivity. We also show that there exist singleton complementary components of any Fatou component of $f$, answering a question of \cite{RSEremenkoPoints}. Our proof relies on a  quasiconformal-surgery approach developed in \cite{BurLaz}.
	\end{abstract}
	
	%%
	%% LaTeX can automatically make a table of contents.  This is done by
	%% uncommenting the following:
	%%
	
	\tableofcontents
	
	%%
	%%  To enter text is easy.  Just type it.  A blank line starts a new
	%%  paragraph. 
	%%
	\section{Introduction}
	
	The \emph{Julia set} of an entire function $f: \mathbb{C} \rightarrow \mathbb{C}$, denoted by $\mathcal{J}(f)$, is the set of points at which the dynamical system $(f, \mathbb{C})$ behaves chaotically. The behavior of $f$ near $\infty$ plays an important role, and one has the following dichotomy. Either $\infty$ is a removable singularity, in which case $f$ is a polynomial, or $\infty$ is an essential singularity, in which case $f$ is a transcendental entire function. 
	
	When $f$ is a polynomial, $\mathcal{J}(f)$ is usually small in the sense of \emph{Hausdorff dimension}. Hausdorff dimension is the most well-studied measure of size for Julia sets, and this is the measure we will study in this manuscript.  For instance, one has that the quadratic polynomial $p_c(z):=z^2+c$ satisfies $\textrm{dim}(\mathcal{J}(p_c))<2$ for generic $c\in\mathbb{C}$ (see for instance \cite{MR1279476}), although there exist parameters $c$ satisfying $\textrm{dim}(\mathcal{J}(p_c))=2$ \cite{MR1626737}, and even $\textrm{area}(\mathcal{J}(p_c))>0$ \cite{MR2950763}, \cite{avila2015lebesgue}.

	%	The most well-studied notion of ``small'' and ``large'' Julia sets is that of \emph{Hausdorff dimension}, and this is the notion we will use in this manuscript. For instance, one has that for a.e. $c\in\mathbb{C}$, the quadratic polynomial $p_c(z):=z^2+c$ satisfies $\textrm{dim}(\mathcal{J}(p_c))<2$ {\color{red}(not actually sure if this is true)}, but there exist parameters $c$ along the boundary of the Mandelbrot set satisfying $\textrm{dim}(\mathcal{J}(p_c))=2$. 
	
	On the other hand, when $f$ is a transcendental entire function, the generic situation is that $\textrm{dim}(\mathcal{J}(f))=2$. For instance, in \cite{Mis} it was shown that $\mathcal{J}(e^z)=\mathbb{C}$, and in \cite{Mc} it was shown that functions in certain standard exponential and sine families have Julia sets of dimension $2$. Thus, in contrast with the polynomial setting, the difficulty in the transcendental setting is to find Julia sets of small dimension, a problem whose history we overview briefly now. 
	
	In \cite{Baker} it was proven the Julia set of any transcendental $f$ must contain a non-trivial continuum, and hence we always have $\textrm{dim}(\mathcal{J}(f))\geq1$. In the class of transcendental $f$ with bounded singular set, denoted $\mathcal{B}$, it was shown in \cite{S1}, \cite{S2}, \cite{S4} that  \[ \{ \textrm{dim}(\mathcal{J}(f)) : f \in \mathcal{B} \} = (1,2], \] (see also \cite{AlbBis}). Finally, in \cite{Bis18}, it was proven that outside of the class $\mathcal{B}$, the lower bound of $1$ in the inequality $\textrm{dim}(\mathcal{J}(f))\geq1$ is actually attained (see also \cite{BurPack}, \cite{XuZhang}). Our main result (see Theorem \ref{main_theorem} below) also achieves this lower bound, but by different methods, and with different resulting dynamical properties which we now discuss.
	
	The Fatou set of the function in \cite{Bis18} is in fact completely described: it consists of a collection of \emph{multiply connected wandering domains}, abbreviated m.c.w.d.. This class of Fatou components has been well-studied \cite{MR2400399}, \cite{KisShishAnnulus}, \cite{BergZheng}, \cite{BergRipStalWD}, \cite{MR2900166}, \cite{MR3422682}, \cite{RSEremenkoPoints}, \cite{ferreira2021multiply} and appears in several different contexts in transcendental dynamics. A m.c.w.d. $U$ of \cite{Bis18} consists of a topological annulus minus countably many discs which accumulate only on the outer boundary of $U$ (see Figure \ref{inner_outer_connectivity}(A)). This topological structure is aptly termed \emph{infinite outer connectivity} (defined precisely in \cite{BergRipStalWD}). The Fatou components of the function in our Theorem \ref{main_theorem} are also all m.c.w.d.'s, however they have \emph{infinite inner connectivity} (see Figure  \ref{inner_outer_connectivity}(B)), and we prove they have the following more intricate topological structure:

	%	have so-called \emph{infinite outer connectivity} (defined precisely in \cite{BergRipStalWD}). This roughly means that the complementary components of $U$ accumulate on the ``outermost'' boundary component of $U$ (see Figure \ref{inner_outer_connectivity}(A)). The Fatou components of the function in our Theorem \ref{main_theorem} are also all m.c.w.d.'s, however they have \emph{infinite inner connectivity} (see Figure  \ref{inner_outer_connectivity}(B)), and we prove they have the following more intricate topological structure:
	
	%	 \emph{infinite inner connectivity}, and a more intricate 
	
	%	 a more intricate topological structure: they have \emph{infinite inner connectivity}, and the complementary components consist both of 
	
	%	. This roughly means the complementary components of $U$ accumulate on the ``innermost'' boundary component of $U$ (see Figure \ref{inner_outer_connectivity}(B)).
	
	\begin{figure}[!h]
		\centering
		\scalebox{.4}{%% Creator: Inkscape 1.0.1 (c497b03c, 2020-09-10), www.inkscape.org
%% PDF/EPS/PS + LaTeX output extension by Johan Engelen, 2010
%% Accompanies image file '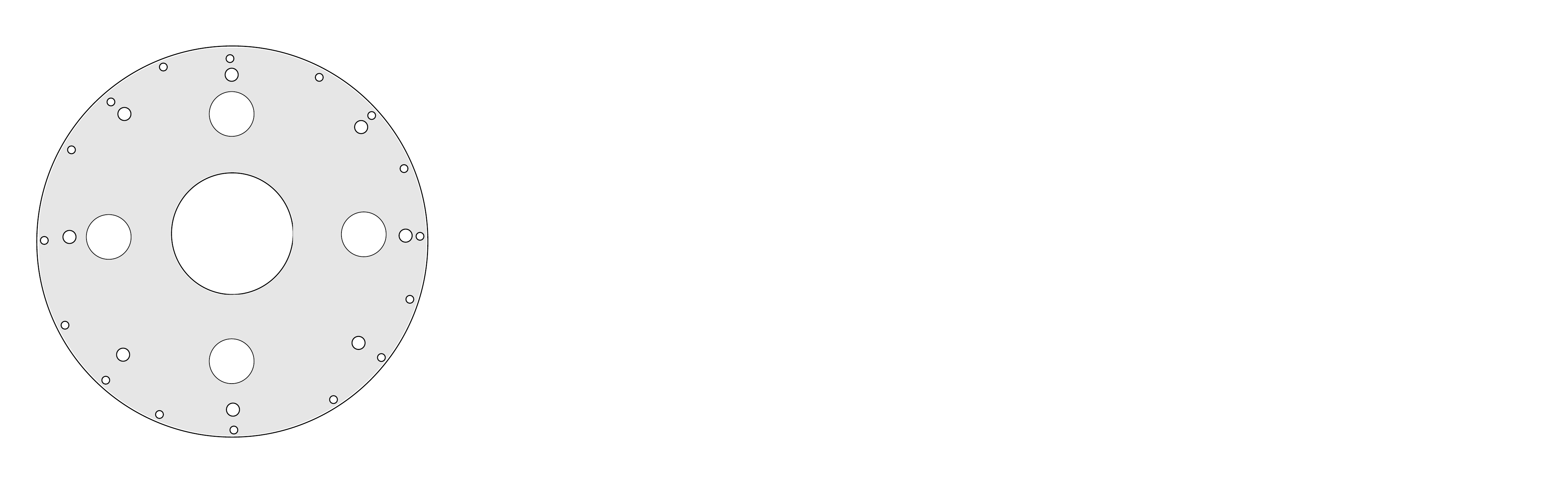' (pdf, eps, ps)
%%
%% To include the image in your LaTeX document, write
%%   \input{<filename>.pdf_tex}
%%  instead of
%%   \includegraphics{<filename>.pdf}
%% To scale the image, write
%%   \def\svgwidth{<desired width>}
%%   \input{<filename>.pdf_tex}
%%  instead of
%%   \includegraphics[width=<desired width>]{<filename>.pdf}
%%
%% Images with a different path to the parent latex file can
%% be accessed with the `import' package (which may need to be
%% installed) using
%%   \usepackage{import}
%% in the preamble, and then including the image with
%%   \import{<path to file>}{<filename>.pdf_tex}
%% Alternatively, one can specify
%%   \graphicspath{{<path to file>/}}
%% 
%% For more information, please see info/svg-inkscape on CTAN:
%%   http://tug.ctan.org/tex-archive/info/svg-inkscape
%%
\begingroup%
  \makeatletter%
  \providecommand\color[2][]{%
    \errmessage{(Inkscape) Color is used for the text in Inkscape, but the package 'color.sty' is not loaded}%
    \renewcommand\color[2][]{}%
  }%
  \providecommand\transparent[1]{%
    \errmessage{(Inkscape) Transparency is used (non-zero) for the text in Inkscape, but the package 'transparent.sty' is not loaded}%
    \renewcommand\transparent[1]{}%
  }%
  \providecommand\rotatebox[2]{#2}%
  \newcommand*\fsize{\dimexpr\f@size pt\relax}%
  \newcommand*\lineheight[1]{\fontsize{\fsize}{#1\fsize}\selectfont}%
  \ifx\svgwidth\undefined%
    \setlength{\unitlength}{1289.19845941bp}%
    \ifx\svgscale\undefined%
      \relax%
    \else%
      \setlength{\unitlength}{\unitlength * \real{\svgscale}}%
    \fi%
  \else%
    \setlength{\unitlength}{\svgwidth}%
  \fi%
  \global\let\svgwidth\undefined%
  \global\let\svgscale\undefined%
  \makeatother%
  \begin{picture}(1,0.3117816)%
    \lineheight{1}%
    \setlength\tabcolsep{0pt}%
    \put(0,0){\includegraphics[width=\unitlength,page=1]{inner_outer_connectivity.pdf}}%
    \put(0.13801292,0.0020157){\color[rgb]{0,0,0}\makebox(0,0)[lt]{\lineheight{1.25}\smash{\begin{tabular}[t]{l}\scalebox{2}{(A)}\end{tabular}}}}%
    \put(0.46477713,0.00172465){\color[rgb]{0,0,0}\makebox(0,0)[lt]{\lineheight{1.25}\smash{\begin{tabular}[t]{l}\scalebox{2}{(B)}\end{tabular}}}}%
    \put(0,0){\includegraphics[width=\unitlength,page=2]{inner_outer_connectivity.pdf}}%
  \end{picture}%
\endgroup%
}
		\caption{(A) and (B) illustrate the concept of infinite outer-connectivity, and infinite inner-connectivity, respectively. (A) and (B) also accurately describe the topology of the m.c.w.d.'s in \cite{Bis18} and Theorem \ref{main_theorem}, respectively. As seen, the structure in (B) is more intricate.}
		\label{inner_outer_connectivity}
	\end{figure}

	\begin{thm}\label{main_theorem} There exists a transcendental entire function $f: \mathbb{C} \rightarrow \mathbb{C}$ satisfying: \begin{enumerate} \item $\textrm{dim}(\mathcal{J}(f))=1$, \item Each Fatou component of $f$ is a m.c.w.d. of infinite inner-connectivity, and \item Each m.c.w.d. of $f$ has uncountably many singleton complementary components. \end{enumerate}
	\end{thm}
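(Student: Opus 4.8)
The plan is to realize $f$ by quasiconformal surgery on an explicit model map, in the framework of \cite{BurLaz}. First I would fix a very rapidly increasing sequence $R_1 < R_2 < \cdots \to \infty$, set $A_n = \{R_n < |z| < R_{n+1}\}$, and build a model map $g$ which on each $A_n$ acts like a high-degree power map $z \mapsto z^{d_n}$ post-composed with an affine map carrying the image across the next annulus $A_{n+1}$; this produces an orbit of wandering annuli whose moduli of images grow without bound, which is exactly what forces every Fatou component of the resulting $f$ to be a multiply connected wandering domain (as in \cite{Bis18}). The new ingredient, compared to \cite{Bis18}, is the placement of the ``holes'': inside each $A_n$ I would remove a countable family of small round disks $D_{n,k}$ accumulating on the \emph{inner} boundary circle $\{|z| = R_n\}$, rather than on the outer boundary, and I would arrange $g$ to map the hole-pattern of $A_n$ into the hole-pattern of $A_{n+1}$ in a combinatorially prescribed way. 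Pulling back under the dynamics, each Fatou component $U$ then inherits infinitely many complementary ``holes'' accumulating on an inner boundary circle, giving infinite inner-connectivity.

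Second, I would produce the singleton complementary components. A bounded complementary component of a Fatou component corresponds to a nested chain $D_{n_0,k_0} \supset f^{-1}$-image pieces $\supset \cdots$ obtained by following preimages of a hole back along the wandering orbit, and such a component is a single point precisely when the diameters of the chain shrink to $0$. By choosing the disks $D_{n,k}$ with a range of sizes, and distortion estimates for the (holomorphic, high-degree) branches of $g$ on the holes, I would arrange that for a Cantor set's worth of admissible chains the diameters tend to $0$ geometrically, yielding uncountably many singleton complementary components — this is what answers the question of \cite{RSEremenkoPoints}.

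Third, I would verify the surgery hypotheses of \cite{BurLaz}: the model map $g$ is holomorphic off a thin ``gluing set'' on which it is quasiregular with uniformly bounded dilatation and bounded overlap, so the Measurable Riemann Mapping Theorem yields a quasiconformal $\phi$ with $f$ (the appropriate composition of $g$ with $\phi^{\pm 1}$) transcendental entire and $\phi$ conjugating the model dynamics to those of $f$. Since $\phi$ is quasiconformal it is a homeomorphism preserving the topological features above (m.c.w.d., infinite inner connectivity, singleton complementary components) and, the dilatation being supported away from the relevant skeleton, it preserves Hausdorff dimension $1$; so it suffices to establish (1)–(3) for the model. For the dimension bound, note that the wandering annuli and all their preimages lie in the Fatou set, so $\mathcal{J}(f)$ is contained in the closure of the ``non-escaping skeleton'' — the circles $\{|z|=R_n\}$, the boundaries $\partial D_{n,k}$, and their full backward orbits. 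Covering this set at scale $\sim R_n$ by thin annular neighborhoods and summing $(\text{number of pieces}) \cdot (\text{diameter})^t$, the piece-count is governed by the products of the degrees $d_j$ while the diameters decay like negative powers of $R_n$; choosing each $R_{n+1}$ enormous relative to $R_n$ and $d_n$ makes the sum finite for every $t > 1$, so $\dim \mathcal{J}(f) \le 1$, and Baker's continuum theorem \cite{Baker} gives $\dim \mathcal{J}(f) \ge 1$.

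The main obstacle I anticipate is the tension between requirements (1) and (2)–(3): infinite inner-connectivity together with a Cantor set of singleton complementary components forces a genuinely intricate, infinitely nested family of holes inside each annulus, while the dimension estimate demands that these holes and all their backward orbits stay thin, well-separated, and summable in the $t$-dimensional sense. Reconciling ``enough disks, suitably nested, to force infinite inner connectivity and uncountably many singletons'' with ``few and thin enough disks, with controlled preimages, to keep $\dim \mathcal{J}(f) = 1$'', all while retaining the bounded-geometry and bounded-overlap conditions needed for the surgery of \cite{BurLaz}, is the delicate heart of the construction; getting the quantitative dependence of the $R_n$, the $d_n$, and the disk radii $r_{n,k}$ right is where the real work lies.
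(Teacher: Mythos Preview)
Your proposal contains a fundamental misconception about the surgery in \cite{BurLaz} that undermines the whole strategy. The Measurable Riemann Mapping Theorem gives a $\phi$ with $f\circ\phi=h$ (equivalently $f=h\circ\phi^{-1}$); this is \emph{not} a conjugacy relation. Iterates satisfy $f^n=h\circ\phi^{-1}\circ h\circ\phi^{-1}\circ\cdots\circ h\circ\phi^{-1}$, not $\phi\circ h^n\circ\phi^{-1}$, so dynamical and topological features of the model $h$ do \emph{not} transfer to $f$ via $\phi$, and your line ``so it suffices to establish (1)--(3) for the model'' is false. This is precisely why the paper devotes Section~\ref{The Construction} to quantitative estimates $|\phi(z)/z-1|\to 0$ (Theorem~\ref{initial_identity}) and then re-establishes every mapping statement directly for $f$ itself in Sections~\ref{Global Mapping}--\ref{Expanding Zeros}. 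Even if $\phi$ \emph{were} a conjugacy, your claim that a quasiconformal map ``preserves Hausdorff dimension $1$'' is wrong: $K$-quasiconformal maps can move dimension anywhere in a nontrivial interval around $1$.

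Your second structural gap is the source of the holes. You propose to explicitly excise disks $D_{n,k}$ accumulating on the inner boundary and then ``arrange $g$ to map the hole-pattern of $A_n$ into the hole-pattern of $A_{n+1}$''. In the actual construction there are no excised disks: the model is simply a power map $c_j z^{M_j}$ on each annulus, and the interpolation between consecutive degrees (built into the \cite{BurLaz} machinery) forces \emph{zeros} of $f$ to appear near $|z|=R_k$. Small balls around these zeros --- the ``petals'' $P_j\subset A_k$ of Definition~\ref{petal_def} --- are where $f$ is conformal onto a huge disk containing $A_{k+1}$ (Lemma~\ref{petal_radius}), and it is this mechanism, not artificially placed holes, that generates the infinite inner-connectivity and the nested structure leading to singleton complementary components. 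The singletons are then the set $Z_2$ of points whose forward orbit visits petals infinitely often; showing $\dim_H(Z_2)=0$ (Section~\ref{singleton_components}) is what forces each such component to be a point. Your covering argument for the Julia set, placing it in ``circles $\{|z|=R_n\}$ and boundaries $\partial D_{n,k}$'', misses that the bulk of $\mathcal J(f)$ consists of genuinely curved $C^1$ Jordan curves $\Gamma_k$ obtained as infinite pullbacks of circles (Sections~\ref{Dimension Z}--\ref{C1proofsection}), and proving \emph{these} have dimension $1$ is one of the substantial parts of the argument.
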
	
	
	Item (3) of Theorem \ref{main_theorem} answers a question of \cite{RSEremenkoPoints} (see Question 9.5 of \cite{RSEremenkoPoints}) on the structure of m.c.w.d.'s. It is left open whether (3) in fact must \emph{always} occur for a m.c.w.d. of infinite inner-connectivity. Another intriguing question suggested by Theorem \ref{main_theorem} is whether there exist transcendental $f$ with $\textrm{dim}(\mathcal{J}(f))=1$ and \emph{doubly}-connected m.c.w.d.: this is closely related to Question (7) of \cite{Bis18}.
	
	Much of the contribution of the present manuscript is in providing an alternative approach to the breakthrough result of \cite{Bis18} (Item (1) in Theorem \ref{main_theorem}), an approach which the authors find conceptual and readily adaptable to other settings. The function $f$ of \cite{Bis18} is defined by an infinite product which is roughly designed to behave as a monomial on large portions of $\mathbb{C}$. The technical work in describing the dynamics of $f$ relies on formula-heavy estimates of the behavior of $f$ by certain terms in the infinite product. 
	
	The approach in the present manuscript is similar in that it constructs $f$ which is designed to behave as a monomial on large portions of $\mathbb{C}$, however this is done by quasiconformal methods. Namely, a quasiregular $h: \mathbb{C} \rightarrow \mathbb{C}$ is constructed, so that by the Measurable Riemann Mapping Theorem there exists a quasiconformal $\phi: \mathbb{C} \rightarrow \mathbb{C}$ such that $f:=h\circ\phi^{-1}$ is the entire function of Theorem \ref{main_theorem}. One has freedom in prescribing the dynamics of $h$, and so the difficulty of describing the dynamics of $f$ thus becomes a matter of estimating the ``correction'' map $\phi$, rather than on formula estimates as in \cite{Bis18}. The details of this quasiconformal approach were detailed in \cite{BurLaz}, and has other applications besides the one described in the present manuscript. 
	
	The advantages of the quasiconformal approach are usually technical in nature. For instance, a key aspect of the proof of Theorem \ref{main_theorem} is in understanding the location of critical values of $f$. In \cite{Bis18}, this requires a delicate estimate involving the infinite product formula. In the quasiconformal approach, this is almost trivial since the critical values of $h$ can be prescribed freely, and $f:=h\circ\phi^{-1}$ and $h$ share the same critical values. Another central difficulty in the proof of Theorem \ref{main_theorem} is showing that the outer boundary of a m.c.w.d. of $f$ is a $C^1$ curve (hence $1$-dimensional). In both approaches this involves studying pullbacks of circles. However, only in the quasiconformal approach is there an explicit parametrization (in terms of $\phi$) for the pullback, and this provides a different approach to the question of the precise degree of regularity for these curves. We will discuss further technical advantages of quasiconformal methods throughout the paper.
	
	We will outline the main arguments in the proof of Theorem \ref{main_theorem} and the structure of the paper in Section \ref{outlineproof}, before filling in the details in Sections \ref{The Construction}-\ref{singleton_components}. Appendix \ref{appendix} contains many classical theorems and definitions that we will make use of throughout the paper, along with a proof of an important Lemma we need in Section \ref{Expanding Zeros}. We would like to thank Chris Bishop for useful discussions, and Gwyneth Stallard for pointing out to us Question 9.5 of \cite{RSEremenkoPoints}. We would also like to thank the California Institute of Technology for hosting a visit of the second author which led to this work.

\section{Outline of the Proof}\label{outlineproof}	

We appeal to the main theorem of \cite{BurLaz} (described in Appendix \ref{appendix}) to produce the quasiregular function $h: \mathbb{C} \rightarrow \mathbb{C}$ as described in the Introduction. The map $h$ roughly behaves as $z\mapsto z^n$ for increasing $n$ as $z\rightarrow\infty$. In order to be able to prove dynamical properties about $f:=h\circ\phi^{-1}$, we need estimates on $|\phi(z)-z|$: these are proven in Section \ref{The Construction}.

In Section \ref{Global Mapping} we define a sequence of annuli $A_k$, $B_k$ for $k\geq1$ (see Figure \ref{Akillustration}), and we prove the following mapping behavior. First we show that \[ f(B_k)\subset B_{k+1}. \] Thus each $B_k$ is contained in a m.c.w.d. of $f$. We define subannuli $V_k\subset A_k$, and prove that \[ A_{k+1} \subset f(V_k). \] We also prove in Section \ref{Expanding Zeros} that there are balls $P_j \subset A_k$ such that $P_j\cap V_k=\emptyset$ which satisfy \[ A_{k+1} \subset f(P_j), \] and $f|_{P_j}$ is conformal.

\begin{figure}[!h]
	\centering
	\scalebox{.25}{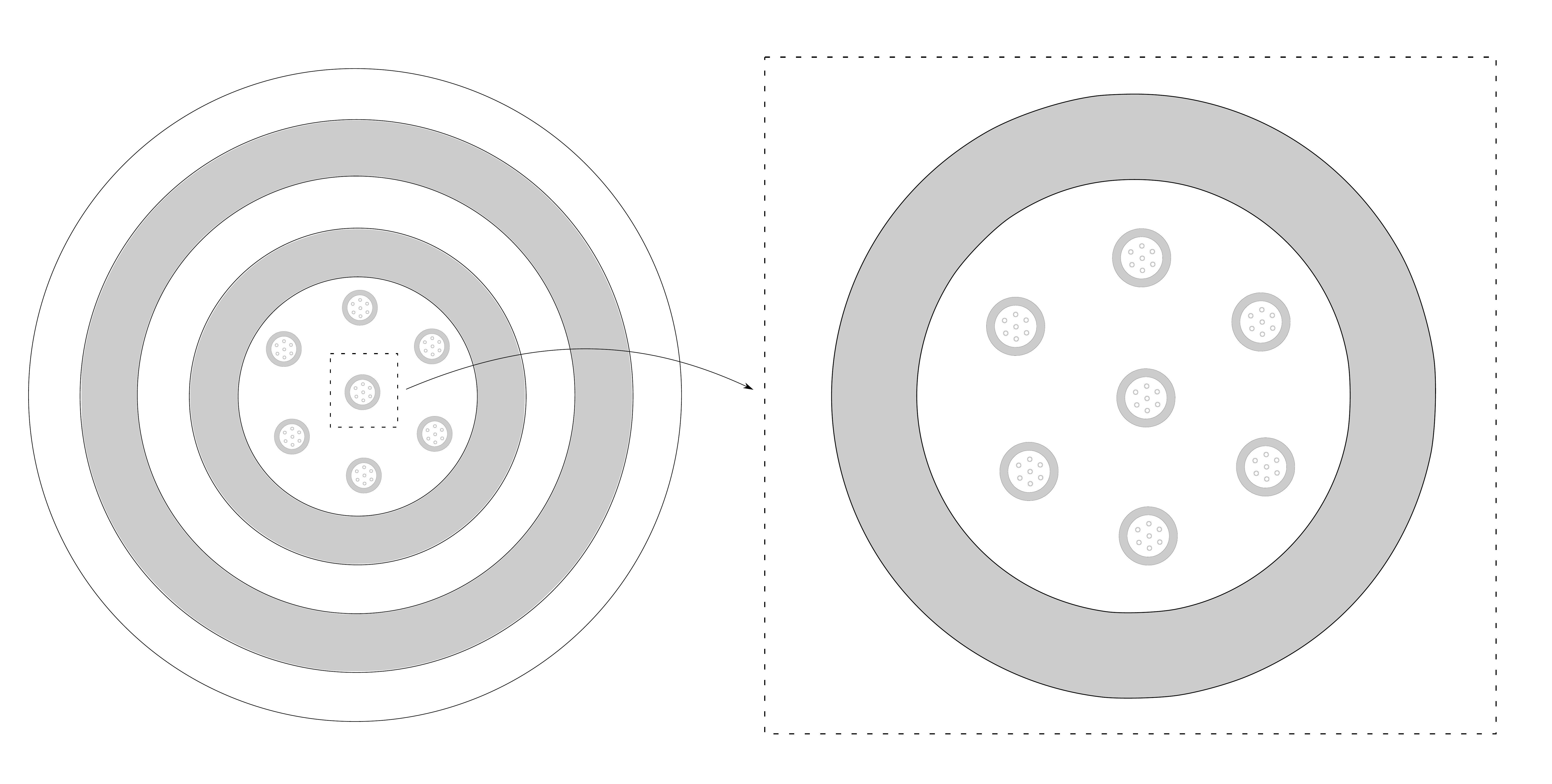}
	\caption{This Figure illustrates the definition of $A_k$, $B_k$ for all $k\in\mathbb{Z}$. The annuli $A_k$ are shaded light grey, and the $B_k$ are white. Also shown is $V_1\subset A_1$ and the ``petals'' $P_j\subset A_1$ (in dark grey).}
	\label{Akillustration}
\end{figure}

The definition of the annuli $A_k$, $B_k$ are extended to negative indices $k$ by pulling back under $f$ (see Figure \ref{Akillustration}).  Together, the annuli $A_k$ and $B_k$ cover $\mathbb{C}$ except for a Cantor set, which we denote by $E$. This Cantor set $E$ is the Julia set of the polynomial-like mapping obtained by restricting the definition of $f$ to a subdomain of $\mathbb{C}$, and we prove in Section \ref{Origin} that $\textrm{dim}(E)\ll1$. Similarly, denoting by $E'$ the set of points which map to $E$, it is readily deduced that $\textrm{dim}(E')=\textrm{dim}(E)$.

As the $B_k$ are contained in wandering components, we have that \begin{equation}\label{firstdefnofX} \mathcal{J}(f)\setminus E'  \subset \{ z \in\mathbb{C} : f^n(z) \in \cup_{k\in\mathbb{Z}} A_k \textrm{ for all } n \}. \end{equation} We denote the set defined on the right-hand side of (\ref{firstdefnofX}) by $X$, so that estimating $\textrm{dim}(\mathcal{J}(f))$ reduces to estimating $\textrm{dim}(X)$. 

We partition $X$ into two sets. For $z\in X$, we say that $z$ \emph{moves forwards} if $z\in A_k$ and $f(z)\in A_{k+1}$. If $z\in A_k$ and $f(z)\in A_j$ for $j\leq k$, we say that $z$ \emph{moves backwards}. We denote by $Y$ the set of $z\in X$ which move backwards infinitely often, and $Z:=X\setminus Y$ so that \[ X = Y \sqcup Z. \]

In Section \ref{Dimension Y}, we construct a sequence of covers $\mathcal{C}_m$ of $Y$, such that $\mathcal{C}_m$ covers all those points which move backwards $m$ times. This is done by simply pulling back the annuli $A_k$ under iterates of $f$ in regions where $f$ is conformal. Standard distortion estimates apply when estimating the diameters of elements of $\mathcal{C}_m$, and we deduce that $\textrm{dim}(Y)\ll 1$.

The set $Z$ is further partitioned into those points which eventually always stay in $\cup_k V_k$, denoted by $Z_1$, and $Z_2:=Z\setminus Z_1$. The dimension of $\mathcal{J}(f)$ is supported on $Z_1$. We prove in Section \ref{Dimension Z} that $Z_1$ consists of Jordan curves, and we prove in Section \ref{C1proofsection} that these curves are in fact $C^1$ (hence have dimension $1$). We prove in Section \ref{singleton_components} that $\textrm{dim}(Z_2)=0$, and $Z_2$ is precisely the set of singleton complementary components in (3) of Theorem \ref{main_theorem}.

\section{Quasiconformal Mapping Estimates}
\label{The Construction}
%	For any integer $N \geq 1$, we first show how to construct the function $f_N$ in Theorem \ref{main_dim_one}. We need to modify the construction of Theorem \ref{mainthm} in a neighborhood of the origin so that $f_N \circ \phi_N$ is a polynomial with a Cantor Julia set.

%\begin{rem} Note the relation $(2x_j)^{M_{j+1}}=x_{j+1}$. The map $f_n$ differs from the map obtained by applying Theorem \ref{mainthm} directly to (\ref{parameters_definition}) only in the disc $|z|\leq r_n$.
%\end{rem}

%In Theorem \ref{mainthm}, the entire function is a perturbation of $z\mapsto z^n$ in a large neighborhood of the origin. In order to prove Theorem \ref{application}, we will need to replace $z\mapsto z^n$ with a polynomial possessing a Julia set of dimension $<1$. In order to do this, we replace $z\mapsto z^n$ with the map $z\mapsto z^n+4z$, and interpolate between the two far from the origin. We will define an interpolation in Proposition \ref{zndeltazinterpolation} below, following Lemma 3.1 of \cite{MR4041106}. First we introduce a smooth bump function.

In this Section, we begin the proof of Theorem \ref{main_theorem} by first applying Theorem \ref{mainthm} (see Appendix \ref{appendix}) to a specific sequence  $(M_j)_{j=1}^\infty$, $(r_j)_{j=1}^\infty$ we now define. This yields an entire function $f$, so that $f\circ\phi=h$ is the quasiregular function described in Section \ref{outlineproof}. As discussed, we have freedom in describing the mapping behavior and dynamics of the quasiregular map $h$, but transferring this behavior to the entire function $f:=h\circ\phi^{-1}$ requires estimates on $|\phi(z)-z|$, and this is the main focus of this Section.

\begin{definition}\label{parameter_defn2} We define an entire function $f$ and a quasiconformal map $\phi: \mathbb{C}\rightarrow\mathbb{C}$ by applying Theorem \ref{mainthm} to the parameters: \begin{align}\label{parameters_definition} M_j:=2^{j}\textrm{, } r_1:=16\textrm{, } c_1:=1  \textrm{ and } r_{j+1}:=c_{j}\cdot\left(\frac{1}{2}r_j\right)^{M_{j}} \textrm{ for } j\geq2.     \end{align} %where $h$ is quasiregular and $\phi$ is quasiconformal.%Let $h_n: \mathbb{C}\rightarrow\mathbb{C}$ be the function defined by $h_n(z):=g_{M_n, r_n-1}$ for $|z|\leq r_n$. For $|z|\geq r_n$, define $h_n$ by (\ref{h_formula}). Lastly, we let $\phi_{n}$ denote any quasiconformal map obtained by applying the Measurable Riemann Mapping theorem to the Beltrami coefficient of $h_n$, so that $f_n:=h_n\circ\phi_n^{-1}$ is entire. We will fix a normalization for $\phi_n$ in Remark \ref{fir_normalization}.
	%\log x_j := \left({\sum_{k=0}^{j} 2^{k+(k+1)+...+j } }\right) \cdot \log 2 \textrm{ for } j\geq 0. 
\end{definition}

\begin{table}[!h]
	\centering
	\begin{tabular}{ |c|c|c|c|c|c| } 
		\hline
		$k$ & 0 & 1 & 2 & 3 & 4\\ 
		\hline
		$M_k$ & $1$ & $2$ & $4$ & $8$ & 16 \\ 
		\hline
		$c_k$ & undefined & $1$ &$ 2^{-8}$ & $2^{-32}$ & $2^{-128}$ \\ 
		\hline
		$r_k$ & 0 & $16$ & $64$ & $2^{12}$ & $2^{56}$ \\ 	
		\hline
	\end{tabular}
	\caption{The values of $M_k$, $c_k$, and $r_k$ for small values of $k$. The sequence $M_k$ increases exponentially, and $r_k$ increases super-exponentially, while $c_k$ decays super-exponentially.}
	\label{ValueTable}
\end{table}

We will need some rough estimates on how fast $(r_j)_{j=1}^\infty$ grows and $(c_j)_{j=1}^\infty$ decays. We first show that by assuming $(r_j)_{j=1}^{\infty}$ satisfies some mild growth conditions, we can show that $(r_j^{M_j})_{j=1}^{\infty}$ grows much faster than  $(c_j)_{j=1}^{\infty}$ decays.
\begin{lem}
	\label{ckrk}	
	Assume for all $k \geq 3$ that $\sqrt{r_{k}} \geq r_j$ for all $j < k$. Then we have
	\begin{equation}
	\label{ckrkeq}
	r_k^{M_k} \cdot c_k = r_k^{M_k} \cdot \prod_{j=1}^{k-1} r_j^{-M_j} \geq r_k^{M_{k-1} + 1},
	\end{equation}
	for all $k \geq 3$.
\end{lem}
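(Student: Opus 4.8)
The plan is to reduce the claim to a single identity between exponents, using that $M_j = 2^j$ grows geometrically and that the hypothesis $\sqrt{r_k}\ge r_j$ is calibrated exactly to the partial sum $\sum_{j<k} M_j$.

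First I would dispose of the equality $c_k=\prod_{j=1}^{k-1}r_j^{-M_j}$, which is not the substance of the lemma: it is immediate from the choice of parameters in Definition \ref{parameter_defn2} (equivalently, from the recursions furnished by Theorem \ref{mainthm}), by a one-line induction on $k$ using $r_{j+1}=c_j(\tfrac12 r_j)^{M_j}$. So the task reduces to proving
\[
  r_k^{M_k}\prod_{j=1}^{k-1}r_j^{-M_j}\ \ge\ r_k^{M_{k-1}+1},
\]
which, after clearing the negative powers, is the same as $\prod_{j=1}^{k-1}r_j^{M_j}\le r_k^{\,M_k-M_{k-1}-1}$.

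Next I would simplify the exponent on the right. Because $M_j=2^j$, we have $M_k-M_{k-1}=2^{k}-2^{k-1}=2^{k-1}=M_{k-1}$, so the target exponent is just $M_{k-1}-1$. Now the hypothesis enters: for every $j<k$ it gives $r_j\le r_k^{1/2}$, and since $j=1<k$ already forces $r_k\ge r_1^{2}=256>1$, powers of $r_k$ are increasing in the exponent. Therefore
\[
  \prod_{j=1}^{k-1}r_j^{M_j}\ \le\ r_k^{\frac12\sum_{j=1}^{k-1}M_j}\ =\ r_k^{\frac12(2^k-2)}\ =\ r_k^{\,2^{k-1}-1}\ =\ r_k^{\,M_{k-1}-1},
\]
which is exactly the bound we need, completing the proof.

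The argument is really just bookkeeping, so I do not expect a genuine obstacle; the one place to be careful is that the hypothesis is \emph{tight} --- namely $\sum_{j=1}^{k-1}M_j=2M_{k-1}-2$, whose half equals precisely $M_{k-1}-1$, so no slack is available or needed --- and that the restriction $k\ge3$ is exactly what guarantees both that the hypothesis is in force and that $M_{k-1}-1$ is a positive integer.
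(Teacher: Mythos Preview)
Your proof is correct and follows essentially the same approach as the paper: both bound each factor $r_j^{M_j}$ (or equivalently $r_j^{-M_j}$) by a power of $r_k$ using the hypothesis $r_j\le r_k^{1/2}$, and then invoke the geometric-sum identity $\sum_{j=1}^{k-1}M_j=2^k-2$ (the paper writes this as $\sum_{j=0}^{k-2}M_j=M_{k-1}-1$, which is the same computation after halving and reindexing). The only cosmetic difference is that you rearrange to bound $\prod_j r_j^{M_j}$ from above rather than $\prod_j r_j^{-M_j}$ from below, and you make explicit the check $r_k>1$ needed for monotonicity of powers; otherwise the arguments are the same.
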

\noindent

\begin{rem} We will prove in Lemma \ref{rk_prep} that the assumption of Lemma \ref{ckrk} does indeed hold.
	%		For our choice of $(r_k)_{k=1}^{\infty}$, we will prove in Lemma \ref{rk_prep} that in fact we do indeed have for all $k \geq 3$ that $\sqrt{r_k} \geq r_j$ for all $j <k$. 
\end{rem}

\begin{proof}
	This is just a calculation, making use of the fact that $M_j - M_{j+1} = - M_j$ and $2M_{j} = M_{j+1}$ for all $j \geq 0$, along with the definition of $c_k$ given by (\ref{mainthm_c}). When $k =1$, we verify (\ref{ckrkeq}) by checking that $r_1^{M_1} = c_1 \cdot r_1^{M_0 + 1}$. For the case of $k \geq 2$, we verify (\ref{ckrkeq}) by computing 
	\begin{align*}
	r_k^{M_k} \cdot c_k &= r_k^{M_k} \cdot \prod_{j=2}^{k} r_{j-1}^{M_{j-1} -M_j} \\
	&= r_k^{M_k} \cdot \prod_{j=2}^{k} r_{j-1}^{-M_{j-1}} \\
	(\sqrt{r_k} \geq r_j)	&\geq r_k^{M_k} \cdot \prod_{j=2}^{k} r_{k}^{-M_{j-2}} \\
	&= r_k^{M_k} \cdot r_k^{-\sum_{j=0}^{k-2} M_j} \\
	&= r_k^{M_k - M_{k-1} + 1} = r_k^{M_{k-1} +1}.
	\end{align*}
	In the last line, we used the fact that $\sum_{j=0}^{k-2} M_j = M_{k-1} -1$.
\end{proof}
\noindent

\begin{lem}
	\label{rk_prep}
	The sequence $(r_k)_{k=1}^{\infty}$ defined in Definition \ref{parameter_defn2} satisfies
	\begin{enumerate}
		\item $r_{2} > r_1$.
		\item For all $k \geq 2$, $\sqrt{r_{k+1}} \geq r_k$.
	\end{enumerate}
	In particular, $(r_k)_{k=1}^{\infty}$ is an increasing sequence, and if $k \geq 2$ we have $\sqrt{r_{k+1}} \geq r_j$ for all $j =1,\dots,k$. 
\end{lem}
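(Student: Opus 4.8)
The plan is to prove (1) by a direct computation and (2) by strong induction on $k$, using Lemma \ref{ckrk} as the engine of the inductive step. The one subtlety is that the hypothesis of Lemma \ref{ckrk} --- namely $\sqrt{r_k}\ge r_j$ for all $j<k$ --- is essentially the monotonicity we are trying to prove, so the two statements have to be interleaved; and since Lemma \ref{ckrk} is only available for $k\ge 3$, the case $k=2$ of (2) must be handled separately by hand.

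First I would verify (1) directly: from Definition \ref{parameter_defn2} (with $c_1=1$ and $M_1=2$) one has $r_2=\left(\tfrac12 r_1\right)^{2}=8^2=64>16=r_1$; see also Table \ref{ValueTable}. For the base case $k=2$ of (2), compute $c_2=r_1^{-M_1}=2^{-8}$, hence $r_3=c_2\left(\tfrac12 r_2\right)^{M_2}=2^{-8}\cdot 32^4=2^{12}$, so that $\sqrt{r_3}=64=r_2$ and (2) holds, with equality, at $k=2$.

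For the inductive step I would fix $k\ge 3$, assume (1) as well as (2) for all indices $2,\dots,k-1$, and first observe that $(r_j)_{j=1}^{k}$ is then increasing: indeed $r_{j+1}\ge r_j^2\ge r_j$ for $2\le j\le k-1$ because $r_j\ge 16$, while $r_1<r_2$ by (1). In particular $\sqrt{r_k}\ge r_{k-1}\ge r_j$ for every $j<k$, so the hypothesis of Lemma \ref{ckrk} is satisfied at index $k$, yielding $c_k r_k^{M_k}\ge r_k^{M_{k-1}+1}$. Substituting into the defining recursion gives
\[ r_{k+1}=c_k\left(\tfrac12 r_k\right)^{M_k}=2^{-M_k}c_k r_k^{M_k}\ge 2^{-M_k}r_k^{M_{k-1}+1}, \]
so it remains to check that $2^{-M_k}r_k^{M_{k-1}+1}\ge r_k^2$, i.e.\ that $r_k^{M_{k-1}-1}\ge 2^{M_k}$. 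Using $M_k=2M_{k-1}$, $M_{k-1}\ge 2$, and $r_k\ge 2^4$, I would estimate $r_k^{M_{k-1}-1}\ge 2^{4(M_{k-1}-1)}=2^{4M_{k-1}-4}\ge 2^{2M_{k-1}}=2^{M_k}$, the last inequality because $2M_{k-1}\ge 4$. Hence $r_{k+1}\ge r_k^2$, which closes the induction.

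Finally, the ``in particular'' clause is immediate: $(r_k)_{k=1}^\infty$ is increasing since $r_1<r_2$ and $r_{k+1}\ge r_k^2\ge r_k$ for $k\ge 2$, and then for $k\ge 2$ monotonicity gives $\sqrt{r_{k+1}}\ge r_k\ge r_j$ for all $j=1,\dots,k$. I do not expect any genuine obstacle here beyond correctly organizing the interleaved induction so that Lemma \ref{ckrk} is invoked only at indices $\ge 3$; every estimate involved is elementary arithmetic with powers of $2$.
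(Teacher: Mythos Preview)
Your proof is correct and follows essentially the same approach as the paper: direct computation for (1) and the base case $k=2$, then strong induction using Lemma \ref{ckrk} at each $k\ge 3$ to obtain $r_{k+1}\ge 2^{-M_k}r_k^{M_{k-1}+1}$ and finish with elementary power-of-$2$ arithmetic. The only cosmetic difference is in the final estimate---the paper rewrites $2^{-M_k}r_k^{M_{k-1}+1}=r_k^{M_{k-2}+1}(r_k/16)^{M_{k-2}}\ge r_k^{M_{k-2}+1}\ge r_k^3$, whereas you derive the (weaker but still sufficient) $r_{k+1}\ge r_k^2$.
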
	
\begin{proof}
	The claim $(1)$ is just a calculation:
	\begin{equation}
	\label{rkprep_base}
	r_2 = c_1\left(\frac{r_1}{2}\right)^{M_1} = 8^2 = 64> 16 = r_1.
	\end{equation}
	We'll prove the second claim by induction. First, we have
	\begin{equation}
	\label{rkprep_base2}
	r_3 = c_2\left(\frac{r_2}{2}\right)^{M_2} = 2^{-8}(2^5)^4= 2^{12} = 64^2.
	\end{equation}
	Therefore $\sqrt{r_3} \geq r_2 > r_1$. 
	
	Suppose that for some $k \geq 3$ we have $\sqrt{r_k} \geq r_{j}$ for all $j = 1,\dots,k-1$. Then by Lemma \ref{ckrk}, 
	\begin{equation}
	\label{rkprep_eqn1}
	r_{k+1} = c_k r_k^{M_k} 2^{-M_k} 	\geq r_k^{M_{k-1} + 1} 2^{-M_k} = r_k^{M_{k-2} +1} r_k^{M_{k-2}} 16^{-M_{k-2}} \geq r_k^{M_{k-2}+1}. 
	\end{equation}
	Since $k \geq 3$, we have $M_{k-2} \geq M_1 = 2$, so that 
	\begin{align}
	\label{rkprep_eqn2}
	r_{k+1} \geq r_k \cdot r_k^2.
	\end{align}
	Therefore, by the inductive hypothesis we must have $\sqrt{r_{k+1}} \geq r_k \geq r_j$ for all $j  = 1,\dots, k-1$. This proves the claim.
\end{proof}

\noindent	We record the following important inequalities that follow from the proof of Lemma \ref{rk_prep}.
\begin{cor}
	\label{rkinequalities}
	We have the following inequalities. For all $k \geq 3$	
	\begin{equation}
	\label{lemma3.2applies}
	c_k r_k^{M_k} \geq r_k^{M_{k-1}+1}, \textrm{ and }
	\end{equation}
	\begin{equation}
	\label{xkesteq}
	r_{k+1} \geq 2^{-M_{k}} r_k^{M_{k-1}+1}.
	\end{equation}
	For all $k \geq 5$, we have
	\begin{equation}
	\label{xkest2_eqn1}
	r_{k+1} \geq 2^{2^{k}}= 2^{M_k}, \textrm{ and }
	\end{equation}
	\begin{equation}
	\label{xkest2_eqn2}
	r_{k+1} \geq 4 r_{k}^2.
	\end{equation}
\end{cor}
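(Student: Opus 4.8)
The plan is to read off all four inequalities from Lemmas \ref{ckrk} and \ref{rk_prep} together with the elementary identities $M_k = 2M_{k-1}$ and $M_{k-1} = 2^{k-1}$ built into Definition \ref{parameter_defn2}. Inequality (\ref{lemma3.2applies}) is immediate: by Lemma \ref{rk_prep}, for every $k \geq 3$ we have $\sqrt{r_k} \geq r_j$ for all $j < k$, so the hypothesis of Lemma \ref{ckrk} is satisfied and (\ref{ckrkeq}) gives precisely $c_k r_k^{M_k} \geq r_k^{M_{k-1}+1}$. Since $r_{k+1} = c_k(r_k/2)^{M_k} = 2^{-M_k} c_k r_k^{M_k}$ by Definition \ref{parameter_defn2}, inequality (\ref{xkesteq}) then follows at once from (\ref{lemma3.2applies}).

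For (\ref{xkest2_eqn1}) I would induct on $k \geq 5$, taking the inductive statement to be $r_k \geq 2^{M_{k-1}}$. The base case $r_5 \geq 2^{M_4} = 2^{16}$ is clear from the values in Table \ref{ValueTable}: e.g. $r_5 \geq r_4^3$ by (\ref{rkprep_eqn2}) and $r_4 = 2^{56}$. For the inductive step, assuming $r_k \geq 2^{M_{k-1}}$, inequality (\ref{xkesteq}) gives
\[ r_{k+1} \;\geq\; 2^{-M_k} r_k^{M_{k-1}+1} \;\geq\; 2^{-M_k} 2^{M_{k-1}(M_{k-1}+1)} \;=\; 2^{\,M_{k-1}^2 - M_{k-1}}, \]
where I used $M_k = 2M_{k-1}$; since $M_{k-1} \geq M_4 = 16 \geq 3$ the exponent satisfies $M_{k-1}^2 - M_{k-1} \geq 2M_{k-1} = M_k$, so $r_{k+1} \geq 2^{M_k}$. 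This proves (\ref{xkest2_eqn1}) for index $k$ and simultaneously furnishes the inductive hypothesis for $k+1$.

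Inequality (\ref{xkest2_eqn2}) then follows by feeding the bound $r_k \geq 2^{M_{k-1}}$ (valid for $k \geq 5$, by the base case and (\ref{xkest2_eqn1})) back into (\ref{xkesteq}): writing $r_k^{M_{k-1}+1} = r_k^2 \cdot r_k^{M_{k-1}-1} \geq r_k^2 \cdot 2^{M_{k-1}(M_{k-1}-1)}$ yields $r_{k+1} \geq r_k^2 \cdot 2^{M_{k-1}^2 - M_{k-1} - M_k} = r_k^2 \cdot 2^{M_{k-1}(M_{k-1}-3)}$, and the exponent is at least $2$ as soon as $M_{k-1} \geq 16$, i.e. for all $k \geq 5$. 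Since each step is a bookkeeping exercise with exponents, there is no genuine obstacle here; the only points needing care are verifying the base case of the induction in (\ref{xkest2_eqn1}) and arranging that induction so that its conclusion $r_{k+1}\geq 2^{M_k}$ is exactly the hypothesis required at the next stage, which is then reused verbatim in the proof of (\ref{xkest2_eqn2}).
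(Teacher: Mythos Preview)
Your proof is correct. For (\ref{lemma3.2applies}) and (\ref{xkesteq}) you argue exactly as the paper does. For (\ref{xkest2_eqn1}) and (\ref{xkest2_eqn2}), however, you take a different route: you set up a fresh induction on the statement $r_k \geq 2^{M_{k-1}}$, driven by (\ref{xkesteq}), and then feed that bound back in to get $r_{k+1} \geq 4r_k^2$. The paper instead simply recycles two inequalities already obtained inside the proof of Lemma~\ref{rk_prep}: from (\ref{rkprep_eqn1}) one has $r_{k+1} \geq r_k^{M_{k-2}+1}$, and since $r_k > 16$ this gives $r_{k+1} > 16^{M_{k-2}} = 2^{M_k}$ directly; and (\ref{rkprep_eqn2}) already says $r_{k+1} \geq r_k^3$, which with $r_k > 16$ yields $r_{k+1} \geq 4r_k^2$ without further work. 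The paper's approach is shorter because it treats the corollary as a pure repackaging of estimates already in hand, whereas your induction is self-contained and does not require the reader to look back at intermediate steps of another proof. Both are fine.
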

\begin{proof}
	Most of the work has already been done in the proof of Lemma \ref{rk_prep}. We first prove equation (\ref{lemma3.2applies}). When $k \geq 2$, By Lemma \ref{rk_prep} we have $\sqrt{r_{k+1}} \geq r_j$ for all $j =1,\dots k$. Therefore, by Lemma \ref{ckrk}, we obtain (\ref{lemma3.2applies}).
	
	Equation (\ref{xkesteq}) follows immediately. Indeed, the first two lines of (\ref{rkprep_eqn1}) yields
	$$r_{k+1} \geq r_k^{M_{k-1}+1} 2^{-M_k},$$
	when $k \geq 3$.
	
	When $k \geq 5$, we can refine the estimate $r_{k+1} \geq r_k^{M_{k-2}+1}$ from (\ref{rkprep_eqn1}). We note that by Lemma \ref{rk_prep}, we have $r_k > 16$ for all $k \geq 5$. Therefore,
	$$r_{k+1} \geq r_k^{M_{k-2}+1} > 16^{M_{k-2}} = (2^4)^{M_{k-2}} = 2^{M_k}.$$
	Finally, since $r_k > 16$ for all $k \geq 1$, we certainly obtain (\ref{xkest2_eqn2}) from (\ref{rkprep_eqn2}).
\end{proof}
\noindent

Corollary \ref{rkinequalities} concludes our discussion of some technical relations and inequalities we will need for the sequences $(r_j)_{j=1}^\infty$, $(c_j)_{j=1}^\infty$. As discussed in the Introduction, one of the key advantages of the quasiconformal approach (over the infinite-product approach) is the relative simplicity of deducing the singular value structure of the constructed function. This is summarized in the following Proposition. Although (\ref{final_crit_pts_listing_2}) and (\ref{listing_of_zeros}) are slightly technical, they simply say the critical points are radially equidistributed on each circle $|z|=r_j$, and the zeros are equidistributed on a slightly larger circle.

\begin{prop}
	\label{critical_point_listing}
	Let $(M_j)_{j=1}^\infty$, $(r_j)_{j=1}^\infty$ and $(c_j)_{j=1}^{\infty}$ be as in Definition \ref{parameter_defn2}. Then the only critical points of $f$ are $0$ and the simple critical points given by 
	\begin{equation}
	\label{final_crit_pts_listing_2}
	\phi\left( r_j\cdot \exp\left(i\frac{(2k_j-1)\pi}{M_{j}} \right) \right), 
	\end{equation} 
	where $j\in\mathbb{N}$, and $1 \leq k_j \leq M_{j}$. The only singular values of $f$ are $0$ and the critical values $(\pm c_jr_j^{M_j})_{j=0}^\infty$. The zeros of $f$ are given by: 
	\begin{equation}
	\label{listing_of_zeros} 
	0\textrm{ and }\phi\left( r_j \cdot \exp \left(  \frac{1}{4}\frac{\pi}{M_{j}} + i \cdot \frac{(2k_j-1)\pi}{M_{j}} \right) \right), 
	\end{equation}
	where $j \in \N$ and $1 \leq k_j \leq M_{j}$.	All of the zeros of $f$ are simple except for $0$ which is of multiplicity $2$.
\end{prop}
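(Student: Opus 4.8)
The plan is to deduce every assertion from the corresponding statement about the quasiregular map $h=f\circ\phi$ produced by the Main Theorem of \cite{BurLaz} (Appendix \ref{appendix}) and then transport it through the homeomorphism $\phi$. Since $f=h\circ\phi^{-1}$ and $\phi\colon\mathbb{C}\to\mathbb{C}$ is an orientation-preserving homeomorphism (fixing $0$, by the normalization in Theorem \ref{mainthm}), the map $f$ fails to be locally injective at a point $w$ exactly when $h$ fails to be locally injective at $\phi^{-1}(w)$, and the local degree of $f$ at $w$ equals the local degree of $h$ at $\phi^{-1}(w)$; likewise $f(w)=0$ iff $h(\phi^{-1}(w))=0$, with the order of vanishing preserved. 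Hence $\mathrm{crit}(f)=\phi(\mathrm{crit}(h))$ and $f^{-1}(0)=\phi(h^{-1}(0))$ with all multiplicities preserved, and, because $f(w)=h(\phi^{-1}(w))$, the critical values of $f$ and of $h$ coincide. It therefore suffices to (i) locate the critical points, zeros and critical values of $h$, and (ii) check that $f$ has no finite asymptotic value other than $0$ and that the set of critical values of $f$ together with $\{0\}$ is closed in $\mathbb{C}$; then the singular set of $f$ is exactly $\{0\}\cup\{\pm c_jr_j^{M_j}\}$.

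For (i), we read the structure of $h$ off the Main Theorem applied to the parameters of Definition \ref{parameter_defn2}. Outside a family of thin ``interpolation'' annuli straddling the circles $|z|=r_j$, the map $h$ agrees with a power map $z\mapsto\pm c_jz^{M_j}$, while near $0$ it acts as a degree-two polynomial-like map, and on each interpolation annulus $h$ is quasiconformal, hence locally injective; since these annuli lie at radii comparable to $r_j>0$ and the relevant power-map values are bounded away from $0$ there, $h$ has no critical points and no zeros on them. Consequently every critical point of $h$ is either $0$ --- where the degree-two model makes $0$ a simple critical point and a zero of multiplicity two --- or one of the points the Main Theorem places on $|z|=r_j$, namely the $M_j$ simple critical points at arguments $(2k_j-1)\pi/M_j$, $1\le k_j\le M_j$ (the count $M_j$ being consistent with Riemann--Hurwitz across the $j$-th annulus). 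Evaluating the model there gives $\pm c_j\big(r_je^{i(2k_j-1)\pi/M_j}\big)^{M_j}=\mp c_jr_j^{M_j}$, so the critical values of $h$ are $\{0\}\cup\{\pm c_jr_j^{M_j}\}_{j\ge0}$, the value $0$ being the critical value at $0$ (the $j=0$ term, since $r_0=0$). Likewise the nonzero zeros of $h$ are the listed points $r_j\exp\!\big(\tfrac{\pi}{4M_j}+i\tfrac{(2k_j-1)\pi}{M_j}\big)$, all simple, lying on the slightly larger circles $|z|=r_je^{\pi/(4M_j)}$. Transporting through $\phi$ as above yields (\ref{final_crit_pts_listing_2}), (\ref{listing_of_zeros}) and the stated multiplicities.

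For (ii), Corollary \ref{rkinequalities} gives $c_jr_j^{M_j}\ge r_j^{M_{j-1}+1}\to\infty$, so the only accumulation point of $\{\pm c_jr_j^{M_j}\}$ in $\widehat{\mathbb{C}}$ is $\infty$ and hence $\{0\}\cup\{\pm c_jr_j^{M_j}\}_{j\ge0}$ is closed in $\mathbb{C}$. That $f$ has no finite asymptotic value other than $0$ follows from the mapping structure: along any path to $\infty$ the power-map behaviour on the annuli $A_k$, together with the estimate $|\phi(z)-z|=o(|z|)$ from Section \ref{The Construction}, forces $|f|\to\infty$, except for paths that eventually enter the polynomial-like core, where the only omitted/asymptotic value is $0$. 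With the previous paragraph this gives $S(f)=\{0\}\cup\{\pm c_jr_j^{M_j}\}_{j\ge0}$.

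The one genuinely delicate point is (i): extracting from the Main Theorem of \cite{BurLaz}, for this particular parameter sequence, the exact arguments and radii of the critical points and of the zeros of $h$, and verifying that the quasiconformal interpolation used there contributes no additional critical points or zeros. Everything else --- the passage from $h$ to $f$ through $\phi$, and the closedness of the critical-value set via the growth of $c_jr_j^{M_j}$ --- is essentially formal.
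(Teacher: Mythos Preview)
Your high-level strategy---transferring the branch set, zeros, and singular values from $h$ to $f=h\circ\phi^{-1}$ via the quasiconformal homeomorphism $\phi$---is correct and is exactly what underlies the paper's one-line proof, which simply cites Proposition~3.21 of \cite{BurLaz}. That proposition records precisely the locations of the critical points, zeros, and critical values of $h$ for this parameter family; the paper does not reprove it.

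However, your reconstruction of that input contains internal errors. You assert that on the interpolation annuli $h$ is ``quasiconformal, hence locally injective'' and that $h$ ``has no critical points and no zeros on them.'' But the listed zeros lie at radius $r_j\exp(\pi/(4M_j))$, which is strictly inside the interpolation annulus $r_j<|z|<r_j\exp(\pi/M_j)$---you yourself write down these zeros two sentences later. The interpolation map in \cite{BurLaz} is quasiregular, not quasiconformal: it must change degree from $M_j$ to $M_{j+1}$, so by Riemann--Hurwitz it necessarily has branch points, and it is engineered to have zeros in its interior. Your argument that ``power-map values are bounded away from $0$ there'' applies only where $h$ equals a power map, not in the interpolation region.

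Your part (ii) is also both unnecessary and flawed. Theorem~\ref{mainthm} already asserts that the only singular values of $f$ are the listed critical values, so there is nothing further to check about asymptotic values. Your attempted argument---that paths to $\infty$ either force $|f|\to\infty$ or ``eventually enter the polynomial-like core''---is incoherent, since a path to $\infty$ cannot eventually remain in a bounded set, and a path to $\infty$ passing near the zeros of $f$ need not have $|f|\to\infty$ along it. In short: the reduction to $h$ is right, but the structure of $h$ on the interpolation annuli must be taken from \cite{BurLaz} rather than guessed, and the singular-value claim is already part of Theorem~\ref{mainthm}.
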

\begin{proof}
	This follows immediately from Proposition 3.21 \cite{BurLaz}.
\end{proof}
\noindent

%\begin{rem} Note the relation $(2x_j)^{M_{j+1}}=x_{j+1}$. The map $f_n$ differs from the map obtained by applying Theorem \ref{mainthm} directly to (\ref{parameters_definition}) only in the disc $|z|\leq r_n$.
%\end{rem}

%In Theorem \ref{mainthm}, the entire function is a perturbation of $z\mapsto z^n$ in a large neighborhood of the origin. In order to prove Theorem \ref{application}, we will need to replace $z\mapsto z^n$ with a polynomial possessing a Julia set of dimension $<1$. In order to do this, we replace $z\mapsto z^n$ with the map $z\mapsto z^n+4z$, and interpolate between the two far from the origin. We will define an interpolation in Proposition \ref{zndeltazinterpolation} below, following Lemma 3.1 of \cite{MR4041106}. First we introduce a smooth bump function.

We now move on to show how to modify $f$ near the origin so that instead of being modeled by a function of the form $z^n$, it is modeled by a polynomial with a Cantor repeller Julia set. This will be advantageous because $z^n$ has a Julia set of dimension $1$, whereas the constructed Cantor repeller will have dimension $\ll1$.

The main idea is that a monic, degree $M_k$ polynomial $p(z)$ behaves like $z\mapsto z^{M_k}$ near $\infty$. We will show how to interpolate between $p(z)$ and $z\mapsto z^{M_k}$ in a way that is quasiconformal with dilatation bounded independent of $k$.  Our strategy closely follows Section 3 of \cite{MR4041106}.
\noindent

\begin{definition} We define \[ b(x)=\begin{cases} 
	\exp(1+\frac{1}{x^2-1}) & \textrm{ if } 0\leq x < 1 \\
	0 & \textrm{ if } x\geq 1,
	\end{cases}
	\]
	and, for $r\geq1$, the smooth map 
	\[ \widehat{\eta}_{r}(x)=\begin{cases} 
	1 & \textrm{ if }x\leq r-1 \\
	b(x-r+1) & \textrm{ if } r-1 \leq x \leq r \\
	0 & \textrm{ if }x\geq r.
	\end{cases}
	\]
	We also set $\eta_{r}(z)=\widehat{\eta}_{r}\left(|z|\right)$. 
\end{definition}

\begin{prop}\label{zndeltazinterpolation} Let   $g_k(z):=c_k z^{M_k}+ r_k z\eta_{r_k}(z)$, and $\mu_k:=(g_k)_{\overline{z}}/(g_k)_{z}$. Then there exists $K'\in\mathbb{N}$ with $K' \geq 5$ such that: \begin{align} \sup_{k\geq K'} \left|\left| \mu_k \right|\right|_{L^\infty(\mathbb{C})} <  1. \end{align}
\end{prop}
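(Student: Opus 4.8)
The plan is to compute the Beltrami coefficient $\mu_k = (g_k)_{\bar z}/(g_k)_z$ explicitly and show its sup-norm is bounded strictly below $1$ for all large $k$, uniformly. The function $g_k(z) = c_k z^{M_k} + r_k z \eta_{r_k}(z)$ is holomorphic (indeed polynomial) outside the annulus $\{r_k - 1 \le |z| \le r_k\}$, since $\eta_{r_k}$ is constant there; so $\mu_k$ is supported in that annulus, and the whole problem is a local estimate on a thin annulus at radius $\approx r_k$. First I would write $z \eta_{r_k}(z) = z \widehat\eta_{r_k}(|z|)$ and differentiate: using $|z| = (z\bar z)^{1/2}$, one gets $\partial_{\bar z}(z\widehat\eta_{r_k}(|z|)) = z \widehat\eta_{r_k}'(|z|) \cdot \frac{z}{2|z|}$ and $\partial_z(z\widehat\eta_{r_k}(|z|)) = \widehat\eta_{r_k}(|z|) + z\widehat\eta_{r_k}'(|z|)\cdot \frac{\bar z}{2|z|}$. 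Hence
\begin{equation}
(g_k)_{\bar z} = \frac{r_k}{2} \frac{z^2}{|z|}\widehat\eta_{r_k}'(|z|), \qquad (g_k)_z = c_k M_k z^{M_k - 1} + r_k\widehat\eta_{r_k}(|z|) + \frac{r_k}{2}|z|\widehat\eta_{r_k}'(|z|).
\end{equation}

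The key point is then to compare the magnitude of the numerator with that of the denominator on the annulus $r_k - 1 \le |z| \le r_k$. The numerator has modulus $\frac{r_k}{2}|z|\,|\widehat\eta_{r_k}'(|z|)|$. In the denominator, the dominant term should be the holomorphic monomial $c_k M_k z^{M_k-1}$: on $|z|\ge r_k - 1$ its modulus is $c_k M_k (r_k-1)^{M_k-1}$, which by Corollary \ref{rkinequalities} (specifically $c_k r_k^{M_k}\ge r_k^{M_{k-1}+1}$, together with $r_k - 1 \ge r_k/2$ and $r_k$ large) is astronomically larger than $r_k^2$, while the other two terms in the denominator are bounded in modulus by $r_k + \frac{r_k}{2}|z|\,|\widehat\eta_{r_k}'(|z|)| \le r_k(1 + \tfrac12 C r_k)$ where $C := \sup_x|\widehat\eta_r'(x)| = \sup_{x}|b'(x)|$ is an absolute constant independent of $r$ and $k$ (since $\widehat\eta_r$ is just a shift of the fixed bump profile $b$). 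Therefore for $k$ large the denominator satisfies $|(g_k)_z| \ge \tfrac12 c_k M_k (r_k/2)^{M_k-1}$, say, while the numerator is at most $\tfrac12 C r_k^2$. So $|\mu_k| \le \frac{C r_k^2}{c_k M_k (r_k/2)^{M_k-1}} \to 0$ as $k\to\infty$, which is far stronger than the required uniform bound $<1$.

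Concretely I would carry out the steps as: (1) observe $\mathrm{supp}(\mu_k) \subseteq \{r_k-1\le|z|\le r_k\}$ and record the absolute constant $C = \|b'\|_\infty$; (2) derive the displayed formulas for $(g_k)_{\bar z}$ and $(g_k)_z$; (3) bound $|(g_k)_{\bar z}| \le \tfrac{C}{2} r_k^2$ on the support; (4) bound $|(g_k)_z|$ below by $c_k M_k(r_k-1)^{M_k-1} - r_k - \tfrac{C}{2}r_k^2$, then invoke (\ref{lemma3.2applies}) and $r_k-1 \ge r_k/2$ to see $c_k M_k (r_k-1)^{M_k-1} \ge M_k (r_k/2)^{M_k-1} r_k^{M_{k-1}+1}/r_k^{M_k-1} \cdot (\text{stuff})$ — more carefully, $c_k(r_k-1)^{M_k-1} \ge c_k r_k^{M_k-1} 2^{-(M_k-1)} \ge r_k^{M_{k-1}+1} r_k^{-1} 2^{-(M_k-1)} = r_k^{M_{k-1}} 2^{-(M_k-1)}$, which still dominates $r_k^2$ once $M_{k-1}\ge 3$ and $r_k$ is large enough relative to $2^{M_k}$ — here one uses (\ref{xkest2_eqn1}), $r_{k+1}\ge 2^{M_k}$, i.e. $r_k \ge 2^{M_{k-1}}$ for $k$ large, so $r_k^{M_{k-1}} 2^{-M_k} = r_k^{M_{k-1}} 4^{-M_{k-1}} \ge (r_k/4)^{M_{k-1}}$ which is enormous; (5) conclude $\|\mu_k\|_\infty < 1$ for all $k \ge K'$ for a suitable $K'\ge 5$.

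The only mildly delicate point — and the one I would be most careful about — is step (4): making sure the lower bound on $|(g_k)_z|$ genuinely beats the two lower-order terms, since one of them, $\tfrac{C}{2}r_k^2$, is exactly the size of the numerator, so a naive triangle-inequality split must keep the monomial term strictly dominant rather than just comparable. This is not a real obstacle because the monomial term grows like $r_k^{M_{k-1}}$ (with $M_{k-1}\to\infty$) against a fixed power $r_k^2$, but it does force the choice of $K'$ to depend on the absolute constant $C$ and to be taken large enough that $c_k M_k(r_k/2)^{M_k-1} \ge 2(r_k + \tfrac{C}{2}r_k^2 + \tfrac{C}{2}r_k^2)$; once that inequality is arranged, $|\mu_k|$ is not merely $<1$ but tends to $0$, and the supremum over $k\ge K'$ is therefore also $<1$.
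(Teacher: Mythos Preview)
Your proposal is correct and follows essentially the same approach as the paper: compute $(g_k)_z$ and $(g_k)_{\bar z}$ explicitly, note that $\mu_k$ is supported on the thin annulus $\{r_k-1\le|z|\le r_k\}$, and use the triangle inequality together with the growth estimates of Corollary~\ref{rkinequalities} to show that the holomorphic monomial term $c_k M_k z^{M_k-1}$ dominates the remaining terms, whence $|\mu_k|\to 0$. The only differences are cosmetic: the paper divides numerator and denominator by $r_k|z|$ and then unwinds the product defining $c_k$ explicitly to show $c_k|z|^{M_k-2}/r_k \ge r_{k-1}$, whereas you work with raw magnitudes and invoke the prepackaged inequalities (\ref{lemma3.2applies}) and (\ref{xkest2_eqn1}) directly; both routes land at the same conclusion.
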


\begin{proof} We abbreviate $\eta(z):=\eta_{r_k}(z)$. We use a similar strategy as in the proof of Lemma 3.1 in \cite{MR4041106}, and the initial steps of the proof are exactly the same. We have 
	
	\[ (g_k)_z(z)=M_kc_kz^{M_k-1}+r_k\eta(z)+r_kz\eta_z(z)\quad\text{and }\quad  (g_k)_{\overline{z}}(z)=r_kz\eta_{\overline{z}}(z). \]
	
	\vspace{2mm}
	
	\noindent Solving $b''(x)=0$, one sees that $|b'(x)|$ has a maximum at $x_0=(1/3)^{1/4}$ with $|b'(x_0)|<e$, so that $|b'(x)|\leq e$ for $x\in[0,1]$. Thus $|(\widehat{\eta})'(x)|\leq e$ for all $x>0$. Using the chain rule again, we have 
	
	\[ \left| \frac{\partial\eta}{\partial z}(z) \right| = \left| (\widehat{\eta})'(|z|) \right|\cdot\left| \frac{\partial|z|}{\partial z} \right| \leq \frac{e}{2}\quad \textrm{and} \quad \left| \frac{\partial\eta}{\partial \overline{z}}(z) \right| = \left| (\widehat{\eta})'(|z|) \right|\cdot\left| \frac{\partial|z|}{\partial \overline{z}} \right| \leq \frac{e}{2},\]
	
	\vspace{2mm}
	
	\noindent where we have used the fact that 
	$$\frac{\partial|z|}{\partial z}=\frac{\overline{z}}{2|z|} \quad {\rm and} \quad \frac{\partial|z|}{\partial \overline{z}}=\frac{z}{2|z|}.$$
	Hence

	\begin{align}\label{ineq1} \left|\frac{(g_k)_{\overline{z}}(z)}{(g_k)_z(z)}\right| \leq \frac{r_k|z|\frac{e}{2}}{\left| M_kc_k|z|^{M_k-1} - r_k|\eta(z)| - r_k|z|\frac{e}{2} \right| } =  \frac{\frac{e}{2}}{\left| \frac{M_kc_k|z|^{M_k-2}}{r_k} - \frac{|\eta(z)|}{|z|} - \frac{e}{2} \right| }.  \end{align} Let us consider the right-hand side of (\ref{ineq1}) for $|z|=r_k - 1$, recalling $M_k:=2^k$. We have that: 
	\begin{align*} \frac{c_k|z|^{M_k-2}}{r_k}&:=\frac{1}{r_k(r_k-1)^2}\left(\prod_{j=2}^k\frac{1}{r_{j-1}^{M_j-M_{j-1}}}\right)(r_k-1)^{2^k} \\
	&= \frac{1}{r_k(r_k-1)^2}\cdot\frac{(r_k-1)^2\cdot (r_k-1)^{2^2}\cdot...\cdot (r_k-1)^{2^{k-1}}}{r_1^2\cdot r_2^{2^2}\cdot ... \cdot r_{k-1}^{2^{k-1}}} \\
	&= \frac{(r_k-1)^{2^2}}{r_k} \cdot \frac{(r_k-1)^{2^3}}{r_1^2\cdot r_2^{2^2}\cdot r_3^{2^3}}  \cdot \frac{(r_k-1)^{2^4}\cdot...\cdot (r_k-1)^{2^{k-1}}}{r_4^{2^4} \cdot ... \cdot r_{k-1}^{2^{k-1}}}
	\end{align*} 
	By Lemma \ref{rk_prep} and Lemma \ref{rkinequalities}, we may deduce for all $k \geq 5$,
	\begin{equation}
	r_k-1>2r_{k-1},
	\end{equation} 
	\begin{equation}
	\frac{(r_k-1)^{2^3}}{r_1^2\cdot  r_2^{2^2} \cdot r_3^{2^3}} \geq 1, \textrm{ and }
	\end{equation}
	\begin{equation}
	(r_k-1)^2 > r_k.
	\end{equation}
	Combining the above inequalities, it follows that when $|z| \geq r_k -1$, we have
	\begin{equation} \frac{c_k|z|^{M_k-2}}{r_k} \geq r_{k-1}.
	\end{equation} 
	Thus it follows from (\ref{ineq1}) that in fact $|(g_k)_{\overline{z}}/(g_k)_z|\rightarrow0$ as $k\rightarrow\infty$. 	
\end{proof}

\begin{definition}\label{parameter_defn} Let $f$, $\phi$ be as in Definition \ref{parameter_defn2}, and $h:=f\circ\phi$. We define a family of entire functions $f_N:=h_N\circ\phi_N^{-1}$ as follows. Let:  \[ h_N(z):=\begin{cases} 
	g_N(z) & \textrm{ if } |z|\leq r_N \\
	h(z) & \textrm{ if } |z|\geq r_N,
	\end{cases}
	\] and $\phi_N: \mathbb{C}\rightarrow\mathbb{C}$ is the quasiconformal mapping such that: \begin{enumerate} \item $f_N$ is holomorphic, \item $\phi_N(0)=0$, and \item $|\phi_N(z)/z - 1|\rightarrow0$ as $z\rightarrow\infty$.  \end{enumerate}
\end{definition}

The fact that $\phi_N$ may be normalized so that $(3)$ is satisfied follows from an argument similar to Remark 5.2 in \cite{BurLaz}.	

\begin{rem} 
	We will always assume that $N \geq 5$.	Note that for $|z|=r_N$, we have $g_N(z)=h(z)$. 
\end{rem}
\noindent

\begin{rem} We will show that for all sufficiently large $N$, the function $f_N$ satisfies the conclusions of Theorem \ref{main_theorem}. We will occasionally omit the subscript $N$ and simply write $f$ when convenient.
\end{rem}

%To be more precise, we will actually construct a family of quasiregular functions $h_N: \C \rightarrow \C$ depending on $N \in \N$. The entire functions which satisfy Theorem \ref{main_theorem} will be of the form $f_N = h_N \circ \phi_N^{-1}$, for all $N$ that are sufficiently large. During the proof, we will sometimes switch between the notation $f$ and $f_N$ when it is convenient. 

\begin{prop}\label{constant_dilatation} For $\phi_N$ as in Definition \ref{parameter_defn}, $\sup_N K(\phi_N)<\infty$.
\end{prop}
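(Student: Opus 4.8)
The plan is to show that the dilatation of $\phi_N$ is bounded independently of $N$ by decomposing the Beltrami coefficient $\mu_{\phi_N}$ into two pieces and bounding each separately. By construction, $f_N = h_N \circ \phi_N^{-1}$ is holomorphic, so $\phi_N$ solves the Beltrami equation with $\mu_{\phi_N} = \mu_{h_N}$, the complex dilatation of the quasiregular map $h_N$. Thus it suffices to bound $\| \mu_{h_N} \|_{L^\infty(\mathbb{C})}$ away from $1$ uniformly in $N$. Now $h_N$ agrees with $g_N$ on $\{|z| \le r_N\}$ and with $h = f \circ \phi$ on $\{|z| \ge r_N\}$, and these two definitions glue continuously along $|z| = r_N$ (as noted in the Remark). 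Hence $\mu_{h_N} = \mu_{g_N} \cdot \mathbbm{1}_{\{|z| \le r_N\}} + \mu_h \cdot \mathbbm{1}_{\{|z| \ge r_N\}}$ (up to a measure-zero set), and
\[
\| \mu_{h_N} \|_{L^\infty(\mathbb{C})} = \max\left( \| \mu_{g_N} \|_{L^\infty(\{|z| \le r_N\})},\ \| \mu_h \|_{L^\infty(\{|z| \ge r_N\})} \right).
\]

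For the first term: the dilatation $\mu_{g_N}$ is supported on the annulus $\{r_N - 1 \le |z| \le r_N\}$, since $\eta_{r_N}$ is constant (hence $g_N$ is holomorphic) off that annulus. On that annulus Proposition \ref{zndeltazinterpolation} gives, for all $N \ge K'$, a uniform bound $\| \mu_{g_N} \|_{L^\infty(\mathbb{C})} \le \sup_{k \ge K'} \| \mu_k \|_{L^\infty(\mathbb{C})} =: \kappa_1 < 1$; in fact the proof of that Proposition shows $\| \mu_{g_N} \|_{L^\infty} \to 0$ as $N \to \infty$, which is even better than we need. For the second term: on $\{|z| \ge r_N\}$ we have $\mu_h = \mu_f \circ \phi \cdot (\overline{\phi_z}/\phi_z)$-type expression, but more simply $\mu_h$ is just the complex dilatation of the fixed quasiregular map $h$ from \cite{BurLaz}, which by the Main Theorem of \cite{BurLaz} has some fixed finite dilatation $K(h)$, i.e. $\| \mu_h \|_{L^\infty(\mathbb{C})} =: \kappa_2 < 1$ independent of $N$. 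Therefore $\| \mu_{h_N} \|_{L^\infty(\mathbb{C})} \le \max(\kappa_1, \kappa_2) < 1$ for all $N \ge K'$, and since $K(\phi_N) = (1 + \| \mu_{h_N} \|_\infty)/(1 - \| \mu_{h_N} \|_\infty)$, we conclude $\sup_{N \ge K'} K(\phi_N) < \infty$. As we always assume $N \ge 5 \le K'$ may be arranged (or we simply take $N$ past the finitely many excluded values), this gives the claim.

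The one point requiring a little care — and the main obstacle — is the behavior exactly on the gluing circle $|z| = r_N$ and verifying that the piecewise-defined $h_N$ is genuinely quasiregular there with the dilatation controlled by the pieces. Since $g_N$ and $h$ agree on $|z| = r_N$ and each is smooth (indeed $\eta_{r_N} \equiv 0$ near and outside that circle from the outside, and $g_N$ is a polynomial perturbation from the inside), $h_N$ is continuous and ACL, and its distributional derivatives coincide a.e. with the classical ones of the two pieces off the (measure-zero) circle; thus the essential supremum of $|\mu_{h_N}|$ is the max of the two essential suprema, and no new contribution arises from the circle itself. This is the standard gluing lemma for quasiregular maps along a smooth curve, and once invoked the argument is complete.
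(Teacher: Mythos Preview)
Your proof is correct and follows essentially the same approach as the paper: split the Beltrami coefficient of $h_N$ into the region $\{|z|\le r_N\}$ (controlled by Proposition~\ref{zndeltazinterpolation}) and the region $\{|z|\ge r_N\}$ (controlled by the fixed dilatation bound for $h$ coming from \cite{BurLaz}, which the paper cites as Proposition~4.6 there). Your write-up is simply more explicit about why $\mu_{\phi_N}=\mu_{h_N}$ and about the gluing along $|z|=r_N$, both of which the paper leaves implicit.
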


\begin{proof} By Proposition \ref{zndeltazinterpolation}, we have $\sup_NK(\phi_N|_{\{|z|\leq r_N\}})<\infty$, and by Proposition 4.6 of \cite{BurLaz}, we have $\sup_N K(\phi_N|_{\{|z|\geq r_N\}})<\infty$.
\end{proof}

In \cite{BurLaz}, the conclusion $|\phi(z)/z-1|\xrightarrow{z\rightarrow\infty}0$ of Theorem \ref{mainthm} is deduced by an application of the Teichm\"uller-Wittich-Belinskii Theorem (see Theorem 6.1 of \cite{MR0344463}). We will need a more quantitative statement for the purposes of proving Theorem \ref{main_theorem}, in particular when we prove that the m.c.w.d.'s of Theorem \ref{main_theorem} have smooth boundary. This quantitative statement is given in Theorem \ref{shishikura} below. The proof follows from the main arguments of \cite{MR3839848}. Indeed, Theorem \ref{shishikura} is quite analagous to the main result of \cite{MR3839848}, but we will need to assume less than in \cite{MR3839848}, and accordingly we will obtain a weaker conclusion, which will nevertheless suffice to prove Theorem \ref{main_theorem}.

\begin{definition}
	\label{omega}
	For $p\geq1$ and $0 < r < 1$, we will denote \begin{align} \omega_p(r):=\left(\frac{1}{2}\right)^{p^{-1}\sqrt{\log\log r^{-1}}}.\end{align}
\end{definition}

\begin{thm}\label{shishikura} Let $\psi: \mathbb{C} \rightarrow \mathbb{C}$ be a quasiconformal mapping, $\mu:=\psi_{\overline{z}}/\psi_z$, and suppose that \begin{align}\label{logarithmic_decay} I(r) := \int \!\!\!\!\!\int_{\{ |z|<r \}} \frac{|\mu|}{1-|\mu|^2} \frac{dxdy}{|z|^2} \textrm{ is finite and has order } O\left(\omega_1(r)\right) \textrm{ as } r\searrow0. \end{align} Then $\psi$ is conformal at $0$, and for any $p>2$, we have \begin{align}\label{bigOconclusion} \psi(z)=\psi(0)+\psi'(0)z+O\left(\omega_p(|z|)\right) \textrm{ as } z\rightarrow0. \end{align}
\end{thm}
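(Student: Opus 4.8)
The plan is to follow the scheme of \cite{MR3839848}, which is itself a quantitative refinement of the Teichm\"uller--Wittich--Belinskii theorem, but to track the dependence on the modulus-of-continuity function $I(r)$ explicitly. The key object is the logarithmic coordinate: writing $z = e^{-w}$ (so that a neighborhood of $0$ in the $z$-plane corresponds to a right half-plane $\{\mathrm{Re}\, w > R\}$ in the $w$-plane), the map $\psi$ lifts to a quasiconformal map $\Psi$ of that half-plane whose Beltrami coefficient $\tilde\mu$ satisfies, by the conformal invariance of the integrand in \eqref{logarithmic_decay}, $\int\!\!\int_{\{\mathrm{Re}\,w > R'\}} \frac{|\tilde\mu|}{1-|\tilde\mu|^2}\, du\, dv = I(e^{-R'})$, which by hypothesis is $O(\omega_1(e^{-R'})) = O\big((1/2)^{\sqrt{R'}}\big)$ as $R' \to \infty$. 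First I would set up this change of variables carefully and record that the hypothesis says the ``tail energy'' of $\tilde\mu$ beyond the line $\mathrm{Re}\, w = R'$ decays like $(1/2)^{\sqrt{R'}}$.

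Next I would use the standard representation of a quasiconformal map in terms of its Beltrami coefficient via the Cauchy/Beurling--Ahlfors transform: $\Psi(w) = w + (\text{correction})$, where the correction is obtained by solving the Beltrami equation and can be estimated by an $L^2$--$L^\infty$ argument (as in the Ahlfors--Bers machinery, or more directly the area-integral estimates in \cite{MR3839848} and \cite{MR0344463}). The point is that the modulus of the correction term at a point $w_0$ with $\mathrm{Re}\, w_0 = R_0$ is controlled by the tail energy $\int\!\!\int_{\{\mathrm{Re}\, w \geq R_0 - C\}} \frac{|\tilde\mu|}{1-|\tilde\mu|^2}$ (the contribution of $\tilde\mu$ near $w_0$ and further into the half-plane), plus a harmless term from the bulk. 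Feeding in the decay $(1/2)^{\sqrt{R_0}}$, one gets that $\Psi(w) - w$ tends to a finite limit $a$ as $\mathrm{Re}\, w \to \infty$ and that $|\Psi(w) - w - a| = O\big((1/2)^{c\sqrt{\mathrm{Re}\, w}}\big)$ for some $c \in (0,1)$; translating back via $z = e^{-w}$, this says precisely that $\psi$ is conformal at $0$ with $\psi'(0) = e^{-a} \neq 0$ and $\psi(z) = \psi(0) + \psi'(0) z + O\big(|z|\cdot(1/2)^{c\sqrt{\log(1/|z|)}}\big)$. Since for any $p > 2$ one has $|z| \cdot (1/2)^{c\sqrt{\log\log(1/|z|)^{-1} \cdot \log(1/|z|)/\log\log(1/|z|)}} \ll \omega_p(|z|)$ — more simply, $|z|^{1/2} = o(\omega_p(|z|))$ while the surgery loses only a $\sqrt{\log}$ factor — a crude comparison of exponents yields \eqref{bigOconclusion}. (The precise bookkeeping is why the conclusion is stated with the slack ``any $p>2$'' rather than an optimal rate.)

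The main obstacle, and the step I would spend the most care on, is the second one: getting the \emph{pointwise} estimate on $\Psi(w) - w$ from the integral hypothesis on $\tilde\mu$, with the correct decay rate. The subtlety is that the singular-integral operators controlling the solution of the Beltrami equation are only bounded on $L^2$, not $L^\infty$, so one cannot simply bound $|\Psi(w)-w|$ by the $L^1$ norm of $\tilde\mu$; one must instead run the argument of \cite{MR3839848} (decomposing the half-plane into dyadic-in-$\mathrm{Re}\, w$ strips, estimating the contribution of each strip to the correction at $w_0$, and summing a geometric-type series whose rate is governed by $\omega_1$), and verify that the weaker hypothesis here — only $O(\omega_1(r))$ decay of $I(r)$, rather than whatever stronger or different assumption is made in \cite{MR3839848} — still makes that sum converge and produces the stated $O(\omega_p)$ error. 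Once that estimate is in hand, the conformality at $0$ and the expansion \eqref{bigOconclusion} follow by unwinding the logarithmic change of coordinates.
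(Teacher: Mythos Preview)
Your plan diverges from the paper's proof and contains a computational error that undermines the rate you claim.

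\textbf{The error.} You write $\omega_1(e^{-R'}) = (1/2)^{\sqrt{R'}}$, but from Definition~\ref{omega} one has $\omega_1(r) = (1/2)^{\sqrt{\log\log r^{-1}}}$, so with $r = e^{-R'}$ this is $(1/2)^{\sqrt{\log R'}}$, not $(1/2)^{\sqrt{R'}}$. This is a much slower decay. Consequently your claimed bound $|\Psi(w)-w-a| = O((1/2)^{c\sqrt{\mathrm{Re}\,w}})$ is unsupported; the corrected input would at best yield $O((1/2)^{c\sqrt{\log \mathrm{Re}\,w}})$, which translates back to $O(\omega_{1/c}(|z|))$. Whether this gives the conclusion \emph{for every} $p>2$ then depends on whether you can take $c$ arbitrarily close to $1/2$, and nothing in your sketch addresses that. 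The sentence involving ``$|z|^{1/2} = o(\omega_p(|z|))$'' is a red herring: your actual error term (once corrected) is not $o(|z|^{1/2})$.

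\textbf{What the paper does instead.} The paper avoids logarithmic coordinates and dyadic strips entirely. It quotes three results from \cite{MR3839848} directly: Lemma~10 gives two inequalities bounding integrals against the kernel $\phi_{z_1,z_2}(z) = z_1/(z(z-z_1)(z-z_2))$ by an annular integral (controlled by $I(r)$, hence $O(\omega_1)$) plus a term $C'(p,\rho)\,I_{p,2}(\mu;|z_1|)^{1/p}$; Theorem~8 reduces the conformality-with-rate conclusion to bounding these four right-hand terms; and Lemma~11 bounds $I_{p,2}(\mu;r)$ itself by a constant times the tail integral plus $(r/r')^2$. Choosing $r' = r^{1/2}$ makes both pieces $O(\omega_1(r))$, so $I_{p,2}^{1/p} = O(\omega_1^{1/p}) = O(\omega_p)$. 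That last identity is exactly where the exponent $p$ enters, and it explains why the conclusion holds for each $p>2$ separately. Your half-plane scheme is a plausible alternative route to a TWB-type statement, but the paper's argument is both shorter and gives the explicit $p$-dependence transparently.
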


\begin{proof} By Lemma 10 of \cite{MR3839848}, we have that for any $p>2$ and $0<\rho<1$, there exists $C'=C'(p,\rho)$ such that if $0<|z_2|<\rho^2|z_1|$, then \begin{align}\label{first_inequality} \left| \int \!\!\!\!\!\int_{\mathbb{C}} \frac{\mu(z)\phi_{z_1, z_2}(z)}{1-|\mu(z)|^2}dxdy \right| \leq \frac{1}{1-\rho^2}\left| \int\!\!\!\!\! \int_{A(\rho^{-1}|z_2|, \rho|z_1|)} \frac{\mu(z)}{1-|\mu(z)|^2}\frac{dxdy}{z^2} \right| +C'I_{p,2}(\mu; |z_1|)^{1/p},\end{align} and \begin{align}\label{second_inequality}  \int \!\!\!\!\!\int_{\mathbb{C}} \frac{|\mu(z)|^2|\phi_{z_1, z_2}(z)|}{1-|\mu(z)|^2}dxdy  \leq \frac{1}{1-\rho^2} \int\!\!\!\!\! \int_{A(\rho^{-1}|z_2|, \rho|z_1|)} \frac{|\mu(z)|^2}{1-|\mu(z)|^2}\frac{dxdy}{|z|^2} +C'I_{p,2}(\mu; |z_1|)^{1/p},\end{align} where \begin{align} \phi_{z_1, z_2}(z)=\frac{z_1}{z(z-z_1)(z-z_2)}\textrm{, and } I_{p,2}:=\int \!\!\!\!\!\int_{\mathbb{C}}  \frac{|\mu(z)|^p}{(1-|\mu(z)|^2)^p} \frac{dxdy}{|z|^2(1+|z|/r)^2}. \end{align} Thus by Theorem 8 of \cite{MR3839848}, it suffices to show that the $\liminf_{z_2\rightarrow0}$ of the four terms on the right hand sides of (\ref{first_inequality}) and (\ref{second_inequality}) are $O(\omega_p(|z_1|))$ as $z_1\rightarrow0$. In fact we have better estimates on the two integral terms: they are $O(\omega_1(|z_1|))$ by the assumption (\ref{logarithmic_decay}). For the remaining two terms, we use Lemma 11 of  \cite{MR3839848}: there exist constants $C_2$ and $C_3$ depending only on $K(\psi)$ such that for $0<r<r'$, \begin{align}\label{I_term} I_{p,2}(\mu; r) \leq C_2 \int \!\!\!\!\!\int_{|z|<r'} \frac{|\mu(z)|^2}{1-|\mu(z)|^2}\frac{dxdy}{|z|^2} + \frac{C_3}{2}\left(\frac{r}{r'}\right)^2.  \end{align} Letting $r'=r^{1/2}$, we see the first term on the right-hand side of (\ref{I_term}) has order $O(\omega_1(r'))=O(\omega_1(r))$, and the second term has order $O(r)$, so that $I_{p,2}$ has order $O(\omega_1(r))$. Thus $I_{p,2}(\mu; |z_1|)^{1/p}$ has order $O(\omega_p(|z_1|))$, and so the result follows.
\end{proof}

\begin{rem}\label{constants} One readily sees from the constants in the proof of Theorem \ref{shishikura} that the big-$O$ constants in (\ref{bigOconclusion}) depend only on $K(\psi)$ and the big-$O$ constants in (\ref{logarithmic_decay}). In particular, they are independent of $N \in \N$.
	
	%Let $\psi$ be as in Theorem \ref{shishikura}, and suppose $\psi(0)=0$. Then Theorem \ref{shishikura} says that $\psi'(0)$ exists, and that there exist $s>0$ and $C>0$ such that \begin{align} |\psi(z)/z-\psi'(0)| < C\cdot\omega_p(|z|) \textrm{ for } 0<|z|<s. \end{align} One readily sees from the constants in the proof of Theorem \ref{shishikura} that the constants $C$, $s$ depend only on $K(\psi)$ and the big-$O$ constants in the $I(r)=O(\omega_1(r))$ estimate.
\end{rem}

\noindent We now apply Theorem \ref{shishikura} to our particular setting:	

\begin{thm}\label{initial_identity} There exists $C' >0$ and $R>0$ such that for any $N\in\mathbb{N}$ and any $p>2$: \begin{align}\label{near_infty}\left|\frac{\phi_N(z)}{z} - 1\right| < C'\cdot\omega_p(1/|z|) \emph{ for } |z|>R. \end{align}
\end{thm}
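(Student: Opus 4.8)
The plan is to apply Theorem \ref{shishikura} not to $\phi_N$ directly, but to the map $z \mapsto 1/\phi_N(1/z)$, which conjugates the behavior of $\phi_N$ near $\infty$ to behavior near $0$; one must of course first check that this inverted map is quasiconformal with the same dilatation bound (which holds since the dilatation is just transported by the conformal involution $z\mapsto 1/z$), that it fixes $0$ (using $\phi_N(0)=0$ — actually here we need the fact that $\phi_N$ fixes $0$ and that by normalization (3) of Definition \ref{parameter_defn} the inverted map extends continuously to $0$ with the inversion sending $\infty$ to $0$), and that its dilatation satisfies the hypothesis \eqref{logarithmic_decay}. So the first step is: set $\Phi_N(w) := 1/\phi_N(1/w)$, verify $\Phi_N$ is quasiconformal with $K(\Phi_N)=K(\phi_N)$ bounded uniformly in $N$ by Proposition \ref{constant_dilatation}, and compute that its Beltrami coefficient $\widetilde\mu_N(w)$ has the same modulus as $\mu_{\phi_N}(1/w)$ up to a unimodular factor, hence $|\widetilde\mu_N(w)| = |\mu_{\phi_N}(1/w)|$.

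The second and main step is to verify the integral hypothesis \eqref{logarithmic_decay}, i.e. that
\[
\int\!\!\!\!\!\int_{\{|w|<r\}} \frac{|\widetilde\mu_N(w)|}{1-|\widetilde\mu_N(w)|^2}\frac{du\,dv}{|w|^2}
= \int\!\!\!\!\!\int_{\{|z|>1/r\}} \frac{|\mu_{\phi_N}(z)|}{1-|\mu_{\phi_N}(z)|^2}\frac{dx\,dy}{|z|^2}
\]
(the change of variables $z = 1/w$ being conformal, so $\frac{du\,dv}{|w|^2} = \frac{dx\,dy}{|z|^2}$) is finite and $O(\omega_1(r))$ as $r\searrow 0$, uniformly in $N$. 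This is where the structure of $h_N$ and the estimates from \cite{BurLaz} enter: the dilatation of $\phi_N$ is supported where $h_N$ is not holomorphic, which (for $|z|\geq r_N$) is a union of the "folding" regions used in the construction of Theorem \ref{mainthm}, concentrated in thin neighborhoods of the circles $|z|=r_j$ and the slits between critical values. The key quantitative input must come from \cite{BurLaz} (analogous to the estimates behind Proposition 4.6 there, and the Teichm\"uller–Wittich–Belinskii hypothesis already verified there to get $|\phi(z)/z-1|\to 0$): one needs that the tail integral $\int_{|z|>\rho} \frac{|\mu|}{1-|\mu|^2}\frac{dxdy}{|z|^2}$ decays at least as fast as $\omega_1(1/\rho) = (1/2)^{\sqrt{\log\log\rho}}$. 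Since $\omega_1$ decays slower than any negative power of $\log\rho$, this is a very mild requirement, and the super-exponential growth of $(r_j)$ (Corollary \ref{rkinequalities}) together with the uniform bound on the dilatation and the fact that each circle $|z|=r_j$ contributes a bounded amount to the integral should give a tail that decays like a geometric series in $j$ — hence far faster than $\omega_1$. I would make this precise by summing the contributions of the annuli $\{r_j - 1 \le |z| \le r_j\}$ and the critical-slit regions for $j$ with $r_j > 1/r$, bounding each by a constant (uniform in $N$) times, say, $2^{-j}$, and noting $r_j > 1/r$ forces $j \gtrsim \log\log(1/r)$ by the super-exponential growth.

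The third step is bookkeeping: having checked the hypothesis of Theorem \ref{shishikura} with big-$O$ constants depending only on $\sup_N K(\phi_N) < \infty$ (Proposition \ref{constant_dilatation}) and the uniform constants in the tail estimate, conclude via Theorem \ref{shishikura} and Remark \ref{constants} that $\Phi_N$ is conformal at $0$ with $\Phi_N(w) = \Phi_N'(0) w + O(\omega_p(|w|))$, where the implied constant is independent of $N$. By the normalization (3) of Definition \ref{parameter_defn}, $\Phi_N'(0) = \lim_{w\to 0} \Phi_N(w)/w = \lim_{z\to\infty} z/\phi_N(z) = 1$. Translating back through $z = 1/w$: $1/\phi_N(z) = 1/z + O(\omega_p(1/|z|))$, and multiplying by $z$ and using that $\phi_N(z)/z \to 1$ (so the reciprocal is bounded near $\infty$) gives $|\phi_N(z)/z - 1| = |z|\cdot|1/\phi_N(z) - 1/z| \cdot |\phi_N(z)/z| \cdot (z/z) \lesssim |z|\,\omega_p(1/|z|)$ — wait, this needs care, since $|z|\omega_p(1/|z|)$ blows up.

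I should instead run Theorem \ref{shishikura} directly on $\Phi_N$ and then track a different quantity: from $\Phi_N(w) - w = O(\omega_p(|w|))$ we get $\frac{\Phi_N(w)}{w} - 1 = O(\omega_p(|w|)/|w|)$, which also blows up, so the naive inversion is too lossy. The fix is standard for this kind of estimate: the correct target is $\phi_N(z) - z = O(|z|\,\omega_p(1/|z|))$ is \emph{not} what's claimed — the claim \eqref{near_infty} is $|\phi_N(z)/z - 1| < C'\omega_p(1/|z|)$, i.e. $|\phi_N(z) - z| < C'|z|\omega_p(1/|z|)$. So I should apply Theorem \ref{shishikura}'s method of proof (the integral formula from Lemma 10 of \cite{MR3839848}) directly in the $z$-variable near $\infty$ rather than conjugating — or, cleaner, apply the inverted version but to $\Phi_N(w)/w$: writing $\Phi_N(w) = w\,\psi(w)$ won't be analytic. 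The honest route: Theorem \ref{shishikura} applied to $\Phi_N$ gives $\Phi_N(w) = w + O(\omega_p(|w|))$; then $\phi_N(z) = 1/\Phi_N(1/z)$, so $\phi_N(z)/z = \frac{1/z}{\Phi_N(1/z)} = \frac{1}{z\Phi_N(1/z)}$, and $z\Phi_N(1/z) = z(1/z + O(\omega_p(1/|z|))) = 1 + O(z\,\omega_p(1/|z|))$ — still the same blow-up. The resolution is that $\omega_p(r) = (1/2)^{p^{-1}\sqrt{\log\log r^{-1}}}$ decays slower than any power of $\log$ but $r\,\omega_p(1/r)$ with $r\to 0$... here we need $r = 1/|z| \to 0$ so $r\,\omega_p(r) \to 0$ fine for the $\Phi_N$-side, but transporting requires dividing by $|w| = 1/|z|$, multiplying the error by $|z|$. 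I would therefore instead prove, by the same Lemma-10 machinery applied on the annulus $A(\rho^{-1}|z_2|, \rho|z_1|)$ with $|z_1|, |z_2| \to \infty$ and the roles of "small" and "large" swapped via the involution, that $\log(\phi_N(z)/z) \to 0$ with rate $O(\omega_p(1/|z|))$ directly — this is exactly what \cite{MR3839848}'s proof of the main theorem does when one reads it "at $\infty$". I expect this conjugation-bookkeeping to be the only real subtlety; the heavy lifting (the integral estimate) is genuinely mild because $\omega_1$ decays so slowly against the super-exponentially sparse support of the dilatation. I will state the needed tail estimate as a lemma citing the relevant estimates of \cite{BurLaz} and Corollary \ref{rkinequalities}, then invoke Theorem \ref{shishikura} (in its inverted form, with the bookkeeping done carefully using that $\omega_p(1/|z|)$ times any fixed power of $\log|z|$ is still $O(\omega_{p'}(1/|z|))$ for $p' $ slightly larger, which absorbs the $|z|$-type losses once one works with $\log(\phi_N(z)/z)$ rather than $\phi_N(z)-z$), and Remark \ref{constants} for the $N$-independence.
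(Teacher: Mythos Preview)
Your approach is essentially the same as the paper's: invert via $\psi(w)=1/\phi_N(1/w)$, verify the integral hypothesis \eqref{logarithmic_decay} by summing contributions from the thin annuli around $1/r_j$, apply Theorem~\ref{shishikura}, and convert back. Two remarks are worth making.

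First, your growth estimate for $j(r)$ is slightly off. The lower bound $r_k\geq 2^{2^{k-1}}$ from Corollary~\ref{rkinequalities} goes the wrong way for this purpose; what you need is an \emph{upper} bound on $r_j$, and the recursion gives $r_j\lesssim 2^{2^{j(j+1)/2}}$, so $r_j>1/r$ only forces $j\gtrsim\sqrt{\log\log(1/r)}$, not $\log\log(1/r)$. This is exactly enough, since $\omega_1(r)=(1/2)^{\sqrt{\log\log r^{-1}}}$, and is precisely the computation the paper carries out.

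Second, you correctly identified the additive-versus-ratio subtlety in the final conversion. The paper states Theorem~\ref{shishikura} with the additive conclusion $\psi(z)=\psi'(0)z+O(\omega_p(|z|))$ but then, in the proof of Theorem~\ref{initial_identity}, invokes the ratio form $|\psi(z)/z-\psi'(0)|<c\,\omega_p(|z|)$ without comment. This is justified because the underlying Theorem~8 of \cite{MR3839848} controls $\log\frac{\psi(z_1)-\psi(z_2)}{z_1-z_2}$, which upon sending $z_2\to 0$ yields the ratio estimate directly---exactly the fix you proposed via $\log(\phi_N(z)/z)$. So your instinct was right; the paper simply elides this step. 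With $\psi'(0)=1$ (from the normalization in Definition~\ref{parameter_defn}), the ratio estimate for $\psi$ at $0$ translates immediately to \eqref{near_infty} via $\phi_N(z)/z = (z\psi(1/z))^{-1}$.
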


\begin{proof} Let $N\in\mathbb{N}$. Let $\phi:=\phi_N: \mathbb{C}\rightarrow\mathbb{C}$ be a quasiconformal mapping such that $h_{N}\circ\phi^{-1}$ is holomorphic, and $\phi(0)=0$. Consider \[ \psi(z):=1/\phi(1/z), \] and define $I(r)$ as in (\ref{logarithmic_decay}). We wish to apply Theorem \ref{shishikura}. To this end, we calculate \begin{align}\label{first_shish_estimate}  I(r) \leq \frac{k}{1-k^2}\int\!\!\!\!\! \int_{(|z|<r)\cap\textrm{supp}(\psi_{\overline{z}})} \frac{dxdy}{|z|^2} \leq \frac{k}{1-k^2} \sum_{j\geq j(r)} \int\!\!\!\!\! \int_{G_j}\frac{dxdy}{|z|^2}, \end{align} where: \begin{equation} j(r) \textrm{ is the smallest integer such that } 1/r<r_{j(r)} \cdot \exp(\pi/M_j), \end{equation} and  \begin{equation} G_j:=\{z\in\mathbb{C} : r_j^{-1} \cdot \exp(-\pi/M_j) \leq |z| \leq (r_j-1)^{-1} \}. \end{equation} 
	As in the proof of Theorem 4.8 of \cite{BurLaz}, we calculate

	\begin{align}
	\label{calculation}
	\frac{k}{1-k^2} \sum_{j\geq j(r)} \int\!\!\!\!\! \int_{G_j}\frac{dxdy}{|z|^2} &\lesssim  \sum_{j \geq j(r)} \int\!\!\!\!\! \int_{G_j}\frac{dxdy}{r_j^{-2}\exp(-2\pi/M_j)}  \\
	&= \sum_{j\geq j(r)} \frac{\pi((r_j-1)^{-2}-r_j^{-2}\exp(-2\pi/M_j))}{r_j^{-2}\exp(-2\pi/M_j)} \nonumber \\
	&\simeq  \sum_{j \geq j(r)}  \left(\left(\frac{r_j}{r_j-1}\right)^{2}\exp(2\pi/M_j)-1\right) \nonumber \\
	&\lesssim \sum_{j \geq j(r)}  \left(\left(\frac{r_j}{r_j-1}\right)^{2}-1 + \left(\frac{r_j}{r_j-1}\right)^{2}\frac{4\pi}{M_j}\right) \nonumber \\
	&\lesssim \sum_{j \geq j(r)}\left( \frac{2r_j -1}{(r_j-1)^2} + \frac{8\pi}{M_j} \right) \nonumber \\
	&\lesssim \sum_{j \geq j(r)} \left( \frac{1}{r_j} + \frac{1}{M_j} \right)  \nonumber\\
	&\lesssim   \left(\frac{1}{2} \right)^{j(r)}, \nonumber
	\end{align} 
	where we have used the fact that $M_j = 2^j$, Corollary \ref{rkinequalities}, and the inequality 
	$$\exp(x) \leq 1 + 2x \textrm{ for all } x \leq 1.$$ 
	%\[  \sum_{n=1}^\infty a_n \simeq \log\left(\prod_{n=1}^\infty (a_n+1)\right) \] for a sequence $a_n\rightarrow0$ as $n\rightarrow\infty$.
	Next, we note that \begin{align} 2^{2^{(j+1)(j+2)/2}}>2^{2^{1+...+j}+...+2^j}>r_j \cdot \exp(\pi/M_j). \end{align} Thus it is readily calculated that \begin{align} r_j \cdot \exp(\pi/M_j)> 1/r \implies (j+1)(j+2) > \log\log r^{-1}, \end{align} and so since $(1/2)^j \simeq (1/2)^{\sqrt{(j+1)(j+2)}}$, it follows that \begin{align}\label{last_shish_estimate} r_j\cdot \exp(\pi/M_j)> \exp(-\pi/M_{j(r)})/r > 1/(2r) \implies \left(\frac{1}{2}\right)^j \lesssim \left(\frac{1}{2}\right)^{\sqrt{\log\log r^{-1}}}. \end{align}

	Together, (\ref{first_shish_estimate})-(\ref{last_shish_estimate}) imply that $I(r)$ is finite and has order $O(\omega_1(r))$ as $r\searrow0$, as needed. Thus we may apply Theorem \ref{shishikura} to deduce that there exist $c>0$ and $r>0$ so that \begin{align} |\psi(z)/z-\psi'(0)| < c\cdot\omega_p(|z|) \textrm{ for } |z|<r. \end{align} By multiplying $\psi$ by a complex constant, we may assume that $\psi'(0)=1$. Since $1/\phi(1/z)=\psi(z)$, the inequality (\ref{near_infty}) follows by taking $R=1/r$ and $C'>c$. By Proposition \ref{constant_dilatation} and Remark \ref{constants}, the constants $C'$ and $R$ do not depend on $N$ since the above big-$O$ estimates for $I(r)$ do not depend on $N$.
	%We continue our calculation: \begin{align}\label{calculation} \frac{1}{2\pi}\sum_{j=1}^\infty \int_{B_j}\frac{K-1}{|z|^2}\emph{d}A(z) \leq \frac{K-1}{2\pi}\sum_{j=1}^\infty \int_{B_j} \frac{1}{x_j^{-2}\exp(-2\pi/M_j)}\emph{d}A(z) = \\ \frac{K-1}{2\pi}\sum_{j=1}^\infty \frac{\pi(x_j^{-2}-x_j^{-2}\exp(-2\pi/M_j))}{x_j^{-2}\exp(-2\pi/M_j)} =  \frac{K-1}{2}\sum_{j=1}^\infty\big(\exp(2\pi/M_j)-1\big).  \nonumber \end{align}
\end{proof}

%\begin{rem}\label{fir_normalization} There is a unique quasiconformal mapping  $\phi_n: \mathbb{C} \rightarrow \mathbb{C}$ such that \begin{enumerate} \item $\phi_n(0)=0$, \item (\ref{near_infty}) holds, and \item $f_n:=h_n\circ\phi_n^{-1}$ is holomorphic. \end{enumerate} We will henceforth refer to this mapping by $\phi_n$ without ambiguity.
%\end{rem}

\noindent 	For the rest of the paper, we fix $C',R >0$ so that Theorem \ref{initial_identity} holds.

\begin{thm}
	\label{identity_origin}
	Let $\varepsilon>0$. There exists $N_\varepsilon\in\mathbb{N}$ such that for $N>N_{\varepsilon}$ we have \begin{align}|\phi_N(z)-z| < \varepsilon \emph{ for } |z|<R.\end{align}
\end{thm}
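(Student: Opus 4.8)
The plan is to localize the problem near the origin and then apply the quantitative estimates already established, combined with a normal-families/compactness argument to handle the dependence on $N$. The key observation is that the maps $\phi_N$ differ from one another only through the modification of $h$ inside the disc $\{|z| \leq r_N\}$, and as $N \to \infty$ this disc exhausts $\mathbb{C}$; meanwhile, Theorem \ref{initial_identity} controls $\phi_N$ uniformly in $N$ on $\{|z| > R\}$, and Proposition \ref{constant_dilatation} controls the dilatation uniformly in $N$. So the family $\{\phi_N\}$ is a uniformly quasiconformal family of normalized maps, hence precompact in the topology of local uniform convergence.

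First I would fix the normalization: each $\phi_N$ is a quasiconformal self-map of $\mathbb{C}$ with $\phi_N(0)=0$, uniformly bounded dilatation by Proposition \ref{constant_dilatation}, and satisfying $|\phi_N(z)/z - 1| < C'\omega_p(1/|z|)$ for $|z| > R$ by Theorem \ref{initial_identity} with $C', R$ independent of $N$. In particular $\phi_N(z)/z \to 1$ as $z \to \infty$ uniformly in $N$, so the family $\{\phi_N\}$ omits no fixed normalization freedom and is precompact: any subsequence has a further subsequence converging locally uniformly on $\mathbb{C}$ to a quasiconformal limit $\phi_\infty$. Second, I would identify all such limits: on $\{|z| > r_N\}$ we have $h_N = h = f \circ \phi$, and for any fixed compact set, once $N$ is large enough that $r_N$ exceeds the relevant radius, the Beltrami coefficient of $\phi_N$ there agrees with that of $\phi$; meanwhile inside $\{|z| \leq r_N\}$ the Beltrami coefficient is supported on the annuli $G_j$ with $j$ large, whose total $I(r)$-mass near $0$ we have already shown is $O(\omega_1(r))$. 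Thus the Beltrami coefficients $\mu_{\phi_N}$ converge (pointwise a.e., dominated) to that of $\phi$ itself, and by the dependence of the solution of the Beltrami equation on parameters together with the shared normalization, $\phi_\infty = \phi$. Since every convergent subsequence has the same limit $\phi$, the full family converges: $\phi_N \to \phi$ locally uniformly on $\mathbb{C}$.

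Finally, I would combine this with the conformality of $\phi$ at $0$. Applying Theorem \ref{initial_identity}'s analogue at the origin — or more directly, applying Theorem \ref{shishikura} to $\psi(z) = 1/\phi(1/z)$ as in the proof of Theorem \ref{initial_identity} — shows $\phi$ is conformal at $0$; since we may further normalize $\phi'(0)$ appropriately, and in any case $\phi(0)=0$, we get $\phi(z) \to 0$ as $z \to 0$, in fact $\phi(z) = \phi'(0)z + O(\omega_p(|z|))$. Consequently, given $\varepsilon > 0$, choose first $\delta \in (0, R)$ so small that $|\phi(z)| < \varepsilon/3$ for $|z| < \delta$ (using $\phi(0)=0$, and shrinking $\delta$ to also ensure $|z| < \varepsilon/3$ there), giving $|\phi(z) - z| < 2\varepsilon/3$ for $|z| < \delta$. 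Then on the compact annulus $\{\delta \leq |z| \leq R\}$, local uniform convergence $\phi_N \to \phi$ gives $N_1$ with $|\phi_N(z) - \phi(z)| < \varepsilon/3$ for $N > N_1$ there; combined with Theorem \ref{initial_identity} applied on $\{\delta \le |z| < R\}$ (where $\omega_p(1/|z|)$ is bounded by a constant times... — more simply, using $\phi_N \to \phi$ on the whole compact set $\{|z| \le R\}$ together with $|\phi(z) - z|$ being bounded by $2\varepsilon/3$ on $\{|z| < \delta\}$ and continuous hence small... ) one concludes $|\phi_N(z) - z| < \varepsilon$ for $|z| < R$ and $N > N_\varepsilon$.

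The main obstacle I anticipate is making the convergence $\phi_N \to \phi$ genuinely rigorous and uniform down to the origin: one must check that the Beltrami coefficients converge in a strong enough sense (not merely a.e., but in a way compatible with the $\omega_1$-decay hypothesis of Theorem \ref{shishikura}) so that the limit is exactly $\phi$ and so that no mass "escapes to $0$" in a way that would spoil the estimate. The cleanest route is probably to avoid identifying the limit abstractly and instead run the estimate of Theorem \ref{initial_identity} directly for $\phi_N$ near $0$: the Beltrami coefficient of $\psi_N(z) := 1/\phi_N(1/z)$ is supported, outside a neighborhood of $0$ of size comparable to $1/r_N$, on exactly the same annuli $G_j$ as for $\psi$, so the integral $I_N(r)$ satisfies $I_N(r) \leq I(r) + (\text{contribution from } |z| \lesssim 1/r_N)$, and since $\mu_k \to 0$ near $|z| = r_N$ by Proposition \ref{zndeltazinterpolation} while the region has controlled modulus, one gets $I_N(r) = O(\omega_1(r))$ with the $O$-constant independent of $N$ — whence Theorem \ref{shishikura} and Remark \ref{constants} give $|\phi_N(z) - \phi_N'(0)z| < C'' \omega_p(|z|)$ near $0$ with $C''$ independent of $N$, and a final argument pinning $\phi_N'(0)$ (via the normalization and the already-established behavior at $\infty$) finishes the proof. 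I would present this direct approach as the primary argument.
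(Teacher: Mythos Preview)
Your proposal contains a genuine error in identifying the limit of the Beltrami coefficients. You claim that on a fixed compact set, once $r_N$ exceeds its radius, the Beltrami coefficient of $\phi_N$ there agrees with that of $\phi$. But the containment goes the other way: once $r_N$ exceeds the radius of a compact set $K$, we have $K \subset \{|z| < r_N\}$, where $h_N = g_N$, not $h$. And $g_N(z) = c_N z^{M_N} + r_N z$ is a polynomial on $\{|z| < r_N - 1\}$, hence holomorphic there, so $\mu_{\phi_N} \equiv 0$ on $K$ for all large $N$. Thus $\mu_{\phi_N} \to 0$ pointwise, not $\mu_{\phi_N} \to \mu_\phi$; correspondingly $\phi_N \to \mathrm{id}$ locally uniformly, not $\phi_N \to \phi$.

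Once you correct this, the argument collapses to something much shorter than what you wrote, and this is exactly the paper's proof: since $\mu_{\phi_N} \to 0$ pointwise (with uniformly bounded $L^\infty$ norm by Proposition~\ref{constant_dilatation}) and the $\phi_N$ share the normalizations $\phi_N(0)=0$ and $\phi_N(z)/z \to 1$ at $\infty$, the parametric continuity of normalized solutions to the Beltrami equation gives $\phi_N \to \mathrm{id}$ uniformly on compacta, in particular on $\{|z| \le R\}$. There is no need to invoke Theorem~\ref{shishikura} again, no need to analyze $\phi$ near $0$, and no issue of ``mass escaping to $0$'': the Beltrami coefficient is literally zero on any fixed bounded set for all large $N$. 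Your alternative approach at the end is also tangled---it conflates behavior near $0$ and near $\infty$---but it becomes unnecessary once the correct pointwise limit is identified.
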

\begin{proof} Let $\mu_N$ be the Beltrami coefficient of $\phi_N$. As $N\rightarrow\infty$, we have $\mu_N\rightarrow0$ pointwise. Thus, we have $|\phi_N(z)- z|\rightarrow 0$ uniformly on the compact set $|z|\leq R$.
\end{proof}

\noindent	We conclude by restating Proposition \ref{critical_point_listing} which listed the critical points and values of $f_N$, but now adapted to account for the new behavior of the function $f_N$ near $0$:

\begin{lem}
	\label{branch_covering_hn}
	Let $\{\zeta_j\}_{j=1}^{2^N-1}$ denote the $2^N-1$ many critical points of $g_{N}(z)$ contained in $B(0,r_N)$. Then the only critical points of $f_N$ are the simple critical points given by $\phi_N(\zeta_j)$ for $j =1,\dots,2^N-1$, and the simple critical points given by 
	\begin{equation}
	\label{final_crit_pts_listing}
	\phi_N\left( r_j\cdot \exp\left(i\frac{(2k_j-1)\pi}{M_{j}} \right) \right), \textrm{  } j \geq N, \textrm{  } 1\leq k_j \leq M_j.
	\end{equation}
	The only singular values of $f_N$ are the critical values $(\pm c_j r_j^{M_j})_{j=N}^{\infty}$ and the critical values $(g_N(\zeta_j))_{j=1}^{2^N-1}$. 
\end{lem}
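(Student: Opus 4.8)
\emph{Proof plan.} The idea is to transport the branch data of the quasiregular model $h_N$ to $f_N=h_N\circ\phi_N^{-1}$ through the homeomorphism $\phi_N$, and then read it off the two pieces defining $h_N$. Since $\phi_N\colon\mathbb{C}\to\mathbb{C}$ is a quasiconformal homeomorphism and $f_N$ is entire, $h_N=f_N\circ\phi_N$ is open and discrete, and a point $w$ is a branch point of $h_N$ (i.e.\ $h_N$ is not locally injective near $w$) if and only if $f_N'(\phi_N(w))=0$; since composition with the local homeomorphism $\phi_N$ preserves local degree, $w$ is a \emph{simple} branch point of $h_N$ exactly when $\phi_N(w)$ is a simple critical point of $f_N$, and $f_N(\phi_N(w))=h_N(w)$ matches the critical values. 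Thus the plan is to determine the branch set $B_{h_N}$ with its local degrees, compute $h_N(B_{h_N})$, and note that $f_N$ has no finite asymptotic values (this last point following from Proposition \ref{critical_point_listing}, since $f_N$ coincides with $f$ near $\infty$ up to a map tending to the identity and the modification producing $f_N$ from $f$ is supported in a bounded set, while the critical value $0$ of $f$ becomes a regular value of $f_N$).

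For $B_{h_N}$ I would cut $\mathbb{C}$ along $\{|z|=r_N\}$. On the open set $\{|z|>r_N\}$ we have $h_N=h=f\circ\phi$, so the branch set there is $\phi^{-1}(\mathrm{Crit}(f))\cap\{|z|>r_N\}$; by Proposition \ref{critical_point_listing}, $\mathrm{Crit}(f)=\{0\}\cup\{\phi(r_je^{i(2k-1)\pi/M_j})\}$ with all points simple away from $0$, so (using that $(r_j)$ is increasing, Lemma \ref{rk_prep}) this intersection equals $\{\,r_je^{i(2k-1)\pi/M_j}:j\ge N+1,\ 1\le k\le M_j\,\}$, with branch values $\pm c_jr_j^{M_j}$, $j\ge N+1$ (the pre-image of the remaining critical point $0$ of $f$ lies in $B(0,r_N)$, so does not appear here). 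On $\{|z|<r_N\}$ we have $h_N=g_N(z)=c_Nz^{M_N}+r_Nz\,\eta_{r_N}(z)$; for $|z|<r_N-1$ this is the polynomial $c_Nz^{M_N}+r_Nz$, whose critical points are the $M_N-1$ distinct roots of $M_Nc_Nz^{M_N-1}=-r_N$, all simple (since $g_N''=M_N(M_N-1)c_Nz^{M_N-2}\neq0$ there) and none at $0$ (since $g_N'(0)=r_N\neq0$). Their common modulus is $(r_N/(M_Nc_N))^{1/(M_N-1)}$, and here I would invoke the inequality $c_N|z|^{M_N-2}/r_N\ge r_{N-1}$ for $|z|\ge r_N-1$ established in the proof of Proposition \ref{zndeltazinterpolation} (valid since $N\ge5$) to get $M_Nc_N(r_N-1)^{M_N-1}>r_N$, hence that this modulus is $<r_N-1$; so these are precisely the $2^N-1$ critical points $\zeta_1,\dots,\zeta_{2^N-1}$ of $g_N$ inside $B(0,r_N)$. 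On the leftover annulus $\{r_N-1\le|z|<r_N\}$ the same proof gives $(g_N)_z\neq0$ with $\|\mu_{g_N}\|_\infty<1$, so $g_N$ is a local homeomorphism there and contributes no branch points.

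The remaining, and I expect hardest, case is the gluing circle $\{|z|=r_N\}$, on which $g_N(z)=c_Nz^{M_N}=h(z)$ while $h_N$ equals $g_N$ on the inner side and $h$ on the outer side. Since $g_N$ is a local homeomorphism at every point of $\{r_N-1\le|z|\le r_N\}$ (by the previous paragraph), a point $w_0$ with $|w_0|=r_N$ is a branch point of $h_N$ if and only if $h$ fails to be locally injective at $w_0$, i.e.\ $w_0$ is a branch point of $h$; by Proposition \ref{critical_point_listing} these are precisely the $M_N$ simple branch points $r_Ne^{i(2k-1)\pi/M_N}$, $1\le k\le M_N$. At each such $w_0$ one must still check that joining the local homeomorphism $g_N$ on the inner half-disk to the degree-$2$ fold of $h$ on the outer half-disk leaves the local degree of $h_N$ at $w_0$ equal to $2$, so that $w_0$ remains a simple branch point, with branch value $\pm c_Nr_N^{M_N}$; I would carry out this local-degree computation using the explicit normal form for $h$ near these points supplied by \cite{BurLaz} (the same input behind Proposition \ref{critical_point_listing}). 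Collecting the three regions, $B_{h_N}=\{\zeta_1,\dots,\zeta_{2^N-1}\}\cup\{\,r_je^{i(2k-1)\pi/M_j}:j\ge N,\ 1\le k\le M_j\,\}$, all simple, with branch values $\{g_N(\zeta_j)\}_{j=1}^{2^N-1}\cup\{\pm c_jr_j^{M_j}\}_{j\ge N}$; pushing forward by $\phi_N$ and applying the reduction of the first paragraph then yields the asserted critical points and singular values of $f_N$.
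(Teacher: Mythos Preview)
Your approach is correct and considerably more detailed than what the paper provides: the paper gives no proof, merely introducing the lemma as a restatement of Proposition~\ref{critical_point_listing} ``adapted to account for the new behavior of the function $f_N$ near $0$.'' Your reduction to the branch set of $h_N$ via $\phi_N$ and the region-by-region analysis is the natural way to unpack this, and your treatment of the inner region (locating the $\zeta_j$ inside $\{|z|<r_N-1\}$ via the estimate from Proposition~\ref{zndeltazinterpolation}) and of the transition annulus $\{r_N-1\le|z|<r_N\}$ is clean.

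The one place that needs more care is the gluing circle $\{|z|=r_N\}$, where both directions of your ``if and only if'' require justification. You address the branch-point case (local degree stays $2$), but the assertion that a \emph{non}-branch-point $w_0$ of $h$ remains a non-branch-point of $h_N$ does not follow merely from both half-maps being local homeomorphisms agreeing on the arc: one must rule out that the two image halves overlap. A clean fix handles both directions at once. Since $\eta_{r_N}\equiv 0$ for $|z|\ge r_N$, the map $g_N$ extends to $c_Nz^{M_N}$ on the outer half of a small disk $D$ about $w_0$ and is an orientation-preserving local homeomorphism on all of $D$; as it sends the outer half into $\{|w|>c_Nr_N^{M_N}\}$, it must send the inner half into $\{|w|<c_Nr_N^{M_N}\}$. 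The same holds for the original $h$ on the inner half, where $h(z)=c_Nz^{M_N}$. Thus $h_N|_{\partial D}$ and $h|_{\partial D}$ agree on the outer arc, while their two inner arcs share endpoints and lie in the simply connected region $\{|w|<c_Nr_N^{M_N}\}$, which does not contain $h_N(w_0)\in\{|w|=c_Nr_N^{M_N}\}$. Their winding numbers about $h_N(w_0)$ therefore coincide, so the local degree of $h_N$ at $w_0$ equals that of $h$ --- namely $1$ or $2$ according to whether $w_0$ is a branch point of $h$. This replaces the explicit normal-form computation you proposed with a short homotopy argument.
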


\section{Mapping Behavior near $\infty$}
\label{Global Mapping}

Having proven in Section \ref{The Construction} all the estimates on the ``correction'' map $\phi$ that we will need, we can now begin describing the mapping behavior of the function $f:=h\circ\phi^{-1}$. In Section \ref{Global Mapping}, we will introduce the annuli $A_k$, $V_k$, $B_k$ for $k\geq1$: these regions will be central to the proof of Theorem \ref{main_theorem}, as discussed in Section \ref{outlineproof}. We will also prove in Lemma \ref{Ak} and \ref{Bk} the fundamental relations: \[ f(B_k)\subset B_{k+1} \textrm{ and } A_{k+1}\subset f(V_k) \textrm{ for all } k\geq1. \] 

In this Section, and in the rest of the paper, we will consider the case $p = 2 \cdot \sqrt{2}$ for Definition \ref{omega}. The following lemma gives us estimates for how $\omega_{2 \cdot \sqrt{2}}(|z|^{-1})$ decays as $z \ra \infty$. 
\begin{lem}
	\label{omega_error}
	There exists $k_0\in\mathbb{Z}$ so that if $k \geq k_0$ and $|z| \geq \frac{1}{20} r_k$, then
	\begin{equation}
	\label{omega_3}
	\omega_{2\sqrt{2}}\left(\frac{1}{|z|}\right) \leq \left( \frac{1}{2} \right)^{\frac{\sqrt{k}}{4}}.
	\end{equation}
\end{lem}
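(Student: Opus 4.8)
The plan is to unwind the definition of $\omega_p$ and reduce the claimed bound to an elementary inequality about $\log\log$ of quantities comparable to $r_k$, using the super-exponential growth of $(r_k)$ recorded in Corollary \ref{rkinequalities}. Recall $\omega_p(r) = (1/2)^{p^{-1}\sqrt{\log\log r^{-1}}}$, so with $r = 1/|z|$ we have $\omega_{2\sqrt2}(1/|z|) = (1/2)^{(2\sqrt2)^{-1}\sqrt{\log\log|z|}}$ (for $|z|$ large enough that $\log\log|z|$ is defined and positive). Since $(1/2)^x$ is decreasing, the inequality \eqref{omega_3} is equivalent to $(2\sqrt2)^{-1}\sqrt{\log\log|z|} \geq \sqrt{k}/4$, i.e. $\sqrt{\log\log|z|} \geq \frac{2\sqrt2}{4}\sqrt{k} = \frac{\sqrt2}{2}\sqrt{k}$, i.e. $\log\log|z| \geq k/2$. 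So the entire lemma reduces to: for all sufficiently large $k$, if $|z| \geq \frac{1}{20}r_k$ then $\log\log|z| \geq k/2$, equivalently $|z| \geq \exp(\exp(k/2)) = e^{e^{k/2}}$.

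The key step is therefore to show $\frac{1}{20}r_k \geq e^{e^{k/2}}$ for all large $k$. By \eqref{xkest2_eqn1} of Corollary \ref{rkinequalities}, we have $r_{k} \geq 2^{M_{k-1}} = 2^{2^{k-1}}$ for all $k \geq 6$ (applying it with index $k-1 \geq 5$). Hence $\frac{1}{20}r_k \geq \frac{1}{20}2^{2^{k-1}}$, so $\log\log(\frac{1}{20}r_k) \geq \log\log(\frac{1}{20}2^{2^{k-1}})$. Now $\log(\frac{1}{20}2^{2^{k-1}}) = 2^{k-1}\log 2 - \log 20 \geq 2^{k-2}$ for $k$ large (since $2^{k-1}\log 2 - \log 20 \geq 2^{k-2}$ once $2^{k-2}(2\log 2 - 1) \geq \log 20$, which holds for all $k \geq 5$ or so as $2\log 2 - 1 > 0$), and then $\log\log(\frac{1}{20}r_k) \geq \log(2^{k-2}) = (k-2)\log 2 > k/2$ for all $k$ large enough (namely $k(\log 2 - 1/2) > 2\log 2$, i.e. $k \geq $ some explicit bound since $\log 2 - 1/2 > 0$). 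Choosing $k_0$ large enough that all these threshold conditions hold simultaneously completes the argument, since for $|z| \geq \frac{1}{20}r_k$ we have $\log\log|z| \geq \log\log(\frac{1}{20}r_k) \geq k/2$, and $x \mapsto \log\log x$ is increasing.

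I do not anticipate a genuine obstacle here — the statement is a routine consequence of the super-exponential growth rate of $r_k$, and the only care needed is bookkeeping: (i) making sure $|z|$ is large enough that $\log\log|z|$ makes sense and is positive (automatic once $\frac1{20}r_k > e$, hence for all $k \geq k_0$), (ii) tracking the additive constants $\log 20$ and the multiplicative constant $\log 2$ through two nested logarithms so as to absorb them into the choice of $k_0$, and (iii) correctly translating the desired inequality $(1/2)^a \leq (1/2)^b$ into $a \geq b$ via monotonicity. The mildest subtlety is that $\log\log$ loses almost all information, so one has enormous slack: $r_k$ is doubly-exponential in $k$ while we only need $|z|$ to be doubly-exponential in $k/2$, so even the crude bound $r_k \geq 2^{2^{k-1}}$ is far more than enough. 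Thus the proof is: reduce via monotonicity of $(1/2)^x$ to $\log\log|z| \geq k/2$, reduce via monotonicity of $\log\log$ to checking it at $|z| = \frac1{20}r_k$, and verify that using \eqref{xkest2_eqn1} together with an explicit choice of $k_0$.
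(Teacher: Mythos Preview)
Your proposal is correct and follows essentially the same approach as the paper: reduce via monotonicity of $(1/2)^x$ to the inequality $\log\log|z| \geq k/2$, then verify this at $|z| = \tfrac{1}{20}r_k$ using the bound $r_k \geq 2^{2^{k-1}}$ from Corollary~\ref{rkinequalities}. The paper's proof differs only cosmetically in the intermediate constants (it passes through $2^{M_{k-5}}$ rather than $2^{k-2}$), but the structure and key estimate are identical.
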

\noindent

\begin{proof}
	This is just a simple calculation using Definition \ref{omega} and Corollary \ref{rkinequalities}. Indeed, for all $k \geq 10$, we have
	\begin{align*}
	\log \log \left(\frac{r_k}{20}\right )  &\geq \log \log \left(\frac{2^{M_{k-1}}}{20}\right) \\
	&\geq \log \log (2^{M_{k-5}})\\
	&= \log ( M_{k-5} \log 2) \\
	&= \log (2^{k-5} \log 2) \\
	&= (k-5) \log 2 + \log \log 2.
	\end{align*}
	Therefore there exists a value $k_0$ so that for all $k \geq k_0$, we have $\log \log \frac{r_k}{20} \geq \frac{1}{2}k.$ For all such $k$, we verify that when $|z| \geq \frac{r_k}{20}$ we have
	\begin{align*}
	\omega_{2\sqrt{2}}\left(\frac{1}{|z|} \right) & =  \left(\frac{1}{2}\right)^{\frac{1}{2 \sqrt{2}}\sqrt{\log\log |z|}} 
	\leq  \left(\frac{1}{2}\right)^{\frac{1}{2 \sqrt{2}}\sqrt{\log\log (\frac{r_k}{20})}} 
	\leq  \left( \frac{1}{2} \right)^{\frac{1}{2 \sqrt{2}} \sqrt{\frac{k}{2}}} 
	=  \left( \frac{1}{2} \right)^{\frac{\sqrt{k}}{4}}. 
	\end{align*}
	This yields (\ref{omega_3}) as desired.
\end{proof}

\begin{rem}
	\label{k0}
	Note that by perhaps choosing $k_0$ larger, we may additionally assume that $r_k \geq \frac{1}{20}r_{k_0} \geq R$ for all $k \geq k_0$. In this case, Theorem \ref{initial_identity} and Lemma \ref{omega_error} imply that for all $k \geq k_0$, if $|z| \geq \frac{r_k}{20}$, we have $\frac{\phi_N(z)}{z} \in B(1,C' \cdot 2^{-\sqrt{k}/4}).$ 
\end{rem}

\begin{lem}
	\label{AkEst}
	Let $k_0$ be as in Remark \ref{k0}. For all $k \geq k_0$, if $|z| \geq \frac{r_k}{20}$ and $N \geq 5$, we have 
	\begin{equation}
	\label{phi_identity}
	\left(1 - C' \cdot  \left( \frac{1}{2} \right)^{\frac{\sqrt{k}}{4}}\right) |z| \leq |\phi_N(z)| \leq \left(1+ C' \cdot \left( \frac{1}{2} \right)^{\frac{\sqrt{k}}{4}}\right) |z|.
	\end{equation}
	\noindent
	
	Moreover, if $z \in \phi_N(\{w\,:\, |w| \geq \frac{r_k}{20}\})$, then 
	\begin{equation}
	\label{phi_inverse_identity}
	\frac{1}{\left(1+ C' \cdot \left( \frac{1}{2} \right)^{\frac{\sqrt{k}}{4}} \right)} |z| \leq |\phi_N^{-1}(z)| \leq \frac{1}{\left(1 - C' \cdot \left( \frac{1}{2} \right)^{\frac{\sqrt{k}}{4}} \right)} |z|.
	\end{equation}
\end{lem}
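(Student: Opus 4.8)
The plan is to observe that \eqref{phi_identity} is an immediate consequence of Remark \ref{k0}, and that \eqref{phi_inverse_identity} follows from \eqref{phi_identity} by a change of variables together with the monotonicity of $t\mapsto (1\pm C' t)$. The main point to be careful about is that $C'\cdot 2^{-\sqrt{k}/4}$ should be genuinely small (in particular $<1$) for the estimates to be non-vacuous and for the second inequality to make sense; this we may guarantee by enlarging $k_0$ once more.

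\begin{proof} By Remark \ref{k0}, since $k\geq k_0$ and $|z|\geq \frac{r_k}{20}\geq R$, we have $\phi_N(z)/z\in B(1,C'\cdot 2^{-\sqrt{k}/4})$, i.e.
\begin{equation*}
\left| \frac{\phi_N(z)}{z} - 1 \right| \leq C'\cdot\left(\frac{1}{2}\right)^{\frac{\sqrt{k}}{4}}.
\end{equation*}
Multiplying through by $|z|$ and applying the triangle inequality in both directions gives
\begin{equation*}
\bigl| |\phi_N(z)| - |z| \bigr| \leq |\phi_N(z)-z| \leq C'\cdot\left(\frac{1}{2}\right)^{\frac{\sqrt{k}}{4}}|z|,
\end{equation*}
which is exactly \eqref{phi_identity}. (Enlarging $k_0$ if necessary, we may assume $C'\cdot 2^{-\sqrt{k}/4}<1$ for all $k\geq k_0$, so that the lower bound in \eqref{phi_identity} is positive.)

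Now suppose $z\in\phi_N(\{w : |w|\geq \frac{r_k}{20}\})$, and write $w:=\phi_N^{-1}(z)$, so $|w|\geq\frac{r_k}{20}$ and $z=\phi_N(w)$. Applying \eqref{phi_identity} with $w$ in place of $z$, we obtain
\begin{equation*}
\left(1 - C'\cdot\left(\frac{1}{2}\right)^{\frac{\sqrt{k}}{4}}\right)|w| \leq |z| \leq \left(1 + C'\cdot\left(\frac{1}{2}\right)^{\frac{\sqrt{k}}{4}}\right)|w|.
\end{equation*}
Since both $1-C'\cdot 2^{-\sqrt{k}/4}$ and $1+C'\cdot 2^{-\sqrt{k}/4}$ are positive, dividing the left inequality by $1-C'\cdot 2^{-\sqrt{k}/4}$ and the right inequality by $1+C'\cdot 2^{-\sqrt{k}/4}$ yields
\begin{equation*}
\frac{1}{1 + C'\cdot\left(\frac{1}{2}\right)^{\frac{\sqrt{k}}{4}}}|z| \leq |w| \leq \frac{1}{1 - C'\cdot\left(\frac{1}{2}\right)^{\frac{\sqrt{k}}{4}}}|z|,
\end{equation*}
which is \eqref{phi_inverse_identity} upon recalling $w=\phi_N^{-1}(z)$.
\end{proof}
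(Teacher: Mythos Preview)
Your proof is correct and follows essentially the same approach as the paper: both derive \eqref{phi_identity} directly from the bound $\phi_N(z)/z\in B(1,C'\cdot 2^{-\sqrt{k}/4})$ of Remark~\ref{k0}, and then obtain \eqref{phi_inverse_identity} by writing $w=\phi_N^{-1}(z)$, applying \eqref{phi_identity} to $w$, and rearranging. Your added remark about enlarging $k_0$ so that $C'\cdot 2^{-\sqrt{k}/4}<1$ is a sensible precaution that the paper leaves implicit.
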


\begin{proof}
	Equation (\ref{phi_identity}) is just a rearrangement of equation (\ref{near_infty}), but using the estimate (\ref{omega_3}). For the second equation, just note that if $z \in \phi_N(\{w\,:\,|w| \geq \frac{r_k}{20}\})$, then there exists $w$ with $|w| \geq \frac{r_k}{20}$ so that $\phi_N(w) = z$. Then (\ref{phi_identity}) holds with $w = \phi_N^{-1}(z)$, so (\ref{phi_inverse_identity}) is just a rearrangement of (\ref{phi_identity}).
\end{proof}

\begin{rem}
	We will always assume that the integer $N \in \N$ satisfies $N \geq k_0$.
\end{rem}

\noindent	Next, we will do some re-indexing of variables. This will make our notation easier to read and more consistent with \cite{Bis18}.
\begin{definition}
	\label{bigR}
	Given the parameters $M_k$, $c_k$, and $r_k$ from Definition \ref{parameter_defn2}, and given any integer $N  \geq 1$, we define
	\begin{equation}
	\label{nk_def}
	n_k := M_{k+N-1} = 2^{N + k - 1},
	\end{equation} 
	and
	\begin{equation}
	\label{Rk_def}
	R_k := r_{k+N-1},
	\end{equation}
	and
	\begin{equation}
	\label{Ck_def}
	C_k := c_{k+N-1}.
	\end{equation}
	Next, for any given $N \geq 1$, we define
	\begin{equation}
	\label{alpha_k}
	\alpha_k := \frac{1}{\left(1- C' \cdot \left( \frac{1}{2} \right)^{\frac{\sqrt{k+N-1}}{4}}\right)},
	\end{equation}	
	and
	\begin{equation}
	\label{beta_k}
	\beta_k := \frac{1}{\left(1+ C' \cdot \left( \frac{1}{2} \right)^{\frac{\sqrt{k+N-1}}{4}}\right)}.
	\end{equation}
\end{definition}
\noindent

\begin{rem}
	\label{alpha_beta_close_1}
	As $k \rightarrow \infty$, the sequence $(\alpha_k)$ decreases monotonically to $1$, and $(\beta_k)$ increases monotonically to $1$. We will always assume the integer $N \in \N$ is large enough so that for all $k \geq 1$ we have 
	\begin{equation}
	\label{alpha_beta_close_1_eqn}
	\frac{99}{100} < \beta_k < 1 < \alpha_k < \frac{101}{100}.
	\end{equation}
	The specific constants above are not important, we just need $\beta_k$ and $\alpha_k$ to be sufficiently close to $1$ for all large $k \geq 1$.
\end{rem}
\noindent

\begin{rem}
	We emphasize that the parameters in Definition \ref{bigR} depend on $N \in \N$; we omit this dependence in the notation for readability. We also remark that Definition \ref{parameter_defn2} implies that we still have $R_{k+1} = C_{k} (\frac{R_k}{2})^{n_{k}}$ for all $k \geq 1$. Finally, we have the equalities $R_1 = r_N$ and $n_1 = M_N$. We will occasionally switch between the two notations. 
\end{rem}

The inequalities (\ref{lemma3.2applies}), (\ref{xkesteq}), (\ref{xkest2_eqn1}), and (\ref{xkest2_eqn2}) that apply to $(r_j)_{j=1}^{\infty}$ can now be restated as follows by applying Definition \ref{bigR}.

\begin{lem}
	\label{Rkest}
	Fix an integer $N \geq 5$. Then for all $k \geq 1$, we have
	\begin{equation}
	\label{Rkest_eqn_2}
	R_k^{n_k} \cdot C_k \geq R_k^{n_{k-1}+1},  
	\end{equation}
	\begin{equation}
	\label{Rkest_eqn}
	R_{k+1} \geq 2^{-n_{k}} \cdot R_k^{n_{k-1} +1}, 
	\end{equation}
	\begin{equation}
	\label{Rkest_eqn3}
	R_k \geq 2^{2^{k + N -2}},\textrm{ and},
	\end{equation}
	\begin{equation}
	\label{Rkest_eqn4}
	R_{k+1} \geq 4 R_k^2.
	\end{equation}
\end{lem}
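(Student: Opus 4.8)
The plan is to observe that Lemma \ref{Rkest} is nothing more than a translation of the already-established inequalities of Corollary \ref{rkinequalities} (equations (\ref{lemma3.2applies}), (\ref{xkesteq}), (\ref{xkest2_eqn1}), (\ref{xkest2_eqn2})) through the re-indexing of Definition \ref{bigR}. Concretely, recall that $n_k = M_{k+N-1}$, $R_k = r_{k+N-1}$, and $C_k = c_{k+N-1}$. Since the hypothesis of the lemma is $N \geq 5$ and $k \geq 1$, every index $k + N - 1$ appearing below is at least $5$, so all four inequalities of Corollary \ref{rkinequalities} are applicable at index $k+N-1$.

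First I would prove (\ref{Rkest_eqn_2}): apply (\ref{lemma3.2applies}) with $k$ replaced by $k + N - 1 \geq 5$, giving $c_{k+N-1} r_{k+N-1}^{M_{k+N-1}} \geq r_{k+N-1}^{M_{k+N-2}+1}$, and then rewrite each term using Definition \ref{bigR}, noting $M_{k+N-2} = n_{k-1}$, to obtain $R_k^{n_k} C_k \geq R_k^{n_{k-1}+1}$. Next, for (\ref{Rkest_eqn}), I would apply (\ref{xkesteq}) at index $k+N-1$, which reads $r_{k+N} \geq 2^{-M_{k+N-1}} r_{k+N-1}^{M_{k+N-2}+1}$; translating via $r_{k+N} = R_{k+1}$, $M_{k+N-1} = n_k$, and $M_{k+N-2} = n_{k-1}$ yields exactly (\ref{Rkest_eqn}). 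For (\ref{Rkest_eqn3}), apply (\ref{xkest2_eqn1}) at index $k+N-1 \geq 5$: $r_{k+N} \geq 2^{2^{k+N-1}}$, i.e. $R_{k+1} \geq 2^{2^{k+N-1}}$; re-indexing $k \mapsto k-1$ (still legitimate since we only need $k+N-1 \geq 5$, and for $k=1$ this is $N \geq 5$) gives $R_k \geq 2^{2^{k+N-2}}$. Finally, (\ref{Rkest_eqn4}) is immediate from (\ref{xkest2_eqn2}) at index $k+N-1$: $r_{k+N} \geq 4 r_{k+N-1}^2$, that is $R_{k+1} \geq 4 R_k^2$.

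There is no real obstacle here — the only point requiring a moment's care is the bookkeeping of indices, specifically checking that when we want the conclusion to hold for all $k \geq 1$ the shifted index never drops below the threshold $5$ required by Corollary \ref{rkinequalities}; since $N \geq 5$, the smallest relevant index is $1 + 5 - 1 = 5$, so we are exactly at (but not below) the boundary, and everything goes through. I would present this as a short paragraph invoking Corollary \ref{rkinequalities} and Definition \ref{bigR} line by line rather than redoing any estimate.
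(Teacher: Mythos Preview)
Your approach is exactly the paper's: the text preceding Lemma \ref{Rkest} says these inequalities ``can now be restated as follows by applying Definition \ref{bigR}'', and no separate proof is given. Your line-by-line translation via $R_k=r_{k+N-1}$, $n_k=M_{k+N-1}$, $C_k=c_{k+N-1}$ is the intended argument.

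There is, however, a small index-bookkeeping slip in your treatment of (\ref{Rkest_eqn3}). To obtain $R_k\ge 2^{2^{k+N-2}}$, i.e.\ $r_{k+N-1}\ge 2^{2^{k+N-2}}$, you must invoke (\ref{xkest2_eqn1}) at index $j=k+N-2$, and Corollary \ref{rkinequalities} states (\ref{xkest2_eqn1}) only for $j\ge 5$. After your re-indexing $k\mapsto k-1$ the required condition is therefore $k+N-2\ge 5$, not $k+N-1\ge 5$; at $k=1$ this reads $N\ge 6$, not $N\ge 5$. So the single boundary case $N=5$, $k=1$ is not covered by your argument as written. This is easily patched: either verify $r_5\ge 2^{16}$ directly (Table \ref{ValueTable} gives $r_4=2^{56}$ and $r_5>r_4$), or observe from the proof of Corollary \ref{rkinequalities} that (\ref{xkest2_eqn1}) in fact holds for all $j\ge 3$, since (\ref{rkprep_eqn1}) is valid for $k\ge 3$ and $r_k>16$ already for $k\ge 2$.
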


\noindent The next lemma describes some relationships among the $n_k$'s that we use freely throughout the paper.
\begin{lem}
	\label{nk}
	For all $k \geq 1$, we have 
	\begin{enumerate}
		\item $2 n_k = n_{k+1}$.
		\item $2^N + \sum_{j=1}^k n_j = n_{k+1}$.
	\end{enumerate}
\end{lem}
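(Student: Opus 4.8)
The plan is to treat this as a direct computation from the explicit formula $n_k = M_{k+N-1} = 2^{N+k-1}$ recorded in Definition \ref{bigR}. For part (1), I would simply write $n_{k+1} = 2^{N+(k+1)-1} = 2^{N+k} = 2\cdot 2^{N+k-1} = 2n_k$, valid for every $k \geq 1$ (indeed for every $k$). No induction is needed here.

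For part (2), I would present the sum as a finite geometric series: $\sum_{j=1}^k n_j = \sum_{j=1}^k 2^{N+j-1} = 2^N\sum_{j=0}^{k-1} 2^j = 2^N(2^k - 1) = 2^{N+k} - 2^N = n_{k+1} - 2^N$, which rearranges to the claimed identity. Alternatively, one can prove (2) by induction on $k$ using (1): the base case $k=1$ reads $2^N + n_1 = 2^N + 2^N = 2^{N+1} = n_2$, and the inductive step adds $n_{k+1}$ to both sides of the identity for $k$ and invokes $n_{k+1} + n_{k+1} = 2n_{k+1} = n_{k+2}$ from part (1). I would likely include the geometric-series version as the cleanest, mentioning the inductive alternative parenthetically.

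There is no genuine obstacle in this lemma; the only thing to be careful about is the index shift in the exponent (the $N-1$ offset), so I would double-check the endpoints of the summation. The statement is purely arithmetic bookkeeping that isolates two relations used repeatedly later, so the "proof" is a two-line calculation.

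\begin{proof}
	Recall from Definition \ref{bigR} that $n_k = 2^{N+k-1}$ for all $k \geq 1$.

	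(1) We compute directly:
	\begin{equation*}
	n_{k+1} = 2^{N+(k+1)-1} = 2^{N+k} = 2 \cdot 2^{N+k-1} = 2 n_k.
	\end{equation*}

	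(2) Summing a finite geometric series,
	\begin{equation*}
	\sum_{j=1}^{k} n_j = \sum_{j=1}^{k} 2^{N+j-1} = 2^{N} \sum_{j=0}^{k-1} 2^{j} = 2^{N}(2^{k}-1) = 2^{N+k} - 2^{N} = n_{k+1} - 2^{N},
	\end{equation*}
	which rearranges to $2^{N} + \sum_{j=1}^{k} n_j = n_{k+1}$, as claimed.
\end{proof}
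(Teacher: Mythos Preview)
Your proof is correct and takes essentially the same approach as the paper: part (1) is a one-line computation from $n_k = 2^{N+k-1}$, and part (2) is the finite geometric series. The paper's version of (2) differs only cosmetically, writing $2^N + \sum_{j=1}^k n_j = 2^N\bigl(1 + \sum_{j=0}^{k-1} 2^j\bigr) = 2^N \cdot 2^k = n_{k+1}$, which is the same calculation with the $2^N$ absorbed at the start rather than the end.
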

\begin{proof}
	(1) is obvious. (2) is a simple calculation:
	\begin{align*}
	2^N + \sum_{j=1}^k n_j = 2^N\left(1+ \sum_{j=0}^{k-1} 2^j\right)
	= 2^N \cdot 2^k 
	= 2^{N+k} = n_{k+1}.
	\end{align*}
	This proves the claim. 
\end{proof}

We denote open round annuli centered at the origin by $A(r,R) = \{z\,:\, r < |z| < R\}$. Next, we will define the following sequence of annuli. See Figures \ref{Akillustration} and \ref{annuli_picture}. 

\begin{definition}\label{annulidefinition}
	Given any $N \geq 1$, we define
	\begin{equation}
	\label{annuli}
	A_k = A\left(\frac{1}{4}R_k, 4 R_k\right), \,\,\, B_k = \overline{A\left(4R_k, \frac{1}{4}R_{k+1}\right)},\,\,\, V_k = A\left(\frac{2}{5}R_k, \frac{3}{5}R_k\right).
	\end{equation}
\end{definition}

\begin{figure}[!h]
	\centering
	\scalebox{.8}{%% Creator: Inkscape 1.0 (4035a4fb49, 2020-05-01), www.inkscape.org
%% PDF/EPS/PS + LaTeX output extension by Johan Engelen, 2010
%% Accompanies image file '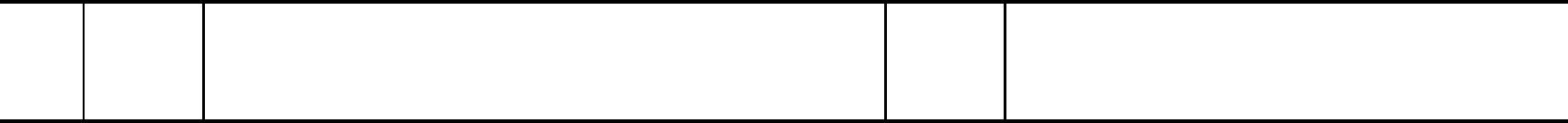' (pdf, eps, ps)
%%
%% To include the image in your LaTeX document, write
%%   \input{<filename>.pdf_tex}
%%  instead of
%%   \includegraphics{<filename>.pdf}
%% To scale the image, write
%%   \def\svgwidth{<desired width>}
%%   \input{<filename>.pdf_tex}
%%  instead of
%%   \includegraphics[width=<desired width>]{<filename>.pdf}
%%
%% Images with a different path to the parent latex file can
%% be accessed with the `import' package (which may need to be
%% installed) using
%%   \usepackage{import}
%% in the preamble, and then including the image with
%%   \import{<path to file>}{<filename>.pdf_tex}
%% Alternatively, one can specify
%%   \graphicspath{{<path to file>/}}
%% 
%% For more information, please see info/svg-inkscape on CTAN:
%%   http://tug.ctan.org/tex-archive/info/svg-inkscape
%%
\begingroup%
  \makeatletter%
  \providecommand\color[2][]{%
    \errmessage{(Inkscape) Color is used for the text in Inkscape, but the package 'color.sty' is not loaded}%
    \renewcommand\color[2][]{}%
  }%
  \providecommand\transparent[1]{%
    \errmessage{(Inkscape) Transparency is used (non-zero) for the text in Inkscape, but the package 'transparent.sty' is not loaded}%
    \renewcommand\transparent[1]{}%
  }%
  \providecommand\rotatebox[2]{#2}%
  \newcommand*\fsize{\dimexpr\f@size pt\relax}%
  \newcommand*\lineheight[1]{\fontsize{\fsize}{#1\fsize}\selectfont}%
  \ifx\svgwidth\undefined%
    \setlength{\unitlength}{491.24999906bp}%
    \ifx\svgscale\undefined%
      \relax%
    \else%
      \setlength{\unitlength}{\unitlength * \real{\svgscale}}%
    \fi%
  \else%
    \setlength{\unitlength}{\svgwidth}%
  \fi%
  \global\let\svgwidth\undefined%
  \global\let\svgscale\undefined%
  \makeatother%
  \begin{picture}(1,0.07843951)%
    \lineheight{1}%
    \setlength\tabcolsep{0pt}%
    \put(0,0){\includegraphics[width=\unitlength,page=1]{mapping_diagrams.pdf}}%
    \put(0.2457096,0.02016946){\color[rgb]{0,0,0}\makebox(0,0)[lt]{\lineheight{1.25}\smash{\begin{tabular}[t]{l}\scalebox{1.5}{$B_k$}\end{tabular}}}}%
    \put(0.06525774,0.02188288){\color[rgb]{0,0,0}\makebox(0,0)[lt]{\lineheight{1.25}\smash{\begin{tabular}[t]{l}\scalebox{1.5}{$A_k$}\\\end{tabular}}}}%
    \put(0.56585122,0.02188287){\color[rgb]{0,0,0}\makebox(0,0)[lt]{\lineheight{1.25}\smash{\begin{tabular}[t]{l}\scalebox{1.5}{$A_{k+1}$}\\\end{tabular}}}}%
    \put(0.67175466,0.02188287){\color[rgb]{0,0,0}\makebox(0,0)[lt]{\lineheight{1.25}\smash{\begin{tabular}[t]{l}\scalebox{1.5}{$B_{k+1}$}\\\end{tabular}}}}%
  \end{picture}%
\endgroup%
}
	\caption{A visualization of $A_k$ and $B_k$ viewed on the cylinder. The annuli $A_k$ have constant modulus, and the annulus $B_k$ have very large and increasing moduli.}
	\label{annuli_picture}
\end{figure}

\noindent We now begin to describe the mapping behavior of $f$ in terms of the annuli in Definition \ref{annulidefinition}. 	

\begin{prop}
	\label{zeros}
	The zeros of $f_N$ that satisfy $|z| \geq \frac{1}{4} R_1$ are contained in $\cup_{k=1}^{\infty} A_k$. In fact, each $A_k$ contains exactly $n_{k}$ many simple zeros, each located inside $A(\frac{3}{5} R_k, \frac{5}{4} R_k)$.
\end{prop}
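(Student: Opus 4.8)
The plan is to use Proposition~\ref{critical_point_listing} (or rather its restated form, Lemma~\ref{branch_covering_hn}) together with the quasiconformal estimates of Section~\ref{The Construction}. Recall from Lemma~\ref{branch_covering_hn} that the zeros of $f_N$ with $|z|$ large are exactly the points $\phi_N\!\left( r_j \exp\!\big( \tfrac14 \tfrac{\pi}{M_j} + i\tfrac{(2k_j-1)\pi}{M_j}\big)\right)$ for $j \geq N$ and $1 \leq k_j \leq M_j$; after the re-indexing of Definition~\ref{bigR} (with $R_k = r_{k+N-1}$ and $n_k = M_{k+N-1}$), the zeros with index $k+N-1 = j$, i.e.\ the ``$k$-th batch,'' are the $n_k$ points $\phi_N(w)$ with $|w| = R_k$, and there are exactly $n_k$ of them. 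So the content of the proposition is really the \emph{location} statement: that the image under $\phi_N$ of the circle $|w| = R_k$ lands inside $A\big(\tfrac35 R_k, \tfrac54 R_k\big) \subset A_k$, and that no two distinct batches collide into the same $A_k$.

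First I would fix $k \geq 1$ and apply Lemma~\ref{AkEst} with the radius parameter equal to $k+N-1$: since $|w| = R_k = r_{k+N-1} \geq \tfrac{1}{20} r_{k+N-1}$ and $k+N-1 \geq k_0$ (we have assumed $N \geq k_0$), equation~(\ref{phi_identity}) gives
\begin{equation*}
\big(1 - C' \cdot (\tfrac12)^{\frac{\sqrt{k+N-1}}{4}}\big) R_k \;\leq\; |\phi_N(w)| \;\leq\; \big(1 + C' \cdot (\tfrac12)^{\frac{\sqrt{k+N-1}}{4}}\big) R_k.
\end{equation*}
By Remark~\ref{alpha_beta_close_1} (choosing $N$ large), the two factors lie in $(\tfrac{99}{100}, \tfrac{101}{100})$, hence $|\phi_N(w)| \in (\tfrac{99}{100} R_k, \tfrac{101}{100} R_k) \subset (\tfrac35 R_k, \tfrac54 R_k)$. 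Thus all $n_k$ zeros of the $k$-th batch lie in $A(\tfrac35 R_k, \tfrac54 R_k)$, which is contained in $A_k = A(\tfrac14 R_k, 4R_k)$. Next I would check these batches are separated: $A(\tfrac35 R_k, \tfrac54 R_k)$ and $A(\tfrac35 R_{k+1}, \tfrac54 R_{k+1})$ are disjoint because $\tfrac35 R_{k+1} \geq \tfrac35 \cdot 4 R_k^2 > \tfrac54 R_k$ by~(\ref{Rkest_eqn4}) and $R_k \geq 2$; this also shows the $A_k$ themselves are pairwise disjoint ($4R_k < \tfrac14 R_{k+1}$), so no $A_k$ can contain zeros from two different batches, and each $A_k$ contains \emph{exactly} $n_k$ zeros. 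Finally, to see these are \emph{all} the zeros with $|z| \geq \tfrac14 R_1$: the only other zero of $f_N$ is $0$ and the finitely many critical-value-free zeros inside $B(0, r_N)$ — wait, more carefully, Lemma~\ref{branch_covering_hn} lists the zeros coming from $g_N$ inside $B(0, r_N)$ (images under $\phi_N$) plus the ``outer'' zeros; one checks via~(\ref{phi_identity}) applied at smaller radii, or directly since $\phi_N(B(0,r_N))$ and $0$ are the only other zero-carrying sets, that none of these has modulus $\geq \tfrac14 R_1$.

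Actually the cleanest way to handle the ``all zeros'' part: by Lemma~\ref{branch_covering_hn} every zero of $f_N$ is either $0$, or an image $\phi_N(\zeta)$ with $|\zeta| < r_N = R_1$, or one of the batch zeros already located. For the middle case, if $|\zeta| < R_1$ one needs $|\phi_N(\zeta)| < \tfrac14 R_1$; since $\phi_N$ is a homeomorphism fixing $0$ and, by~(\ref{phi_identity}) applied on the circle $|w| = R_1$, maps that circle into the annulus $(\tfrac{99}{100} R_1, \tfrac{101}{100} R_1)$, the bounded complementary component of $\phi_N(\{|w|=R_1\})$ — which contains $\phi_N(B(0,R_1))$ — lies inside $B(0, \tfrac{101}{100} R_1)$; this is not quite $< \tfrac14 R_1$, so one must instead invoke Theorem~\ref{identity_origin}: for $N$ large, $|\phi_N(z) - z| < \tfrac{1}{100} R_1$ for $|z| < R$, combined with the fact (from Proposition~\ref{zndeltazinterpolation} / the construction of $g_N$) that the critical points $\zeta_j$ of $g_N$ lie well inside $B(0, \tfrac15 R_1)$, say, because $g_N(z) = C_1 z^{n_1} + R_1 z \eta_{R_1}(z)$ has its critical points at scale comparable to $(R_1/C_1)^{1/(n_1-1)} \ll R_1$. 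I would phrase this last step by noting $R_1 = r_N$ and all the relevant inner data sits on a fixed compact set rescaled, so Theorem~\ref{identity_origin} applies.

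The main obstacle I anticipate is precisely this last point — confirming that the ``inner'' zeros of $f_N$ (those coming from $g_N$) all have modulus strictly less than $\tfrac14 R_1$, so that the dividing radius $\tfrac14 R_1$ in the statement genuinely separates inner from outer zeros. The outer part is a routine application of Lemma~\ref{AkEst} plus the gap estimate~(\ref{Rkest_eqn4}); the subtlety is entirely in locating the critical/zero structure of the interpolating polynomial model $g_N$ near the origin and checking it survives the $\phi_N$-correction, for which Theorem~\ref{identity_origin} (uniformity in $N$ on $|z| < R$) is the right tool, provided one has first pinned down the scale of the critical points of $g_N$ from its explicit formula.
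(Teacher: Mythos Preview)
Your approach is exactly the paper's: its proof is the single line ``This follows for $f_N$ by combining (\ref{listing_of_zeros}) and Lemma~\ref{AkEst}.'' Two small cleanups: the batch-$k$ pre-images actually lie on $|w| = R_k\exp(\tfrac{\pi}{4n_k})$ rather than $|w|=R_k$ (the extra factor is harmlessly close to $1$), and your inner-zero discussion is more tangled than needed --- Lemma~\ref{branch_covering_hn} lists critical points, not zeros; the inner zeros of $h_N$ are those of $q_N$, which by~(\ref{zeros_qN}) and the computation~(\ref{quotient_est_eqn1}) sit on a circle of radius $\lesssim r_{N-1}^2 \leq \tfrac14 R_1$ (via~(\ref{xkest2_eqn2})), so after $\phi_N$ they stay well below $\tfrac14 R_1$.
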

\noindent

\begin{proof}
	This follows for $f_N$ by combining (\ref{listing_of_zeros}) and Lemma \ref{AkEst}.
\end{proof}

\begin{lem}
	\label{PrePowerMap}
	There exists $M \in \N$ so that for all $N \geq M$, for all $k \geq 1$, and for all $z \in A( \frac{5}{4}R_k, \frac{3}{4} R_{k+1})$, we have
	\begin{equation}
	f_N(z) = C_{k+1}  \cdot (\phi_N^{-1}(z))^{n_{k+1}}.
	\end{equation}
\end{lem}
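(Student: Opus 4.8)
The plan is to unwind the definitions of $f_N$, $h_N$, and $\phi_N$ and exploit the fact that on the relevant annulus $h_N$ coincides with a pure monomial. Recall $f_N = h_N \circ \phi_N^{-1}$, so $f_N(z) = h_N(\phi_N^{-1}(z))$; thus everything reduces to (i) locating $\phi_N^{-1}(z)$ and (ii) knowing the formula for $h_N$ there. For step (ii), by Definition \ref{parameter_defn} we have $h_N(w) = h(w)$ when $|w| \geq r_N = R_1$, and $h = f \circ \phi$ is the original map of Definition \ref{parameter_defn2}. The key input is Proposition 3.21 of \cite{BurLaz} (invoked via Proposition \ref{critical_point_listing}), which tells us that on each annulus between consecutive critical circles $f$ behaves as a scaled monomial; concretely, for $w$ in an appropriate range straddling the circle $|w| = R_{k+1}$, one has $h(w) = f(\phi(w)) = C_{k+1}\, w^{\,n_{k+1}}$ (this is exactly the ``behaves as $z \mapsto z^{n}$'' statement, the constant being read off from the critical value normalization $\pm C_k R_k^{n_k}$). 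Then $f_N(z) = C_{k+1}(\phi_N^{-1}(z))^{n_{k+1}}$ follows by substituting $w = \phi_N^{-1}(z)$, provided $\phi_N^{-1}(z)$ lies in the annulus on which this monomial formula is valid.

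The real content, then, is the bookkeeping in step (i): showing that if $z \in A(\tfrac54 R_k, \tfrac34 R_{k+1})$, then $\phi_N^{-1}(z)$ lies in the annulus where $h_N$ is the monomial $C_{k+1} w^{n_{k+1}}$. First I would fix $M$ (the threshold for $N$) large enough that $N \geq M$ forces $N \geq k_0$ and forces Remark \ref{alpha_beta_close_1} to apply, so that $\beta_k, \alpha_k \in (\tfrac{99}{100}, \tfrac{101}{100})$ for all $k \geq 1$. Since $|z| \geq \tfrac54 R_k \geq \tfrac{1}{20} R_k$, Lemma \ref{AkEst}, equation (\ref{phi_inverse_identity}), gives $\beta_k |z| \leq |\phi_N^{-1}(z)| \leq \alpha_k |z|$; combined with $\tfrac54 R_k \leq |z| \leq \tfrac34 R_{k+1}$ this yields roughly $\tfrac{99}{100}\cdot\tfrac54 R_k \leq |\phi_N^{-1}(z)| \leq \tfrac{101}{100}\cdot\tfrac34 R_{k+1}$, which comfortably sits inside, say, $A(R_k, R_{k+1})$ — the annulus between the two critical circles $|w| = R_k$ and $|w| = R_{k+1}$ on which $h$ (hence $h_N$, since $|w| \geq R_k \geq R_1 = r_N$ there for $k \geq 1$) is given by the single monomial term. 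The numerical slack built into the constants $\tfrac14, \tfrac54, \tfrac34, 4$ in Definition \ref{annulidefinition} and Proposition \ref{zeros} is precisely what makes this work: the zeros of $f_N$ in $A_k$ all lie in $A(\tfrac35 R_k, \tfrac54 R_k)$, so starting the source annulus at $\tfrac54 R_k$ keeps us strictly past them.

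I expect the main obstacle to be verifying that the monomial description of $h$ on the inter-critical annulus is exactly $C_{k+1} w^{n_{k+1}}$ with the right constant and right exponent — i.e., correctly citing and applying Proposition 3.21 of \cite{BurLaz} together with Definition \ref{bigR}'s re-indexing ($n_{k+1} = M_{k+N}$, $C_{k+1} = c_{k+N}$, $R_{k+1} = r_{k+N}$), and double-checking that between the critical circles $|w| = R_k$ and $|w| = R_{k+1}$ the relevant term dominating the (quasiregular) interpolation is the degree-$n_{k+1}$ term rather than the degree-$n_k$ term. This is a matter of matching the index conventions and confirming the construction of $h$ places the ``$w \mapsto C_{k+1} w^{n_{k+1}}$'' behavior on the outer side of the circle $|w| = R_k$ up to past $|w| = R_{k+1}$ — consistent with the fact that $f_N$ has $n_{k+1}$ zeros in $A_{k+1}$ (Proposition \ref{zeros}) and with the critical value $\pm C_{k+1} R_{k+1}^{n_{k+1}}$ on the circle $|w| = R_{k+1}$. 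Once that identification is pinned down, the rest is the elementary radius estimate above.
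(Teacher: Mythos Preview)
Your approach is correct and essentially the same as the paper's: both reduce to showing that $\phi_N^{-1}$ maps $A(\tfrac54 R_k,\tfrac34 R_{k+1})$ into the annulus on which $h_N$ equals the monomial $C_{k+1}w^{n_{k+1}}$, and both accomplish this via the estimate in Lemma~\ref{AkEst}. Two minor points worth tightening: (i) the monomial formula comes directly from Theorem~\ref{mainthm} (re-indexed via Definition~\ref{bigR}), which gives $h(w)=C_{k+1}w^{n_{k+1}}$ on $A\bigl(\exp(\pi/n_k)R_k,\,R_{k+1}\bigr)$ rather than on all of $A(R_k,R_{k+1})$---so your ``which exponent'' worry is resolved by a single citation, not by inspecting zeros or critical values; (ii) you therefore need $\tfrac{99}{100}\cdot\tfrac54>\exp(\pi/n_k)$, which holds for $N$ large since $n_k\ge n_1=2^N$.
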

\begin{proof} 
	If $z \in A\left(\exp\left(\frac{\pi}{n_{k}}\right) \cdot R_k, R_{k+1}\right)$, then 
	$$f_N \circ \phi_N(z) = C_{k+1} \cdot z^{n_{k+1}}.$$
	Therefore, if $z \in \phi_N \left(A\left(\exp\left(\frac{\pi}{n_{k}}\right) \cdot R_k, R_{k+1}\right)\right)$, we must have
	$$f_N(z) = C_{k+1} \cdot (\phi_N^{-1}(z))^{n_{k+1}}.$$
	Therefore, it is sufficient to show that there is an $M$ so that for all $N \geq M$, and for all $k \geq 1$, we have 
	$$A\left( \frac{5}{4}R_k, \frac{3}{4} R_{k+1} \right) \subset \phi_N \left( A \left( \exp\left(\frac{\pi}{n_{k}}\right) \cdot R_k, R_{k+1}\right)\right).$$ 
	The existence of such an $M$ is a simple calculation using Lemma \ref{AkEst}.
\end{proof}

\begin{lem}
	\label{PowerMap}
	There exists $M \in \N$ so that for all $N \geq M$, for all $k\geq 1$, and for all $z \in  A( \frac{5}{4}R_k, \frac{3}{4} R_{k+1})$,
	\begin{equation}
	\beta^{n_{k+1}}_k C_{k+1}|z|^{n_{k+1}}\leq |f_N(z)| \leq \alpha^{n_{k+1}}_k C_{k+1} |z|^{n_{k+1}}.
	\end{equation}
\end{lem}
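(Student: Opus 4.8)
The plan is to deduce Lemma \ref{PowerMap} directly from Lemma \ref{PrePowerMap} together with the quantitative control on $\phi_N$ near $\infty$ provided by Lemma \ref{AkEst}. First I would fix $M$ at least as large as the $M$ produced by Lemma \ref{PrePowerMap}, so that for $N\geq M$, $k\geq 1$, and $z\in A(\tfrac54 R_k,\tfrac34 R_{k+1})$ we have the exact formula $f_N(z)=C_{k+1}\cdot(\phi_N^{-1}(z))^{n_{k+1}}$; taking absolute values gives $|f_N(z)|=C_{k+1}\,|\phi_N^{-1}(z)|^{n_{k+1}}$. The task then reduces to sandwiching $|\phi_N^{-1}(z)|$ between $\beta_k|z|$ and $\alpha_k|z|$.

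Next I would check that the hypothesis of Lemma \ref{AkEst} applies. Since $z\in A(\tfrac54 R_k,\tfrac34 R_{k+1})$ we certainly have $|z|\geq \tfrac54 R_k \geq \tfrac{1}{20} R_k$, so (enlarging $M$ if necessary so that $N\geq k_0$, in accordance with the standing assumption) $z$ lies in the range where (\ref{phi_inverse_identity}) is valid — more precisely, one should note that $A(\tfrac54 R_k,\tfrac34 R_{k+1})\subset \phi_N(\{w:|w|\geq \tfrac{1}{20}R_k\})$, which follows from the first part of Lemma \ref{PrePowerMap}'s proof (the set is contained in the $\phi_N$-image of a round annulus around $|w|\sim R_k$) or directly from (\ref{phi_identity}) applied to a slightly smaller radius. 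Then (\ref{phi_inverse_identity}) gives
\[
\beta_k |z| = \frac{1}{1+C'(\tfrac12)^{\sqrt{k+N-1}/4}}\,|z| \leq |\phi_N^{-1}(z)| \leq \frac{1}{1-C'(\tfrac12)^{\sqrt{k+N-1}/4}}\,|z| = \alpha_k|z|,
\]
by the very definitions (\ref{alpha_k}) and (\ref{beta_k}) of $\alpha_k$ and $\beta_k$.

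Finally I would raise this chain of inequalities to the power $n_{k+1}$ — all quantities involved are positive, so the inequalities are preserved — and multiply through by $C_{k+1}>0$, obtaining
\[
\beta_k^{n_{k+1}} C_{k+1} |z|^{n_{k+1}} \leq C_{k+1}|\phi_N^{-1}(z)|^{n_{k+1}} = |f_N(z)| \leq \alpha_k^{n_{k+1}} C_{k+1}|z|^{n_{k+1}},
\]
which is exactly the claimed estimate. I do not anticipate any genuine obstacle here: the only point requiring a moment's care is verifying that $z$ lies in the domain on which the inverse estimate (\ref{phi_inverse_identity}) holds, i.e. that the annulus $A(\tfrac54 R_k,\tfrac34 R_{k+1})$ is contained in $\phi_N(\{|w|\geq \tfrac{1}{20}R_k\})$; this is the same kind of "simple calculation using Lemma \ref{AkEst}" invoked at the end of the proof of Lemma \ref{PrePowerMap}, and indeed it is implicit there since $\phi_N\big(A(\exp(\pi/n_k)R_k, R_{k+1})\big)\supset A(\tfrac54 R_k,\tfrac34 R_{k+1})$ with the inner annulus well inside $\{|w|\geq \tfrac{1}{20}R_k\}$. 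Everything else is monotonicity of $t\mapsto t^{n_{k+1}}$ on $(0,\infty)$ and the bookkeeping built into Definition \ref{bigR}.
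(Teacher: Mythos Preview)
Your proposal is correct and follows exactly the same approach as the paper, which simply states that the lemma follows immediately from Lemma \ref{AkEst}, Definition \ref{bigR}, and Lemma \ref{PrePowerMap}. You have merely unpacked these three ingredients in the natural order, and your extra care in verifying the domain hypothesis for (\ref{phi_inverse_identity}) is appropriate and consistent with the paper's own remark at the end of the proof of Lemma \ref{PrePowerMap}.
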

\begin{proof}
	This follows immediately from Lemma \ref{AkEst}, Definition \ref{bigR}, and Lemma \ref{PrePowerMap}.
\end{proof}

When estimating $f$ near $|z| = R_1$, we will require in some situations the following Lemma, which is similar to Lemma \ref{PowerMap}. Recall that by Definition \ref{parameter_defn} for all $z \in B(0,r_N - 1)$, we have $f_N(z) = q_N \circ \phi_N^{-1}(z)$, where $q_N(z) = c_N z^{M_N} + r_N z = C_1 z^{n_1} + R_1 z.$
\begin{lem}
	\label{the_k_equals_1_lemma_qN}
	There exists $M \in \N$ so that for all $N \geq M$, we have:
	\begin{equation}
	\label{the_k_equals_1_lemma_qN_eqn}
	\frac{1}{2} c_N |z|^{M_N} \leq |q_N(z)| \leq 2 c_N |z|^{M_N} \emph{ for all } z \in A(\frac{1}{20}R_1, \frac{19}{20}R_1).
	\end{equation}
\end{lem}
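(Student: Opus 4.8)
The plan is to reduce the estimate on $|q_N(z)|$ to the estimate on $|g_N|$ (or rather its $z^{M_N}$ part) already implicit in the proof of Proposition \ref{zndeltazinterpolation}, and then invoke Lemma \ref{AkEst} to pass from $\phi_N^{-1}(z)$ back to $z$. Recall $q_N(w) = c_N w^{M_N} + r_N w = C_1 w^{n_1} + R_1 w$, and $f_N(z) = q_N(\phi_N^{-1}(z))$ on $B(0, r_N - 1)$. So the first step is to show that on the annulus $A(\tfrac{1}{20}R_1, \tfrac{19}{20}R_1)$ (which, after pulling back by $\phi_N^{-1}$, lands in a comparable annulus by Lemma \ref{AkEst} once $N$ is large) the monomial term $c_N w^{M_N}$ dominates the linear term $R_1 w$; precisely, $|R_1 w| \le \tfrac{1}{2}|c_N w^{M_N}|$, i.e. $c_N |w|^{M_N - 1} \ge 2 R_1$.

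The key inequality is essentially the one established in the proof of Proposition \ref{zndeltazinterpolation}: for $|w| \ge r_k - 1$ one has $c_k|w|^{M_k - 2}/r_k \ge r_{k-1}$, and the same chain of estimates (using Lemma \ref{rk_prep} and Corollary \ref{rkinequalities}, in particular $(r_k - 1)^2 > r_k$, $r_k - 1 > 2r_{k-1}$, and $(r_k-1)^{2^3} \ge r_1^2 r_2^{2^2} r_3^{2^3}$) gives, for $|w| \ge \tfrac{1}{21}r_N$ say, a bound of the form $c_N |w|^{M_N - 1} \ge (\text{large}) \cdot R_1$ with the large factor going to $\infty$ with $N$; in any case it exceeds $2R_1$ for $N \ge M$. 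Here I would first fix $M$ large enough (via Lemma \ref{AkEst}, using that $\alpha_1, \beta_1$ are within $\tfrac{1}{100}$ of $1$ for $N$ large by Remark \ref{alpha_beta_close_1}) so that $\phi_N^{-1}\big(A(\tfrac{1}{20}R_1, \tfrac{19}{20}R_1)\big) \subset A(\tfrac{1}{21}R_1, \tfrac{20}{21}R_1) \subset B(0, r_N - 1)$, which justifies using the formula $f_N = q_N \circ \phi_N^{-1}$ there.

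Granting $|R_1 w| \le \tfrac12 |c_N w^{M_N}|$, the triangle inequality gives $\tfrac12 c_N |w|^{M_N} \le |q_N(w)| \le \tfrac32 c_N |w|^{M_N}$ for $w = \phi_N^{-1}(z)$. It remains to replace $|w| = |\phi_N^{-1}(z)|$ by $|z|$: by Lemma \ref{AkEst} (equation (\ref{phi_inverse_identity})), for $|z| \ge \tfrac{1}{20}R_1$ and $N$ large we have $\beta_1 |z| \le |\phi_N^{-1}(z)| \le \alpha_1 |z|$ with $\beta_1^{M_N}, \alpha_1^{M_N}$ — this is the one subtlety: the exponent $M_N$ is huge. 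But $\alpha_1, \beta_1$ themselves depend on $N$ through $(\tfrac12)^{\sqrt{N}/4}$, which decays much faster than $M_N = 2^N$ grows is fatal — so instead I would absorb the factor $\alpha_1^{M_N}$ into the constant $2$: this forces choosing $M$ so that $\alpha_1^{M_N} \le \tfrac{4}{3}$ and $\beta_1^{M_N} \ge \tfrac{2}{3}$, i.e. $(\tfrac{101}{100})^{2^N} \le \tfrac43$, which is \emph{false} for large $N$. The correct route, which I expect to be the main obstacle and which I would handle carefully, is to keep the linear term's help: one does not need $\alpha_1^{M_N}$ close to $1$, because the clean statement is about $c_N|z|^{M_N}$ with the slack factors $2$ and $\tfrac12$ already generous; one shows instead that for $N \ge M$ the combined error from $|R_1 w|$ and from $\phi_N^{-1}$ still fits, by using the much stronger domination $c_N|w|^{M_N-1} \ge (\text{huge})R_1$ to beat $\alpha_1^{M_N}$ — indeed $\alpha_1^{M_N} = \exp(M_N \log \alpha_1) \le \exp(M_N \cdot 2 C' 2^{-\sqrt N/4})$, and since $M_N = 2^N$ while $2^{-\sqrt N /4} \to 0$, the product $M_N \cdot 2^{-\sqrt N/4} = 2^{N - \sqrt N/4}$ actually \emph{diverges}, so this too fails. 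Thus the only viable path is: the exponent in $\alpha_k, \beta_k$ must be taken with the \emph{correct} index. I would therefore recheck that the relevant estimate (\ref{phi_inverse_identity}) should be applied with the $\omega_p$-bound at scale $|z| \asymp R_1 = r_N$, giving an error $C' \omega_{2\sqrt2}(1/r_N)$, and from Lemma \ref{omega_error} this is $\le (\tfrac12)^{\sqrt N/4}$; raising to the power $M_N = 2^N$ gives $\exp(2^N \log(1 + C'(\tfrac12)^{\sqrt N/4}))$, still divergent. The resolution must be that the statement only claims the bound after the monomial domination is so overwhelming that a two-sided factor of $2$ suffices even against $\alpha_1^{M_N}$: concretely, from $r_{k-1}^{2^3}$-type slack one gets $c_N|w|^{M_N} \ge R_1^{M_N}$ roughly, and $\alpha_1^{M_N} R_1^{M_N}$ versus $2 c_N |z|^{M_N}$ — here both sides scale like an $M_N$-th power, so dividing, one needs $(\alpha_1 |z| / |z|)^{M_N} \le 2 \cdot (\text{domination gain})$, and the domination gain is itself exponential in $M_N$, so it wins. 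In the write-up I would make this precise by choosing $M$ so that the domination margin exceeds $\alpha_1^{M_N} \cdot 2$, which is possible because, per the proof of Proposition \ref{zndeltazinterpolation}, $c_N|z|^{M_N-2}/R_1 \ge R_{N-1} \ge 2^{2^{N-2}}$ grows like $2^{2^{N-2}}$ whereas $\alpha_1^{M_N} \le \exp(2^N \cdot 2 C' 2^{-\sqrt N/4})$ — and comparing double-exponent $2^{N-2}$ against $2^{N-\sqrt N/4}$, the former wins since $N - 2 > N - \sqrt N/4$ for $N \ge 64$. This is the crux; the rest is bookkeeping with Lemma \ref{AkEst}, Lemma \ref{omega_error}, and Corollary \ref{rkinequalities}. $\qed$
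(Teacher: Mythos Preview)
You have misread the statement. The lemma concerns the \emph{polynomial} $q_N(z)=c_N z^{M_N}+r_N z$ evaluated at $z$ directly; there is no $\phi_N$ in the statement at all. The quasiconformal correction enters only in the \emph{next} lemma (Lemma~\ref{the_k_equal_1_lemma}), which bounds $f_N=q_N\circ\phi_N^{-1}$ on a slightly smaller annulus and absorbs the $\alpha_1^{M_N},\beta_1^{M_N}$ factors into the statement rather than trying to hide them inside the constant $2$.

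Consequently, your entire second half---the agonizing over whether $\alpha_1^{M_N}$ stays bounded, the double-exponential comparison $2^{N-2}$ versus $2^{N-\sqrt N/4}$---is irrelevant here. Your argument is essentially finished at the line ``the triangle inequality gives $\tfrac12 c_N|w|^{M_N}\le |q_N(w)|\le \tfrac32 c_N|w|^{M_N}$'': just take $w=z$ and you are done. This is exactly what the paper does: it writes
\[
c_N|z|^{M_N}\Bigl(1-\tfrac{r_N}{c_N|z|^{M_N-1}}\Bigr)\le |q_N(z)|\le c_N|z|^{M_N}\Bigl(1+\tfrac{r_N}{c_N|z|^{M_N-1}}\Bigr),
\]
and then shows directly (using $r_{N+1}=c_N(r_N/2)^{M_N}$ and the growth bounds of Corollary~\ref{rkinequalities}) that the error term $r_N/(c_N|z|^{M_N-1})$ is at most $\tfrac{1}{20}(20/r_N^{1/4})^{M_N}$, which is below $1$ once $r_N^{1/4}\ge 40$. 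Your route via the estimate from Proposition~\ref{zndeltazinterpolation} would also work, but note that estimate was proved for $|w|\ge r_N-1$, whereas here you need it down to $|w|\ge r_N/20$; the paper's direct computation handles this cleanly.
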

\begin{proof}
	This is a simple but somewhat tedious application of Lemma \ref{Rkest}. By the triangle inequality, we obtain for all $z \in A(\frac{1}{20}R_1, \frac{19}{20}R_1)$ that
	\begin{equation*}
	c_N |z|^{M_N} \left(1 - \frac{r_N}{c_N |z|^{M_{N} - 1}}\right) \leq |q_N(z)| \leq c_N|z|^{M_N}\left( 1 + \frac{r_N}{c_N |z|^{M_N - 1}}\right). 
	\end{equation*} 
	On the one hand, we have by Lemma \ref{rkinequalities} that
	\begin{align*}
	\max_{z \in A(\frac{1}{20}R_1, \frac{19}{20}R_1)} \left( 1 + \frac{r_N}{c_N |z|^{M_N - 1}}\right) &= 1 + \frac{r_N}{c_N(\frac{1}{20}r_N)^{M_N - 1}} \\
	&= 1 + \frac{r_N \frac{1}{20}r_N}{(\frac{1}{10})^{M_N} c_N(\frac{r_N}{2})^{M_N}} \\
	&= 1 + \frac{10^{M_N}}{20} \cdot \frac{r_N^{2}}{r_{N+1}} \\
	&\leq 1 + \frac{10^{M_N}}{20} \cdot \frac{r_N^2}{2^{-M_N} r_N^{M_{(N-1)}}} \\
	&\leq 1 + \frac{20^{M_N}}{20} \cdot \frac{1}{r_N^{M_{(N-2)}}} \\
	&= 1 + \frac{1}{20} \left( \frac{20}{r_N^{\frac{1}{4}}} \right)^{M_N}
	\end{align*}
	By Lemma \ref{Rkest}, there exists $M$ so that for all $N \geq M$ we have $r_N^{\frac{1}{4}} \geq 40$ and therefore we obtain
	\begin{equation*}
	\max_{z \in A(\frac{1}{20}R_1, \frac{19}{20}R_1)} \left( 1 + \frac{r_N}{c_N |z|^{M_N - 1}}\right) \leq 1+\frac{1}{20} \cdot \left(\frac{1}{2}\right)^{M_N} < 2. 
	\end{equation*}
	Therefore, we obtain for all $z \in z \in A(\frac{1}{20}R_1, \frac{19}{20}R_1)$ that
	\begin{equation}
	\label{q_N_upper_bound}
	|q_N(z)| \leq 2 c_N|z|^{M_N}.
	\end{equation}
	The proof of the other inequality is similar.
	%Indeed, we use Lemma \ref{rkinequalities} to verify that 
	%\begin{align*}
	%\min_{z \in A(\frac{1}{20}R_1, \frac{19}{20}R_1)}   \left(1 - \frac{r_N}{c_N |z|^{M_{N} - 1}}\right) &\geq 1 - \frac{r_N}{c_N (\frac{1}{20}r_N)^{M_{N} - 1}} \\
	%	&= 1 - \frac{r_N \frac{1}{20}r_N}{(\frac{1}{10})^{M_N} c_N(\frac{r_N}{2})^{M_N}} \\
	%	&= 1 - \frac{10^{M_N}}{20}  \cdot \frac{r_N^2}{r_{N+1}} \\
	%	& \geq  1 - \frac{10^{M_N}}{20} \cdot \frac{r_N^2}{2^{-M_N}r_N^{M_{(N-1)}}} \\
	%	& \geq 1 - \frac{20^{M_N}}{20}\cdot \frac{1}{r_N^{M_{(N-2)}}} \\
	%	&= 1 - \frac{1}{20} %\left(\frac{20}{r_N^{\frac{1}{4}}}\right)^{M_N}.
	%\end{align*}
	%Since $r_N^{\frac{1}{4}} \geq 40$, by perhaps taking $M$ larger we can obtain for all $N \geq M$ that
	%\begin{equation*}
	%\min_{z \in A(\frac{1}{20}R_1, \frac{19}{20}R_1)}   \left(1 - \frac{r_N}{c_N |z|^{M_{N} - 1}}\right) \geq 1 - \frac{1}{20} \left(\frac{1}{2}\right)^{M_N} > \frac{1}{2}.
	%\end{equation*}
	%Therefore, we obtain
	%\begin{equation}
	%\label{q_N_lower_bound}
	%\min_{z \in A(\frac{1}{20}R_1, \frac{19}{20}R_1)} |q_N(z)| \geq %\frac{1}{2}c_N |z|^{M_N}
	%\end{equation}
	%Inequalities (\ref{q_N_upper_bound}) and (\ref{q_N_lower_bound}) combine to prove (\ref{the_k_equals_1_lemma_qN_eqn}), as desired.
\end{proof}

\begin{lem}
	\label{the_k_equal_1_lemma}
	For all $N$ sufficiently large, we have
	\begin{equation}
	\label{the_k_equal_1_lemma_eqn}
	\frac{1}{2} \beta_1^{M_N} c_N |z|^{M_N} \leq |f_N(z)| \leq 2\alpha_1^{M_N} c_N |z|^{M_N} \emph{ for all } z \in A(\frac{1}{10}R_1, \frac{9}{10}R_1).
	\end{equation}
\end{lem}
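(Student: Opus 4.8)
The plan is to combine the estimate on $|q_N(z)|$ from Lemma~\ref{the_k_equals_1_lemma_qN} with the distortion estimate for $\phi_N^{-1}$ from Lemma~\ref{AkEst}. Recall that by Definition~\ref{parameter_defn}, whenever $|z| \leq r_N - 1$ we have $f_N(z) = q_N(\phi_N^{-1}(z))$, where $q_N(w) = c_N w^{M_N} + r_N w$. So for $z \in A(\tfrac{1}{10}R_1, \tfrac{9}{10}R_1)$ (which is contained in $B(0, r_N - 1)$ once $N$ is large, since $R_1 = r_N$ and $\tfrac{9}{10}r_N < r_N - 1$ eventually) we may write $|f_N(z)| = |q_N(\phi_N^{-1}(z))|$.

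First I would show that $w := \phi_N^{-1}(z)$ lands in the annulus $A(\tfrac{1}{20}R_1, \tfrac{19}{20}R_1)$ where Lemma~\ref{the_k_equals_1_lemma_qN} applies. This uses Lemma~\ref{AkEst}: since $|z| \geq \tfrac{1}{10}R_1 \geq \tfrac{1}{20}R_1$ and $R_1 = r_N$ with $N \geq k_0$, we get $\beta_1 |z| \leq |\phi_N^{-1}(z)| \leq \alpha_1 |z|$, and then Remark~\ref{alpha_beta_close_1} ($\tfrac{99}{100} < \beta_1 < 1 < \alpha_1 < \tfrac{101}{100}$) gives
\[
\frac{1}{20}R_1 < \frac{99}{100}\cdot\frac{1}{10}R_1 \leq \beta_1|z| \leq |w| \leq \alpha_1|z| \leq \frac{101}{100}\cdot\frac{9}{10}R_1 < \frac{19}{20}R_1,
\]
so $w \in A(\tfrac{1}{20}R_1, \tfrac{19}{20}R_1)$ as needed. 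Applying Lemma~\ref{the_k_equals_1_lemma_qN} then yields $\tfrac{1}{2}c_N|w|^{M_N} \leq |f_N(z)| \leq 2c_N|w|^{M_N}$.

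Finally I would substitute the bounds $\beta_1|z| \leq |w| \leq \alpha_1|z|$ into $c_N|w|^{M_N}$: the upper bound gives $|f_N(z)| \leq 2c_N(\alpha_1|z|)^{M_N} = 2\alpha_1^{M_N}c_N|z|^{M_N}$, and the lower bound gives $|f_N(z)| \geq \tfrac{1}{2}c_N(\beta_1|z|)^{M_N} = \tfrac{1}{2}\beta_1^{M_N}c_N|z|^{M_N}$, which is exactly (\ref{the_k_equal_1_lemma_eqn}). Throughout one must take $N$ large enough that: the constraints in Lemma~\ref{the_k_equals_1_lemma_qN} hold, $N \geq k_0$, Remark~\ref{alpha_beta_close_1} applies, and $\tfrac{9}{10}r_N < r_N - 1$. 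The only mild subtlety — and the one place a little care is needed rather than a routine substitution — is checking that $\phi_N^{-1}(z)$ stays inside the slightly wider annulus $A(\tfrac{1}{20}R_1, \tfrac{19}{20}R_1)$, which is precisely why that lemma was stated with the margin between $\tfrac{1}{10}, \tfrac{9}{10}$ and $\tfrac{1}{20}, \tfrac{19}{20}$; this is handled by the inequalities above. I do not expect any genuine obstacle here; the statement is designed so that the wider window in Lemma~\ref{the_k_equals_1_lemma_qN} absorbs the distortion of $\phi_N^{-1}$.
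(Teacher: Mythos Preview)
Your proof is correct and follows essentially the same approach as the paper. One small imprecision: the condition needed for $f_N(z) = q_N(\phi_N^{-1}(z))$ is that $|\phi_N^{-1}(z)| \leq r_N - 1$ (since $h_N = g_N$ on $B(0,r_N-1)$ and $f_N = h_N\circ\phi_N^{-1}$), not $|z| \leq r_N - 1$; but this follows from your bound $|\phi_N^{-1}(z)| < \tfrac{19}{20}R_1$ once $N$ is large enough that $\tfrac{19}{20}r_N < r_N - 1$, which is exactly how the paper handles it.
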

\begin{proof}
	By Lemma \ref{AkEst}, there exists $M \in \N$ so that for all $N \geq M$ we have
	\begin{equation*}
	\phi^{-1}_N\left(A\left(\frac{1}{10}R_1, \frac{9}{10}R_1\right)\right) \subset A\left(\frac{1}{20}R_1, \frac{19}{20}R_1\right).
	\end{equation*}
	By perhaps choosing $M$ larger we have for all $N \geq M$ that $R_1 - 1 \geq \frac{19}{20}R_1$ as well. Then, for all $z \in A(\frac{1}{10}R_1, \frac{9}{10}R_1)$, we have by Lemma \ref{AkEst} and Lemma \ref{the_k_equals_1_lemma_qN} that
	\begin{equation}
	\label{the_k_equal_1_lemma_upper}
	\max_{z \in A(\frac{1}{10}R_1, \frac{9}{10}R_1)} |f_N(z)| = \max_{z \in A(\frac{1}{10}R_1, \frac{9}{10}R_1)} |q_N(\phi_N^{-1}(z))| 
	\leq 2 c_N |\phi_N^{-1}(z)|^{M_N} 
	\leq 2 c_N \alpha_1^{M_N} |z|^{M_N}.
	\end{equation}
	Similarly, we obtain
	\begin{equation}
	\label{the_k_equal_1_lemma_lower}
	\min_{z \in A(\frac{1}{10}R_1, \frac{9}{10}R_1)} |f_N(z)| = \min_{z \in A(\frac{1}{10}R_1, \frac{9}{10}R_1)} |q_N(\phi_N^{-1}(z))| 
	\geq \frac{1}{2} c_N |\phi_N^{-1}(z)|^{M_N}
	\geq \frac{1}{2} c_N \beta_1^{M_N}|z|^{M_N}
	\end{equation}
	This proves the claim.
\end{proof}

We are now ready to prove some basic lemmas about the macroscopic mapping behavior of the function $f_N$. First we will need the following basic lemma. 
\begin{lem}
	\label{holomorphic_annulus}
	Suppose that $g$ is holomorphic on an annulus $W = A(a,b)$ and continuous up to the boundary of $W$. Let $U = A(c,d)$.
	\begin{enumerate}
		\item If $|g(z)| \leq c$ on $|z| = a$ and $|g(z)| \geq d$ on $|z| = b$, then $U \subset g(W)$.
		\item Suppose $g$ has no zeros in $W$ and that $g(\partial W) \subset \overline{U}$. Then $g(W) \subset \overline{U}$.
	\end{enumerate}
\end{lem}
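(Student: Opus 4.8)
The plan is to reduce both statements to the argument principle together with the basic fact that a nonconstant holomorphic map is open. For part (1), fix a point $w_0 \in U$, so $c < |w_0| < d$. I want to count the zeros of $g(z) - w_0$ in $W$. On the inner boundary $|z| = a$ we have $|g(z)| \le c < |w_0|$, hence $|g(z) - w_0| \ge |w_0| - |g(z)| > 0$, so $g - w_0$ has no zeros there; similarly on $|z| = b$ we have $|g(z)| \ge d > |w_0|$, so $|g(z) - w_0| \ge |g(z)| - |w_0| > 0$ and again no zeros on that boundary circle. Thus the winding number $n(g(\gamma_b), w_0) - n(g(\gamma_a), w_0)$, where $\gamma_a, \gamma_b$ are the boundary circles of $W$ traversed counterclockwise, equals the number of zeros of $g - w_0$ in $W$ counted with multiplicity. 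On $\gamma_a$ the image curve $g(\gamma_a)$ lies in the disc $\{|\zeta| \le c\}$, which does not contain $w_0$, so $n(g(\gamma_a), w_0) = 0$; on $\gamma_b$ the image curve lies in $\{|\zeta| \ge d\}$, and since $w_0$ lies in the bounded complementary component $\{|\zeta| < d\}$ of that set, the winding number $n(g(\gamma_b), w_0)$ equals the winding number of $g(\gamma_b)$ about any point of that component, in particular about $0$, which is the change in argument of $g$ around $\gamma_b$ and is a well-defined integer (the same for every $w_0 \in U$). Hence the number of zeros of $g - w_0$ in $W$ equals this fixed integer $m := n(g(\gamma_b), 0) \ge 0$ for every $w_0 \in U$. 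Since $g$ is nonconstant (it takes at least two values, e.g.\ by the boundary hypotheses it attains modulus $\le c$ and $\ge d$), it is an open map, so $g(W)$ is open; moreover $g(W) \cap U$ is relatively closed in $U$ because the set of $w_0 \in U$ with $m$ preimages is all of $U$ and $m \ge 1$ forces $w_0 \in g(W)$ — and if instead $m = 0$ we would contradict that $g$ omits no value of $U$... let me instead argue directly: the function $w_0 \mapsto \#\{g = w_0 \text{ in } W\}$ is constant $= m$ on the connected set $U$, and it is not identically zero since $g(W)$ is a nonempty open set meeting $\overline U$ (indeed $g$ takes values of modulus between $c$ and $d$ somewhere, by continuity of $|g|$ along a radial arc from $\gamma_a$ to $\gamma_b$ and the intermediate value theorem). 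Therefore $m \ge 1$ and $U \subset g(W)$.

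For part (2), suppose $g$ has no zeros in $W$ and $g(\partial W) \subset \overline U = \{c \le |\zeta| \le d\}$. I want to show $g(W) \subset \overline U$, equivalently that $g(W)$ avoids both $\{|\zeta| < c\}$ and $\{|\zeta| > d\}$. Since $g$ is zero-free on $W$ and continuous on $\overline W$, which is compact, $|g|$ attains its minimum on $\overline W$; by the minimum modulus principle that minimum is attained on $\partial W$, where $|g| \ge c$, so $|g| \ge c$ throughout $W$, ruling out the inner region. For the outer region, $|g|$ attains its maximum on $\overline W$ on the boundary by the maximum modulus principle, and on $\partial W$ we have $|g| \le d$, so $|g| \le d$ throughout $W$. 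Combining, $c \le |g(z)| \le d$ for all $z \in W$, i.e.\ $g(W) \subset \overline U$, as claimed.

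The only mildly delicate point — and the one I would be most careful about — is the verification in part (1) that the common preimage count $m$ is strictly positive rather than zero; this is exactly where one uses that $g$ genuinely maps across the annulus (realizing all moduli in $[c,d]$ along a radial path), so that $g(W)$ actually meets $U$, and then the constancy of the preimage count over the connected set $U$ upgrades "meets" to "contains". Everything else is a routine invocation of the argument principle and the maximum/minimum modulus principles, so I would keep those parts brief.
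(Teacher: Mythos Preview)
Your proof is correct. For part (2) you use the maximum and minimum modulus principles exactly as the paper indicates. For part (1) you go through the argument principle to show the preimage count over $U$ is a positive constant, whereas the paper simply appeals to the open mapping theorem (and defers details to \cite{Bis18}): the open-mapping route shows $g(W)\cap U$ is open, then uses a compactness/boundary argument to see it is also closed in $U$, and nonemptiness via the same radial intermediate-value step you give. Your approach is a bit heavier but yields the extra information that every value in $U$ is taken the same number of times; the only point worth tightening in a final write-up is the technicality that the argument principle, as stated, needs $g$ holomorphic across the boundary circles, which you can handle by passing to slightly smaller concentric circles $|z|=a'$, $|z|=b'$ on which the strict inequalities $|g|<|w_0|$ and $|g|>|w_0|$ persist by continuity.
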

\begin{proof}
	The first item uses the fact that holomorphic maps are open. The second item is an application of the maximum principle. A detailed proof can be found in Lemma 11.1 of \cite{Bis18}.
\end{proof}

Next, we prove the following lemma about the mapping behavior on $A_k$, where we will see the dynamics of $f_N$ are the most interesting. 

%\begin{figure}
%	\centering
%	\scalebox{.9}{\input{map Ak.pdf_tex}}
%	\caption{The dynamics of $f_N$ are very expansive on $V_k$; $f_N$ maps the annulus $V_k$ onto an annulus that contains $A_{k+1}$.}
%	\label{Ak_picture}
%\end{figure}

\begin{lem}
	\label{Ak}
	There exists $M \in \N$ such that for all $N \geq M$ and for all $k \geq 1$, we have
	\begin{equation}	
	A_{k+1} \subset f_N(V_k) \subset f_N(A_k).
	\end{equation}
\end{lem}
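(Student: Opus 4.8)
The statement to prove is that $A_{k+1} \subset f_N(V_k) \subset f_N(A_k)$ for all $k \geq 1$ and all sufficiently large $N$. The inclusion $f_N(V_k) \subset f_N(A_k)$ is immediate from $V_k = A(\tfrac25 R_k, \tfrac35 R_k) \subset A(\tfrac14 R_k, 4R_k) = A_k$, so the entire content is in the first inclusion. The plan is to apply part (1) of Lemma \ref{holomorphic_annulus} with $g = f_N$, $W = V_k$, and $U = A_{k+1}$. The two circles bounding $V_k$ are $|z| = \tfrac25 R_k$ and $|z| = \tfrac35 R_k$, and the target annulus $A_{k+1}$ has inner radius $\tfrac14 R_{k+1}$ and outer radius $4R_{k+1}$. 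So I need: $|f_N(z)| \leq \tfrac14 R_{k+1}$ when $|z| = \tfrac25 R_k$, and $|f_N(z)| \geq 4 R_{k+1}$ when $|z| = \tfrac35 R_k$.

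The main idea is that on the circles $|z| = \tfrac25 R_k$ and $|z| = \tfrac35 R_k$, the map $f_N$ is governed by Lemma \ref{PowerMap} (for $k \geq 2$, since then $V_k \subset A(\tfrac54 R_{k-1}, \tfrac34 R_k)$... wait, I should be careful: $V_k$ sits near $|z| \approx R_k/2$, which is in the range $A(\tfrac54 R_{k-1}, \tfrac34 R_k)$ provided $R_k/2 > \tfrac54 R_{k-1}$, i.e. $R_k > \tfrac52 R_{k-1}$, which holds by Lemma \ref{Rkest} equation (\ref{Rkest_eqn4}); and near the origin, for $k=1$, $V_1 \subset A(\tfrac1{10}R_1, \tfrac9{10}R_1)$ so I instead use Lemma \ref{the_k_equal_1_lemma}). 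Thus for $k \geq 2$ we have $|f_N(z)| \approx C_k |z|^{n_k}$ up to the bounded factors $\beta_{k-1}^{n_k}, \alpha_{k-1}^{n_k}$, and I must check $C_k (\tfrac25 R_k)^{n_k} \cdot \alpha_{k-1}^{n_k} \leq \tfrac14 R_{k+1}$ and $C_k(\tfrac35 R_k)^{n_k}\cdot\beta_{k-1}^{n_k} \geq 4 R_{k+1}$. Since $R_{k+1} = C_k (\tfrac12 R_k)^{n_k}$ exactly (the re-indexed recursion noted after Definition \ref{bigR}), both reduce to comparing $(\tfrac25 \cdot \alpha_{k-1})^{n_k}$, resp. $(\tfrac35 \cdot \beta_{k-1})^{n_k}$, against $(\tfrac12)^{n_k}$ with room to spare for the constants $\tfrac14$ and $4$. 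Because $\tfrac25 \alpha_{k-1} < \tfrac12 < \tfrac35 \beta_{k-1}$ once $N$ is large enough that $\alpha_{k-1}, \beta_{k-1}$ are within $\tfrac{1}{100}$ of $1$ (Remark \ref{alpha_beta_close_1}), these are exponentially small / exponentially large in $n_k$, which easily absorbs the multiplicative constants $4$ and $\tfrac14$; here I use that $n_k \to \infty$, in fact $n_k \geq 2^N$. The $k=1$ case is the same computation with the extra harmless factor $2$ from Lemma \ref{the_k_equal_1_lemma} and with $q_N$, $c_N$, $M_N$, $R_1$ in place of the re-indexed symbols.

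I would carry out the steps in this order: (i) dispose of $f_N(V_k) \subset f_N(A_k)$ by the set inclusion $V_k \subset A_k$; (ii) verify $V_k$ lies in the annular range where Lemma \ref{PowerMap} (for $k \geq 2$) or Lemma \ref{the_k_equal_1_lemma} (for $k=1$) applies, using (\ref{Rkest_eqn4}) to locate $\tfrac12 R_k$ strictly between $\tfrac54 R_{k-1}$ and $\tfrac34 R_k$; (iii) on $|z| = \tfrac25 R_k$ bound $|f_N(z)|$ above by $\alpha_{k-1}^{n_k} C_k (\tfrac25 R_k)^{n_k} = (\tfrac45\alpha_{k-1})^{n_k} R_{k+1} \leq \tfrac14 R_{k+1}$ for $N$ large; (iv) on $|z| = \tfrac35 R_k$ bound $|f_N(z)|$ below by $\beta_{k-1}^{n_k} C_k(\tfrac35 R_k)^{n_k} = (\tfrac65 \beta_{k-1})^{n_k} R_{k+1} \geq 4 R_{k+1}$ for $N$ large; (v) apply Lemma \ref{holomorphic_annulus}(1) with $g = f_N|_{\overline{V_k}}$ (noting $f_N$ is holomorphic on a neighborhood of $\overline{V_k}$ and continuous up to the boundary) to conclude $A_{k+1} \subset f_N(V_k)$; (vi) handle $k=1$ separately with Lemma \ref{the_k_equal_1_lemma}, which only changes the constants by a factor of $2$. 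I do not expect any genuine obstacle — the only thing requiring a little care is choosing $M$ (equivalently $N \geq M$) uniformly in $k$: this works because the relevant quantities $(\tfrac45 \alpha_{k-1})^{n_k}$ and $(\tfrac65 \beta_{k-1})^{n_k}$ are, respectively, at most $(\tfrac{81}{100})^{2^N}$ and at least $(\tfrac{6\cdot 99}{5\cdot 100})^{2^N}$ for every $k \geq 1$ by Remark \ref{alpha_beta_close_1}, and $2^N \to \infty$, so a single $N$ works for all $k$ simultaneously.
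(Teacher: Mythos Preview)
Your proposal is correct and follows essentially the same approach as the paper: bound $|f_N|$ on the inner and outer boundary circles of $V_k$ via Lemma~\ref{PowerMap} (or Lemma~\ref{the_k_equal_1_lemma} for $k=1$), reduce to the recursion $R_{k+1} = C_k(\tfrac12 R_k)^{n_k}$, and invoke Lemma~\ref{holomorphic_annulus}(1). The paper writes $\alpha_k,\beta_k$ where you (more carefully, given the range $A(\tfrac54 R_{k-1},\tfrac34 R_k)$ in Lemma~\ref{PowerMap}) write $\alpha_{k-1},\beta_{k-1}$, but since both are within $\tfrac1{100}$ of $1$ by Remark~\ref{alpha_beta_close_1} this makes no difference to the argument.
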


\begin{proof}
	First, we prove the case of $k \geq 2$. In this setting, by Lemma \ref{PowerMap}
	\begin{align*}
	\max_{|z| = \frac{2}{5}R_k} |f_N(z)| &\leq \max_{|z| = \frac{2}{5}R_k}  \alpha^{n_{k}}_k C_{k}  |z|^{n_{k}} 
	\leq  \alpha^{n_{k}}_k C_k \left( \frac{2}{5} R_k \right)^{n_{k}} 
	\leq
	\alpha^{n_{k}}_k \left( \frac{4}{5} \right)^{n_{k}} \cdot C_{k} \cdot \left(\frac{1}{2}R_k\right)^{n_{k}}\\
	\end{align*}
	By (\ref{alpha_beta_close_1_eqn}), we have $\alpha_k \cdot\frac{4}{5} \leq \frac{7}{8}$. Therefore, if $N \geq 5$, since $R_{k+1}= C_{k} \cdot (\frac{R_k}{2})^{n_{k}}$, we end up with 
	\begin{equation}
	\label{Inner_Vk}
	\max_{|z| = \frac{2}{5}R_k} |f_N(z)| \leq  \left(\frac{7}{8}\right)^{n_{k}} R_{k+1} < \frac{1}{4} R_{k+1}.
	\end{equation}
	Next, observe that by Lemma $\ref{PowerMap}$, we have
	\begin{align*}
	\min_{|z| = \frac{3}{5}R_k} |f_N(z)| \geq \min_{|z| = \frac{3}{5}R_k} \beta_k^{n_k} C_k |z|^{n_{k}} 
	= \beta_k^{n_k} C_k \left(\frac{3}{5} R_k\right)^{n_{k}} 
	= \beta_k^{n_k} \cdot \left(\frac{6}{5}\right)^{n_k} \cdot C_{k} \cdot \left(\frac{1}{2}R_k\right)^{n_{k}}.
	\end{align*}
	By (\ref{alpha_beta_close_1_eqn}), we have $\beta_k\cdot \frac{6}{5} \geq \frac{9}{8}$. If $N \geq 5$, we end up with
	\begin{equation}
	\label{Outer_Vk}
	\min_{|z| = \frac{3}{5}R_k} |f_N(z)| \geq \left(\frac{9}{8}\right)^{n_{k}} R_{k+1} > 4 R_{k+1}.
	\end{equation}
	The lemma for the case of $k \geq 2$ now follows from Lemma \ref{holomorphic_annulus}, part $(1)$. 
	
	The case of $k =1$ is almost exactly the same, except we now have to use Lemma \ref{the_k_equal_1_lemma}. By following the exact same steps as above, we obtain since $N \geq 5$ that
	
	\begin{equation}
	\label{Inner_V1}
	\max_{|z| = \frac{2}{5}R_1} |f_N(z)| \leq 2 \left(\frac{7}{8}\right)^{n_1}R_{2} < \frac{1}{4} R_2, 
	\end{equation}
	and,
	\begin{equation}
	\label{Outer_V1}
	\min_{|z| = \frac{3}{5}R_1} |f_N(z)| \geq \frac{1}{2}\left(\frac{9}{8}\right)^{n_{1}} R_{2} > 4 R_2
	\end{equation}
	The lemma for the case of $k = 1$ now follows from Lemma \ref{holomorphic_annulus}, part $(1)$.
\end{proof}

\begin{lem}
	\label{Bk}
	There exists $M \in \N$ so that for all $N \geq M$ and for all $k \geq 1$, we have 
	\begin{equation}
	f_N(B_k)  \subset  B_{k+1}.
	\end{equation}
\end{lem}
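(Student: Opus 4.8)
The strategy is to apply part (2) of Lemma~\ref{holomorphic_annulus} on the annulus $B_k = \overline{A(4R_k, \tfrac14 R_{k+1})}$. Since $f_N$ has no zeros on $B_k$ (by Proposition~\ref{zeros}, all zeros of $f_N$ with $|z|\geq \tfrac14 R_1$ lie in $\cup_j A_j \subset \cup_j A(\tfrac14 R_j, 4R_j)$, which is disjoint from $B_k$), it suffices to check that $f_N(\partial B_k) \subset B_{k+1} = \overline{A(4R_{k+1}, \tfrac14 R_{k+2})}$; then the maximum principle applied to $f_N$ and to $1/f_N$ forces $f_N(B_k)\subset \overline{A(4R_{k+1},\tfrac14 R_{k+2})}$. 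Actually one must be slightly careful: part (2) of Lemma~\ref{holomorphic_annulus} gives $g(W)\subset\overline U$ from $g(\partial W)\subset\overline U$ and no zeros; so the whole proof reduces to estimating $|f_N|$ on the two boundary circles $|z| = 4R_k$ and $|z| = \tfrac14 R_{k+1}$.

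First I would treat the generic case $k\geq 2$. Both boundary circles of $B_k$ lie inside the range $A(\tfrac54 R_k, \tfrac34 R_{k+1})$ where Lemma~\ref{PowerMap} applies (note $4R_k > \tfrac54 R_k$ and $\tfrac14 R_{k+1} < \tfrac34 R_{k+1}$, and $4R_k < \tfrac14 R_{k+1}$ holds by \eqref{Rkest_eqn4}), so
\begin{equation*}
\beta_k^{n_{k+1}} C_{k+1} |z|^{n_{k+1}} \leq |f_N(z)| \leq \alpha_k^{n_{k+1}} C_{k+1} |z|^{n_{k+1}}.
\end{equation*}
On $|z| = 4R_k$: writing $C_{k+1}(4R_k)^{n_{k+1}} = 8^{n_{k+1}} C_{k+1}(\tfrac12 R_k)^{n_{k+1}}$ and recalling $R_{k+1} = C_k(\tfrac12 R_k)^{n_k}$, this is not directly $R_{k+1}$-shaped because the exponent is $n_{k+1} = 2n_k$, not $n_k$; instead I would argue through magnitudes. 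The cleanest route: on $|z| = 4R_k$ we have $|z| \leq \tfrac12 R_{k+1}$ by \eqref{Rkest_eqn4} (since $4R_k \leq R_{k+1}^{1/2}$), hence $|f_N(z)| \leq \alpha_k^{n_{k+1}} C_{k+1}(\tfrac12 R_{k+1})^{n_{k+1}} \leq \alpha_k^{n_{k+1}}(\tfrac12)^{n_{k+1}} R_{k+2} < \tfrac14 R_{k+2}$ using $\alpha_k < \tfrac{101}{100}$ and $R_{k+2} = C_{k+1}(\tfrac12 R_{k+1})^{n_{k+1}}$. For the lower bound on $|z| = 4R_k$, I want $|f_N(z)| \geq 4R_{k+1}$: here $|f_N(z)| \geq \beta_k^{n_{k+1}} C_{k+1}(4R_k)^{n_{k+1}}$, and since $C_{k+1} = R_{k+2}/(\tfrac12 R_{k+1})^{n_{k+1}}$ this equals $\beta_k^{n_{k+1}}(8R_k/R_{k+1})^{n_{k+1}} R_{k+2}$ — this is awkward, so instead I'd directly compare to $4R_{k+1}$ via $C_{k+1}(4R_k)^{n_{k+1}} = C_{k+1} R_k^{n_k}\cdot 4^{n_{k+1}} R_k^{n_k}$ and use $C_{k+1}R_k^{n_k}$-type estimates from \eqref{Rkest_eqn_2}, \eqref{Rkest_eqn}. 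The analogous pair of bounds on $|z| = \tfrac14 R_{k+1}$ (where I want $4R_{k+1} \leq |f_N| \leq \tfrac14 R_{k+2}$) is handled the same way: $\tfrac14 R_{k+1} = \tfrac12\cdot\tfrac12 R_{k+1}$ gives $|f_N(z)| \leq \alpha_k^{n_{k+1}}(\tfrac14)^{n_{k+1}} R_{k+2} < \tfrac14 R_{k+2}$ immediately, and the lower bound $|f_N(z)| \geq \beta_k^{n_{k+1}} C_{k+1}(\tfrac14 R_{k+1})^{n_{k+1}} = \beta_k^{n_{k+1}}(\tfrac12)^{n_{k+1}} R_{k+2} \geq 4R_{k+1}$ follows from $R_{k+2} \gg R_{k+1}$ (e.g. \eqref{Rkest_eqn3}/\eqref{Rkest_eqn4}) absorbing the $\beta_k^{n_{k+1}}(\tfrac12)^{n_{k+1}}$ factor. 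So on both boundary components of $B_k$ one gets $4R_{k+1} \leq |f_N(z)| \leq \tfrac14 R_{k+2}$, i.e. $f_N(\partial B_k)\subset B_{k+1}$.

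For the base case $k = 1$, the inner circle $|z| = 4R_1$ may fall in the regime near $|z| = R_1$ where I must use Lemma~\ref{the_k_equal_1_lemma} (or Lemma~\ref{PowerMap} for $k=1$, whichever applies at radius $4R_1$ relative to $\tfrac54 R_1$ — in fact $4R_1 > \tfrac54 R_1$ so Lemma~\ref{PowerMap} with $k=1$ applies, giving the same form with $n_2$); the outer circle $|z| = \tfrac14 R_2$ is handled by Lemma~\ref{PowerMap} with $k=1$ exactly as above, possibly picking up the harmless extra factors of $2$ and $\tfrac12$ seen in \eqref{Inner_V1}–\eqref{Outer_V1}. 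These constants are all swallowed by the super-exponential gap between consecutive $R_k$'s. The main obstacle is purely bookkeeping: keeping straight that the exponent governing $f_N$ on $B_k$ is $n_{k+1}$ (so the natural ``unit'' $C_{k+1}(\tfrac12 R_{k+1})^{n_{k+1}} = R_{k+2}$ is the \emph{next} radius, not $R_{k+1}$), and verifying that $4R_k, \tfrac14 R_{k+1} \in [\tfrac54 R_k, \tfrac34 R_{k+1}]$ so that Lemma~\ref{PowerMap} is genuinely available on all of $\partial B_k$ — this last point uses \eqref{Rkest_eqn4} ($R_{k+1} \geq 4R_k^2$, so in particular $R_{k+1} \geq 16R_k$, giving $4R_k \leq \tfrac14 R_{k+1}$ with room to spare). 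Once the four boundary estimates are in hand, Lemma~\ref{holomorphic_annulus}(2) and Proposition~\ref{zeros} finish the proof, and enlarging $M$ as needed handles all the ``for $N$ sufficiently large'' caveats uniformly.
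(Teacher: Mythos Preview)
Your approach is the paper's: reduce to the four boundary estimates via Lemma~\ref{holomorphic_annulus}(2) and Proposition~\ref{zeros}, then invoke Lemma~\ref{PowerMap} on both circles (and yes, $k=1$ needs no separate treatment since $4R_1>\tfrac54 R_1$ already puts you in the range of Lemma~\ref{PowerMap}). One arithmetic slip: on $|z|=4R_k$ you bound $|z|\le\tfrac12 R_{k+1}$ and then write $C_{k+1}(\tfrac12 R_{k+1})^{n_{k+1}}=(\tfrac12)^{n_{k+1}}R_{k+2}$, but in fact $C_{k+1}(\tfrac12 R_{k+1})^{n_{k+1}}=R_{k+2}$ exactly, so your displayed chain gives only $|f_N(z)|\le\alpha_k^{n_{k+1}}R_{k+2}$, which is not $<\tfrac14 R_{k+2}$. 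The easy fix is to use $4R_k\le\tfrac14 R_{k+1}$ instead (this is just the nonemptiness of $B_k$), picking up the missing $(\tfrac12)^{n_{k+1}}$. The paper organizes the computation differently: it records the identity $C_{k+1}/C_k=R_k^{-n_k}$ and expresses $C_{k+1}(4R_k)^{n_{k+1}}$ as a multiple of $R_{k+1}$ (namely $32^{n_k}R_{k+1}$), then compares to $R_{k+2}$ via~\eqref{Rkest_eqn}; this same identity gives the lower bound on $|z|=4R_k$ as $(2\beta_k^2)^{n_k}R_{k+1}>8R_{k+1}$ directly, bypassing the ``awkward'' step you flagged.
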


\begin{figure}
	\centering
	\scalebox{.8}{%% Creator: Inkscape 1.0 (4035a4fb49, 2020-05-01), www.inkscape.org
%% PDF/EPS/PS + LaTeX output extension by Johan Engelen, 2010
%% Accompanies image file '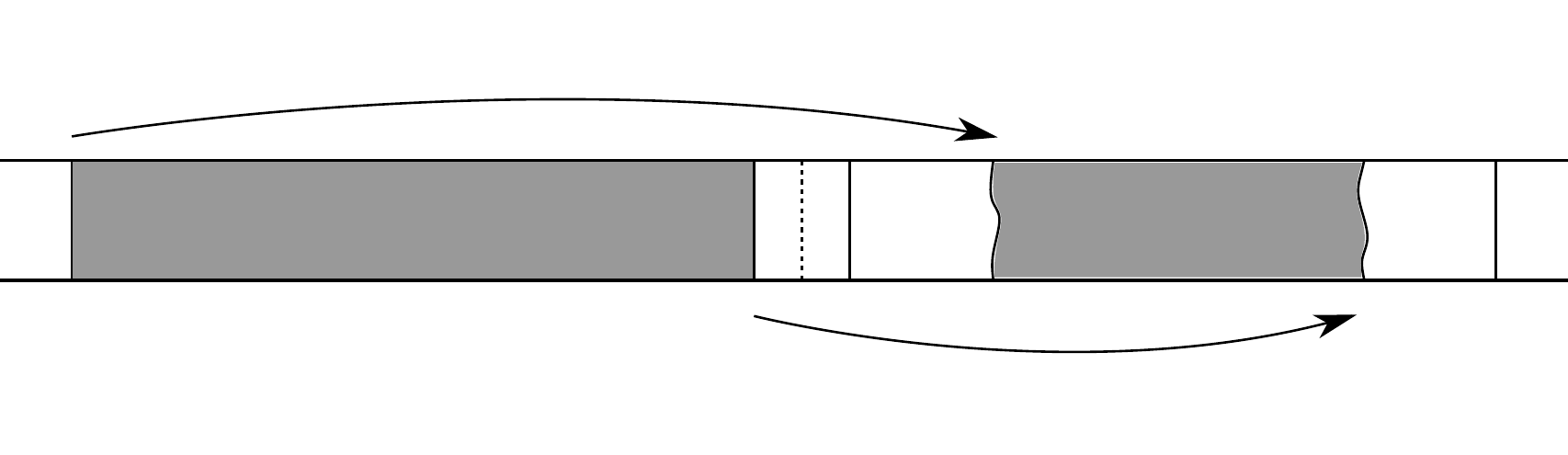' (pdf, eps, ps)
%%
%% To include the image in your LaTeX document, write
%%   \input{<filename>.pdf_tex}
%%  instead of
%%   \includegraphics{<filename>.pdf}
%% To scale the image, write
%%   \def\svgwidth{<desired width>}
%%   \input{<filename>.pdf_tex}
%%  instead of
%%   \includegraphics[width=<desired width>]{<filename>.pdf}
%%
%% Images with a different path to the parent latex file can
%% be accessed with the `import' package (which may need to be
%% installed) using
%%   \usepackage{import}
%% in the preamble, and then including the image with
%%   \import{<path to file>}{<filename>.pdf_tex}
%% Alternatively, one can specify
%%   \graphicspath{{<path to file>/}}
%% 
%% For more information, please see info/svg-inkscape on CTAN:
%%   http://tug.ctan.org/tex-archive/info/svg-inkscape
%%
\begingroup%
  \makeatletter%
  \providecommand\color[2][]{%
    \errmessage{(Inkscape) Color is used for the text in Inkscape, but the package 'color.sty' is not loaded}%
    \renewcommand\color[2][]{}%
  }%
  \providecommand\transparent[1]{%
    \errmessage{(Inkscape) Transparency is used (non-zero) for the text in Inkscape, but the package 'transparent.sty' is not loaded}%
    \renewcommand\transparent[1]{}%
  }%
  \providecommand\rotatebox[2]{#2}%
  \newcommand*\fsize{\dimexpr\f@size pt\relax}%
  \newcommand*\lineheight[1]{\fontsize{\fsize}{#1\fsize}\selectfont}%
  \ifx\svgwidth\undefined%
    \setlength{\unitlength}{491.2500189bp}%
    \ifx\svgscale\undefined%
      \relax%
    \else%
      \setlength{\unitlength}{\unitlength * \real{\svgscale}}%
    \fi%
  \else%
    \setlength{\unitlength}{\svgwidth}%
  \fi%
  \global\let\svgwidth\undefined%
  \global\let\svgscale\undefined%
  \makeatother%
  \begin{picture}(1,0.29014155)%
    \lineheight{1}%
    \setlength\tabcolsep{0pt}%
    \put(0,0){\includegraphics[width=\unitlength,page=1]{map_Bk.pdf}}%
    \put(0.33889837,0.24374376){\color[rgb]{0,0,0}\makebox(0,0)[lt]{\lineheight{1.25}\smash{\begin{tabular}[t]{l}\scalebox{1.5}{$f_N$}\end{tabular}}}}%
    \put(0.67872112,0.01440237){\color[rgb]{0,0,0}\makebox(0,0)[lt]{\lineheight{1.25}\smash{\begin{tabular}[t]{l}\scalebox{1.5}{$f_N$}\end{tabular}}}}%
    \put(0.22499671,0.1323112){\color[rgb]{0,0,0}\makebox(0,0)[lt]{\lineheight{1.25}\smash{\begin{tabular}[t]{l}\scalebox{1.5}{$B_k$}\\\end{tabular}}}}%
    \put(0.71213433,0.13874666){\color[rgb]{0,0,0}\makebox(0,0)[lt]{\lineheight{1.25}\smash{\begin{tabular}[t]{l}\scalebox{1.5}{$B_{k+1}$}\end{tabular}}}}%
  \end{picture}%
\endgroup%
}
	\caption{$f_N$ maps the annulus $B_k$ into the annulus $B_{k+1}$. The picture is not to scale; in reality, the modulus of $B_{k+1}$ is much larger than the modulus of $f_N(B_k)$.}
	\label{Bk_picture}
\end{figure}

\begin{proof}
	We will adopt a similar strategy to Lemma \ref{Ak}, using Lemma \ref{holomorphic_annulus}, part $(2)$. First, we make the important observation that $\frac{C_{k+1}}{C_k} = R_k^{-n_k}$ for all $k \geq 1$. Next, observe that if $|z| = 4 R_k$, then we have by Lemma \ref{PowerMap} 
	\begin{align*}
	\max_{|z| = 4R_k} |f_N(z)| &\leq \max_{|z| = 4R_k}  \alpha_k^{n_{k+1}} C_{k+1}|z|^{n_{k+1}} \\
	&=   \alpha_k^{n_{k+1}} C_{k+1} (4R_k)^{n_{k+1}} \\
	&=   \alpha_k^{n_{k+1}}  8^{n_{k+1}} \cdot C_{k} R_k^{-n_k}\cdot \left(\frac{1}{2}R_k\right)^{n_{k+1}} \\
	&=  \alpha_k^{n_{k+1}}  8^{n_{k+1}} C_{k} \left(\frac{1}{2}R_k\right)^{n_{k}} \cdot R_k^{-n_k} \left(\frac{1}{2}R_k\right)^{n_{k}}\\
	&=  \alpha_k^{n_{k+1}} 8^{n_{k+1}} 2^{-n_{k}} R_{k+1}. \\
	&=  (\alpha_k^2)^{n_{k}} 32^{n_{k}} R_{k+1}
	\end{align*}
	By (\ref{alpha_beta_close_1_eqn}), we have $\alpha_k^2 \leq 2$ for all $k \geq 1$.
	By Lemma \ref{Rkest}, there exists $M$ so that for all $N \geq M$, and for all $k \geq 1$ we have $R_k^{\frac{1}{2}} > 256$. Therefore,
	\begin{equation}
	\label{Setup1}
	\frac{64^{n_{k+1}} R_{k+1}}{R_{k+2}} \leq \frac{64^{n_{k+1}}R_{k+1}}{\left( \frac{1}{2}\right)^{n_{k+1}} R_{k+1}^{n_k+1}} = \left(\frac{128}{R^{\frac{1}{2}}_{k+1}} \right)^{n_{k+1}} \leq \left(\frac{1}{2}\right)^{n_{k+1}}.
	\end{equation}
	By (\ref{Setup1}), and since $N \geq 5$, we have for all $k \geq 1$
	\begin{equation}
	\label{4R_k_Max}
	\max_{|z| = 4R_k} |f_N(z)| \leq 64^{n_{k+1}} R_{k+1} \leq \left( \frac{1}{2}\right)^{n_{k+1}} R_{k+2} < \frac{1}{8} R_{k+2}.
	\end{equation}
	
	Next observe that by Lemma \ref{PowerMap} we similarly have
	\begin{align*}
	\min_{|z| = 4R_k} |f_N(z)| &\geq  \min_{|z| = 4R_k}  \beta_k^{n_{k+1}} C_{k+1}|z|^{n_{k+1}} \\
	.	&= \beta_k^{n_{k+1}} C_k R_k^{-n_k} \cdot 2^{n_{k+1}} \left(\frac{R_k}{2}\right)^{n_{k}} \cdot \left(\frac{R_k}{2}\right)^{n_{k}} \\
	&=(\beta^2_k)^{n_{k}} 2^{n_k} R_{k+1} = (2\beta_k^2)^{n_k} R_{k+1}
	\end{align*} 
	By (\ref{alpha_beta_close_1_eqn}) we have $\beta_k^2 \cdot 2 \geq  \frac{3}{2}$. Therefore, since $N \geq 5$, we have for all $k \geq 1$ that
	\begin{equation}
	\label{4R_k_Min}
	\min_{|z| = 4R_k} |f_N(z)| \geq \left( \frac{3}{2} \right)^{n_{k}} R_{k+1} >  8 R_{k+1}.
	\end{equation}
	Therefore, by (\ref{4R_k_Max}) and (\ref{4R_k_Min}), for all $k \geq 1$ we have
	\begin{equation}
	\label{Inner_Bk}
	f_N(|z| = 4R_k) \subset A\left(8R_k,\frac{1}{8}R_{k+1}\right)  \subset B_{k+1}.
	\end{equation}	
	
	We can use similar techniques as above to analyze the behavior of $f_N$ on the outermost boundary of $B_k$. Indeed, we have 
	
	%The definition of $c_k$ from Theorem \ref{mainthm} along with (\ref{Ck_def}) implies that $C_{k+1} = C_{k+2} R_{k+1}^{n_{k+1}}$. By combining this with Lemma \ref{PowerMap}, we have
	%\begin{align*}
	%\max_{|z| = \frac{1}{4}R_{k+1}} |f_N(z)| &\leq \max_{|z| = \frac{1}{4}R_{k+1}}  \alpha_k^{n_{k+1}}C_{k+1}|z|^{n_{k+1}} \\
	%&=  \alpha_k^{n_{k+1}}  \left(\frac{1}{2}\right)^{n_{k+1}}  \cdot C_{k+1} \cdot \left( \frac{R_{k+1}}{2}\right)^{n_{k+1}}   \\
	%&=  \alpha_k^{n_{k+1}}  \left(\frac{1}{2}\right)^{n_{k+1}} R_{k+2}
	%\end{align*}
	% By (\ref{alpha_beta_close_1_eqn}) we have $\alpha_k  \cdot \frac{1}{2} \leq \frac{3}{4}$. Therefore, since $N \geq 5$, we have 
	
	\begin{equation}
	\label{14R_k_max}
	\max_{|z| = \frac{1}{4}R_{k+1}} |f_N(z)|  < \frac{1}{8} R_{k+2},
	\end{equation}
	and,
	%	\begin{align*}
	%	\min_{|z| = \frac{1}{4}R_{k+1}} |f_N(z)| &\geq  \min_{|z| = \frac{1}{4}R_{k+1}}   \beta_k^{n_{k+1}} C_{k+1} |z|^{n_{k+1}} \\
	%	&= \beta_k^{n_{k+1}} C_{k+1} \left(\frac{1}{4}\right)^{n_{k+1}} R_{k+1}^{n_{k+1}} \\
	%	&=  \beta_k^{n_{k+1}} \left(\frac{1}{4}\right)^{n_{k+1}} C_{k+1} R_{k+1}^{n_{k+1} -1} R_{k+1}.
	%	\end{align*}
	%	By Lemma \ref{Rkest}, we have $C_{k+1} R_{k+1}^{n_{k+1}-1} \geq R_{k+1}^{n_{k}}$, and by (\ref{alpha_beta_close_1_eqn}) we have $\beta_k  \frac{1}{4} \geq \frac{1}{8}$. It follows that 
	%	\begin{equation*}
	%		\min_{|z| = \frac{1}{4}R_{k+1}} |f_N(z)| \geq \left(\frac{1}{8}\right)^{n_{k+1}} R_{k+1}^{n_{k}} R_{k+1}.
	%	\end{equation*}
	%	By Lemma \ref{Rkest} there exists $M$ so that for all $N \geq M$, we have for all $k \geq 1$ that $R_{k+1}^{1/2} > 64$ and therefore we have
	%	\begin{equation*}
	%	\left(\frac{1}{8}\right)^{n_{k+1}} (R_{k+1}^{\frac{1}{2}})^{n_{k+1}} > 8^{n_{k+1}}.
	%	\end{equation*}
	%	It follows that for all such $N$, we have for all $k \geq 1$ since $N \geq 5$ that 
	\begin{equation}
	\label{14R_k_Min}
	\min_{|z| = \frac{1}{4}R_{k+1}} |f_N(z)|  > 8 R_{k+1}.
	\end{equation}
	Therefore, by (\ref{14R_k_Min}) and (\ref{14R_k_max}) we have 
	\begin{equation}
	\label{Outer_Bk}
	f_N(|z| = \frac{1}{4} R_{k+1}) \subset A\left(8R_k,\frac{1}{8}R_{k+1}\right) \subset B_{k+1}.
	\end{equation}  
	As we commented at the start, this proves the lemma.
\end{proof}

We conclude this Section by recording the location of the critical points and values of $f$ in relation to the annuli $A_k$, $B_k$.

\begin{prop}
	\label{Fatou_critical_points}
	There exists $M \in \N$ so that for all $N \geq M$, all critical points $z$ of $f_N$ with $|z| > \frac{1}{4} R_1$ belong to $\cup_{k=1}^{\infty} A_k$. Moreover, if $z \in A_k$ is a critical point, then $f_N(z) \in B_{k+1}$.
\end{prop}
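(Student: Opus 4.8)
The plan is to combine the critical point listing from Lemma \ref{branch_covering_hn} with the radial estimate on $\phi_N$ from Lemma \ref{AkEst} to locate the critical points, and then to combine Proposition \ref{zeros} with Lemma \ref{PowerMap} (and Lemma \ref{the_k_equal_1_lemma} for $k=1$) to track where they map. First I would recall that by Lemma \ref{branch_covering_hn} the critical points of $f_N$ with $|z| > \frac{1}{4}R_1$ are exactly the points $\phi_N\left( r_j \exp\left(i\frac{(2k_j-1)\pi}{M_j}\right)\right)$ for $j \geq N$ and $1 \leq k_j \leq M_j$ (the finitely many critical points coming from $g_N$ all lie in $B(0,r_N) = B(0,R_1)$, and a short argument using $|\phi_N(z)/z - 1| \to 0$ shows their images stay well inside $|z| < \frac{1}{4}R_1$ once $N$ is large — alternatively they correspond to the interior dynamics and are simply excluded by the hypothesis $|z| > \frac14 R_1$ after noting $\phi_N$ maps $\{|z| = r_N\}$ close to itself). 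Re-indexing via Definition \ref{bigR}, these critical points are $\phi_N\left(R_m \exp\left(i\frac{(2k_m - 1)\pi}{n_m}\right)\right)$ for $m \geq 1$. Since such a point $w = \phi_N(z)$ has $|z| = R_m \geq \frac{r_k}{20}$ for the appropriate index, Lemma \ref{AkEst} gives $\left(1 - C'(\tfrac12)^{\sqrt{m+N-1}/4}\right)R_m \leq |w| \leq \left(1 + C'(\tfrac12)^{\sqrt{m+N-1}/4}\right)R_m$, i.e. $\beta_m^{-1} \cdot \text{(nope)}$ — more precisely $|w| \in [\beta_m R_m \cdot (\text{const}), \ldots]$; using Remark \ref{alpha_beta_close_1} that $\frac{99}{100} < \beta_m < 1 < \alpha_m < \frac{101}{100}$ we get $\frac{99}{100} R_m < |w| < \frac{101}{100} R_m$, which is comfortably inside $A_m = A(\frac14 R_m, 4R_m)$. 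Hence every critical point of $f_N$ with modulus exceeding $\frac14 R_1$ lies in $\bigcup_{k \geq 1} A_k$, and in fact inside the sub-annulus $A(\frac{99}{100}R_k, \frac{101}{100}R_k)$.

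For the second assertion, suppose $z \in A_k$ is a critical point, so by the above $|z| \in (\frac{99}{100}R_k, \frac{101}{100}R_k)$; in particular $z \in A(\frac{5}{4}R_{k-1}, \frac34 R_{k+1})$ when $k \geq 2$ (since $\frac{99}{100}R_k > \frac54 R_{k-1}$ by $R_k \geq 4R_{k-1}^2 \gg R_{k-1}$ from Lemma \ref{Rkest}, and $\frac{101}{100}R_k < \frac34 R_{k+1}$ similarly). Then Lemma \ref{PowerMap} applies (with the index shifted by one, i.e. using the power map on $A(\frac54 R_{k-1}, \frac34 R_k)$... actually on $A(\frac54 R_k, \frac34 R_{k+1})$ — here I must be careful about which power map governs $z$; since $\frac{99}{100}R_k < \frac54 R_k$, the relevant statement is that $f_N$ near $|z| \approx R_k$ behaves like $C_k(\phi_N^{-1}(z))^{n_k}$, which is Lemma \ref{PrePowerMap}/\ref{PowerMap} with $k$ replaced by $k-1$). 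This yields $\beta_{k-1}^{n_k} C_k |z|^{n_k} \leq |f_N(z)| \leq \alpha_{k-1}^{n_k} C_k |z|^{n_k}$, and plugging in $|z| \in (\frac{99}{100}R_k, \frac{101}{100}R_k)$ together with $R_{k+1} = C_k(\frac{R_k}{2})^{n_k}$ gives $|f_N(z)| \in \left((\tfrac{2 \cdot 99}{100}\beta_{k-1})^{n_k}R_{k+1}, (\tfrac{2 \cdot 101}{100}\alpha_{k-1})^{n_k}R_{k+1}\right)$; since $\frac{198}{100}\beta_{k-1} > \frac{3}{2}$ and $\frac{202}{100}\alpha_{k-1} < \text{(large)}$... the lower bound $(\frac32)^{n_k}R_{k+1} > 4R_{k+1}$ is clear, and for the upper bound one uses $R_{k+2} = C_{k+1}(\frac{R_{k+1}}{2})^{n_{k+1}}$ and the estimate $C^{n_k}R_{k+1} < \frac14 R_{k+2}$ exactly as in \eqref{Setup1}–\eqref{4R_k_Max} of Lemma \ref{Bk}. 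Thus $f_N(z) \in A(4R_{k+1}, \frac14 R_{k+2}) \subset B_{k+1}$. For $k = 1$ the same computation goes through using Lemma \ref{the_k_equal_1_lemma} in place of Lemma \ref{PowerMap}, with the extra harmless factors of $\frac12$ and $2$ absorbed into the exponential gains.

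The main obstacle I anticipate is bookkeeping rather than conceptual: one must pin down precisely which of the "power map" lemmas (Lemma \ref{PrePowerMap}, \ref{PowerMap}, \ref{the_k_equals_1_lemma_qN}, \ref{the_k_equal_1_lemma}) is valid in the annulus $A(\frac{99}{100}R_k, \frac{101}{100}R_k)$ — the indexing convention where $f_N$ on $A(\frac54 R_k, \frac34 R_{k+1})$ is governed by the degree-$n_{k+1}$ monomial means that near $|z| \approx R_k$ it is the degree-$n_k$ monomial that matters, so all the estimates in Lemmas \ref{Ak} and \ref{Bk} must be re-run with indices decremented by one. A secondary point requiring care is verifying that the critical points arising from the polynomial $g_N$ near the origin genuinely have images (and indeed the points themselves, up to the correction $\phi_N$) confined to $|z| < \frac14 R_1$, so that the hypothesis $|z| > \frac14 R_1$ cleanly excludes them; this follows from Lemma \ref{AkEst} applied at $|z| = R_1 = r_N$ together with the fact that $g_N$'s critical points lie in $B(0, r_N)$ and, after the correction, still in a slightly larger ball that one checks has radius $< \frac14 R_1$ once $N$ is large — or one simply notes these are irrelevant to the stated claim since the claim only concerns critical points of modulus $> \frac14 R_1$. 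Choosing $M$ to be the maximum of the thresholds coming from Lemmas \ref{AkEst}, \ref{PowerMap}, \ref{the_k_equal_1_lemma}, \ref{Rkest}, and Remark \ref{alpha_beta_close_1} completes the argument.
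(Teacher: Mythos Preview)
Your first paragraph (locating the critical points in $A_k$ via Lemma \ref{branch_covering_hn} and Lemma \ref{AkEst}) is correct and matches the paper's argument.

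The second paragraph has a genuine gap. You try to estimate $|f_N(z)|$ at a critical point by invoking Lemma \ref{PowerMap} (or its $k=1$ analogue), but the critical points lie \emph{exactly} in the interpolation annulus where those lemmas do not apply. Concretely: the critical points have $\phi_N^{-1}(z)$ on the circle $|w| = R_k$, and in the $w$-coordinate the map $h_N = f_N \circ \phi_N$ is transitioning from degree $n_k$ to degree $n_{k+1}$ on the annulus $R_k \leq |w| \leq R_k \exp(\pi/n_k)$. Lemma \ref{PrePowerMap} with index $k-1$ gives the formula $f_N(z) = C_k(\phi_N^{-1}(z))^{n_k}$ only for $z \in A(\tfrac54 R_{k-1}, \tfrac34 R_k)$, and a critical point with $|z| \approx R_k > \tfrac34 R_k$ is not in that range; with index $k$ the formula is degree $n_{k+1}$ and applies only for $|z| > \tfrac54 R_k$. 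So neither version of Lemma \ref{PowerMap} covers the critical points, and your bound $\beta_{k-1}^{n_k} C_k |z|^{n_k} \leq |f_N(z)| \leq \alpha_{k-1}^{n_k} C_k |z|^{n_k}$ is not justified.

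The fix is much simpler than your approach and is what the paper does: Lemma \ref{branch_covering_hn} (and Proposition \ref{critical_point_listing}) already tells you the critical \emph{values} exactly --- they are $\pm C_k R_k^{n_k}$. There is no need to estimate $|f_N(z)|$ from the location of $z$; just compute
\[
C_k R_k^{n_k} = 2^{n_k} C_k \left(\tfrac{R_k}{2}\right)^{n_k} = 2^{n_k} R_{k+1},
\]
which is immediately $> 4R_{k+1}$, and then use Lemma \ref{Rkest} to check $2^{n_k} R_{k+1} < \tfrac14 R_{k+2}$. This places the critical value in $B_{k+1}$ with no case distinction between $k=1$ and $k \geq 2$ and no $\alpha_k$, $\beta_k$ bookkeeping at all. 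You had this information available (you cite Lemma \ref{branch_covering_hn} in your first paragraph) but then discarded it in favor of the power-map estimates.
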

\noindent

\begin{proof}
	By Proposition \ref{critical_point_listing} and Lemma \ref{AkEst}, there exists $M$ so that for all $N \geq M$ all critical points of $f_N$ satisfying $|z| \geq \frac{1}{4} R_1$ belong to $\cup_{k=1}^{\infty} A_k$. If $z \in A_k$ is a critical point, then Proposition \ref{critical_point_listing} also says that $|f_N(z)| = C_k R_k^{n_k}$. To see that $C_k R_k^{n_k} \in B_{k+1}$, first notice that we have the identity
	\begin{equation}
	\label{identity_CkRknk}
	C_k R_k^{n_k} = 2^{n_k} C_k (\frac{1}{2}R_k)^{n_k} = 2^{n_k} R_{k+1}.
	\end{equation}
	It follows immediately from (\ref{identity_CkRknk}) that since $N \geq 5$ we have
	\begin{equation}
	\label{crit_value_lower_bound}
	C_k R_k^{n_k} = 2^{n_k} R_{k+1} > 8R_{k+1}.
	\end{equation}
	On the other hand, we have by Lemma \ref{Rkest} that
	\begin{equation*}
	\frac{2^{n_k}R_{k+1}}{R_{k+2}} \leq \frac{8^{n_k}}{R_{k+1}^{n_k}} \leq \left(\frac{1}{2}\right)^{n_k}.
	\end{equation*}
	So since $N \geq 5$ we obtain from (\ref{identity_CkRknk}) that
	\begin{equation}
	\label{crit_value_upper_bound}
	C_k R^{n_k}_k \leq \left( \frac{1}{2} \right)^{n_{k}} R_{k+2} < \frac{1}{8} R_{k+2}
	\end{equation}
	Therefore, by (\ref{crit_value_lower_bound}) and (\ref{crit_value_upper_bound}) we have $f_N(z) \in B_{k+1}$.
\end{proof}

\begin{rem}
	\label{N is large}
	For the rest of the paper, we will always assume that $N$ is large enough so that all of the statements and inequalities in this section are valid.
\end{rem}

\section{Mapping Behavior near $0$}
\label{Origin}

Having studied the mapping behavior of $f$ in the region $|z|>R_1/4$ in Section \ref{Global Mapping}, we now study in Section \ref{Origin} the mapping behavior of $f$ in $|z|<R_1/4$. Recall that in $|z|<R_1/4$, the mapping $f$ satisfies $f(z)=q_N\circ\phi_N^{-1}(z)$, where $q_N(z) = c_N z^{M_N} + r_N z$. This polynomial was chosen so as to have a Cantor Julia set of dimension $\ll 1$. Thus, when we consider $f$ as a polynomial-like mapping by restricting the domain of $f$ to a subdomain of $|z|<R_1/4$, we will see that the Julia set of this polynomial-like mapping has dimension $\ll 1$.

We begin with the following lemma about the polynomial $q_N(z)$.
\begin{lem}
	\label{qN}
	Let $q_N(z) = c_N z^{M_N} + r_N \cdot z$. Then the derivative of $q_N(z)$ is 
	\begin{equation}
	\label{derivative_qN}
	q_N'(z) = c_N M_N z^{M_N-1} + r_N.
	\end{equation}
	The non-zero zeros of $q_N$ are given by
	\begin{equation}
	\label{zeros_qN}
	z = \left(\frac{-r_N}{c_N} \right)^{\frac{1}{M_N-1}}.
	\end{equation} 
	The critical points of $q_N$ are given by
	\begin{equation}
	\label{crit_qN}
	z = \left( \frac{-r_N}{c_N M_N}\right)^{\frac{1}{M_N-1}}.
	\end{equation}
	The critical values of $q_N$ lie on the circle
	\begin{equation}
	\label{critvalue_qN}
	|z| = \left( \frac{r_N}{c_N M_N}\right)^{\frac{1}{M_N-1}} \cdot r_N \cdot \left(1- \frac{1}{M_N} \right)
	\end{equation}
	The value of $|q_N'(z)|$ when $z$ is a zero of $q_N$ satisfies
	\begin{equation}
	\label{zeroexp_qN}
	|q_N'(z)| = r_N (M_N -1).
	\end{equation}
\end{lem}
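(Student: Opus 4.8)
Every assertion in this lemma is a direct computation from the defining formula $q_N(z) = c_N z^{M_N} + r_N z$, so the plan is simply to carry out the differentiations and substitutions in order, with no auxiliary constructions needed.

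First I would differentiate: $q_N'(z) = c_N M_N z^{M_N - 1} + r_N$, which gives \eqref{derivative_qN}. For the non-zero zeros, I would factor $q_N(z) = z\bigl(c_N z^{M_N - 1} + r_N\bigr)$; setting the second factor to zero gives $z^{M_N - 1} = -r_N/c_N$, i.e. \eqref{zeros_qN}. For the critical points, I would set $q_N'(z) = 0$ to get $z^{M_N - 1} = -r_N/(c_N M_N)$, which is \eqref{crit_qN}.

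Next, for the critical values I would evaluate $q_N$ at a critical point $z$. Using $z^{M_N - 1} = -r_N/(c_N M_N)$ one has $z^{M_N} = z \cdot z^{M_N-1} = -z r_N/(c_N M_N)$, hence
\[
q_N(z) = c_N\left(\frac{-z r_N}{c_N M_N}\right) + r_N z = r_N z\left(1 - \frac{1}{M_N}\right),
\]
and taking absolute values together with $|z| = (r_N/(c_N M_N))^{1/(M_N-1)}$ yields \eqref{critvalue_qN}. Finally, for \eqref{zeroexp_qN}, if $z$ is a non-zero zero then $z^{M_N-1} = -r_N/c_N$, so
\[
q_N'(z) = c_N M_N \left(\frac{-r_N}{c_N}\right) + r_N = r_N(1 - M_N),
\]
and $|q_N'(z)| = r_N(M_N - 1)$.

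There is no real obstacle here: the only points requiring a little care are bookkeeping the exponents (writing $z^{M_N} = z\cdot z^{M_N-1}$ to reuse the relations defining the zeros and critical points) and noting that all the displayed radii are the moduli of the corresponding $(M_N-1)$-th roots, which are well-defined up to the choice of root but have a single modulus. I would state the computations concisely and leave the elementary algebra to the reader.
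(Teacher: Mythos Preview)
Your proposal is correct and follows exactly the same approach as the paper: direct computation from the definition of $q_N$. The paper's proof is even more terse, explicitly verifying only the critical-value formula \eqref{critvalue_qN} and leaving the rest as ``simple calculations,'' so your version is if anything more complete.
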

\begin{proof}
	These are all simple calculations. We only verify (\ref{critvalue_qN}). If $z$ is a critical point of $q_N$ then we calculate that
	\begin{eqnarray*}
		q_N(z) &=& c_N \left( \frac{-r_N}{c_N M_N}\right)^{\frac{M_N}{M_N-1}} + r_N \left( \frac{-r_N}{c_N M_N}\right)^{\frac{1}{M_N-1}} \\
		&=& \left( \frac{-r_N}{c_N M_N}\right)^{\frac{1}{M_N-1}} \cdot \left(r_N - \frac{r_N}{M_N}\right) \\
		&=& \left( \frac{-r_N}{c_N M_N}\right)^{\frac{1}{M_N-1}} \cdot r_N \cdot \left( 1 - \frac{1}{M_N} \right).
	\end{eqnarray*}
	The result follows upon taking the absolute value.
\end{proof}

We will now prove that the critical values of $q_N$ map to $B_1$. This will be crucial in dimension estimates which require coverings that are built by considering the inverse $f^{-1}$. First we need the following technical Lemma:

\begin{lem}
	\label{quotient_est}
	For all $N \geq 10$, we have
	\begin{equation}
	\label{quotient_est_eqn}
	2^{M_{N-7}}r_{N-1} \leq \left( \frac{r_N}{c_N M_N} \right)^{\frac{1}{M_N - 1}} \leq \frac{1}{\sqrt{2}}r_{N-1}^2.
	\end{equation}
\end{lem}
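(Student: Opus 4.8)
The plan is to obtain both inequalities in \eqref{quotient_est_eqn} by taking $M_N-1$ powers of everything in sight and reducing each to an explicit comparison among the $r_j$'s, which can then be handled by the estimates already recorded in Lemma \ref{rk_prep} and Corollary \ref{rkinequalities}. Recall from Definition \ref{parameter_defn2} that $c_N = \prod_{j=1}^{N-1} r_j^{-M_j}$ and $r_{N} = c_{N-1}(\tfrac12 r_{N-1})^{M_{N-1}}$, so $r_N/c_N = r_N \cdot \prod_{j=1}^{N-1} r_j^{M_j}$; since $M_N = 2^N$, raising to the $1/(M_N-1)$ power is the natural move, and the factor $M_N$ in the denominator is harmless because $M_N^{1/(M_N-1)} \to 1$.

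For the \emph{lower} bound I would show $\left(\tfrac{r_N}{c_N M_N}\right) \geq \left(2^{M_{N-7}} r_{N-1}\right)^{M_N - 1}$. Using $r_N/c_N \geq r_N^{M_{N-1}+1} \cdot (\text{something} \geq 1)$-type relations from Corollary \ref{rkinequalities} (specifically \eqref{lemma3.2applies} rearranged, together with $c_N = c_{N-1} r_{N-1}^{-M_{N-1}}$), one reduces to checking that a large power of $r_N$ dominates $2^{M_{N-7}(M_N-1)} r_{N-1}^{M_N-1} M_N$. Since \eqref{Rkest_eqn3}/\eqref{xkest2_eqn1} give $r_N \geq 2^{M_{N-1}}$ and Lemma \ref{rk_prep} gives $\sqrt{r_N} \geq r_{N-1}$, the exponent bookkeeping closes with room to spare; the constants $2^{M_{N-7}}$ and $M_N$ are absorbed because $M_{N-7} = M_N/128$ is small compared with the available exponents on $r_N$.

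For the \emph{upper} bound I would show $\left(\tfrac{r_N}{c_N M_N}\right) \leq \left(\tfrac{1}{\sqrt 2}\, r_{N-1}^2\right)^{M_N-1}$, i.e. $r_N \cdot \prod_{j=1}^{N-1} r_j^{M_j} \leq 2^{-(M_N-1)/2} r_{N-1}^{2(M_N-1)} M_N$. Again using $\prod_{j=1}^{N-1} r_j^{M_j} = r_N/c_{N-1}\cdot(\text{explicit})$ and the growth estimate $r_N \geq 4 r_{N-1}^2$ from \eqref{xkest2_eqn2} to control the ratio $r_N/r_{N-1}^{2}$, one checks the inequality on exponents of $r_{N-1}$ (and of $2$); here one uses that $\sum_{j=1}^{N-1} M_j = M_N - 2$ and Lemma \ref{rk_prep} to bound the intermediate $r_j$'s ($j<N-1$) by powers of $r_{N-1}$, so the whole product is at most a controlled power of $r_{N-1}$. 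The $2^{-(M_N-1)/2}$ on the right is the tightest part, but it is comfortably beaten because each $r_j \geq 16 = 2^4$ contributes spare factors of $2$.

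The main obstacle is purely the \emph{exponent bookkeeping}: one must carefully track how $c_N = \prod_{j<N} r_j^{-M_j}$ interacts with the super-exponential growth of $r_N$, and verify that after taking $(M_N-1)$-th roots the leftover polynomial-in-$N$ factors ($M_N$, constants) do not spoil either bound. All the genuine inequalities needed ($r_N \geq 2^{M_{N-1}}$, $r_N \geq 4 r_{N-1}^2$, $\sqrt{r_N} \geq r_j$, and \eqref{lemma3.2applies}) are already in hand from Lemma \ref{rk_prep} and Corollary \ref{rkinequalities}, so no new ideas are required—just a disciplined calculation valid for $N \geq 10$, with the hypothesis $N\geq 10$ used exactly to guarantee indices like $N-7$ are $\geq 3$ so those corollaries apply.
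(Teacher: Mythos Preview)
Your overall strategy is reasonable, but the specific inequalities you invoke point the wrong way, and you are missing the one observation that makes the computation clean. The paper does \emph{not} work with the full product $r_N/c_N = r_N\prod_{j=1}^{N-1} r_j^{M_j}$; instead it first uses the recursion $r_N = c_{N-1}(r_{N-1}/2)^{M_{N-1}}$ together with $c_{N-1}/c_N = r_{N-1}^{M_{N-1}}$ to obtain the exact identity
\[
\frac{r_N}{c_N} \;=\; 2^{-M_{N-1}}\, r_{N-1}^{\,M_N}.
\]
Once you have this, both bounds are almost immediate: $\bigl(\tfrac{r_N}{c_N M_N}\bigr)^{1/(M_N-1)} = (1/M_N)^{1/(M_N-1)}\cdot 2^{-M_{N-1}/(M_N-1)}\cdot r_{N-1}\cdot r_{N-1}^{1/(M_N-1)}$, and one only needs crude bounds on each factor (the first two lie in $[1/2,1]$, the last lies between a modest power of $2$ and $r_{N-1}$, estimated via Corollary~\ref{rkinequalities}).

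By contrast, the tools you cite do not yield what you claim. Rearranging \eqref{lemma3.2applies} gives $1/c_N \le r_N^{M_{N-1}-1}$, hence an \emph{upper} bound $r_N/c_N \le r_N^{M_{N-1}}$, which is useless for your lower bound. Likewise, $r_N \ge 4r_{N-1}^2$ from \eqref{xkest2_eqn2} is a \emph{lower} bound on $r_N$ and cannot help with the upper bound on $\bigl(\tfrac{r_N}{c_N M_N}\bigr)^{1/(M_N-1)}$. Your ``$\prod_{j=1}^{N-1} r_j^{M_j} = r_N/c_{N-1}\cdot(\text{explicit})$'' is also not correct as written (the left side equals $r_{N-1}^{M_{N-1}}/c_{N-1}$, not $r_N/c_{N-1}$). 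If you carry out that substitution correctly you \emph{will} recover the identity above, at which point the rest of your outline becomes unnecessary; but as stated, the proposal does not close.
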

\begin{proof}
	Recall first that $r_N = c_{N-1} (\frac{r_{N-1}}{2})^{M_{N-1}}$ by definition. Therefore, we calculate
	\begin{equation}
	\label{quotient_est_eqn1}
	\frac{r_N}{c_N} = \frac{1}{2^{M_{(N-1)}}} \frac{c_{N-1} r^{M_{(N-1)}}_{N-1}}{c_N}
	= \frac{1}{2^{M_{(N-1)}}} \frac{r_{N-1}^{M_{(N-1)}}}{r_{N-1}^{-M_{(N-1)}}} = \frac{1}{2^{M_{(N-1)}}} r_{N-1}^{M_N}.
	\end{equation} 	
	First we prove the upper bound for (\ref{quotient_est_eqn}). First note that we have for all $N \geq 5$ that
	\begin{equation}
	\label{quotient_est_eqn2}
	2^{- \frac{M_{(N-1)}}{M_N - 1}} \leq \frac{1}{\sqrt{2}},
	\end{equation}
	so that by combining (\ref{quotient_est_eqn1}) and (\ref{quotient_est_eqn2}) we obtain
	\begin{equation}
	\label{quotient_est_eqn_right}
	\left( \frac{r_N}{c_N M_N} \right)^{\frac{1}{M_N - 1}} = \left(\frac{1}{M_N}\right)^{\frac{1}{M_N -1}} \cdot 2^{- \frac{M_{(N-1)}}{M_N - 1}} \cdot r_{N-1} \cdot r_{N-1}^{\frac{1}{M_N - 1}} \leq \frac{1}{\sqrt{2}} r_{N-1}^2.
	\end{equation}
	To prove the lower bound for (\ref{quotient_est_eqn}), since $N \geq 5$ note that we have
	\begin{equation}
	\label{quotient_est_eqn3}
	\left( \frac{1}{M_N} \right)^{\frac{1}{M_{N}-1}} \geq \frac{1}{2} \textrm{, and } 2^{\frac{-M_{N-1}}{M_N -1}} \geq \frac{1}{2}.
	\end{equation}
	By two applications of Corollary \ref{rkinequalities}, since $N \geq 10$ we obtain
	\begin{equation}
	\label{quotient_est_eqn4}
	r_{N-1}^{\frac{1}{M_N-1}} \geq 2^{-\frac{M_{(N-2)}}{M_N -1}} \cdot r_{N-2}^{\frac{M_{(N-3)}}{M_N - 1}}\geq 2^{-1/2} r_{N-2}^{1/8} \geq 2^{-1/2} 2^{M_{N-3}/8} = 2^{M_{N-6} - 1/2}.
	\end{equation}
	Therefore, in a similar way to (\ref{quotient_est_eqn_right}), except this time using (\ref{quotient_est_eqn3}) and (\ref{quotient_est_eqn4}), we have 
	\begin{equation}
	\label{quotient_est_eqn_left}
	\left( \frac{r_N}{c_N M_N} \right)^{\frac{1}{M_N - 1}} = 2^{\frac{-M_{(N-1)}}{M_N -1}} \cdot \left(\frac{1}{M_N}\right)^{\frac{1}{M_N -1}} \cdot  r_{N-1} \cdot r_{N-1}^{\frac{1}{M_N - 1}} \geq 2^{M_{N-6} - 1/2 - 2} r_{N-1} \geq 2^{M_{N-7}} r_{N-1}.
	\end{equation}
	Equations (\ref{quotient_est_eqn_right}) and (\ref{quotient_est_eqn_left}) combine to prove (\ref{quotient_est_eqn}).
\end{proof}

\begin{lem}
	\label{crits_B1}
	For $N \geq 10$, the critical values of $q_N$ belong to $B_1$, and the critical points satisfy $|z| \leq  r_{N-1}^2$.
\end{lem}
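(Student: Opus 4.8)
The plan is to use Lemma \ref{qN} to reduce the claim to explicit size estimates, and then feed in the bounds from Lemma \ref{quotient_est}. Recall from (\ref{critvalue_qN}) that the critical values of $q_N$ all lie on the circle of radius
\[
\rho_N := \left( \frac{r_N}{c_N M_N}\right)^{\frac{1}{M_N-1}} \cdot r_N \cdot \left(1- \frac{1}{M_N} \right),
\]
so it suffices to show that $4R_1 \leq \rho_N \leq \frac{1}{4}R_2$, since $B_1 = \overline{A(4R_1, \frac{1}{4}R_2)}$ and $R_1 = r_N$, $R_2 = r_{N+1}$. The factor $\left(1-\frac{1}{M_N}\right)$ is between $\frac{1}{2}$ and $1$ for all $N \geq 5$, so it is harmless and I would absorb it into constants immediately. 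Thus the whole statement comes down to estimating the product $\left( \frac{r_N}{c_N M_N}\right)^{\frac{1}{M_N-1}} \cdot r_N$ from above and below, together with the auxiliary claim that the critical points, which by (\ref{crit_qN}) lie on the circle $|z| = \left(\frac{r_N}{c_N M_N}\right)^{\frac{1}{M_N-1}}$, have modulus at most $r_{N-1}^2$ — and this last assertion is \emph{exactly} the upper bound in Lemma \ref{quotient_est} (since $\frac{1}{\sqrt 2}r_{N-1}^2 \leq r_{N-1}^2$), so nothing further is needed there.

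For the critical values: by Lemma \ref{quotient_est}, $\left( \frac{r_N}{c_N M_N}\right)^{\frac{1}{M_N-1}} \geq 2^{M_{N-7}} r_{N-1}$, which is enormous — in particular $\geq 2$ — for $N \geq 10$, so $\rho_N \geq \frac{1}{2} \cdot 2 \cdot r_N = r_N \geq 4 R_1$? No: $\rho_N \geq \frac12 \cdot 2^{M_{N-7}} r_{N-1} \cdot r_N \geq r_N = R_1$ is not quite $4R_1$, so I would instead note $2^{M_{N-7}} r_{N-1} \geq 8$ for $N\geq 10$ (indeed $M_{N-7}\geq 8$), giving $\rho_N \geq \frac12 \cdot 8 \cdot r_N = 4 R_1$. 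For the upper bound, Lemma \ref{quotient_est} gives $\left( \frac{r_N}{c_N M_N}\right)^{\frac{1}{M_N-1}} \leq \frac{1}{\sqrt 2} r_{N-1}^2$, hence $\rho_N \leq \frac{1}{\sqrt 2} r_{N-1}^2 \cdot r_N$. I then need $\frac{1}{\sqrt 2} r_{N-1}^2 r_N \leq \frac14 r_{N+1}$. Since $r_{N+1} = c_N (\frac12 r_N)^{M_N} = c_N 2^{-M_N} r_N^{M_N}$ and $M_N \geq 2$, while $r_{N-1} < r_N$ by Lemma \ref{rk_prep}, the right side dwarfs the left; concretely I would compare $r_{N-1}^2 r_N \leq r_N^3$ against $r_{N+1} \geq 4 r_N^2$ from (\ref{xkest2_eqn2}) — this only gives $r_N^3$ versus $4 r_N^2$, which is the wrong direction unless $r_N$ is small, so instead I should use the sharper growth $r_{N+1} \geq 2^{-M_N} r_N^{M_{N-1}+1}$ from (\ref{xkesteq}), or just $r_{N+1} \geq 2^{M_N}$ combined with $r_{N-1}, r_N$ being fixed — cleaner is: $\frac{1}{\sqrt2} r_{N-1}^2 r_N < r_N^3 \leq r_{N+1}/4$ provided $r_{N+1} \geq 4 r_N^3$, and since $r_{N+1} = c_N 2^{-M_N} r_N^{M_N}$ with $c_N 2^{-M_N} r_N^{M_N - 3} \to \infty$, this holds for $N$ large; to be safe I would quote (\ref{Rkest_eqn}) in the $r_j$-form, i.e. $r_{N+1}\ge 2^{-M_N} r_N^{M_{N-1}+1}$, and note $M_{N-1}+1 \ge 3$ together with $r_N \ge 2^{M_{N-1}}$ absorbing the $2^{-M_N}$ factor.

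The main obstacle, such as it is, is purely bookkeeping: making sure the constants $\frac12$, $\sqrt2$, $1-\frac{1}{M_N}$, and the $\frac14$-thresholds defining $A_1, B_1$ all line up, and confirming that all invoked inequalities (Lemmas \ref{rk_prep}, \ref{quotient_est}, Corollary \ref{rkinequalities}) are valid in the stated range $N \geq 10$ — there is no conceptual difficulty, since Lemma \ref{quotient_est} has already done the one genuinely delicate estimate (relating $(r_N/c_N M_N)^{1/(M_N-1)}$ to powers of $r_{N-1}$ via the recursion). So the proof is: invoke (\ref{critvalue_qN}) and (\ref{crit_qN}), apply Lemma \ref{quotient_est} for both directions, bound the harmless factor $1-\frac{1}{M_N}$, and check the two inequalities $4R_1 \le \rho_N$ and $\rho_N \le \frac14 R_2$ using $R_1 = r_N$, $R_2 = r_{N+1}$ and the growth estimates of Corollary \ref{rkinequalities}.
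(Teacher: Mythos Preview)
Your proposal is correct and follows essentially the same approach as the paper: invoke (\ref{crit_qN}) and (\ref{critvalue_qN}), apply Lemma \ref{quotient_est} for both directions, and check the $B_1$ thresholds using the growth estimates in Corollary \ref{rkinequalities}. The one place you take a detour is the upper bound $\rho_N \leq \frac14 R_2$: the paper applies (\ref{xkest2_eqn2}) twice in succession, writing $\frac{1}{\sqrt 2}\,r_{N-1}^2\,r_N \leq \frac{1}{4\sqrt 2}\,r_N^2 \leq \frac{1}{16\sqrt 2}\,r_{N+1}$, which avoids your longer route through (\ref{xkesteq}) and the estimate $r_{N+1}\geq 4r_N^3$.
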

\begin{proof}
	If $z$ is a critical point, by Lemma \ref{qN} and Lemma \ref{quotient_est} we have
	\begin{equation}
	\label{crits_B1_eqn1}
	|z| = \left( \frac{r_N}{c_N M_N} \right)^{\frac{1}{M_N - 1}} \leq \frac{1}{\sqrt{2}}r_{N-1}^2.
	\end{equation}
	Recall that by Corollary \ref{rkinequalities} that if $N \geq 10$ we have $r_{N-1}^2\leq \frac{1}{4}r_N$ and $r_N^2 \leq \frac{1}{4}r_{N+1}$. So by Lemma \ref{qN} and Lemma \ref{quotient_est}, if $z$ is a critical point then 
	\begin{equation}
	\label{crits_B1_eqn3}
	|q_N(z)| = \left( \frac{r_N}{c_N M_N} \right)^{\frac{1}{M_N - 1}} r_N \left(1 - \frac{1}{M_N} \right) \leq \frac{1}{\sqrt{2}} r_{N-1}^2 r_N \leq \frac{1}{4\sqrt{2}} r_N^2 < \frac{1}{16\sqrt{2}} r_{N+1},
	\end{equation}
	and,
	\begin{equation}
	\label{crits_B1_eqn4}
	|q_N(z)| \geq 2^{M_{(N-7)}} r_{N-1} r_N \cdot \left( 1 - \frac{1}{M_N} \right) > 8 r_N.
	\end{equation}	
	Therefore, by Definition \ref{bigR} we have the critical values of $q_N$ contained in $B_1$, as desired. 
\end{proof}

\noindent Now we introduce a polynomial-like mapping by restricting $f$ to a subdomain $U$, defined as follows:

\begin{definition}
	\label{poly_like_defs}
	For the rest of this section, we will use the following definitions.
	\begin{enumerate}
		\item Define $D = B(0, \frac{1}{4}R_1)$.
		\item Define $r = 16 r_{N-1}^2 R_1$. This is the same as $r = 16r_{N-1}^2 r_N$ by (\ref{Rk_def}).
		\item Define $V = B(0,r)$ and $U' = q_N^{-1}(V)$. 
		\item Define $U = \phi_N(U')$.
	\end{enumerate}
\end{definition}	

\begin{figure}[!h]
	\centering
	\scalebox{.8}{%% Creator: Inkscape 1.0 (4035a4fb49, 2020-05-01), www.inkscape.org
%% PDF/EPS/PS + LaTeX output extension by Johan Engelen, 2010
%% Accompanies image file '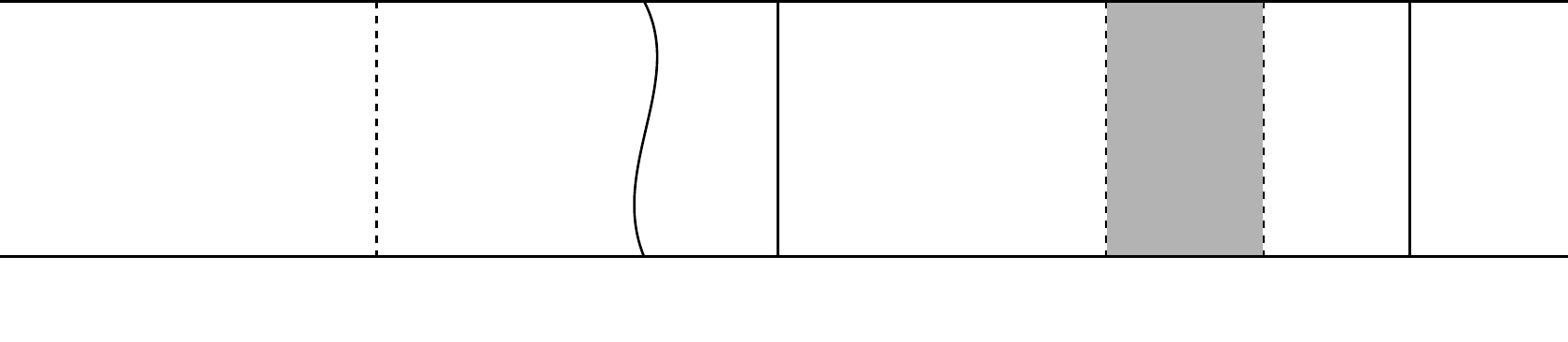' (pdf, eps, ps)
%%
%% To include the image in your LaTeX document, write
%%   \input{<filename>.pdf_tex}
%%  instead of
%%   \includegraphics{<filename>.pdf}
%% To scale the image, write
%%   \def\svgwidth{<desired width>}
%%   \input{<filename>.pdf_tex}
%%  instead of
%%   \includegraphics[width=<desired width>]{<filename>.pdf}
%%
%% Images with a different path to the parent latex file can
%% be accessed with the `import' package (which may need to be
%% installed) using
%%   \usepackage{import}
%% in the preamble, and then including the image with
%%   \import{<path to file>}{<filename>.pdf_tex}
%% Alternatively, one can specify
%%   \graphicspath{{<path to file>/}}
%% 
%% For more information, please see info/svg-inkscape on CTAN:
%%   http://tug.ctan.org/tex-archive/info/svg-inkscape
%%
\begingroup%
  \makeatletter%
  \providecommand\color[2][]{%
    \errmessage{(Inkscape) Color is used for the text in Inkscape, but the package 'color.sty' is not loaded}%
    \renewcommand\color[2][]{}%
  }%
  \providecommand\transparent[1]{%
    \errmessage{(Inkscape) Transparency is used (non-zero) for the text in Inkscape, but the package 'transparent.sty' is not loaded}%
    \renewcommand\transparent[1]{}%
  }%
  \providecommand\rotatebox[2]{#2}%
  \newcommand*\fsize{\dimexpr\f@size pt\relax}%
  \newcommand*\lineheight[1]{\fontsize{\fsize}{#1\fsize}\selectfont}%
  \ifx\svgwidth\undefined%
    \setlength{\unitlength}{483.75001134bp}%
    \ifx\svgscale\undefined%
      \relax%
    \else%
      \setlength{\unitlength}{\unitlength * \real{\svgscale}}%
    \fi%
  \else%
    \setlength{\unitlength}{\svgwidth}%
  \fi%
  \global\let\svgwidth\undefined%
  \global\let\svgscale\undefined%
  \makeatother%
  \begin{picture}(1,0.21739038)%
    \lineheight{1}%
    \setlength\tabcolsep{0pt}%
    \put(0,0){\includegraphics[width=\unitlength,page=1]{polynomial_like_schematic.pdf}}%
    \put(0.14291888,0.01249907){\color[rgb]{0,0,0}\makebox(0,0)[lt]{\lineheight{1.25}\smash{\begin{tabular}[t]{l}$|z| = \left(\frac{r_N}{c_NM_N}\right)^{\frac{1}{M_N - 1}}$\end{tabular}}}}%
    \put(0.38355869,0.00881565){\color[rgb]{0,0,0}\makebox(0,0)[lt]{\lineheight{1.25}\smash{\begin{tabular}[t]{l}$\partial U$\end{tabular}}}}%
    \put(0.45232757,0.01167655){\color[rgb]{0,0,0}\makebox(0,0)[lt]{\lineheight{1.25}\smash{\begin{tabular}[t]{l}$|z| = \frac{1}{4}R_1$\end{tabular}}}}%
    \put(0.85172301,0.02690567){\color[rgb]{0,0,0}\makebox(0,0)[lt]{\lineheight{1.25}\smash{\begin{tabular}[t]{l}$\partial V = \partial B(0,r)$\end{tabular}}}}%
  \end{picture}%
\endgroup%
}
	\caption{A schematic for Definition \ref{poly_like_defs} and Lemma \ref{polynomial_like_polynomial}. The critical points for $q_N$ lie on the circle $|z| = (\frac{r_N}{c_NM_N})^{\frac{1}{M_N-1}}$, and the associated critical values lie in an annulus contained in $V$, illustrated in gray. So $U$ contains the critical points of $q_N$, and it will also be verified that $U \subset D$.}
	\label{Polynomial_like_picture}
\end{figure}

\begin{lem}
	\label{polynomial_like_polynomial}
	For all $N \geq 10$, the triple $q_N: U' \ra V$ is a degree $2^N$ polynomial-like mapping. Moreover, all $2^{N} -1$ many critical points of $q_N$ belong to $U' \subset D$. 
\end{lem}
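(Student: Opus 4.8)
The plan is to verify the three defining properties of a polynomial-like map in the sense of Douady--Hubbard for the triple $q_N: U' \to V$: (i) $U'$ and $V$ are topological disks with $\overline{U'} \subset V$, (ii) $q_N|_{U'}: U' \to V$ is proper, and (iii) the degree is $2^N$. Then I would separately check that all $2^N - 1$ critical points of $q_N$ lie in $U'$, and that $U' \subset D = B(0, \tfrac14 R_1)$, which combined with the fact that $\phi_N$ is a homeomorphism fixing $0$ also gives $U = \phi_N(U') \subset D$ once one knows $\phi_N(D) \supseteq U'$ appropriately — actually since the claim as stated is about $U' \subset D$ and the critical points of $q_N$ in $U'$, I will focus on those.

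\textbf{Step 1: $V = B(0,r)$ is a disk and $U' = q_N^{-1}(V)$ is bounded with $q_N: U' \to V$ proper of degree $2^N$.} Since $q_N$ is a polynomial of degree $M_N = 2^N$, the preimage $U' = q_N^{-1}(V)$ of the bounded disk $V$ is a bounded open set, and $q_N: U' \to V$ is automatically proper (preimages of compact sets are compact) of degree equal to $\deg q_N = 2^N$. The content is to show $U'$ is connected and simply connected, i.e. a topological disk. By the standard criterion, $q_N^{-1}(V)$ is a disk provided $V$ contains all the critical values of $q_N$ (so that no component of the preimage is "pinched off" and the full preimage is connected with the right Euler characteristic). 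This is exactly where Lemma \ref{crits_B1} enters: the critical values of $q_N$ lie in $B_1 = \overline{A(4R_1, \tfrac14 R_2)}$, so each critical value $v$ satisfies $|v| \leq \tfrac14 R_2$. I must check $\tfrac14 R_2 < r = 16 r_{N-1}^2 R_1$, equivalently $R_2 < 64 r_{N-1}^2 R_1$; but from the proof of Lemma \ref{crits_B1} we actually have the sharper bound $|q_N(z)| \leq \tfrac{1}{4\sqrt 2} r_N^2$ for critical points $z$, and since by Lemma \ref{quotient_est} $\left(\tfrac{r_N}{c_N M_N}\right)^{1/(M_N-1)} \leq \tfrac{1}{\sqrt 2} r_{N-1}^2$, the critical values in fact satisfy $|v| \leq \tfrac{1}{\sqrt 2} r_{N-1}^2 r_N < 16 r_{N-1}^2 r_N = r$, so $V$ contains all critical values with room to spare. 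Hence $U'$ is a topological disk and $q_N: U' \to V$ is a polynomial-like map of degree $2^N$.

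\textbf{Step 2: $U' \subset D$, and the critical points lie in $U'$.} For the containment $U' \subset D = B(0, \tfrac14 R_1)$: a point $z$ with $|z|$ comparable to $\tfrac14 R_1 = \tfrac14 r_N$ has, by Lemma \ref{the_k_equals_1_lemma_qN} (or a direct triangle-inequality estimate $|q_N(z)| \geq c_N|z|^{M_N} - r_N|z|$), modulus $|q_N(z)|$ enormously larger than $r$; concretely, on $|z| = \tfrac14 R_1$ one gets $|q_N(z)| \gtrsim c_N (\tfrac14 r_N)^{M_N} = 4^{-M_N} C_1 R_1^{n_1}$, which by Lemma \ref{Rkest} vastly exceeds $r = 16 r_{N-1}^2 r_N$ for $N$ large. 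Thus $\partial D$ maps outside $\overline V$, and since $U'$ is connected and contains $0$ (as $q_N(0) = 0 \in V$), we get $U' \subset D$. For the critical points: by Lemma \ref{qN} they lie on the circle $|z| = \left(\tfrac{r_N}{c_N M_N}\right)^{1/(M_N-1)}$, which by Lemma \ref{quotient_est} is at most $\tfrac{1}{\sqrt 2} r_{N-1}^2 < \tfrac14 R_1$ (using $r_{N-1}^2 \leq \tfrac14 r_N$ from Corollary \ref{rkinequalities}), and their images, the critical values, lie in $V$ by Step 1; hence every critical point lies in $q_N^{-1}(V) = U'$. Since $q_N$ has exactly $M_N - 1 = 2^N - 1$ critical points counted with multiplicity (the zeros of $q_N'(z) = c_N M_N z^{M_N - 1} + r_N$, all simple), this accounts for all of them.

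\textbf{Main obstacle.} The only genuine point requiring care is the topological-disk claim for $U'$ in Step 1 — that is, correctly invoking the criterion "preimage of a disk containing all critical values is a disk" and checking the numerical inequality $\tfrac14 R_2 < r$ (or the sharper $|v| < r$) from the earlier lemmas; the rest is routine once Lemmas \ref{qN}, \ref{quotient_est}, \ref{crits_B1} and Corollary \ref{rkinequalities} are in hand. I expect the author's proof to simply cite Lemma \ref{crits_B1} for the critical value location, note that $q_N^{-1}$ of a disk around all critical values is a disk, and do the one-line size comparison $r > \tfrac14 R_2$ together with the $\partial D$ estimate to conclude $U' \subset D$.
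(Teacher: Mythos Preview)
Your proposal is correct and follows essentially the same route as the paper. The paper orders the steps slightly differently (first showing $U'\subset D$ via the $\partial D$ estimate, then locating the critical values in $V$), and for the topological-disk claim it explicitly invokes Lemma~\ref{preimage_helper} together with the Riemann--Hurwitz formula (Theorem~\ref{RH}) rather than citing the ``preimage of a disk containing all critical values is a disk'' criterion directly, but the content is identical; your sharper critical-value bound $|v|\leq \tfrac{1}{\sqrt2}r_{N-1}^2 r_N < r$ is exactly what the paper uses (via (\ref{crits_B1_eqn3})), and you were right to abandon the weaker $\tfrac14 R_2 < r$ attempt.
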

\noindent

\begin{proof}
	We first verify that $U' \subset D$. Note that if $|z| = \frac{1}{4}R_1 = \frac{1}{4} r_N$, then by Lemma \ref{the_k_equals_1_lemma_qN} we have 
	\begin{align}
	\label{OneFourR_2}
	|q_N(z)| &\geq \frac{1}{2} c_N \left( \frac{1}{4}r_N \right)^{M_N} \\ 
	\nonumber	&= \left( \frac{1}{2}\right)^{M_N + 1} r_{N+1} \\
	\nonumber(\textrm{Lemma \ref{Rkest}})    &\geq \left( \frac{1}{4} \right)^{M_N+\frac{1}{2}} r_N^{M_{(N-1)}} r_N \\
	\nonumber	&= \frac{1}{2} \left( \frac{r^{\frac{1}{4}}_N}{4} \right)^{M_N} r_N^{M_{(N-2)}} r_N.
	\end{align}
	Therefore, since $N \geq 10$ we have $r_N^{\frac{1}{4}}/4 > 8$ and we deduce that
	\begin{equation}
	\label{OneFourR_1}
	|q_N(z)| \geq 16 r_N^3 > 16 r^2_{N-1}r_N.
	\end{equation}
	Therefore, we must have $U' \subset D$.
	
	By equation (\ref{crits_B1_eqn3}), the critical values of $q_N$ satisfy $|z| \leq r_{N-1}^2r_N$. Therefore, $V$ contains all $2^N-1$ many critical values of $q_N$, so that $U'$ contains the $2^N - 1$ many critical points of $q_N$. It now follows from Lemma \ref{preimage_helper} that $q_N: U' \ra V$ is a proper degree $2^N$ branched covering map, and it follows from Theorem \ref{RH} that $U'$ is a Jordan domain. Since $U'$ is contained in $D$ which is compactly contained in $V$,	$q_N: U' \rightarrow V$ is a degree $2^N$ polynomial-like mapping.
\end{proof}	
\begin{lem}
	\label{polynomial_like}
	Let $U$ be as in Definition \ref{poly_like_defs} and $N \geq 10$. Then $U \subset D$ and the triple $f_N:U\ra V$ is a degree $2^{N}$ polynomial-like mapping.
\end{lem}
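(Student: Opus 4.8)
The plan is to transfer the polynomial-like structure of $q_N\colon U'\to V$, already established in Lemma \ref{polynomial_like_polynomial}, through the homeomorphism $\phi_N$, exploiting that $f_N=q_N\circ\phi_N^{-1}$ near the origin (Definition \ref{parameter_defn}). The only genuinely new input needed is a slightly sharpened version of the inclusion $U'\subset D$, with enough room to spare that the mild expansion of $\phi_N$ cannot push $U=\phi_N(U')$ out of $D$.

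\emph{Step 1: $U'\subset B(0,\tfrac18 R_1)$.} For $|z|=\tfrac18 R_1=\tfrac{r_N}{8}$ we have $\tfrac18\in(\tfrac1{20},\tfrac{19}{20})$, so Lemma \ref{the_k_equals_1_lemma_qN} gives $|q_N(z)|\geq \tfrac12 c_N|z|^{M_N}=\tfrac12\cdot 4^{-M_N}c_N(\tfrac{r_N}{2})^{M_N}=\tfrac12\cdot 4^{-M_N}r_{N+1}$. Combining $r_{N+1}\geq 2^{-M_N}r_N^{M_{N-1}+1}$ (Lemma \ref{Rkest}) with the super-exponential growth of $(r_k)$ recorded in Corollary \ref{rkinequalities} (namely $r_N\geq 4r_{N-1}^2$ and $r_{N-1}\geq 2^{M_{N-2}}$) shows $\tfrac12\cdot 4^{-M_N}r_{N+1}>16\,r_{N-1}^2\,r_N=r$ once $N$ is large. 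Since $U'=q_N^{-1}(V)$ is connected (Lemma \ref{polynomial_like_polynomial}) and contains $0$, it is therefore contained in $B(0,\tfrac18 R_1)$; in particular $U'\subset B(0,r_N-1)$.

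\emph{Step 2: $\phi_N(B(0,\tfrac18 R_1))\subset D$, hence $U\subset D$.} Since $\tfrac18 R_1\geq\tfrac{R_1}{20}=\tfrac{r_N}{20}$, Lemma \ref{AkEst} applies on the circle $\{|z|=\tfrac18 R_1\}$ and, together with Remark \ref{alpha_beta_close_1}, gives $|\phi_N(z)|\leq\alpha_1\cdot\tfrac18 R_1<\tfrac{101}{100}\cdot\tfrac18 R_1<\tfrac14 R_1$ for every such $z$. As $\phi_N$ is an orientation-preserving homeomorphism of $\mathbb{C}$ fixing $0$, the Jordan curve $\phi_N(\{|z|=\tfrac18 R_1\})$ lies in the simply connected disk $B(0,\tfrac14 R_1)=D$, hence so does its bounded complementary component $\phi_N(B(0,\tfrac18 R_1))$. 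Therefore $U=\phi_N(U')\subset D$, and $\overline U=\phi_N(\overline{U'})\subset\overline D$, which is compactly contained in $B(0,r)=V$ because $\tfrac14 R_1<r$.

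\emph{Step 3: polynomial-like structure.} By Step 1, $U'\subset B(0,r_N-1)$, so on $U=\phi_N(U')$ we have $f_N=q_N\circ\phi_N^{-1}$ (Definition \ref{parameter_defn}, as used before Lemma \ref{the_k_equals_1_lemma_qN}). Thus $f_N|_U$ factors as the homeomorphism $\phi_N^{-1}\colon U\to U'$ followed by the degree-$2^N$ polynomial-like map $q_N\colon U'\to V$ from Lemma \ref{polynomial_like_polynomial}; being the composition of a proper map with a homeomorphism, $f_N\colon U\to V$ is proper, and $U$ is a Jordan domain (the image of the Jordan domain $U'$ under the global homeomorphism $\phi_N$). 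Since $f_N$ is holomorphic and $U$ is compactly contained in $V$, the triple $f_N\colon U\to V$ is a polynomial-like mapping, of degree $2^N=\deg(q_N|_{U'})$. Combined with $U\subset D$ from Step 2, this is exactly the assertion of the lemma.

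The bulk of the argument is bookkeeping through $\phi_N$; the only place requiring the Section \ref{The Construction} estimates is Step 1, and the only point demanding care is the topological claim in Step 2 that a Jordan curve lying inside a disk bounds a subregion of that disk, which relies on $\phi_N$ being a homeomorphism of the whole plane fixing the origin.
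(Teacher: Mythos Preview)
Your proof is correct and follows the same approach as the paper: transfer the polynomial-like structure of $q_N\colon U'\to V$ through $\phi_N$, using that $f_N=q_N\circ\phi_N^{-1}$ on $U$. The paper's own proof is a terse one-liner citing Lemma \ref{AkEst} for $\phi_N(U')\subset D$; your Steps 1--2 are a more careful unpacking of exactly this (sharpening $U'\subset D$ to $U'\subset B(0,\tfrac18 R_1)$ so that Lemma \ref{AkEst}, which only applies for $|z|\geq R_1/20$, can be invoked on the boundary circle and then combined with the global homeomorphism property of $\phi_N$).
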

\begin{proof}
	By Lemma \ref{AkEst} we verify that $U = \phi_N(U') \subset D$. Therefore, by Lemma \ref{branch_covering_hn}, $f_N = q_N \circ \phi_N^{-1}: U \rightarrow V$ is a proper, degree $2^N$ branched covering map, and is therefore a degree $2^N$ polynomial-like mapping. 
\end{proof}

The rest of Section \ref{Origin} is devoted to showing the filled Julia set of $f_N: U \ra V$ has dimension $\ll 1$. We will do so by constructing a cover by pulling back $B(0, 4R_1)$ under appropriate branches of $f^{-1}$.

\begin{rem}	
	\label{gamma_sigma}
	Suppose that $B(0,R)$ is the disk of radius $R$ centered at the origin, where we take any $R \in [4R_1,8R_1]$, so that $B(0,R) \subset V$. By Lemmas \ref{polynomial_like_polynomial} and \ref{polynomial_like}, $\overline{B(0,R)}$ contains the $2^N-1$ many critical points of $f_N: U \rightarrow V$. However, by (\ref{crits_B1_eqn4}), $B(0,R)$ does not contain the $2^N-1$ many critical values of $f_N: U \rightarrow V$. It follows then from Lemma \ref{component_lemma} that $f_N^{-1}(B) \subset U$ is the disjoint union of $2^N$ many Jordan domains $B_i$, $i=1,\dots,2^N$ such that $f_N: B_i \rightarrow B$ is conformal.
\end{rem}

Remark \ref{gamma_sigma} motivates the following definition (see Figure \ref{gammaillustration}).

\begin{figure}
	\centering	
	\scalebox{.5}{%% Creator: Inkscape 1.0.1 (c497b03c, 2020-09-10), www.inkscape.org
%% PDF/EPS/PS + LaTeX output extension by Johan Engelen, 2010
%% Accompanies image file '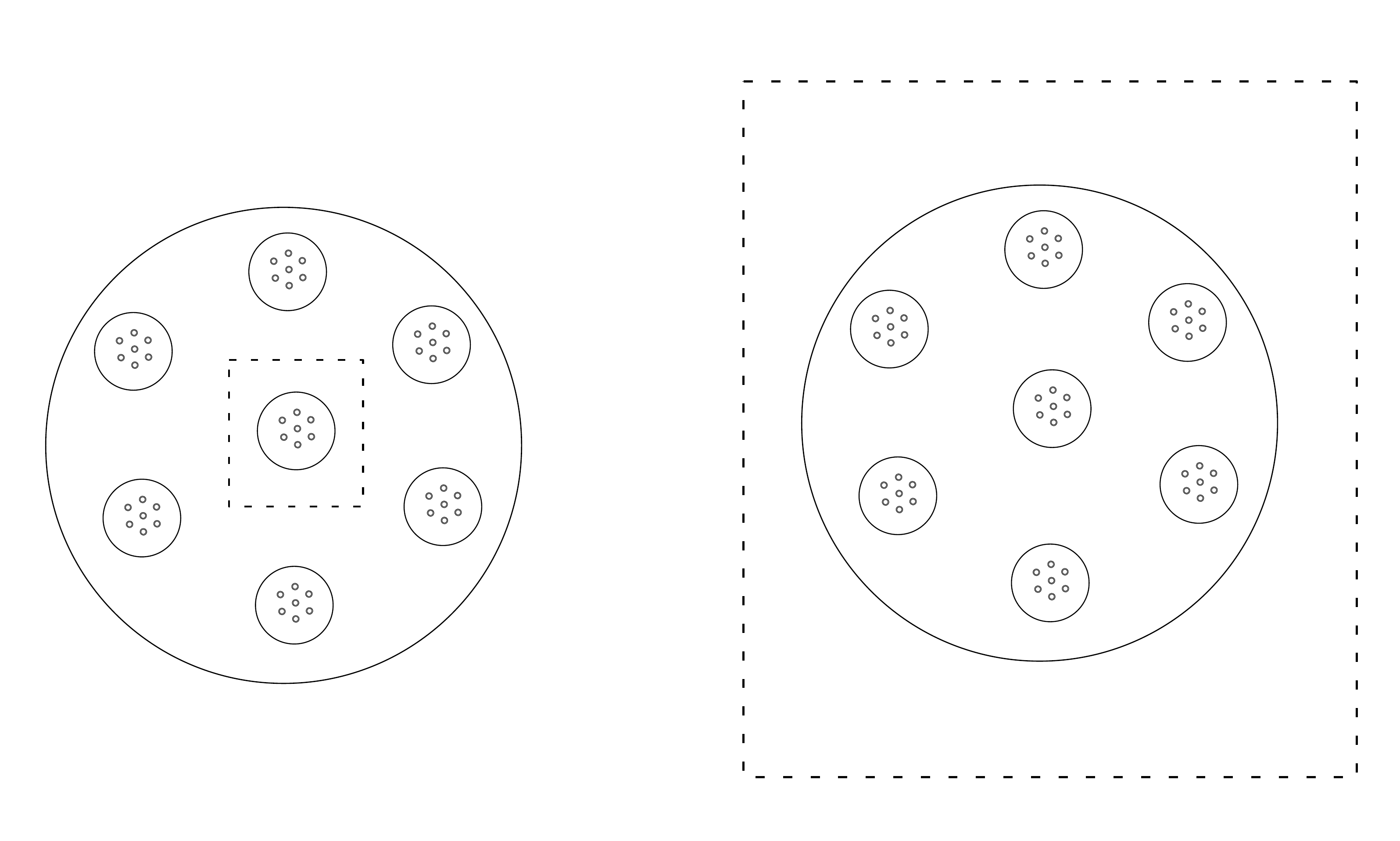' (pdf, eps, ps)
%%
%% To include the image in your LaTeX document, write
%%   \input{<filename>.pdf_tex}
%%  instead of
%%   \includegraphics{<filename>.pdf}
%% To scale the image, write
%%   \def\svgwidth{<desired width>}
%%   \input{<filename>.pdf_tex}
%%  instead of
%%   \includegraphics[width=<desired width>]{<filename>.pdf}
%%
%% Images with a different path to the parent latex file can
%% be accessed with the `import' package (which may need to be
%% installed) using
%%   \usepackage{import}
%% in the preamble, and then including the image with
%%   \import{<path to file>}{<filename>.pdf_tex}
%% Alternatively, one can specify
%%   \graphicspath{{<path to file>/}}
%% 
%% For more information, please see info/svg-inkscape on CTAN:
%%   http://tug.ctan.org/tex-archive/info/svg-inkscape
%%
\begingroup%
  \makeatletter%
  \providecommand\color[2][]{%
    \errmessage{(Inkscape) Color is used for the text in Inkscape, but the package 'color.sty' is not loaded}%
    \renewcommand\color[2][]{}%
  }%
  \providecommand\transparent[1]{%
    \errmessage{(Inkscape) Transparency is used (non-zero) for the text in Inkscape, but the package 'transparent.sty' is not loaded}%
    \renewcommand\transparent[1]{}%
  }%
  \providecommand\rotatebox[2]{#2}%
  \newcommand*\fsize{\dimexpr\f@size pt\relax}%
  \newcommand*\lineheight[1]{\fontsize{\fsize}{#1\fsize}\selectfont}%
  \ifx\svgwidth\undefined%
    \setlength{\unitlength}{918.85158749bp}%
    \ifx\svgscale\undefined%
      \relax%
    \else%
      \setlength{\unitlength}{\unitlength * \real{\svgscale}}%
    \fi%
  \else%
    \setlength{\unitlength}{\svgwidth}%
  \fi%
  \global\let\svgwidth\undefined%
  \global\let\svgscale\undefined%
  \makeatother%
  \begin{picture}(1,0.60024242)%
    \lineheight{1}%
    \setlength\tabcolsep{0pt}%
    \put(0,0){\includegraphics[width=\unitlength,page=1]{gammaillustration.pdf}}%
    \put(0.19949581,0.47797228){\color[rgb]{0,0,0}\makebox(0,0)[lt]{\lineheight{1.25}\smash{\begin{tabular}[t]{l}\scalebox{3}{$\gamma$}\end{tabular}}}}%
    \put(0,0){\includegraphics[width=\unitlength,page=2]{gammaillustration.pdf}}%
    \put(0.04735776,0.03412872){\color[rgb]{0,0,0}\makebox(0,0)[lt]{\lineheight{1.25}\smash{\begin{tabular}[t]{l}\scalebox{3}{$\Gamma_1$}\end{tabular}}}}%
    \put(0,0){\includegraphics[width=\unitlength,page=3]{gammaillustration.pdf}}%
    \put(0.58354274,0.05925398){\color[rgb]{0,0,0}\makebox(0,0)[lt]{\lineheight{1.25}\smash{\begin{tabular}[t]{l}\scalebox{3}{$\Gamma_2$}\end{tabular}}}}%
    \put(0,0){\includegraphics[width=\unitlength,page=4]{gammaillustration.pdf}}%
  \end{picture}%
\endgroup%
}
	\caption{Illustrated is Definition \ref{GreensLines} of the families $\Gamma_n$.}
	\label{gammaillustration}
\end{figure}

\begin{definition}
	\label{GreensLines}
	Define $\gamma := \{z\,:\, |z| = 4R_1\}.$ Then $\gamma$ is a circle that surrounds the critical points of $f_N$ contained in $D$, but not the critical values associated to those critical points 
	\begin{enumerate}
		\item Let $\Gamma_1 = f^{-1}_N(\gamma)$ be the disjoint union of the $2^N$ many Jordan curves contained in $D$ which $f_N$ maps to $\gamma$. We denote the elements of $\Gamma_1$ by $\gamma_1$.
		\item Let $\Gamma_n = f^{-n}_N(\gamma)$ be the disjoint union of $2^{Nn}$ many Jordan curves in $D$ which get mapped by $f^n_N$ to $\gamma$. We denote the elements of $\Gamma_N$ by $\gamma_n$. 
		\item Given $\gamma_n \in \Gamma_n$, we define $\widehat{\gamma_n}$ to be the bounded simply connected domain with boundary given by $\gamma_n$. We define $\widehat{\Gamma_n}$ to be the set of all $\widehat{\gamma_n}.$
	\end{enumerate} 
\end{definition}

\begin{rem}
	\label{Modulus_lower_bound}
	An alternative definition for $\gamma$ in Definition \ref{GreensLines} would be $\sigma = \{z\,:\, |z| = 8R_1\}$, and we analogously could define $\Sigma_n$, elements $\sigma_n \subset \Sigma_n$, and $\widehat{\sigma_n}$. Then for each $\sigma_n \in \Sigma_n$, $\widehat{\sigma_n}$ contains exactly one element $\gamma_n \in \Gamma_n$, and the modulus of $\widehat{\sigma_n} \setminus \overline{\widehat{\gamma_n}}$ is bounded below by $(2\pi)^{-1} \log 2 > 0$. For each $\sigma_n \in \Sigma_n$, there exists some element $\sigma_{n-1} \in \Sigma_{n-1}$ such that $f_N: \widehat{\sigma_n} \rightarrow \widehat{\sigma_{n-1}}$ is conformal. This means that the corresponding mapping $f_N: \widehat{\gamma_n} \rightarrow \widehat{\gamma_{n-1}}$ is conformal, and by Remark \ref{Bounded_conformal_distortion}, Corollary \ref{Kobe} and Corollary \ref{shape} apply with constants that do not depend on the integers $N$ or $n$.
\end{rem}	

\noindent We now estimate the diameters of our covering of Definition \ref{GreensLines}:	

\begin{lem}
	\label{level_lines_p_N}
	Let $\gamma$, $\Gamma_n$, and $\widehat{\Gamma_n}$ be as in Definition \ref{GreensLines}. Then there exists a value $M \in \N$ so that for all $N \geq M$, for all $n \geq 1$, and for every $\gamma_n \in \Gamma_n$ we have
	$$\diam(f_N(\gamma_n)) \geq R_1 \diam(\gamma_n).$$
\end{lem}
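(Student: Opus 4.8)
The plan is to exploit that $f_N(\gamma_n)=\gamma_{n-1}$, so the assertion $\diam(f_N(\gamma_n))\ge R_1\diam(\gamma_n)$ is exactly the statement that $f_N$ expands diameters by at least $R_1$ along the curve $\gamma_n$. First I would localize: by Definition \ref{GreensLines} and Remark \ref{gamma_sigma}, every $\gamma_n$ with $n\ge1$ is contained in a unique component $\widehat{\gamma_1}\in\widehat{\Gamma_1}$, and $f_N|_{\widehat{\gamma_1}}$ is a conformal bijection onto $B(0,4R_1)$ which extends conformally to the strictly larger region $\widehat{\sigma_1}$, the collar $\widehat{\sigma_1}\setminus\overline{\widehat{\gamma_1}}$ having modulus at least $(2\pi)^{-1}\log2$ (Remark \ref{Modulus_lower_bound}). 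Hence Corollary \ref{Kobe} and Corollary \ref{shape} apply on $\widehat{\gamma_1}$ with constants independent of $N$ and $n$: $|f_N'|$ has bounded oscillation on $\widehat{\gamma_1}$, and $\diam(f_N(S))$ is comparable to $|f_N'(\zeta)|\diam(S)$ up to a universal constant, for every $S\subseteq\widehat{\gamma_1}$ and every $\zeta\in\widehat{\gamma_1}$. Thus it suffices to bound $|f_N'|$ from below by an amount comparable to $R_1$ at one well-chosen point of each of the $2^N$ components of $\widehat{\Gamma_1}$.

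Next I would evaluate at the (unique) zero $z_0$ of $f_N$ inside the component, which exists since $f_N:\widehat{\gamma_1}\to B(0,4R_1)$ is conformal and $0\in B(0,4R_1)$. Since $\widehat{\gamma_1}\subseteq U\subseteq D\subseteq B(0,r_N-1)$ and the Beltrami coefficient of $\phi_N$ vanishes on $B(0,r_N-1)$ (there $h_N$ is the polynomial $q_N$), the map $\phi_N$ is holomorphic and univalent on $B(0,r_N-1)$ and on $\widehat{\gamma_1}$ we have $f_N=q_N\circ\phi_N^{-1}$ with $\phi_N^{-1}$ holomorphic; by Lemma \ref{branch_covering_hn} this zero is $z_0=\phi_N(w_0)$ with $w_0$ a zero of $q_N$. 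Then $f_N'(z_0)=q_N'(w_0)\cdot(\phi_N^{-1})'(z_0)$, and by equation (\ref{zeroexp_qN}) of Lemma \ref{qN}, $|q_N'(w_0)|=r_N=R_1$ when $w_0=0$ (the component containing the origin) and $|q_N'(w_0)|=r_N(M_N-1)$ for every other component. For the remaining factor I would use that $\phi_N$ is holomorphic near $0$: Lemma \ref{AkEst} (on the circles $|z|\asymp\rho$, $r_N/20\le\rho\le r_N-1$) and Theorem \ref{identity_origin} (on $|z|<R$) show $\phi_N$ maps these circles and the disk $B(0,R)$ essentially to themselves, so two applications of the Schwarz lemma (equivalently Koebe distortion) on $B(0,r_N-1)$ give $|\phi_N'(0)|=1+O(2^{-\sqrt N/4})$; since each component of $\widehat{\Gamma_1}$ has diameter bounded by a universal constant (by the distortion estimate analogous to Remark \ref{Modulus_lower_bound} applied to $q_N$ on that component together with (\ref{zeroexp_qN})) and lies deep inside $B(0,r_N-1)$, Koebe distortion then yields $|(\phi_N^{-1})'|=1+o(1)$ throughout $\widehat{\gamma_1}$, uniformly in $N$. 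Combining, $|f_N'(z_0)|\ge(1-o(1))R_1$ on every component, and $\ge(1-o(1))R_1(M_N-1)$ off the origin component.

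Feeding this into the bounded-distortion estimate of the first paragraph gives $\diam(f_N(\gamma_n))\gtrsim R_1\diam(\gamma_n)$, and I expect the main obstacle to be sharpening the implied constant to exactly $R_1$ on the one component where there is no room to spare, namely the component $\widehat{\gamma_1}$ containing the origin, where the expansion is only of exact order $R_1$ rather than $R_1(M_N-1)$. On that component I would avoid the lossy Koebe constant by computing directly: $\phi_N^{-1}(\widehat{\gamma_1})$ is the component of $q_N^{-1}(B(0,4R_1))$ containing $0$, on which $q_N$ differs from the rotation–scaling $w\mapsto r_N w$ by a super-exponentially small amount (Lemma \ref{quotient_est} forces $c_N|w|^{M_N-1}/r_N$ to be negligible for $|w|$ up to a large multiple of $r_{N-1}$), so $q_N$ carries that component onto $B(0,4R_1)$ with diameter expansion $R_1(1-o(1))$; together with the $(1+o(1))$ contribution of the holomorphic, essentially identity-normalized map $\phi_N$ near $0$ this gives expansion at least $R_1$ once $N$ is large enough to absorb the (quantitatively controlled, $\lesssim 2^{-\sqrt N/4}$) corrections. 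Everywhere off the origin component the factor $M_N-1$ makes the inequality hold with enormous room.
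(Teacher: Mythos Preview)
Your approach coincides with the paper's: estimate $\diam(\gamma_1)$ via Corollary~\ref{shape} using the collar from Remark~\ref{Modulus_lower_bound}, compute $|f_N'|$ at the unique zero of $f_N$ in each $\widehat{\gamma_1}$ via $f_N=q_N\circ\phi_N^{-1}$ and (\ref{zeroexp_qN}), then for $n>1$ transfer the $n=1$ estimate through the conformal map $f_N^{n-1}$ via Corollary~\ref{Kobe}. Your Schwarz/Koebe argument that $|(\phi_N^{-1})'|=1+o(1)$ on every $\widehat{\gamma_1}$ is correct (the nonzero zeros of $q_N$ lie at radius $O(r_{N-1}^2)$ by Lemma~\ref{quotient_est}, and $r_{N-1}^2/r_N\to 0$), so on the off-origin components you recover the paper's bound $\gtrsim R_1(M_N-1)$, with room to spare.

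You go beyond the paper in one respect: you correctly isolate the component of $\widehat{\Gamma_1}$ containing the origin, where the relevant zero is $z_0=0$ and $|q_N'(0)|=r_N$, not $r_N(M_N-1)$ --- equation (\ref{zeroexp_qN}) is false at $z=0$, and the paper's proof tacitly applies it there. Your direct-computation fix, however, does not quite close this gap: it yields expansion $R_1(1+o(1))$ on the origin component, but neither $o(1)$ correction (the one from $q_N(w)-r_Nw$ and the one from $\phi_N'(0)-1$) has a definite sign, so you cannot rule out expansion $R_1(1-\varepsilon_N)$ with $\varepsilon_N>0$. No enlargement of $N$ absorbs a correction of the wrong sign. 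The clean repair is to weaken the conclusion to $\diam(f_N(\gamma_n))\ge\tfrac12 R_1\,\diam(\gamma_n)$; this is what both your argument and the paper's actually establish, and it suffices unchanged for every downstream use, namely (\ref{filled_Julia_sum_1})--(\ref{decay}) and (\ref{easy_W_eqn2}).
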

\begin{proof}
	Choose some $\widehat{\gamma_1} \in \widehat{\Gamma_1}$, and let $z_0\in \widehat{\gamma_1}$ be a zero for $f_N$. Such a $z_0$ exists since $f_N(\gamma_1) = \gamma$ surrounds the origin. Then by Corollary \ref{shape} and Remark \ref{Modulus_lower_bound} applied to the appropriate branch of the inverse $f^{-1}_N: B(0,4R_1) \rightarrow \widehat{\gamma_1}$, there exists a constant $C>0$ such that
	$$ \gamma_1 \subset B\left(z_0, \frac{C(4R_1)}{|f_N'(z_0)|}\right).$$
	Therefore, by Theorem \ref{identity_origin} and Lemma \ref{qN}, there exists another constant $L>0$ so that
	$$\frac{\diam(f_N(\gamma_1))}{\diam(\gamma_1)} \geq \frac{1}{C} \cdot  |f'_N(z_0)| = \frac{1}{C} \cdot R_1(M_N -1) \cdot|(\phi_N^{-1})'(z_0)| \geq \frac{R_1}{C}\cdot L \cdot  (M_N-1).$$
	\noindent
	This proves the estimate for the case of $n =1$. For $n > 1$, by Corollary \ref{Kobe} and Remark \ref{Modulus_lower_bound}, there exists a constant $c >1$ such that
	$$\frac{\diam(f_N(\gamma_n))}{\diam(\gamma_n)} \geq \frac{1}{c} \frac{\diam(f_N^{n-1}(f_N(\gamma_n)))}{\diam(f_N^{n-1}(\gamma_n))} \geq \frac{1}{c} \frac{\diam(\gamma)}{\diam(\gamma_1)} \geq \frac{R_1}{c \cdot C} \cdot L \cdot (M_N-1).$$
	Therefore, there exists $M \in \N$ so that for all $N \geq M$ and for all $n \geq 1$, we have
	$$\frac{\diam(f_N(\gamma_n))}{\diam(\gamma_n)} \geq R_1.$$
	This is exactly what we wanted to show.
\end{proof}

\noindent	Lemma \ref{level_lines_p_N} allows us to deduce the Hausdorff dimension of the Julia set of the polynomial like mapping $f_N: U \ra V$.
\begin{thm}
	\label{dimension_origin_Cantor}
	Let $t > 0$ be given. Then there exists an integer $M$ so that for all $N \geq M$, the Hausdorff dimension of the filled Julia set of $f_N:U\ra V$ is at most $t$.
\end{thm}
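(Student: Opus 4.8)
We plan to bound the Hausdorff dimension of the filled Julia set $K$ of the polynomial-like map $f_N\colon U\ra V$ directly from the definition, using the nested families $\Gamma_n$ of Definition \ref{GreensLines} as covers of $K$ and feeding the expansion estimate of Lemma \ref{level_lines_p_N} into a counting argument.

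First I would check that $\widehat{\Gamma_n}$ covers $K$ for every $n\geq 1$. By Lemma \ref{polynomial_like} we have $K\subset U\subset D\subset B(0,4R_1)=\widehat{\gamma}$, so every point of $K$ has its entire forward $f_N$-orbit inside $B(0,4R_1)$; consequently $K\subset f_N^{-n}(B(0,4R_1))=\bigcup_{\gamma_n\in\Gamma_n}\widehat{\gamma_n}$, which by Remark \ref{gamma_sigma} and Definition \ref{GreensLines} is a union of $2^{Nn}$ Jordan domains.

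Next I would iterate Lemma \ref{level_lines_p_N} to control diameters. For $\gamma_n\in\Gamma_n$ the image $f_N(\gamma_n)$ equals some $\gamma_{n-1}\in\Gamma_{n-1}$ (with the convention $\gamma_0:=\gamma$), since $f_N^{n-1}(f_N(\gamma_n))=\gamma$ and $f_N$ maps $\widehat{\gamma_n}$ conformally onto some $\widehat{\gamma_{n-1}}$ by Remark \ref{Modulus_lower_bound} (and Remark \ref{gamma_sigma} for $n=1$). Hence Lemma \ref{level_lines_p_N} gives $\diam(\gamma_n)\leq R_1^{-1}\diam(\gamma_{n-1})$, and iterating down to level $0$ yields
\[
\diam(\widehat{\gamma_n})=\diam(\gamma_n)\leq R_1^{-n}\diam(\gamma)=8R_1\cdot R_1^{-n}
\]
for all $\gamma_n\in\Gamma_n$. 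Since $R_1>1$, the meshes of the covers $\widehat{\Gamma_n}$ tend to $0$ as $n\to\infty$. It then remains to sum and choose $N$: for $t>0$,
\[
\sum_{\gamma_n\in\Gamma_n}\diam(\widehat{\gamma_n})^t\leq 2^{Nn}\bigl(8R_1\cdot R_1^{-n}\bigr)^t=8^tR_1^t\,\bigl(2^NR_1^{-t}\bigr)^n,
\]
and by Lemma \ref{Rkest} (see (\ref{Rkest_eqn3})) we have $R_1=r_N\geq 2^{2^{N-1}}$, so $\tfrac{N\log 2}{\log R_1}\leq\tfrac{N}{2^{N-1}}\to 0$ as $N\to\infty$. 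Thus, given $t>0$, I would choose $M$ large enough that $2^NR_1^{-t}<1$ for all $N\geq M$ and also large enough that Lemma \ref{level_lines_p_N}, Lemma \ref{polynomial_like}, and the other lemmas of this section apply. For such $N$ the displayed sum tends to $0$ as $n\to\infty$, and since the meshes tend to $0$ this gives $\mathcal{H}^t(K)=0$, hence $\textrm{dim}(K)\leq t$.

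The substantive work is already contained in Lemma \ref{level_lines_p_N}, so there is no real difficulty here; the only delicate point is the quantitative one, namely that $R_1=r_N$ grows super-exponentially in $N$. This is precisely what lets the per-level expansion factor $R_1$ beat the branching number $2^N$ once $N$ is large, and hence forces the dimension below the prescribed $t$.
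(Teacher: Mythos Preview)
Your proposal is correct and follows essentially the same approach as the paper: both use the covers $\widehat{\Gamma_n}$, iterate the expansion estimate of Lemma \ref{level_lines_p_N}, and then choose $N$ large via (\ref{Rkest_eqn3}) so that $2^N R_1^{-t}<1$. The only cosmetic difference is that you iterate the diameter bound first and then sum, whereas the paper sums first and then iterates the resulting inequality; the substance is identical.
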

\begin{proof}
	For each $n \geq 1$, $\widehat{\Gamma_n}$ is a covering of the Julia set of $(f_N,U,V)$. Fix $t > 0$. If $\widehat{\gamma_n} \in \widehat{\Gamma_n}$, then $f_N(\widehat{\gamma_n}) =: \widehat{\gamma_{n-1}} \in \widehat{\Gamma_{n-1}}$. Therefore, by Lemma \ref{level_lines_p_N}, we have
	\begin{equation}
	\label{filled_Julia_sum_1}
	\sum_{\widehat{\gamma_n} \in \widehat{\Gamma_n}} \diam(\widehat{\gamma_n})^t \leq R_1^{-t} \cdot 2^N \cdot \sum_{\widehat{\gamma_{n-1}} \in \widehat{\Gamma_{n-1}}} \diam(\widehat{\gamma_{n-1}})^t.
	\end{equation}
	It follows from (\ref{filled_Julia_sum_1}) that 
	\begin{equation}
	\label{filled_Julia_set_2}
	\sum_{n=1}^{\infty} \sum_{\widehat{\gamma_n} \in \widehat{\Gamma_n}} \diam(\widehat{\gamma_n})^t \leq  \diam(\gamma)^t \cdot \sum_{n=1}^{\infty} 2^{Nn} R_1^{-tn}
	\end{equation}
	The sum in (\ref{filled_Julia_set_2}) converges if and only if 
	$$2^{N} R_1^{-t} < 1.$$
	To see this, we use Corollary \ref{rkinequalities} to see that
	\begin{equation}
	\label{decay}
	R_1^{-t} 2^N 
	=  r_N^{-t} 2^{N} 
	\leq 2^{N-M_{N-1}t}
	\end{equation}
	Choose $M$ so that for all  $N \geq M$, we have $N - 2^{N-1}t < 0$, so that we obtain $2^N R_1^{-t} < 1$. For such a choice of $N$, (\ref{filled_Julia_set_2}) converges, and for any $\eta > 0$ there exists some value $n$ so that 
	$$\sum_{\widehat{\gamma_n} \in \widehat{\Gamma_n}} \diam(\widehat{\gamma_n})^t < \eta$$
	Since $\widehat{\Gamma_n}$ is a covering of the filled Julia set of $f_N: U \ra V$, it follows that its Hausdorff dimension is bounded above by $t$. 
\end{proof}

\noindent We conclude Section \ref{Origin} by showing that the critical values of $f_N$ lying in $|z|<R_1/4$ map to $B_1$. This will be crucial in constructing covers of the Julia set of $f_N$ by pulling back the annuli $A_k$ under $f_N$.

\begin{lem}
	\label{crit_values_avoid_A1_fN}
	For all $N \geq 10$, if $z$ is a critical point of $f_N$ contained in $D$, then $f_N(z) \in B_1$. 
\end{lem}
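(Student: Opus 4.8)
The plan is to reduce the statement to Lemma \ref{crits_B1}, which already contains all the quantitative content. First I would recall from Lemma \ref{branch_covering_hn} that the critical points of $f_N$ are exactly the points $\phi_N(\zeta_j)$, $j=1,\dots,2^N-1$, where $\zeta_1,\dots,\zeta_{2^N-1}$ are the critical points of $g_N$ in $B(0,r_N)$, together with the critical points $\phi_N(r_j\exp(i(2k_j-1)\pi/M_j))$ with $j\geq N$. By Lemma \ref{AkEst} this second family has modulus at least roughly $r_j\geq r_N=R_1>\frac{1}{4}R_1$, hence it lies outside $D=B(0,\frac{1}{4}R_1)$ (alternatively, it lies in $\cup_{k\geq1}A_k$ by Proposition \ref{Fatou_critical_points}). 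Consequently, every critical point $z$ of $f_N$ with $z\in D$ is of the form $z=\phi_N(\zeta_j)$. Moreover, since the critical points of $q_N$ are located on the circle $|z|=(r_N/(c_NM_N))^{1/(M_N-1)}$, and Lemma \ref{quotient_est} together with Corollary \ref{rkinequalities} gives that this radius is at most $\frac{1}{\sqrt{2}}r_{N-1}^2<r_N-1$, these critical points lie in the region $\{|z|\le r_N-1\}$ where $g_N=q_N$; thus the $\zeta_j$ are precisely the critical points of $q_N$.

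Next I would invoke the polynomial-like structure established earlier. By Lemmas \ref{polynomial_like_polynomial} and \ref{polynomial_like}, each $\zeta_j$ lies in $U'$, and $\phi_N(\zeta_j)\in U=\phi_N(U')\subset D$, so on a neighborhood of $\phi_N(\zeta_j)$ we have $f_N=q_N\circ\phi_N^{-1}$. Therefore
\[ f_N(z)=f_N(\phi_N(\zeta_j))=q_N\bigl(\phi_N^{-1}(\phi_N(\zeta_j))\bigr)=q_N(\zeta_j), \]
so that the critical values of $f_N$ coming from critical points in $D$ coincide with the critical values of $q_N$. Finally, Lemma \ref{crits_B1} states that for $N\geq10$ all critical values of $q_N$ belong to $B_1$, whence $f_N(z)\in B_1$, completing the argument.

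I do not expect any genuine obstacle here: the essential estimate—placing the critical values of $q_N$ inside the annulus $B_1=\overline{A(4R_1,\frac{1}{4}R_2)}$ via the bounds $8r_N<|q_N(\zeta)|<\frac{1}{16\sqrt{2}}r_{N+1}$ of Lemma \ref{crits_B1}—is already available. The only items requiring care are the identification of the critical points of $f_N$ inside $D$ with $\phi_N$-images of critical points of $q_N$ (using Lemma \ref{branch_covering_hn} and the fact that $g_N$ agrees with $q_N$ in the region containing those points), and the inclusion $\phi_N(U')\subset D$, which is precisely Lemma \ref{polynomial_like}.
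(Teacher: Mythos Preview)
Your proposal is correct and follows essentially the same approach as the paper: identify the critical points of $f_N$ in $D$ as $\phi_N$-images of critical points of $q_N$, use $f_N=q_N\circ\phi_N^{-1}$ there to see that the associated critical values are exactly the critical values of $q_N$, and then invoke Lemma~\ref{crits_B1}. The paper's proof is simply a terser version of your argument, omitting the verification that the ``outer'' critical points lie outside $D$ and that the $\zeta_j$ sit in the region where $g_N=q_N$.
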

\begin{proof}
	The only critical points of $f_N$ contained inside of $D$ are of the form $\phi_N(z)$, where $z$ is a critical point of $q_N$. By Lemma \ref{crits_B1}, the critical values of $q_N$ are contained in $B_1$. Therefore, the critical values of $f_N$ associated to the critical points in $D$ belong to $B_1$.
\end{proof}	

\section{Location of the Julia Set}
\label{Expanding Zeros}

In this Section, we refine our understanding of the behavior of $f$ in the annuli $A_k$. Namely, we prove that unless $z$ belongs to $V_k$ or a collection of small balls (which we call \emph{petals} in Definition \ref{petal_def} below), then $f(z)\in B_{k+1}$. This is crucial to our understanding of the structure of $\mathcal{J}(f)$, since it is readily observed (see Lemma \ref{BkFatou} below) that the annuli $B_k$ lie in $\mathcal{F}(f)$. We will also further describe the behavior of $f$ in the aforementioned petals. 

% It may be quickly observed that the annuli $B_k$ all belong to $\mathcal{F}(f)$, 

%Namely, we prove that most of $A_k$ maps to 

% Namely, we prove that 

% The main result is that if $z\in\mathcal{J}(f)\cap A_k$, then either $z$ belongs to the subannulus $V_k \subset A_k$, or $z$ belongs to one of a collection of small balls (which we call \emph{petals} in Definition \ref{petal_def} below).

\begin{lem}
	\label{BkFatou}
	There exists $M \in \N$ such that for all $N \geq M$, $B_k$ belongs to the Fatou set of $f_N$ for all $k \geq 1$.
\end{lem}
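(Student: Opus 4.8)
The plan is to show that each $B_k$ is contained in a Fatou component by exhibiting an iterate of $f_N$ that maps a neighborhood of $B_k$ into a region where the family of iterates is normal. By Lemma \ref{Bk} we already know $f_N(B_k) \subset B_{k+1}$ for all $k \geq 1$, and hence $f_N^n(B_k) \subset B_{k+n}$ for all $n$. So it suffices to show that the sequence of iterates $(f_N^n)$ is uniformly bounded on $B_k$ (or on a slightly larger open annulus containing $B_k$): by Montel's theorem this forces $(f_N^n)$ to be normal there, so $B_k \subset \mathcal{F}(f_N)$.

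First I would upgrade Lemma \ref{Bk} slightly: since $B_k = \overline{A(4R_k, \tfrac14 R_{k+1})}$ is closed, I want an open annulus $B_k' \supset B_k$, say $B_k' = A(3R_k, \tfrac13 R_{k+1})$, still satisfying $f_N(B_k') \subset B_{k+1}'$; the estimates (\ref{4R_k_Max}), (\ref{4R_k_Min}), (\ref{14R_k_max}), (\ref{14R_k_Min}) from the proof of Lemma \ref{Bk}, together with Lemma \ref{holomorphic_annulus}(2) (note $f_N$ has no zeros in $B_k$ by Proposition \ref{zeros}, since all zeros lie in $\cup A_j$), give this with room to spare — the constants $\tfrac18, 8$ appearing there already beat $3$ and $\tfrac13$. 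Then $f_N^n(B_k') \subset B_{k+n}' \subset A(3R_{k+n}, \tfrac13 R_{k+n+1})$ for all $n \geq 0$.

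Next I would extract a uniform bound. On $B_{k+n}'$ we have $|f_N^{n}(z)| \leq \tfrac13 R_{k+n+1}$; this is a finite bound depending on $n$, not yet uniform. To get normality I instead observe directly: the open set $\Omega := \bigcup_{j \geq 1} B_j'$ is forward-invariant under $f_N$ by the above, and $\mathbb{C}\setminus\Omega$ contains, for instance, the annuli $A_j$ (in particular $A_1$, which has nonempty interior), so $\Omega$ omits an open set — hence more than three points. By Montel's theorem the family $\{f_N^n\}_{n\geq 0}$ restricted to any component of $\Omega$ is normal, so $\Omega \subset \mathcal{F}(f_N)$, and in particular $B_k \subset B_k' \subset \mathcal{F}(f_N)$ for all $k \geq 1$. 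The existence of $M$ is inherited from Lemma \ref{Bk}: we take $M$ at least as large as the $M$ there.

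The only mild obstacle is the bookkeeping in the first step — confirming that the boundary estimates in the proof of Lemma \ref{Bk}, which were stated for the circles $|z| = 4R_k$ and $|z| = \tfrac14 R_{k+1}$, also hold (with the same style of computation) for the circles $|z| = 3R_k$ and $|z| = \tfrac13 R_{k+1}$ bounding the slightly enlarged $B_k'$, so that Lemma \ref{holomorphic_annulus}(2) applies to $f_N: B_k' \to \mathbb{C}$ with image inside $B_{k+1}'$. This is routine: replacing $4$ by $3$ and $\tfrac14$ by $\tfrac13$ changes the constants $\alpha_k^{n_{k+1}} 8^{n_{k+1}} 2^{-n_k}$, etc., by a bounded factor that is absorbed by the super-exponential decay coming from Lemma \ref{Rkest} (the $(128/R_{k+1}^{1/2})^{n_{k+1}}$-type terms). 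Everything else is a direct appeal to Montel's theorem.
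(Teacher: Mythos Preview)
Your approach is essentially the paper's: use Lemma~\ref{Bk} to get forward invariance $f_N(B_k)\subset B_{k+1}$, and conclude normality. The paper is terser --- it just says ``each point in $B_k$ escapes locally uniformly to $\infty$'' and stops --- while you are more careful about the open/closed issue by passing to an enlarged open $B_k'$ and then invoking Montel via omitted values rather than via escape. That extra care is reasonable and buys you a cleaner handling of the boundary circles $|z|=4R_k$ and $|z|=\tfrac14 R_{k+1}$, which the paper's one-liner glosses over.

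There is one small slip: your claim that $\Omega=\bigcup_{j\ge 1}B_j'$ omits the annuli $A_j$ is false as stated, since $B_k'=A(3R_k,\tfrac13 R_{k+1})$ overlaps both $A_k$ (on $(3R_k,4R_k)$) and $A_{k+1}$ (on $(\tfrac14 R_{k+1},\tfrac13 R_{k+1})$). The fix is immediate: $\Omega$ omits the disk $B(0,3R_1)$ (indeed, it omits $0$), so Montel still applies. Alternatively, since $f_N^n\to\infty$ uniformly on each $B_k'$ (because $|f_N^n(z)|\ge 3R_{k+n}\to\infty$), you get normality directly without needing to count omitted values; this is exactly the paper's route.
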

\begin{proof}
	By Lemma \ref{Bk}, there exists $M \in \N$ such that for all $N \geq M$ we have $f_N(B_k) \subset B_{k+1}$. This implies that each point in $B_k$ escapes locally uniformly to $\infty$.
\end{proof}

Therefore, when $|z| \geq \frac{1}{4} R_1$, the Julia set of $f_N$ is contained in $\cup_{k=1}^{\infty} A_k$. For each $k \geq 1$, Proposition \ref{zeros} says that $f_N$ has $n_k$ many zeros contained in $A_k$. For a given $k \geq 1$, let $\{w_j^k\}_{j=1}^{n_k}$ denote these zeros. Following the terminology in \cite{Bis18}, we introduce some notation for the balls containing the zeros of $f_N$ inside of $A_k$ (see Figure \ref{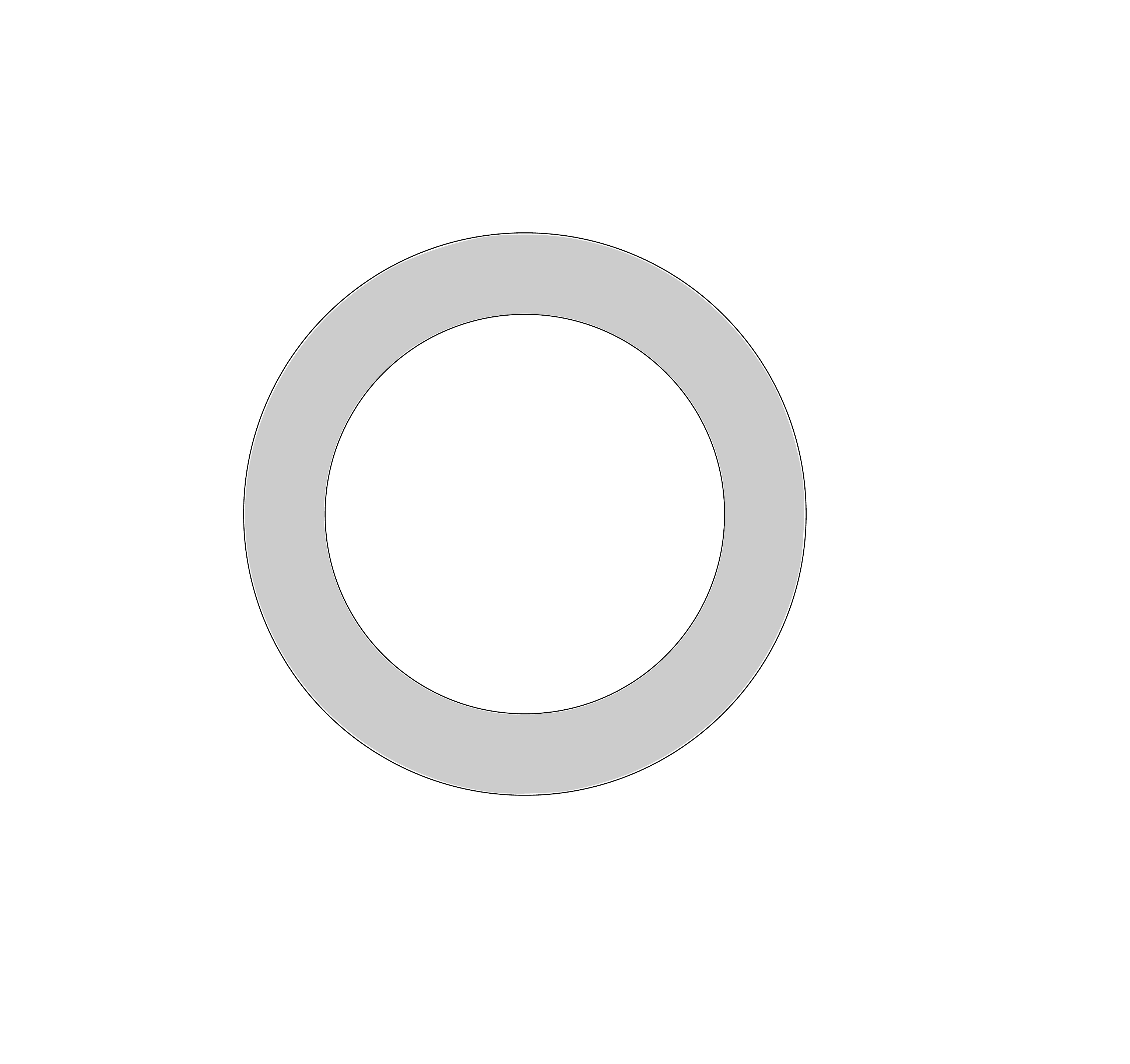_tex}).
\begin{definition}
	\label{petal_def}
	For $k \geq 1$, let  $\mathcal{P}_k =  \cup_{j=1}^{n_k} B(w^k_j,R_k/2^{n_k})$ be the \textit{petals} of $f_N$ inside of $A_k$.  A connected component $P_k \subset \mathcal{P}_k$ will be called a \textit{petal}. 
\end{definition}

\begin{figure}
	\centering	
	\scalebox{.4}{%% Creator: Inkscape 1.0.1 (c497b03c, 2020-09-10), www.inkscape.org
%% PDF/EPS/PS + LaTeX output extension by Johan Engelen, 2010
%% Accompanies image file 'petalillustration.pdf' (pdf, eps, ps)
%%
%% To include the image in your LaTeX document, write
%%   \input{<filename>.pdf_tex}
%%  instead of
%%   \includegraphics{<filename>.pdf}
%% To scale the image, write
%%   \def\svgwidth{<desired width>}
%%   \input{<filename>.pdf_tex}
%%  instead of
%%   \includegraphics[width=<desired width>]{<filename>.pdf}
%%
%% Images with a different path to the parent latex file can
%% be accessed with the `import' package (which may need to be
%% installed) using
%%   \usepackage{import}
%% in the preamble, and then including the image with
%%   \import{<path to file>}{<filename>.pdf_tex}
%% Alternatively, one can specify
%%   \graphicspath{{<path to file>/}}
%% 
%% For more information, please see info/svg-inkscape on CTAN:
%%   http://tug.ctan.org/tex-archive/info/svg-inkscape
%%
\begingroup%
  \makeatletter%
  \providecommand\color[2][]{%
    \errmessage{(Inkscape) Color is used for the text in Inkscape, but the package 'color.sty' is not loaded}%
    \renewcommand\color[2][]{}%
  }%
  \providecommand\transparent[1]{%
    \errmessage{(Inkscape) Transparency is used (non-zero) for the text in Inkscape, but the package 'transparent.sty' is not loaded}%
    \renewcommand\transparent[1]{}%
  }%
  \providecommand\rotatebox[2]{#2}%
  \newcommand*\fsize{\dimexpr\f@size pt\relax}%
  \newcommand*\lineheight[1]{\fontsize{\fsize}{#1\fsize}\selectfont}%
  \ifx\svgwidth\undefined%
    \setlength{\unitlength}{883.48123385bp}%
    \ifx\svgscale\undefined%
      \relax%
    \else%
      \setlength{\unitlength}{\unitlength * \real{\svgscale}}%
    \fi%
  \else%
    \setlength{\unitlength}{\svgwidth}%
  \fi%
  \global\let\svgwidth\undefined%
  \global\let\svgscale\undefined%
  \makeatother%
  \begin{picture}(1,0.94149117)%
    \lineheight{1}%
    \setlength\tabcolsep{0pt}%
    \put(0,0){\includegraphics[width=\unitlength,page=1]{petalillustration.pdf}}%
    \put(0.91427829,0.66790028){\color[rgb]{0,0,0}\makebox(0,0)[lt]{\lineheight{1.25}\smash{\begin{tabular}[t]{l}\scalebox{3}{$A_k$}\end{tabular}}}}%
    \put(0,0){\includegraphics[width=\unitlength,page=2]{petalillustration.pdf}}%
    \put(0.78438528,0.82850723){\color[rgb]{0,0,0}\makebox(0,0)[lt]{\lineheight{1.25}\smash{\begin{tabular}[t]{l}\scalebox{3}{$V_k$}\end{tabular}}}}%
    \put(0,0){\includegraphics[width=\unitlength,page=3]{petalillustration.pdf}}%
    \put(0.91587025,0.50775879){\color[rgb]{0,0,0}\makebox(0,0)[lt]{\lineheight{1.25}\smash{\begin{tabular}[t]{l}\scalebox{3}{$P_j$}\end{tabular}}}}%
    \put(0.44221187,0.35635859){\color[rgb]{0,0,0}\makebox(0,0)[lt]{\lineheight{1.25}\smash{\begin{tabular}[t]{l}\scalebox{3}{$B_{k-1}$}\end{tabular}}}}%
    \put(0,0){\includegraphics[width=\unitlength,page=4]{petalillustration.pdf}}%
    \put(0.45894194,0.15178468){\color[rgb]{0,0,0}\makebox(0,0)[lt]{\lineheight{1.25}\smash{\begin{tabular}[t]{l}\scalebox{3}{$B_{k}$}\end{tabular}}}}%
    \put(0,0){\includegraphics[width=\unitlength,page=5]{petalillustration.pdf}}%
  \end{picture}%
\endgroup%
}
	\caption{Illustrated is Definition \ref{petal_def} of the petals $P_k$. The annuli $B_{k-1}$, $B_k$ are in white, the annulus $A_k$ is in light grey, and the annulus $V_k$ and petals $P_k$ are in dark grey.}
	\label{petalillustration.pdf_tex}
\end{figure}

As already mentioned, we will now prove several lemmas (Lemmas \ref{annulus_1}-\ref{hole_punched}) detailing the mapping behavior of $f$ within the annulus $A_k$, most crucially within the subannulus $V_k$ and the petals $P_k$. 

\begin{lem}
	\label{annulus_1}
	There exists $M \in \N$ so that for all $N \geq M$, if $k \geq 1$, then 
	$$f_N\left(A\left(\frac{5}{4} R_k, 4 R_k\right)\right)  \subset B_{k+1}.$$
\end{lem}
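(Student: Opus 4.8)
The plan is to derive this directly from Lemma \ref{PowerMap}, reusing almost verbatim the computations already performed in the proof of Lemma \ref{Bk}, with the only new ingredient being the lower bound evaluated at the inner radius $\frac{5}{4}R_k$ instead of at $4R_k$. First I would check that the annulus $A(\frac{5}{4}R_k,4R_k)$ lies inside the region $A(\frac{5}{4}R_k,\frac{3}{4}R_{k+1})$ on which Lemma \ref{PowerMap} is valid: this follows from $R_{k+1}\geq 4R_k^2$ (inequality (\ref{Rkest_eqn4})) together with $R_k\geq R_1=r_N\geq 16$, which forces $4R_k<\frac{3}{4}R_{k+1}$. Hence for every $z\in A(\frac{5}{4}R_k,4R_k)$ we have
\[
\beta_k^{n_{k+1}}C_{k+1}|z|^{n_{k+1}}\leq |f_N(z)|\leq \alpha_k^{n_{k+1}}C_{k+1}|z|^{n_{k+1}}.
\]

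For the upper bound, $|z|^{n_{k+1}}$ attains its maximum on the annulus at $|z|=4R_k$, so $|f_N(z)|\leq \alpha_k^{n_{k+1}}C_{k+1}(4R_k)^{n_{k+1}}$, which is precisely the quantity estimated in (\ref{Setup1})–(\ref{4R_k_Max}) in the proof of Lemma \ref{Bk}; that chain of inequalities (using Lemma \ref{Rkest} and $N\geq 5$) already gives $\alpha_k^{n_{k+1}}C_{k+1}(4R_k)^{n_{k+1}}\leq 64^{n_{k+1}}R_{k+1}<\frac{1}{8}R_{k+2}<\frac{1}{4}R_{k+2}$. For the lower bound, $|z|^{n_{k+1}}$ is minimized at $|z|=\frac{5}{4}R_k$; using $C_{k+1}=C_kR_k^{-n_k}$, $R_{k+1}=C_k(\frac{R_k}{2})^{n_k}$, and $n_{k+1}=2n_k$, one computes $C_{k+1}\big(\frac{5}{4}R_k\big)^{n_{k+1}}=\big(\frac{25}{8}\big)^{n_k}R_{k+1}$, whence
\[
\min_{|z|=\frac{5}{4}R_k}|f_N(z)|\geq \beta_k^{n_{k+1}}\Big(\frac{25}{8}\Big)^{n_k}R_{k+1}=\Big(\frac{25}{8}\beta_k^2\Big)^{n_k}R_{k+1}.
\]
By (\ref{alpha_beta_close_1_eqn}) we have $\beta_k>\frac{99}{100}$, so $\frac{25}{8}\beta_k^2>3$, and since $N\geq 5$ we have $n_k\geq n_1=2^N\geq 32$; therefore $\min_{|z|=\frac{5}{4}R_k}|f_N(z)|\geq 3^{n_k}R_{k+1}>4R_{k+1}$.

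Combining the two bounds, every $z\in A(\frac{5}{4}R_k,4R_k)$ satisfies $4R_{k+1}<|f_N(z)|<\frac{1}{4}R_{k+2}$, hence $f_N(z)\in A(4R_{k+1},\frac{1}{4}R_{k+2})\subset \overline{A(4R_{k+1},\frac{1}{4}R_{k+2})}=B_{k+1}$; taking $M$ to be the largest of the finitely many thresholds coming from Lemmas \ref{PowerMap}, \ref{Rkest} and \ref{Bk} finishes the proof. I do not expect a genuine obstacle here: the argument is a near-repetition of Lemma \ref{Bk}, and the only substantively new point is the elementary observation that, because $\frac{5}{4}>1$, raising $\frac{5}{4}R_k$ to the power $n_{k+1}$ still produces an exponentially large factor relative to $R_{k+1}$, which is what keeps the image away from the inner boundary of $B_{k+1}$.
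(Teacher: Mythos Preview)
Your proof is correct. The paper takes a slightly different route: rather than applying Lemma~\ref{PowerMap} pointwise on the whole annulus, it invokes Proposition~\ref{zeros} to observe that $f_N$ has no zeros in $A(\frac{5}{4}R_k,4R_k)$ and then appeals to part~(2) of Lemma~\ref{holomorphic_annulus} (the maximum/minimum principle), reducing the problem to checking $f_N(\partial W)\subset B_{k+1}$ on the two boundary circles. For $|z|=4R_k$ it cites (\ref{Inner_Bk}); for $|z|=\frac{5}{4}R_k$ it bounds the maximum via the maximum principle (dominated by the value on $|z|=4R_k$) and the minimum via Lemma~\ref{PowerMap}, arriving at essentially the same numerics you obtain. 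Your approach is marginally more direct, since the two-sided estimate of Lemma~\ref{PowerMap} already controls $|f_N|$ throughout the annulus once you verify $4R_k<\frac{3}{4}R_{k+1}$, so the zero-free observation and the maximum principle become unnecessary; the paper's route, on the other hand, is the one that generalizes to the later lemmas (e.g.\ Lemma~\ref{hole_punched}) where pointwise power-map bounds are not available on the full region.
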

\noindent
\begin{proof}
	By Proposition \ref{zeros} and by Lemma \ref{holomorphic_annulus}, it is sufficient to verify that there exists $M \in \N$ so that for all $N \geq M$ we have $f_N(|z| = 4R_k) \subset B_{k+1}$ and $f_N(|z| = \frac{5}{4}R_k) \subset B_{k+1}$. The former has already been verified in (\ref{Inner_Bk}), so we only need to prove the latter. By the maximum principle for holomorphic functions, we have 
	\begin{equation}
	\label{54_annulus_max}
	\max_{|z| = \frac{5}{4}R_k} |f_N(z)| \leq \max_{|z| = 4R_k} |f_N(z)| < \frac{1}{8}R_{k+2}.
	\end{equation}
	By Lemma \ref{PowerMap}, we have 
	\begin{align*}
	\min_{|z| = \frac{5}{4}R_k} |f_N(z)| \geq \min_{|z| = \frac{5}{4}R_k} C_{k+1} \beta_k^{n_{k+1}} |z|^{n_{k+1}} 
	= \beta_k^{n_{k+1}} C_k \left( \frac{R_k}{2} \right)^{n_k} \left(\frac{5}{2}\right)^{n_{k+1}} \left(\frac{1}{2}\right)^{n_k} 
	\geq \beta_k^{n_{k+1}} R_{k+1} \left(\frac{5}{4}\right)^{n_{k+1}}
	\end{align*}
	By (\ref{alpha_beta_close_1_eqn}) we have $\beta_k \cdot \frac{5}{4}\geq \frac{6}{5}$. Therefore, since $N \geq 5$ we have
	\begin{equation}
	\label{54_annnulus_min}
	\min_{|z| = \frac{5}{4}R_k} |f_N(z)| \geq \left(\frac{6}{5}\right)^{n_{k+1}} R_{k+1} > 8R_{k+1}.
	\end{equation}
	It follows from (\ref{54_annnulus_min}) and (\ref{54_annulus_max}) that
	\begin{equation}
	\label{54_annulus}
	f_N\left(|z| = \frac{5}{4}R_k\right) \subset A(8R_{k+1}, \frac{1}{8}R_{k+2}) \subset B_{k+1}.
	\end{equation}
	As discussed at the beginning, this proves the claim.
\end{proof}

\begin{lem}
	\label{annulus_2}
	There exists $M \in \N$ so that for all $N \geq M$, if $k \geq 1$, then
	\begin{equation*}
	f_N\left(A\left(\frac{1}{4}R_k, \frac{2}{5}R_k\right)\right) \subset B_{k}.
	\end{equation*}
\end{lem}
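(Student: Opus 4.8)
The plan is to follow the proof of Lemma~\ref{annulus_1}, this time applying part~(2) of Lemma~\ref{holomorphic_annulus} to the annulus $W:=A(\tfrac14 R_k,\tfrac25 R_k)$, aiming for $f_N(W)\subset B_k=\overline{A(4R_k,\tfrac14 R_{k+1})}$. First I would check that $f_N$ has no zeros in $W$: by Proposition~\ref{zeros} every zero of $f_N$ with $|z|\geq\tfrac14 R_1$ lies in some $A(\tfrac35 R_j,\tfrac54 R_j)$, and since $R_{j+1}\geq 4R_j^2$ by \eqref{Rkest_eqn4} these annuli are pairwise separated and disjoint from $W$; the remaining zeros lie in $B(0,\tfrac14 R_1)$, which is also disjoint from $W$. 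Hence it suffices to show $4R_k\leq |f_N(z)|\leq\tfrac14 R_{k+1}$ on the two circles $|z|=\tfrac14 R_k$ and $|z|=\tfrac25 R_k$; Lemma~\ref{holomorphic_annulus}(2) then gives $f_N(W)\subset B_k$.

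For $k\geq 2$ I would invoke Lemma~\ref{PowerMap} with index $k-1$: since $\tfrac14 R_k\geq R_{k-1}^2>\tfrac54 R_{k-1}$ by \eqref{Rkest_eqn4}, the closure of $W$ is contained in $A(\tfrac54 R_{k-1},\tfrac34 R_k)$, so on the circle $|z|=cR_k$ (with $c\in\{\tfrac14,\tfrac25\}$) one has $\beta_{k-1}^{n_k}C_k|z|^{n_k}\leq|f_N(z)|\leq\alpha_{k-1}^{n_k}C_k|z|^{n_k}$, which, using the identity $C_k(\tfrac12 R_k)^{n_k}=R_{k+1}$, becomes $(2c\,\beta_{k-1})^{n_k}R_{k+1}\leq|f_N(z)|\leq(2c\,\alpha_{k-1})^{n_k}R_{k+1}$. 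The upper bound is $<\tfrac14 R_{k+1}$: by \eqref{alpha_beta_close_1_eqn} one has $2c\,\alpha_{k-1}\leq\tfrac78<1$ when $c=\tfrac25$ and $2c\,\alpha_{k-1}<\tfrac{101}{200}<1$ when $c=\tfrac14$, and $n_k\geq 2^N$ is large. For the lower bound, $2c\,\beta_{k-1}>\tfrac14$, so $|f_N(z)|>4^{-n_k}R_{k+1}\geq 4^{-n_k}2^{-n_k}R_k^{\,n_{k-1}+1}$ by \eqref{Rkest_eqn}; since $R_k\geq 2^{\,n_{k-1}}$ by \eqref{Rkest_eqn3} and $n_k=2n_{k-1}$, this is at least $R_k\cdot 2^{\,n_{k-1}(n_{k-1}-6)}>4R_k$ once $N$ is large. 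Thus $f_N(\partial W)\subset B_k$ for $k\geq 2$.

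The case $k=1$ is identical except that Lemma~\ref{the_k_equal_1_lemma} replaces Lemma~\ref{PowerMap}: since the closure of $W$ lies in $A(\tfrac1{10}R_1,\tfrac9{10}R_1)$, on $|z|=cR_1$ we get $\tfrac12(2c\,\beta_1)^{n_1}R_2\leq|f_N(z)|\leq 2(2c\,\alpha_1)^{n_1}R_2$, and the harmless extra factors $2$ and $\tfrac12$ are absorbed exactly as above (the upper bound now needs $(2c\,\alpha_1)^{n_1}<\tfrac18$, the lower bound loses a factor $\tfrac12$, both fine for $n_1=2^N$ large, with \eqref{Rkest_eqn} and \eqref{Rkest_eqn3} applied at $k=1$). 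Combining the two cases and invoking Lemma~\ref{holomorphic_annulus}(2) completes the proof.

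As in Lemmas~\ref{Ak}, \ref{Bk} and \ref{annulus_1}, I do not expect a genuine obstacle here: the whole content is bookkeeping with the super-exponential growth of $(R_k)$. The one step requiring slight care is the lower bound $|f_N|>4R_k$ on the inner circle $|z|=\tfrac14 R_k$, where the natural factor $(\beta_{k-1}/2)^{n_k}$ is exponentially small and must be beaten by the stronger growth estimate $R_{k+1}\geq 2^{-n_k}R_k^{\,n_{k-1}+1}$ together with $R_k\geq 2^{\,n_{k-1}}$, rather than by the cruder $R_{k+1}\geq 4R_k^2$ that suffices for the other circle.
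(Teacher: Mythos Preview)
Your proof is correct and follows essentially the same approach as the paper: verify there are no zeros in the annulus via Proposition~\ref{zeros}, estimate $|f_N|$ on the two boundary circles using Lemma~\ref{PowerMap} (respectively Lemma~\ref{the_k_equal_1_lemma} when $k=1$), and conclude by Lemma~\ref{holomorphic_annulus}(2). The only minor difference is that the paper shortcuts the treatment of the inner circle $|z|=\tfrac14 R_k$ for $k\geq 2$ by reusing the already-established inclusion \eqref{Outer_Bk} from the proof of Lemma~\ref{Bk}, whereas you re-derive this estimate directly from Lemma~\ref{PowerMap} together with \eqref{Rkest_eqn} and \eqref{Rkest_eqn3}; both routes are valid.
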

\begin{proof}
	We consider the cases of $k \geq 2$ and $k = 1$ separately.
	
	When $k \geq 2$, (\ref{Outer_Bk}) implies that $f_N(|z| = \frac{1}{4}R_k) \subset B_{k}$. By (\ref{Inner_Vk}), we have $\max_{|z| = \frac{2}{5}R_k} |f_N(z)| \leq \frac{1}{4}R_{k+1}$. Next, we observe that by Lemma \ref{Rkest},
	\begin{align*}
	\min_{|z| = \frac{2}{5}R_k} |f_N(z)| \geq \min_{|z| = \frac{2}{5}R_k} \beta_k^{n_k} C_k |z|^{n_{k}} 	=  \beta_k^{n_k} C_k \left(\frac{4}{5}\right)^{n_k} \left(\frac{1}{2}R_k\right)^{n_{k}} 
	&= \beta_k^{n_k} R_{k+1} \left(\frac{4}{5}\right)^{n_k} \\ 
	&\geq \beta_k^{n_k} \left(\frac{4}{10}\right)^{n_k} R_k^{n_{k-1}} R_k. 
	\end{align*} 
	Since $N \geq 5$, by (\ref{alpha_beta_close_1_eqn}) and Lemma \ref{Rkest} we have 
	\begin{equation}
	\beta_k \frac{4}{10} R^{\frac{1}{2}}_k \geq 4.
	\end{equation}
	Therefore we obtain 
	\begin{equation}
	\min_{|z| = \frac{2}{5}R_k} |f_N(z)| \geq 4^{n_k} R_k > 8 R_k.
	\end{equation}
	It follows that 
	\begin{equation}
	\label{Inner_Vk_Bk}
	f_N(|z| = \frac{2}{5}R_k) \subset A(8R_{k}, \frac{1}{8}R_{k+1})  \subset  B_k
	\end{equation}
	whenever $k \geq 2$. Therefore, the case of $k \geq 2$ follows from part (2) of Lemma \ref{holomorphic_annulus} and Proposition \ref{zeros}.
	
	For the $k = 1$ case, we use slightly different estimates. First, notice that by following a similar argument above, but applying Lemma \ref{the_k_equal_1_lemma}, we obtain
	\begin{equation*}
	\min_{|z| = \frac{2}{5}R_1} |f_N(z)| \geq \frac{1}{2} 4^{n_1} R_1 > 8R_1.
	\end{equation*}
	This, combined with (\ref{Inner_V1}) allows us to conclude that 
	\begin{equation}
	\label{Inner_V1_B1}
	f_N(|z|= \frac{2}{5}R_1)  \subset B_1.
	\end{equation}
	Next, by following similar reasoning as in (\ref{OneFourR_2}), except this time applying Lemma \ref{the_k_equal_1_lemma}, we have 
	\begin{equation*}
	\min_{|z| = \frac{1}{4}R_1} |f_N(z)| \geq \frac{1}{4} \beta_1^{M_N} \left( \frac{r_N^{\frac{1}{2}}}{4}\right)^{M_N} r_N^{M_{N-2}} r_N = \frac{1}{4}\left( \frac{\beta_1 r_N^{\frac{1}{2}}}{4}\right)^{M_N} r_N^{M_{N-2}} r_N
	\end{equation*}
	By (\ref{alpha_beta_close_1_eqn}) and Lemma \ref{Rkest}, along with similar reasoning as (\ref{OneFourR_1}), we have
	\begin{equation}
	\label{annulus_2_eqn}
	\min_{|z| = \frac{1}{4}R_1} |f_N(z)| > 16 r^2_{N-1} r_N = 16 r^2_{N-1} R_1^2 > 4 R_1.
	\end{equation}
	So by (\ref{annulus_2_eqn}) and the Maximum principle used with (\ref{Inner_V1}), we have 
	\begin{equation}
	\label{Inner_B0}
	f_N(|z| = \frac{1}{4}R_1) \subset B_1.
	\end{equation}
	The $k=1$ case now follows from Proposition \ref{zeros} and part (2) of Lemma \ref{holomorphic_annulus}.
\end{proof}

\begin{lem}
	\label{annulus_3}
	There exists $M \in \N$ so that for all $N \geq M$, if $k \geq 1$, we have
	\begin{equation}
	\label{annulus_3_3/2}
	f_N(|z| = \frac{3}{5}R_k) \subset B_{k+1}.
	\end{equation}
\end{lem}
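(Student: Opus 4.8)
The plan is to prove the slightly stronger inclusion
\[ f_N\bigl(|z| = \tfrac{3}{5}R_k\bigr) \subset A\bigl(4R_{k+1}, \tfrac{1}{4}R_{k+2}\bigr), \]
which implies the lemma since $A(4R_{k+1}, \tfrac14 R_{k+2}) \subset B_{k+1}$ by Definition \ref{annulidefinition}; here $M$ will be the maximum of the constants appearing in the lemmas invoked below. The lower bound $\min_{|z| = \frac{3}{5}R_k} |f_N(z)| > 4R_{k+1}$ has in fact already been proved: it is exactly inequality (\ref{Outer_Vk}) when $k \geq 2$ and inequality (\ref{Outer_V1}) when $k = 1$, both obtained in the proof of Lemma \ref{Ak}. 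So the only new content is the matching upper bound $\max_{|z| = \frac{3}{5}R_k} |f_N(z)| < \tfrac14 R_{k+2}$.

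For $k \geq 2$, I would first note that the circle $|z| = \tfrac{3}{5}R_k$ lies inside the annulus $A(\tfrac54 R_{k-1}, \tfrac34 R_k)$ on which Lemma \ref{PowerMap} applies with index $k-1$: the outer inequality $\tfrac35 R_k < \tfrac34 R_k$ is trivial, and the inner inequality $\tfrac35 R_k > \tfrac54 R_{k-1}$ follows from $R_k \geq 4 R_{k-1}^2 \geq 4R_{k-1} > \tfrac{25}{12}R_{k-1}$, using (\ref{Rkest_eqn4}) of Lemma \ref{Rkest}. Lemma \ref{PowerMap} then gives
\[ \max_{|z| = \frac{3}{5}R_k} |f_N(z)| \leq \alpha_{k-1}^{n_k} C_k \Bigl(\tfrac35 R_k\Bigr)^{n_k} = \Bigl(\tfrac65\alpha_{k-1}\Bigr)^{n_k} C_k \Bigl(\tfrac12 R_k\Bigr)^{n_k} \leq 2^{n_k} R_{k+1}, \]
where I used $\tfrac65\alpha_{k-1} < \tfrac65 \cdot \tfrac{101}{100} < 2$ from (\ref{alpha_beta_close_1_eqn}) and the identity $R_{k+1} = C_k(\tfrac12 R_k)^{n_k}$. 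Finally, exactly as in (\ref{Setup1}) and (\ref{crit_value_upper_bound}), Lemma \ref{Rkest} yields $R_{k+2} \geq 2^{-n_{k+1}} R_{k+1}^{n_k + 1}$, so that $2^{n_k} R_{k+1}/R_{k+2} \leq 2^{n_k + n_{k+1}} R_{k+1}^{-n_k} = (8/R_{k+1})^{n_k} < \tfrac14$, since $R_{k+1}$ is large (in fact $R_{k+1} > 256$). This gives the desired upper bound for $k \geq 2$.

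For $k = 1$, the circle $|z| = \tfrac{3}{5}R_1$ lies in $A(\tfrac1{10}R_1, \tfrac9{10}R_1)$, so Lemma \ref{the_k_equal_1_lemma} applies and gives $\max_{|z| = \frac35 R_1}|f_N(z)| \leq 2\alpha_1^{M_N} c_N(\tfrac35 R_1)^{M_N} = 2(\tfrac65\alpha_1)^{n_1} R_2 \leq 2 \cdot 2^{n_1} R_2$, where $n_1 = M_N$, $R_1 = r_N$, and $R_2 = C_1(\tfrac12 R_1)^{n_1} = c_N(\tfrac12 r_N)^{M_N}$. Again Lemma \ref{Rkest} gives $R_3 \geq 2^{-n_2} R_2^{n_1 + 1}$, whence $2 \cdot 2^{n_1} R_2 / R_3 \leq 2 \cdot (8/R_2)^{n_1} < \tfrac14$ since $R_2$ is large. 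Combining the upper and lower bounds in each case yields $f_N(|z| = \tfrac35 R_k) \subset A(4R_{k+1}, \tfrac14 R_{k+2}) \subset B_{k+1}$ for all $k \geq 1$. I expect no real obstacle: all the content is in placing $|z| = \tfrac35 R_k$ in the domain of Lemma \ref{PowerMap} (resp. Lemma \ref{the_k_equal_1_lemma}) and then carrying out the same $R_k$-power bookkeeping via Lemma \ref{Rkest} that already underlies Lemmas \ref{Ak}, \ref{Bk}, and \ref{annulus_1}.
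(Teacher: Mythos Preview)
Your proof is correct and follows essentially the same approach as the paper's: both split into the cases $k=1$ and $k\geq 2$, invoke the already-established lower bounds (\ref{Outer_Vk}) and (\ref{Outer_V1}), and obtain the upper bound via Lemma \ref{PowerMap} (respectively Lemma \ref{the_k_equal_1_lemma}) combined with the growth estimates of Lemma \ref{Rkest}. Your version is in fact slightly more careful in explicitly verifying that $|z|=\tfrac35 R_k$ lies in the domain $A(\tfrac54 R_{k-1},\tfrac34 R_k)$ where Lemma \ref{PowerMap} (with index $k-1$) applies, and correctly uses $\alpha_{k-1}$ rather than the paper's $\alpha_k$; the constants differ only cosmetically.
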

\begin{proof}
	We again must argue the $k=1$ and $k \geq 2$ cases separately. 
	
	When $k \geq 2$, we have by Lemma \ref{PowerMap} that 
	\begin{align*}
	\max_{|z| = \frac{3}{5}R_k} |f_N(z)| &\leq \alpha_k^{n_k} C_k \left(\frac{3}{5}R_k \right)^{n_k} 
	= \alpha_k^{n_k} C_k \left( \frac{6}{5} \right)^{n_k} \left( \frac{1}{2} R_k \right)^{n_k} 
	= \alpha_k^{n_k} \left( \frac{6}{5} \right)^{n_k} R_{k+1}.
	\end{align*}
	By (\ref{alpha_beta_close_1_eqn}) we have $\alpha_k  \frac{6}{5} \leq \frac{7}{5}$. Therefore, since $N \geq 5$, we obtain
	\begin{equation*}
	\frac{(\alpha_k \frac{6}{5})^{n_k}R_{k+1}}{R_{k+2}} \leq \frac{( \frac{7}{5})^{n_k}R_{k+1}}{2^{-n_{k+1}} R_{k+1}^{n_k + 1}} = \left( \frac{28}{5 R_{k+1}} \right)^{n_k} < \left(\frac{1}{4}\right)^{n_k}.
	\end{equation*}
	It follows that
	\begin{equation}
	\label{Outer_Vk_max}
	\max_{|z| = \frac{3}{5}R_k} |f_N(z)| \leq \left(\frac{1}{4}\right)^{n_k} R_{k+2} < \frac{1}{8}R_{k+2}.
	\end{equation}
	Therefore, we have by (\ref{Outer_Vk}) and (\ref{Outer_Vk_max}) that 
	\begin{equation}
	\label{annulus_3_eqn1}
	f_N(|z| = \frac{3}{5}R_k)\subset A(8R_{k+1}, \frac{1}{8}R_{k+2})  \subset B_{k+1}.
	\end{equation}
	
	For the $k = 1$, case, the arguments are similar, and by using Lemma \ref{the_k_equal_1_lemma} and requiring $N \geq 5$ we obtain
	\begin{equation}
	\label{Outer_V1_max}
	\max_{|z| = \frac{3}{5}R_1} \leq 2 \left( \frac{1}{4} \right)^{n_k} R_{3} < \frac{1}{8} R_3. 
	\end{equation}
	Therefore, by (\ref{Outer_V1}) and (\ref{Outer_V1_max}) we obtain
	\begin{equation}
	\label{annulus_3_eqn2}
	f_N(|z| = \frac{3}{5}R_1) \subset A(8R_2,\frac{1}{8}R_3) \subset  B_{3}.
	\end{equation}
	The Lemma now follows from (\ref{annulus_3_eqn1}) and (\ref{annulus_3_eqn2})
\end{proof}

\begin{cor}
	\label{helpful_Vk_cor}
	For all $k \geq 1$, we have $f_N(V_k) \subset B_k \cup A_{k+1} \cup B_{k+1}$
\end{cor}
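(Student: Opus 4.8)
The plan is to apply part~(2) of Lemma~\ref{holomorphic_annulus} to the entire function $f_N$ restricted to the round annulus $V_k = A(\tfrac{2}{5}R_k,\tfrac{3}{5}R_k)$. The first step is a bookkeeping observation: directly from Definition~\ref{annulidefinition} the radii $\tfrac14 R_{k+1}$ and $4R_{k+1}$ are shared endpoints of $B_k,A_{k+1}$ and of $A_{k+1},B_{k+1}$ respectively, so the three pieces glue into one closed round annulus,
\[
B_k \cup A_{k+1} \cup B_{k+1} \;=\; \Big\{\, z : 4R_k \le |z| \le \tfrac{1}{4}R_{k+2} \,\Big\} \;=\; \overline{U}, \qquad U := A\!\left(4R_k,\, \tfrac{1}{4}R_{k+2}\right).
\]
Thus it suffices to check the two hypotheses of Lemma~\ref{holomorphic_annulus}(2) for $g = f_N$, $W = V_k$: that $f_N$ is zero-free on $V_k$, and that $f_N(\partial V_k)\subset\overline{U}$.

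For the first hypothesis I would invoke Proposition~\ref{zeros}. Every zero of $f_N$ with $|z| > \tfrac14 R_1$ lies in some $A_j$, and the $A_j$ are pairwise disjoint (using $R_{j+1}\ge 4R_j^2$ from Lemma~\ref{Rkest}); since $\overline{V_k}\subset A_k$, the only zeros that could meet $\overline{V_k}$ are the $n_k$ zeros in $A_k$, and Proposition~\ref{zeros} places these inside $A(\tfrac35 R_k,\tfrac54 R_k)$, which is disjoint from $\overline{V_k}=\{\tfrac25 R_k\le|z|\le\tfrac35 R_k\}$. Hence $f_N$ has no zeros on $\overline{V_k}$, in particular none on $V_k$.

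For the second hypothesis, $\partial V_k$ is the union of the circles $|z|=\tfrac25 R_k$ and $|z|=\tfrac35 R_k$. On the inner circle the estimates already established inside the proof of Lemma~\ref{annulus_2} — equation~(\ref{Inner_Vk_Bk}) when $k\ge 2$ and equation~(\ref{Inner_V1_B1}) when $k=1$ — give $f_N(\{|z|=\tfrac25 R_k\})\subset B_k\subset\overline{U}$. On the outer circle, Lemma~\ref{annulus_3} gives $f_N(\{|z|=\tfrac35 R_k\})\subset B_{k+1}\subset\overline{U}$. So $f_N(\partial V_k)\subset\overline{U}$, and Lemma~\ref{holomorphic_annulus}(2) (whose content is just the maximum modulus principle, applicable precisely because $f_N$ is zero-free on $V_k$) yields
\[
f_N(V_k)\subset\overline{U} = B_k\cup A_{k+1}\cup B_{k+1}.
\]

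I do not expect a genuine obstacle: all the analytic work is already done in Lemmas~\ref{annulus_1}--\ref{annulus_3} and Proposition~\ref{zeros}, and this corollary is essentially a packaging statement. The one place that warrants a moment's care is the identification $B_k\cup A_{k+1}\cup B_{k+1} = \overline{A(4R_k,\tfrac14 R_{k+2})}$, since Lemma~\ref{holomorphic_annulus} is phrased for round annuli $A(c,d)$; but, as noted, this identification is immediate from Definition~\ref{annulidefinition}.
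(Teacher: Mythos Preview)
Your proof is correct and follows essentially the same approach as the paper: both verify the boundary-circle images via (\ref{Inner_Vk_Bk}), (\ref{Inner_V1_B1}) and Lemma~\ref{annulus_3}, then apply part~(2) of Lemma~\ref{holomorphic_annulus}. You are in fact slightly more careful than the paper's own write-up, explicitly checking the zero-free hypothesis via Proposition~\ref{zeros} and spelling out the identification $B_k\cup A_{k+1}\cup B_{k+1}=\overline{A(4R_k,\tfrac14 R_{k+2})}$.
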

\begin{proof}
	By (\ref{Inner_Vk_Bk}) and (\ref{Inner_V1_B1}), along with (\ref{annulus_3_eqn1}) and (\ref{annulus_3_eqn2}), we have $f_N(|z|=\frac{2}{5}R_k) \subset B_k$ and $f_N(|z| = \frac{3}{5}R_k) \subset B_{k+1}$ for all $k \geq 1$. Therefore, by part (2) of Lemma \ref{holomorphic_annulus}, we have $f_N(V_k) \subset B_k \cup A_{k+1} \cup B_{k+1}$, as desired.
\end{proof}

The following lemma asserts that $f_N$ is conformal on every petal $P_k \subset \mathcal{P}_k$, with large expansion. 

\begin{lem}
	\label{petal_radius}
	There exists an $M \in \N$ and a constant $\lambda > 0$ so that for all $N \geq M$, for all $k \geq 1$, and for all zeros $w_j^k$ in $A_k$, $j = 1,\dots,n_k$, the mapping
	\begin{equation}
	\label{conformal}
	f_N: B(w^k_j, \lambda ( \exp(\pi/n_k) - 1 ) R_k) \rightarrow f_N(B(w^k_j, \lambda ( \exp(\pi/n_k) - 1 ) R_k)),
	\end{equation}
	is conformal. Moreover, we have 
	\begin{equation}
	B(w_j^k, \frac{R_k}{2^{n_k}}) \subset B(w^k_j, \lambda ( \exp(\pi/n_k) - 1 ) R_k),
	\end{equation}
	and
	\begin{equation}
	\label{conformal_2}
	B(0,4 R_{k+1}) \subset f_N(B(w^k_j, \frac{R_k}{2^{n_k}})).
	\end{equation}
\end{lem}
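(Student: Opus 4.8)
The plan is to read off everything from the explicit model by working in the coordinate $h_N=f_N\circ\phi_N$ and transporting conclusions back through $\phi_N$. By Proposition~\ref{critical_point_listing}, \eqref{listing_of_zeros} and Lemma~\ref{branch_covering_hn}, the zeros and critical points of $h_N$ lying over $A_k$ sit on the rays $\arg z=(2k_j-1)\pi/n_k$, the critical point on such a ray at modulus $R_k$ and the zero $\widetilde w_j:=\phi_N^{-1}(w_j^k)$ at modulus $R_k\exp(\pi/(4n_k))$, while the circles $|z|=R_{k\pm1}$ and the point $0$ are a factor $\gtrsim R_k$ away. First I would fix the curvilinear sector $\widetilde S_j:=\{R_k\exp(\pi/(8n_k))<|z|<20R_k\}\cap\{|\arg z-(2k_j-1)\pi/n_k|<\pi/(8n_k)\}$. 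It contains $\widetilde w_j$; it contains no critical point of $h_N$ (the one on the same ray lies a factor $\exp(\pi/(8n_k))$ below $\widetilde S_j$ in modulus, and those on neighbouring rays are more than $\pi/(8n_k)$ away in argument); and $h_N$ restricted to $\widetilde S_j$ is injective, since $h_N$ is the exact monomial $C_{k+1}z^{n_{k+1}}$ on the outer part (beyond the transition annulus $\{R_k-1\le|z|\le R_k\exp(\pi/n_k)\}$, by the proof of Lemma~\ref{PrePowerMap}), the inner monomial $C_kz^{n_k}$ further in, and a bounded-dilatation folding model on the transition annulus, none of which branches inside $\widetilde S_j$, which spans less than one angular period $2\pi/n_k$, so $h_N|_{\widetilde S_j}$ is a local homeomorphism onto a simply connected image and hence a homeomorphism. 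Since $f_N=h_N\circ\phi_N^{-1}$ is entire (so holomorphic) and injective on $\Omega_j:=\phi_N(\widetilde S_j)$, it is conformal there. (For $k=1$ one uses $g_N$ from Definition~\ref{parameter_defn} in place of the inner monomial.)

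The crux is to show $B(w_j^k,\lambda(\exp(\pi/n_k)-1)R_k)\subset\Omega_j$ for a $\lambda>0$ depending only on the dilatation $K(\phi_N)$, which is uniform in $N$ by Proposition~\ref{constant_dilatation}. Since $\widetilde w_j$ lies at distance $\asymp R_k/n_k$ from $\partial\widetilde S_j$, there is a round disk $B(\widetilde w_j,s_0)\subset\widetilde S_j$ with $s_0\asymp R_k/n_k\asymp(\exp(\pi/n_k)-1)R_k$, and it suffices to know $\phi_N(B(\widetilde w_j,s_0))\supset B(w_j^k,\lambda(\exp(\pi/n_k)-1)R_k)$ — equivalently, that $\phi_N$ distorts disks of radius $\asymp R_k/n_k$ centered near $|z|=R_k$ by at most a factor depending only on $K(\phi_N)$, ``with unit scale''. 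This is the one place where crude estimates fail: the displacement bound $|\phi_N(z)-z|=O(|z|\,\omega_p(1/|z|))$ of Theorem~\ref{initial_identity} is much larger than $R_k/n_k$, so it cannot be applied directly. The relevant fact is instead that $\phi_N$ is conformal off the thin transition annulus $\{R_k-1\le|z|\le R_k\exp(\pi/n_k)\}$, whose Euclidean width is itself $\asymp R_k/n_k$; combining this with the quasisymmetry of $\phi_N$ and the identity-like behaviour $\phi_N(z)/z\to1$ on the flanking conformal annuli yields the required scale control. I expect this to be (or to follow from) the auxiliary lemma proved in the appendix, which I would invoke here; then $\lambda$ is chosen so small that $\lambda(\exp(\pi/n_k)-1)R_k$ lies below the guaranteed inradius of $\phi_N(B(\widetilde w_j,s_0))$ about $w_j^k$. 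This gives the conformality assertion \eqref{conformal}, and the middle inclusion $B(w_j^k,R_k/2^{n_k})\subset B(w_j^k,\lambda(\exp(\pi/n_k)-1)R_k)$ is then immediate, since $R_k/2^{n_k}\ll R_k/n_k$ once $n_k$ (hence $N$) is large.

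For \eqref{conformal_2}, I would combine the conformality of $f_N$ on $B(w_j^k,\rho)$, $\rho:=\lambda(\exp(\pi/n_k)-1)R_k$, with the lower bound $|f_N'(w_j^k)|\gtrsim n_kC_kR_k^{n_k-1}$ on the derivative at this simple zero; this is the analogue for the outer zeros of the identity $|q_N'(\text{zero})|=r_N(M_N-1)$ of Lemma~\ref{qN}, and reflects that in the folding model $|h_N|$ drops from $\asymp C_kR_k^{n_k}$ to $0$ across a radial window of width $\asymp R_k/n_k$ about $\widetilde w_j$ — one reads it off from the explicit construction in \cite{BurLaz} together with bounded Koebe distortion on $B(w_j^k,\rho)$. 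Koebe's one-quarter theorem applied to $f_N$ on $B(w_j^k,R_k/2^{n_k})$ then gives $f_N(B(w_j^k,R_k/2^{n_k}))\supset B(0,\tfrac14(R_k/2^{n_k})|f_N'(w_j^k)|)$, and since $C_k(R_k/2)^{n_k}=R_{k+1}$ (Definition~\ref{bigR}) this ball equals $B(0,c\,n_kR_{k+1})$ for an absolute $c>0$, which contains $B(0,4R_{k+1})$ as soon as $N$ (hence $n_k$) is large. The single genuine obstacle is the scale control of $\phi_N$ at the petal scale described in the second paragraph; the rest is bookkeeping on the locations of zeros and critical points together with Koebe distortion.
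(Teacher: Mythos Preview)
Your overall architecture — establish injectivity and image control for the quasiregular model $h_N$ near its zero, then push through $\phi_N$, then finish with Koebe — matches the paper's. But the way you allocate the work differs from the paper at the two key steps, and your guess about the appendix lemma is off.

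\textbf{What the appendix lemma actually says.} Lemma~\ref{small_ball_rescaled} is a statement about $g_{n,2n,x,c}$, i.e.\ about $h_N$ restricted to the transition annulus, \emph{not} about $\phi_N$. It asserts that there are absolute constants $\lambda_0,\delta_0$ with
\[
B(0,\delta_0\,C_kR_k^{n_k})\ \subset\ h_N\bigl(B(\widetilde w_j,\lambda_0(\exp(\pi/n_k)-1)R_k)\bigr)\ \subset\ B(0,\tfrac12\,C_kR_k^{n_k}),
\]
with $h_N$ injective on that ball. The two-sided image bound is the essential content; it is proved in the appendix via a Jacobian estimate for $g_{n,2n}$ (Theorem~1.8 of \cite{MR777305}) together with quasisymmetry of $g_{n,2n}$. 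Your sector argument would recover injectivity of $h_N$, but not these image bounds, and it is the image bounds that drive everything downstream: they give a conformal map $f_N$ from the quasidisk $\Omega:=\phi_N(B(\widetilde w_j,\lambda_0 r))$ onto a region pinched between two concentric disks of comparable radii $\asymp C_kR_k^{n_k}$, which is exactly what lets one run Koebe's one-quarter theorem cleanly to get \eqref{expansion} and then Corollary~\ref{shape} for \eqref{conformal_2}. So you should replace the sector construction by a direct appeal to Lemma~\ref{small_ball_rescaled}.

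\textbf{The scale-control issue.} You are right that passing from a round ball about $\widetilde w_j$ (where the appendix lemma lives) to a round ball about $w_j^k=\phi_N(\widetilde w_j)$ of the \emph{same} radius $\asymp(\exp(\pi/n_k)-1)R_k$ is not immediate from the displacement bound of Theorem~\ref{initial_identity}; quasisymmetry of $\phi_N$ alone gives only H\"older-type control $R_k/n_k^{K}\lesssim\cdot\lesssim R_k/n_k^{1/K}$, not linear. The paper records this transfer tersely as ``by Lemma~\ref{small_ball_rescaled} and Lemma~\ref{AkEst}''. One clean way to justify it, which your proposal gestures at but does not quite pin down, uses that $\mu_{\phi_N}=\mu_{h_N}$: on the injectivity ball $B_0=B(\widetilde w_j,\lambda_0 r)$ one has $\phi_N=F\circ h_N$ for a \emph{conformal} $F$ on $h_N(B_0)$, and in fact $F=(f_N|_{\phi_N(B_0)})^{-1}$. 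Since $h_N(B_0)$ is trapped between $B(0,\delta_0 C_kR_k^{n_k})$ and $B(0,\tfrac12 C_kR_k^{n_k})$, Koebe distortion applied to $F$ converts the two-sided image bound for $h_N$ into two-sided radius bounds for $\phi_N(B_0)$ once one pins the scale by a single reference datum (for instance the location of an adjacent critical point or zero in the $f_N$-coordinate, or equivalently an upper bound on $|f_N'(w_j^k)|$). This is the piece you correctly flag as the genuine obstacle; the paper's proof does not spell it out either.

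\textbf{The last paragraph.} Once \eqref{conformal_3} is in hand, the paper does \emph{not} apply Koebe directly to the tiny petal as you suggest; it first extracts the derivative lower bound \eqref{expansion} from Koebe one-quarter on $B(w_j^k,\lambda r)\to\{\supset B(0,\delta C_kR_k^{n_k})\}$, and then applies Corollary~\ref{shape} to the inverse branch $f_N^{-1}:B(0,4R_{k+1})\to D$ to show $D\subset B(w_j^k,R_k/2^{n_k})$, which is \eqref{conformal_2}. Your Koebe-on-the-petal argument would also work, but it still needs the same derivative lower bound, and that bound is what the image inclusion from the appendix lemma is designed to supply.
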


\begin{proof}
	Note that by Lemma \ref{small_ball_rescaled} and Lemma \ref{AkEst}, there exists constants $\lambda >0$ and $\delta >0$ so that 
	\begin{equation}
	\label{conformal_3}
	B(0, \delta C_k R_k^{n_k}) \subset f_N(B(w^k_j, \lambda ( \exp(\pi/n_k) - 1 ) R_k)) \subset B(0, \frac{1}{2} C_kR_k^{n_k}).
	\end{equation}
	Moreover, Lemma \ref{small_ball_rescaled} and Lemma \ref{AkEst} imply that the mapping in equation (\ref{conformal}) is injective, and therefore conformal. Next note that
	\begin{equation}
	\label{petal_in_conformal_ball}
	\frac{2^{-n_k} R_k}{\lambda(\exp(\pi/n_k)-1)R_k} \leq \frac{2^{-n_k}}{\lambda\pi / n_k} = \frac{n_k}{2^{n_k} \pi \lambda} \xrightarrow[]{k \rightarrow \infty} 0.
	\end{equation}
	Therefore, there exists $M$ so that for all $N \geq M$, we have $(\ref{conformal})$ and $B(w_j^k, R_k/2^{n_k}) \subset B(w^k_j, \lambda ( \exp(\pi/n_k) - 1 ) R_k)$. It remains to verify that (\ref{conformal_2}) holds.
	
	To see this, note that by Theorem \ref{Kobe_Quarter} and (\ref{conformal_3}), we have 
	\begin{align*}
	|(f_N)'(w^k_j)| &\geq  \frac{1}{4} \frac{\dist(0, \partial B(0,\delta C_k R_k^{n_k}))}{\dist(w^k_j,\partial B(w^k_j, \lambda ( \exp(\pi/n_k) - 1 ) R_k) ))} \\
	&= \frac{\delta }{4 \lambda} \frac{C_kR_k^{n_k}}{(\exp(\pi/n_k) - 1 )R_k} \\
	&\geq \frac{\delta}{8 \lambda \pi} n_k C_k R_k^{n_k - 1} \\
	&= \frac{\delta}{8 \lambda \pi} 2^{n_k} n_k R_{k+1} R_k^{-1}.
	\end{align*}
	Therefore, if $w^k_j$ is a zero of $f_N$ in $A_k$, we have
	\begin{equation}
	\label{expansion}
	|(f_N)'(w^k_j)| \geq  \frac{\delta}{8 \lambda \pi} 2^{n_k} n_k R_{k+1} R_k^{-1}.
	\end{equation}
	
	Next, consider the branch of the inverse $f_N^{-1}: B(0,\delta C_k R_k^{n_k}) \ra D'$, where 
	$$D' \subset B(w^k_j, \lambda ( \exp(\pi/n_k) - 1 ) R_k).$$
	Since $\delta >0$ is fixed, there exists a perhaps larger $M \in \N$ so that for all $N \geq M$ we have $\delta 2^{n_1} > 4$. Therefore,
	\begin{equation}
	\label{conformal_4}
	\delta C_k R_k^{n_k} = \delta 2^{n_k} C_k(\frac{1}{2}R_k)^{n_k} = \delta 2^{n_k} R_{k+1} > 2^{n_{k-1}} 4R_{k+1} > 4R_{k+1}.
	\end{equation}
	We can further deduce from (\ref{conformal_4}) that the modulus of the annulus $B(0,\delta C_k R_k^{n_k}) \setminus \overline{B(0,4R_{k+1})}$ is bounded below by some fixed constant independent of $k$. Denote $D = f_N^{-1}(B(0,4R_{k+1})) \subset D'$. By applying Corollary \ref{shape} to $f_N^{-1}: B(0,4R_{k+1}) \rightarrow D$, we see that there exists a constant $L'>0$ such that
	\begin{equation}
	\label{conformal_eq_1}
	\frac{4R_{k+1}}{L'} \frac{1}{|f'(w^k_j)|} \leq r_{D,w^k_j} \leq R_{D,w^k_j} \leq L' \frac{4R_{k+1}}{|f'(w^k_j)|}.
	\end{equation}
	Since the modulus of $B(0,\delta C_k R_k^{n_k}) \setminus \overline{B(0,4R_{k+1})}$ is bounded below by some fixed constant independent of $k$, the constant $L'$ is independent of $k$ and $w^k_j$.  By perhaps increasing $M$, we have that for all $N\geq M$ that 
	\begin{equation}
	\label{conformal_eq_2}
	4 R_{k+1}L' |f'(w_j^k)|^{-1} \leq 4 R_{k+1}L'\frac{8 \lambda \pi}{\delta n_k} 2^{-n_k} R^{-1}_{k+1} R_k \leq \frac{R_k}{2^{n_k}}
	\end{equation}
	Therefore, by (\ref{conformal_eq_1}) and (\ref{conformal_eq_2}), and (\ref{outer_radius}), we have
	$$D \subset B(w_j^k, L'\frac{4R_{k+1}}{|f'(w_j^k)|}) \subset B(w_j^k,\frac{R_k}{2^{n_k}}).$$
	This proves (\ref{conformal_2}), which is exactly what we wanted to show.
\end{proof}

\begin{lem}
	\label{hole_punched}
	For all $k \geq 1$, $f_N(A(\frac{3}{5}R_k, \frac{5}{4}R_k) \setminus \overline{\cup_{j=1}^{n_k}B(w^k_j, R_k/2^{n_k})}) \subset B_{k+1}$.
\end{lem}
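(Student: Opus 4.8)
The plan is to apply the maximum principle on the finitely‑connected domain
$$ W := A\!\left(\tfrac{3}{5}R_k,\tfrac{5}{4}R_k\right)\setminus \overline{\bigcup_{j=1}^{n_k} B\!\left(w_j^k, R_k/2^{n_k}\right)}, $$
in the same spirit as part (2) of Lemma \ref{holomorphic_annulus}, but with $n_k+2$ boundary components instead of two. First I would record that, by Proposition \ref{zeros}, the only zeros of $f_N$ in $A_k$ are the $w_j^k$, and these lie strictly inside $A(\tfrac35 R_k,\tfrac54 R_k)$; hence each $w_j^k$ lies in the corresponding open petal, and $f_N$ is zero‑free on a neighborhood of $\overline{W}$. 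Consequently both $|f_N|$ and $|1/f_N|$ are subharmonic on $W$ and continuous on $\overline{W}$, so $f_N(W)\subset B_{k+1}$ will follow once we show $f_N(\partial W)\subset B_{k+1}=\overline{A(4R_{k+1},\tfrac14 R_{k+2})}$. The boundary $\partial W$ consists of three types of pieces: the outer circle $|z|=\tfrac54 R_k$, the inner circle $|z|=\tfrac35 R_k$, and the petal boundaries $\partial B(w_j^k,R_k/2^{n_k})$.

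On $|z|=\tfrac54 R_k$ the containment in $B_{k+1}$ is exactly (\ref{54_annulus}) from the proof of Lemma \ref{annulus_1}, and on $|z|=\tfrac35 R_k$ it is Lemma \ref{annulus_3}. For a petal boundary I would invoke Lemma \ref{petal_radius}: since $R_k/2^{n_k}<\lambda(\exp(\pi/n_k)-1)R_k$, the closed petal $\overline{B(w_j^k,R_k/2^{n_k})}$ lies in the ball on which $f_N$ is conformal, so $f_N$ maps it homeomorphically onto the closure of a Jordan domain, and in particular $f_N(\partial B(w_j^k,R_k/2^{n_k}))=\partial\big(f_N(B(w_j^k,R_k/2^{n_k}))\big)$. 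By (\ref{conformal_2}) this Jordan domain contains the open set $B(0,4R_{k+1})$, and by (\ref{conformal_3}) (together with monotonicity of images under inclusion) it is contained in $B(0,\tfrac12 C_k R_k^{n_k})$; hence its boundary lies in $\overline{A(4R_{k+1},\tfrac12 C_k R_k^{n_k})}$. Finally $\tfrac12 C_k R_k^{n_k}=2^{n_k-1}R_{k+1}$, and combining the identity $C_kR_k^{n_k}=2^{n_k}R_{k+1}$ with (\ref{crit_value_upper_bound}) gives $\tfrac12 C_k R_k^{n_k}\le \tfrac18 R_{k+2}<\tfrac14 R_{k+2}$, so $f_N(\partial B(w_j^k,R_k/2^{n_k}))\subset B_{k+1}$.

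Having shown $f_N(\partial W)\subset B_{k+1}$, the maximum principle applied to the subharmonic function $|f_N|$ yields $|f_N|\le \tfrac14 R_{k+2}$ on $W$, and applied to the subharmonic function $|1/f_N|$ yields $|f_N|\ge 4R_{k+1}$ on $W$; together these give $f_N(W)\subset B_{k+1}$. (Throughout, $N$ is taken large enough that Lemmas \ref{annulus_1}, \ref{annulus_3} and \ref{petal_radius} apply, as in Remark \ref{N is large}.) The only step that is not completely routine is the petal‑boundary estimate: one must use conformality of $f_N$ on the \emph{slightly larger} ball to upgrade the two–sided containment $B(0,4R_{k+1})\subset f_N(\text{petal})\subset B(0,\tfrac12 C_kR_k^{n_k})$ — which by itself concerns only the petal — into the statement that the image of the petal \emph{boundary} lies in the closed annulus $\overline{A(4R_{k+1},\tfrac12 C_kR_k^{n_k})}$; everything else is the standard maximum/minimum modulus principle on a multiply‑connected domain.
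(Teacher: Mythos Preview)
Your proof is correct and follows essentially the same approach as the paper: verify that each of the $n_k+2$ boundary components of $W$ is mapped into $B_{k+1}$ (using (\ref{54_annulus}), Lemma \ref{annulus_3}, and the petal estimates from (\ref{conformal_2}), (\ref{conformal_3}), (\ref{crit_value_upper_bound})), then apply the maximum/minimum modulus principle to the zero-free function $f_N$ on $W$. Your treatment of the petal boundaries is slightly more explicit than the paper's---you spell out why conformality on the larger ball forces $f_N(\partial B(w_j^k,R_k/2^{n_k}))$ to coincide with the boundary of the image Jordan domain---but the argument is otherwise identical.
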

\begin{proof}
	First, observe that every connected component of the boundary of $A(\frac{3}{5}R_k, \frac{5}{4}R_k) \setminus \overline{\cup_{j=1}^{n_k} B(w^k_j, R_k/2^{n_k})}$ is mapped inside of $B_{k+1}$ by $f_N$. Indeed, by Lemma \ref{annulus_3}, $f_N(|z| = \frac{3}{5}R_k) \subset B_{k+1}$, and by (\ref{54_annulus}), $f_N(|z| = \frac{5}{4} R_k) \subset B_{k+1}$. The rest of the connected components of the boundary of $A(\frac{3}{5}R_k, \frac{5}{4}R_k) \setminus \overline{\cup_{j=1}^{n_k} (w^k_j, R_k/2^{n_k})}$ are the boundaries of the petals $P_k \subset \mathcal{P}_k$. By (\ref{conformal_2}), 
	\begin{equation}
	\label{hole_punched_1}
	f_N(\partial B(w_j^k, \frac{R_k}{2^{n_k}})) \subset \{|z| \geq 4 R_{k+1}\}.
	\end{equation}
	By (\ref{crit_value_upper_bound}) and (\ref{conformal_3}), we have 
	\begin{equation}
	\label{hole_punched_2}
	f_N(\partial B(w_j^k, \frac{R_k}{2^{n_k}})) \subset B(0, \frac{1}{2}C_kR_k^{n_k}) \subset B(0, \frac{1}{4}R_{k+2}).
	\end{equation}
	Equations (\ref{hole_punched_1}) and (\ref{hole_punched_2}) imply
	\begin{equation}
	\label{petal_boundary}
	f_N(\partial B(w_j^k, \frac{R_k}{2^{n_k}})) \subset B_{k+1}.
	\end{equation}
	By Proposition \ref{zeros}, $f_N$ has no additional zeros in $A(\frac{3}{5}R_k, \frac{5}{4}R_k) \setminus \overline{\cup_{j=1}^{n_k} B(w^k_j, R_k/2^{n_k})}$. The result now follows from the maximum principle and minimum principle for non-zero holomorphic functions.
\end{proof}

With Lemmas \ref{annulus_1}-\ref{hole_punched} in hand, we can now deduce the following about the Julia set of $f_N$:

\begin{thm}
	\label{Julia_place}
	There exists $M \in \N$ so that if $N \geq M$, then for all $k \geq 1$, 
	$$(\Julia(f_N) \cap A_k) \subset \left( \cup_{j=1}^{n_k} B\left(w^k_j,\frac{R_k}{2^{n_k}}\right)\right) \cup V_k.$$
\end{thm}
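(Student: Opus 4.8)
The plan is to show that every point of $A_k$ lying outside $V_k\cup\mathcal{P}_k$ belongs to the Fatou set of $f_N$, where $\mathcal{P}_k=\cup_{j=1}^{n_k}B(w^k_j,R_k/2^{n_k})$ is the union of the petals; since $V_k\cup\mathcal{P}_k$ is exactly the set on the right-hand side of the assertion, this is all that is needed. By Lemma \ref{BkFatou} the annuli $B_k$ and $B_{k+1}$ lie in $\Fatou(f_N)$, and since $\Fatou(f_N)$ is backward invariant, it suffices to check that $f_N(z)\in B_k\cup B_{k+1}$ for every $z\in A_k\setminus(V_k\cup\mathcal{P}_k)$. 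Accordingly I would take $M$ to be the largest of the finitely many thresholds supplied by Lemmas \ref{BkFatou}, \ref{annulus_1}, \ref{annulus_2}, \ref{annulus_3}, \ref{petal_radius} and \ref{hole_punched}, fix $N\geq M$, and run a case analysis on $|z|$.

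Since $A_k=A(\tfrac14 R_k,4R_k)$ and $V_k=A(\tfrac25 R_k,\tfrac35 R_k)$, a point $z\in A_k\setminus V_k$ has $|z|\in(\tfrac14 R_k,\tfrac25 R_k]\cup[\tfrac35 R_k,4R_k)$. For $|z|\in(\tfrac14 R_k,\tfrac25 R_k)$, Lemma \ref{annulus_2} gives $f_N(z)\in B_k$, and for $|z|=\tfrac25 R_k$ the estimates (\ref{Inner_Vk_Bk}) and (\ref{Inner_V1_B1}) give $f_N(z)\in B_k$. For $|z|=\tfrac35 R_k$, Lemma \ref{annulus_3} gives $f_N(z)\in B_{k+1}$; for $|z|=\tfrac54 R_k$, the estimate (\ref{54_annulus}) from the proof of Lemma \ref{annulus_1} gives $f_N(z)\in B_{k+1}$; and for $|z|\in(\tfrac54 R_k,4R_k)$, Lemma \ref{annulus_1} gives $f_N(z)\in B_{k+1}$. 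The remaining range is $|z|\in(\tfrac35 R_k,\tfrac54 R_k)$ with $z\notin\mathcal{P}_k$: then either $z\in A(\tfrac35 R_k,\tfrac54 R_k)\setminus\overline{\mathcal{P}_k}$, and Lemma \ref{hole_punched} gives $f_N(z)\in B_{k+1}$, or $z\in\partial B(w^k_j,R_k/2^{n_k})$ for some $j$, and the estimate (\ref{petal_boundary}) from the proof of Lemma \ref{hole_punched} again gives $f_N(z)\in B_{k+1}$. Here I use Proposition \ref{zeros}, which guarantees that all $n_k$ zeros of $f_N$ in $A_k$ lie in $A(\tfrac35 R_k,\tfrac54 R_k)$, so that the petals are centered there — precisely the configuration for which Lemma \ref{hole_punched} was proved. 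Collecting the cases, $f_N(z)\in B_k\cup B_{k+1}\subset\Fatou(f_N)$, hence $z\in\Fatou(f_N)$ and $z\notin\Julia(f_N)$, giving $(\Julia(f_N)\cap A_k)\subset V_k\cup\mathcal{P}_k$.

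I do not expect a genuine obstacle: all of the analytic content has been front-loaded into Lemmas \ref{annulus_1}--\ref{hole_punched}, so the theorem is a clean partition-and-covering argument. The only point requiring a little care, rather than a direct citation, is the treatment of the three boundary circles $|z|=\tfrac25 R_k$, $|z|=\tfrac35 R_k$, $|z|=\tfrac54 R_k$ and of the petal boundaries, since the preceding lemmas are phrased for the corresponding \emph{open} annuli with the petals removed; but in each instance the needed containment is already recorded as an intermediate estimate — namely (\ref{Inner_Vk_Bk}), (\ref{Inner_V1_B1}), (\ref{54_annulus}), (\ref{petal_boundary}), together with the conclusion of Lemma \ref{annulus_3} — so no new computation is needed.
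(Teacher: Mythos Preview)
Your proposal is correct and follows essentially the same partition-and-covering argument as the paper: show that every point of $A_k$ outside $V_k\cup\mathcal{P}_k$ is mapped by $f_N$ into $B_k\cup B_{k+1}$, hence lies in $\Fatou(f_N)$ by Lemma~\ref{BkFatou}. You are in fact slightly more careful than the paper in explicitly handling the petal-boundary case via (\ref{petal_boundary}), which the paper's proof glosses over when invoking Lemma~\ref{hole_punched}.
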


\begin{proof}
	We will show that all other points get mapped to $B_k$ or $B_{k+1}$, so that they belong to the Fatou set of $f_N$ by Lemma \ref{BkFatou}.	Suppose that $z \in A_k \cap \Julia(f_N)$, but $z \notin \mathcal{P}_k$. Then by Lemma \ref{annulus_1}, $z \notin A(\frac{5}{4}R_k, 4 R_k)$, and by Lemma \ref{annulus_2}, $z \notin A(\frac{1}{4}R_k, \frac{2}{5} R_k)$. Similarly, Lemma \ref{hole_punched} and the assumption that $z \notin \mathcal{P}_k$ shows that $z \notin A(\frac{3}{5}R_k, \frac{5}{4}R_k)$. Finally, observe that we cannot have $z \in \{|z| = \frac{2}{5}R_k\}$, $z \in \{|z| = \frac{3}{5}R_k\}$, or $z \in \{|z| = \frac{5}{4} R_k\}$ by equations (\ref{Inner_Vk_Bk}), (\ref{Inner_V1_B1}), (\ref{annulus_3_3/2}), and (\ref{54_annulus}), respectively. Since $z \in A_k \cap J(f_N)$, but $z \notin \mathcal{P}_k$, it follows that $z \in A(\frac{2}{5}R_k, \frac{3}{5}R_k) = V_k$, which proves the theorem.
\end{proof}

\noindent The same argument of the Proof of Theorem \ref{Julia_place} yields the following useful result:

\begin{lem}
	\label{Julia_Place_AkAk1}
	Suppose that $z \in A_k$ and $f_N(z) \in A_{j}$ for some $k \geq 1$ and some $j \in \Z$. Then 
	$$z \in V_k \cup \left( \cup_{j=1}^{n_k} B\left(w^k_j,\frac{R_k}{2^{n_k}}\right)\right)$$
\end{lem}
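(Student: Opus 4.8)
The plan is to rerun, essentially word for word, the case analysis from the proof of Theorem \ref{Julia_place}, changing only the mechanism by which a contradiction is reached. In Theorem \ref{Julia_place} one used that $z\in\Julia(f_N)$ forces $f_N(z)\in\Julia(f_N)$, together with $B_k,B_{k+1}\subset\Fatou(f_N)$ (Lemma \ref{BkFatou}); here the hypothesis $f_N(z)\in A_j$ will instead be contradicted by the fact that the annuli $B_i$ miss all of the annuli $A_i$.

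So suppose, for contradiction, that $z\in A_k$ but $z\notin V_k\cup\mathcal{P}_k$, where $\mathcal{P}_k=\cup_{j=1}^{n_k}B(w^k_j,R_k/2^{n_k})$ is as in Definition \ref{petal_def}. Partition $A_k\setminus(V_k\cup\mathcal{P}_k)$ exactly as in the proof of Theorem \ref{Julia_place}: into the three subannuli $A(\tfrac14R_k,\tfrac25R_k)$, $A(\tfrac35R_k,\tfrac54R_k)\setminus\overline{\mathcal{P}_k}$, $A(\tfrac54R_k,4R_k)$, and the three circles $\{|z|=\tfrac25R_k\}$, $\{|z|=\tfrac35R_k\}$, $\{|z|=\tfrac54R_k\}$; this is a genuine partition because, by Proposition \ref{zeros}, every petal lies in $A(\tfrac35R_k,\tfrac54R_k)$. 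Lemma \ref{annulus_2} handles the first subannulus, Lemma \ref{hole_punched} the second, and Lemma \ref{annulus_1} the third, while the equations (\ref{Inner_Vk_Bk}), (\ref{Inner_V1_B1}), (\ref{annulus_3_3/2}), (\ref{54_annulus}) handle the three circles. In every case one concludes $f_N(z)\in B_k\cup B_{k+1}$.

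It remains to observe that $B_k\cup B_{k+1}$ is disjoint from $A_j$ for every $j\in\Z$. For $j\geq1$ this is immediate from Definition \ref{annulidefinition} together with the growth estimate $R_{i+1}\geq4R_i^2$ of Lemma \ref{Rkest}: the open round annulus $A_j=A(\tfrac14R_j,4R_j)$ and the closed round annulus $B_i=\overline{A(4R_i,\tfrac14R_{i+1})}$ have disjoint radial ranges whenever $i,j\geq1$. For $j\leq0$, the annuli $A_j$ are obtained (Section \ref{outlineproof}, Figure \ref{Akillustration}) by pulling back under $f_N$ inside the polynomial-like domain $U\subset D=B(0,\tfrac14R_1)$ of Lemma \ref{polynomial_like}, hence $A_j\subset D$, whereas $B_k\cup B_{k+1}\subset\{|z|>4R_1\}$. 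Either way, $f_N(z)\in B_k\cup B_{k+1}$ contradicts $f_N(z)\in A_j$, so in fact $z\in V_k\cup\mathcal{P}_k$, which is the claim.

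The one step that is not entirely automatic — and the place I would be most careful — is the disjointness of $B_k\cup B_{k+1}$ from the negative-index annuli $A_j$, since those are not explicit round annuli but are defined via pull-backs; once one knows from Section \ref{Origin} that all the relevant preimages are contained in $D=B(0,\tfrac14R_1)$, however, this is clear, and the rest of the argument is a direct quotation of the proof of Theorem \ref{Julia_place}.
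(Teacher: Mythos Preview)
Your proposal is correct and is exactly the argument the paper has in mind: the paper's ``proof'' of Lemma~\ref{Julia_Place_AkAk1} is literally the single sentence that the same argument as for Theorem~\ref{Julia_place} applies, and you have correctly reconstructed that argument, replacing the Julia/Fatou contradiction by the disjointness of $B_k\cup B_{k+1}$ from every $A_j$. Your handling of the negative-index case (noting $A_j\subset D=B(0,\tfrac14 R_1)$ for $j\le 0$ while $B_k\cup B_{k+1}\subset\{|z|\ge 4R_1\}$) is the right observation, even though the formal definition of $A_j$ for $j\le 0$ appears only in Definition~\ref{negative_indices} in the next section.
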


\section{Conformal Mapping Behavior}
\label{Branched Cover}

We begin Section \ref{Branched Cover} by defining $A_k$, $B_k$ for negative indices $k$, simply by pulling back (by $f$) the definition of $A_k$, $B_k$ for positive $k$ (see Figure \ref{AkBknegativeindices}). With this definition, we will deduce that if $z\in\mathcal{J}(f)$, then either $z$ maps to the filled Julia set of the polynomial like mapping $(f, U, V)$, or all iterates of $z$ lie in $\cup_{k\in\mathbb{Z}}A_k$. We have already estimated the dimension of the Julia set of $(f, U, V)$ in Section \ref{Origin}, so in this Section we study the set of points which have orbits lying in $\cup_{k\in\mathbb{Z}}A_k$.

%We saw at the beginning of Section \ref{Expanding Zeros} that the annuli $B_k$ all belong to $\mathcal{F}(f)$. Thus if $z\in\mathcal{J}(f)$, we must have that 

%In this section, we discuss the useful conformal mapping and branched covering behavior the function $f_N$ has. We first need to define $A_k$, $B_k$, $U_k$ and $V_k$ for indices $k \leq 0$, see Figure \ref{A0}. The definitions below are the same as in Section 15 of \cite{Bis18}.

\begin{definition}
	\label{negative_indices}
	Let $D = B(0, \frac{1}{4}R_1).$ We define
	\begin{equation}
	A_0 = \{z \,:\, z \in D, f_N(z) \in A_1\}.
	\end{equation}
	For integers $k \geq 1$, we define
	\begin{equation}
	\label{negative_Ak}
	A_{-k} = \{z \,:\, z, \dots, f_N^k(z) \in D,  f_N^{k+1}(z) \in A_1\}.
	\end{equation}
	We define $B_k$ and $V_k$ for $k \leq 0$ in the exact same way.
\end{definition}

\begin{figure}[!h]
	\centering
	\scalebox{.25}{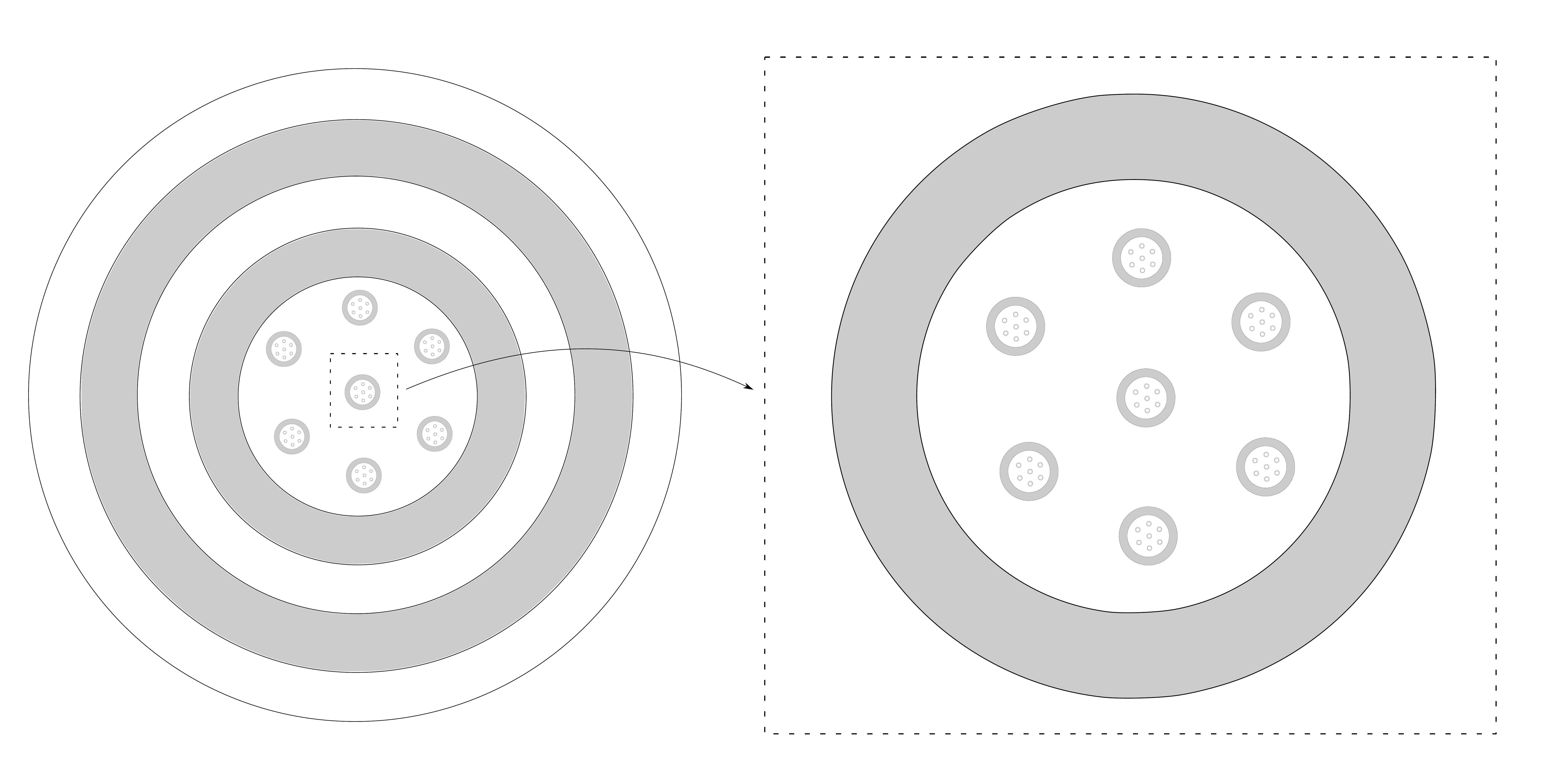}
	\caption{Illustrated is Definition \ref{negative_indices} of $A_k$, $B_k$ for negative $k$.  Each connected component of $A_0$ maps conformally onto $A_1$; therefore, each connected component of $A_0$ contains $2^N$ many connected components of $A_{-1}$. The picture repeats itself as we zoom in. The set $B_0$ is the region between the outer boundaries of the components of $A_0$ and the inner boundary of $A_1$.}
	\label{AkBknegativeindices}
\end{figure}

\begin{notation}
	We will use the following notation in this section.
	\begin{enumerate}
		\item $D = B(0, \frac{R_1}{4})$.
		\item The polynomial-like mapping $(f_N,U,V)$ will be the one defined as in Lemma \ref{polynomial_like}. 
		\item The filled Julia set of $(f_N,U,V)$ will be denoted as $E$.
	\end{enumerate}
\end{notation}

\begin{lem}
	\label{decomposition}
	Let $z \in \C$. Then exactly one of the following is true.
	\begin{enumerate}
		\item We have $z \in E$.
		\item There exists $k \in \Z$ so that $z \in B_k$.
		\item There exists $k \in \Z$ so that $z \in A_k$.
	\end{enumerate}
\end{lem}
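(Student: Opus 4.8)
The plan is to show that the three listed conditions are mutually exclusive and exhaustive. First I would verify exhaustivity. Given $z \in \C$, distinguish cases based on the orbit of $z$ under $f_N$. If $|z| \geq \frac{1}{4}R_1$, then by Proposition \ref{zeros} and the definitions in Section \ref{Global Mapping}, the annuli $A_k$ and $B_k$ for $k \geq 1$ together cover $\{|z| \geq \frac{1}{4}R_1\}$: indeed $A_k = A(\frac14 R_k, 4R_k)$ and $B_k = \overline{A(4R_k, \frac14 R_{k+1})}$ are consecutive (closed-up) annuli whose union over $k \geq 1$ is $\{|z| \geq \frac14 R_1\}$, and the boundary circles $|z| = 4R_k$ and $|z| = \frac14 R_{k+1}$ belong to the (closed) $B_k$. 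So in this case $z$ satisfies (2) or (3). If instead $z \in D = B(0, \frac14 R_1)$, consider the forward orbit $z, f_N(z), f_N^2(z), \dots$. Either $f_N^n(z) \in D$ for all $n \geq 0$ — in which case $z$ lies in the filled Julia set $E$ of the polynomial-like map $(f_N, U, V)$ (here one uses that $U \subset D$ from Lemma \ref{polynomial_like}, and that the filled Julia set is exactly the set of points whose orbit never escapes $D$; points in $D \setminus U$ have $f_N$-image outside $V \supset D$, hence escape, so staying in $D$ forever forces staying in $U$ forever) — or there is a smallest $m \geq 0$ with $f_N^m(z) \notin D$. For that smallest $m$ we have $f_N^{m-1}(z) \in D$ (or $m = 0$) and $f_N^m(z)$ lies in $\{|w| \geq \frac14 R_1\}$, so by the first case $f_N^m(z) \in A_j$ or $B_j$ for some $j \geq 1$; tracing back through Definition \ref{negative_indices}, this places $z$ in $A_{-(m-1)}$ or $B_{-(m-1)}$ (with the convention that $m=0$ gives membership determined directly, and one must check the index bookkeeping of \eqref{negative_Ak} matches — $f_N^{m}(z) \in A_1$ specifically is handled by further iterating within the $A_k$, $B_k$ for positive indices, using $f_N(B_k) \subset B_{k+1}$ from Lemma \ref{Bk} and the fact that points in $A_k$ either move to $A_{k+1}$ or into some $B_j$, cf. Theorem \ref{Julia_place}).

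For mutual exclusivity: $E \subset U \subset D$ and every point of $E$ has its entire forward orbit in $D$, whereas a point in $A_k$ or $B_k$ with $k \geq 1$ has $|z| \geq \frac14 R_1$ so is not in $D$, and a point in $A_{-k}$ or $B_{-k}$ with $k \geq 0$ has, by definition, some forward iterate landing in $A_1 \subset \{|w| \geq \frac14 R_1\}$, hence its orbit escapes $D$ — so (1) is incompatible with (2) and (3). That (2) and (3) are incompatible with each other for positive indices follows because the open annuli $A_k$ are pairwise disjoint and disjoint from the $B_k$ by construction (the only overlaps would be on boundary circles, which lie in the closed $B_k$ and not in the open $A_k$). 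For negative indices, disjointness of $A_{-k}$ from $B_{-j}$ and from $A_{-j}$ ($j \neq k$) is inherited by pulling back: a component of $A_{-k}$ maps under $f_N^{k+1}$ conformally (on the relevant branch, using Lemma \ref{polynomial_like_polynomial} and that these pullbacks avoid critical values) into $A_1$, while the number of iterates needed to first reach $\{|w|\geq \frac14 R_1\}$ is an invariant distinguishing the index, and within a fixed index the target annulus ($A_1$ versus some $B_j$) distinguishes $A$ from $B$.

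I expect the main obstacle to be the index bookkeeping in the negative-index case: one must be careful that Definition \ref{negative_indices} demands $f_N^{k+1}(z) \in A_1$ \emph{specifically} (not merely $f_N^{k+1}(z) \in \cup_j A_j$), so when a point of $D$ first escapes $D$ it may land in some $A_j$ or $B_j$ with $j \geq 1$ but not necessarily $j = 1$, and one then needs the forward dynamics among the positive-index annuli — $f_N(B_k) \subset B_{k+1}$ and the trichotomy for $A_k$ from Section \ref{Expanding Zeros} — to see that the orbit nonetheless visits each of $A_1, A_2, \dots$ in order (equivalently, that pulling back $A_1$ under all iterates where $f_N$ stays in $D$ genuinely exhausts $D \setminus (E \cup \text{positive-index sets})$). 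Careful tracking of which iterate is the "first escape" and matching it to the index in \eqref{negative_Ak}, together with confirming disjointness survives the pullback, is where the real work lies; everything else is bookkeeping from the already-established mapping lemmas.
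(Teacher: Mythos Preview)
Your overall architecture matches the paper's: split according to whether the forward orbit stays in $D$ forever (giving $z\in E$) or first leaves $D$ at some time $m$, and in the latter case read off a negative index from Definition~\ref{negative_indices}. Your mutual-exclusivity argument and your characterization of $E$ (points in $D\setminus U$ are sent outside $V\supset D$, so staying in $D$ forever forces staying in $U$ forever) are both fine.

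The gap is exactly where you flagged it, but your proposed resolution does not work. You suggest using the forward dynamics among the positive-index annuli to argue that the orbit ``visits each of $A_1, A_2,\dots$ in order''. This is false in general: if the first escape from $D$ were to land in $A_j$ with $j>1$, nothing in the dynamics sends the orbit back down to $A_1$; points in $A_j$ either move to $A_{j+1}$, drop into some $B_l$, or fall into a petal and re-enter some $A_i$ with $i\le j$---but there is no mechanism guaranteeing a visit to $A_1$ at the specific iterate required by Definition~\ref{negative_indices}. So the index bookkeeping cannot be repaired this way.

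The paper's fix is much more direct and avoids any forward tracking. Since $f_N(\partial D)\subset B_1$ by~(\ref{Inner_B0}), the maximum principle gives $|f_N(w)|\le \tfrac14 R_2$ for every $w\in D$. Hence if $m$ is the first escape time from $D$ (so $f_N^{m-1}(z)\in D$), then $f_N^m(z)=f_N(f_N^{m-1}(z))$ lies in $\{w:\tfrac14 R_1\le |w|\le \tfrac14 R_2\}$, i.e.\ in $A_1\cup B_1$. Thus $j=1$ is forced, and Definition~\ref{negative_indices} applies with index $1-m$. (The paper phrases this as first escape from $U$ rather than $D$, which introduces a harmless extra sub-case when the orbit passes through $\overline D\setminus U$; your version, tracking escape from $D$, is actually cleaner once the maximum-principle step is in place.)
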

\begin{proof}
	The filled Julia set $E$ of $(f_N,U,V)$ is forward invariant and contained in $D$. Therefore, if $z \in E$ it is impossible for $z \in A_k$ or $z \in  B_k$ for any integer $k$ by Definition \ref{negative_indices}. So we will suppose that $z \notin E$. By Definition \ref{annuli}, if $|z| > \frac{R_1}{4}$, then there exists $k \geq 0$ so that $z$ must belong to exactly one of $B_k$ or $A_k$. Therefore, we only have to focus our attention on the case $|z| \leq \frac{R_1}{4}$.
	
	Suppose first that $z \in U \subset \overline{D}$, recalling that $U \subset \overline{D}$ by Lemma \ref{polynomial_like}. Then since $z \notin E$, there exists a smallest integer $l \geq 1$ so that $f_N^l(z) \notin U$. First we consider the case that $|f_N^l(z)| > R_1/4$. Then by (\ref{Inner_B0}), $f_N(\partial D) \subset B_1$, so by our choice of $l$ and the maximum principle we must have either $f_N^l(z) \in A_1$ or $f_N^l(z) \in B_1$. It follows from Definition \ref{negative_indices} that either $z \in A_{1-l}$ or $z \in B_{1-l}$. 
	
	Next, we consider the case that $|f_N^l(z)| \leq \frac{1}{4}R_1$, so that $f^l(z) \in \overline{D} \setminus U$. Observe that by Definition \ref{poly_like_defs}, $f_N(\partial U) = \{z\,:\, |z| = 16r_{N-1}^2 R_1\} \subset B_1$. Indeed, we certainly have $16r_{N-1}^2 R_1 > 4 R_1$, and we may argue using Lemma \ref{Rkest} that $16 r^2_{N-1} R_1 < R_2/4$. We also have $f_N(\partial D) \subset B_1$ by (\ref{Inner_B0}). Therefore by Proposition \ref{zeros}, we must have $f^{l+1}(z) \in B_1$, so that $z \in B_{-l}$. 
\end{proof}

\begin{lem}
	\label{A_preimage}
	Suppose that $z \in f^{-1}_N(A_k)$ for some integer $k \in \Z$. Then $z \in A_j$ for some integer $j$. 
\end{lem}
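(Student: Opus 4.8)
The plan is to deduce everything from the trichotomy of Lemma \ref{decomposition}: every $z \in \C$ lies in exactly one of $E$, $\cup_{j \in \Z} B_j$, or $\cup_{j \in \Z} A_j$. Since the desired conclusion is precisely that $z$ falls into the third class, it suffices to rule out the other two possibilities under the hypothesis $f_N(z) \in A_k$.

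First I would dispose of the case $z \in E$. The filled Julia set $E$ of the polynomial-like map $(f_N, U, V)$ is forward invariant, so $z \in E$ would force $f_N(z) \in E$; but $f_N(z) \in A_k$, and $A_k$ is disjoint from $E$ by the ``exactly one'' clause of Lemma \ref{decomposition}, a contradiction.

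Next I would treat the case $z \in B_j$ for some $j \in \Z$, and the key point is that $f_N$ maps each $B_j$ into some $B_{j'}$. For $j \geq 1$ this is exactly Lemma \ref{Bk}. For $j = 0$ it is immediate from Definition \ref{negative_indices}, since $B_0$ consists of those $w \in D$ with $f_N(w) \in B_1$. For $j = -m$ with $m \geq 1$, unwinding Definition \ref{negative_indices} shows $z, f_N(z), \dots, f_N^m(z) \in D$ and $f_N^{m+1}(z) \in B_1$, whence $w := f_N(z)$ satisfies $w, f_N(w), \dots, f_N^{m-1}(w) \in D$ and $f_N^m(w) \in B_1$, i.e. $w \in B_{1-m}$ (with the convention $B_0$ when $m=1$). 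In every case $f_N(z)$ lies in some $B_{j'}$, which contradicts $f_N(z) \in A_k$ together with the disjointness of $A_k$ and $B_{j'}$ furnished by Lemma \ref{decomposition}. Hence the only remaining alternative is $z \in A_j$ for some $j \in \Z$, as claimed.

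The argument is essentially bookkeeping with Lemma \ref{decomposition} and Lemma \ref{Bk}; the only place that requires any care is matching up the shifted index in the negative-index case of the $B_j$'s, so that is where I would be most careful when writing out the details.
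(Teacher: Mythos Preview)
Your proof is correct and follows essentially the same approach as the paper: invoke the trichotomy of Lemma~\ref{decomposition}, rule out $z\in E$ by forward invariance of $E$, and rule out $z\in B_j$ by showing $f_N(B_j)\subset B_{j+1}$ (via Lemma~\ref{Bk} for $j\ge 1$ and Definition~\ref{negative_indices} for $j\le 0$). The only difference is that you split the nonpositive-index case into $j=0$ and $j<0$ and track the shifted indices explicitly, whereas the paper handles all $j\le 0$ in one line directly from the definition.
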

\begin{proof}
	By assumption we have $f_N(z) \in A_k$ for some integer $k$. By Lemma \ref{decomposition}, either $z \in E$, $z \in A_j$ for some integer $j$, or $z \in B_j$ for some integer $j$. We cannot have $z \in E$, because $f_N(E) \subset E$. If $z \in B_j$ and $j \leq 0$, then by Definition \ref{negative_indices} we must have $f_N(z) \in B_{j+1}$. If $z \in B_j$ for some $j \geq 1$, then $f_N(z) \in B_{j+1}$ by Lemma \ref{Bk}. Therefore, we cannot have $z \in B_j$ for any integer $j$. The only remaining possibility is that we must have $z \in A_j$ for some integer $j$.
\end{proof}

\begin{notation}
	\label{widehat}
	For an open set $\Omega \subset \C$, we let $\widehat \Omega$ denote the union of $\Omega$ and its bounded complementary components. 
\end{notation}

\begin{definition}
	\label{Jordan_Annulus}
	A domain $A \subset \C$ is a topological annulus if the complement of $A$ has two connected components. We say $A$ is a Jordan annulus if the boundary of $A$ consists of two Jordan curves. 
\end{definition}

\begin{definition}
	\label{postcritical_set}
	Let $f: \C \rightarrow \C$ be an entire function. We define $CV(f)$ to be the set of all critical values of $f$, $AV(f)$ the set of asymptotic values of $f$, and $SV(f) = \overline{CV(f) \cup AV(f)}$ to be the set of all singular values of $f$.  We define the postsingular set of $f$ by 
	\begin{equation}
	\label{postsingular_set_eqn}
	P(f) = \overline{\{f^n(z)\,:\, z \in SV(f)\,\, , n \geq 0 \}}.
	\end{equation}
	We define the postcritical set of $f$ in a similar way.
\end{definition}

\begin{lem}
	\label{postcritical_set_B}
	For all sufficiently large $N$, the postsingular set of $f_N$ coincides with the postcritical set, and is a subset of $\cup_{k \geq 1} B_k$.
\end{lem}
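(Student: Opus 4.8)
The plan is to deduce this from the singular-value list already established and from the forward invariance of the annuli $B_k$. For the first assertion, I would invoke Lemma \ref{branch_covering_hn}, which lists every critical point of $f_N$ and states that the only singular values of $f_N$ are the critical values $(\pm c_j r_j^{M_j})_{j=N}^{\infty}$ together with the finitely many $(g_N(\zeta_j))_{j=1}^{2^N-1}$. In particular $SV(f_N)=CV(f_N)$: the map has no asymptotic values, and $0$ (a singular value of the original $f$ by Proposition \ref{critical_point_listing}) is not one for $f_N$, since near $0$ the map is modeled by $q_N$, which has a simple zero there. Hence $P(f_N)=\overline{\{f_N^n(z):z\in SV(f_N),\ n\ge 0\}}=\overline{\{f_N^n(z):z\in CV(f_N),\ n\ge 0\}}$ is exactly the postcritical set.

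For the inclusion $P(f_N)\subset\bigcup_{k\ge1}B_k$, I would first check that $CV(f_N)\subset\bigcup_{k\ge1}B_k$ by splitting the critical points of $f_N$ into the two families of Lemma \ref{branch_covering_hn}. The points $\phi_N(\zeta_j)$ lie in $U\subset D$ by Lemma \ref{polynomial_like}, and Lemma \ref{crit_values_avoid_A1_fN} places their images in $B_1$. The remaining critical points $\phi_N(r_j\exp(i(2k_j-1)\pi/M_j))$ with $j\ge N$ satisfy $|z|>\tfrac14 R_1$ by Lemma \ref{AkEst}, so Proposition \ref{Fatou_critical_points} puts each one in some $A_k$, $k\ge1$, with image of modulus $C_kR_k^{n_k}$ lying in $B_{k+1}$; since $B_{k+1}$ is a round annulus centered at $0$ and these critical values occur in pairs $\pm C_kR_k^{n_k}$ of equal modulus, both signs lie in $B_{k+1}$. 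Thus $CV(f_N)\subset B_1\cup\bigcup_{k\ge1}B_{k+1}=\bigcup_{k\ge1}B_k$.

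Next I would use forward invariance: Lemma \ref{Bk} gives $f_N(B_k)\subset B_{k+1}$ for every $k\ge1$, so $\bigcup_{k\ge1}B_k$ is forward invariant and therefore contains the full orbit $\{f_N^n(z):z\in CV(f_N),\ n\ge0\}$. Finally, $\bigcup_{k\ge1}B_k$ is closed in $\C$: each $B_k$ is closed, and since $R_k\to\infty$ any bounded subset of $\C$ meets only finitely many of them, so taking the closure in the definition of the postcritical set does not leak out of $\bigcup_{k\ge1}B_k$. Combining, $P(f_N)$ equals the postcritical set and is contained in $\bigcup_{k\ge1}B_k$, with all the cited statements valid once $N$ is taken large enough (cf. Remark \ref{N is large}).

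This argument is essentially bookkeeping assembled from Lemmas \ref{branch_covering_hn}, \ref{crit_values_avoid_A1_fN} and \ref{Bk}, Proposition \ref{Fatou_critical_points}, and the earlier modulus estimates, so I do not expect a genuine obstacle. The three points needing a little care are: justifying the equality $SV(f_N)=CV(f_N)$ (not merely $\supseteq$), which comes from the explicit list in Lemma \ref{branch_covering_hn}; observing that the $\pm$-pairs of critical values share a modulus, so that membership in the round annuli $B_k$ is automatic for both signs; and verifying closedness of $\bigcup_{k\ge1}B_k$, so that passing to the closure is harmless.
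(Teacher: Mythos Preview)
Your argument is correct and follows essentially the same route as the paper: invoke Lemma \ref{branch_covering_hn} to rule out asymptotic values, split the critical points according to whether they lie inside or outside $D$ and apply Lemma \ref{crit_values_avoid_A1_fN} and Proposition \ref{Fatou_critical_points} respectively, then use Lemma \ref{Bk} for forward invariance. Your additional remarks on the $\pm$ pairs and on the closedness of $\bigcup_{k\ge1}B_k$ are careful touches that the paper leaves implicit.
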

\begin{proof}
	By Lemma \ref{branch_covering_hn}, $f_N$ has no asymptotic values. By Proposition \ref{Fatou_critical_points}, for all sufficiently large $N$, all of the critical points of $f_N$ with $|z| \geq \frac{R_1}{4}$ are mapped into $B_{k+1}$ for some $k \geq 1$. By Lemma \ref{crit_values_avoid_A1_fN}, all of the critical points of $f_N$ with $|z| \leq \frac{R_1}{4}$ are mapped into $B_1$. Therefore, $SV(f_N) \subset \cup_{k \geq 1} B_k$. It follows from Lemma \ref{Bk} that $P(f_N) \subset \cup_{k\geq 1} B_k$ as well. 
\end{proof}

Next, we note some of the basic covering map behavior on the annuli $A_k$.

\begin{lem}
	\label{T}
	For all $k \leq 1$, we let $Z_k$ denote $A_k$ or $V_k$. 
	\begin{enumerate}
		\item Let $k \leq 0$ and suppose that $\widehat Z_k'$ is a connected component of $\widehat Z_k$ and $\widehat Z_{k+1}'$ is a connected component of $\widehat Z_{k+1}$ so that $f_N(\widehat Z_k') = \widehat Z'_{k+1}$. Then $f_N: \widehat Z_k' \rightarrow \widehat Z'_{k+1}$ is conformal, and every connected component of $\widehat Z_k$ is a Jordan domain.
		\item Let $k \leq 0$ and suppose that $Z_k'$ is a connected component of $Z_k$ and $Z_{k+1}'$ is a connected component of $Z_{k+1}$ so that $f_N(Z_k') = Z'_{k+1}$. Then $f_N: Z_k' \rightarrow Z'_{k+1}$ is conformal, and every connected component of $Z_k$ is a Jordan annulus.
		\item For all $k \leq 1$, $Z_k$ consists of exactly $2^{(-k+1)N}$ many connected components. 
	\end{enumerate}
\end{lem}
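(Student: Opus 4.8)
The plan is to prove items (1), (2), (3) simultaneously by induction on $-k$, exploiting that $(f_N,U,V)$ is a degree-$2^N$ polynomial-like map (Lemma \ref{polynomial_like}) whose $2^N-1$ critical values lie in $\cup_{j\geq 1}B_j\subset\{|z|\geq 4R_1\}$ (by Lemma \ref{crit_values_avoid_A1_fN} and Lemma \ref{crits_B1}), hence outside the closed disk $\overline{D}=\overline{B(0,\tfrac14 R_1)}$ and in particular outside every domain $\widehat{Z_{-m}}$ that the induction produces, all of which will lie in $D$. The base case $k=0$ is essentially Remark \ref{gamma_sigma}: since $\widehat{A_1}=B(0,4R_1)$ and $\widehat{V_1}=B(0,\tfrac35 R_1)$ are Jordan domains with closure compactly contained in $V$ and containing no critical value of $f_N\colon U\to V$, Lemma \ref{component_lemma} shows each of their preimages under the polynomial-like map is a disjoint union of $2^N$ Jordan domains mapped conformally; intersecting with $f_N^{-1}(A_1)$, resp. $f_N^{-1}(V_1)$, and using $A_1,V_1\subset B(0,4R_1)$, shows that $A_0$, resp. $V_0$, has exactly $2^N$ components, each a Jordan annulus mapped conformally onto $A_1$, resp. $V_1$, that $\widehat{A_0}$, resp. $\widehat{V_0}$, is a disjoint union of $2^N$ Jordan domains (one per sheet), and that all of these sets are contained in $D$.

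\textbf{Inductive step.} Suppose the conclusions hold for $Z_{1-m}$, $m=0,1,\dots,\ell$; I treat $Z_{-\ell}$ (the case $m=\ell+1$). First I check the bookkeeping identity $Z_{-\ell}=U\cap f_N^{-1}(Z_{1-\ell})$: by Definition \ref{negative_indices}, $Z_{-\ell}=\{z\in D: f_N(z)\in Z_{1-\ell}\}$, and since $f_N$ agrees with $q_N\circ\phi_N^{-1}$ on $D$ (Lemma \ref{AkEst} and Definition \ref{parameter_defn}) while $q_N$ maps the complement of $U'=q_N^{-1}(V)$ into $\{|w|\geq r\}$ with $r=16 r_{N-1}^2 R_1>\tfrac14 R_1$, any $z\in D$ with $f_N(z)\in Z_{1-\ell}\subset D\subset B(0,\tfrac14 R_1)$ must lie in $U=\phi_N(U')$. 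By the inductive hypothesis $\widehat{Z_{1-\ell}}$ is a disjoint union of $2^{\ell N}$ Jordan domains, each contained in $D$, with closure compactly contained in $V$ and disjoint from the critical values of $f_N\colon U\to V$; hence (again by Lemma \ref{component_lemma}) $f_N^{-1}(\widehat{Z_{1-\ell}})\cap U$ is a disjoint union of $2^{\ell N}\cdot 2^N=2^{(\ell+1)N}$ Jordan domains, each mapped conformally onto a component of $\widehat{Z_{1-\ell}}$. Intersecting with $f_N^{-1}(Z_{1-\ell})\cap U=Z_{-\ell}$, and using that the conformal preimage of a Jordan annulus is a Jordan annulus whose fill-in is the whole preimage disk, I conclude that $Z_{-\ell}$ has exactly $2^{(\ell+1)N}$ components, each a Jordan annulus mapped conformally onto a component of $Z_{1-\ell}$, and that $\widehat{Z_{-\ell}}$ is the disjoint union of these $2^{(\ell+1)N}$ fill-in disks (pairwise disjoint, since the target Jordan domains are disjoint), each contained in $U\subset D$. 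This closes the induction and gives (1), (2), (3) at once, with the count $2^{(\ell+1)N}=2^{(-(-\ell)+1)N}$.

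\textbf{What needs care.} The routine points to get right are: that $\widehat{Z_{-m}}\subset D$ for every $m\geq 1$ (immediate from $\widehat{Z_{-m}}\subset U\subset D$ and $\overline{D}\subset V$), so that the polynomial-like map is genuinely unbranched over each such domain and the ``a covering of a disk is trivial'' principle splits the preimage into exactly $2^N$ conformal copies at every stage; and that the fill-ins of the components of $Z_{-m}$ are pairwise disjoint, so that $\widehat{Z_{-m}}$ decomposes componentwise and the component counts multiply cleanly. The only mildly delicate point I anticipate is the reduction $Z_{-\ell}=U\cap f_N^{-1}(Z_{1-\ell})$ — that tracking orbits which ``stay in $D$'' is the same as tracking orbits of the polynomial-like map $f_N\colon U\to V$ — which rests on the separation $r=16 r_{N-1}^2 R_1>\tfrac14 R_1$ from Definition \ref{poly_like_defs}; everything else is standard covering-space bookkeeping for polynomial-like maps.
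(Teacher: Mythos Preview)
Your proof is correct and follows essentially the same approach as the paper: both arguments use Lemma~\ref{component_lemma} together with the absence of critical values over the relevant Jordan domains, then count components inductively. The only difference is cosmetic: the paper invokes Lemma~\ref{postcritical_set_B} (no singular values of $f_N$ in $\overline{B(0,4R_1)}$) directly for the global entire function, which lets it skip your explicit reduction to the polynomial-like map $f_N\colon U\to V$ and the verification that $Z_{-\ell}=U\cap f_N^{-1}(Z_{1-\ell})$; your route is slightly more hands-on but reaches the same conclusion by the same mechanism.
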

\begin{proof}
	To prove (1), note that by Lemma \ref{postcritical_set_B}, there are no critical values of $f_N$ contained in $\overline{B(0,4R_1)}$. Therefore, the claim follows by Lemma \ref{component_lemma}, which further implies that each connected component of $\widehat Z_k$ for $k \leq 0$ is a Jordan domain. 
	
	Part (2) follows immediately from part (1).  
	
	To see part (3), note that Lemma \ref{component_lemma} implies that $Z_0$ consists of $2^N$ many connected components. Each connected component of $\widehat Z_0$ is mapped conformally onto $\widehat Z_1$ by $f_N$. Therefore $f_N(\widehat Z_0)$ contains the $2^N$ many connected components of $Z_0$, and it follows that each connected component of $\widehat Z_0$ contains $2^N$ many connected components of $Z_{-1}$. Therefore $Z_{-1}$ consists of $2^{2N}$ many connected components. By proceeding similarly, we deduce that every connected component of $\widehat Z_{k+1}$ for $k \leq -1$ contains $2^{N}$ many connected components of $Z_{k}$, and $Z_k$ therefore must have $2^{N(-k+1)}$ many connected components. 
\end{proof}

\begin{lem}
	\label{CoveringMapVk}
	Let $k \geq 1$. Let $W = f_N^{-1}(A_{k+1}) \cap V_k$, which is non-empty by Lemma \ref{Ak}. Then $f_N: W \rightarrow A_{k+1}$ is a degree $n_{k}$ covering map. 
\end{lem}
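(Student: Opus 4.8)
The plan is to count, for a fixed $w_0\in A_{k+1}$, the solutions of $f_N(z)=w_0$ lying in the annulus $V_k=A(\tfrac{2}{5}R_k,\tfrac{3}{5}R_k)$ by the argument principle, show this count is always $n_k$, and check that $f_N$ restricts to an unbranched proper map of $W$ onto $A_{k+1}$, so that it is an $n_k$-sheeted covering.

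First I would fix the geometry. Let $C_{\mathrm{in}}=\{|z|=\tfrac{2}{5}R_k\}$ and $C_{\mathrm{out}}=\{|z|=\tfrac{3}{5}R_k\}$ be the bounding circles of $V_k$, each oriented counterclockwise. Inside the proof of Lemma \ref{Ak} it was shown that $|f_N|<\tfrac{1}{4}R_{k+1}$ on $C_{\mathrm{in}}$ (this is (\ref{Inner_Vk}), resp.\ (\ref{Inner_V1}) when $k=1$) and $|f_N|>4R_{k+1}$ on $C_{\mathrm{out}}$ (this is (\ref{Outer_Vk}), resp.\ (\ref{Outer_V1})). Hence $f_N(\partial V_k)\cap\overline{A_{k+1}}=\emptyset$, so $W=f_N^{-1}(A_{k+1})\cap V_k$ is open with $\overline{W}\subset V_k$ and $f_N(\partial W)\subset\partial A_{k+1}$; in particular $f_N\colon W\to A_{k+1}$ is proper, and it is onto by Lemma \ref{Ak}. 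Next I would observe that $f_N$ has no critical points in $V_k$: by Proposition \ref{critical_point_listing} (restated in Lemma \ref{branch_covering_hn}) together with Proposition \ref{Fatou_critical_points}, every critical point of $f_N$ either lies in $B(0,\tfrac14 R_1)$ or lies in some $A_m$, where by Lemma \ref{AkEst} and Remark \ref{alpha_beta_close_1} it has modulus strictly between $\tfrac{99}{100}R_m$ and $\tfrac{101}{100}R_m$; using $R_{m}\ge 4R_{m-1}^2$ (Lemma \ref{Rkest}) to separate the scales $R_m$, one checks that no such number lies in $(\tfrac{2}{5}R_k,\tfrac{3}{5}R_k)$. (Alternatively, on $V_k$ one can write $f_N=p_k\circ\phi_N^{-1}$ with $p_k$ holomorphic and $p_k'\ne0$ on $\phi_N^{-1}(V_k)$ — by Lemma \ref{PrePowerMap} for $k\ge2$, and with $p_1=q_N$ for $k=1$ since $\phi_N^{-1}(V_1)$ avoids the critical points of $q_N$ — which forces $\phi_N^{-1}$ to be conformal on $V_k$ and hence $f_N'\ne0$ there.) Thus $f_N\colon W\to A_{k+1}$ is a proper local homeomorphism.

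To compute the degree I would apply the argument principle to $f_N-w_0$ on the annulus $V_k$. By the previous step $f_N-w_0$ has no zeros on $C_{\mathrm{in}}\cup C_{\mathrm{out}}$, and all of its zeros in $V_k$ lie in $W$ (as $f_N(z)=w_0\in A_{k+1}$ forces $z\in W$), so their number, counted with multiplicity, equals
\begin{equation*}
n\bigl(f_N\circ C_{\mathrm{out}},\,w_0\bigr)-n\bigl(f_N\circ C_{\mathrm{in}},\,w_0\bigr).
\end{equation*}
Since $f_N(C_{\mathrm{in}})$ lies in the disk $\{|w|<\tfrac14 R_{k+1}\}$ while $w_0$ does not, the second winding number is $0$; since $f_N(C_{\mathrm{out}})$ lies in $\{|w|>4R_{k+1}\}$, the connected set $\{|w|\le 4R_{k+1}\}$ contains both $0$ and $w_0$ and misses $f_N(C_{\mathrm{out}})$, so $n(f_N\circ C_{\mathrm{out}},w_0)=n(f_N\circ C_{\mathrm{out}},0)$, which by the argument principle for $f_N$ equals the number of zeros of $f_N$ in $\{|z|<\tfrac{3}{5}R_k\}$. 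By Proposition \ref{zeros} the zeros of $f_N$ with $|z|\ge\tfrac14 R_1$ that lie in this disk are precisely those in $A_1,\dots,A_{k-1}$ (those in $A_k$ sit in $A(\tfrac35 R_k,\tfrac54 R_k)$, and each $A_j$ with $j\ge k+1$ lies in $\{|z|>\tfrac35 R_k\}$), totalling $\sum_{j=1}^{k-1}n_j$; and since $(f_N,U,V)$ is a degree $2^N$ polynomial-like map with $0\in V$ and $U\subset B(0,\tfrac14 R_1)$ (Lemma \ref{polynomial_like}), $f_N$ has exactly $2^N$ zeros with $|z|<\tfrac14 R_1$. By Lemma \ref{nk}(2) the total is $2^N+\sum_{j=1}^{k-1}n_j=n_k$. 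Hence every $w_0\in A_{k+1}$ has exactly $n_k$ preimages in $W$ — all simple, by the no-critical-points step — and so $f_N\colon W\to A_{k+1}$, being a proper local homeomorphism onto the connected set $A_{k+1}$ with all fibres of cardinality $n_k$, is a degree $n_k$ covering map.

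The main obstacle I anticipate is the bookkeeping in the last paragraph: correctly identifying which zeros of $f_N$ fall inside $\{|z|<\tfrac35 R_k\}$, confirming via Lemma \ref{polynomial_like} that exactly $2^N$ of them lie in the polynomial-like region, and matching the sum $2^N+\sum_{j=1}^{k-1}n_j$ with $n_k$ through Lemma \ref{nk}(2) — together with the (cheap but necessary) verification that $f_N$ has no critical points on $V_k$. The winding-number arithmetic itself is routine once the boundary estimates from the proof of Lemma \ref{Ak} are invoked.
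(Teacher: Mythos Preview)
Your argument is correct, but it takes a genuinely different route from the paper. The paper's proof is much shorter and exploits the explicit model: on $\phi_N^{-1}(W)$ one has $f_N\circ\phi_N(z)=C_k z^{n_k}$ (for $k\ge2$) or $f_N\circ\phi_N(z)=q_N(z)$ (for $k=1$), and this map is manifestly a degree $n_k$ covering of $A_{k+1}$; composing with the homeomorphism $\phi_N$ then transfers the covering property to $f_N|_W$. Your approach instead treats $f_N$ as a black box, using only the boundary estimates from Lemma~\ref{Ak}, the absence of critical points in $V_k$, and a global zero count via the argument principle together with the identity $2^N+\sum_{j=1}^{k-1}n_j=n_k$ of Lemma~\ref{nk}(2). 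The paper's route is cleaner here because the construction hands you an explicit conjugacy to a power map; your route is more robust in that it would go through for any $f$ with the same boundary behavior and zero distribution, without needing the formula. One small point: your citation of Lemma~\ref{polynomial_like} only gives $2^N$ zeros in $U$, not in all of $D$; to close this you should note (via Lemma~\ref{qN} and the estimates on $|z|=(r_N/c_N)^{1/(M_N-1)}$) that all zeros of $q_N$ have modulus far below $\tfrac14 R_1$, so $f_N$ has no zeros in $D\setminus U$.
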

\begin{proof}
	When $k \geq 2$, observe that on $\phi_N^{-1}(W)$, we have $f_N \circ \phi_N = C_{k} z^{n_{k}}$ and that the mapping $f_N \circ \phi_N: \phi^{-1}_N(W) \rightarrow A_{k+1}$ is a degree $n_{k}$ covering map. Since $\phi_N$ is quasiconformal, it follows that $f_N: W \rightarrow A_{k}$ is a degree $n_{k}$ covering map as well. The $k = 1$ case is similar, except this time we use that fact that on $\phi_N^{-1}(W)$, $f_N \circ \phi_N(z) = q_N(z)$ is a degree $n_1$ covering map to conclude that $f_N \circ \phi_N: \phi_N^{-1}(W) \rightarrow A_{2}$ is a degree $n_1$ covering map.
\end{proof}

Recall that by Lemma \ref{BkFatou}, for $k \geq 1$ the annuli $B_k$ belong to the Fatou set of $f_N$. This motivates the following definition.

\begin{definition}
	\label{central_series}
	For $k \geq 1$, we define $\Omega_k$ to be the Fatou component that contains $B_{k}$. 
\end{definition}

\begin{rem}
	It is readily verified that each $\Omega_k$ is multiply connected. By Definition \ref{parameter_defn}, we have $f_N(0) = 0$, and $0$ is in the Julia set of $f_N$ since it is a repelling fixed point. Indeed, by Definition \ref{parameter_defn} we have
	$$|f_N'(0)| = |q_N'(0)| \cdot |\phi_N'(0)|.$$
	We verify from (\ref{derivative_qN}) that $q_N'(0) = r_N$, and by Theorem \ref{identity_origin}, for all $N$ sufficiently large we may assume that $\frac{1}{2} \leq |\phi_N'(0)| \leq \frac{3}{2}$. Therefore, for all $N$ sufficiently large, by Lemma \ref{Rkest} we have
	$$|f_N'(0)| \geq \frac{1}{2} r_N > 1.$$
	Therefore $0$ is a repelling fixed point for $f_N$. Since $B_{k} \subset \Omega_k$ for all $k \geq 1$, it follows that $\Omega_k$ cannot be simply connected.
\end{rem}

\begin{lem}
	\label{central_series_distinct}
	Suppose that $j, k \geq 1$ and $j \neq k$. Then $\Omega_j \neq \Omega_k$. 
\end{lem}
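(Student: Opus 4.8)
The plan is to argue by contradiction and show that if $\Omega_j=\Omega_k$ for some $j\neq k$, then $\Omega_j$ would be a \emph{periodic} Fatou component of $f_N$, which is impossible since (as noted in the Remark following Definition \ref{central_series}) each $\Omega_k$ is multiply connected, while periodic Fatou components of transcendental entire functions are simply connected.

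More precisely, I would assume $\Omega_j=\Omega_k=:\Omega$ with $j<k$ and set $p:=k-j\geq 1$. Iterating Lemma \ref{Bk} gives $f_N^{p}(B_j)\subset B_k$. Now $f_N^{p}(\Omega_j)$ is a connected set, hence contained in a single Fatou component $W$; and since $\emptyset\neq f_N^{p}(B_j)\subset f_N^{p}(\Omega_j)\cap B_k$ while $B_k\subset\Omega_k=\Omega$, the component $W$ meets $\Omega$ and therefore equals $\Omega$. Thus $f_N^{p}(\Omega)\subset\Omega$, i.e.\ $\Omega$ is a periodic Fatou component of period dividing $p$. On the other hand $\Omega=\Omega_j$ is multiply connected (it contains the round annulus $B_j$, which separates the repelling fixed point $0\in\Julia(f_N)$ from $\infty$), and $f_N$ is transcendental entire (since $h_N$ coincides with $h$ outside a compact set, the map $f_N=h_N\circ\phi_N^{-1}$ inherits zeros and critical points tending to $\infty$). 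By the classical theorem of Baker that a multiply connected Fatou component of a transcendental entire function is necessarily a wandering domain --- equivalently, every periodic Fatou component is simply connected (see \cite{Baker}) --- this is a contradiction. The case $j>k$ is symmetric, so $\Omega_j\neq\Omega_k$ whenever $j\neq k$.

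The main point to check carefully is not the combinatorial step above, which is short and uses only Lemma \ref{Bk} together with the definition of a Fatou component, but rather that Baker's theorem genuinely applies in our setting: that $f_N$ is transcendental entire and that $\Omega$ is genuinely multiply connected. Both facts are already on record in the excerpt, so I expect no real obstacle. One could alternatively try to argue directly that $\Julia(f_N)$ separates $B_j$ from $B_k$, so that they cannot lie in a common Fatou component; but at this stage of the paper we have not yet shown that $\Julia(f_N)\cap V_k$ contains a Jordan curve winding around the origin, so the periodicity argument via Baker's theorem is the cleaner route.
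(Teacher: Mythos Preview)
Your argument is correct. Both your proof and the paper's rest on Baker's classical result that multiply connected Fatou components of transcendental entire functions are bounded wandering domains, but you and the paper invoke complementary halves of that statement.

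The paper first upgrades $f_N(\Omega_k)\subset\Omega_{k+1}$ to an equality $f_N(\Omega_k)=\Omega_{k+1}$ (using that $f_N$ has no asymptotic values, via Herring), then argues that $\Omega_j=\Omega_k$ forces $\Omega_j$ to contain infinitely many of the $B_l$ and hence to be \emph{unbounded}; this contradicts the ``bounded'' part of Baker's theorem. Your route instead observes only the inclusion $f_N^{p}(\Omega)\subset\Omega$, concludes $\Omega$ is \emph{periodic}, and contradicts the ``wandering'' part of Baker's theorem. Your version is marginally more economical in that it does not require the surjectivity step (and hence not the appeal to Herring); the paper's version makes the geometric picture (that $\Omega_j$ would swallow all the $B_l$) slightly more explicit. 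Either way the substance is the same.
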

\begin{proof}
	By Lemma \ref{Bk}, we have $f_N(B_{k}) \subset B_{k+1}$ for all $k \geq 1$. Therefore, we must have $f_N(\Omega_k) \subset \Omega_{k+1}$ for all $k \geq 1$, and since $f_N$ has no asymptotic values by Lemma \ref{branch_covering_hn}, we actually have 
	\begin{equation}
	\label{Omega_Maps_Next}
	f_N(\Omega_k) = \Omega_{k+1},
	\end{equation}
	for all $k \geq 1$, (see Corollary 2 of \cite{Herring}).
	
	Suppose for the sake of contradiction that we have $\Omega_j = \Omega_k$ for some $k > j \geq 1$. It follows from Definition \ref{central_series} that $\Omega_j = \Omega_{j+1}$. This combined with (\ref{Omega_Maps_Next}) implies that $\Omega_j$ is unbounded. This contradicts Theorem 1 of \cite{Baker}.
\end{proof}

\begin{definition}
	\label{A_def}
	We define
	\begin{equation}
	\label{A}
	A := \bigcup_{k \in \Z} A_k,
	\end{equation}	
	and
	\begin{equation}
	\label{X}
	X := \{z: f^n(z) \in A\,\, , n = 0,1,\dots\}.
	\end{equation}
\end{definition}

\begin{lem}
	\label{A_backward_invariant}
	There exists $M \in \N$ so that for all $N \geq M$, we have $f_N^{-1}(A) \subset A.$
\end{lem}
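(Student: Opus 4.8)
The plan is to show $f_N^{-1}(A) \subset A$ by taking an arbitrary point $z$ with $f_N(z) \in A$ — say $f_N(z) \in A_k$ for some $k \in \Z$ — and ruling out, via Lemma \ref{decomposition}, every possibility for $z$ other than $z \in A_j$ for some $j \in \Z$. By Lemma \ref{decomposition}, $z$ lies in $E$, in some $B_j$, or in some $A_j$; the goal is to eliminate the first two alternatives. The case $z \in E$ is immediate: $E$ is forward invariant under $f_N$ (it is the filled Julia set of the polynomial-like map $(f_N,U,V)$), so $f_N(z) \in E \subset D$, which is disjoint from every $A_k$ by Definition \ref{negative_indices} and Definition \ref{annuli}. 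This is essentially the content of the first two sentences of the proof of Lemma \ref{A_preimage}, which handles the special case where $A_k$ has nonnegative index; the present lemma is the same statement allowing all integer indices $k$.

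Next I would rule out $z \in B_j$ for every $j \in \Z$. Split into $j \leq 0$ and $j \geq 1$. If $j \geq 1$, then Lemma \ref{Bk} gives $f_N(z) \in f_N(B_j) \subset B_{j+1}$, and since the $B$'s are pairwise disjoint from the $A$'s (by Definition \ref{annuli}, $B_{j+1} = \overline{A(4R_{j+1}, \tfrac14 R_{j+2})}$ meets no $A_k$ except possibly in boundary points — and here one should note the closed annulus $B_{j+1}$ is disjoint from the open annuli $A_k$, so there is genuinely no overlap), this contradicts $f_N(z) \in A_k$. If $j \leq 0$, then by Definition \ref{negative_indices} applied to $B_j$ (defined exactly as $A_j$ but starting from $B_1$), we have $f_N(z) \in B_{j+1}$ as well, again for all $j \leq 0$ including the boundary case $j = 0$ where $f_N(z) \in B_1$; and once more this is disjoint from every $A_k$. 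Hence $z \in B_j$ is impossible for any $j$, leaving only $z \in A_j$ for some $j$, which is what we want. The role of $M$ is just to guarantee that Lemma \ref{Bk} (and the other structural lemmas invoked) hold; take $M$ at least as large as required there.

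I do not anticipate a serious obstacle here — this is a short bookkeeping argument essentially identical to Lemma \ref{A_preimage} with "for some integer $k$" in place of "$k \geq 1$." The one point deserving a line of care is the disjointness of the closed annuli $B_k$ from the open annuli $A_k$: from Definition \ref{annuli}, $\overline{B_k} = \overline{A(4R_k, \tfrac14 R_{k+1})}$ has boundary circles $|z| = 4R_k$ and $|z| = \tfrac14 R_{k+1}$, while $A_k = A(\tfrac14 R_k, 4R_k)$ and $A_{k+1} = A(\tfrac14 R_{k+1}, 4R_{k+1})$ are open, so indeed $B_k \cap A_j = \emptyset$ for every $j$ (the shared circles are not in the open $A_j$'s). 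The analogous statement for $k \leq 0$ follows because the negatively-indexed $B_k$, $A_k$ are defined by pulling back the positively-indexed ones under iterates of $f_N$, which separates them at each stage. With that observation in hand the proof is just the case analysis above.
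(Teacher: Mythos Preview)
Your argument is correct and is essentially identical to the paper's, though you have slightly misread Lemma~\ref{A_preimage}: that lemma is already stated for arbitrary $k \in \Z$, not just $k \geq 1$, so the paper's proof of Lemma~\ref{A_backward_invariant} is the one-line observation that $f_N^{-1}(A) = \bigcup_{k \in \Z} f_N^{-1}(A_k) \subset A$ by Lemma~\ref{A_preimage}. What you have written is simply a reproduction of the proof of Lemma~\ref{A_preimage} itself.
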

\begin{proof}
	By Lemma \ref{A_preimage}, we have $f_N^{-1}(A_k) \subset A$ for each integer $k$. Since
	$$f_N^{-1}(A) = \bigcup_{k \in \Z} f_N^{-1}(A_k),$$
	the conclusion follows immediately.
\end{proof}

\begin{definition}
	\label{E'}
	Recall that the filled Julia set of the polynomial-like mapping $(f_N,U,V)$ from Lemma \ref{polynomial_like} is denoted as $E$. Define
	\begin{equation}
	E' = \bigcup_{n=0}^{\infty} f_N^{-n}(E).
	\end{equation}
\end{definition}

\begin{lem}
	\label{E'_dim}
	The Hausdorff dimension of $E$ and $E'$ are the same. 
\end{lem}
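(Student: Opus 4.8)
The plan is to reduce everything to two standard facts: Hausdorff dimension is countably stable, and a holomorphic map is locally bi-Lipschitz near any non-critical point. Since
\[ E' = \bigcup_{n=0}^{\infty} f_N^{-n}(E) \]
is a countable union and $E = f_N^{-0}(E) \subseteq E'$, it suffices to prove $\dim f_N^{-n}(E) \le \dim E$ for every $n \ge 0$; the reverse inequality $\dim E' \ge \dim E$ is then immediate. We will prove the bound by induction on $n$, the case $n = 0$ being trivial.

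The structural input that makes the induction work is that \emph{$E'$ contains no critical point of $f_N$}. By Lemma \ref{postcritical_set_B}, the postsingular set $P(f_N)$ is contained in $\bigcup_{k \geq 1} B_k$; since $(R_k)_{k\ge1}$ is increasing we have $B_k \subseteq \{|z| \geq 4 R_k\} \subseteq \{|z| \geq 4 R_1\}$ for all $k \geq 1$, which is disjoint from $D = B(0, R_1/4) \supseteq E$. Hence $P(f_N) \cap E = \emptyset$. Now suppose $c$ is a critical point of $f_N$ with $c \in f_N^{-n}(E)$ for some $n \ge 0$. Then $f_N^n(c) \in E$, so $f_N^{n+1}(c) \in f_N(E) \subseteq E$ because the filled Julia set $E$ is forward invariant. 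On the other hand $f_N(c) \in CV(f_N) \subseteq SV(f_N)$, so $f_N^{n+1}(c) = f_N^{n}(f_N(c)) \in P(f_N)$ by definition of the postsingular set. Thus $f_N^{n+1}(c) \in E \cap P(f_N) = \emptyset$, a contradiction. Therefore each $f_N^{-n}(E)$ avoids the (discrete, by Lemma \ref{branch_covering_hn}) critical set of $f_N$.

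For the inductive step it now suffices to show: if $S \subseteq \C$ contains no critical point of $f_N$, then $\dim f_N^{-1}(S) \le \dim S$. Every point at which $f_N'$ does not vanish has an open neighbourhood on which $f_N$ is injective; as the complement of the critical set is Lindel\"of, we may pick countably many such neighbourhoods $W_1, W_2, \dots$ covering $\C$ minus the critical set, and hence covering $f_N^{-1}(S)$. On each $W_i$ the map $f_N \colon W_i \to f_N(W_i)$ is a biholomorphism, hence bi-Lipschitz on every compact subset, hence dimension-preserving on arbitrary subsets (exhaust $W_i$ by compacta and use countable stability). Consequently $\dim(f_N^{-1}(S) \cap W_i) = \dim f_N(f_N^{-1}(S) \cap W_i) \le \dim S$, and taking the supremum over $i$ gives $\dim f_N^{-1}(S) \le \dim S$. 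Applying this with $S = f_N^{-(n-1)}(E)$ (legitimate by the previous paragraph) completes the induction, and so $\dim E' = \sup_{n \ge 0} \dim f_N^{-n}(E) = \dim E$.

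The only genuinely delicate point is the claim in the second paragraph that $E'$ meets no critical point of $f_N$: this is precisely where the location of the postcritical set, far from the origin and inside $\bigcup_{k\ge1} B_k$ (Lemma \ref{postcritical_set_B}), is essential. The remaining ingredients — countable stability of Hausdorff dimension and the local bi-Lipschitz behaviour of a holomorphic map near a non-critical point — are routine, the latter being needed only because $f_N$ is not globally injective.
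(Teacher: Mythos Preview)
Your argument is correct, with one slip of phrasing: the auxiliary claim should have hypothesis ``$f_N^{-1}(S)$ contains no critical point'' (equivalently, $S$ contains no critical \emph{value}), since that is what is needed for the $W_i$ to cover $f_N^{-1}(S)$. In your application with $S = f_N^{-(n-1)}(E)$ this holds anyway, because your second paragraph shows every $f_N^{-m}(E)\subset E'$ avoids the critical set; so the slip is only in how the mini-lemma is stated, not in the logic.

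The paper's proof is a single line: it invokes the general fact (\ref{dimension_entire}) that $\dim_H S = \dim_H f^{-1}(S)$ for any entire $f$ and any $S$, together with countable stability (\ref{countable_stability}). Your local bi-Lipschitz argument is exactly a proof of the relevant inequality in (\ref{dimension_entire}), so you are reproving by hand what the paper cites as a black box --- which is fine, and arguably more self-contained. What is genuinely superfluous is the detour through the postsingular set: the critical set of a non-constant entire function is discrete, hence countable, hence of Hausdorff dimension $0$, so even if $f_N^{-1}(S)$ met it you could simply discard that countable piece without affecting the dimension. No information about the location of $P(f_N)$ is needed here.
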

\begin{proof}
	This follows immediately from Definition \ref{E'} along with (\ref{countable_stability}) and (\ref{dimension_entire}).
\end{proof}		

The following lemmas give us some basic rules for how orbits of points in $X$ behave.	

\begin{lem}
	\label{Julia_Place_X}
	The Julia set of $f_N:\C \rightarrow \C$ is a subset of $E' \cup X$. 
\end{lem}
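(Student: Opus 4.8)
The plan is to show that any point $z$ in the Julia set which is \emph{not} in $E'$ must have all of its forward iterates lying in $A = \cup_{k\in\Z} A_k$, i.e. $z \in X$. So suppose $z \in \Julia(f_N)$ and $z \notin E'$. First I would invoke Lemma \ref{decomposition}: for each $n \geq 0$ the point $f_N^n(z)$ lies in exactly one of $E$, some $B_k$, or some $A_k$. I claim $f_N^n(z)$ can never land in $E$: if $f_N^n(z) \in E$ then $z \in f_N^{-n}(E) \subset E'$ by Definition \ref{E'}, contradicting $z \notin E'$. Likewise $f_N^n(z)$ can never land in any $B_k$: by Lemma \ref{BkFatou} the annuli $B_k$ with $k \geq 1$ lie in the Fatou set, and by Definition \ref{negative_indices} together with the forward-mapping behavior $f_N(B_k) \subset B_{k+1}$ (Lemma \ref{Bk}, and the definition for negative indices), every $B_k$ with $k \leq 0$ is eventually mapped into a $B_j$ with $j \geq 1$, hence also lies in the Fatou set; since $\Julia(f_N)$ is completely invariant, no iterate of a Julia point can enter any $B_k$.

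Having excluded $E$ and all the $B_k$, Lemma \ref{decomposition} forces $f_N^n(z) \in A_{k(n)}$ for some index $k(n) \in \Z$, for every $n \geq 0$. By Definition \ref{A_def} this says precisely that $f_N^n(z) \in A$ for all $n$, i.e. $z \in X$. Therefore $\Julia(f_N) \setminus E' \subset X$, which rearranges to $\Julia(f_N) \subset E' \cup X$, as desired. One should double-check the claim that every $B_k$ is contained in the Fatou set: for $k \leq 0$ this is where the re-indexed Definition \ref{negative_indices} is used — a point in $B_k$ with $k \leq 0$ has a bounded-length orbit segment staying in $D$ and then its image falls into $B_1$ (or an $A_1$-preimage structure), after which Lemma \ref{Bk} takes over and the orbit escapes locally uniformly to $\infty$; local uniform escape is an open condition, so such points are Fatou.

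The main obstacle is the bookkeeping around the negatively-indexed $B_k$: one must confirm from Definition \ref{negative_indices} that $B_{-k}$ is genuinely defined as the pullback of $B_1$ (so that $f_N^{k+1}(B_{-k}) \subset B_1 \subset \Fatou(f_N)$), and hence that $B_{-k}$ too is in the Fatou set. Everything else is an immediate bundling-together of Lemma \ref{decomposition} (the trichotomy), Lemma \ref{BkFatou} (the $B_k$, $k\geq1$, are Fatou), and the definition of $E'$ as the full backward orbit of $E$. I expect the write-up to be short.

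\medskip

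\noindent\emph{Proof.} Let $z \in \Julia(f_N)$, and suppose $z \notin E'$; we will show $z \in X$. Fix $n \geq 0$ and apply Lemma \ref{decomposition} to the point $f_N^n(z)$: exactly one of the following holds.

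First, it is impossible that $f_N^n(z) \in E$. Indeed, if $f_N^n(z) \in E$ then $z \in f_N^{-n}(E) \subset E'$ by Definition \ref{E'}, contradicting our assumption.

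Second, it is impossible that $f_N^n(z) \in B_j$ for some integer $j$. To see this, note that every $B_j$ is contained in the Fatou set of $f_N$: for $j \geq 1$ this is Lemma \ref{BkFatou}, while for $j \leq 0$ Definition \ref{negative_indices} gives that $f_N^{-j+1}(B_j) \subset B_1$, and $B_1 \subset \Fatou(f_N)$ again by Lemma \ref{BkFatou}, so $B_j \subset f_N^{-(-j+1)}\big(\Fatou(f_N)\big) = \Fatou(f_N)$ by complete invariance of the Fatou set. Hence $B_j \subset \Fatou(f_N)$ for every $j \in \Z$. Since $f_N^n(z) \in \Julia(f_N)$ by complete invariance of the Julia set, we cannot have $f_N^n(z) \in B_j$.

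Therefore, by Lemma \ref{decomposition}, we must have $f_N^n(z) \in A_{k}$ for some integer $k$ (depending on $n$). By Definition \ref{A_def}, this means $f_N^n(z) \in A$. Since $n \geq 0$ was arbitrary, $z \in X$. This shows $\Julia(f_N) \setminus E' \subset X$, i.e. $\Julia(f_N) \subset E' \cup X$. $\qed$
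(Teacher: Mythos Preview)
Your proof is correct and follows essentially the same approach as the paper: assume $z\in\Julia(f_N)\setminus E'$, then for each $n$ use Lemma \ref{decomposition} to rule out $f_N^n(z)\in E$ (via the definition of $E'$) and $f_N^n(z)\in B_j$ (via Lemma \ref{BkFatou}, Definition \ref{negative_indices}, and complete invariance), forcing $f_N^n(z)\in A$ for all $n$. Your write-up is in fact slightly more explicit than the paper's about why the negatively-indexed $B_j$ lie in the Fatou set.
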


\begin{rem}
	In fact, we actually have $\Julia(f_N) = E' \cup X$, and this will become apparent in the later sections.
\end{rem}

\begin{proof}
	Since $(f_N,U,V)$ is a polynomial-like mapping and $E$ is its filled Julia set, $E$ coincides with the closure of the repelling periodic cycles for $(f_N,U,V)$. These are also repelling periodic cycles for $f_N$ viewed as an entire function, so $E  \subset \Julia(f_N)$. Since $\Julia(f_N)$ is backwards invariant, it follows immediately from Definition \ref{E'} that $E' \subset \Julia(f_N)$ as well.
	
	Now, suppose that $z \in \Julia(f_N)$, but $z \notin E'$. The set $E'$ is both forward and backward invariant. Therefore, for all $n \geq 0$, we have $f_N^n(z) \notin E'$, and in particular, we have $f_N^n(z) \notin E$. By Lemma \ref{BkFatou} and Definition \ref{negative_indices}, if $f_N^n(z) \in B_k$ for some $n \geq 0$ and $k \in \Z$, we must have $z$ in the Fatou set of $f_N$. Therefore, by Lemma \ref{decomposition} we must have $f_N^n(z) \in A$ for all $n \geq 0$ so that $z \in X$, as desired.
\end{proof}

\begin{lem}
	\label{other widehat help}
	Suppose that $z \in A_k$ and $f_N(z) \in A_j$ for some $j \in \Z$. Then $j \leq k+1 $.
\end{lem}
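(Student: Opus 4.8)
The plan is to rule out the possibility $j \geq k+2$ by using the expansion/contraction structure already established, together with the location of the zeros of $f_N$ inside $A_k$. First I would invoke Theorem \ref{Julia_place} and Lemma \ref{Julia_Place_AkAk1}: if $z \in A_k$ and $f_N(z) \in A_j$ for \emph{any} integer $j$, then $z$ must lie in $V_k$ or in one of the petals $B(w^k_\ell, R_k/2^{n_k})$. So it suffices to examine these two cases and show that in each one, $f_N(z)$ cannot reach $A_j$ for $j \geq k+2$.

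In the petal case, Lemma \ref{hole_punched} already tells us that $f_N$ maps $A(\tfrac{3}{5}R_k,\tfrac{5}{4}R_k)$ minus the closed petals into $B_{k+1}$, and equation (\ref{hole_punched_2}) (together with (\ref{conformal_3}) and (\ref{crit_value_upper_bound})) gives that $f_N$ of a whole petal is contained in $B(0,\tfrac{1}{2}C_k R_k^{n_k}) \subset B(0,\tfrac14 R_{k+2})$. Since $A_{k+2} = A(\tfrac14 R_{k+2}, 4R_{k+2})$ lies entirely outside $B(0,\tfrac14 R_{k+2})$, and any $A_j$ with $j \geq k+2$ is even farther out (as $(R_m)$ is increasing by Lemma \ref{rk_prep}), we conclude $f_N(z) \notin A_j$ for $j \geq k+2$ when $z$ is in a petal. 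In the case $z \in V_k$, Corollary \ref{helpful_Vk_cor} states directly that $f_N(V_k) \subset B_k \cup A_{k+1} \cup B_{k+1}$, so again $f_N(z)$ cannot lie in $A_j$ for any $j \geq k+2$. Combining the two cases, whenever $z \in A_k$ and $f_N(z) \in A_j$ we must have $j \leq k+1$.

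The only mild subtlety — and the place where one must be a little careful rather than the "main obstacle" — is making sure the inclusions used ($f_N(\text{petal}) \subset B(0,\tfrac14 R_{k+2})$ and $f_N(V_k) \subset B_k \cup A_{k+1}\cup B_{k+1}$) genuinely cover \emph{all} of $z$'s possible locations; this is exactly why invoking Lemma \ref{Julia_Place_AkAk1} first is essential, since it reduces the analysis to precisely those regions where the needed image bounds have already been proven. No new estimates are required: the lemma is essentially a bookkeeping consequence of Corollary \ref{helpful_Vk_cor}, Lemma \ref{hole_punched}, and Lemma \ref{Julia_Place_AkAk1}, all of which are available.
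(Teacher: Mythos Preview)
Your argument is correct for $k \geq 1$, but you have omitted the case $k \leq 0$: Lemma \ref{Julia_Place_AkAk1} is stated only for $k \geq 1$, and neither $V_k$ nor petals are defined for $k \leq 0$. The fix is trivial (by Definition \ref{negative_indices}, $z \in A_k$ with $k \leq 0$ forces $f_N(z) \in A_{k+1}$), but it should be said.

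More interestingly, your route for $k \geq 1$ is considerably more elaborate than the paper's. You first localize $z$ to $V_k$ or a petal via Lemma \ref{Julia_Place_AkAk1}, and then invoke Corollary \ref{helpful_Vk_cor} and the petal image bounds separately. The paper bypasses all of this with a single stroke: since $f_N(\{|z|=4R_k\}) \subset B_{k+1}$ by (\ref{Inner_Bk}), the maximum principle gives $f_N(B(0,4R_k)) \subset \widehat{B_{k+1}} = \overline{B(0,\tfrac14 R_{k+2})}$, and hence $f_N(A_k)$ cannot meet $A_j$ for $j \geq k+2$. This needs nothing about the internal structure of $A_k$ (no petals, no $V_k$), only the behavior on the outer boundary circle. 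Your approach works and is logically sound, but it leans on three or four prior lemmas where one maximum-principle line suffices; the paper's argument is what you would want if this lemma were needed earlier, before the petal machinery is set up.
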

\begin{proof}
	First, suppose that $z \in A_k$ for some $k \leq 0$. Then by Definition \ref{negative_indices}, $f_N(z) \in A_{k+1}$, so that $j = k+1$.
	
	Next, suppose that $z \in A_k$ for some $k \geq 1$. Then by (\ref{Inner_Bk}), we have $f_N(|z|=4R_k) \subset B_{k+1}$. Therefore, by the maximum principle for holomorphic functions, we must have $f_N(B(0,4R_k)) \subset \widehat B_{k+1}.$ So if $f_N(z) \in A_j$, we must have $j \leq k+1$. 
\end{proof}

\begin{lem}
	\label{widehat help}
	Suppose that $z \in A_k$ and $f_N(z) \in A_j$ for some $k \geq j$. Then $k \geq 1$, and there exists a petal $P_k \subset \mathcal{P}_k$ such that $z \in P_k$.
\end{lem}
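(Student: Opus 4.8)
The plan is to combine three facts that are already in hand: the definition of the negatively-indexed annuli, the location result Lemma \ref{Julia_Place_AkAk1}, and the coarse image bound for $V_k$ from Corollary \ref{helpful_Vk_cor}. First I would dispose of the case $k \leq 0$. If $z \in A_k$ with $k \leq 0$, then Definition \ref{negative_indices} forces $f_N(z) \in A_{k+1}$: for $k=0$ this is exactly the defining property of $A_0$, and for $k<0$ it follows by peeling off one iterate from the condition defining $A_k$. In particular the hypothesis $f_N(z) \in A_j$ gives $j = k+1 > k$, contradicting $k \geq j$. Hence $k \geq 1$, which is the first assertion of the lemma.

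Next, with $k \geq 1$ now established and $f_N(z) \in A_j$ for some $j \in \Z$, Lemma \ref{Julia_Place_AkAk1} applies and yields
\[ z \in V_k \cup \left( \bigcup_{i=1}^{n_k} B\left(w^k_i, \frac{R_k}{2^{n_k}}\right) \right) = V_k \cup \mathcal{P}_k, \]
so it remains only to rule out $z \in V_k$. Suppose $z \in V_k$. Then Corollary \ref{helpful_Vk_cor} gives $f_N(z) \in B_k \cup A_{k+1} \cup B_{k+1}$. On the other hand $f_N(z) \in A_j$, and by Lemma \ref{decomposition} the sets $E$, the $B_m$, and the $A_m$ are pairwise disjoint; hence $A_j$ is disjoint from both $B_k$ and $B_{k+1}$, which forces $f_N(z) \in A_j \cap A_{k+1}$. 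As distinct annuli $A_m$ are disjoint, this intersection is nonempty only if $j = k+1$, again contradicting $k \geq j$. Therefore $z \notin V_k$, so $z \in \mathcal{P}_k$, i.e. $z$ lies in some connected component $P_k$ of $\mathcal{P}_k$ — a petal. This completes the argument.

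Since every ingredient is already proved, there is no serious obstacle here; the only points requiring a little care are the bookkeeping in the $k<0$ case (checking that $f_N$ shifts a negative index up by exactly one) and the appeal to the disjointness of the $A_m$ and $B_m$, which is precisely the ``exactly one of the following'' clause of Lemma \ref{decomposition}.
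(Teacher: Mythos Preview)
Your proposal is correct and follows essentially the same route as the paper's proof: first eliminate $k\le 0$ via Definition \ref{negative_indices}, then apply Lemma \ref{Julia_Place_AkAk1}, and finally rule out $z\in V_k$ using Corollary \ref{helpful_Vk_cor}. One small remark: Lemma \ref{decomposition} as stated gives that each point lies in exactly one of $E$, some $B_m$, or some $A_m$, but does not by itself assert that distinct $A_m$'s are pairwise disjoint; that disjointness is immediate from the definitions (round annuli for $m\ge 1$, and $A_m\subset D$ for $m\le 0$), so the conclusion stands.
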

\noindent
\begin{proof}
	If $k \leq 0$, we have $f_N(z) \in A_{k+1}$ by Definition \ref{negative_indices}, so we must have $k \geq 1$. 
	
	By Lemma \ref{Julia_Place_AkAk1}, we must have $z \in V_k$ or we must have $z \in P_k$ for some petal $P_k \subset \mathcal{P}_k$. If $z \in V_k$, then by Corollary \ref{helpful_Vk_cor}, we must have $f_N(z) \in B_k \cup A_{k+1} \cup B_{k+1}$. Since $f_N(z) \in A_j$ and $j \leq k$ we must have $z \in P_k$ for some petal $P_k \subset \mathcal{P}_k.$
\end{proof}

\begin{lem}
	\label{component_counter}
	Suppose that $\Omega \subset A_{k+1}$ is a Jordan domain for some $k \geq 1$. Then the number of connected components of $f_N^{-1}(\Omega)$ that are contained in $A_k$ is $n_{k+1}$, and each component is a Jordan domain.
\end{lem}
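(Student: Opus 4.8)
The plan is to locate, within $A_k$, exactly the places that can map into $A_{k+1}$, and then count preimages of $\Omega$ piece by piece. By Lemma~\ref{Julia_Place_AkAk1}, any $z\in A_k$ with $f_N(z)\in A_{k+1}$ lies in $V_k$ or in one of the $n_k$ petals $B(w^k_j,R_k/2^{n_k})$; since $V_k$ is an annulus and the petals are pairwise disjoint round disks sitting near $\{|z|=R_k\}$ (hence disjoint from $V_k$, as $R_k/2^{n_k}$ is tiny), these $1+n_k$ sets are the connected components of $V_k\cup\mathcal{P}_k$. Consequently any component of $f_N^{-1}(\Omega)$ contained in $A_k$ lies inside exactly one of them, and it suffices to count the contribution of each.

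For the shape of the components I would first note that $\Omega$ contains no singular values of $f_N$: all singular values lie in $\cup_{j\ge1}B_j$ by Lemma~\ref{postcritical_set_B} (equivalently by Proposition~\ref{Fatou_critical_points} and Lemma~\ref{crit_values_avoid_A1_fN}), $f_N$ has no asymptotic values by Lemma~\ref{branch_covering_hn}, and $A_{k+1}$ is disjoint from every $B_j$ by Lemma~\ref{decomposition}. Hence Lemma~\ref{component_lemma} applies: every component of $f_N^{-1}(\Omega)$ is a Jordan domain mapped conformally onto $\Omega$, so in particular each component contained in $A_k$ is a Jordan domain.

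For the count, on each petal $P=B(w^k_j,R_k/2^{n_k})$ Lemma~\ref{petal_radius} gives that $f_N|_P$ is conformal with $\Omega\subset A_{k+1}\subset B(0,4R_{k+1})\subset f_N(P)$, while (\ref{petal_boundary}) gives $f_N(\partial P)\subset B_{k+1}$, which is disjoint from $\Omega$. Thus any component of $f_N^{-1}(\Omega)$ meeting $P$ cannot touch $\partial P$ and so lies inside $P$, and since $f_N|_P$ is injective and $\Omega$ connected there is exactly one such component; this gives $n_k$ components from the petals. On $V_k$, the circles $\{|z|=\frac{2}{5}R_k\}$ and $\{|z|=\frac{3}{5}R_k\}$ map into $B_k$ and $B_{k+1}$ respectively by (\ref{Inner_Vk_Bk}), (\ref{Inner_V1_B1}), and Lemma~\ref{annulus_3}, both disjoint from $\Omega$, so any component of $f_N^{-1}(\Omega)$ meeting $V_k$ must lie inside $W:=f_N^{-1}(A_{k+1})\cap V_k$. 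By Lemma~\ref{CoveringMapVk}, $f_N\colon W\to A_{k+1}$ is a covering map of degree $n_k$; since $\Omega\subset A_{k+1}$ is simply connected, the restriction of this covering over $\Omega$ is trivial, so $(f_N|_W)^{-1}(\Omega)$ has exactly $n_k$ components. Adding the two disjoint contributions yields $n_k+n_k=n_{k+1}$ components, the last equality being Lemma~\ref{nk}.

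The only genuinely delicate bookkeeping is the step ``a component of $f_N^{-1}(\Omega)$ meeting $V_k$ (resp.\ a petal) is actually contained in it,'' which is precisely where the boundary--mapping estimates enter; everything else is either a direct citation (Lemmas~\ref{Julia_Place_AkAk1}, \ref{petal_radius}, \ref{CoveringMapVk}, \ref{component_lemma}) or the standard fact that a covering of a simply connected base is trivial. A minor point worth checking along the way is that the $1+n_k$ pieces of $V_k\cup\mathcal{P}_k$ really are pairwise disjoint, so that no component of $f_N^{-1}(\Omega)$ gets counted twice.
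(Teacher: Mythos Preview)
Your proof is correct and follows essentially the same approach as the paper: use Lemma~\ref{postcritical_set_B} (so Lemma~\ref{component_lemma} applies and every component is a Jordan domain), count $n_k$ components in $V_k$ via the degree-$n_k$ covering of Lemma~\ref{CoveringMapVk}, count one component in each of the $n_k$ petals via Lemma~\ref{petal_radius}, and invoke Lemma~\ref{Julia_Place_AkAk1} to rule out any others. If anything, you are slightly more explicit than the paper about why a component meeting $V_k$ or a petal must stay inside it, and about why the covering over a simply connected $\Omega$ splits into $n_k$ sheets.
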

\begin{proof}
	By Lemma \ref{postcritical_set_B}, there are no singular values of $f_N$ in $\overline{U}$. Therefore, $f_N^{-1}(U)$ is the disjoint union of Jordan domains. By Lemma \ref{CoveringMapVk}, there are exactly $n_k$ many connected components of $f_N^{-1}(U)$ contained inside of $V_k$. By Lemma \ref{petal_radius}, there is exactly one connected component of $f_N^{-1}(U)$ contained in each petal $P_k \subset \mathcal{P}_k$, which yields $n_k$ many more connected components of $f_N^{-1}(U)$. There are no other connected components contained inside of $A_k$ by Lemma \ref{Julia_Place_AkAk1}. Therefore, the total number of connected components contained inside of $A_k$ is $n_k + n_k = n_{k+1}$.
\end{proof}

\begin{definition}
	If a point $z \in X$, then for each $n\geq 0$, $f_N^n(z) \in A_{k(n,z)}$ for some integer $k(n,z)$.  By Lemma \ref{other widehat help} 
	\begin{equation}
	\label{orbit_sequence}
	k(z,n+1) \leq k(z,n) + 1.
	\end{equation}
	We call the sequence $(k(z,n))_{n=0}^{\infty}$ the orbit sequence of $z$.
\end{definition}

This inspires the following definition, which will be crucial to the proof of Theorem \ref{main_theorem}.  

\begin{definition}
	\label{xyz}
	Suppose that $z \in X$. For $n \geq 1$, if $k(z,n) < k(z,n-1) + 1$, we will say that $z$ \textit{moves backwards} on the $n^{\textrm{th}}$ iterate. We will sometimes omit the iterate $n$ and just say that $z$ moves backwards. We let $Y$ denote the set of all points in $X$ that move backwards for infinitely many distinct iterates, and we let $Z$ denote the set of all points in $X$ that move backwards for only finitely many iterates. 
\end{definition}

\begin{rem}
	Suppose that $z, f(z),\dots, f^{n}(z) \in A$, but we perhaps do not have $f^{n+1}(z) \in A$. Then we may still define the finite orbit sequence $(k(z,j))_{j=0}^n$. Therefore, we may still speak of a point $z$ moving backwards for the iterates where its finite orbit sequence is defined.
\end{rem}

\begin{rem}
	\label{moves_backwards_set}
	Let $W$ be connected with $W \subset A$ and suppose that $f^j_N(W) \subset A$ for $j = 1,\dots, n$. Then since $W$ is connected, each point $z \in W$ has the same finite orbit sequence $(k(z,j))_{j=0}^n$. In these situations, we will say that the set $W$ moves backwards whenever any of the points $z \in W$ move backwards. 
\end{rem}

\begin{rem}
	\label{yz}
	It follows immediately from Definition \ref{xyz} that \[X = Y \cup Z.\] 
\end{rem}

\section{A First Dimension Estimate}
\label{Dimension Y}

We deduced in Section \ref{Branched Cover} that $\mathcal{J}(f)\subset E'\cup X$. We have already estimated the dimension of $E'$, and we now move on to estimating the dimension of $X=Y\sqcup Z$. This Section will be devoted to estimating the dimension of $Y$ (the set of points which move backwards infinitely often). In fact we will show that the dimension of $Y$ can be taken arbitrarily close to $0$. Although the details are somewhat technical, the idea is simple: we build a sequence of coverings $\cC_m$ of $Y$ by pulling back the annuli $A_k$ under appropriate branches of the inverse of $f^m$. The diameters of elements in $\cC_m$ are estimated by standard distortion estimates for conformal mappings.

More precisely, our goal in this section is to show that for any $t >0$, there exists some $M \in \N$ so that for all $N \geq M$, we have $\dim_H(Y) \leq t$. We'll start by formally constructing a sequence of coverings $\cC_m$ of $Y \cap A_1$, for $m \geq 0$ using the dynamics of $f_N$. Our initial covering $\cC_0$ will have exactly one element, the annulus $A_1$. We first describe how to construct $\cC_1$ from $\cC_0$ (see Figure \ref{C1covering}).

\begin{figure}[!htb]
	\centering	
	\scalebox{.5}{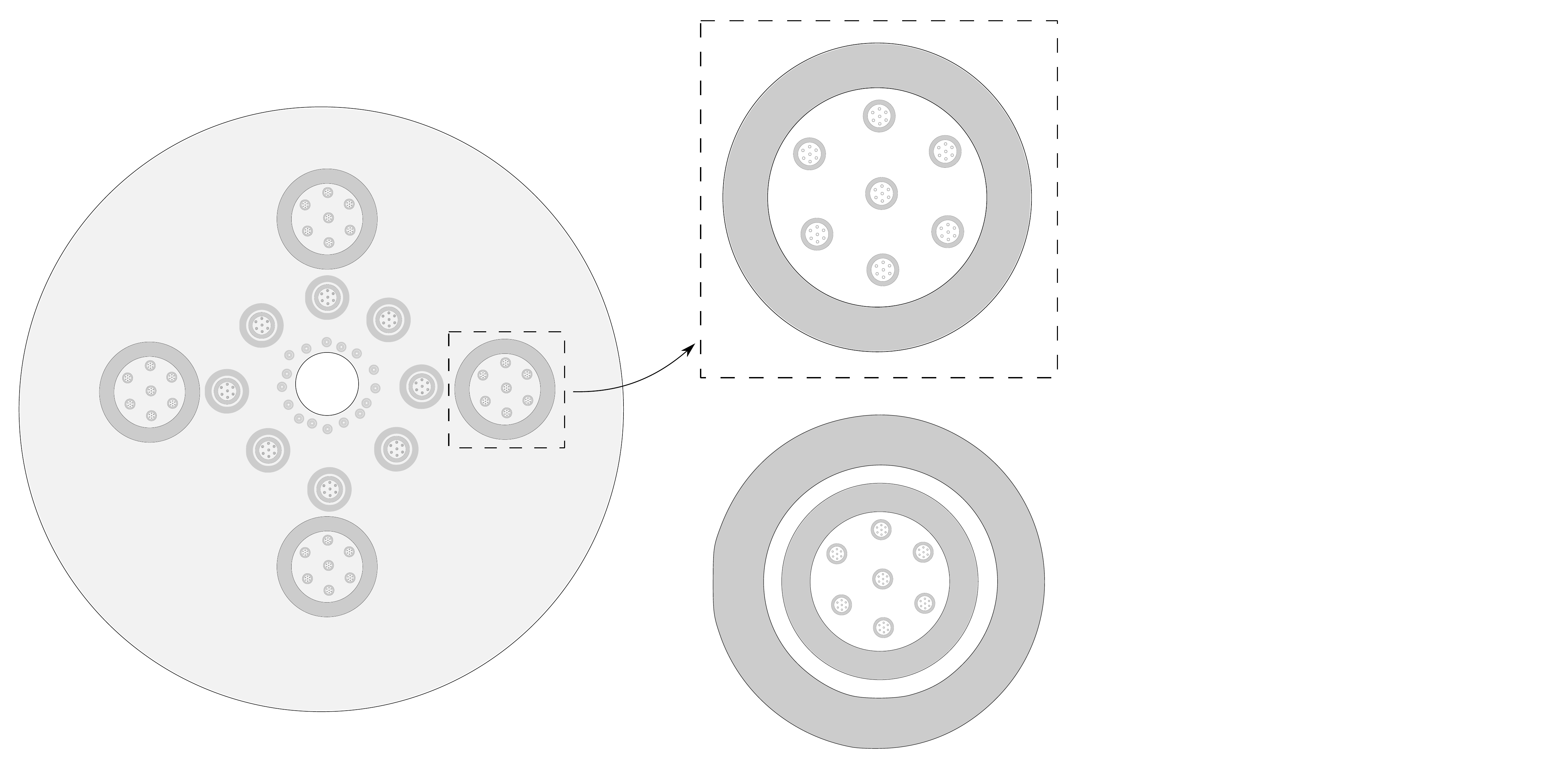}
	\caption{Illustrated is the covering $\cC_1$, and the notation $W_k^n$ for elements of $\cC_1$ (see Notation \ref{Wnk}). The elements of $\cC_m$ for $m>1$ are obtained by essentially placing a scaled-down copy of $\cC_{m-1}$ in each annulus in the covering $\cC_{m-1}$.  }
	\label{C1covering}
\end{figure}

\begin{lem}
	\label{cC1}
	There exists a collection of sets $\cC_1$ that has the following properties.
	\begin{enumerate}
		\item Every element in $\cC_{1}$ is a subset of an element in $\cC_0$.
		\item $\cC_1$ is a countable cover of $Y \cap A_1$.
		\item Let $W$ be an element of $\cC_1$. Then there exists an integer $n \geq 1$ and an integer $k \in \Z$ such that $f_N^{n-1}(W) \subset A_{n}$, and $f_N^n(W)$ is a connected component of $A_k$ for $k \leq n$.
		\item Every element of $\cC_1$ moves backwards once.
	\end{enumerate}
\end{lem}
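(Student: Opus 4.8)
Here is the plan. The idea is to build $\cC_1$ by pulling back connected components of the annuli $A_k$, $k\le n$, along the ``straight-up'' orbit tube $A_1\to A_2\to\cdots\to A_n$, over all $n\ge 1$. The key preliminary observation is a description of $Y\cap A_1$ via orbit sequences: if $z\in Y\cap A_1$ it moves backwards at least once, so there is a smallest $n=n(z)\ge 1$ at which it does; for $1\le j\le n-1$ the point does not move backwards, so by Lemma \ref{other widehat help} and Definition \ref{xyz} we have $k(z,j)=k(z,j-1)+1$, and since $k(z,0)=1$ this forces $f_N^{\,j}(z)\in A_{1+j}$ for $0\le j\le n-1$; in particular $f_N^{\,n-1}(z)\in A_n$, and $k(z,n)\le n$ because $z$ moves backwards at iterate $n$.

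For the construction, for each $n\ge 1$ and each connected component $A_k'$ of $A_k$ with $k\le n$, let $\mathcal W(n,A_k')$ be the set of connected components of
\[
\{\,z:\ z\in A_1,\ f_N(z)\in A_2,\ \dots,\ f_N^{\,n-1}(z)\in A_n,\ f_N^{\,n}(z)\in A_k'\,\},
\]
and set $\cC_1:=\bigcup_{n\ge 1}\bigcup_{k\le n}\bigcup_{A_k'}\mathcal W(n,A_k')$. Then (1) is immediate since each such $W$ lies in $A_1$, the unique element of $\cC_0$; (4) holds because the (constant, by Remark \ref{moves_backwards_set}) finite orbit sequence of $W$ is $1,2,\dots,n,k$ with $k\le n$, which moves backwards only at iterate $n$; and (2) follows from the previous paragraph, since every $z\in Y\cap A_1$ lies in the defining set for $n=n(z)$ and the component $A_k'$ of $A_k$ containing $f_N^{\,n}(z)$, provided each $\mathcal W(n,A_k')$ is finite, which is part of the next step.

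The main step is a finite iterated pullback that simultaneously gives finiteness and property (3). By Lemma \ref{widehat help}, a point of $A_j$ mapping into some $A_i$ with $i\le j$ lies in a petal $P_j\subset\mathcal P_j$; by Lemma \ref{Julia_Place_AkAk1} a point of $A_j$ mapping into $A_{j+1}$ lies in $V_j\cup\mathcal P_j$. Using Lemma \ref{petal_radius} ($f_N$ is conformal on each petal $P_j$ and $B(0,4R_{j+1})\subset f_N(P_j)$, a ball containing every component of every $A_i$ with $i\le j$), Lemma \ref{CoveringMapVk} ($f_N\colon f_N^{-1}(A_{j+1})\cap V_j\to A_{j+1}$ is an $n_j$-fold covering), and noting via Lemma \ref{postcritical_set_B} that all the relevant preimages meet no critical value, one checks that $f_N^{-1}(A_k')\cap A_n$ is a disjoint union of conformal copies of $A_k'$, one inside each petal of $\mathcal P_n$; and that pulling a ``small'' piece (null-homotopic in its ambient annulus) back from $A_{j}$ into $A_{j-1}$ produces at most $n_j$ new pieces — $n_{j-1}$ of them inside components of $f_N^{-1}(A_{j})\cap V_{j-1}$ and $n_{j-1}$ of them inside the petals of $\mathcal P_{j-1}$ — each again a ``small'' conformal copy, consistent with Lemma \ref{component_counter}. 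Iterating $n-1$ times bounds $|\mathcal W(n,A_k')|$ by $n_n n_{n-1}\cdots n_2<\infty$, so $\cC_1$ is countable; and since every local branch in the chain is surjective onto its target component (the petal maps cover $B(0,4R_{j+1})$ and the $V_j$-maps are onto all of $A_{j+1}$), we obtain $f_N^{\,n}(W)=A_k'$, a connected component of $A_k$, which is (3).

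The delicate point I expect to be the main obstacle is the bookkeeping of this iterated pullback: verifying that every preimage piece encountered is null-homotopic in its ambient annulus — so that the $n_j$-fold covering of Lemma \ref{CoveringMapVk} splits it into $n_{j-1}$ disjoint conformal copies rather than one wrapped annulus — and that surjectivity onto components is maintained at each stage, so that (3) holds exactly rather than merely up to containment. A secondary nuisance is that the components $A_k'$ with $k\le 0$ are Jordan annuli rather than round annuli; handling these cleanly means passing to the filled domains $\widehat{A_k'}$, tracking components there (where Lemma \ref{component_counter} applies directly), and then reinstating the holes.
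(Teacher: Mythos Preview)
Your proposal is correct and arrives at essentially the same collection as the paper, but by a more laborious route. The paper proceeds as follows: for each $z\in Y\cap A_1$ let $n$ be the first iterate at which $z$ moves backwards, and let $W$ be the connected component of $f_N^{-n}(A_k)$ containing $z$; the set of distinct $W$'s so obtained is $\cC_1$. Properties (1) and (4) then follow just as you argue, via backward invariance (Lemma~\ref{A_backward_invariant}) and connectedness. The real difference is countability: you obtain it by explicit iterated-pullback counting (petals plus the covering of Lemma~\ref{CoveringMapVk}), bounding $|\mathcal W(n,A_k')|$ by a product of the $n_j$'s, whereas the paper simply notes that distinct elements of $\cC_1$ are pairwise disjoint open subsets of the bounded set $A_1$, hence there are at most countably many. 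This one-line observation sidesteps entirely the null-homotopy bookkeeping you identify as the main obstacle. On the other hand, your route makes the surjectivity in (3)---that $f_N^{\,n}(W)$ equals a full component of $A_k$, not merely a subset---genuinely transparent; the paper's justification of (3) is very terse on this point, and the underlying conformality (that $f_N^n$ maps a Jordan domain $B\supset W$ conformally onto a ball containing $\widehat{A_k}$) is only spelled out later in Lemma~\ref{section_8_distortion}, using exactly the petal and covering analysis you sketch. So your extra work is not wasted---it anticipates what comes next---but for this particular lemma the paper's disjointness argument is the cleaner path.
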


\begin{proof}
	For each $z \in A_1 \cap Y$, by Definition \ref{xyz} and (\ref{orbit_sequence}), there is a smallest positive integer $n$ so that $f^n_N(z) \in A_k$ for some $k \leq n$. We remark that it is possible that $k$ is a non-positive integer.  Let $W$ denote the connected component of $f^{-n}_N(A_k)$ which contains $z$. The collection of all distinct components obtained by applying this procedure to all $z \in Y$ is denoted by $\cC_1$. We check that the properties in the lemma hold. 
	\begin{enumerate}
		\item By Lemma \ref{A_backward_invariant}, $W \subset A$, and since $W$ is connected and contains $z \in A_1$, we have $W \subset A_1$.
		\item Any two elements of $\cC_1$ are disjoint. Since $A_1$ is bounded and all elements of $\cC_1$ are open, the collection $\cC_1$ is countable. 
		%By Lemma \ref{petal_radius}, and, in particular, (\ref{conformal_3}), for all $k \leq 0$ there exists $z \in A_1$ so that $f_N(z) \in A_k$. It follows that the collection $\cC_1$ is not finite and therefore countable.
		\item This follows from the construction of each $W$, since we chose the smallest positive integer $n$ such that $f^n_N(z) \in A_k$ for $k \leq n$.
		\item By Lemma \ref{A_backward_invariant}, $f_N^j(W) \subset A$ for $j = 0,\dots,n$, so this claim follows since $n$ is the smallest positive integer so that $f^n_N(z) \in A_k$ for $k \leq n$.
	\end{enumerate}
	This proves the claim.
\end{proof}

We now show how to construct $\cC_m$ for $m > 1$. Our procedure is inductive and described in the Lemma below.

\begin{lem}
	\label{cCm}
	Let $m \geq 1$. Suppose there exists a collection of subsets $\cC_m$ that satisfy the following properties.
	\begin{itemize}
		\item $\cC_m$ is a countable cover of $Y \cap A_1$.
		\item Let $W$ be an element of $\cC_m$. Then there exists an integer $n \geq 1$ and an integer $k \in \Z$ such that $f_N^n(W)$ is a connected component of $A_k$ and $f_N^{n-1}(W) \subset A_j$ for some $j \geq k$. Moreover $W$ moves backwards for the $m^{\textrm{th}}$ time on the $n^{\textrm{th}}$ iterate.
	\end{itemize}
	Then there exists a collection of subsets $\cC_{m+1}$ that satisfy the following properties.
	\begin{enumerate}
		\item $\cC_{m+1}$ is a refinement of $\cC_m$, i.e, every element in $\cC_{m+1}$ is a subset of an element in $\cC_m$.
		\item Each element of $\cC_m$ contains countably many elements of $\cC_{m+1}$, and $\cC_{m+1}$ is a countable cover of $Y \cap A_1$.
		\item Every element of $\cC_{m+1}$ moves backwards $m+1$ many times.
	\end{enumerate}
	Moreover, let $W$ be an element of $\cC_{m+1}$, and let $z \in Y$ satisfy $z \in W \subset W'$ where $W'$ is an element of $\cC_m$. Let $(k(z,j))_{j=0}^{\infty}$ denote the orbit sequence of $z$. Let $n$ be the value such that $f_N^{n-1}(W) \subset f_N^{n-1}(W') \subset A_{k(z,n-1)}$, $f_N^n(W')$ is a connected component of  $A_{k(z,n)}$, where $k(z,n) \leq k(z,n-1)$ and $z$ moves backwards for the $m^{\textrm{th}}$ time on the $n^{\textrm{th}}$ iterate. Then there exists a value $q \geq 1$ such that $f_N^{n+q-1}(W) \subset A_{k(z,n+q-1)}$, $f_N^{n+q}(W)$ is a connected component of $A_{k(z,n+q)}$, where $k(z,n+q) \leq k(z,n+q-1)$, and $W$ moves backwards for the $m+1$st time on the $n+q$th iterate.
\end{lem}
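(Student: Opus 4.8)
The plan is to build $\mathcal{C}_{m+1}$ by ``pulling back one more backward move'' inside each element of $\mathcal{C}_m$, in direct analogy with the construction of $\mathcal{C}_1$ in Lemma \ref{cC1}. Fix $W \in \mathcal{C}_m$ with associated data $n \geq 1$, $k \in \mathbb{Z}$, and connected component $\Omega$ of $A_k$ as in the hypotheses, so that $f_N^n(W) = \Omega$, $f_N^{n-1}(W) \subset A_j$ for some $j \geq k$, and $W$ moves backwards for the $m$-th time on the $n$-th iterate. For each $z \in Y \cap W$, the point $z$ moves backwards infinitely often, so there is a smallest integer $q = q(z) \geq 1$ such that $z$ moves backwards on the $(n+q)$-th iterate; by minimality of $q$ we have $k(z,n+i) = k(z,n) + i$ for $0 \leq i \leq q-1$ and $k(z,n+q) \leq k(z,n+q-1)$. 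Let $\Omega'$ be the connected component of $A_{k(z,n+q)}$ containing $f_N^{n+q}(z)$ (a domain in either of the cases $k(z,n+q) \leq 0$ or $k(z,n+q)\geq 1$), and let $W''$ be the connected component of $f_N^{-(n+q)}(\Omega')$ containing $z$. Define $\mathcal{C}_{m+1}$ to be the collection of all distinct sets $W''$ arising this way as $W$ ranges over $\mathcal{C}_m$ and $z$ over $Y \cap W$.

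Next I would verify the required properties. For property (1), the containment $W'' \subset W$: by Lemma \ref{A_backward_invariant} we have $f_N^{-\ell}(A) \subset A$ for every $\ell$, so $W'' \subset f_N^{-(n+q)}(\Omega') \subset A$; letting $\Omega''$ be the connected component of $f_N^{-q}(\Omega')$ containing $f_N^n(z)$, connectedness of $\Omega''$ together with $\Omega'' \subset A = \bigsqcup_k A_k$ and $f_N^n(z) \in \Omega'' \cap \Omega$ forces $\Omega'' \subset \Omega$; then pulling back by $f_N^n$ and using that $W$ is itself a full connected component of $f_N^{-n}(\Omega)$ (a fact that holds by construction and that I would carry along explicitly as part of the inductive data), one gets $W'' \subset W$. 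Property (2) follows because the sets $W''$ produced inside a fixed $W$ are pairwise disjoint open subsets of the bounded set $A_1$, hence form a countable family, and every $z \in Y \cap A_1$ lies in some $W \in \mathcal{C}_m$ and therefore in some $W'' \in \mathcal{C}_{m+1}$, so $\mathcal{C}_{m+1}$ covers $Y \cap A_1$. For property (3), since $f_N^\ell(W'') \subset A$ for $\ell = 0,\dots,n+q$ and $W''$ is connected, Remark \ref{moves_backwards_set} shows every point of $W''$ has the same finite orbit sequence as $z$; combined with $W'' \subset W$ (so the orbit sequence of $W''$ through iterate $n$ coincides with that of $W$, which moves backwards $m$ times there, the $m$-th on iterate $n$) and with the behavior of $z$ on iterates $n+1,\dots,n+q$ dictated by minimality of $q$, we conclude $W''$ moves backwards exactly $m+1$ times, the $(m+1)$-st on iterate $n+q$. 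The ``Moreover'' clause is then just the record of this construction: the relevant $q$ is $q(z)$, $f_N^{n+q}(W'') = \Omega'$ is a connected component of $A_{k(z,n+q)}$ (since a connected component of the preimage of a domain under $f_N^{n+q}$ maps properly and onto it), $f_N^{n+q-1}(W'') \subset A_{k(z,n+q-1)}$ because it is connected, contained in $A$, and contains $f_N^{n+q-1}(z)$, and $k(z,n+q) \leq k(z,n+q-1)$ is precisely the assertion that $z$ (equivalently $W''$) moves backwards on iterate $n+q$.

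The main obstacle I anticipate is the bookkeeping of connected components needed to justify $W'' \subset W$ cleanly: it relies on backward invariance $f_N^{-1}(A)\subset A$ (Lemma \ref{A_backward_invariant}), the disjointness of the $A_k$, and the structural fact that each element of $\mathcal{C}_m$ is a full connected component of $f_N^{-n}$ of a component of some $A_k$ (so that properness of $f_N^{n+q}$ restricted to components of preimages can be invoked). For this reason I would strengthen the inductive hypothesis slightly to record that fact, as it is maintained automatically by the construction. A secondary, routine point is transferring every claim phrased for the single point $z$ to the entire set $W''$; this is exactly the role of Remark \ref{moves_backwards_set}, once one has checked $f_N^\ell(W'') \subset A$ for all $\ell \leq n+q$, which is immediate from backward invariance.
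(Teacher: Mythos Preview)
Your proposal is correct and follows essentially the same approach as the paper: define $\mathcal{C}_{m+1}$ by taking, for each $z\in Y\cap W'$ with $W'\in\mathcal{C}_m$, the connected component of $f_N^{-(n+q)}$ of the appropriate component of $A_{k(z,n+q)}$ containing $z$, where $q$ is the least integer giving the next backward move. Your justification of $W''\subset W'$ via the intermediate component $\Omega''$ is slightly more explicit than the paper's (which simply notes $f_N^n(W'')\subset A$ is connected and meets $\Omega$, hence lies in $\Omega$, so $W''\subset W'$), but the arguments are equivalent.
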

\begin{proof}
	Choose $z \in Y \cap A_1$. Then there exists an element $W'$ of $\cC_m$ containing $z$. Let $n$ be the integer such that $f_N^n(W') = A_{k(z,n)}$, $k(z,n) \leq k(z,n-1)$, and $z$ moves backwards for the $m^{\textrm{th}}$ time on the $n^{\textrm{th}}$ iterate. Then since $z \in Y$, it must move backwards again. Therefore, there exists a smallest value $q \geq 1$ such that $f_N^{n+q}(z) \in A_{k(z,n+q)}$ and $k(z,n+q) \leq k(z,n+q-1)$. We let $W$ denote the connected component of $f_N^{-(n+q)}(A_{k(z,n+q)})$ which contains $z$, and we let $\cC_{m+1}$ denote the collection of all distinct components obtained by applying this procedure to all $z \in Y$. We now prove the desired properties.
	
	\begin{enumerate}
		\item Let $W$ be an element of $\cC_{m+1}$. Then by construction, there exists some point $z \in Y \cap A_1$ contained inside of $W$. Let $W'$ denote the element of $\cC_m$ that contains $z$. Let $n$ be the integer so that $f^n(W') = A_k$ for some $k$, where $W'$ moves backwards for the $m^{\textrm{th}}$ time on the $n^{\textrm{th}}$ iterate. We must have $f^n(W) \subset A$, so since $W$ is connected we have $f^n(W) \subset A_k$ and it follows that $W \subset W'$.
		\item We already know $\cC_m$ is countable and by the construction $\cC_{m+1}$ covers $Y \cap A_1$. Let $W'$ be an element of $\cC_m$, and let $n$ be the integer such that $W'$ moves backwards for the $m^{\textrm{th}}$ time on the $n^{\textrm{th}}$ iterate. Then $f_N^n(W') = A_k$ for some integer $k$. If $k \geq 1$, then by (\ref{conformal_3}) , there exists countably many elements of $\cC_{m+1}$ contained in $W'$. If $k \leq 0$ is negative, then $f_N^{n-k+1}(W') = A_1$ by Lemma \ref{T}, and we can apply the same reasoning for the $k \geq 1$ case to see that there exists countably many elements of $\cC_{m+1}$ contained in $W'$. Since there are countably many elements $W'$ in $\cC_m$, the collection $\cC_{m+1}$ is also countable.
		\item That every element of $\cC_{m+1}$ moves backwards $m+1$ many times follows from the definition of $n$ and the choice of $q$ in the construction.  
	\end{enumerate}
\end{proof}

In the rest of our analysis, we will mostly be focused on understanding what happens when we refine from $\cC_m$ to $\cC_{m+1}$. Therefore, the following notation will be convenient (see Figures \ref{C1covering} and \ref{widehat covering pic}).	
\begin{notation}
	\label{Wnk}
	Let $W'$ be an element of $\cC_m$, and let $W \subset W'$ be an element of $\cC_{m+1}$. We will denote $W'$ as $W^n_k$, where $n\geq 1$ is the iterate where $W'$ moves backwards for the $m^{\textrm{th}}$ time, and $f_N^n(W^n_k) = A_k$ for some integer $k$. We will say that $W'$ is of the form $W^n_k$ for $n \geq 1$ and $k \in \Z$. Likewise, we will denote $W$ as $W^{n+q}_j$, where $n+q$ is the iterate where $W$ moves backwards for the $m+1$st time, and $f_N^{n+q}(W)= A_j$. We will say $W$ is of the form $W^{n+q}_j$ for $q \geq 1$.
\end{notation}

The following lemma states that the mappings used to define the collections $\cC_m$ are conformal with bounded distortion. We state it precisely below.

\begin{figure}[!htb]
	\centering	
	\scalebox{.9}{%% Creator: Inkscape 1.0 (4035a4fb49, 2020-05-01), www.inkscape.org
%% PDF/EPS/PS + LaTeX output extension by Johan Engelen, 2010
%% Accompanies image file '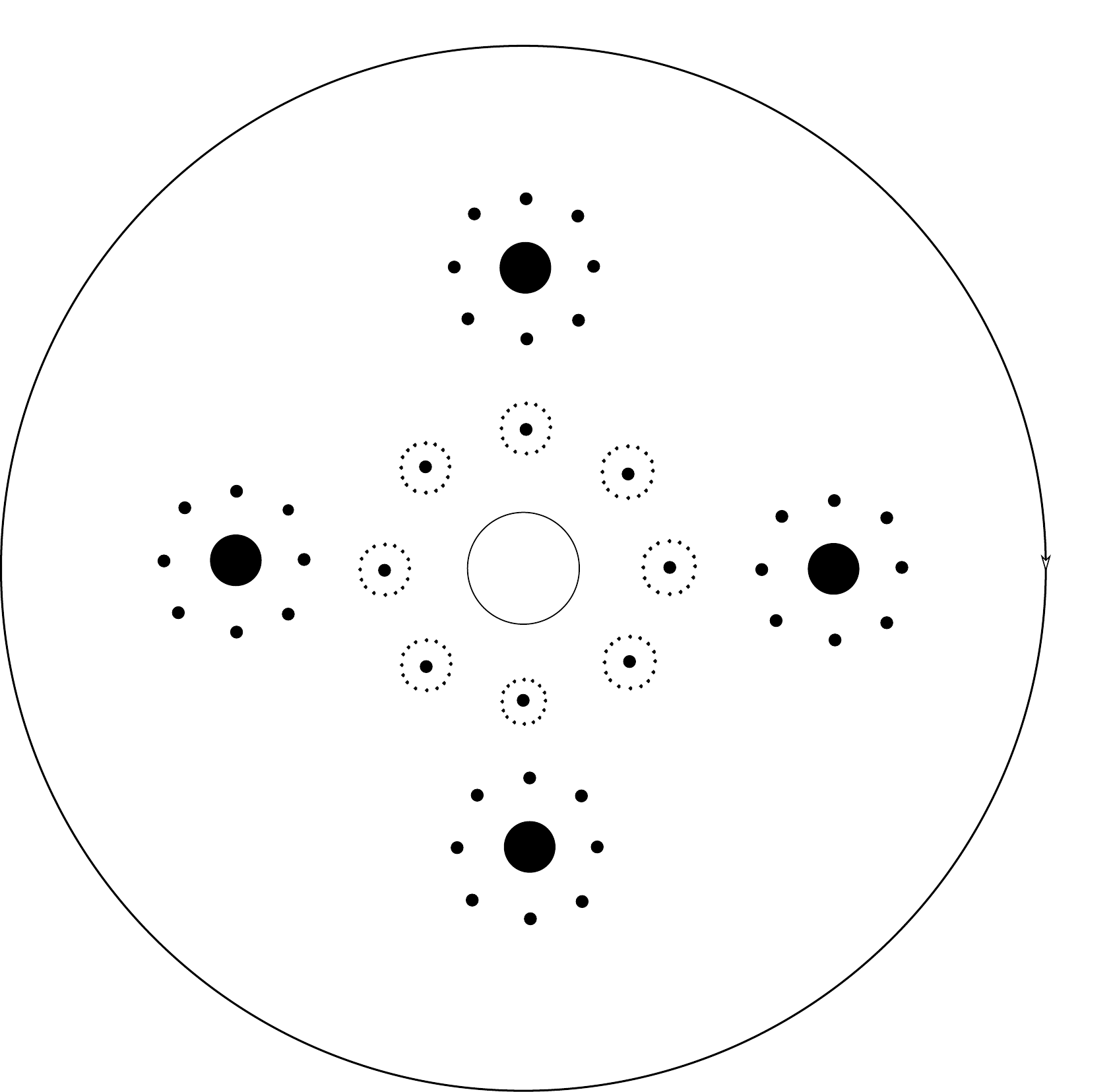' (pdf, eps, ps)
%%
%% To include the image in your LaTeX document, write
%%   \input{<filename>.pdf_tex}
%%  instead of
%%   \includegraphics{<filename>.pdf}
%% To scale the image, write
%%   \def\svgwidth{<desired width>}
%%   \input{<filename>.pdf_tex}
%%  instead of
%%   \includegraphics[width=<desired width>]{<filename>.pdf}
%%
%% Images with a different path to the parent latex file can
%% be accessed with the `import' package (which may need to be
%% installed) using
%%   \usepackage{import}
%% in the preamble, and then including the image with
%%   \import{<path to file>}{<filename>.pdf_tex}
%% Alternatively, one can specify
%%   \graphicspath{{<path to file>/}}
%% 
%% For more information, please see info/svg-inkscape on CTAN:
%%   http://tug.ctan.org/tex-archive/info/svg-inkscape
%%
\begingroup%
  \makeatletter%
  \providecommand\color[2][]{%
    \errmessage{(Inkscape) Color is used for the text in Inkscape, but the package 'color.sty' is not loaded}%
    \renewcommand\color[2][]{}%
  }%
  \providecommand\transparent[1]{%
    \errmessage{(Inkscape) Transparency is used (non-zero) for the text in Inkscape, but the package 'transparent.sty' is not loaded}%
    \renewcommand\transparent[1]{}%
  }%
  \providecommand\rotatebox[2]{#2}%
  \newcommand*\fsize{\dimexpr\f@size pt\relax}%
  \newcommand*\lineheight[1]{\fontsize{\fsize}{#1\fsize}\selectfont}%
  \ifx\svgwidth\undefined%
    \setlength{\unitlength}{485.74036744bp}%
    \ifx\svgscale\undefined%
      \relax%
    \else%
      \setlength{\unitlength}{\unitlength * \real{\svgscale}}%
    \fi%
  \else%
    \setlength{\unitlength}{\svgwidth}%
  \fi%
  \global\let\svgwidth\undefined%
  \global\let\svgscale\undefined%
  \makeatother%
  \begin{picture}(1,0.9810531)%
    \lineheight{1}%
    \setlength\tabcolsep{0pt}%
    \put(0,0){\includegraphics[width=\unitlength,page=1]{hard_covering_dots_only.pdf}}%
    \put(0.43515876,0.95482378){\color[rgb]{0,0,0}\makebox(0,0)[lt]{\lineheight{1.25}\smash{\begin{tabular}[t]{l}$W^n_k$\end{tabular}}}}%
    \put(0.64748545,0.75113141){\color[rgb]{0,0,0}\makebox(0,0)[lt]{\lineheight{1.25}\smash{\begin{tabular}[t]{l}$\widehat W^{n+1}_{k}$\end{tabular}}}}%
    \put(0,0){\includegraphics[width=\unitlength,page=2]{hard_covering_dots_only.pdf}}%
    \put(0.21395801,0.28569543){\color[rgb]{0,0,0}\makebox(0,0)[lt]{\lineheight{1.25}\smash{\begin{tabular}[t]{l}$\widehat W^{n+2}_{k+1}$\end{tabular}}}}%
    \put(0,0){\includegraphics[width=\unitlength,page=3]{hard_covering_dots_only.pdf}}%
    \put(0.61042037,0.30503111){\color[rgb]{0,0,0}\makebox(0,0)[lt]{\lineheight{1.25}\smash{\begin{tabular}[t]{l}$\widehat W^{n+3}_{k+2}$\end{tabular}}}}%
    \put(0,0){\includegraphics[width=\unitlength,page=4]{hard_covering_dots_only.pdf}}%
  \end{picture}%
\endgroup%
}
	\caption{An illustration of components of the form $\widehat W^{n+q}_{k+q-1} \in \widehat \cC_{m+1}$ contained inside of some $W^n_k \in \cC_m$ for $q=1,2,$ and $3$. The component $W^n_k$ is bounded by the innermost and outermost circle.}
	\label{widehat covering pic}
\end{figure}

\begin{lem}
	\label{section_8_distortion}
	Let $m > 0$ and let $W^n_k$ be an element of $\cC_m$. Then there exists a Jordan domain $B$ containing $W^n_k$ such that  $f_N^n: B \rightarrow f_N^n(B)$ is conformal. Moreover, 
	\begin{enumerate}
		\item When $k \leq 0$ the modulus of $B \setminus \widehat W^n_k$ is bounded below by $(2\pi)^{-1} \log(2)$, and $f_N^n(B)$ is an element $\widehat \sigma_{1-k}$ from Remark \ref{Modulus_lower_bound}.
		\item When $k \geq 1$, $f_N^n(B) =  B(0,4R_{k+1})$, and the modulus of $B \setminus \widehat W^n_k$ is bounded below by $(2\pi)^{-1} \log(2).$
	\end{enumerate}
\end{lem}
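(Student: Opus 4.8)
The plan is to peel off the mapping $f_N^n$ into a composition of $n$ single iterates and produce the Jordan domain $B$ as the corresponding nested pullback of the disk $B(0,4R_{k'+1})$ (or of an element $\widehat\sigma_{1-k'}$ when the relevant index is non-positive), applying at each stage the ``no singular values'' input of Lemma \ref{postcritical_set_B}. Concretely, by the construction of $\cC_m$ in Lemmas \ref{cC1} and \ref{cCm}, the element $W^n_k$ arises as the connected component containing some $z\in Y\cap A_1$ of $f_N^{-n}(A_k)$, and along the orbit we have $f_N^{j}(W^n_k)\subset A_{k(z,j)}$ for $j=0,\dots,n-1$, with $f_N^n(W^n_k)$ a connected component of $A_k$. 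I would first treat the top index $k(z,n-1)=:\kappa\geq k$: since $f_N^{n-1}(W^n_k)\subset A_\kappa$, and since $\overline{B(0,4R_1)}$ — hence $\overline{B(0,4R_{\kappa+1})}$ when $\kappa\geq 1$, and an appropriate $\widehat\sigma$ region when $\kappa\leq 0$ — contains no singular values of $f_N$ by Lemma \ref{postcritical_set_B}, Lemma \ref{component_lemma} gives that the component $B_{n-1}$ of $f_N^{-1}$ of that larger domain containing $f_N^{n-1}(W^n_k)$ is a Jordan domain on which $f_N$ is conformal, and it contains $f_N^{n-1}(W^n_k)$ with the stated modulus gap: in the $\kappa\leq 0$ case this gap is exactly the $(2\pi)^{-1}\log 2$ of Remark \ref{Modulus_lower_bound}, and in the $\kappa\geq 1$ case $A_\kappa = A(\tfrac14 R_\kappa, 4R_\kappa)$ sits inside $B(0,4R_{\kappa+1})$ with $\mathrm{mod}(B(0,4R_{\kappa+1})\setminus \overline{A_\kappa})\geq \mathrm{mod}(B(0,4R_{\kappa+1})\setminus\overline{B(0,4R_\kappa)}) = (2\pi)^{-1}\log(R_{\kappa+1}/R_\kappa)\geq (2\pi)^{-1}\log 2$ by Lemma \ref{Rkest}.

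Next I would pull back inductively: having produced a Jordan domain $B_j\supset f_N^{j}(W^n_k)$ for some $1\leq j\leq n-1$ on which $f_N^{n-j}$ is conformal onto $B(0,4R_{\kappa+1})$ (resp. the $\widehat\sigma$ region), I note that since $f_N^{j-1}(W^n_k)$ lies in some $A_{k(z,j-1)}$ and $f_N$ restricted to a neighbourhood of it is locally injective away from the petals and $V_k$ (Lemma \ref{T}, Lemma \ref{component_lemma}, Lemma \ref{petal_radius}, Lemma \ref{CoveringMapVk}), and since no singular values of $f_N$ lie in $B_j$, the component $B_{j-1}$ of $f_N^{-1}(B_j)$ containing $f_N^{j-1}(W^n_k)$ is a Jordan domain on which $f_N$ is conformal. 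Taking $B:=B_0$ gives a Jordan domain containing $W^n_k$ with $f_N^n\colon B\to f_N^n(B)$ conformal, and $f_N^n(B)$ equal to $B(0,4R_{\kappa+1})$ (the $\kappa\geq1$ subcase of the final iterate) or to an $\widehat\sigma_{1-\kappa}$ (the $\kappa\leq 0$ subcase). Since conformal maps preserve the modulus of annuli, $\mathrm{mod}(B\setminus \widehat W^n_k) = \mathrm{mod}(f_N^n(B)\setminus f_N^n(\widehat W^n_k)) = \mathrm{mod}\big(B(0,4R_{\kappa+1})\setminus\widehat{A_\kappa}\big)$ (or the $\widehat\sigma$ analogue), which is $\geq (2\pi)^{-1}\log 2$ as computed above; this yields (1) and (2) once one checks the final-iterate index: when $k\geq 1$ one has $\kappa\geq k\geq 1$ but in fact the relevant bounding disk for $f_N^n(B)$ should be taken as $B(0,4R_{k+1})$ with $A_k\subset B(0,4R_k)\subset B(0,4R_{k+1})$, which is what the statement asserts.

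The one subtlety — and the main obstacle — is the bookkeeping of which enclosing round disk or $\widehat\sigma$ region is used at the final ($n$-th) iterate versus the intermediate iterates, since the index $k$ of $f_N^n(W^n_k)$ may differ from the top index $\kappa = k(z,n-1)$, and the statement phrases the conclusion in terms of $k$ (namely $f_N^n(B)=B(0,4R_{k+1})$ when $k\geq 1$). The resolution is that $W^n_k$ was built as a component of $f_N^{-n}(A_k)$ — more precisely of $f_N^{-1}$ of a component of $A_k$ sitting inside $A_{\kappa}$-space at level $n-1$ — so at the last step one pulls back $B(0,4R_{k+1})\supset A_k$ rather than the larger region, and the conformality of that last pullback is guaranteed because $\overline{B(0,4R_{k+1})}$ still contains no singular values when $k\geq 1$ by Lemma \ref{postcritical_set_B} and Lemma \ref{Rkest} (which ensures $4R_{k+1}$ is far below the first critical value in $B_{k+1}$, located past $8R_{k+1}$), while for $k\leq 0$ one uses the $\widehat\sigma_{1-k}$ region exactly as in Remark \ref{Modulus_lower_bound}; the modulus bound then transfers verbatim. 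I expect the rest to be routine given the machinery already assembled in Sections \ref{Origin}–\ref{Branched Cover}.
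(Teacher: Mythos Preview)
There is a genuine gap. Your argument rests on the claim that $\overline{B(0,4R_{k+1})}$ contains no singular values of $f_N$ when $k\geq 1$, and you invoke Lemma \ref{postcritical_set_B} together with the observation that the critical values landing in $B_{k+1}$ sit past $8R_{k+1}$. But Lemma \ref{postcritical_set_B} says the singular values lie in $\bigcup_{j\geq 1}B_j$, and the disk $B(0,4R_{k+1})$ swallows \emph{all} of $B_1,\dots,B_k$; in particular it contains the critical values of $q_N$ (which lie in $B_1$ by Lemma \ref{crits_B1}) and the values $\pm C_jR_j^{n_j}\in B_{j+1}$ for every $j<k$. So the target disk for the last pullback is full of singular values, and the step ``component of $f_N^{-1}(B(0,4R_{k+1}))$ is mapped conformally'' cannot be justified by the absence of singular values. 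The same problem infects your inductive step: you assert ``no singular values of $f_N$ lie in $B_j$'' without having shown $B_j\subset A$, and without that containment there is no reason for the claim to hold.

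The paper's proof handles exactly this point by exploiting that the $n$-th iterate is a \emph{backwards} move. By Lemma \ref{widehat help} this forces $f_N^{n-1}(W^n_k)$ into a petal $P_j\subset A_j$ for some $j\geq k$, and the conformal ball $B(w,\lambda(\exp(\pi/n_j)-1)R_j)$ of Lemma \ref{petal_radius} around the zero $w\in P_j$ maps conformally onto a disk containing $B(0,\delta C_jR_j^{n_j})\supset B(0,4R_{k+1})$. Thus the relevant branch of $f_N^{-1}$ restricted to $B(0,4R_{k+1})$ lands inside this petal ball, hence inside $A_j\subset A$. Once $f_N^{n-1}(B)\subset A$ is established, backward invariance of $A$ (Lemma \ref{A_backward_invariant}) gives $f_N^l(B)\subset A$ for every $0\leq l\leq n-1$, and now Lemma \ref{postcritical_set_B} does apply to make $f_N^{n-1}\colon B\to f_N^{n-1}(B)$ conformal. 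Your inductive skeleton is fine, but the crucial first pullback needs the petal argument, not a singular-value count.
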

\begin{proof}
	Fix $m > 0$ and choose some $k \geq 1$. Let $W^n_k$ be an element of $\cC_m$. By following (\ref{conformal_4}) and using the fact that $N \geq 5$, we deduce
	\begin{equation}
	\label{section_8_distortion_eq1}
	\frac{\delta}{2} C_k R_k^{n_k} > 2^{n_{k-1} - 1} 4 R_{k+1} > 16 R_{k+1}.
	\end{equation}
	Therefore, we have 
	
	\begin{equation}
	\label{conformal_ball_big_enough}
	\widehat{A}_k = B(0,4R_k) \subset  B(0,4R_{k+1}) \subset  B\left(0,\frac{\delta}{2} C_k R_k^{n_k}\right) \subset B\left(0,\frac{\delta}{2} C_j R_j^{n_j}\right) .
	\end{equation}
	Let $B$ denote the connected component of $f_N^{-n}(B(0,4R_{k+1}))$ which contains $W^n_k$. By Lemma \ref{widehat help}, $f_N^{n-1}(W^n_k)$ is the subset of a petal $P_j \subset A_j$ for some $j \geq k$. Since $f_N^n(\widehat W^n_k) = B(0,4R_k)$, $f_N^{n-1}(\widehat W^n_k)$ contains a zero $w$ of $f_N$ and we can deduce by (\ref{conformal_ball_big_enough}) that we have 
	\begin{equation*}
	f_N^{n-1}(B) \subset B(w,\lambda(\exp(\pi/n_j)-1)R_j) \subset A_j.
	\end{equation*}
	Therefore, by (\ref{conformal_3}), $f_N: f_N^{n-1}(B) \rightarrow B(0,4R_{k+1})$ is conformal. Since $f_N^{n-1}(B) \subset A_j$, we have $f_N^l(B) \subset A$ for $l = 0,\dots, n-1$ by Lemma \ref{A_backward_invariant}. Therefore, by Lemma \ref{postcritical_set_B}, $f_N^{n-1}:B \rightarrow f_N^{n-1}(B)$ is conformal. Therefore, the composition $f_N^n: B \rightarrow B(0,4R_{k+1})$
	is conformal and $B$ is a Jordan domain. The modulus lower bound for $B \setminus \widehat W^n_k$ follows from (\ref{Rkest_eqn4}).
	
	Now consider the case of $k \leq 0$. Let $A_k'$ be the connected component of $A_k$ such that $f_N^n(W^n_k) = A_k'$. Then the boundary of $\widehat{A}_k'$ is one of the elements $\gamma_{1-k}$ of $\Gamma_{1-k}$ from Definition \ref{GreensLines}. Therefore there exists an element $\sigma_{1-k}$ from Remark \ref{Modulus_lower_bound} so that the modulus of $\widehat{\sigma}_{1-k} \setminus \widehat{\gamma}_{1-k}$ is bounded below by $(2\pi)^{-1} \log(2).$ Let $B$ denote the connected component of $f_N^{-n}(\widehat{\sigma}_{k+1})$ that contains $W^n_k$. Then $f_N^n:B \rightarrow \widehat{\sigma}_{k+1}$
	is conformal by a similar argument to the $k \geq 1$ case.
\end{proof}

\begin{rem}
	\label{cCm_Jordan_Annulus}
	It follows immediately from Lemma \ref{section_8_distortion} that for all $m \geq 0$, every element of $\cC_m$ is a Jordan annulus.
\end{rem}

\begin{rem}
	\label{L''}
	Let $m \geq 0$ and let $W^n_k$ be any element of $\cC_m$, and let $K$ be any compact subset of $\widehat W^n_k$. Let $B$ be the Jordan domain from Lemma \ref{section_8_distortion} containing $W^n_k$. By Remark \ref{Bounded_conformal_distortion}, Corollary \ref{Kobe} applies with $D = B$ to $f_N^n: B \rightarrow f_N^n(B)$ with $U = W^n_k$ and constant $C = L'' >0$ that does not depend on $m$, the element $W^n_k$, or the compact set $K$.
\end{rem}	

We now begin estimating the diameters of elements in $\cC_m$ for the purpose of estimating the dimension of $Y$.  The following Lemma is the same inequality (17.1) in \cite{Bis18}. Our proof is similar, and we include all the details for the sake of the reader. 

\begin{lem}
	\label{easy_W}
	Fix some $t > 0$ and let $W^n_k \in \cC_m$ be given. Then there exists some $M \in \N$ so that for all $N \geq M$ we have
	\begin{equation}
	\label{easy_W_eqn}
	\sum_{W^n_{k-1} \subset \widehat{W^n_k}} \diam(W^n_{k-1})^t \leq \frac{1}{100} \diam(W^n_k)^t,
	\end{equation}
	where the sum in (\ref{easy_W_eqn}) is taken over all components of the form $W^n_{k-1}$ in $\cC_m$ that are contained in $\widehat W^n_k$.
\end{lem}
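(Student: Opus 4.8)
The plan is to relate the components $W^n_{k-1}$ appearing in the sum to a single "ambient" conformal picture and then reduce (\ref{easy_W_eqn}) to a geometric-series estimate in which the ratio is driven by the super-exponential growth of the $R_k$'s. Concretely, fix $m>0$ and $W^n_k\in\cC_m$. By Lemma \ref{section_8_distortion} there is a Jordan domain $B\supset W^n_k$ on which $f_N^n$ is conformal, with $f_N^n(\widehat W^n_k)=B(0,4R_k)$ (in the case $k\ge 1$; the case $k\le 0$ is handled identically after applying Lemma \ref{T} to push forward to $A_1$, as in the proof of Lemma \ref{cCm}). Each component $W^n_{k-1}\subset\widehat W^n_k$ of the stated form satisfies $f_N^n(W^n_{k-1})=A_{k-1}'$, a connected component of $A_{k-1}$; by Lemma \ref{T} each such component of $A_{k-1}$ lies inside exactly one component of $\widehat A_{k-1}$, and by Lemma \ref{component_counter} the number of such components sitting inside $\widehat A_k$ is $n_k$ (they are, after one more pullback, in bijection with the $n_{k-1}$ zeros of $f_N$ inside $A_{k-1}$ together with the petals — in any case the count is bounded by a power of $2$ in $N$ and $k$, which is all we need).

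Next I would transfer diameters through $f_N^n$. Since $f_N^n:B\to f_N^n(B)$ is conformal and, by Remark \ref{L''} (via Remark \ref{Bounded_conformal_distortion} and Corollary \ref{Kobe}), has bounded distortion on $\widehat W^n_k$ with a constant $L''$ independent of $m$, $n$, $k$, we get, for each component $W^n_{k-1}\subset\widehat W^n_k$,
\begin{equation}
\frac{\diam(W^n_{k-1})}{\diam(W^n_k)} \;\le\; (L'')^2\,\frac{\diam\big(f_N^n(W^n_{k-1})\big)}{\diam\big(f_N^n(W^n_k)\big)} \;\le\; (L'')^2\,\frac{\diam(A_{k-1})}{\diam(A_k)}.
\end{equation}
Here I use that $f_N^n(W^n_k)$ is a component of $A_k$ (so its diameter is comparable to $\diam A_k\asymp R_k$) and $f_N^n(W^n_{k-1})$ is a component of $A_{k-1}$ of diameter $\le \diam A_{k-1}\asymp R_{k-1}$. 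Hence
\begin{equation}
\sum_{W^n_{k-1}\subset\widehat W^n_k}\diam(W^n_{k-1})^t \;\le\; (\#\text{components})\cdot (L'')^{2t}\Big(\frac{R_{k-1}}{R_k}\Big)^t \diam(W^n_k)^t \;\lesssim\; n_k\,(L'')^{2t}\Big(\frac{R_{k-1}}{R_k}\Big)^t\diam(W^n_k)^t.
\end{equation}

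Finally I would invoke the growth estimate $R_k\ge 4R_{k-1}^2$ from Lemma \ref{Rkest} (equivalently (\ref{Rkest_eqn4})), so that $R_{k-1}/R_k\le (4R_{k-1})^{-1}\le R_{k-1}^{-1}$, together with $R_{k-1}\ge 2^{2^{k+N-3}}$ and $n_k=2^{k+N-1}$ from (\ref{Rkest_eqn3}) and (\ref{nk_def}). Thus $n_k(L'')^{2t}(R_{k-1}/R_k)^t$ decays super-exponentially in $N$ uniformly over $k\ge 1$, so there is an $M$ (depending on $t$, on $L''$, and on the implicit comparability constants between $\diam A_k$ and $R_k$) such that for $N\ge M$ this quantity is at most $\tfrac1{100}$, which is exactly (\ref{easy_W_eqn}). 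For $k\le 0$ the same argument applies: one first notes $f_N^n(W^n_k)$ is a component $A_k'$ of $A_k$ whose image under a further iterate is $A_1$ by Lemma \ref{T}, and conformality with bounded distortion along that additional finite composition (again Remark \ref{Modulus_lower_bound}, Corollary \ref{Kobe}) lets one reduce to the already-treated $k=1$ estimate; since the relevant ratio $R_0/R_1$ type quantity is then replaced by a fixed constant times the $k=1$ bound, enlarging $M$ if necessary finishes the case.

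The step I expect to be the main obstacle is bookkeeping the count of components $W^n_{k-1}$ inside $\widehat W^n_k$ and making sure the distortion constant really is uniform — i.e. that Corollary \ref{Kobe}/Remark \ref{L''} genuinely apply with the modulus lower bound $(2\pi)^{-1}\log 2$ from Lemma \ref{section_8_distortion} no matter how deep into the refinement we are. Once uniformity of $L''$ and the polynomial (in $2^N$) bound on the number of components are in hand, the decisive inequality is purely the super-exponential gap $R_k\gg R_{k-1}$, which dominates every other term.
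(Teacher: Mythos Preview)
Your overall strategy is the same as the paper's: push $W^n_{k-1}$ and $W^n_k$ forward by the conformal map $f_N^n$ (via Remark~\ref{L''}/Corollary~\ref{Kobe}), compare diameters in the image, and let the super-exponential growth of the $R_k$ absorb the count. The final bound you write down is valid. However, your component-counting step is wrong, and fixing it actually makes the argument much simpler.

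The error is in invoking Lemma~\ref{component_counter}. That lemma counts components of $f_N^{-1}(\Omega)$ inside $A_k$, i.e.\ preimages under one additional iterate. Here nothing of the sort is happening: $W^n_{k-1}$ and $W^n_k$ are pullbacks under the \emph{same} map $f_N^n$, of $A_{k-1}$ and $A_k$ respectively, and the inclusion $W^n_{k-1}\subset\widehat W^n_k$ simply reflects $A_{k-1}\subset\widehat A_k$ in the image. Since $f_N^n:\widehat W^n_k\to\widehat A_k$ is a conformal \emph{bijection} (Lemma~\ref{section_8_distortion}), the number of $W^n_{k-1}$ inside $\widehat W^n_k$ equals the number of components of $A_{k-1}$ inside $\widehat A_k$. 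For $k>1$ this is exactly one (there is a single annulus $A_{k-1}$), so your factor $n_k$ is a gross overcount; for $k\le 1$ it is $2^N$ by Lemma~\ref{T}(3). Your overcount happens not to break the final inequality because $R_{k-1}$ grows doubly-exponentially, but the justification you give for the count is not correct.

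The paper accordingly splits at $k>1$ versus $k\le 1$ rather than $k\ge 1$ versus $k\le 0$. For $k>1$ the single-component case gives $(L'')^t(R_{k-1}/R_k)^t\le (L'')^t/R_1^t$ immediately. For $k\le 1$ your comparison $\diam A_{k-1}\asymp R_{k-1}$ is not available (the $A_j$ for $j\le 0$ are not round annuli of scale $R_j$), and the paper instead uses Lemma~\ref{level_lines_p_N} to get $\diam(A_{k-1}')/\diam(A_k')\le 1/R_1$ directly, yielding the clean bound $2^N(L'')^t/R_1^t$. Both cases then reduce to choosing $M$ so that $2^N/R_1^t$ is small, which is (\ref{decay}).
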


\begin{rem}
	The specific constant $1/100$ is not particularly important. In fact, by increasing $M$, it can be replaced by any arbitrarily small positive constant.
\end{rem}

\begin{proof}
	The proof splits into two cases: the case of $k >1$, and the case of $k \leq 1$. 
	
	Suppose that $k > 1$. Then there is exactly one element of the form $W^n_{k-1} \subset \widehat W^n_k$. Indeed, the mapping $f_N^n: \widehat W^n_{k} \rightarrow \widehat A_{k}$ is a conformal bijection, and since there is only one $A_{k-1} \subset \widehat A_k$ when $k >1$, there can only be one $W^n_{k-1} \subset \widehat W^n_k$. Thus we have
	\begin{equation}
	\label{easy_W_eqn1}
	\frac{\diam(W^n_{k-1})^t}{\diam(W^n_k)^t} \stackrel{\textrm{Remark } \ref{L''}}{\leq} (L'')^t \frac{\diam(A_{k-1})^t}{\diam(A_{k})^t} = (L'')^t \frac{R_{k-1}^t}{R_k^t} \stackrel{\textrm{Lemma } \ref{Rkest}}{\leq} (L'')^t \left(\frac{1}{4R_{k-1}}\right)^t \leq \frac{(L'')^t}{R^t_1}.
	\end{equation}

	%$y \stackrel{t=x^n}{=\joinrel=\joinrel=} x$
	
	Suppose that $k \leq 1$. Then there exists a connected component $A_k'$ of $A_k$ so that $f_N^{n}: \widehat W^n_k \rightarrow \widehat A_k'$ is conformal. By Lemma \ref{T}, we have the following composition of conformal bijections
	\begin{equation*}
	\widehat W^n_k \xrightarrow[]{f_N^n} \widehat A_k' \xrightarrow[]{f^{1-k}_N} \widehat A_1,
	\end{equation*}
	Therefore, the number of elements of the form $W^n_{k-1} \subset \widehat W^n_{k}$ is equal to the number of connected components of $A_0$, which is $2^N$ by part (3) of Lemma \ref{T}. Next, recall that the outermost boundary of each connected component of $A_k$ and $A_{k-1}$ coincides with an element $\gamma_{-k+1}$ and an element $\gamma_{-k+2}$ from Definition \ref{GreensLines}, respectively (here we take the convention that $\gamma_0$ is $\{z:|z| = 4R_1\}$). Therefore, we obtain
	\begin{align}
	\label{easy_W_eqn2}
	\frac{\sum_{W^n_{k-1} \subset \widehat{W^n_k}} \diam(W^n_{k-1})^t}{\diam(W^n_k)^t} &\stackrel{\textrm{Remark } \ref{L''}}{\leq} (L'')^t \frac{\sum_{A_{k-1}' \subset A_k'} \diam(A_{k-1}')^t}{\diam(A_k')^t} \\ &\stackrel{\textrm{Lemma }\ref{level_lines_p_N}}{\leq}  (L'')^t \frac{2^N \frac{\diam(A_k')^t}{R^t_1}}{\diam(A'_k)^t} \stackrel{\textrm{Definition } \ref{lambdaN}}{=} \frac{(L'')^t \cdot  2^N}{R^t_1}.  \nonumber\end{align}
	By (\ref{easy_W_eqn1}) and (\ref{easy_W_eqn2}) the conclusion of the Lemma now follows by choosing $M$ large enough so that for all $N \geq M$ we have
	$$\frac{(L'')^t \cdot 2^N}{R^t_1} \leq \frac{1}{100}.$$
	Such an $M$ exists by applying Lemma \ref{Rkest}, inequality (\ref{Rkest_eqn3}); see (\ref{decay}).
\end{proof}

%\begin{figure}[!htb]
%	\centering	
%	\scalebox{.8}{\input{simple covering.pdf_tex}}
%	\caption{An illustration of the relative positions of elements in $\cC_m$. On the left, we illustrate $k \geq 2$. In this case, $W^n_k$ is surrounded by one component of the form $W^n_{k+1}$ and surrounds one component of the form $W^n_{k-1}$. On the right is shown the case of $k \leq 1$: each component $W^n_k$ surrounds $2^N$ many components of the form $W^n_{k-1}$ (compare to Figure \ref{gammaillustration}).}
%	\label{hole pic}
%\end{figure}

We move on to describing how to change the covering $\cC_m$ of $Y \cap A_1$ by topological annuli into a simpler covering $\widehat\cC_m$ by topological disks.

\begin{definition}
	\label{hatcC1}
	We define $\widehat \cC_0$ to be $\widehat A_1$ and we define $\widehat \cC_1$ to be the collection
	\begin{equation}
	\label{hatcC1_eqn}
	\{\widehat W^n_n\,:\, n \geq 1 \textrm{ and } W^n_n \in \cC_1\}
	\end{equation}
\end{definition}	
\begin{rem}
	\label{maximality_1}
	When $m =1$, we note that the covering $\widehat \cC_m$ satisfies
	\begin{itemize}
		\item $\widehat \cC_m$ is a covering of $\cC_m$, and hence is a covering of $A_1 \cap Y$, and
		\item If $\widehat W^n_k \in \widehat \cC_m$, then $k \geq 1$.
	\end{itemize}
\end{rem}

\begin{definition}
	\label{hatcCm}
	Let $m \geq 1$ and assume that $\widehat \cC_m$ has been constructed and satisfies $(1)$ and $(2)$ of Remark \ref{maximality_1}. Let $\widehat W^n_k \in \cC_m$. Then $\widehat W^n_k$ contains a sequence of components $W^n_j \in \cC_m$ for $j \leq k$. Fix $W^n_j$, and consider the elements of $\cC_{m+1}$ contained inside of $W^n_j$ of the form $W^{n+q}_{j+q-1}$. If $j \geq 1$, then all $q \geq 1$ occur. If $j \leq 0$, then $q$ must satisfy $q \geq 2-j$. Either way, for each valid choice of $q$, the elements of $\widehat \cC_m$ which lie inside of $W^n_j$ are defined to be the components $\widehat W^{n+q}_{j+q-1}$. Doing this for all $j \leq k$, we obtain all of  the elements of $\widehat \cC_{m+1}$ contained in $\widehat W^n_k$. The covering $\widehat \cC_{m+1}$ is defined to be the collection of all such elements obtained in this way for each $\widehat W^n_k \in \cC_m.$
\end{definition}

Next we need the following technical Lemma. Recall that in Definition \ref{bigR} we defined $n_k = 2^{N+k-1}.$
\begin{lem}
	\label{holesum}
	Fix some $t >0$. For $k \geq 1$, define $L_k = n_1\cdots n_k$, and let $\varepsilon > 0$ be given. Then there exists $M \in \N$ such that for all $N \geq M$, we have
	\begin{equation}
	\label{holesumeq}
	\sum_{k=1}^{\infty} 2^k L_k R^{-t}_k < \varepsilon.
	\end{equation}
\end{lem}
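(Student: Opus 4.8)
The plan is to bound the series by a geometric one, using that $R_k$ grows doubly-exponentially in $k+N$ while the factorial-type factor $L_k$ grows only like $2^{kN+k(k-1)/2}$.

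\textbf{Step 1: explicit form of the terms.} Since $n_j = 2^{N+j-1}$ by Definition \ref{bigR}, we have
\[ L_k = \prod_{j=1}^k n_j = 2^{\sum_{j=1}^k (N+j-1)} = 2^{kN + k(k-1)/2}. \]
Write $T_k := 2^k L_k R_k^{-t}$, so the assertion is that $\sum_{k\ge 1} T_k$ can be made smaller than $\varepsilon$ by taking $N$ large.

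\textbf{Step 2: the ratio of consecutive terms.} A direct computation gives
\[ \frac{T_{k+1}}{T_k} = 2\,\frac{L_{k+1}}{L_k}\left(\frac{R_k}{R_{k+1}}\right)^t = 2\,n_{k+1}\left(\frac{R_k}{R_{k+1}}\right)^t. \]
By Lemma \ref{Rkest}, inequality (\ref{Rkest_eqn4}) gives $R_{k+1}\ge 4R_k^2$, hence $R_k/R_{k+1}\le 1/R_k$, and inequality (\ref{Rkest_eqn3}) gives $R_k\ge 2^{2^{k+N-2}}$. Since $n_{k+1}=2^{N+k}$, this yields
\[ \frac{T_{k+1}}{T_k} \le 2\cdot 2^{N+k}\cdot 2^{-t\,2^{k+N-2}} = 2^{\,N+k+1 - t\,2^{k+N-2}}. \]
I would then observe that the function $k\mapsto t\,2^{k+N-2}-(N+k+2)$ is increasing in $k$ (its increments are $t\,2^{k+N-2}-1>0$ once $N$ is large) and nonnegative at $k=1$ once $N$ exceeds a threshold $N_0=N_0(t)$ depending only on $t$; consequently, for all $N\ge N_0$ and all $k\ge 1$ we get $T_{k+1}/T_k\le \tfrac12$.

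\textbf{Step 3: sum and conclude.} For $N\ge N_0(t)$,
\[ \sum_{k=1}^\infty T_k \le T_1\sum_{j=0}^\infty 2^{-j} = 2T_1 = 2^{N+2}\,r_N^{-t} \le 2^{\,N+2 - t\,2^{N-1}}, \]
using once more $R_1=r_N\ge 2^{2^{N-1}}$ from (\ref{Rkest_eqn3}). The right-hand side tends to $0$ as $N\to\infty$, so choosing $M\ge N_0(t)$ large enough that $2^{N+2-t\,2^{N-1}}<\varepsilon$ for all $N\ge M$ completes the proof.

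\textbf{Main obstacle.} There is no deep difficulty here; the only point requiring a little care is arranging that the threshold $N_0$ depends on $t$ alone and not on $k$, which is why I isolate the monotonicity-in-$k$ observation in Step 2. Everything else is bookkeeping with the growth estimates already recorded in Corollary \ref{rkinequalities} and Lemma \ref{Rkest}.
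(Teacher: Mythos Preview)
Your proof is correct and follows essentially the same approach as the paper: both compute the ratio $T_{k+1}/T_k = 2n_{k+1}(R_k/R_{k+1})^t$, bound it using the growth estimates (\ref{Rkest_eqn3}) and (\ref{Rkest_eqn4}) from Lemma~\ref{Rkest}, and then control the first term $T_1$ separately as $N\to\infty$. The only cosmetic difference is that the paper notes the ratio can be made arbitrarily small while you only bound it by $\tfrac12$, but this is immaterial.
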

\begin{proof}
	We will use the Ratio test. Let $a_k$ denote the $k$th term of (\ref{holesumeq}). Then for any $k \geq 1$ we have, by applying Lemma \ref{Rkest}, there exists $M$ so that for all $N \geq M$
	\begin{align}
	\label{Ratio_Test_Strong}
	\frac{a_{k+1}}{a_k} &= \frac{2^{k+1}n_1 \cdots n_kn_{k+1}R^t_{k+1}}{2^k n_1 \cdots n_k R^t_{k}} \\
	\nonumber	&= 2n_{k+1} \left(\frac{R_k}{R_{k+1}}\right)^t  \\
	\nonumber	&\leq 2n_{k+1} \left(\frac{1}{4R_k}\right)^t \\
	\nonumber	&\leq \frac{2}{4^t}n_{k+1} 2^{-t2^{k+N-2}}\\
	\nonumber	&\leq 8 n_{k-1} \left(\frac{1}{2^t}\right)^{n_{k-1}}.	
	\end{align}
	Since $n x^{n} \rightarrow 0$ as $n \rightarrow \infty$ whenever $x \in (0,1)$, the series converges by the Ratio test. In fact, a stronger statement is true. By perhaps choosing $M$ larger, we may arrange for the ratio in (\ref{Ratio_Test_Strong}) to be arbitrarily small for all $k \geq 1$. We may also arrange for the first term of (\ref{holesumeq}), $2n_1 R_1^{-t}$ to be arbitrarily small. It follows that we can make (\ref{holesumeq}) arbitrarily small.
\end{proof}

The proof of the Lemma \ref{hard_W} is the same as the proof of $(17.2)$ in \cite{Bis18}. We include the details for the sake of the reader. First, we introduce the following convenient definition.
\begin{definition}
	\label{W^n_j(q)}
	Let $m \geq 0$ and let $W^n_j \in \cC_m$ be given. Fix some value $q \geq 1$. We define $W^n_j(q)$ to be the set of all elements $\widehat W^{n+q}_{j+q-1} \in \widehat \cC_{m+1}$ that are a subset of $W^n_j$. 
\end{definition}

\begin{lem}
	\label{hard_W}
	Fix some $t >0$. Let $W^n_j \in \cC_m$ be given for $m \geq 0$. Then there exists a $M \in \N$ so that for all $N \geq M$ we have
	\begin{equation}\label{what?}
	\sum_{q =1}^{\infty} \sum_{W^n_j(q)} \diam(\widehat W^{n+q}_{j+q -1})^t \leq \frac{1}{100} \diam(W^n_j)^t
	\end{equation}
\end{lem}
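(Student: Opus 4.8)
The plan is to follow the proof of inequality $(17.2)$ in \cite{Bis18}, transporting the estimate to the round‑annulus model via the distortion bounds already established, and then summing a very rapidly convergent series using Lemma \ref{holesum}.

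First I would pass to the model geometry. By Lemma \ref{section_8_distortion} there is a Jordan domain $B \supset W^n_j$ on which $f_N^n$ is conformal, with $f_N^n(B)=B(0,4R_{j+1})$ when $j\geq 1$ (and $f_N^n(B)$ one of the domains $\widehat\sigma_{1-j}$ from Remark \ref{Modulus_lower_bound} when $j\leq 0$), and in either case the annulus $B\setminus\widehat W^n_j$ has modulus at least $(2\pi)^{-1}\log 2$. Hence by Remark \ref{L''} the map $f_N^n$ distorts diameters of subsets of $\widehat W^n_j$ by a factor depending only on $t$ and $K(\phi_N)$, so
\[
\sum_{q\geq 1}\sum_{W^n_j(q)}\diam(\widehat W^{n+q}_{j+q-1})^t \;\leq\; (L'')^{2t}\,\frac{\diam(W^n_j)^t}{\diam(f_N^n(W^n_j))^t}\;\sum_{q\geq 1}\sum_{W^n_j(q)}\diam\big(f_N^n(\widehat W^{n+q}_{j+q-1})\big)^t .
\]
When $j\leq 0$ one composes in addition with the conformal map $f_N^{1-j}$ of Lemma \ref{T} to pass to the model at level $1$, so I may assume $j\geq 1$, in which case $f_N^n(W^n_j)=A_j$ and each $f_N^n(\widehat W^{n+q}_{j+q-1})$ is a topological disk inside $A_j$ obtained by pulling $\widehat A_{j+q-1}=B(0,4R_{j+q-1})$ back through $q-1$ forward branches $A_j\to A_{j+1}\to\cdots\to A_{j+q-1}$ followed by one petal branch at level $j+q-1$ (the backward move that keeps the level fixed at $j+q-1$ must, by Corollary \ref{helpful_Vk_cor} and Lemma \ref{Julia_Place_AkAk1}, occur inside a petal).

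Next comes the count and the diameter estimate. By Lemma \ref{component_counter} each forward move from level $\ell$ to $\ell+1$ contributes exactly $n_{\ell+1}$ preimage components in $A_\ell$ ($n_\ell$ inside $V_\ell$ through the degree‑$n_\ell$ covering of Lemma \ref{CoveringMapVk}, and one per petal by Lemma \ref{petal_radius}), and the final petal step contributes a further factor $n_{j+q-1}$, so the number of components in $W^n_j(q)$ equals $n_{j+q-1}\prod_{i=1}^{q-1}n_{j+i}$; with $L_k:=n_1\cdots n_k$ this is $n_{j+q-1}L_{j+q-1}/L_j\leq 2^{j+q-1}L_{j+q-1}$ once $N$ is large. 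For the diameters, each forward branch and the petal branch are conformal with bounded distortion (Remark \ref{Modulus_lower_bound}, Corollary \ref{Kobe}), the derivative at the zero in $A_\ell$ is $\gtrsim 2^{n_\ell}n_\ell R_{\ell+1}/R_\ell$ by (\ref{expansion}), and the derivative on $V_\ell$ is comparably large by Lemma \ref{PowerMap}; composing these factors and telescoping the resulting product of radii, together with $R_{k+1}\geq 4R_k^2$ from (\ref{Rkest_eqn4}), gives $\diam\big(f_N^n(\widehat W^{n+q}_{j+q-1})\big)\lesssim \diam(A_j)\,R_{j+q-1}^{-1}$. Therefore the displayed double sum is $\lesssim \diam(W^n_j)^t\sum_{q\geq1}2^{j+q-1}L_{j+q-1}R_{j+q-1}^{-t}\leq \diam(W^n_j)^t\sum_{k\geq1}2^kL_kR_k^{-t}$, and by Lemma \ref{holesum} the last series is $<1/100$ (after absorbing the distortion constant) once $N$ is chosen large enough. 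The case $q=1$ — one petal branch, no forward moves, $n_j$ components each of relative diameter $\lesssim (2^{n_j}n_jR_j)^{-1}$ — is exactly the base term of this estimate and is handled using only the conformality of $f_N$ on petals (Lemma \ref{petal_radius}).

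The main obstacle will be making the composition of distortion estimates along the forward orbit genuinely quantitative: one needs the product of the individual expansion factors along $A_j\to\cdots\to A_{j+q-1}$ to dominate $2^{j+q-1}L_{j+q-1}R_{j+q-1}^{-t}$ uniformly in $q$ and in the chosen branch, and one needs the Koebe‑type constant of Remark \ref{L''} to be genuinely independent of $N$, $n$, $m$ and of the particular component — this is precisely what the uniform lower bounds on the moduli of $B\setminus\widehat W^n_j$, of $\widehat\sigma_n\setminus\widehat\gamma_n$, and of the petal annuli of Lemma \ref{petal_radius} are for. The remaining bookkeeping — the $j\leq 0$ versus $j\geq 1$ split and the two flavours of forward‑move component ($V_\ell$ versus petal) — is routine once these uniform estimates are in place.
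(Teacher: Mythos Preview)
Your proposal is essentially correct and ends at the same series $\sum_{k\geq 1} 2^k L_k R_k^{-t}$ handled by Lemma \ref{holesum}, but your diameter estimate takes a harder road than the paper's. You propose to compose derivative estimates step by step along the forward orbit $A_j\to A_{j+1}\to\cdots\to A_{j+q-1}$ and then through the petal, telescoping products of $n_\ell R_{\ell+1}/R_\ell$. This is workable (the super-exponential growth of $R_k$ will absorb the $q$-dependent accumulation of Koebe constants), but it is exactly the ``main obstacle'' you flag, and two of your citations are off: Lemma \ref{PowerMap} bounds $|f_N|$, not $|f_N'|$, and Remark \ref{Modulus_lower_bound} concerns the Cantor construction near the origin, not the petal annuli.

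The paper avoids the composition altogether. Since $W^{n+q}_{j+q-1}\in\cC_{m+1}$, Lemma \ref{section_8_distortion} applies to it directly: there is a Jordan domain $B\supset\widehat W^{n+q}_{j+q-1}$ on which the full map $f_N^{n+q}$ is conformal with $f_N^{n+q}(B)=B(0,4R_{j+q})$, and $f_N^n(B)\subset A_j$. A \emph{single} Koebe application to $f_N^q$ on $f_N^n(B)$ then gives
\[
\frac{\diam\big(f_N^n(\widehat W^{n+q}_{j+q-1})\big)}{\diam(f_N^n(B'))}\;\lesssim\;\frac{\diam(\widehat A_{j+q-1})}{\diam(B(0,2R_{j+q}))}\;\asymp\;\frac{R_{j+q-1}}{R_{j+q}}\;\leq\;\frac{1}{4R_{j+q-1}},
\]
where $B'$ is the component of $f_N^{-(n+q)}(B(0,2R_{j+q}))$ inside $B$, so that the modulus of $B\setminus\overline{B'}$ is uniformly bounded below. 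Combined with the first Koebe step (your displayed inequality) this yields $\diam(\widehat W^{n+q}_{j+q-1})^t\lesssim R_{j+q-1}^{-t}\diam(W^n_j)^t$ with a constant independent of $q$. The counting is then identical to yours. The upshot: rather than tracking derivatives branch by branch, recognise that Lemma \ref{section_8_distortion} already packages the whole composition $f_N^{n+q}$ into one conformal map with controlled distortion, and compare the image disk $\widehat A_{j+q-1}$ against the slightly larger $B(0,2R_{j+q})$.
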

\begin{proof}
	Fix an arbitrary element $W^n_j \in \cC_m$, and choose an arbitrary element of the form $\widehat W^{n+q}_{j+q-1} \in \widehat \cC_{m+1}$ contained inside of $W^n_j$ for some $q \geq \max\{1, 2-j\}$. 
	
	First, we observe that the mapping
	$$f_N^n: W^n_j \rightarrow A_j,$$
	is conformal by Lemma \ref{section_8_distortion}. Therefore, by Lemma \ref{section_8_distortion} and Corollary \ref{Kobe} we obtain
	\begin{equation}
	\label{hard_w_eq1}
	\frac{\diam(\widehat{W}^{n+q}_{j+q-1})^t}{\diam(W^n_j)^t} \leq (L'')^t \frac{\diam(f_N^n(\widehat{W}^{n+q}_{j+q-1}))^t}{\diam(A_j)^t}.
	\end{equation}
	Next, we observe that the mapping
	$$f_N^q: f_N^n(\widehat{W}^{n+q}_{j+q-1}) \rightarrow A_{j+q-1}$$
	is conformal. Noting that $j +q-1 \geq 1$, if $B$ is the Jordan domain from Lemma \ref{section_8_distortion} that contains $\widehat W^{n+q}_{j+q-1}$, then $f_N^n(B) \subset A_j$. Let $B'$ be the connected component of $f^{-(n+q)}_N(B(0,2R_{j+q}))$ so that $B' \subset B$ and the modulus of $B \setminus \overline{B'}$ is bounded above by $(2\pi)^{-1} \log(2)$. Therefore by Corollary \ref{Kobe} and Remark \ref{L''} we obtain
	\begin{equation}
	\label{hard_w_eq2}
	\frac{\diam(f_N^n(\widehat{W}^{n+q}_{j+q-1}))^t}{\diam(f_N^n(B'))^t} \leq (L'')^t \frac{\diam(A_{j+q-1})^t}{\diam(f_N^{n+q}(B'))^t} = (L'')^t \frac{\diam(A_{j+q-1})^t}{\diam(B(0,2R_{j+q}))^t}.
	\end{equation}
	Combining (\ref{hard_w_eq1}) and (\ref{hard_w_eq2}), we obtain  
	\begin{align*}
	\diam(\widehat{W}^{n+q}_{j+q-1})^t &\leq (L'')^{2t} \frac{\diam(f_N^n(B))^t}{\diam(B(0,2R_{j+q}))^t} \cdot \frac{\diam(A_{j+q-1})^t}{\diam(A_j)^t} \cdot \diam(W^n_j)^t \\
	&\leq (L'')^{2t} \frac{\diam(A_j)^t}{\diam(B(0,2R_{j+q}))^t} \cdot \frac{\diam(A_{j+q-1})^t}{\diam(A_j)^t } \cdot \diam(W^n_j)^t \\
	&= (2(L'')^2)^t \left(\frac{R_{j+q-1}}{R_{j+q}}\right)^t \cdot \diam(W^n_j)^t \\
	&\leq \left(\frac{(L'')^2}{2}\right)^t  \cdot \frac{1}{R^t_{j+q-1}}\diam(W^n_j)^t.
	\end{align*}
	
	Next, for a fixed value $q \geq 1$, we want to count the total number of components of the form $\widehat W^{n+q}_{j+q-1} \in \widehat \cC_{m+1}$ contained in $W^n_j$. Suppose that $j \geq 1$. First, the mapping $f_N^n: W^n_j \rightarrow A_j$ is a conformal mapping. Next, for each $i = 0,1,\dots, q-1$, we have $f^{n+i}_N(\widehat W^{n+q}_{j+q-1}) \subset A_{j+i}$, and $f_N^{n+q}(\widehat W^{n+q}_{j+q-1}) = A_{j+q-1}$.   
	
	Since there are $n_{j+q-1}$ many petals in $A_{j+q-1}$, there are $n_{j+q-1}$ many connected components of $f_N^{-1}(A_{j+q-1})$ contained inside of $A_{j+q-1}$ by Lemma \ref{widehat help}. For such a connected component $U$ of $f_N^{-1}(A_{j+q-1})$, by repeatedly applying Lemma \ref{component_counter}, there are exactly $n_{j+1}\cdot n_{j+2} \cdots n_{j+q-1} = 2^{q-1} n_j\cdots n_{j+q-2}$ many connected components contained in $A_j$ which map conformally onto $U$. Therefore, the total number of components of the form $\widehat W^{n+q}_{j+q-1} \in \widehat \cC_{m+1}$ contained in $W^n_j$ is bounded above by
	\begin{equation}
	\label{num_comp_j_big}
	(2^{q-1} n_{j} \cdots n_{j+q-2}) \cdot n_{j+q-1} \leq 2^q n_{j} \cdots n_{j+q-1} \leq 2^{q + j - 1} L_{q+j-1}.
	\end{equation}
	When $j \leq 0$, for a fixed value $q \geq 1$, counting the total number of components of the form $\widehat W^{n+q}_{j+q-1} \in \cC_{m+1}$ contained in $W^n_j$ is similar to the $j \geq 1$ case, with just one additional complication. In this case, we must have $q \geq 2-j$, and we have $f_N^n(W^{n+q}_{j+q-1}) \subset U$, where $U = f_N^{-(q-1)}(A_{j+q-1}) \cap A_j'$ and $A_j'$ is the connected component of $A_j$ that contains $f_N^n(W^{n+q}_{j+q-1})$. However, by Lemma \ref{T}, $f_N^{1-j}: U \rightarrow f_N^{1-j}(U)$ is conformal. So by similar reasoning as (\ref{num_comp_j_big}), the number of components of the form $W^{n+q}_{j+q-1}$ contained inside of $W^n_j$ is bounded above by
	\begin{equation}
	\label{num_comp_j_small}
	2^{j+q-2} n_1 \cdots n_{j+q-2} \cdot n_{j+q-1} \leq 2^{j+q-1} L_{j+q-1}.
	\end{equation}
	for each $q \geq 2-j$. Therefore, 
	\begin{align*}
	\sum_{q =1}^{\infty} \sum_{W^n_j(q)} \diam(\widehat W^{n+q}_{j+q -1})^t &\leq \diam(W^n_j)^t \cdot \left(\frac{(L'')^2}{2}\right)^t \sum_{q \geq \max\{1,2-j\}}^{\infty} 2^{j+q-1} L_{j+q-1} R_{j+q-1}^{-t} \\
	&\leq \diam(W^n_j)^t \cdot \left(\frac{(L'')^2}{2}\right)^t\sum_{k=1}^{\infty} 2^{k} L_{k} R_{k}^{-t}
	\end{align*}
	The result now follows by choosing $M$ so large that for all $N \geq M$ 
	\begin{equation*}
	\sum_{k=1}^{\infty}2^{k} L_{k} R_{k}^{-t}   < \frac{1}{100} \left(\frac{2}{(L'')^2}\right)^t.
	\end{equation*}
	Such an $M$ exists by Lemma \ref{holesum}.
\end{proof}

We will now show that the sum of the diameters of every distinct element $W$ of $\widehat \cC_{m+1}$ is comparable to the sum of the diameters of every distinct element $V$ of $\widehat \cC_m$.

\begin{lem}
	\label{compare_layers}
	Fix some $t>0$. Let $m \geq 1$ be given. Then 
	\begin{equation}
	\label{compare_layers_eqn}
	\sum_{W \in \widehat{\cC}_{m+1}} \diam(W)^t \leq \frac{1}{10} \sum_{V \in \widehat{\cC}_m} \diam(V)^t.
	\end{equation}
\end{lem}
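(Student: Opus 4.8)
The plan is to decompose the left-hand sum of (\ref{compare_layers_eqn}) by grouping the elements of $\widehat{\cC}_{m+1}$ according to which element of $\cC_m$ they are nested in, and then to apply the two diameter estimates already established, namely Lemma \ref{easy_W} and Lemma \ref{hard_W}. Recall from Definition \ref{hatcCm} that every element of $\widehat{\cC}_{m+1}$ is of the form $\widehat W^{n+q}_{j+q-1}$ for some element $W^n_j \in \cC_m$ containing it, where $W^n_j \subset \widehat W^n_k$ for the distinguished $\widehat W^n_k \in \widehat \cC_m$, and $j \leq k$. So first I would write
\[
\sum_{W \in \widehat{\cC}_{m+1}} \diam(W)^t
= \sum_{\widehat W^n_k \in \widehat\cC_m} \ \sum_{\substack{W^n_j \in \cC_m \\ W^n_j \subset \widehat W^n_k}} \ \sum_{q \geq 1} \ \sum_{W^n_j(q)} \diam(\widehat W^{n+q}_{j+q-1})^t,
\]
using Definition \ref{W^n_j(q)} for the innermost sum.

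Next I would apply Lemma \ref{hard_W} to the two innermost sums: for each fixed $W^n_j \in \cC_m$ with $W^n_j \subset \widehat W^n_k$, we get
\[
\sum_{q \geq 1} \sum_{W^n_j(q)} \diam(\widehat W^{n+q}_{j+q-1})^t \leq \frac{1}{100} \diam(W^n_j)^t,
\]
provided $N$ is large enough (the $M$ from Lemma \ref{hard_W}). Substituting this in leaves
\[
\sum_{W \in \widehat{\cC}_{m+1}} \diam(W)^t \leq \frac{1}{100} \sum_{\widehat W^n_k \in \widehat\cC_m} \ \sum_{\substack{W^n_j \in \cC_m \\ W^n_j \subset \widehat W^n_k}} \diam(W^n_j)^t.
\]
Now the inner sum ranges over all $W^n_j \subset \widehat W^n_k$, i.e.\ over $j \leq k$; the term $j = k$ is $\diam(W^n_k)^t$ itself, and the terms $j < k$ are handled by iterating Lemma \ref{easy_W}. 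Indeed, applying Lemma \ref{easy_W} to $W^n_k$ bounds $\sum_{W^n_{k-1} \subset \widehat W^n_k} \diam(W^n_{k-1})^t \leq \tfrac{1}{100}\diam(W^n_k)^t$, and then applying it again to each such $W^n_{k-1}$ (noting these are again elements of $\cC_m$, and their nesting structure is the same by Lemma \ref{T}) controls the $j = k-2$ terms, and so on; summing the geometric series gives
\[
\sum_{\substack{W^n_j \in \cC_m \\ W^n_j \subset \widehat W^n_k}} \diam(W^n_j)^t \leq \sum_{i=0}^{\infty} \left(\frac{1}{100}\right)^i \diam(W^n_k)^t = \frac{100}{99}\diam(W^n_k)^t.
\]
Combining, $\sum_{W \in \widehat\cC_{m+1}} \diam(W)^t \leq \tfrac{1}{100}\cdot\tfrac{100}{99}\sum_{\widehat W^n_k \in \widehat\cC_m}\diam(W^n_k)^t = \tfrac{1}{99}\sum_{V \in \widehat\cC_m}\diam(V)^t \leq \tfrac{1}{10}\sum_{V \in \widehat\cC_m}\diam(V)^t$, as required, after choosing $N$ large enough that both Lemma \ref{easy_W} and Lemma \ref{hard_W} apply.

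The step I expect to be the main obstacle is justifying the iteration of Lemma \ref{easy_W} cleanly: one must check that for a fixed $\widehat W^n_k \in \widehat\cC_m$, the collection of all $W^n_j \in \cC_m$ with $W^n_j \subset \widehat W^n_k$ is genuinely organized as a nested tree in which each $W^n_j$ has its "children" $W^n_{j-1}$ sitting inside $\widehat W^n_j$, so that repeated application of (\ref{easy_W_eqn}) telescopes into a convergent geometric series. This is essentially the content of Lemma \ref{T} (conformality of $f_N^n$ on $\widehat W^n_k$ and the component-counting of the $A_j$'s for $j \leq 0$), but it requires care in bookkeeping — in particular keeping the iterate $n$ fixed throughout while the index $j$ descends, and handling the transition at $j = 1$ where the geometry of $A_j$ changes from a single round annulus to $2^{N(1-j)}$ components. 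A cleaner alternative, if the telescoping bookkeeping becomes awkward, is to prove the bound $\sum_{W^n_j \subset \widehat W^n_k}\diam(W^n_j)^t \leq \tfrac{100}{99}\diam(W^n_k)^t$ by induction on the depth $k - j$, using Lemma \ref{easy_W} as the inductive step; this avoids having to describe the entire tree at once.
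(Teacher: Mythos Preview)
Your proposal is correct and follows essentially the same approach as the paper's proof: group the elements of $\widehat\cC_{m+1}$ according to the $W^n_j\in\cC_m$ containing them (and the $\widehat W^n_k\in\widehat\cC_m$ above that), apply Lemma~\ref{hard_W} to the inner sum, then iterate Lemma~\ref{easy_W} to collapse the sum over $j\leq k$ into a geometric series bounded by $\tfrac{100}{99}\diam(W^n_k)^t$. The paper handles the iteration of Lemma~\ref{easy_W} exactly as you suggest, simply asserting that repeated application yields $\sum_{W^n_j\subset\widehat W^n_k}\diam(W^n_j)^t\le(1/100)^{k-j}\diam(W^n_k)^t$ for each fixed $j$, so your bookkeeping concern is not a real obstacle here.
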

\begin{proof}
	Let $\widehat W^n_k \in \widehat \cC_m$. Define
	\begin{equation*}
	G := \{W \in \widehat \cC_{m+1} \,:\, W \subset \widehat W^n_k\}.    
	\end{equation*}
	If $W \in E$, then there exists $j \leq k$ so that $W \subset W^n_j \in \cC_m$ and $W^n_j \subset \widehat W^n_k$. For fixed $j$, define
	\begin{equation*}
	G_j := \{W \in G\,:\,W \subset W^n_j\}.
	\end{equation*}
	It follows from Lemma \ref{hard_W} that 
	\begin{equation*}
	\sum_{W \in G_j} \diam(W)^t \leq \frac{1}{100} \diam(W^n_j)^t.
	\end{equation*}
	Since $G = \cup_{j\leq k} G_j$, we obtain
	\begin{equation}
	\label{compare_layers_eqn_1}
	\sum_{W \in G} \diam(W)^t \leq \frac{1}{100}\sum_{W^n_j \in \cC_m,\, W^n_j \subset \widehat W^n_k} \diam(W^n_j)^t.
	\end{equation}
	By repeatedly applying Lemma \ref{easy_W}, we have for any fixed $j \leq k$ that 
	\begin{equation}
	\label{compare_layers_eqn_2}
	\sum_{W^n_j \subset \widehat W^n_k} \diam(W^n_j)^t \leq \left( \frac{1}{100} \right)^{k-j} \diam(W^n_k)^t
	\end{equation}
	By combining (\ref{compare_layers_eqn_1}) and (\ref{compare_layers_eqn_2}), we deduce
	\begin{equation*}
	\sum_{W \in E} \diam(W)^t \leq \sum_{j \leq k} \left(\frac{1}{100}\right)^{k-j+1} \diam(W^n_k)^t \leq \frac{1}{10}\diam(W^n_k)^t.
	\end{equation*}
	The claim now follows by summing over all $\widehat W^n_k \in \widehat \cC_m$.
\end{proof}

\begin{thm}
	\label{dim_buried_points}
	With the notation as above,
	\begin{equation}
	\label{geometric_sum}
	\sum_{m = 1}^{\infty} \sum_{W \in \widehat \cC_m} \diam(W)^t < \infty.
	\end{equation}
	As a consequence, we have $\dim_H(Y \cap A_1) \leq t$.
\end{thm}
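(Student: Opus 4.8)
The plan is to assemble the geometric-decay estimates of Lemmas~\ref{easy_W}, \ref{hard_W} and their consequence Lemma~\ref{compare_layers} into a convergent double series, and then to read off the Hausdorff dimension bound from the elementary fact that a set of zero $t$-dimensional Hausdorff content has zero $t$-dimensional Hausdorff measure.

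First I would settle the base level. The collection $\cC_0$ has the single element $A_1$, which we write as $W^0_1$; then the elements of $\widehat\cC_1$ are precisely the components of the form $\widehat W^{0+q}_{1+q-1}=\widehat W^q_q$ lying in $A_1$, so applying Lemma~\ref{hard_W} with $m=0$ gives, for all $N$ past some threshold,
\[
\sum_{W\in\widehat\cC_1}\diam(W)^t \;=\; \sum_{q=1}^\infty\ \sum_{W^0_1(q)}\diam(\widehat W^q_q)^t \;\leq\; \tfrac{1}{100}\diam(A_1)^t \;<\;\infty .
\]
Next I would iterate Lemma~\ref{compare_layers}, which gives $\sum_{W\in\widehat\cC_{m+1}}\diam(W)^t\leq\tfrac{1}{10}\sum_{V\in\widehat\cC_m}\diam(V)^t$ for every $m\geq1$; by induction,
\[
\sum_{W\in\widehat\cC_m}\diam(W)^t \;\leq\; 10^{-(m-1)}\cdot\tfrac{1}{100}\diam(A_1)^t \qquad (m\geq1).
\]
Summing over $m$ then yields
\[
\sum_{m=1}^\infty\ \sum_{W\in\widehat\cC_m}\diam(W)^t \;\leq\; \tfrac{1}{100}\diam(A_1)^t\sum_{m=1}^\infty 10^{-(m-1)} \;=\; \tfrac{1}{90}\diam(A_1)^t \;<\;\infty ,
\]
which is~(\ref{geometric_sum}). (Only finitely many thresholds on $N$ intervene — from Lemmas~\ref{easy_W}, \ref{hard_W}, \ref{compare_layers} — so one takes $M$ to be their maximum, in line with Remark~\ref{N is large}.)

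For the dimension conclusion I would argue as follows. Convergence of the double series in~(\ref{geometric_sum}) forces its $m$-th inner sum to tend to $0$, i.e.\ $\sum_{W\in\widehat\cC_m}\diam(W)^t\to0$ as $m\to\infty$. By Lemma~\ref{cCm}, Remark~\ref{maximality_1} and Definition~\ref{hatcCm}, each $\widehat\cC_m$ is a countable cover of $Y\cap A_1$. Hence the $t$-dimensional Hausdorff content of $Y\cap A_1$ vanishes; since any cover achieving a small value of $\sum\diam^t$ automatically consists of sets of small diameter (here is where $t>0$ is used), this gives $\mathcal H^t(Y\cap A_1)=0$, and therefore $\dim_H(Y\cap A_1)\leq t$.

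The analytic substance is entirely contained in the earlier lemmas, so I do not expect a real obstacle at this stage; the two points I would check with care are the matching of the base step (passing from $\widehat\cC_0$ to $\widehat\cC_1$) with the $m=0$ instance of Lemma~\ref{hard_W} and its indexing conventions, and the assertion — used implicitly above — that every $\widehat\cC_m$ is in fact a cover of $Y\cap A_1$.
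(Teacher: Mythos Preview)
Your proposal is correct and follows essentially the same route as the paper: iterate Lemma~\ref{compare_layers} to obtain geometric decay of $\sum_{W\in\widehat\cC_m}\diam(W)^t$, sum the resulting geometric series, and then use that each $\widehat\cC_m$ covers $Y\cap A_1$ together with Lemma~\ref{Hdim_Facts}(1) to conclude $H^t(Y\cap A_1)=0$. Your handling of the base step (invoking Lemma~\ref{hard_W} with $m=0$ to pass from $\widehat\cC_0$ to $\widehat\cC_1$) is in fact more careful than the paper, which tacitly absorbs this into the same geometric estimate.
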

\begin{proof}
	We obtain a geometric sum by Lemma \ref{compare_layers}. Indeed,
	$$\sum_{m = 1}^{\infty} \sum_{W \in \widehat \cC_m} \diam(W)^t \leq \sum_{m=1}^{\infty} \left(\frac{1}{10}\right)^m \diam(A_1)^t < \infty.$$
	Therefore, for every $\varepsilon > 0$, there exists $m \geq 0$ so that 
	$$\sum_{W \in \widehat \cC_m} \diam(W)^t < \varepsilon.$$
	Since $\widehat \cC_m$ covers $Y \cap A_1$ for each $m \geq 0$, by applying (\ref{estimate_sum}), we deduce that $H^{t}(Y \cap A_1) = 0$. It follows immediately that $\dim_H(Y \cap A_1) \leq t.$
\end{proof}

\begin{cor}
	\label{dim_Y_t}
	We have $\dim_H(Y) \leq t$.	
\end{cor}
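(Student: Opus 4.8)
The plan is to decompose $Y$ into the countably many pieces $Y \cap A_k$, $k \in \Z$, to bound $\dim_H(Y \cap A_k) \leq t$ for each $k$, and then to conclude by countable stability of Hausdorff dimension (see (\ref{countable_stability})). The first preliminary I would record is an invariance property of $Y$: since $f_N^{-1}(A) \subset A$ by Lemma \ref{A_backward_invariant}, a point $z$ lies in $X$ if and only if $f_N(z)$ does, and the orbit sequence of $f_N(z)$ is the shift $(k(z,n+1))_{n\ge0}$ of that of $z$; hence $z$ moves backwards infinitely often iff $f_N(z)$ does, so $z \in Y \iff f_N(z) \in Y$.

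For the indices $k \leq 0$, I would use conformal pullback. By Definition \ref{negative_indices} and Lemma \ref{T}, each connected component $A_k'$ of $A_k$ is mapped conformally onto the annulus $A_1$ by $f_N^{1-k}$, and by the collar estimate of Remark \ref{Modulus_lower_bound} together with Corollary \ref{Kobe} this conformal map has distortion bounded independently of $k$ and of the component. A conformal map of bounded distortion is locally bi-Lipschitz after rescaling, hence preserves Hausdorff dimension; and by the invariance above, $f_N^{1-k}(Y \cap A_k') \subset Y \cap A_1$. Therefore $\dim_H(Y \cap A_k') \leq \dim_H(Y \cap A_1) \leq t$ by Theorem \ref{dim_buried_points}, and since $A_k$ has only finitely many components (Lemma \ref{T}), $\dim_H(Y \cap A_k) \leq t$ for every $k \leq 0$.

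The case $k = 1$ is Theorem \ref{dim_buried_points} itself, so the remaining, slightly more delicate, point is $k \geq 2$, where one cannot conjugate $A_k$ to $A_1$ by a conformal coordinate change. Here the plan is simply to observe that the covering construction of Section \ref{Dimension Y} uses no special feature of the base annulus $A_1$: taking $\cC_0 = \{A_k\}$ instead, the inductive construction of $\cC_m$ and $\widehat\cC_m$ (Lemmas \ref{cC1} and \ref{cCm}) runs verbatim, and the diameter estimates of Lemmas \ref{easy_W} and \ref{hard_W} are already stated for an arbitrary element of an arbitrary $\cC_m$, with constants coming only from the $R_k$-growth of Lemma \ref{Rkest} and the uniform Koebe distortion of Remark \ref{L''}. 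Consequently Lemma \ref{compare_layers} and the argument of Theorem \ref{dim_buried_points} yield the finite geometric sum $\sum_{m\geq1}\sum_{W\in\widehat\cC_m}\diam(W)^t \leq \sum_{m\geq1}(1/10)^m\diam(A_k)^t < \infty$ for all large $N$, so $\dim_H(Y \cap A_k) \leq t$ for every $k \geq 2$ as well. Combining all cases, $Y = \bigcup_{k\in\Z}(Y \cap A_k)$ is a countable union of sets of dimension at most $t$, whence $\dim_H(Y) \leq t$.

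I expect the main obstacle to be precisely the $k \geq 2$ case, since it is the one place where the reduction to $Y \cap A_1$ is not literal; the work there is to check that nothing in the estimates of Section \ref{Dimension Y} secretly depends on the base annulus being $A_1$ (it does not). The two secondary points — the invariance $z \in Y \iff f_N(z) \in Y$, and the fact that bounded-distortion conformal maps preserve Hausdorff dimension — are routine.
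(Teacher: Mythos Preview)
Your proposal is correct and follows essentially the same approach as the paper: decompose $Y$ into the pieces $Y\cap A_k$, handle $k\ge 2$ by rerunning the Section~\ref{Dimension Y} covering argument with $\cC_0=\{A_k\}$, handle $k\le 0$ by the conformal map $f_N^{1-k}:A_k'\to A_1$, and conclude via countable stability. You are in fact slightly more explicit than the paper in justifying the $k\le 0$ step (recording the invariance $z\in Y\iff f_N(z)\in Y$ and the preservation of dimension under bounded-distortion conformal maps), whereas the paper simply asserts $\dim_H(Y\cap A_k')=\dim_H(Y\cap A_1)$.
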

\begin{proof}
	First, we will observe that our arguments above apply to the case of $A_k \cap Y$ for $k >1$, with simple modifications made to the definitions of $\cC_m$. Therefore, $\dim_H(Y \cap A_k) \leq t$ for all $k \geq 1$. If $k \leq 0$, let $A_k'$ be a connected component of $A_k$. Then by repeatedly applying Lemma \ref{T}, we see that $f^{k+1}_N$ maps $A_k'$ onto $A_1$ conformally. It follows that $\dim_H(Y \cap A_k') = \dim_H(Y \cap A_1)$, and we deduce that $\dim_H(Y \cap A_k) = \dim_H(Y \cap A_1)$ for all $k \leq 0$. 
	
	Since $Y \subset A$, we conclude by (\ref{countable_stability}) that
	$$\dim_H(Y) =\dim_H(Y \cap A) = \sup_{k \in \Z} \dim(A_k \cap Y) \leq t,$$
	as desired.
\end{proof}

\section{Jordan Fatou Boundary Components}
\label{Dimension Z}

Recall that we have proven $\mathcal{J}(f)\subset E' \cup Y \cup Z$, and that $E'$ and $Y$ may be taken to have arbitrarily small (positive) dimension. We now move on to estimating the dimension of $Z$ (those points whose orbits always stay in $A$, and eventually only move forward). It will be necessary to partition $Z$ as follows:

%Recall that the set $Z$ was defined in Definition \ref{xyz}. 

\begin{definition}
	\label{Z1Z2}
	Let 
	\begin{equation}
	\label{Z_1}
	Z_1 := \left\{z\in Z: \textrm{ there exists } l \geq 0 \textrm{ such that for all } j \geq 0,\, f_N^{l+j}(z) \in \bigcup_{k \geq 1} V_k\right\},
	\end{equation}
	and
	\begin{equation}
	\label{Z_2}
	Z_2 := Z \setminus Z_1. 
	\end{equation}
	Note that $Z = Z_1 \sqcup Z_2$.
\end{definition}

\noindent Our primary objective over the next three sections is the proof of the following theorem.

\begin{thm}
	\label{boundary theorem}
	$Z_1$ is the disjoint union of countably many $C^1$ Jordan curves, and $Z_2$ has Hausdorff dimension $0$.
\end{thm}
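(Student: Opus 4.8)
The plan is to split Theorem \ref{boundary theorem} into the two assertions it contains — the structure of $Z_1$ and the dimension bound for $Z_2$ — and to treat the $C^1$-regularity of $Z_1$ as a separate, harder matter to be carried out in Section \ref{C1proofsection}; here I would only establish that $Z_1$ is a disjoint union of countably many Jordan curves. The starting point is that every $z \in Z_1$ has, after dropping finitely many initial iterates, an orbit that forever moves forward and always lands in some $V_k$: that is, there is $l$ with $f_N^{l+j}(z) \in V_{k_0 + j}$ for all $j \geq 0$, where $k_0 = k(z,l)$. For each fixed starting index $k_0 \geq 1$, consider the set $\Lambda_{k_0}$ of points $w \in V_{k_0}$ whose entire forward orbit satisfies $f_N^j(w) \in V_{k_0 + j}$. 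By Lemma \ref{CoveringMapVk}, the map $f_N : W_k \to A_{k+1}$ restricted to $W_k := f_N^{-1}(A_{k+1}) \cap V_k$ is a covering map of degree $n_k$, and hence $f_N : f_N^{-1}(V_{k+1}) \cap V_k \to V_{k+1}$ is a degree-$n_k$ covering between round annuli. So $\Lambda_{k_0}$ is exactly the nested intersection $\bigcap_{m \geq 0} f_N^{-m}(V_{k_0+m}) \cap V_{k_0}$, an intersection of a decreasing sequence of subannuli, each a degree-$n$ cover of the next. The standard argument — pulling back the core curve of $V_{k_0+m}$ and using that the moduli of the complementary "buffer" annuli $V_k \setminus (\text{pullback region})$ are bounded below (as in Remark \ref{Modulus_lower_bound}) so that the nested pullback annuli shrink — shows $\Lambda_{k_0}$ is a single Jordan curve. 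Taking all preimages under $f_N^l$, $l \geq 0$, and over all $k_0$, one gets countably many Jordan curves; disjointness follows because distinct components of the relevant preimage annuli are disjoint, and because a point on two such curves would have two incompatible orbit sequences.

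For the second assertion, that $\dim_H(Z_2) = 0$, I would argue that a point $z \in Z_2$ is a point of $Z$ (so its orbit is eventually forward-only and stays in $A$) but is \emph{not} eventually trapped in $\cup_k V_k$. Since $z \in Z$, after finitely many iterates the orbit sequence $k(z,n)$ is strictly increasing by $1$ each step, so $f_N^n(z) \in A_{k_0+n}$ for all large $n$; and by Lemma \ref{Julia_Place_AkAk1} (or Theorem \ref{Julia_place}), each such $f_N^n(z)$ lies either in $V_{k_0+n}$ or in one of the petals $P \subset \mathcal{P}_{k_0+n}$. Being in $Z_2$ means it lands in a petal infinitely often. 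The key input is Lemma \ref{petal_radius}: on each petal $f_N$ is conformal with enormous expansion, and $f_N(B(w_j^k, R_k/2^{n_k})) \supset B(0, 4R_{k+1})$. One then builds a cover of $Z_2 \cap A_1$ analogous to the covers $\cC_m$ of Section \ref{Dimension Y}, but now indexed by the iterates at which the orbit visits a petal (rather than the iterates at which it moves backwards): pull back the annuli $A_k$ and the petal-balls under the (conformal, bounded-distortion, strongly-expanding) branches of $f_N^{-1}$ determined by the petal itinerary, applying Koebe distortion (Corollary \ref{Kobe}, Corollary \ref{shape}) with the uniform modulus estimates from Remark \ref{Modulus_lower_bound} / Lemma \ref{petal_radius}. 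Each petal visit introduces a factor of roughly $n_k$ many branches but a diameter contraction by a factor comparable to $C_k R_k^{n_k-1} \gg R_k^{n_{k-1}}$; exactly as in Lemma \ref{holesum} and Lemma \ref{hard_W}, the resulting sum $\sum 2^k L_k R_k^{-t}$-type series converges for \emph{every} $t > 0$ once $N$ is large, because the expansion at petals beats the branching by a super-exponential margin uniformly in $t$. Hence $H^t(Z_2 \cap A_1) = 0$ for all $t > 0$, so $\dim_H(Z_2 \cap A_1) = 0$; transporting to the other $A_k$ via the conformal maps $f_N^{k+1}$ (for $k \leq 0$) and obvious modifications (for $k \geq 1$) as in Corollary \ref{dim_Y_t}, and using countable stability of Hausdorff dimension, gives $\dim_H(Z_2) = 0$.

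The step I expect to be the main obstacle is the bookkeeping for the $Z_2$ cover: a point of $Z_2$ can interleave petal-visits and $V_k$-visits in an arbitrary pattern while always moving forward, so the combinatorics of the itinerary is genuinely two-sided (which petal in $A_{k_0+n}$, and how many $V$-steps between petal-steps). One must be careful that the cover built from petal-itineraries actually covers all of $Z_2$ — in particular that the $V$-steps between petal-visits contribute only bounded distortion and do not spoil the summability — and that the branches used are all conformal with distortion bounded independently of $N$ and of the depth $m$ of the construction, which relies on the uniform lower modulus bounds of Remark \ref{Modulus_lower_bound} and Lemma \ref{petal_radius} together with Lemma \ref{postcritical_set_B} (no singular values in the relevant disks). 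A secondary subtlety is verifying that $\Lambda_{k_0}$ is a genuine Jordan curve and not merely a continuum: this needs the nested annuli to have moduli staying bounded away from $0$ at each pullback step — which again comes from Remark \ref{Modulus_lower_bound} — so that the intersection is a single curve; the finer claim that it is $C^1$ is deferred, as it requires the quantitative control on $\phi_N$ near $\infty$ from Theorem \ref{initial_identity} and is the subject of Section \ref{C1proofsection}.
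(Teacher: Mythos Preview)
Your overall plan is right, and your decomposition matches the paper's, but there are two points where your sketch diverges from what actually works.

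\textbf{The $Z_1$ part.} Your claimed mechanism --- that the ``buffer'' moduli $V_k \setminus (\text{pullback region})$ are bounded below --- is not correct, and Remark~\ref{Modulus_lower_bound} (which concerns the polynomial-like picture near $0$) is not the relevant input. In fact those buffer moduli tend to $0$: each pullback step is a degree-$n_{k}$ cover of annuli, so the modulus of $\Gamma_{k,n}$ (in the paper's notation) is $\mathrm{mod}(V_{k+n+1})/(n_{k+1}\cdots n_{k+n})$, and the buffers shrink at the same rate. What you actually need is (i) that $\mathrm{mod}(\Gamma_{k,n})\to 0$, which follows from the high degrees, and (ii) an argument that the intersection is a genuine Jordan curve and not merely a separating continuum. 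Modulus $\to 0$ alone does not give (ii). The paper handles (ii) by an angle argument: it shows (Lemma~\ref{key_angle}, Corollary~\ref{foliation_almost_circles}) that every leaf of the pulled-back foliation meets each radial ray in exactly one point, and then uses a derivative lower bound on $V_k$ (Lemma~\ref{foliation_derivative_estimate}) to show the Euclidean radial width $w_{k,m}\to 0$ (Lemma after Definition~\ref{width}). This gives an explicit parametrisation $\theta\mapsto r(\theta)e^{i\theta}$ of the limit curve. Your nested-annuli picture is the right container, but the shrinking mechanism and the Jordan-curve conclusion both need these extra ingredients.

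\textbf{The $Z_2$ part.} Your itinerary-based cover (indexed by the $m$-th petal visit, parallel to the $\cC_m$ machinery of Section~\ref{Dimension Y}) would work, but the paper takes a shorter route and your ``main obstacle'' turns out not to be one. The paper first localises to $Z_2\cap\partial\Omega_k$ (Lemmas~\ref{Z2_in_Ak}--\ref{Boundary_Z}, Corollary~\ref{Z2_in_petals}), where every point has the pure forward orbit sequence $(k,k+1,\dots)$ and lands in a petal infinitely often. It then covers by the \emph{single}-petal-visit sets $\bigcup_{j\ge l} f_N^{-j}(\mathcal P_{k+j})\cap A_k$ and lets $l\to\infty$. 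No itinerary bookkeeping is needed, because the crucial observation is that the derivative lower bound on $V_k$ (Lemma~\ref{foliation_derivative_estimate}) and on the petals (Lemma~\ref{Derivative_Petal}) are the \emph{same}, namely $|f_N'|\gtrsim n_k R_{k+1}/R_k$. Hence every forward step --- petal or $V$ --- contributes the same expansion, the chain rule gives $|(f_N^j)'|\gtrsim 4^{-j}(R_{k+j}/R_k)\prod n_{l+k}$ uniformly (Corollary~\ref{Derivative_Product}), and a single pulled-back petal at depth $j$ already has diameter $\lesssim R_k/2^{n_{k+j}}$. So the $V$-steps do not ``spoil the summability''; they help just as much as petal steps, and the sum $\sum_{j\ge l}2^{j}L_{k+j}(2^{-t})^{n_{k+j}}$ converges for every $t>0$ without choosing $N$ large.
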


In this Section, we will focus on proving that $Z_1$ consists of a disjoint union of Jordan curves, and in Section \ref{C1proofsection} we will prove that they are $C^1$. Lastly, in Section \ref{singleton_components} we will study $Z_2$.

For the entirety of the next three sections, we choose $M$ so large so that for all $N \geq M$ and $k \geq 1$ we have for all $z \in V_k$ that 
\begin{equation}
\label{phi_derivative_9}
\frac{1}{2} \leq |\phi'_N(z)| \leq 2.
\end{equation}	
The existence of such an $M$ follows from the Cauchy estimate and Lemma \ref{AkEst}. 

Recall from Definition \ref{central_series} that for $k \geq 1$, $\Omega_k$ is the Fatou component containing $B_k$. We first study the set $Z_1 \cap \overline{\Omega_k}$ for $k \geq 1$. 

\begin{lem}
	\label{angle_change2}
	Let $F: \C \ra \C$ be a holomorphic function, $z \in\mathbb{C}$ and suppose that $F'(z) \neq 0$, $z \neq 0$, and $F(z)\neq0$.
	Let $R_z$, $R_{F(z)}$ denote the rays starting at the origin and passing through $z$, $F(z)$, respectively. Let $v \in T_{z} \C$ denote the outward pointing tangent vector to $R_{z}$ based at $z$ and let $w \in T_{F(z)} \C$ be the image of the outward pointing tangent vector to $R_{F(z)}$ based at $F(z)$. Then the angle between $DF_z(v)$ and $w$ is given by $\arg(\frac{z}{F(z)}F'(z)).$
\end{lem}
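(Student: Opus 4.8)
The plan is to do a direct computation in terms of arguments of complex numbers, identifying tangent vectors with complex numbers via the usual holomorphic trivialization of $T\C$. First I would fix notation: identify $T_z\C$ with $\C$ so that the outward radial tangent vector at $z$ is represented by $z/|z|$ (a positive multiple of $z$, pointing away from the origin along $R_z$), and similarly the outward radial tangent vector at $F(z)$ is represented by $F(z)/|F(z)|$. Since $F$ is holomorphic, the differential $DF_z$ acts on tangent vectors by multiplication by $F'(z)$; thus $DF_z(v)$ is represented by $F'(z)\cdot z/|z|$, and the pushforward $w$ of the outward radial vector at $F(z)$ is also just $F(z)/|F(z)|$ (the lemma's wording "image of the outward pointing tangent vector" should be read as the vector in $T_{F(z)}\C$ itself, since we are comparing two vectors based at $F(z)$).

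Next I would recall that the (signed) angle between two nonzero complex numbers $a$ and $b$, regarded as vectors, is $\arg(a/b)$ (equivalently $\arg(\overline{b}a)$). Applying this with $a = DF_z(v) = F'(z) z/|z|$ and $b = w = F(z)/|F(z)|$ gives that the angle is
\begin{equation}
\arg\!\left( \frac{F'(z)\, z/|z|}{F(z)/|F(z)|} \right) = \arg\!\left( \frac{z}{F(z)} F'(z) \cdot \frac{|F(z)|}{|z|} \right) = \arg\!\left( \frac{z}{F(z)} F'(z) \right),
\end{equation}
where the last equality uses that $|F(z)|/|z|$ is a positive real and hence does not affect the argument. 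This is exactly the claimed formula, and the hypotheses $F'(z)\neq 0$, $z\neq 0$, $F(z)\neq 0$ are precisely what guarantee that all the quantities whose arguments are taken are nonzero, so everything is well-defined.

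The only genuine subtlety — and the step I would be most careful about — is the bookkeeping of which trivialization of the tangent bundle is being used and in what sense $w$ is "the image of" the outward radial vector at $F(z)$. In the intended reading, one compares $DF_z(v)$, the pushforward of the radial vector at $z$, with the radial vector at $F(z)$ itself; both live in $T_{F(z)}\C \cong \C$, so the comparison is literally between the two complex numbers $F'(z)z/|z|$ and $F(z)/|F(z)|$, and the computation above is complete. I would spell out this identification explicitly at the start so that the one-line argument is unambiguous; modulo that, the proof is the short computation displayed above and there is no real obstacle.
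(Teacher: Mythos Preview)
Your proof is correct and follows essentially the same idea as the paper: both identify tangent vectors with complex numbers and compute the angle as the argument of a quotient. The paper structures the argument slightly differently (first treating the fixed-point case $F(z)=z$ and then reducing the general case by precomposing with the rotation $\zeta\mapsto \tfrac{z}{F(z)}F(\zeta)$), but your one-shot computation of $\arg\bigl(DF_z(v)/w\bigr)$ is if anything more direct.
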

\begin{figure}[!htb]
	\centering	
	\scalebox{.9}{%% Creator: Inkscape 1.0 (4035a4fb49, 2020-05-01), www.inkscape.org
%% PDF/EPS/PS + LaTeX output extension by Johan Engelen, 2010
%% Accompanies image file '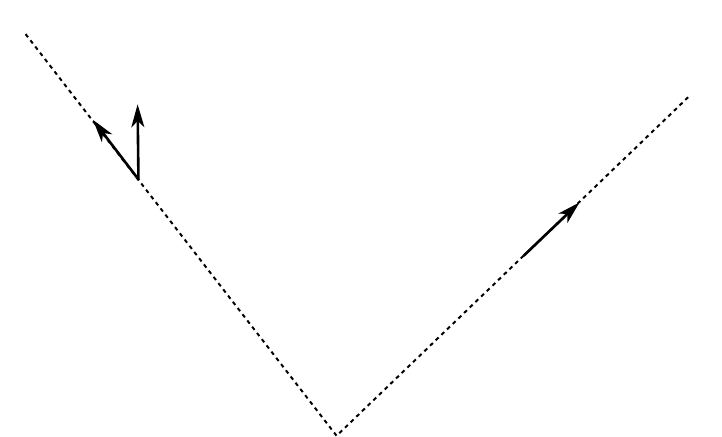' (pdf, eps, ps)
%%
%% To include the image in your LaTeX document, write
%%   \input{<filename>.pdf_tex}
%%  instead of
%%   \includegraphics{<filename>.pdf}
%% To scale the image, write
%%   \def\svgwidth{<desired width>}
%%   \input{<filename>.pdf_tex}
%%  instead of
%%   \includegraphics[width=<desired width>]{<filename>.pdf}
%%
%% Images with a different path to the parent latex file can
%% be accessed with the `import' package (which may need to be
%% installed) using
%%   \usepackage{import}
%% in the preamble, and then including the image with
%%   \import{<path to file>}{<filename>.pdf_tex}
%% Alternatively, one can specify
%%   \graphicspath{{<path to file>/}}
%% 
%% For more information, please see info/svg-inkscape on CTAN:
%%   http://tug.ctan.org/tex-archive/info/svg-inkscape
%%
\begingroup%
  \makeatletter%
  \providecommand\color[2][]{%
    \errmessage{(Inkscape) Color is used for the text in Inkscape, but the package 'color.sty' is not loaded}%
    \renewcommand\color[2][]{}%
  }%
  \providecommand\transparent[1]{%
    \errmessage{(Inkscape) Transparency is used (non-zero) for the text in Inkscape, but the package 'transparent.sty' is not loaded}%
    \renewcommand\transparent[1]{}%
  }%
  \providecommand\rotatebox[2]{#2}%
  \newcommand*\fsize{\dimexpr\f@size pt\relax}%
  \newcommand*\lineheight[1]{\fontsize{\fsize}{#1\fsize}\selectfont}%
  \ifx\svgwidth\undefined%
    \setlength{\unitlength}{207.87339529bp}%
    \ifx\svgscale\undefined%
      \relax%
    \else%
      \setlength{\unitlength}{\unitlength * \real{\svgscale}}%
    \fi%
  \else%
    \setlength{\unitlength}{\svgwidth}%
  \fi%
  \global\let\svgwidth\undefined%
  \global\let\svgscale\undefined%
  \makeatother%
  \begin{picture}(1,0.60444312)%
    \lineheight{1}%
    \setlength\tabcolsep{0pt}%
    \put(0,0){\includegraphics[width=\unitlength,page=1]{angle_formula.pdf}}%
    \put(0.92842781,0.49408244){\color[rgb]{0,0,0}\makebox(0,0)[lt]{\lineheight{1.25}\smash{\begin{tabular}[t]{l}$R_z$\end{tabular}}}}%
    \put(-0.00089586,0.57295929){\color[rgb]{0,0,0}\makebox(0,0)[lt]{\lineheight{1.25}\smash{\begin{tabular}[t]{l}$R_{F(z)}$\end{tabular}}}}%
    \put(0.71454578,0.29501787){\color[rgb]{0,0,0}\makebox(0,0)[lt]{\lineheight{1.25}\smash{\begin{tabular}[t]{l}$v$\end{tabular}}}}%
    \put(0.70312858,0.20407172){\color[rgb]{0,0,0}\makebox(0,0)[lt]{\lineheight{1.25}\smash{\begin{tabular}[t]{l}$z$\end{tabular}}}}%
    \put(0.20946106,0.34630545){\color[rgb]{0,0,0}\makebox(0,0)[lt]{\lineheight{1.25}\smash{\begin{tabular}[t]{l}$F(z)$\end{tabular}}}}%
    \put(0.09909375,0.36894437){\color[rgb]{0,0,0}\makebox(0,0)[lt]{\lineheight{1.25}\smash{\begin{tabular}[t]{l}$w$\end{tabular}}}}%
    \put(0.19747491,0.41160092){\color[rgb]{0,0,0}\makebox(0,0)[lt]{\lineheight{1.25}\smash{\begin{tabular}[t]{l}$DF_z(v)$\end{tabular}}}}%
    \put(0,0){\includegraphics[width=\unitlength,page=2]{angle_formula.pdf}}%
  \end{picture}%
\endgroup%
}
	\caption{A schematic for the statement of Lemma \ref{angle_change2}.}
	\label{angle formula pic}
\end{figure}
\begin{proof}
	First we consider the case that $F(z)=z$. Then, letting $v \in T_z \C$ denote the unit tangent vector pointing in the direction of $R_z$, we have
	$$DF_z(v) = F'(z) v = |F'(z)| \arg(F'(z)) v \in T_z \C.$$
	This proves the result in the case that $w=z$. When $F(z)\not=z$, the result follows from applying the above reasoning to the function $\zeta\mapsto\frac{z}{F(z)}\cdot F(\zeta)$.
\end{proof}

The following Lemma follows a similar strategy to Lemma 18.1 in \cite{Bis18}. 	

\noindent

\begin{lem}
	\label{angledistortion}
	Let $\varepsilon > 0$ and $n,k \in \N$ be given. Suppose that $\phi$ is a univalent function on the annulus $A(\frac{1}{4}R_k,\frac{3}{4}R_k)$ and suppose that $|\phi(z)/z - 1| < \varepsilon$ on $A(\frac{2}{5}R_k,\frac{3}{5}R_k)$. Define 
	$$F(z) = (\phi(z))^n.$$
	For any fixed $\tau \in [0,2\pi)$, parameterize the segment $S(\tau) = \{r e^{i\tau} \, : \, \frac{2}{5}R_k \leq r \leq \frac{3}{5}R_k\}$ as $\gamma_{\tau}(r) = re^{i\tau}$, $r \in [\frac{2}{5}R_k,\frac{3}{5}R_k]$. Suppose that $F \circ \gamma_{\tau}$ and $\gamma_{\varphi}$ intersect at some point $z$. Then the angle between the tangent vectors of $F \circ \gamma_{\tau}$ and $\gamma_{\varphi}$ based at $F(z)$ is $O(\varepsilon)$ as $\varepsilon \rightarrow 0$. 
\end{lem}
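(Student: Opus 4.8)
The plan is to compute the relevant angle directly using Lemma~\ref{angle_change2}, applied to the holomorphic map $F(z) = (\phi(z))^n$. First I would note that since $F = \phi^n$ we have $F'(z) = n(\phi(z))^{n-1}\phi'(z)$, so that
\begin{equation}
\label{angle_proposal_eq1}
\frac{z}{F(z)}F'(z) = \frac{z}{(\phi(z))^n} \cdot n (\phi(z))^{n-1} \phi'(z) = n \cdot \frac{z\,\phi'(z)}{\phi(z)}.
\end{equation}
By Lemma~\ref{angle_change2}, if $F\circ\gamma_\tau$ and $\gamma_\varphi$ intersect at a point $w = F(z)$ (with $z = \gamma_\tau(r)$ on the segment $S(\tau)$), then the tangent vector to $F\circ\gamma_\tau$ at $w$ is $DF_z(v)$ where $v$ is the outward radial direction at $z$, while the tangent to $\gamma_\varphi$ at $w$ is (a positive multiple of) the outward radial direction at $w$; hence the angle between them equals $\arg\!\left(\frac{z}{F(z)}F'(z)\right) = \arg\!\left(n\cdot\frac{z\,\phi'(z)}{\phi(z)}\right) = \arg\!\left(\frac{z\,\phi'(z)}{\phi(z)}\right)$, since $n$ is a positive real. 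So it suffices to show $\arg\!\left(\frac{z\,\phi'(z)}{\phi(z)}\right) = O(\varepsilon)$ as $\varepsilon\to0$, uniformly on $S(\tau)$.

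Next I would control $\frac{z\phi'(z)}{\phi(z)}$ using the hypothesis $|\phi(w)/w - 1| < \varepsilon$ on $A(\tfrac{2}{5}R_k, \tfrac{3}{5}R_k)$ together with the univalence of $\phi$ on the slightly larger annulus $A(\tfrac14 R_k, \tfrac34 R_k)$. Write $g(w) := \phi(w)/w - 1$, so $|g| < \varepsilon$ on the middle annulus and $\phi(w) = w(1 + g(w))$. Then
\begin{equation}
\label{angle_proposal_eq2}
\frac{z\phi'(z)}{\phi(z)} = \frac{z\big(1 + g(z) + z g'(z)\big)}{z(1+g(z))} = 1 + \frac{z g'(z)}{1 + g(z)}.
\end{equation}
Since $|g| < \varepsilon < \tfrac12$, the denominator $1+g(z)$ is bounded away from $0$, so it remains to bound $|z g'(z)|$. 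This is where a Cauchy estimate enters: the function $g$ is holomorphic on $A(\tfrac14 R_k, \tfrac34 R_k)$ (as $\phi$ is univalent, hence holomorphic, there and $w\mapsto 1/w$ is holomorphic away from $0$) and bounded by $\varepsilon$ on the middle annulus. Applying the Cauchy integral formula for $g'$ on a circle of radius comparable to $R_k$ around $z \in S(\tau) \subset A(\tfrac25 R_k, \tfrac35 R_k)$ — say radius $\tfrac{1}{40}R_k$, which keeps the circle inside the annulus of univalence where $g$ is defined, though one should be a touch careful that this disk stays inside the region where the bound $|g|<\varepsilon$ holds; if not, shrink the radius to a fixed fraction of $R_k$ that does — gives $|g'(z)| \lesssim \varepsilon / R_k$, and hence $|z g'(z)| \lesssim \varepsilon$ since $|z| \asymp R_k$. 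Therefore $\frac{z\phi'(z)}{\phi(z)} = 1 + O(\varepsilon)$, so its argument is $O(\varepsilon)$ as $\varepsilon\to0$, with constant independent of $k$, $n$, $\tau$, which completes the proof.

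\textbf{Main obstacle.} The only genuinely delicate point is the Cauchy estimate for $g'$: the bound $|g|<\varepsilon$ is only assumed on the thin middle annulus $A(\tfrac25 R_k,\tfrac35 R_k)$, so one must choose the radius of the Cauchy circle to be a small enough fixed multiple of $R_k$ that the circle (and its interior, as needed) stays within the region where both $g$ is holomorphic and — for the estimate to give $O(\varepsilon)$ — where $|g|$ is controlled. A clean way around any mismatch is to observe that $\phi$ being univalent on $A(\tfrac14 R_k,\tfrac34 R_k)$ lets one also derive a uniform bound on $g$ on a slightly larger sub-annulus via the standard distortion/Koebe estimates for univalent functions, combined with the $\varepsilon$-bound on the middle annulus, so that a Cauchy estimate with radius a fixed fraction of $R_k$ applies; alternatively, one argues directly that $\phi'(z)/\phi(z) - 1/z$ is small by the same mechanism. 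In either formulation the point is routine once set up, but it is the step that needs care to get the $O(\varepsilon)$ uniform in all parameters.
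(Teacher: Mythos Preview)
Your proposal is correct and is essentially the paper's own argument: both reduce via Lemma~\ref{angle_change2} to bounding $\arg\big(n\cdot z\phi'(z)/\phi(z)\big)$, then use a Cauchy estimate on a disk of radius $\asymp R_k$ to show $\phi'(z)$ (equivalently your $zg'(z)$) is within $O(\varepsilon)$ of the expected value. The only cosmetic difference is that the paper sets $g(\zeta)=\phi(\zeta)-\zeta$ and estimates $\phi'(z)\in B(1,10\varepsilon)$ and $z/\phi(z)\in B(1,2\varepsilon)$ separately before multiplying, whereas you set $g(w)=\phi(w)/w-1$ and go directly to $z\phi'(z)/\phi(z)=1+O(\varepsilon)$; and the ``main obstacle'' you flag about the Cauchy circle possibly leaving the annulus where $|g|<\varepsilon$ is present verbatim in the paper's proof (which uses radius $\tfrac{1}{10}R_k$ without comment) and is handled, as you suggest, by shrinking the radius or noting that in the application the $\varepsilon$-bound actually holds on the larger annulus.
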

\begin{proof}
	Following Lemma \ref{angle_change2}, it is sufficient to estimate $\arg(z F'(z)/F(z))$. To that end, first observe that by the chain rule we have
	\begin{align}
	z \frac{F'(z)}{F(z)} = z \frac{n (\phi(z))^{n-1} \phi'(z)}{(\phi(z))^n} 
	= \frac{z}{\phi(z)} \cdot n \phi'(z).
	\end{align}
	Let $g(\zeta) = \phi(\zeta) - \zeta$. Then $g'(\zeta) = \phi'(\zeta) - 1$. If $z \in A(\frac{2}{5}R_k,\frac{3}{5}R_k)$, then $B(z,\frac{1}{10}R_k) \subset A(\frac{1}{4}R_k,\frac{3}{4}R_k)$, so that Cauchy estimates say we must have
	\begin{align*}
	|g'(z)| &\leq \frac{\max_{B(z,\frac{1}{10}R_k)} |g(\zeta)| }{\frac{1}{10}R_k}	\\
	&= \frac{10}{R_k} \cdot \max_{B(z,\frac{1}{10}R_k)} |\phi(\zeta) -\zeta| \\
	& = \frac{10}{R_k} \cdot  \max_{B(z,\frac{1}{10}R_k)}|\zeta|\cdot  |\phi(\zeta)/\zeta - 1| \\
	&\leq 10 \cdot \varepsilon
	\end{align*}
	It follows that for all $z \in A(\frac{2}{5}R_k,\frac{3}{5}R_k)$, we have $|\phi'(z) -1| < 10 \varepsilon$, so that $\phi'(z) \in B(1, 10\varepsilon).$ This means that $n\phi'(z) \in B(n,10 n\varepsilon).$ By assumption, we have $\frac{\phi(z)}{z} \in B(1,\varepsilon)$. Therefore if $\varepsilon < \frac{1}{2}$ we have $\frac{z}{\phi(z)} \in B(1,2\varepsilon)$.
	
	\begin{figure}[!htb]
		\centering	
		\scalebox{.9}{%% Creator: Inkscape 1.0 (4035a4fb49, 2020-05-01), www.inkscape.org
%% PDF/EPS/PS + LaTeX output extension by Johan Engelen, 2010
%% Accompanies image file '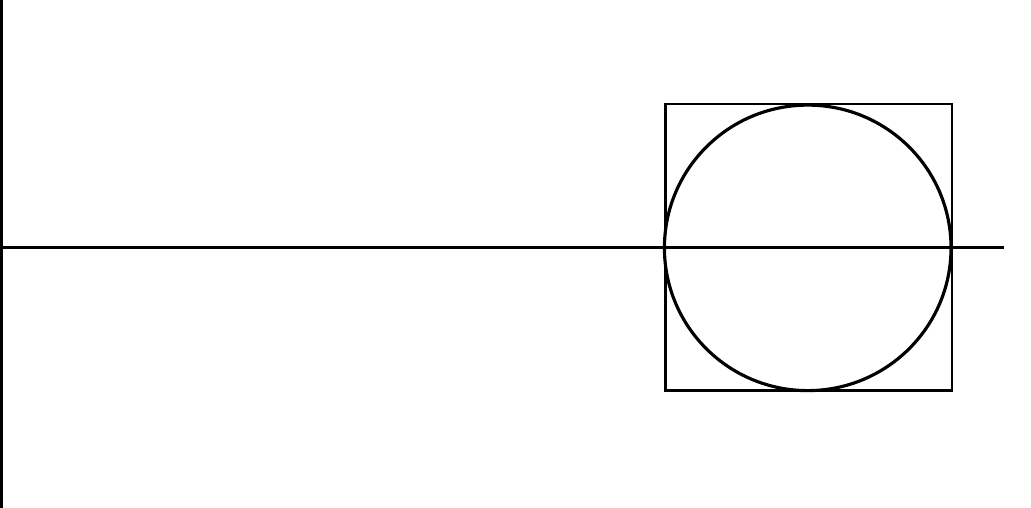' (pdf, eps, ps)
%%
%% To include the image in your LaTeX document, write
%%   \input{<filename>.pdf_tex}
%%  instead of
%%   \includegraphics{<filename>.pdf}
%% To scale the image, write
%%   \def\svgwidth{<desired width>}
%%   \input{<filename>.pdf_tex}
%%  instead of
%%   \includegraphics[width=<desired width>]{<filename>.pdf}
%%
%% Images with a different path to the parent latex file can
%% be accessed with the `import' package (which may need to be
%% installed) using
%%   \usepackage{import}
%% in the preamble, and then including the image with
%%   \import{<path to file>}{<filename>.pdf_tex}
%% Alternatively, one can specify
%%   \graphicspath{{<path to file>/}}
%% 
%% For more information, please see info/svg-inkscape on CTAN:
%%   http://tug.ctan.org/tex-archive/info/svg-inkscape
%%
\begingroup%
  \makeatletter%
  \providecommand\color[2][]{%
    \errmessage{(Inkscape) Color is used for the text in Inkscape, but the package 'color.sty' is not loaded}%
    \renewcommand\color[2][]{}%
  }%
  \providecommand\transparent[1]{%
    \errmessage{(Inkscape) Transparency is used (non-zero) for the text in Inkscape, but the package 'transparent.sty' is not loaded}%
    \renewcommand\transparent[1]{}%
  }%
  \providecommand\rotatebox[2]{#2}%
  \newcommand*\fsize{\dimexpr\f@size pt\relax}%
  \newcommand*\lineheight[1]{\fontsize{\fsize}{#1\fsize}\selectfont}%
  \ifx\svgwidth\undefined%
    \setlength{\unitlength}{296.70390199bp}%
    \ifx\svgscale\undefined%
      \relax%
    \else%
      \setlength{\unitlength}{\unitlength * \real{\svgscale}}%
    \fi%
  \else%
    \setlength{\unitlength}{\svgwidth}%
  \fi%
  \global\let\svgwidth\undefined%
  \global\let\svgscale\undefined%
  \makeatother%
  \begin{picture}(1,0.49291566)%
    \lineheight{1}%
    \setlength\tabcolsep{0pt}%
    \put(0,0){\includegraphics[width=\unitlength,page=1]{small_angle.pdf}}%
    \put(0.76373551,0.21188332){\color[rgb]{0,0,0}\makebox(0,0)[lt]{\lineheight{1.25}\smash{\begin{tabular}[t]{l}$n$\end{tabular}}}}%
    \put(0.199504,0.25875501){\color[rgb]{0,0,0}\makebox(0,0)[lt]{\lineheight{1.25}\smash{\begin{tabular}[t]{l}$\theta$\end{tabular}}}}%
    \put(0.69456975,0.15940558){\color[rgb]{0,0,0}\makebox(0,0)[lt]{\lineheight{1.25}\smash{\begin{tabular}[t]{l}$B(n,24n\epsilon)$\end{tabular}}}}%
    \put(0,0){\includegraphics[width=\unitlength,page=2]{small_angle.pdf}}%
  \end{picture}%
\endgroup%
}
		\caption{A schematic for the proof of Lemma \ref{angledistortion}.}
		\label{angledistortion_pic}
	\end{figure}

	Putting everything together, we have
	\begin{equation}
	\label{multiplying_balls}
	\frac{z}{\phi(z)} \cdot n \phi'(z) \in B(n,24n\varepsilon).
	\end{equation}
	Indeed, if $a \in B(1,2\varepsilon)$ and $b \in B(n, 10n\varepsilon)$ we have
	\begin{align*}
	|ab-n| &= |a(b-n) + an - n| \leq |a| \cdot |b-n| + n \cdot |a-1| \\
	&\leq (1+2\varepsilon) 10 n \varepsilon + 2\varepsilon n \\
	&= 12n\varepsilon + 20n \varepsilon^2 \\
	&= 12n\varepsilon (1+ \frac{5}{3} \varepsilon) \\
	&< 24n \varepsilon 
	\end{align*}
	whenever $\varepsilon < 3/5$. 
	Therefore, for all $\varepsilon$ sufficiently small we have
	$$\arg\left(z \frac{F'(z)}{F(z)}\right) \leq \arctan(24 \cdot 2 \varepsilon) = O(\varepsilon) \textrm{ as } \varepsilon \rightarrow 0.$$
	This proves the claim; see Figure \ref{angledistortion_pic}.	
\end{proof}

\begin{definition}\label{gamma_definition}	
	Let $\Omega_k$ be given for some $k \geq 1$.  Then the outermost boundary component of $\Omega_k$ is contained in $V_{k+1}$ by Theorem \ref{Julia_place}. Define
	\begin{equation}
	\label{Gammakn}
	\Gamma_{k,n} := \{z \in V_{k+1}\,:\, f_N^j(z) \in V_{k+j+1}, j = 0, 1,\dots,n\}.
	\end{equation}

	By Lemma \ref{Ak} and Lemma \ref{CoveringMapVk}, for $n \geq 1$ each $\Gamma_{k,n}$ is a topological annulus compactly contained inside of $\Gamma_{k, n-1}$, and $\Gamma_{k,1}$ is compactly contained inside of $\Gamma_{k,0} = V_{k+1}$. We define
	\begin{equation}
	\label{Gammak}
	\Gamma_k := \bigcap_{n=1}^{\infty} \Gamma_{k,n}.
	\end{equation}	
\end{definition}

 The remainder of this Section will be devoted to showing that $\Gamma_k$ is in fact a Jordan curve, and in the next Section we will show that it is $C^1$. 

\begin{definition}
	\label{foliation}
	Fix some $k \geq 1$ and let $n \geq 0$ be arbitrary. Each $V_{k+n+1}$ has a foliation of closed circles centered around the origin, including the inner and outer boundary of $V_{k+n+1}$. When $n = 0$ this is a foliation of $\Gamma_{k,0}$ which we denote by $\mathcal{U}_{k,0}$. When $n \geq 1$, by pulling this foliation back to $\Gamma_{k,n}$ by $f^{n}_N$ we obtain a foliation $\mathcal{U}_{k,n}$ of $\Gamma_{k,n}$ by analytic Jordan curves by Lemma \ref{CoveringMapVk}. 
\end{definition}

\begin{rem}
	Let $n \geq 1$. It is readily verified from (\ref{Gammakn}) that $f_N(\Gamma_{k,n}) = \Gamma_{k+1,n-1}$. Similarly, we can verify using Definition \ref{foliation} that if $\gamma \in \mathcal{U}_{k,n}$, then $f_N(\gamma) \in \mathcal{U}_{k+1,n-1}$.
\end{rem}

\begin{lem}
	\label{key_angle}
	Let $k \geq 1$, and suppose that $\gamma_n \in \mathcal{U}_{k,n}$ and $\gamma_m \in \mathcal{U}_{k,m}$ for $m > n \geq 0$. Suppose that $\gamma_n$ and $\gamma_m$ intersect at some point $z$. Let $\tau_n(z)$ and $\tau_m(z)$ denote the counter-clockwise oriented unit tangent vectors of $\gamma_n$ and $\gamma_m$ at $z$. Likewise, let $\nu_n(z)$ and $\nu_m(z)$ denote the outward pointing normal vectors of $\gamma_n$ and $\gamma_m$ at $z$. Then 
	\begin{equation}
	\label{key_angle_eq}
	|\tau_n(z) - \tau_m(z)| = |\nu_n(z) - \nu_m(z)| \leq O \left( \sum_{l=n}^{m-1} 2^{-\frac{\sqrt{N+k +l}}{4}} \right).
	\end{equation}
\end{lem}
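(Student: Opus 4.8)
The plan is to reduce the estimate to a telescoping sum of single-step angle changes, each of which is controlled by Lemma \ref{angledistortion}. First observe that $\gamma_n \in \mathcal{U}_{k,n}$ is, by Definition \ref{foliation}, the pullback under $f_N^n$ of a round circle $C$ centered at the origin inside $V_{k+n+1}$; similarly $\gamma_m$ is the pullback under $f_N^m$ of a round circle $C'$ inside $V_{k+m+1}$. Write $m = n + s$ with $s \geq 1$. Then $f_N^n(\gamma_n)$ is a round circle in $V_{k+n+1}$, while $f_N^n(\gamma_m) \in \mathcal{U}_{k+n, s}$ is the pullback under $f_N^s$ of a round circle in $V_{k+m+1}$. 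So after applying $f_N^n$ it suffices to compare, at the image point $z_n := f_N^n(z)$, the tangent direction of a genuine round circle with that of $f_N^{-s}$ of a round circle, and then transport the resulting angle bound back through the conformal map $f_N^n$ (which preserves angles, so the comparison is unchanged — this is the point of Lemma \ref{angle_change2}: the relevant quantity $\arg(w F'(w)/F(w))$ is conformally natural). Thus it is enough to prove, for each single level $l$ with $n \le l \le m-1$, that passing from $\mathcal{U}_{k,l}$ to $\mathcal{U}_{k,l+1}$ changes the tangent direction at a common point by $O\!\left(2^{-\sqrt{N+k+l}/4}\right)$, and then sum.

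Next I would isolate the single-step estimate. Fix $l$ and a point $z$ lying on $\gamma_l \in \mathcal{U}_{k,l}$ and $\gamma_{l+1} \in \mathcal{U}_{k,l+1}$. Apply $f_N^l$: we land at $w := f_N^l(z) \in V_{k+l+1}$, where $f_N^l(\gamma_l)$ is a round circle, and $f_N^l(\gamma_{l+1}) \in \mathcal{U}_{k+l,1}$ is a single preimage component under $f_N$ of a round circle sitting in $V_{k+l+2}$. On $V_{k+l+1}$ the map $f_N$ has the form $f_N(\zeta) = C_{k+l+1}(\phi_N^{-1}(\zeta))^{n_{k+l+1}}$ by Lemma \ref{PrePowerMap} (using that $V_{k+l+1} \subset A(\tfrac{5}{4}R_{k+l}, \tfrac{3}{4}R_{k+l+1})$ for $k+l \ge 1$), so up to the conformal change of coordinates $\phi_N^{-1}$ this is exactly the power-map situation of Lemma \ref{angledistortion} with $n = n_{k+l+1}$ and $\phi = \phi_N^{-1}$. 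By Theorem \ref{initial_identity} and Lemma \ref{AkEst}, on $V_{k+l+1}$ we have $|\phi_N^{-1}(\zeta)/\zeta - 1| = O\!\left(2^{-\sqrt{k+l+N-1}/4}\right)$ (this is $\varepsilon$ in the notation of Lemma \ref{angledistortion}, with the univalence hypothesis on $A(\tfrac14 R_{k+l+1}, \tfrac34 R_{k+l+1})$ supplied by $\phi_N$ being globally quasiconformal hence injective). The conclusion of Lemma \ref{angledistortion} then gives that the angle between the tangent of the round circle $f_N^l(\gamma_l)$ and the tangent of $f_N^l(\gamma_{l+1})$ at their intersection point is $O(\varepsilon) = O\!\left(2^{-\sqrt{N+k+l}/4}\right)$; note $\sqrt{k+l+N-1}$ and $\sqrt{k+l+N}$ differ by a bounded factor so the exponent matches the claim. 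Since $f_N^l$ is conformal on the relevant region (all iterates stay in $A$ and avoid the postsingular set by Lemma \ref{postcritical_set_B}), angles are preserved and the same bound holds between $\tau_l(z)$ and $\tau_{l+1}(z)$.

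Finally, the full estimate follows by the triangle inequality along the chain $\gamma_n, \gamma_{n+1}, \dots, \gamma_m$: one checks each consecutive pair intersects at a point near $z$ (they all pass through the foliation of nested annuli, and one can take the intersection with the radial segment through $z$, or argue that the level curves through $z$ for each foliation are nested and meet; the precise common point can be chosen so the single-step bounds apply with the constants uniform), so that
\[
|\tau_n(z) - \tau_m(z)| \;\leq\; \sum_{l=n}^{m-1} |\tau_l(z) - \tau_{l+1}(z)| \;\leq\; O\!\left(\sum_{l=n}^{m-1} 2^{-\frac{\sqrt{N+k+l}}{4}}\right),
\]
and the identity $|\tau_n - \tau_m| = |\nu_n - \nu_m|$ is immediate since the normal is the tangent rotated by $\pi/2$. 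I expect the main obstacle to be bookkeeping rather than anything deep: namely making precise the claim that the consecutive level curves $\gamma_l$ and $\gamma_{l+1}$ can be compared at a common point with uniform constants — one must verify that the nested annuli $\Gamma_{k,l}$ shrink in a controlled way so that the relevant points stay within the region where $\phi_N^{-1}$ enjoys the required $\varepsilon$-estimate, and that the $O$-constants from Lemma \ref{angledistortion} (which depend on $K(\phi_N)$ via Theorem \ref{initial_identity} and Remark \ref{constants}) are genuinely independent of $l$, $m$, $k$, and $N$. This uniformity is exactly what Proposition \ref{constant_dilatation} and Remark \ref{constants} were set up to provide, so it should go through.
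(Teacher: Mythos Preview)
Your approach is essentially the same as the paper's: reduce to consecutive levels by the triangle inequality, push forward by $f_N^{l}$ so that one of the two curves becomes a genuine round circle, and then invoke Lemma \ref{angledistortion} with $\varepsilon = O(2^{-\sqrt{N+k+l}/4})$ coming from Lemma \ref{AkEst}. One small simplification over what you wrote: your worry about finding a common point for each consecutive pair $\gamma_l,\gamma_{l+1}$ dissolves, since $\mathcal{U}_{k,l}$ is a \emph{foliation} of $\Gamma_{k,l}$ and $z \in \Gamma_{k,m} \subset \Gamma_{k,l}$ for every $n \le l \le m$, so there is a unique leaf $\gamma_l \in \mathcal{U}_{k,l}$ through the fixed point $z$ and the telescoping sum is taken at $z$ itself.
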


\begin{figure}[!htb]
	\centering	
	\scalebox{.9}{\input{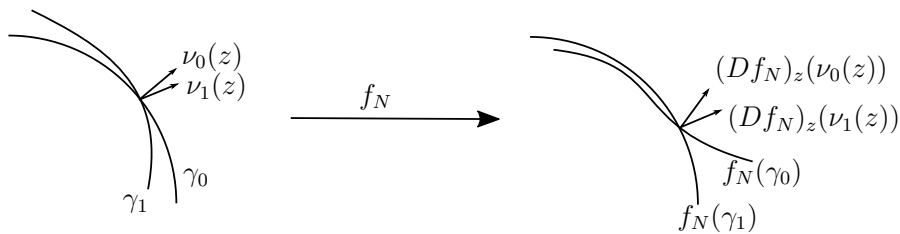}}
	\caption{A schematic for the proof of Lemma \ref{key_angle} in the key step of $m =1$ and $n=0$. In this case, $\gamma_0$ and $f_N(\gamma_1)$ are circles, and $(Df_N):T_z\C \rightarrow T_{f_N(z)}\C$ maps the outward pointing normal $\nu_1(z)$ to $\gamma_1$ to an outward pointing normal $(Df_N)_z(\nu_1(z))$ of the circle $f_N(\gamma_1)$. This allows us to apply Lemma \ref{angledistortion} to estimate the angle between $(Df_N)_z(\nu_1(z))$ and $(Df_N)_z(\nu_0(z))$, which coincides with the angle between $\nu_1(z)$ and $\nu_0(z)$.}
	\label{key_angle_pic}
\end{figure}

\begin{proof}
	We first consider the case $m =1$ and $n = 0$. In this case, $\gamma_0$ is a circle, and $f_N(\gamma_1)$ is a circle in $V_{k+2}$. Let $z$ be a point of intersection of $\gamma_0$ and $\gamma_1$, so that $\nu_0(z)$ and $\nu_1(z)$ are the corresponding outward pointing normal vectors based at $z$. Let $R$ be the ray through the origin passing through $z$, let $R'$ be the ray passing through $f_N(z)$. Since $f_N(\gamma_1)$ is a circle, $Df_N: T_z \C \rightarrow T_{f_N(z)}\C$ maps $\nu_1(z)$ to the outward pointing normal of a circle based at $f_N(z)$. Therefore, $\nu_0(z)$ coincides with the tangent vector to $R$ based at $z$, $(Df_N)_z(\nu_0)$ coincides with the tangent vector to $f_N(R)$ based at $f_N(z)$, and $(Df_N)_z(\nu_1)$ coincides with the tangent vector to $R'$ based at $f_N(z)$. Therefore, by Lemma \ref{angledistortion}, the angle between $(Df_N)_z(\nu_0)$ and $(Df_N)_z(\nu_1)$ is $O(2^{-\sqrt{k+N}})$. Since $f_N$ is conformal at $z$, we deduce that 
	\begin{equation*}
	|\tau_1(z) - \tau_0(z)| = |\nu_1(z) - \nu_0(z)| \leq O(2^{-\sqrt{k+N}}).
	\end{equation*}
	
	Next we consider the case of $m >1$ and $n = m-1$. By Lemma \ref{CoveringMapVk}, the angle between $\gamma_{m}$ and $\gamma_{m-1}$ at $z$ is the same as the angle between $f_N^{m-1}(\gamma_m)$ and $f_N^{m-1}(\gamma_{m-1})$ at the point $f_N^{m-1}(z)$. We also have that $f_N^{m-1}(\gamma_m) \in \mathcal{U}_{k+m-1,1}$ and $f_N^{m-1}(\gamma_{m-1}) \in \mathcal{U}_{k+m-1,0}$, so that $f_N^{m-1}(\gamma_{m-1})$ is a circle. Therefore, we have by Lemma \ref{angledistortion} that 
	$$|\tau_m(z) - \tau_{m-1}(z)| \leq O(2^{-\frac{\sqrt{N+k+m-1}}{4}}).$$	
	The Lemma now follows by applying the triangle inequality. Let $m > n$. Then
	$$|\tau_m(z) - \tau_n(z)| \leq \sum_{l=n}^{m-1} |\tau_{l}(z) - \tau_{l+1}(z)| \leq O\left(\sum_{l=n}^{m-1} 2^{-\frac{\sqrt{N+k +l}}{4}} \right).$$
	This proves the claim.
\end{proof}

\begin{cor}
	\label{foliation_almost_circles}
	Let $k \geq 1$, $n \geq 0$, and let $\gamma$ be any element of $\mathcal{U}_{k,n}$. Then there exists $M \in \N$ so that if $N \geq M$, $\gamma \cap \{re^{i\theta}\,: 0<r<\infty\}$ is a single point for any $\theta$.
\end{cor}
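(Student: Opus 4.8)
The plan is to show that each leaf $\gamma \in \mathcal{U}_{k,n}$ is a radial graph, i.e.\ can be written as $\gamma = \{ \rho(\theta) e^{i\theta} : \theta \in [0,2\pi) \}$ for a single-valued function $\rho$, which is exactly the statement that every ray from the origin meets $\gamma$ in one point. The key geometric input is Lemma~\ref{key_angle}: it controls how far the tangent (equivalently the outward normal) of $\gamma$ can deviate from that of the circle through the same point that belongs to $\mathcal{U}_{k,0}$. Indeed, applying Lemma~\ref{key_angle} with $m = n$ and the circle $\gamma_0 \in \mathcal{U}_{k,0}$ through an intersection point $z$, we get that the outward normal $\nu_n(z)$ of $\gamma$ at $z$ differs from the radial direction $\hat{z} = z/|z|$ by an angle
\begin{equation}
\label{foliation_normal_bound}
|\nu_n(z) - \hat{z}| \leq O\left( \sum_{l=0}^{n-1} 2^{-\frac{\sqrt{N+k+l}}{4}} \right) \leq O\left( \sum_{l=0}^{\infty} 2^{-\frac{\sqrt{N+k+l}}{4}} \right),
\end{equation}
and the right-hand side is bounded by a quantity that tends to $0$ as $N \to \infty$, uniformly in $k \geq 1$ and $n \geq 0$. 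So the first step is to fix $M$ large enough that this angular deviation is, say, less than $\pi/4$ for all $N \geq M$, all $k \geq 1$, all $n \geq 0$, and every point of every leaf of $\mathcal{U}_{k,n}$.

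Next I would translate this normal-direction bound into the graph property. Parametrize $\gamma$ by arclength; the tangent vector $\tau_n$ makes angle less than $\pi/4$ with the tangential direction $i\hat{z}$ at every point (since $\tau_n \perp \nu_n$ and $i\hat z \perp \hat z$). Writing $z(s) = r(s) e^{i\theta(s)}$ along $\gamma$, one has $\dot z = (\dot r + i r \dot\theta) e^{i\theta}$, and the condition that $\tau_n$ is within $\pi/4$ of $i\hat z$ says precisely that $|\dot r| < r |\dot\theta|$, hence $\dot\theta$ never vanishes. Therefore $\theta$ is strictly monotone along $\gamma$; since $\gamma$ is a Jordan curve (it is an analytic Jordan curve, being a pullback by the covering $f_N^n$ of a round circle via Lemma~\ref{CoveringMapVk} and Definition~\ref{foliation}) separating the origin from $\infty$, $\theta$ increases by exactly $2\pi$ over one full traversal. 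Monotonicity of $\theta$ is exactly the assertion that each ray $\{r e^{i\theta} : 0 < r < \infty\}$ meets $\gamma$ in exactly one point.

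One subtlety to address carefully: Lemma~\ref{key_angle} is stated for two leaves that \emph{intersect}, and here I am comparing $\gamma \in \mathcal{U}_{k,n}$ with the unique circle of $\mathcal{U}_{k,0}$ passing through a given point $z \in \gamma$ — that circle has radius $|z|$ and automatically passes through $z$, so the hypothesis is met at every point $z$ of $\gamma$, and we get the pointwise bound \eqref{foliation_normal_bound} at every $z \in \gamma$. I should also confirm that the implied constants in Lemma~\ref{key_angle}, which come from Lemma~\ref{angledistortion} (whose error term is $O(\varepsilon)$ with $\varepsilon \asymp 2^{-\sqrt{N+k+l}}$ via the estimate \eqref{phi_derivative_9} and Remark~\ref{k0}/Lemma~\ref{AkEst}), are absolute — independent of $k$, $n$, and $N$ — so that the geometric series in \eqref{foliation_normal_bound} really can be made uniformly small by taking $N$ large. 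The main obstacle is this uniformity bookkeeping: making sure the $O(\cdot)$ constant in Lemma~\ref{key_angle} does not secretly depend on $k$ or $N$ in a way that defeats the summation, and handling the endpoint leaves (the inner and outer boundary circles of $V_{k+1}$, which are genuine circles and trivially satisfy the conclusion) as well as the passage to the limit leaf $\Gamma_k$ later. But modulo that bookkeeping, the argument is: uniform normal-direction control $\Rightarrow$ $\theta$ monotone along each leaf $\Rightarrow$ each leaf is a radial graph.
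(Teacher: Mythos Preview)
Your proposal is correct and takes essentially the same approach as the paper: both use Lemma~\ref{key_angle} (with $\gamma_0$ the circle through a given point of $\gamma$) to bound the angle between the normal to $\gamma$ and the radial direction. The paper frames this as a contradiction---if a ray met $\gamma$ twice there would be a point of tangency, where the normal is perpendicular to the radial direction, violating Lemma~\ref{key_angle}---while you give the equivalent direct monotonicity-of-$\theta$ argument.
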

\begin{figure}[!htb]
	\centering	
	\scalebox{.9}{%% Creator: Inkscape 1.0 (4035a4fb49, 2020-05-01), www.inkscape.org
%% PDF/EPS/PS + LaTeX output extension by Johan Engelen, 2010
%% Accompanies image file '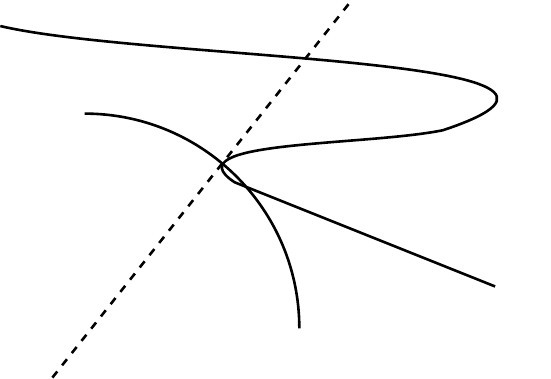' (pdf, eps, ps)
%%
%% To include the image in your LaTeX document, write
%%   \input{<filename>.pdf_tex}
%%  instead of
%%   \includegraphics{<filename>.pdf}
%% To scale the image, write
%%   \def\svgwidth{<desired width>}
%%   \input{<filename>.pdf_tex}
%%  instead of
%%   \includegraphics[width=<desired width>]{<filename>.pdf}
%%
%% Images with a different path to the parent latex file can
%% be accessed with the `import' package (which may need to be
%% installed) using
%%   \usepackage{import}
%% in the preamble, and then including the image with
%%   \import{<path to file>}{<filename>.pdf_tex}
%% Alternatively, one can specify
%%   \graphicspath{{<path to file>/}}
%% 
%% For more information, please see info/svg-inkscape on CTAN:
%%   http://tug.ctan.org/tex-archive/info/svg-inkscape
%%
\begingroup%
  \makeatletter%
  \providecommand\color[2][]{%
    \errmessage{(Inkscape) Color is used for the text in Inkscape, but the package 'color.sty' is not loaded}%
    \renewcommand\color[2][]{}%
  }%
  \providecommand\transparent[1]{%
    \errmessage{(Inkscape) Transparency is used (non-zero) for the text in Inkscape, but the package 'transparent.sty' is not loaded}%
    \renewcommand\transparent[1]{}%
  }%
  \providecommand\rotatebox[2]{#2}%
  \newcommand*\fsize{\dimexpr\f@size pt\relax}%
  \newcommand*\lineheight[1]{\fontsize{\fsize}{#1\fsize}\selectfont}%
  \ifx\svgwidth\undefined%
    \setlength{\unitlength}{158.57358184bp}%
    \ifx\svgscale\undefined%
      \relax%
    \else%
      \setlength{\unitlength}{\unitlength * \real{\svgscale}}%
    \fi%
  \else%
    \setlength{\unitlength}{\svgwidth}%
  \fi%
  \global\let\svgwidth\undefined%
  \global\let\svgscale\undefined%
  \makeatother%
  \begin{picture}(1,0.68727099)%
    \lineheight{1}%
    \setlength\tabcolsep{0pt}%
    \put(0,0){\includegraphics[width=\unitlength,page=1]{two_point_intersection.pdf}}%
    \put(0.4992787,0.0258948){\color[rgb]{0,0,0}\makebox(0,0)[lt]{\lineheight{1.25}\smash{\begin{tabular}[t]{l}$C$\\\end{tabular}}}}%
    \put(0.91327633,0.16141884){\color[rgb]{0,0,0}\makebox(0,0)[lt]{\lineheight{1.25}\smash{\begin{tabular}[t]{l}$\gamma$\end{tabular}}}}%
    \put(0,0){\includegraphics[width=\unitlength,page=2]{two_point_intersection.pdf}}%
    \put(0.50523282,0.46248051){\color[rgb]{0,0,0}\makebox(0,0)[lt]{\lineheight{1.25}\smash{\begin{tabular}[t]{l}$\nu_C(z)$\end{tabular}}}}%
    \put(0.29254319,0.51121009){\color[rgb]{0,0,0}\makebox(0,0)[lt]{\lineheight{1.25}\smash{\begin{tabular}[t]{l}$\nu_{\gamma}(z)$\end{tabular}}}}%
  \end{picture}%
\endgroup%
}
	\caption{A schematic for the proof of Corollary \ref{foliation_almost_circles}. If some ray $R$ passed through $\gamma$ at more than one point, there is a ray $R'$ tangent to $\gamma$ at some other point $z$. If $C$ is the circle centered at the origin passing through $z$, the normal vectors $\nu_{\gamma}(z)$ and $\nu_C(z)$ make an angle of $\pi/2$ with each other.}
	\label{two point intersection pic}
\end{figure}
\begin{proof}
	Suppose that some ray $R$ through the origin intersected $\gamma$ more than once. Since $\gamma$ is an analytic Jordan curve, this implies that there exists a point $z$ on $\gamma$ and a ray $R'$ such that $R'$ passes through $z$ and is tangent to $\gamma$. This in turn implies that the circle passing through $z$ centered at the origin makes an angle of $\pi/2$ with $\gamma$. For all $N$ sufficiently large, this is a contradiction to Lemma \ref{key_angle}.
\end{proof}

\begin{lem}
	\label{subannulus_Vk}
	Suppose that $k \geq 2$, $z \in V_k$, and $f_N(z) \in V_{k+1}$. Then 
	$$\phi_N^{-1}(z) \in A\left(\frac{1}{2} \left(\frac{1}{4}\right)^{\frac{1}{n_k}}R_k, \frac{1}{2} \left(\frac{3}{4}\right)^{\frac{1}{n_k}} R_k\right).$$ 
\end{lem}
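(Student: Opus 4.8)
The plan is to reverse-engineer the hypotheses $z\in V_k$ and $f_N(z)\in V_{k+1}$ through the formula for $f_N$ on this annulus. Recall that for $k\geq 2$ and $z\in A(\frac54 R_{k-1},\frac34 R_k)\supset V_k$ we have by Lemma \ref{PrePowerMap} the identity $f_N(z)=C_k\cdot(\phi_N^{-1}(z))^{n_k}$. Writing $w:=\phi_N^{-1}(z)$, the condition $f_N(z)\in V_{k+1}=A(\frac25 R_{k+1},\frac35 R_{k+1})$ becomes
\begin{equation*}
\frac25 R_{k+1} < C_k|w|^{n_k} < \frac35 R_{k+1}.
\end{equation*}
Using $R_{k+1}=C_k(\frac12 R_k)^{n_k}$, this simplifies to $\frac25(\frac12 R_k)^{n_k} < |w|^{n_k} < \frac35(\frac12 R_k)^{n_k}$, i.e.
\begin{equation*}
\left(\frac25\right)^{1/n_k}\cdot\frac12 R_k < |w| < \left(\frac35\right)^{1/n_k}\cdot\frac12 R_k.
\end{equation*}

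First I would check that $z\in V_k$ together with Lemma \ref{AkEst} (or the derivative bound \eqref{phi_derivative_9}) forces $w=\phi_N^{-1}(z)$ to lie in the annulus $A(\frac54 R_{k-1},\frac34 R_k)$, so that the formula of Lemma \ref{PrePowerMap} genuinely applies; since $V_k=A(\frac25 R_k,\frac35 R_k)$ and $\alpha_k,\beta_k$ are close to $1$ by \eqref{alpha_beta_close_1_eqn}, and $R_k$ is much larger than $R_{k-1}$ by Lemma \ref{Rkest}, this is an easy sandwich estimate. Then I would carry out the algebraic simplification above. The only remaining gap between the displayed interval for $|w|$ and the claimed interval $A\big(\frac12(\frac14)^{1/n_k}R_k,\ \frac12(\frac34)^{1/n_k}R_k\big)$ is the discrepancy between the constants $\frac25,\frac35$ on one side and $\frac14,\frac34$ on the other. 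But $\frac14<\frac25$ and $\frac35<\frac34$, so the interval I derived is contained in the claimed one, which is all that is asserted. (In fact one should double-check the direction of containment is the one stated: $(\frac14)^{1/n_k}<(\frac25)^{1/n_k}$ since $x\mapsto x^{1/n_k}$ is increasing, so the claimed lower endpoint is smaller; similarly the claimed upper endpoint is larger; hence the inclusion holds.)

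I do not expect any serious obstacle here — the statement is essentially a direct computation once the power-map formula from Lemma \ref{PrePowerMap} is invoked. The one point requiring a little care is the very first step: verifying that the hypothesis $z\in V_k\subset A_k$ guarantees $\phi_N^{-1}(z)$ stays inside the region $A(\frac54 R_{k-1},\frac34 R_k)$ on which $f_N$ is purely the monomial $C_k(\phi_N^{-1})^{n_k}$, rather than being affected by the lower-order term or the transition near $|z|=R_{k-1}$. This follows from \eqref{phi_inverse_identity} of Lemma \ref{AkEst} with $\frac25 R_k\le|z|\le\frac35 R_k$, giving $\beta_k\cdot\frac25 R_k\le|\phi_N^{-1}(z)|\le\alpha_k\cdot\frac35 R_k$, and then \eqref{alpha_beta_close_1_eqn} plus $R_{k}\ge 4R_{k-1}^2$ (inequality \eqref{Rkest_eqn4}) places this comfortably inside $(\frac54 R_{k-1},\frac34 R_k)$.
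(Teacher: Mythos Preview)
Your proposal is correct and follows essentially the same approach as the paper: both invoke the formula $f_N(z)=C_k(\phi_N^{-1}(z))^{n_k}$ on $V_k$ (via Lemma~\ref{PrePowerMap} with the index shifted) and then unwind the condition $f_N(z)\in V_{k+1}$ using $R_{k+1}=C_k(\tfrac12 R_k)^{n_k}$; the paper phrases each bound as a contradiction while you compute directly, which is equivalent. One small remark: in your final paragraph the hypothesis of Lemma~\ref{PrePowerMap} is on $z$, not on $\phi_N^{-1}(z)$, so once you have observed $V_k\subset A(\tfrac54 R_{k-1},\tfrac34 R_k)$ (which you did in your first line) no further check is needed.
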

\begin{proof}
	Recall that by Lemma \ref{PrePowerMap} for $z \in V_k$ we have $f_N = h_N \circ \phi_N^{-1}(z) = C_k(\phi_N^{-1}(z))^{n_k}.$ Suppose for the sake of a contradiction that $|\phi_N^{-1}(z)| \leq \frac{1}{2} (\frac{1}{4})^{\frac{1}{n_k}}R_k$. Then 
	\begin{align*}
	|f_N(z)| &= C_k(|\phi_N^{-1}(z)|)^{n_k} 
	\leq C_k\left(\frac{1}{2} \left(\frac{1}{4}\right)^{\frac{1}{n_k}}R_k\right)^{n_k} 
	= \frac{1}{4} R_{k+1} 
	< \frac{2}{5} R_{k+1}.
	\end{align*}
	Since we have $f_N(z) \in V_{k+1}$, we have a contradiction and deduce that we must have $|\phi_N^{-1}(z)| > \frac{1}{2} (\frac{1}{4})^{\frac{1}{n_k}}R_k$ 
	
	Similarly, suppose for the sake of a contradiction that $|\phi_N^{-1}(z)| \geq \frac{1}{2} (\frac{3}{4})^{\frac{1}{n_k}}R_k$. Then
	\begin{align*}
	|f_N(z)| \geq \frac{3}{4}R_{k+1} > \frac{3}{5} R_{k+1}.
	\end{align*}
	Since we have $f_N(z) \in V_{k+1}$, we have a contradiction and deduce that we must have $|\phi_N^{-1}(z)| < \frac{1}{2} (\frac{3}{4})^{\frac{1}{n_k}}R_k$. The claim follows.  
\end{proof}

\begin{lem}
	\label{foliation_derivative_estimate}
	Let $k, n\geq1$ and suppose that $z \in \Gamma_{k,n}$. Then 
	\begin{equation}
	\label{foliation_derivative_estimate_eqn}
	|f_N'(z)| \geq \frac{1}{4} n_{k+1} \frac{R_{k+2}}{R_{k+1}}
	\end{equation}
\end{lem}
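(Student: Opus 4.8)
The plan is to compute $|f_N'(z)|$ directly using the fact that $f_N$ is essentially a power map, corrected by $\phi_N$, on the relevant region. Since $z \in \Gamma_{k,n} \subset V_{k+1}$ and $f_N(z) \in V_{k+2}$ (by the definition of $\Gamma_{k,n}$ and Definition \ref{gamma_definition}), Lemma \ref{PrePowerMap} applies: on $\phi_N^{-1}(V_{k+1})$ we have $f_N \circ \phi_N(w) = C_{k+1} w^{n_{k+1}}$, hence $f_N(z) = C_{k+1}(\phi_N^{-1}(z))^{n_{k+1}}$. Differentiating, $f_N'(z) = n_{k+1} C_{k+1} (\phi_N^{-1}(z))^{n_{k+1}-1} (\phi_N^{-1})'(z)$. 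So I must lower-bound three factors: the power $|\phi_N^{-1}(z)|^{n_{k+1}-1}$, the constant $C_{k+1}$, and the derivative $|(\phi_N^{-1})'(z)|$.

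First I would handle $|(\phi_N^{-1})'(z)|$: by (\ref{phi_derivative_9}) we have $\tfrac12 \le |\phi_N'(w)| \le 2$ for $w \in V_{k+1}$ (valid since $N$ is large and $z\in\Gamma_{k,n}$ forces $\phi_N^{-1}(z)\in V_{k+1}$ by Lemma \ref{subannulus_Vk}), and hence $|(\phi_N^{-1})'(z)| = |\phi_N'(\phi_N^{-1}(z))|^{-1} \geq \tfrac12$. Next, for the power factor, Lemma \ref{subannulus_Vk} gives $|\phi_N^{-1}(z)| \geq \tfrac12(\tfrac14)^{1/n_{k+1}} R_{k+1}$, so $|\phi_N^{-1}(z)|^{n_{k+1}-1} \geq \big(\tfrac12 R_{k+1}\big)^{n_{k+1}-1} \cdot (\tfrac14)^{(n_{k+1}-1)/n_{k+1}} \geq \tfrac14 \big(\tfrac12 R_{k+1}\big)^{n_{k+1}-1}$. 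Combining with $C_{k+1}(\tfrac12 R_{k+1})^{n_{k+1}} = R_{k+2}$ (from Definition \ref{parameter_defn2}, restated after Definition \ref{bigR}), we get
\[
|f_N'(z)| \;\geq\; n_{k+1} \cdot C_{k+1} \cdot \tfrac14\Big(\tfrac12 R_{k+1}\Big)^{n_{k+1}-1} \cdot \tfrac12
\;=\; \frac{n_{k+1}}{8}\cdot\frac{R_{k+2}}{\tfrac12 R_{k+1}}
\;=\; \frac{n_{k+1}}{4}\cdot\frac{R_{k+2}}{R_{k+1}},
\]
which is exactly (\ref{foliation_derivative_estimate_eqn}). (One should double-check the constant bookkeeping: the statement asks for $\tfrac14 n_{k+1} R_{k+2}/R_{k+1}$, and the chain of inequalities above produces precisely this once the factors $\tfrac14$, $\tfrac12$, and $\tfrac{1}{\tfrac12}=2$ are multiplied; if a cruder bound suffices one can absorb slack.)

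The main obstacle is a bookkeeping subtlety rather than a conceptual one: I must make sure Lemma \ref{subannulus_Vk} genuinely applies, i.e. that $z \in V_{k+1}$ and $f_N(z) \in V_{k+2}$ — both follow from $z \in \Gamma_{k,n}$ and the convention in Definition \ref{gamma_definition} that $\Gamma_{k,n} \subset V_{k+1}$ with $f_N^j(z) \in V_{k+j+1}$ — and that the index shift is handled correctly (the lemma as stated uses $k$, here we apply it with $k$ replaced by $k+1$, which requires $k+1 \geq 2$, true since $k \geq 1$). The estimate (\ref{phi_derivative_9}) was set up precisely for $z \in V_k$, so its use on $\phi_N^{-1}(z) \in V_{k+1}$ is legitimate. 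No hard analysis is needed beyond these citations and the Cauchy-type derivative bound already established.
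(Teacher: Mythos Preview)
Your proof is correct and follows essentially the same approach as the paper: differentiate the identity $f_N(z)=C_{k+1}(\phi_N^{-1}(z))^{n_{k+1}}$, apply Lemma~\ref{subannulus_Vk} (with index shifted to $k+1$) to bound $|\phi_N^{-1}(z)|$ from below, use (\ref{phi_derivative_9}) to bound $|(\phi_N^{-1})'(z)|\ge \tfrac12$, and simplify via $C_{k+1}(\tfrac12 R_{k+1})^{n_{k+1}}=R_{k+2}$. The arithmetic and the order of grouping differ only cosmetically from the paper's computation.
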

\begin{proof}
	By the chain rule we have \[ f_N'(z) = n_{k+1} C_{k+1}(\phi_N^{-1}(z))^{n_{k+1}-1}(\phi_N^{-1})'(z).  \] Since $z \in \Gamma_{k,n}$ and $k\geq1$, we have $f_N(z) \in V_{k+2}$. Therefore by Lemma \ref{subannulus_Vk}, we must have \[\phi_N^{-1}(z) \in A\left(\frac{1}{2} \left(\frac{1}{4}\right)^{\frac{1}{n_{k+1}}}R_{k+1}, \frac{1}{2} \left(\frac{3}{4}\right)^{\frac{1}{n_{k+1}}} R_{k+1}\right). \] Therefore, by (\ref{phi_derivative_9}), we have
	\begin{align*}
	|f'_N(\phi_N^{-1})(z)| &\geq n_{k+1} C_{k+1}(|\phi_N^{-1}|)^{n_{k+1}-1} |(\phi_N^{-1})'(z)| \\
	&\geq \frac{1}{2} n_{k+1} C_{k+1}\left(\frac{1}{2} \left(\frac{1}{4}\right)^{\frac{1}{n_{k+1}}} R_{k+1}\right)^{n_{k+1}-1} \\
	&= \frac{1}{2} n_{k+1} \left(\frac{1}{4}\right)^{\frac{n_{k+1}-1}{n_{k+1}}} \frac{1}{\frac{1}{2}R_{k+1}} R_{k+2} \\
	&\geq \frac{2}{2\cdot 4} n_{k+1} \frac{R_{k+2}}{R_{k+1}} \\
	&= \frac{1}{4} n_{k+1} \frac{R_{k+2}}{R_{k+1}}
	\end{align*}
	This is precisely what we wanted to show.
\end{proof}

\begin{rem}
	It follows from Corollary \ref{foliation_almost_circles} that if $\Gamma$ is an element of $\mathcal{U}_{k,n}$, then we can parameterize $\Gamma$  as 
	$$\gamma: [0,2\pi] \rightarrow \Gamma$$
	$$\gamma(\theta) = r(\theta)\cdot e^{i\theta}$$ for some $\mathbb{R}^+$-valued function $r(\theta)$ on $[0,2\pi]$. Equivalently, we have
	$$\gamma(\theta) = (r(\theta)\cos(\theta), r(\theta)\sin(\theta)).$$
\end{rem}

\begin{definition}
	\label{width}
	Fix $k \geq 1$ and $m \geq 1$. Let $R_{\theta}$ be the ray starting at the origin with angle $\theta$. Define
	\begin{equation}
	\label{width_eqn}
	w_{k,m}(\theta) = \length(R_{\theta} \cap \Gamma_{k,m}),
	\end{equation}
	and,
	\begin{equation}
	\label{width_eqn_sup}
	w_{k,m} = \sup_{\theta \in [0,2\pi)} w_{k,m}(\theta)
	\end{equation}
\end{definition}

\noindent
\begin{lem}
	Fix $k \geq 1$. Then 
	$$w_{k,m} \leq \frac{8^{m-1}}{n_{k+1} \cdots n_{k+m}} R_{k+1}.$$
	In particular, $w_{k,m} \rightarrow 0$ as $m \rightarrow \infty$.
\end{lem}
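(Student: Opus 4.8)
The plan is to bound $w_{k,m}(\theta)$ for each fixed $\theta$ by pushing the radial slice $I_\theta:=R_\theta\cap\Gamma_{k,m}$ forward by $f_N^m$ \emph{in one step} onto the round annulus $V_{k+m+1}=A(\tfrac25R_{k+m+1},\tfrac35R_{k+m+1})$ and comparing lengths; doing this in a single step, rather than inducting on $m$ and peeling off one iterate at a time, avoids ever having to control the radial oscillation of the intermediate leaves of $\mathcal{U}_{k+j,m-j}$. First I would record the geometry of $I_\theta$: since $\Gamma_{k,m}$ is a topological annulus surrounding $0$ (Definition \ref{gamma_definition}) foliated by $\mathcal{U}_{k,m}$, and, by Corollary \ref{foliation_almost_circles} (for $N$ large), every ray through $0$ meets every leaf of $\mathcal{U}_{k,m}$ in exactly one point, $I_\theta$ is a single radial segment of length $w_{k,m}(\theta)$ crossing each leaf of $\mathcal{U}_{k,m}$ exactly once and with endpoints on the two boundary leaves. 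By Lemma \ref{CoveringMapVk} applied $m$ times, $f_N^m$ restricts on $\Gamma_{k,m}$ to a holomorphic, locally injective map carrying $\mathcal{U}_{k,m}$ onto the circular foliation of $V_{k+m+1}$ (Definition \ref{foliation}); hence $\sigma:=f_N^m(I_\theta)$ meets each circle $\{|w|=r\}$ of that foliation exactly once, so $|{\cdot}|$ is strictly monotone along $\sigma$ and $\sigma$ is a simple arc joining the two boundary circles of $V_{k+m+1}$.

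The key step is that $\sigma$ is almost radial. At a point $z\in I_\theta$, the leaf $\gamma\in\mathcal{U}_{k,m}$ through $z$ and the circle $\gamma_0\in\mathcal{U}_{k,0}$ through $z$ intersect at $z$, so Lemma \ref{key_angle} with $n=0$ bounds the angle between their tangents at $z$ by $O\!\big(\sum_{l\ge0}2^{-\sqrt{N+k+l}/4}\big)=:O(\Sigma)$; since $I_\theta$ is perpendicular to $\gamma_0$ and $f_N^m$ is conformal, $\sigma$ meets the circles of $V_{k+m+1}$ at angle $\tfrac{\pi}{2}-O(\Sigma)$, i.e.\ it makes angle $O(\Sigma)$ with the radial direction at each of its points. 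Because $\Sigma\to0$ as $N\to\infty$, for $N$ large $\cos\big(O(\Sigma)\big)\ge\tfrac45$; orienting $\sigma$ so that $|{\cdot}|$ increases along it, strict monotonicity forces $\frac{d}{ds}|\sigma(s)|=\cos(\text{angle with the outward radial})\ge\tfrac45$, hence
\[
\length(\sigma)\ \le\ \tfrac54\Big(\tfrac35R_{k+m+1}-\tfrac25R_{k+m+1}\Big)\ =\ \tfrac14\,R_{k+m+1}.
\]

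It remains to bound $|(f_N^m)'|$ from below on $I_\theta$. For $z\in\Gamma_{k,m}$ the remark following Definition \ref{foliation} gives $f_N^j(z)\in\Gamma_{k+j,m-j}$ for $0\le j\le m$, and for $j\le m-1$ both indices of $\Gamma_{k+j,m-j}$ are $\ge1$, so Lemma \ref{foliation_derivative_estimate} applies to every factor of the chain rule:
\[
\big|(f_N^m)'(z)\big|\ =\ \prod_{j=0}^{m-1}\big|f_N'\!\big(f_N^j(z)\big)\big|\ \ge\ \prod_{j=0}^{m-1}\frac14\,n_{k+j+1}\,\frac{R_{k+j+2}}{R_{k+j+1}}\ =\ \frac{n_{k+1}\cdots n_{k+m}}{4^{m}}\cdot\frac{R_{k+m+1}}{R_{k+1}},
\]
the $R$-ratios telescoping. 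Since $\length(\sigma)=\int_{I_\theta}|(f_N^m)'|\,|dz|\ge\big(\min_{I_\theta}|(f_N^m)'|\big)\,w_{k,m}(\theta)$, combining this with the previous two displays yields
\[
w_{k,m}(\theta)\ \le\ \frac{\tfrac14R_{k+m+1}}{4^{-m}\,n_{k+1}\cdots n_{k+m}\,R_{k+m+1}/R_{k+1}}\ =\ \frac{4^{m-1}}{n_{k+1}\cdots n_{k+m}}\,R_{k+1}\ \le\ \frac{8^{m-1}}{n_{k+1}\cdots n_{k+m}}\,R_{k+1}.
\]
Taking the supremum over $\theta$ gives the asserted bound, and since $n_{k+j}=2^{N+k+j-1}$ is much larger than $8$ for every $j\ge1$, the right-hand side is at most $n_{k+1}^{-1}2^{-(m-1)}R_{k+1}$, which tends to $0$ as $m\to\infty$.

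The step I expect to be the main obstacle is the ``almost radial'' estimate for $\sigma$: it is precisely there that one must combine the angular control of the leaves (Lemma \ref{key_angle}) with the conformality of $f_N^m$ in order to keep $\length(\sigma)$ comparable to the width $\tfrac15R_{k+m+1}$ of $V_{k+m+1}$, and it is the reason for carrying out the whole comparison through the single map $f_N^m$ onto the round annulus $V_{k+m+1}$ rather than recursively.
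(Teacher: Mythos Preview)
Your proof is correct and takes a genuinely different route from the paper's. The paper argues \emph{inductively}: it applies one iterate of $f_N$ to the radial slice $S_\theta=R_\theta\cap\Gamma_{k,m}$, uses Lemma~\ref{angledistortion} to show $f_N(S_\theta)\subset\Gamma_{k+1,m-1}$ is nearly radial and hence has length $\le 2\,w_{k+1,m-1}$, and combines this with the single--step derivative bound from Lemma~\ref{foliation_derivative_estimate} to obtain the recursion
\[
w_{k,m}\ \le\ \frac{8}{n_{k+1}}\,\frac{R_{k+1}}{R_{k+2}}\,w_{k+1,m-1},
\]
which is then iterated down to the base case $w_{k,1}\le R_{k+1}/n_{k+1}$ coming from Lemma~\ref{subannulus_Vk}.

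You instead push forward \emph{in one step} by $f_N^m$ onto the genuinely round annulus $V_{k+m+1}$, invoke the accumulated angle estimate Lemma~\ref{key_angle} (with $n=0$) to make the image arc $\sigma$ uniformly close to radial, and then divide the resulting length bound $\length(\sigma)\le\tfrac14R_{k+m+1}$ by the telescoped chain--rule lower bound on $|(f_N^m)'|$. This sidesteps the somewhat delicate point in the recursive argument of controlling the radial extent $r_2-r_1$ of $f_N(S_\theta)$ in terms of $w_{k+1,m-1}$ when the boundary leaves of $\Gamma_{k+1,m-1}$ are not circles; in your version the target annulus is round, so the radial extent is exactly $\tfrac15R_{k+m+1}$. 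As a bonus you obtain the sharper constant $4^{m-1}$ in place of $8^{m-1}$. The price you pay is that you use the full Lemma~\ref{key_angle} rather than only the single--step Lemma~\ref{angledistortion}, but since Lemma~\ref{key_angle} is already available and is itself a telescoped consequence of Lemma~\ref{angledistortion}, this is no real cost.
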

\begin{proof}
	Fix $k \geq 1$. Note that by Lemma \ref{subannulus_Vk}, we have 
	\begin{align*}
	w_{k,1} \leq \frac{1}{2}R_{k+1}\left( \left(\frac{3}{4}\right)^{\frac{1}{n_{k+1}}} - \left(\frac{1}{4}\right)^{\frac{1}{n_{k+1}}}\right) \leq \frac{1}{2}R_{k+1} \frac{2}{n_{k+1}} = \frac{R_{k+1}}{n_{k+1}},
	\end{align*}
	where for the second inequality we have used the easily verified fact that $(3/4)^x-(1/4)^x\leq 2x$ for all sufficiently small $x>0$.
	
	Therefore, for all $k \geq 1$, we have 
	\begin{equation}
	w_{k,1} \leq \frac{R_{k+1}}{n_{k+1}}
	\end{equation}
	
	Next, fix some $m > 1$, and define $S_{\theta} = R_{\theta} \cap \Gamma_{k,m}.$ Then $f_N(S_{\theta})$ is a curve in $\Gamma_{k+1,m-1}$ with one endpoint on the inner boundary of $\Gamma_{k+1,m-1}$ and the other endpoint on the outer boundary of $\Gamma_{k+1,m-1}$. Then by Lemma \ref{angledistortion}, the angle between $f_N(S_{\theta})$ and any radial segment it meets is $O(2^{-\frac{\sqrt{N+k}}{4}}).$ We will now show that this implies that the length of $f_N(S_{\theta})$ is bounded above by $2w_{k+1,m-1}$.
	
	Indeed, first observe that by Lemma \ref{angledistortion}, $f_N(S_{\theta})$ intersects any circle centered at $0$ at most once. Thus, we may parameterize $f_N(S_{\theta})$ as $\gamma(r) = r\exp(i\theta(r))$ for $r \in [r_1,r_2]$ with $r_2 - r_1 \leq w_{k+1,m-1}$ and some $[0,2\pi)$-valued function $\theta(r)$. Suppose that the radial arc \[ \sigma(r) := r\exp(i\theta_0)\textrm{, } r \in [r_1,r_2] \] intersects $f_N(S_{\theta})$ at some point $z_0 = r_0e^{i\theta_0}$. Then the angle $\varphi$ between the tangent vectors of $\sigma$ and $f_N(S_{\theta})$ at the point $z_0$ is given by the usual dot-product formula
	$$\cos(\varphi) = \frac{ \textrm{Re}(\sigma'(r_0))\textrm{Re}(\gamma'(r_0)) +  \textrm{Im}(\sigma'(r_0))\textrm{Im}(\gamma'(r_0))}{||\sigma'(r_0)|| \cdot ||\gamma'(r_0)||} = \frac{1}{\sqrt{1 + \left(r_0\theta'(r_0)\right)^2}}.$$
	Recall that $\varphi = O(2^{-\sqrt{N+k}})$ by Lemma \ref{angledistortion}. Thus, for all sufficiently large $N$, we have that $\cos(\varphi) \in [0.9,1]$.  It follows that $|\theta'(r_0)| \leq \frac{1}{r_0}.$ The above reasoning holds for all $r_0\in[r_1,r_2]$, and so it follows that:
	\begin{equation*}
	\length(f_N(S_{\theta})) = \int_{r_1}^{r_2} ||\gamma'(r)|| dr = \int_{r_1}^{r_2}  \sqrt{1 + \left(r\theta'(r)\right)^2}dr  \leq \sqrt{2}(r_2 - r_1) < 2 w_{k+1,m-1}.
	\end{equation*}
	On the other hand, we can establish a lower bound for the length of $f_N(S_{\theta})$ using Lemma \ref{foliation_derivative_estimate}. Indeed, we have:
	\begin{equation*}
	\length(f_N(S_{\theta})) = \int_{S_{\theta}} |f'_N(z)| |dz| \geq w_{k,m}(\theta) \cdot \frac{n_{k+1}}{4}\frac{R_{k+2}}{R_{k+1}}.
	\end{equation*}
	Therefore, we have
	\begin{equation*}
	w_{k,m} \leq 2 w_{k+1,m-1} \frac{4}{n_{k+1}} \frac{R_{k+1}}{R_{k+2}} = \frac{8 w_{k+1,m-1} }{n_{k+1}} \frac{R_{k+1}}{R_{k+2}}.
	\end{equation*}
	Therefore, we have for all $k \geq 1$ and all $m \geq 1$ that 
	\begin{equation*}
	w_{k,m} \leq \frac{8^{m-1}}{n_{k+1}\cdots n_{k+m}} w_{k+m-1,1} \frac{R_{k+1}}{R_{k+m}} \leq \frac{8^{m-1}}{n_{k+1}\cdots n_{k+m-1}} \frac{R_{k+1}}{R_{k+m}}\frac{R_{k+m}}{n_{k+m}} = \frac{8^{m-1}}{n_{k+1}\cdots n_{k+m}} R_{k+1}
	\end{equation*}
	This is what we wanted to show, and it also follows that $w_{k,m} \rightarrow \infty$ as $m \rightarrow \infty$, as desired.
\end{proof}

\begin{thm}\label{jordancurvetheorem}
	For each $k \geq 1$, $\Gamma_k$ is a Jordan curve. Furthermore, $\Gamma_k$ intersects any ray $\{z: \emph{arg}(z)=\theta\}$ in exactly one point. 
\end{thm}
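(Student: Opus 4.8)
The plan is to realize $\Gamma_k$ as a nested intersection of the closed topological annuli $\overline{\Gamma_{k,n}}$ and to show this intersection is a Jordan curve by exhibiting it as the graph of a continuous function $r(\theta)$ in polar coordinates. First I would record the structural facts already available: by Definition \ref{gamma_definition} (using Lemma \ref{Ak} and Lemma \ref{CoveringMapVk}) each $\Gamma_{k,n}$ is a topological annulus compactly contained in $\Gamma_{k,n-1}$, so $\Gamma_k=\bigcap_n\overline{\Gamma_{k,n}}$ is a nonempty compact connected full-of-holes... more precisely a continuum that separates the plane (the inner and outer boundary circles of $V_{k+1}$ lie in different complementary components of every $\overline{\Gamma_{k,n}}$, hence of $\Gamma_k$). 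The key quantitative input is the width estimate just proved: $w_{k,m}\to 0$ as $m\to\infty$, i.e. the radial extent of $\Gamma_{k,m}$ along every ray shrinks to $0$.

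The main step is the following. Fix an angle $\theta$ and consider the ray $R_\theta$. By Corollary \ref{foliation_almost_circles}, for $N$ large every leaf of the foliation $\mathcal U_{k,n}$ meets $R_\theta$ in exactly one point; in particular the two boundary circles of $\Gamma_{k,n}$ each meet $R_\theta$ once, so $R_\theta\cap\overline{\Gamma_{k,n}}$ is a single closed subarc of $R_\theta$, of length $w_{k,n}(\theta)\le w_{k,n}$. These arcs are nested (since $\overline{\Gamma_{k,n+1}}\subset\overline{\Gamma_{k,n}}$) and their lengths tend to $0$, so by the nested interval property $R_\theta\cap\Gamma_k=\bigcap_n\left(R_\theta\cap\overline{\Gamma_{k,n}}\right)$ is a single point; call its modulus $r(\theta)$. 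This already gives the "exactly one point" assertion. It remains to check $\theta\mapsto r(\theta)$ is continuous: given $\varepsilon>0$, pick $n$ with $w_{k,n}<\varepsilon/2$; the set $\overline{\Gamma_{k,n}}$ is compact and, being a genuine closed topological annulus, $r(\theta')$ stays within $\varepsilon$ of $r(\theta)$ once $\theta'$ is close enough to $\theta$, because both $r(\theta),r(\theta')$ lie in $R_\bullet\cap\overline{\Gamma_{k,n}}$ and the radial-coordinate function of the (compact) inner and outer boundary Jordan curves of $\Gamma_{k,n}$ — each of which is a single-valued continuous function of the angle by Corollary \ref{foliation_almost_circles} applied to those boundary leaves — is uniformly continuous. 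Squeezing $r(\theta')$ between these two continuous bounds, whose gap is $<\varepsilon/2<\varepsilon$, forces $|r(\theta)-r(\theta')|<\varepsilon$. Hence $r$ is continuous (and $2\pi$-periodic), so $\theta\mapsto r(\theta)e^{i\theta}$ is a continuous injection of the circle onto $\Gamma_k$, i.e. $\Gamma_k$ is a Jordan curve.

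I would then note that this simultaneously proves the second sentence of the theorem: each ray $\{\arg z=\theta\}$ meets $\Gamma_k$ in the single point $r(\theta)e^{i\theta}$.

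The step I expect to be the main obstacle — or at least the one needing the most care — is the continuity of $r(\theta)$, specifically arguing that $r(\theta)$ is trapped between the radial graphs of the inner and outer boundary curves of $\Gamma_{k,n}$ with a uniformly small gap. This requires knowing not just that individual leaves of $\mathcal U_{k,n}$ are radial graphs (Corollary \ref{foliation_almost_circles}) but that $\overline{\Gamma_{k,n}}$ is exactly the closed region between its two boundary graphs; this follows from $\Gamma_{k,n}$ being a topological annulus foliated by such graphs, but one should spell out that the complementary components of $\overline{\Gamma_{k,n}}$ are the "inside of the inner graph" and "outside of the outer graph". An alternative, slightly slicker route avoiding the foliation bookkeeping is a direct Hausdorff-distance argument: $\mathrm{diam}$-wise the sets $R_\theta\cap\overline{\Gamma_{k,n}}$ shrink uniformly (by $w_{k,n}\to0$) and vary continuously in $\theta$ because $\overline{\Gamma_{k,n}}$ is compact, so the limiting point depends continuously on $\theta$; I would present whichever is shorter once the boundary-graph description is nailed down.
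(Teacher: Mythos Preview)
Your proposal is correct and essentially identical to the paper's proof. The paper parametrizes the inner and outer boundaries of $\Gamma_{k,n}$ as radial graphs $r_n^{\mathrm{in}}(\theta),\,r_n^{\mathrm{out}}(\theta)$ via Corollary~\ref{foliation_almost_circles}, shows these are uniformly Cauchy using the same squeeze $|r_m^{\mathrm{in}}(\theta)-r_n^{\mathrm{in}}(\theta)|\le r_n^{\mathrm{out}}(\theta)-r_n^{\mathrm{in}}(\theta)=w_{k,n}(\theta)\to 0$, and identifies the common uniform limit with $\Gamma_k$; your nested-interval definition of $r(\theta)$ followed by the continuity-by-squeezing argument is just a repackaging of the same steps.
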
	
\begin{proof}
	Fix $k\geq1$. By Corollary \ref{foliation_almost_circles}, there exist Jordan-curve parameterizations of the form \begin{align}\label{ccparametrization} \gamma_n^{\textrm{in}}(\theta) = r_n^{\textrm{in}}(\theta)e^{i\theta}\textrm{,    } \theta \in [0,2\pi] \\ \gamma_n^{\textrm{out}}(\theta) = r_n^{\textrm{out}}(\theta)e^{i\theta}\textrm{,    } \theta \in [0,2\pi] \end{align} of the inner and outer boundaries (respectively) of $\Gamma_{k,n}$. Let $m\geq n$. Then, by Lemma \ref{cauchy_estimate} and since $\Gamma_{k,m} \subset \Gamma_{k,n}$, we have the estimate
	\begin{equation}\label{innerouter} |r_m^{\textrm{in}}(\theta) - r_n^{\textrm{in}}(\theta)| = r_m^{\textrm{in}}(\theta) - r_n^{\textrm{in}}(\theta) \leq r_n^{\textrm{out}}(\theta) - r_n^{\textrm{in}}(\theta) = w_{k,n} \xrightarrow{n\rightarrow\infty}0. \end{equation} By (\ref{innerouter}) we can conclude that $\gamma_n^{\textrm{in}}$ has a continuous limit: \[ \gamma^{\textrm{in}}(\theta):=r^{\textrm{in}}(\theta)e^{i\theta}\textrm{,    } \theta \in [0,2\pi]. \] Similar reasoning allows us to conclude that $\gamma_n^{\textrm{out}}$ has a continuous limit: \[ \gamma^{\textrm{out}}(\theta):=r^{\textrm{out}}(\theta)e^{i\theta}\textrm{,    } \theta \in [0,2\pi], \] and moreover by (\ref{innerouter}), \begin{equation}\label{innerouter=}  \gamma^{\textrm{out}}([0,2\pi])=\gamma^{\textrm{in}}([0,2\pi]). \end{equation} Note that $\gamma^{\textrm{in}}([0,2\pi])$ is a Jordan curve since $r^{\textrm{in}}$ is continuous. Furthermore, $\gamma^{\textrm{in}}([0,2\pi]) \subset \Gamma_k$ since $r^{\textrm{in}}(\theta)\geq r_n^{\textrm{in}}(\theta)$ for all $n$ and $\theta$. Moreover, we must have $\Gamma_k \subset \gamma^{\textrm{in}}([0,2\pi])$ by (\ref{innerouter=}). Thus $\Gamma_k=\gamma^{\textrm{in}}([0,2\pi]) $ is a Jordan curve.

	%	 if we suppose by way of contradiction that $\Gamma_k \subsetneq \gamma^{\textrm{in}}([0,2\pi])$, then we would have 

	%	It follows by Lemma \ref{cauchy_estimate} that $r_n(\theta)$ is uniformly Cauchy, and hence $r_n(\theta)$ converges uniformly to some continuous function $r(\theta)$. Define \[ \gamma(\theta):=r(\theta)e^{i\theta}\textrm{,    } \theta \in [0,2\pi]. \] Observe $\gamma([0,2\pi])$ is a Jordan curve since $r(\theta)$ is continuous. We claim that $\gamma([0,2\pi])=\Gamma_k$.  First observe that $\gamma([0,2\pi]) \subset \Gamma_k$ since $r(\theta)\geq r_n(\theta)$ for all $n$ and $\theta$.
	
	%It remains to show that $\Gamma_k \subset \gamma([0,2\pi])$. Suppose by way of contradiction that  $\Gamma_k \subsetneq \gamma([0,2\pi])$. By completely analogous considerations, we may consider parameterizations $\tilde{\gamma}_n(\theta)=\tilde{r}_n(\theta)e^{i\theta}$ of the outer boundaries of $\Gamma_{k,n}$ for all $n$ and prove that there is a limit $\tilde{\gamma}(\theta)=\tilde{r}(\theta)e^{i\theta}$. If $\Gamma_k \subsetneq \gamma([0,2\pi])$, then it must be the case that $\tilde{\gamma}[0,2\pi]\not=$ 

	%	\vspace{5mm} 
	
	%	so that $\gamma_n(\theta)$ converges uniformly to some continuous and injective function $\gamma(\theta)$ which satisfies $\gamma(0) = \gamma(2\pi)$. This limit function $\gamma(\theta)$ gives a parameterization of $\Gamma_k$ as a Jordan curve.
\end{proof}

\section{Smooth Fatou Boundary Components}\label{C1proofsection}

In this Section, we continue our study of the set $Z_1$. We will first prove that each Jordan curve $\Gamma_k$ is in fact a $C^1$ curve (see Theorem \ref{C1} below). Then we will conclude that the set $Z_1$ is a disjoint union of $C^1$ curves, and in particular has dimension $1$. We begin with a precise definition of a $C^1$ curve:

\begin{definition} We say that a Jordan curve $\Gamma$ is $C^1$ if there exists a $C^1$ parametrization $\gamma: [0,2\pi]\rightarrow\mathbb{C}$ of the curve $\Gamma$ satisfying $\gamma'(\theta)\not=0$ for all $\theta\in[0,2\pi]$.
\end{definition}

\begin{thm}
	\label{C1}
	For every $k \geq 1$, $\Gamma_k$ is $C^1$. 
\end{thm}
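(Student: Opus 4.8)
The plan is to upgrade the parametrizations $\gamma_n^{\textrm{in}}(\theta) = r_n^{\textrm{in}}(\theta) e^{i\theta}$ from Theorem \ref{jordancurvetheorem} from $C^0$-convergence to $C^1$-convergence. Since each $\Gamma_{k,n}$ has analytic boundary curves (being pullbacks of circles under the conformal covering maps of Lemma \ref{CoveringMapVk}), the functions $r_n^{\textrm{in}}$ are smooth, and it suffices to show that the sequence $(r_n^{\textrm{in}})'$ is uniformly Cauchy on $[0,2\pi]$; then the limit $r^{\textrm{in}}$ is $C^1$, and since $\gamma^{\textrm{in}}(\theta) = r^{\textrm{in}}(\theta)e^{i\theta}$ always has $|\gamma^{\textrm{in}}{}'(\theta)|^2 = (r^{\textrm{in}})'(\theta)^2 + r^{\textrm{in}}(\theta)^2 \geq r^{\textrm{in}}(\theta)^2 > 0$, this gives a $C^1$ parametrization with non-vanishing derivative.

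The key observation is that $(r_n^{\textrm{in}})'(\theta)/r_n^{\textrm{in}}(\theta)$ controls the angle that $\gamma_n^{\textrm{in}}$ makes with the radial direction: if $\varphi_n(\theta)$ denotes the angle between the tangent vector to $\gamma_n^{\textrm{in}}$ at $\gamma_n^{\textrm{in}}(\theta)$ and the tangent to the circle $|z| = r_n^{\textrm{in}}(\theta)$ at that point, then $\tan(\varphi_n(\theta)) = (r_n^{\textrm{in}})'(\theta)/r_n^{\textrm{in}}(\theta)$ (the standard polar-coordinate formula). So controlling $(r_n^{\textrm{in}})'$ is equivalent to controlling the deviation of the tangent direction of $\gamma_n^{\textrm{in}}$ from the circular tangent direction — and \emph{this is exactly what Lemma \ref{key_angle} provides}. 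Indeed, I would argue: the inner boundary curve $\gamma_n^{\textrm{in}}$ of $\Gamma_{k,n}$ lies in $\mathcal{U}_{k,n}$ (it is the pullback of the inner boundary circle of $V_{k+n+1}$), and the inner boundary $\gamma_0^{\textrm{in}}$ of $\Gamma_{k,0} = V_{k+1}$ is a genuine circle, an element of $\mathcal{U}_{k,0}$. Hence for $m > n$, at any point where $\gamma_m^{\textrm{in}}$ and $\gamma_n^{\textrm{in}}$ meet, Lemma \ref{key_angle} bounds the angle between their tangents by $O\big(\sum_{l=n}^{m-1} 2^{-\sqrt{N+k+l}/4}\big)$. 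But to compare $(r_n^{\textrm{in}})'(\theta)$ and $(r_m^{\textrm{in}})'(\theta)$ at the \emph{same} angle $\theta$ rather than at an intersection point, I would also invoke Lemma \ref{key_angle} comparing each $\gamma_n^{\textrm{in}}$ against the circle through $\gamma_n^{\textrm{in}}(\theta)$ (which is an element of $\mathcal{U}_{k,0}$ passing through that point, up to noting the foliations are nested and Lemma \ref{key_angle} applies to any two leaves through a common point): this shows $|\varphi_n(\theta)| = O\big(\sum_{l=0}^{n-1} 2^{-\sqrt{N+k+l}/4}\big)$, and more importantly $|\varphi_m(\theta') - \varphi_n(\theta')| = O\big(\sum_{l=n}^{m-1} 2^{-\sqrt{N+k+l}/4}\big)$ at common points $z = \gamma_n^{\textrm{in}}(\theta')= \gamma_m^{\textrm{in}}(\theta')$ — and since $\gamma_n^{\textrm{in}}$ meets every ray exactly once (Corollary \ref{foliation_almost_circles}), and the radial displacement $|r_m^{\textrm{in}}(\theta) - r_n^{\textrm{in}}(\theta)| \leq w_{k,n} \to 0$ is also controlled, I can transfer the angle estimate to a uniform estimate $|(r_m^{\textrm{in}})'(\theta) - (r_n^{\textrm{in}})'(\theta)| = O\big(\sum_{l=n}^{\infty} 2^{-\sqrt{N+k+l}/4}\big)$ for all $\theta$, using $\tan$ being bi-Lipschitz near $0$ and $r_n^{\textrm{in}}(\theta) \asymp R_{k+1}$ being bounded above and below.

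Since $\sum_{l\geq 1} 2^{-\sqrt{N+k+l}/4} < \infty$, the tail $\sum_{l \geq n} 2^{-\sqrt{N+k+l}/4} \to 0$ as $n \to \infty$, so $\big((r_n^{\textrm{in}})'\big)_n$ is uniformly Cauchy on $[0,2\pi]$; combined with the uniform convergence $r_n^{\textrm{in}} \to r^{\textrm{in}}$ from Theorem \ref{jordancurvetheorem}, a standard theorem gives $r^{\textrm{in}} \in C^1([0,2\pi])$ with $(r^{\textrm{in}})' = \lim_n (r_n^{\textrm{in}})'$. Then $\gamma^{\textrm{in}}(\theta) = r^{\textrm{in}}(\theta)e^{i\theta}$ is a $C^1$ parametrization of $\Gamma_k$ with $\gamma^{\textrm{in}}{}'(\theta) \neq 0$ everywhere, proving $\Gamma_k$ is $C^1$.

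\textbf{Main obstacle.} The delicate point I expect is the passage from the \emph{angle} estimate to a \emph{derivative} estimate at a common angle $\theta$: Lemma \ref{key_angle} is stated for tangent vectors at a point where two leaves intersect, and one must be careful that comparing $\gamma_n^{\textrm{in}}$ and $\gamma_m^{\textrm{in}}$ "at the same $\theta$" is legitimate — because $\gamma_n^{\textrm{in}}(\theta)$ and $\gamma_m^{\textrm{in}}(\theta)$ are generally different points on the same ray. The clean way around this is to fix $\theta$, let $z_n = \gamma_n^{\textrm{in}}(\theta)$, let $C_n$ be the circle $|w| = |z_n|$ (an element of $\mathcal{U}_{k,0}$), and apply Lemma \ref{key_angle} to the pair $(\gamma_n^{\textrm{in}}, C_n)$ at $z_n$ to get $|\varphi_n(\theta)| = |\tau_n(z_n) - \tau_0^{C_n}(z_n)| \leq O\big(\sum_{l=0}^{n-1} 2^{-\sqrt{N+k+l}/4}\big)$, hence a Cauchy estimate $|\varphi_m(\theta) - \varphi_n(\theta)| \leq O\big(\sum_{l=n}^{\infty} 2^{-\sqrt{N+k+l}/4}\big)$ uniformly in $\theta$; since $(r_n^{\textrm{in}})'(\theta) = r_n^{\textrm{in}}(\theta)\tan\varphi_n(\theta)$ with $r_n^{\textrm{in}}(\theta) \in [\tfrac{2}{5}R_{k+1}, \tfrac{3}{5}R_{k+1}]$ converging uniformly, the product converges uniformly as well. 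One must double-check that Lemma \ref{key_angle}'s hypothesis (that both curves lie in some $\mathcal{U}_{k,\bullet}$ through the common point, with the angle bound holding) is met here — the circle $C_n$ through $z_n$ is the unique leaf of $\mathcal{U}_{k,0}$ through $z_n$, and the proof of Lemma \ref{key_angle} only uses that one of the two curves is circular, so this is fine.
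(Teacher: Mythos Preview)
Your approach has a genuine gap at the step ``hence a Cauchy estimate $|\varphi_m(\theta) - \varphi_n(\theta)| \leq O\big(\sum_{l=n}^{\infty} 2^{-\sqrt{N+k+l}/4}\big)$.'' From the bound $|\varphi_n(\theta)| \leq O\big(\sum_{l=0}^{n-1} a_l\big)$ you cannot conclude that $(\varphi_n(\theta))_n$ is Cauchy; a uniformly bounded sequence need not converge. The underlying problem is that Lemma \ref{key_angle} compares tangent directions of two leaves \emph{at a point where they meet}, but the inner boundaries $\gamma_n^{\textrm{in}}$ and $\gamma_m^{\textrm{in}}$ of the nested annuli $\Gamma_{k,n}\supset\Gamma_{k,m}$ never intersect. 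Your workaround---comparing each $\gamma_n^{\textrm{in}}$ to the circle through $z_n=\gamma_n^{\textrm{in}}(\theta)$---yields only a uniform bound on $|\varphi_n(\theta)|$, not a telescoping Cauchy estimate. To salvage the argument you would have to compare the tangent of $\gamma_m^{\textrm{in}}$ at $z_m$ to the tangent of the leaf $\eta_n\in\mathcal{U}_{k,n}$ through $z_m$ (which Lemma \ref{key_angle} does control) and then argue that the tangent of $\eta_n$ at $z_m$ is close to the tangent of $\gamma_n^{\textrm{in}}$ at the nearby point $z_n$; this last step requires a uniform modulus of continuity for the tangent field of $\mathcal{U}_{k,n}$ in the radial direction, which is not supplied by Lemma \ref{key_angle} and would need a separate argument.

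The paper avoids this difficulty by a different construction. Rather than using the inner boundaries of $\Gamma_{k,n}$, it fixes a point $z_0\in\Gamma_k$ and, for each $m$, pulls back the circle through $f^m(z_0)$ to obtain a curve $\gamma_0^m$ through $z_0$; all these approximants share the common point $z_0$ (indeed a dense set of common points), so the derivatives can be compared at the same point. Using the relation $f(z)=C_j(\phi^{-1}(z))^{n_j}$ on $V_j$ and the chain rule, the paper derives an explicit product formula
\[
(\gamma_0^m)'(\theta)=i\,\phi(\gamma_0^m(\theta))\prod_{j=1}^{m-1}\frac{\phi(f^j\gamma_0^m(\theta))}{f^j\gamma_0^m(\theta)}\prod_{j=0}^{m-1}\frac{1}{\phi'(f^j\gamma_0^m(\theta))},
\]
and convergence (and non-vanishing) of the derivative follows directly from the summable estimates $|\phi(z)/z-1|,\ |\phi'(z)-1|=O(2^{-\sqrt{N+k}/4})$ on $V_k$. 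This bypasses entirely the need to compare tangents of non-intersecting curves.
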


\noindent We will consider the case $k=1$ to simplify notation, and we will fix a point $z_0\in\Gamma_1$ throughout this Section. We will sometimes omit the subscript $N$ from $f_N$ and $\phi_N$ and simply write $f$ or $\phi$.

\begin{definition} For $m\geq1$, let $s_m$ be such that the circle $|z|=s_m$ passes through $f^m(z_0)\in V_{2+m}$ (see Figure \ref{pullbackcurves}), and define \begin{equation} \gamma_m^m(\theta):=s_m\exp(i\theta) \textrm{ for } \theta\in[0,2\pi]. \end{equation} For $0\leq k<m$, define \begin{equation}\label{parametrizations} \gamma_k^m(\theta):= \phi\left(\hspace{-2mm}\sqrt[\leftroot{-1}\uproot{7}n_{k+2}]{\frac{\gamma_{k+1}^m(n_{k+2}\cdot\theta)}{C_{k+2}}}\right) \textrm{ for } \theta\in[0,2\pi],\end{equation} where the branch of $\sqrt[\leftroot{-0}\uproot{7}n_{k+2}]{\cdot}$ chosen depends on $\theta$ and is such that (\ref{parametrizations}) defines a parametrization of a Jordan curve surrounding $0$.
\end{definition}

	\begin{figure}[!h]
		\centering
		\scalebox{.4}{%% Creator: Inkscape 1.0.1 (c497b03c, 2020-09-10), www.inkscape.org
%% PDF/EPS/PS + LaTeX output extension by Johan Engelen, 2010
%% Accompanies image file '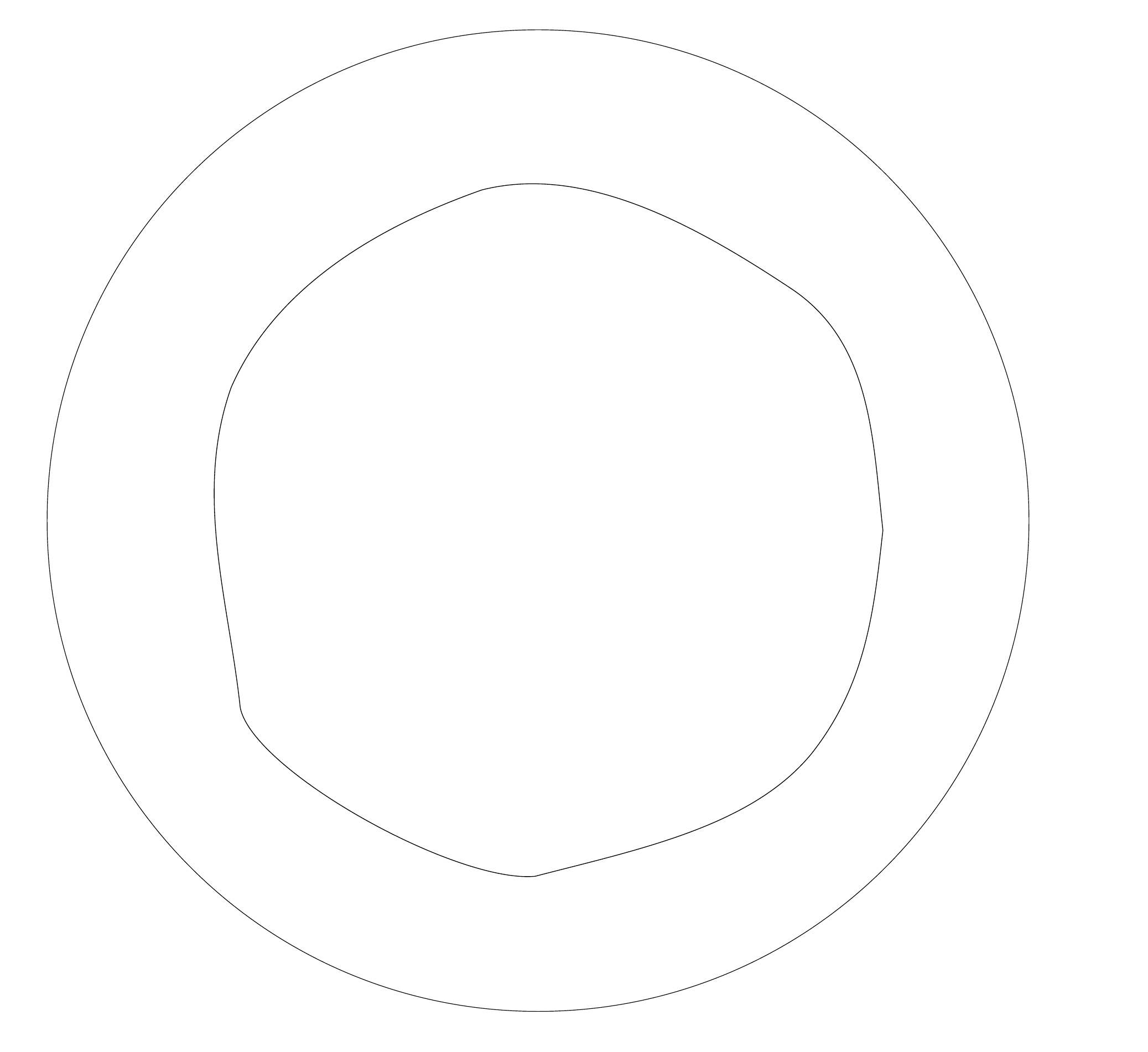' (pdf, eps, ps)
%%
%% To include the image in your LaTeX document, write
%%   \input{<filename>.pdf_tex}
%%  instead of
%%   \includegraphics{<filename>.pdf}
%% To scale the image, write
%%   \def\svgwidth{<desired width>}
%%   \input{<filename>.pdf_tex}
%%  instead of
%%   \includegraphics[width=<desired width>]{<filename>.pdf}
%%
%% Images with a different path to the parent latex file can
%% be accessed with the `import' package (which may need to be
%% installed) using
%%   \usepackage{import}
%% in the preamble, and then including the image with
%%   \import{<path to file>}{<filename>.pdf_tex}
%% Alternatively, one can specify
%%   \graphicspath{{<path to file>/}}
%% 
%% For more information, please see info/svg-inkscape on CTAN:
%%   http://tug.ctan.org/tex-archive/info/svg-inkscape
%%
\begingroup%
  \makeatletter%
  \providecommand\color[2][]{%
    \errmessage{(Inkscape) Color is used for the text in Inkscape, but the package 'color.sty' is not loaded}%
    \renewcommand\color[2][]{}%
  }%
  \providecommand\transparent[1]{%
    \errmessage{(Inkscape) Transparency is used (non-zero) for the text in Inkscape, but the package 'transparent.sty' is not loaded}%
    \renewcommand\transparent[1]{}%
  }%
  \providecommand\rotatebox[2]{#2}%
  \newcommand*\fsize{\dimexpr\f@size pt\relax}%
  \newcommand*\lineheight[1]{\fontsize{\fsize}{#1\fsize}\selectfont}%
  \ifx\svgwidth\undefined%
    \setlength{\unitlength}{597.47994971bp}%
    \ifx\svgscale\undefined%
      \relax%
    \else%
      \setlength{\unitlength}{\unitlength * \real{\svgscale}}%
    \fi%
  \else%
    \setlength{\unitlength}{\svgwidth}%
  \fi%
  \global\let\svgwidth\undefined%
  \global\let\svgscale\undefined%
  \makeatother%
  \begin{picture}(1,0.91671408)%
    \lineheight{1}%
    \setlength\tabcolsep{0pt}%
    \put(0.50822096,0.4648449){\color[rgb]{0,0,0}\makebox(0,0)[lt]{\lineheight{1.25}\smash{\begin{tabular}[t]{l}\scalebox{3}{$z_0$}\end{tabular}}}}%
    \put(0,0){\includegraphics[width=\unitlength,page=1]{pullbackcurves.pdf}}%
    \put(0.78489529,0.78623233){\color[rgb]{0,0,0}\makebox(0,0)[lt]{\lineheight{1.25}\smash{\begin{tabular}[t]{l}\scalebox{3}{$\gamma_m^m$}\end{tabular}}}}%
    \put(0.67334758,0.69533376){\color[rgb]{0,0,0}\makebox(0,0)[lt]{\lineheight{1.25}\smash{\begin{tabular}[t]{l}\scalebox{3}{$\gamma_{m-1}^m$}\end{tabular}}}}%
    \put(0,0){\includegraphics[width=\unitlength,page=2]{pullbackcurves.pdf}}%
    \put(0.53340038,0.58880533){\color[rgb]{0,0,0}\makebox(0,0)[lt]{\lineheight{1.25}\smash{\begin{tabular}[t]{l}\scalebox{3}{$\gamma_{0}^m$}\end{tabular}}}}%
    \put(0,0){\includegraphics[width=\unitlength,page=3]{pullbackcurves.pdf}}%
  \end{picture}%
\endgroup%
}
		\caption{Illustrated is a brief sketch of the curves $\gamma_k^m$ for $0\leq k \leq m$. }
		\label{pullbackcurves}
	\end{figure}

\begin{rem} By precomposing $\gamma_0^m$ with a translation of $[0,2\pi]$ mod $2\pi$, we may assume there is $\theta_0\in[0,2\pi]$ not depending on $m$ with $z_0=\gamma_0^m(\theta_0)\in\gamma_0^m([0,2\pi])$.
\end{rem}

\begin{lem}\label{chainruleformula} Let $\theta\in[0,2\pi]$. Then \begin{equation}\label{simplifiedeqtn2} (\gamma_0^m)'(\theta)=i\phi(\gamma_0^m(\theta))\cdot \prod_{k=1}^{m-1}\frac{\phi(f^k\circ\gamma^m_0(\theta))}{f^k\circ\gamma^m_0(\theta)}\cdot \prod_{k=0}^{m-1}\frac{1}{\phi'(f^k\circ\gamma^m_0(\theta))}. \end{equation}
\end{lem}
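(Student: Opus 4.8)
The statement is a direct computation via the chain rule applied iteratively to the recursive definition (\ref{parametrizations}). The plan is to unwind the definition one level at a time, differentiate, and multiply the resulting factors together. Concretely, write $\gamma_k^m(\theta) = \phi\big(\psi_k(\theta)\big)$ where $\psi_k(\theta) := \big(\gamma_{k+1}^m(n_{k+2}\theta)/C_{k+2}\big)^{1/n_{k+2}}$ is the chosen branch. First I would differentiate this relation: $(\gamma_k^m)'(\theta) = \phi'(\psi_k(\theta)) \cdot \psi_k'(\theta)$. Then I would compute $\psi_k'(\theta)$ using the power rule and the chain rule: since $\psi_k(\theta)^{n_{k+2}} = \gamma_{k+1}^m(n_{k+2}\theta)/C_{k+2}$, implicit differentiation gives $n_{k+2}\psi_k(\theta)^{n_{k+2}-1}\psi_k'(\theta) = n_{k+2}(\gamma_{k+1}^m)'(n_{k+2}\theta)/C_{k+2}$, hence
\[
\psi_k'(\theta) = \frac{(\gamma_{k+1}^m)'(n_{k+2}\theta)}{C_{k+2}\,\psi_k(\theta)^{n_{k+2}-1}} = \frac{\psi_k(\theta)\,(\gamma_{k+1}^m)'(n_{k+2}\theta)}{\gamma_{k+1}^m(n_{k+2}\theta)}.
\]
Combining, $(\gamma_k^m)'(\theta) = \phi'(\psi_k(\theta))\,\psi_k(\theta)\,\dfrac{(\gamma_{k+1}^m)'(n_{k+2}\theta)}{\gamma_{k+1}^m(n_{k+2}\theta)}$.

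Next I would exploit the key identity $f = f_N$ satisfies $f\circ\phi(w) = C_{k+2} w^{n_{k+2}}$ on the relevant annulus (Lemma \ref{PrePowerMap}), so that $f^k\circ\gamma_0^m(\theta) = \gamma_k^m(\theta)$ for each $k$ — this is exactly why the curves $\gamma_k^m$ were built by taking $n_{k+2}$-th roots and applying $\phi$. Granting this (it follows by induction on $k$ from the construction, since $f(\gamma_k^m(\theta)) = f\circ\phi(\psi_k(\theta)) = C_{k+2}\psi_k(\theta)^{n_{k+2}} = \gamma_{k+1}^m(n_{k+2}\theta)$... here one must be careful about the argument doubling, but the \emph{point set} $\gamma_k^m([0,2\pi])$ maps onto $\gamma_{k+1}^m([0,2\pi])$, and with the right reparametrization $f^k\circ\gamma_0^m(\theta)=\gamma_k^m(\theta)$), we have $\psi_k(\theta) = \phi^{-1}(\gamma_k^m(\theta)) = \phi^{-1}(f^k\circ\gamma_0^m(\theta))$. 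Substituting into the recursion:
\[
(\gamma_k^m)'(\theta) = \frac{\phi'(\phi^{-1}(f^k\circ\gamma_0^m(\theta)))\cdot \phi^{-1}(f^k\circ\gamma_0^m(\theta))}{f^k\circ\gamma_0^m(\theta)}\cdot (\gamma_{k+1}^m)'(n_{k+2}\theta).
\]
Wait — I would instead keep the cleaner form $(\gamma_k^m)'(\theta) = \phi'(\psi_k)\psi_k \cdot \frac{(\gamma_{k+1}^m)'(n_{k+2}\theta)}{\gamma_{k+1}^m(n_{k+2}\theta)}$ and note $\phi(\psi_k) = \gamma_k^m = f^k\circ\gamma_0^m$, $\gamma_{k+1}^m(n_{k+2}\theta) = f^{k+1}\circ\gamma_0^m(\theta)$. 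Then iterate this identity from $k=0$ up to $k=m-1$, at the top using $(\gamma_m^m)'(\cdot) = i s_m e^{i(\cdot)} = i\gamma_m^m(\cdot) = i f^m\circ\gamma_0^m(\theta)$ (evaluated at the appropriate multiple of $\theta$). The telescoping product of the factors $\phi'(\psi_k)\psi_k/f^{k+1}(\ldots)$ for $k=0,\dots,m-1$, together with $\phi(\psi_k)=f^k\circ\gamma_0^m$, rearranges precisely into (\ref{simplifiedeqtn2}): the $i\phi(\gamma_0^m(\theta))$ comes from $\phi(\psi_0)=\gamma_0^m$ wait — rather the leading $i f^m\circ\gamma_0^m$ from the top cancels against one factor in the denominator, leaving $i\phi(\gamma_0^m(\theta))$ hmm; I would track the indices carefully so the surviving numerator is $i\phi(\gamma_0^m(\theta))$, the middle product is $\prod_{k=1}^{m-1}\phi(f^k\circ\gamma_0^m(\theta))/(f^k\circ\gamma_0^m(\theta))$, and the last product is $\prod_{k=0}^{m-1}1/\phi'(f^k\circ\gamma_0^m(\theta))$ — but note the formula has $1/\phi'$ whereas my computation produced $\phi'$; this means the relevant recursion should be read for the \emph{inverse}, i.e. differentiating $\phi^{-1}$ rather than $\phi$, so I would set it up with $\psi_k = \phi^{-1}(\gamma_k^m)$ from the start and use $(\phi^{-1})'(w) = 1/\phi'(\phi^{-1}(w))$, which is where the reciprocal of $\phi'$ enters.

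\textbf{Main obstacle.} The genuine calculation is routine; the one delicate point is bookkeeping the \emph{argument reparametrizations} $\theta \mapsto n_{k+2}\theta$ (mod $2\pi$) and the branch choices for the $n_{k+2}$-th roots, and verifying that $f^k\circ\gamma_0^m(\theta) = \gamma_k^m(\theta)$ holds \emph{as parametrized curves} (not just as sets) so that the chain rule composes cleanly along the orbit. I expect the cleanest writeup to first record this parametrization identity as a preliminary observation (induction on $k$ using Lemma \ref{PrePowerMap}), then state that the chain rule applied to the composition $\gamma_0^m = \phi\circ(\text{root})\circ\phi^{-1}\circ f \circ \cdots$ — equivalently, differentiating $f^m\circ\gamma_0^m(\theta) = \gamma_m^m(\text{const}\cdot\theta)$ directly via $\frac{d}{d\theta}\big(f^m\circ\gamma_0^m\big) = (f^m)'(\gamma_0^m(\theta))\cdot(\gamma_0^m)'(\theta)$ and solving for $(\gamma_0^m)'(\theta)$, with $(f^m)'$ expanded by the chain rule along the orbit and each $f'$ evaluated through $f = C\cdot(\phi^{-1})^{n}$ — yields (\ref{simplifiedeqtn2}) after the product telescopes. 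I would present it in this second form (differentiate $f^m\circ\gamma_0^m$, a single circle, and divide) since it avoids iterating the root-extraction recursion and makes the telescoping transparent.
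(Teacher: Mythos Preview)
Your second approach---differentiate the identity $f^m\circ\gamma_0^m(\theta)=s_m\exp(in_2\cdots n_{m+1}\theta)$, expand the left side as $(f^m)'(\gamma_0^m(\theta))\cdot(\gamma_0^m)'(\theta)$ via the chain rule along the orbit, rewrite each factor $f'(z)$ using the logarithmic-derivative form $f'(z)=n_k\,f(z)\cdot\frac{(\phi^{-1})'(z)}{\phi^{-1}(z)}$ coming from Lemma~\ref{PrePowerMap}, and then solve for $(\gamma_0^m)'(\theta)$---is exactly what the paper does. Your preliminary observation that $f(\gamma_k^m(\theta))=\gamma_{k+1}^m(n_{k+2}\theta)$ (hence $f^m\circ\gamma_0^m$ parametrizes the circle $|z|=s_m$) is the paper's equation~(\ref{functional_equation}), and the telescoping you anticipate is the passage from~(\ref{simplifiedeqtn}) to~(\ref{simplifiedeqtn2}).

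One remark on your confusion in the first approach: your direct recursion genuinely produces factors $\phi'(\phi^{-1}(\cdot))$ and $\phi^{-1}(\cdot)$ rather than $1/\phi'(\cdot)$ and $\phi(\cdot)$. The discrepancy with the displayed formula~(\ref{simplifiedeqtn2}) is a notational artifact of the paper: in this section the symbol $\phi$ in equations~(\ref{f_derivative}) and~(\ref{simplifiedeqtn2}) effectively stands for $\phi_N^{-1}$ (note that $f'(z)=n_kf(z)\,\phi'(z)/\phi(z)$ only holds if one reads $\phi$ as $\phi_N^{-1}$, since $f(z)=C_k(\phi_N^{-1}(z))^{n_k}$). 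This does not affect any of the subsequent estimates, since both $\phi_N$ and $\phi_N^{-1}$ satisfy the same asymptotics $|\phi(z)/z-1|\to 0$ and $|\phi'(z)-1|\to 0$. So your instinct was correct; there is no error in your recursion, only a harmless abuse of notation in the target formula.
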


\begin{proof} By Lemma \ref{PrePowerMap} and Lemma \ref{Ak}, we have that \begin{equation}\label{gamma_location}\gamma_k^m([0,2\pi])\subset V_{k+2}\textrm{ for } 0\leq k \leq m.\end{equation} Thus the Definition (\ref{parametrizations}) is such  that: \begin{equation}\label{functional_equation} f^m\circ\gamma_0^m(\theta)=s_m\exp(in_2...n_{m+1}\theta)  \textrm{ for } \theta\in[0,2\pi]. \end{equation} An application of the chain rule then yields: \begin{equation}\label{chain_rule_application} f'(f^{m-1}\circ\gamma_0^m(\theta))\cdot...\cdot f'(\gamma^m_0(\theta))\cdot(\gamma^m_0)'(\theta) = s_min_2...n_{m+1}\exp(in_2...n_{m+1}\theta). \end{equation} By Lemma \ref{PrePowerMap} and (\ref{gamma_location}), we have that \begin{equation}\label{f_derivative} f'(z)=C_kn_k(\phi(z))^{n_k-1}\phi'(z)=n_kf(z)\frac{\phi'(z)}{\phi(z)} \textrm{ for } z\in \gamma^m_{k-2}([0,2\pi]). \end{equation} Thus (\ref{chain_rule_application}) and (\ref{f_derivative}) yield: \begin{equation}\label{simplifiedeqtn} (\gamma^m_0)'(\theta) \cdot \prod_{k=1}^m  \left( n_{k+1} \cdot f^k(\gamma^m_0(\theta))\cdot\frac{\phi'(f^{k-1}\circ\gamma^m_0(\theta))}{\phi(f^{k-1}\circ\gamma^m_0(\theta))} \right) = s_min_2...n_{m+1}\exp(in_2...n_{m+1}\theta) \end{equation} Thus, by using (\ref{functional_equation}), isolating for $(\gamma^m_0)'(\theta)$ in the equation (\ref{simplifiedeqtn}) yields (\ref{simplifiedeqtn2}).
\end{proof}

Note that although the curve $\gamma^m_0([0,2\pi])$ depends on $m$, the point $z_0=\gamma_0^m(\theta_0)$ does not depend on $m$.

\begin{lem}\label{convergenceattheta0} The sequence $(\gamma_0^m)'(\theta_0)$ converges as $m\rightarrow\infty$. 
\end{lem}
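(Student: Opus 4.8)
\textbf{Proof proposal for Lemma \ref{convergenceattheta0}.}

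The plan is to use the explicit product formula (\ref{simplifiedeqtn2}) from Lemma \ref{chainruleformula} and show that, evaluated at $\theta_0$, each of the two infinite products converges as $m\to\infty$, while the prefactor $i\phi(\gamma_0^m(\theta_0))=i\phi(z_0)$ is independent of $m$. So everything reduces to controlling the two products
\[
P_m:=\prod_{k=1}^{m-1}\frac{\phi(f^k(z_0))}{f^k(z_0)}\quad\text{and}\quad Q_m:=\prod_{k=0}^{m-1}\frac{1}{\phi'(f^k(z_0))},
\]
where I have used that $\gamma_0^m(\theta_0)=z_0$ for every $m$, hence $f^k\circ\gamma_0^m(\theta_0)=f^k(z_0)$ for $0\le k\le m$. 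The key geometric input is that, since $z_0\in\Gamma_1\subset Z_1$, we have $f^k(z_0)\in V_{k+2}$ for all $k\ge0$; in particular $|f^k(z_0)|\simeq R_{k+2}\ge \tfrac{1}{20}r_{k+N+1}$, so $f^k(z_0)$ escapes to $\infty$ extremely fast.

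The main step is then a convergence-of-infinite-products estimate. For the first product, Theorem \ref{initial_identity} (with $p=2\sqrt2$) together with Lemma \ref{omega_error} gives
\[
\left|\frac{\phi(f^k(z_0))}{f^k(z_0)}-1\right|<C'\,\omega_{2\sqrt2}(1/|f^k(z_0)|)\le C'\left(\tfrac12\right)^{\sqrt{k+N+1}/4}
\]
once $k$ is large enough (Remark \ref{k0}), and the right-hand side is summable in $k$; hence $\prod_{k\ge1}\frac{\phi(f^k(z_0))}{f^k(z_0)}$ converges to a nonzero limit. For the second product I would first upgrade Theorem \ref{initial_identity} to a derivative estimate via a Cauchy estimate on a disc of radius comparable to $|f^k(z_0)|$ centred at $f^k(z_0)$ (entirely analogous to (\ref{phi_derivative_9}), but with quantitative $\omega$-decay): this yields $|\phi'(f^k(z_0))-1|\lesssim C'\left(\tfrac12\right)^{\sqrt{k+N+1}/4}$ for all large $k$, which is again summable, so $Q_m$ converges as well. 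Multiplying the three convergent factors gives the claim.

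The step I expect to be the main obstacle — or at least the one requiring the most care — is the derivative bound $|\phi'(f^k(z_0))-1|\to0$ with an explicit summable rate. Theorem \ref{initial_identity} controls $|\phi_N(z)/z-1|$, not $|\phi_N'(z)-1|$; to pass to the derivative I need to apply the Cauchy integral formula to $g(\zeta):=\phi_N(\zeta)-\zeta$ on a ball $B(f^k(z_0),\,c\,|f^k(z_0)|)$ with $c$ a small fixed constant, and bound $\max|g|$ on that ball by $\sup_{|w|\gtrsim r_{k+N}}|w|\cdot C'\omega_{2\sqrt2}(1/|w|)$. Because $\omega_{2\sqrt2}(1/|w|)$ is (eventually) decreasing in $|w|$ and the ball has radius only a fixed fraction of $|f^k(z_0)|$, the maximum of $|g|$ over the ball is $O\big(|f^k(z_0)|\cdot\omega_{2\sqrt2}(1/|f^k(z_0)|)\big)$; dividing by the radius $\simeq|f^k(z_0)|$ gives exactly the desired bound $O\big(\omega_{2\sqrt2}(1/|f^k(z_0)|)\big)=O\big(2^{-\sqrt{k+N+1}/4}\big)$, which is summable. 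Everything else is a routine application of the elementary fact that $\sum|a_k|<\infty$ implies $\prod(1+a_k)$ converges, so the bulk of the work is packaging these two decay estimates; I would state them as a short preliminary lemma (or cite the already-present (\ref{phi_derivative_9})-type reasoning) before assembling the product.
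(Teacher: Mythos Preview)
Your proposal is correct and follows essentially the same approach as the paper: you use the product formula (\ref{simplifiedeqtn2}) at $\theta_0$, observe that the terms $f^k(z_0)$ are independent of $m$, bound $|\phi(f^k(z_0))/f^k(z_0)-1|$ via Theorem \ref{initial_identity}/Lemma \ref{omega_error} and $|\phi'(f^k(z_0))-1|$ via a Cauchy estimate, and conclude by summability that the infinite products converge. The paper's proof is identical in structure, citing Lemma \ref{AkEst} for the first bound and ``standard Cauchy estimates'' for the second; your anticipated ``main obstacle'' is handled in exactly the way you describe.
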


%Moreover, by translating $[0,2\pi]$ mod $2\pi$, we may assume there is $\theta_0\in[0,2\pi]$ not depending on $m$ with $z_0=\gamma_0^m(\theta_0)\in\gamma_0^m([0,2\pi])$.
\begin{proof}
	Let $k\geq0$. Since $f^k\circ\gamma^m_0(\theta_0)\in V_{k+2}$ by (\ref{gamma_location}), we have by Lemma \ref{AkEst} that \begin{equation}\label{firstabs} \left| \frac{\phi(f^k\circ\gamma^m_0(\theta))}{f^k\circ\gamma^m_0(\theta)} - 1 \right| \leq C'\cdot2^{-\sqrt{k+N+2}/4}. \end{equation} The standard Cauchy estimates then apply to show that \begin{equation}\label{secondabs} \left|\phi'(f^k\circ\gamma^m_0(\theta_0)) - 1\right| \leq C'\cdot2^{-\sqrt{k+N+2}/4}. \end{equation} Note that the right-hand sides of (\ref{firstabs}) and (\ref{secondabs}) are summable over $k$. Moreover, it is easily verified that if two complex sums $\sum_k|z_k-1|$ and $\sum_k|w_k-1|$ both converge, then so does $\prod_kz_kw_k$. Thus we conclude from Lemma \ref{chainruleformula}, (\ref{firstabs}) and (\ref{secondabs}) that $(\gamma_0^m)'(\theta_0)$ converges.
\end{proof}

To summarize, we have thus far defined the curves $\gamma^m_0([0,2\pi])$ and their parametrizations, and we have shown that $(\gamma^m_0)'(\theta_0)$ converges as $m\rightarrow\infty$. Next we will show that $(\gamma^m_0)$ converges on the following dense subset of $[0,2\pi]$.

\begin{definition}  Let $\Theta_m\in[0,2\pi]$ be such that $f^m\circ\gamma_0^m(\theta_0)=\gamma_m^m(\Theta_m)$. By (\ref{parametrizations}), there are $n_{m+1}\cdot...\cdot n_2$ many points $\theta \in [0,2\pi]$ such that \begin{equation}\label{periodicity_equation}f^m(\gamma_0^m(\theta))=\gamma_m^m(\Theta_m). \end{equation} Denote the collection of $\theta$ satisfying (\ref{periodicity_equation}) as $\mathcal{A}_m$, and define \begin{equation} \mathcal{A}:=\bigcup_{m\geq1}\mathcal{A}_m. \end{equation}
\end{definition}

\begin{lem}\label{uniform_over_dense} Let $\theta\in\mathcal{A}$. Then the sequence $(\gamma_0^k)'(\theta)$ converges as $k\rightarrow\infty$, uniformly over $\mathcal{A}$.
\end{lem}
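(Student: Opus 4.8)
\textbf{Proof proposal for Lemma \ref{uniform_over_dense}.}

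The plan is to mimic the argument of Lemma \ref{convergenceattheta0}, but carried out simultaneously at all points $\theta\in\mathcal{A}$ and with quantitative control that is uniform in $\theta$. Fix $\theta\in\mathcal{A}$, so $\theta\in\mathcal{A}_{m_0}$ for some $m_0$, and hence $\theta\in\mathcal{A}_m$ for all $m\geq m_0$ since the curves are nested via the functional equation (\ref{periodicity_equation}). For $k\geq m_0$ we have the chain-rule formula (\ref{simplifiedeqtn2}) with $\theta_0$ replaced by $\theta$:
\begin{equation*}
(\gamma_0^k)'(\theta)=i\,\phi(\gamma_0^k(\theta))\cdot\prod_{j=1}^{k-1}\frac{\phi(f^j\circ\gamma_0^k(\theta))}{f^j\circ\gamma_0^k(\theta)}\cdot\prod_{j=0}^{k-1}\frac{1}{\phi'(f^j\circ\gamma_0^k(\theta))}.
\end{equation*}
Because $f^j\circ\gamma_0^k(\theta)\in V_{j+2}$ by (\ref{gamma_location}), the same Cauchy estimates as in (\ref{firstabs}) and (\ref{secondabs}) give
\begin{equation*}
\left|\frac{\phi(f^j\circ\gamma_0^k(\theta))}{f^j\circ\gamma_0^k(\theta)}-1\right|\leq C'\cdot 2^{-\sqrt{j+N+2}/4},\qquad \left|\phi'(f^j\circ\gamma_0^k(\theta))-1\right|\leq C'\cdot 2^{-\sqrt{j+N+2}/4},
\end{equation*}
with constants independent of both $k$ and $\theta$. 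The right-hand sides are summable over $j$, so the two infinite products $\prod_{j\geq1}\phi(f^j\circ\gamma_0^k(\theta))/f^j\circ\gamma_0^k(\theta)$ and $\prod_{j\geq0}\phi'(f^j\circ\gamma_0^k(\theta))^{-1}$ are uniformly Cauchy in $k$ and $\theta$ — this is the point where one needs the lemma-type fact that absolutely convergent products have tails tending to $1$ uniformly.

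The remaining, and genuinely new, ingredient is to show that for fixed $\theta\in\mathcal{A}$ the orbit points $f^j\circ\gamma_0^k(\theta)$ themselves converge as $k\to\infty$ (and then also $\gamma_0^k(\theta)$ and $\phi(\gamma_0^k(\theta))$), uniformly over $\mathcal{A}$. Here I would argue as in the proof of Theorem \ref{jordancurvetheorem}: for $k\geq k'\geq m_0$ the curves $\gamma_0^{k}([0,2\pi])$ and $\gamma_0^{k'}([0,2\pi])$ both lie in the nested annulus $\Gamma_{1,k'-?}$ (more precisely, both parametrize Jordan curves trapped between the inner and outer boundaries of $\Gamma_{1,\min(k,k')}$), whose radial width $w_{1,n}$ tends to $0$. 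Combined with the fact that the angular coordinate is preserved along $\mathcal{A}$ — the defining relation (\ref{periodicity_equation}) pins down which branch of the root is used, so $\gamma_0^k(\theta)$ and $\gamma_0^{k'}(\theta)$ have comparable arguments up to an error controlled by the small-angle estimate of Lemma \ref{key_angle} — this forces $|\gamma_0^k(\theta)-\gamma_0^{k'}(\theta)|\to 0$ uniformly in $\theta$. Applying the uniform continuity/expansion estimates of Section \ref{Dimension Z} propagates this to each $f^j\circ\gamma_0^k(\theta)$.

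Putting the two pieces together: each factor in (\ref{simplifiedeqtn2}) converges uniformly in $\theta\in\mathcal{A}$ as $k\to\infty$, and the product of the factors converges uniformly because of the summable bounds above (a uniformly convergent product of uniformly convergent, uniformly bounded factors with values near $1$ converges uniformly). Hence $(\gamma_0^k)'(\theta)$ converges uniformly over $\mathcal{A}$, which is the assertion. I expect the main obstacle to be the bookkeeping in the second paragraph — carefully showing that the branch choices in (\ref{parametrizations}) are consistent between levels $k$ and $k'$ at points of $\mathcal{A}$, so that the sequence $\gamma_0^k(\theta)$ (not merely the sets $\gamma_0^k([0,2\pi])$) converges; once that is in place the rest is the same absolutely-convergent-product estimate already used for $\theta_0$.
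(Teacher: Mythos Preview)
Your overall strategy is right, but you are working much harder than necessary because you missed the key structural observation that drives the paper's proof. You correctly identify the ``genuinely new ingredient'' as showing that $f^j\circ\gamma_0^k(\theta)$ converges in $k$, and you propose to extract this from the width estimates $w_{1,n}\to 0$ together with angular control from Lemma~\ref{key_angle}. This is delicate: $f$ is strongly expanding on $V_k$, so convergence of $\gamma_0^k(\theta)$ alone does not automatically propagate to $f^j\circ\gamma_0^k(\theta)$ for all $j$, and your appeal to ``uniform continuity/expansion estimates'' is exactly backwards here. You also flag the branch bookkeeping as the main obstacle, without resolving it.

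The paper sidesteps all of this with a single observation: for $\theta\in\mathcal{A}_m$, the point $\gamma_0^k(\theta)$ is not merely convergent in $k$, it is \emph{eventually constant}: $\gamma_0^{m+k}(\theta)=\gamma_0^m(\theta)$ for all $k\geq 0$. The reason is that the normalization $z_0=\gamma_0^m(\theta_0)$ for every $m$ forces $\gamma_m^{m+1}(\Theta_m)=f^m(z_0)=\gamma_m^m(\Theta_m)$, and pulling back through the recursive definition (\ref{parametrizations}) then gives $\gamma_0^{m+1}(\theta)=\gamma_0^m(\theta)$ at every $\theta\in\mathcal{A}_m$; induction does the rest. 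Consequently, the entire orbit $\big(f^j\circ\gamma_0^k(\theta)\big)_{j\geq 0}$ is independent of $k$ once $k\geq m$, so in the formula (\ref{simplifiedeqtn2}) the factors themselves do not depend on $k$: one is simply looking at the partial products of a fixed infinite product. The bounds (\ref{firstabs}), (\ref{secondabs}) depend only on the index $j$ and not on $\theta$, which gives the uniformity over $\mathcal{A}$ for free. In short, the ``main obstacle'' you anticipated dissolves once you notice that the branch choices, pinned down by the common basepoint $z_0$, make the curves literally agree at every point of $\mathcal{A}_m$.
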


\begin{proof} Since $\theta\in\mathcal{A}$, we have that $\theta\in\mathcal{A}_m$ for some $m$. Consider $\gamma_0^{m+1}$. Since $z_0\in\gamma_0^{m+1}([0,2\pi])$, it follows from the definition of $\Theta_m$ that $\gamma_m^{m+1}([0,2\pi])$ passes through the point $\gamma_m^m(\Theta_m)$. Moreover, by precomposing $\gamma_m^{m+1}$ with a translation of $[0,2\pi]$ mod $2\pi$ if necessary, we may assume that \begin{equation} \gamma_m^{m+1}(\Theta_m) =  \gamma_m^m(\Theta_m). \end{equation} Thus we conclude that \begin{equation} \gamma_0^{m+1}(\theta)=\gamma_0^m(\theta) \textrm{ for } \theta\in\mathcal{A}_m, \end{equation} and arguing recursively we see that for all $k\geq1$, we have: \begin{equation} \gamma_0^{m+k}(\theta)=\gamma_0^m(\theta) \textrm{ for } \theta\in\mathcal{A}_m. \end{equation} Thus, the sequence  \begin{equation} \left(f^k\circ\gamma_0^m(\theta)\right)_{k=1}^\infty \end{equation} in fact does not depend on $m$. Thus the equation (\ref{simplifiedeqtn2}) and the same exact argument as for Lemma \ref{convergenceattheta0} shows that in fact $(\gamma_0^k)'(\theta)$ converges as $k\rightarrow\infty$ for any $\theta\in\mathcal{A}_m$, with convergence which is uniform over $m$ and the set $\mathcal{A}_m$.

\end{proof}

In order to deduce convergence of $(\gamma_0^k)'$ on all of $[0,2\pi]$, we will need the following Proposition, whose proof is elementary and hence is omitted.

\begin{prop}\label{complete_space} Let $X$ be a complete metric space, $f_n: X \rightarrow \mathbb{C}$ a sequence of uniformly continuous functions, and assume $(f_n)$ converges uniformly on a dense subset of $X$. Then the sequence $f_n$ converges uniformly on all of $X$. 
\end{prop}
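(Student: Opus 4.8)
The plan is to reduce everything to the statement that $(f_n)$ is \emph{uniformly Cauchy} on $X$. Once that is established, completeness of the target $\mathbb{C}$ produces a pointwise limit $f$, and the uniform Cauchy estimate passes to the limit to give uniform convergence $f_n \to f$ on all of $X$. It is worth noting at the outset that only the continuity of each $f_n$ (which is implied by, but weaker than, uniform continuity) and the completeness of $\mathbb{C}$ are actually used; the hypothesis that $X$ itself be complete plays no role in this argument.

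So let $D \subset X$ be the given dense subset on which $(f_n)$ converges uniformly. First I would record that uniform convergence on $D$ is equivalent to $(f_n|_D)$ being uniformly Cauchy on $D$: given $\varepsilon > 0$ there is an $M$ so that $|f_n(d) - f_m(d)| \le \varepsilon$ for every $d \in D$ and all $n, m \ge M$. Next, fix an arbitrary $x \in X$ and, using density, pick a sequence $(d_j)$ in $D$ with $d_j \to x$. Since $f_n$ and $f_m$ are continuous, $f_n(d_j) \to f_n(x)$ and $f_m(d_j) \to f_m(x)$ as $j \to \infty$, so letting $j \to \infty$ in $|f_n(d_j) - f_m(d_j)| \le \varepsilon$ yields $|f_n(x) - f_m(x)| \le \varepsilon$. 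Because $x$ was arbitrary and $M$ depends only on $\varepsilon$, this gives $\sup_{x \in X} |f_n(x) - f_m(x)| \le \varepsilon$ for all $n, m \ge M$, i.e.\ $(f_n)$ is uniformly Cauchy on $X$.

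To finish: since $\mathbb{C}$ is complete, for each $x \in X$ the Cauchy sequence $\big(f_n(x)\big)$ converges; denote its limit by $f(x)$. Fixing $n \ge M$ and letting $m \to \infty$ in $|f_n(x) - f_m(x)| \le \varepsilon$ gives $|f_n(x) - f(x)| \le \varepsilon$ for all $x \in X$ and all $n \ge M$, which is precisely uniform convergence of $(f_n)$ to $f$ on $X$. There is essentially no obstacle in this proof; the only step meriting a moment's care is the transfer of the Cauchy estimate from $D$ to $X$, which rests solely on continuity of the individual $f_n$ and density of $D$. (Alternatively one could observe that the uniform limit of $(f_n|_D)$ is uniformly continuous and hence extends uniquely to a continuous function on $\overline{D} = X$, then verify $f_n \to f$ uniformly on $X$ by the same three-term comparison; but the direct uniform-Cauchy route above is shorter.)
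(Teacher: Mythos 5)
Your proof is correct: the paper omits the proof of Proposition \ref{complete_space} as elementary, and your argument — transfer the uniform Cauchy estimate from the dense set to all of $X$ via continuity of each $f_n$ and density, then use completeness of $\mathbb{C}$ and let $m\to\infty$ — is exactly the intended standard argument. Your side remark is also accurate: only continuity of the $f_n$ and completeness of the target are used, so the hypotheses of uniform continuity and completeness of $X$ are not needed for this statement.
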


\begin{lem}\label{uniform_conv_gamma} The functions $\gamma_k': [0,2\pi]\rightarrow\mathbb{C}$ converge uniformly. 
\end{lem}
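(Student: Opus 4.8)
The goal is to upgrade the pointwise-on-a-dense-set convergence of $(\gamma_0^k)'$ established in Lemma \ref{uniform_over_dense} to uniform convergence on all of $[0,2\pi]$, which is then exactly what Lemma \ref{uniform_conv_gamma} asserts (the statement is about $\gamma_k'$, but by Remark right before, $\gamma_k$ should be read as the $\gamma_0^k$ parametrizations based at the fixed point $z_0$). The plan is to apply Proposition \ref{complete_space} with $X=[0,2\pi]$ (or $\mathbb{R}/2\pi\mathbb{Z}$), which is a complete metric space, and $f_n = (\gamma_0^n)'$. We already have from Lemma \ref{uniform_over_dense} that $(\gamma_0^k)'$ converges uniformly on the dense set $\mathcal{A}$ (density of $\mathcal{A}$ follows since $\mathcal{A}_m$ consists of $n_2\cdots n_{m+1}$ points which, by Theorem \ref{jordancurvetheorem} and Corollary \ref{foliation_almost_circles}, are spread around the circle with gaps shrinking to $0$ as $m\to\infty$). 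So the only missing hypothesis of Proposition \ref{complete_space} is that the functions $(\gamma_0^n)'$ are \emph{uniformly} (equi-)continuous, uniformly in $n$.

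The main step, therefore, is to prove an equicontinuity estimate for $(\gamma_0^n)'$. The natural approach is to differentiate the product formula (\ref{simplifiedeqtn2}) of Lemma \ref{chainruleformula}, or rather to estimate the modulus of continuity of each factor directly. Write $(\gamma_0^m)'(\theta) = i\phi(\gamma_0^m(\theta))\cdot \prod_{k=1}^{m-1}\frac{\phi(f^k\circ\gamma^m_0(\theta))}{f^k\circ\gamma^m_0(\theta)}\cdot \prod_{k=0}^{m-1}\frac{1}{\phi'(f^k\circ\gamma^m_0(\theta))}$. Each factor $\frac{\phi(w)}{w}$ and $\frac{1}{\phi'(w)}$ is, by Lemma \ref{AkEst} and Cauchy estimates applied on $V_{k+2}$, within $C'2^{-\sqrt{k+N+2}/4}$ of $1$, and moreover its \emph{derivative} in $w$ is $O(2^{-\sqrt{k+N+2}/4}/R_{k+2})$ by a second Cauchy estimate (one more derivative costs a factor $1/\operatorname{dist}$ to the boundary of the slightly larger annulus, which is comparable to $R_{k+2}$). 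Meanwhile $\theta\mapsto f^k\circ\gamma_0^m(\theta)$ has derivative of size controlled by the length bounds $w_{k,m}$ from Section \ref{Dimension Z} — roughly $|(f^k\circ\gamma_0^m)'(\theta)|\lesssim R_{k+2}$ (the curve lives in $V_{k+2}$ and winds once as $\theta$ traverses $[0,2\pi]$, up to the bounded-angle estimate of Lemma \ref{angledistortion}). Multiplying, the $\theta$-derivative of the $k$-th factor is $O(2^{-\sqrt{k+N+2}/4})$, which is summable in $k$ uniformly in $m$. A standard telescoping/product-rule argument (the same "$\sum|z_k-1|$ and $\sum|z_k'|$ converge $\Rightarrow$ $\prod z_k$ is Lipschitz" principle already invoked for Lemma \ref{convergenceattheta0}) then gives a uniform Lipschitz (hence uniform equicontinuity) bound for $(\gamma_0^m)'$ on $[0,2\pi]$, with constant independent of $m$.

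With the uniform equicontinuity in hand, Proposition \ref{complete_space} applies directly: a sequence of uniformly continuous (with common modulus of continuity) functions converging uniformly on a dense subset converges uniformly on the whole space. This yields uniform convergence of $(\gamma_0^k)'$ on $[0,2\pi]$, proving the lemma; combined with the already-established uniform convergence of $\gamma_0^k$ itself (Theorem \ref{jordancurvetheorem}, where the radial functions $r_n^{\mathrm{in}},r_n^{\mathrm{out}}$ were shown to converge uniformly), we get that the limit curve $\Gamma_1$ is $C^1$ with nonvanishing derivative, which is the content of Theorem \ref{C1}.

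I expect the main obstacle to be the bookkeeping in the equicontinuity estimate: one must carefully track how a $\theta$-derivative distributes over the $m$-fold product, control $|(f^k\circ\gamma_0^m)'(\theta)|$ using the width estimates $w_{k,m}$ and Lemma \ref{angledistortion} (so that the radial and angular components are both of the right order), and verify that the resulting bound on $\bigl|\tfrac{d}{d\theta}\bigl(\text{$k$-th factor}\bigr)\bigr|$ is genuinely summable in $k$ \emph{uniformly in $m$} — the point being that the decay $2^{-\sqrt{k+N+2}/4}$ from $\phi$ being close to the identity far from the origin dominates the polynomial growth of $n_k$ and $R_{k+2}$. Everything else (density of $\mathcal{A}$, invoking Proposition \ref{complete_space}) is routine.
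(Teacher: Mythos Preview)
Your overall strategy --- density of $\mathcal{A}$, uniform convergence on $\mathcal{A}$ from Lemma \ref{uniform_over_dense}, then Proposition \ref{complete_space} --- is exactly the paper's proof. The difference is that you believe Proposition \ref{complete_space} requires \emph{equicontinuity} (a common modulus of continuity), and you spend most of your effort establishing that. In fact the proposition as stated only needs each $f_n$ to be \emph{individually} uniformly continuous: for fixed $n\ge N$ one uses the modulus of $f_n$ (which may depend on $n$) together with that of the limit $g$ on the dense set to choose, for each $x$, some $d\in\mathcal{A}$ close enough that $|f_n(x)-f_n(d)|$, $|f_n(d)-g(d)|$, $|g(d)-g(x)|$ are all small. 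Since each $(\gamma_0^m)'$ is analytic on the compact set $[0,2\pi]$, individual uniform continuity is automatic, and the paper's proof is complete in two lines.

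Your equicontinuity sketch, besides being unnecessary, also has a gap. You assert that $f^k\circ\gamma_0^m$ ``winds once as $\theta$ traverses $[0,2\pi]$'' and hence $|(f^k\circ\gamma_0^m)'(\theta)|\lesssim R_{k+2}$. But from the functional equation (\ref{functional_equation}) one has $f^k\circ\gamma_0^m(\theta)=\gamma_k^m(n_2\cdots n_{k+1}\theta)$, so this curve winds $n_2\cdots n_{k+1}$ times and its $\theta$-derivative is of order $n_2\cdots n_{k+1}\,R_{k+2}$, not $R_{k+2}$. The $\theta$-derivative of the $k$-th factor is therefore of order $n_2\cdots n_{k+1}\cdot 2^{-\sqrt{k+N+2}/4}$, and since $n_2\cdots n_{k+1}=2^{kN+k(k+1)/2}$ grows super-exponentially while $2^{-\sqrt{k+N+2}/4}$ decays only sub-exponentially, the resulting series diverges rather than converges. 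So the Lipschitz bound does not follow from the argument you outline. The fix is simply to drop the equicontinuity step entirely and invoke Proposition \ref{complete_space} as the paper does.
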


\begin{proof} Since $\mathcal{A}$ is dense in $[0,2\pi]$ by (\ref{parametrizations}), Lemma \ref{uniform_over_dense} implies that the functions $(\gamma_0^k)'$ converge uniformly (as $m\rightarrow\infty$) on a dense subset of $[0,2\pi]$. We conclude by Proposition \ref{complete_space} that the functions  $(\gamma_0^k)'$ converge uniformly on $[0,2\pi]$.
\end{proof}

\noindent To deduce that the functions $\gamma_k$ converge, we will use the following elementary result (see Theorem 3.7.1 of \cite{MR3310023}):

\begin{prop}\label{tao_prelim} Let $\gamma_k: [0,2\pi] \rightarrow \mathbb{C}$ be a sequence of $C^1$ functions. Suppose that the functions $\gamma_k'$ converge uniformly to a function $g$, and suppose furthermore that $\gamma_k(\theta_0)$ converges for some $\theta_0\in[0,2\pi]$. Then the functions $\gamma_k$ converge uniformly to a $C^1$ function $\gamma_\infty: [0,2\pi]\rightarrow\mathbb{C}$, and $\gamma_\infty'=g$.  
\end{prop}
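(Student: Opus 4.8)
The plan is to reduce everything to the fundamental theorem of calculus. Since each $\gamma_k$ is $C^1$ on $[0,2\pi]$, for every $\theta\in[0,2\pi]$ we have the identity
\[ \gamma_k(\theta) = \gamma_k(\theta_0) + \int_{\theta_0}^{\theta} \gamma_k'(s)\,ds. \]
First I would record two preliminary observations. The function $g$ is continuous on $[0,2\pi]$, being the uniform limit of the continuous functions $\gamma_k'$; in particular $g$ is Riemann integrable on every subinterval. Also, the limit $c:=\lim_{k\to\infty}\gamma_k(\theta_0)$ exists by hypothesis. With these in hand I would define the candidate limit
\[ \gamma_\infty(\theta) := c + \int_{\theta_0}^{\theta} g(s)\,ds, \qquad \theta\in[0,2\pi]. \]

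Next I would prove that $\gamma_k\to\gamma_\infty$ uniformly. For any $\theta\in[0,2\pi]$, subtracting the two displayed expressions and using the triangle inequality together with the trivial bound $|\int_{\theta_0}^\theta h| \le 2\pi\,\|h\|_{\infty}$ on an interval of length at most $2\pi$ gives
\[ |\gamma_k(\theta)-\gamma_\infty(\theta)| \le |\gamma_k(\theta_0)-c| + \int_{\theta_0}^{\theta}|\gamma_k'(s)-g(s)|\,ds \le |\gamma_k(\theta_0)-c| + 2\pi\,\|\gamma_k'-g\|_{\infty}. \]
The right-hand side is independent of $\theta$ and tends to $0$ as $k\to\infty$, so $\gamma_k\to\gamma_\infty$ uniformly on $[0,2\pi]$. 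Finally, since $g$ is continuous, the fundamental theorem of calculus applied to $\theta\mapsto\int_{\theta_0}^\theta g(s)\,ds$ shows that $\gamma_\infty$ is differentiable with $\gamma_\infty'=g$, and since $g$ is continuous this says precisely that $\gamma_\infty\in C^1([0,2\pi])$ with $\gamma_\infty'=g$, as claimed.

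There is no genuine obstacle in this argument; it is a routine exercise. The only two points that merit a sentence of care are: (i) that the limit $g$ of the derivatives is continuous, which is what upgrades ``$\gamma_\infty$ is differentiable with derivative $g$'' to ``$\gamma_\infty$ is $C^1$'', and (ii) that the convergence of $\gamma_k$ is uniform rather than merely pointwise, which follows because the error term splits into the constant error $|\gamma_k(\theta_0)-c|$ plus an integral over an interval of bounded length, both controlled uniformly in $\theta$. Alternatively, one could phrase the same proof via Proposition \ref{complete_space} by checking that $(\gamma_k)$ is uniformly Cauchy on all of $[0,2\pi]$ using the identical estimate, but the direct construction of $\gamma_\infty$ above seems cleaner.
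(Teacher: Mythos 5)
Your proof is correct and complete: the reduction to the fundamental theorem of calculus, the uniform estimate $|\gamma_k(\theta)-\gamma_\infty(\theta)|\le|\gamma_k(\theta_0)-c|+2\pi\|\gamma_k'-g\|_\infty$, and the observation that continuity of $g$ upgrades differentiability to $C^1$ are exactly what is needed. The paper itself gives no proof, citing Theorem 3.7.1 of \cite{MR3310023} instead, and your argument is precisely the standard proof of that cited result, so there is nothing to compare beyond noting that you have supplied the omitted details.
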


\begin{lem} The functions $\gamma_k: [0,2\pi]\rightarrow\mathbb{C}$ converge uniformly to a $C^1$ function $\gamma_\infty: [0,2\pi] \rightarrow\mathbb{C}$.
\end{lem}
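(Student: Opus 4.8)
The goal is to show that $\gamma_k := \gamma_0^k$ converges uniformly to a $C^1$ function $\gamma_\infty$, which will then parametrize $\Gamma_1$. The plan is to simply combine the ingredients already assembled: Lemma \ref{uniform_conv_gamma} gives uniform convergence of $(\gamma_0^k)'$ to some limit function $g$, and Lemma \ref{convergenceattheta0} gives convergence of $(\gamma_0^k)(\theta_0)$ at the single point $\theta_0$. These are precisely the two hypotheses of Proposition \ref{tao_prelim}, so the conclusion follows directly.

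First I would record that each $\gamma_0^k$ is $C^1$: this follows from the explicit formula (\ref{parametrizations}) together with the fact that $\phi = \phi_N$ is (quasi)conformal and, more to the point, that on the relevant annuli $f_N$ is holomorphic and the branches of the radicals involved are holomorphic (by (\ref{gamma_location}), $\gamma_k^m$ stays inside $V_{k+2}$ where the power-map structure of Lemma \ref{PrePowerMap} applies, so no branch points are encountered). Alternatively, one can just cite Lemma \ref{chainruleformula}, which produces an explicit continuous formula for $(\gamma_0^m)'$, showing $\gamma_0^m \in C^1$.

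Then I would invoke Lemma \ref{uniform_conv_gamma} to get a function $g: [0,2\pi] \to \mathbb{C}$ with $(\gamma_0^k)' \to g$ uniformly, and Lemma \ref{convergenceattheta0} to get that $(\gamma_0^k)'(\theta_0)$ — wait, more precisely I need $(\gamma_0^k)(\theta_0)$ to converge; but this is immediate since $\gamma_0^k(\theta_0) = z_0$ for all $k$ by the normalization in the Remark following Lemma \ref{chainruleformula} (the point $z_0$ does not depend on $m$). With both hypotheses of Proposition \ref{tao_prelim} verified, we conclude that $\gamma_0^k \to \gamma_\infty$ uniformly with $\gamma_\infty \in C^1$ and $\gamma_\infty' = g$.

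There is essentially no obstacle here — this lemma is a bookkeeping step that packages the two preceding convergence lemmas via the elementary Proposition \ref{tao_prelim}. The only point requiring a line of care is confirming that $\gamma_0^k(\theta_0)$ is genuinely constant (equal to $z_0$) rather than merely convergent, which is exactly what the normalization remark guarantees; this is why Lemma \ref{convergenceattheta0}, despite being phrased in terms of the derivative, combines cleanly with the fixed basepoint. (One should of course also remark afterward, outside this lemma, that $\gamma_\infty$ parametrizes $\Gamma_1$: its image is a Jordan curve contained in $\Gamma_1$ by the same argument as in Theorem \ref{jordancurvetheorem}, and since $\Gamma_1$ is itself a Jordan curve the two coincide; but that identification is not part of this lemma's statement.)
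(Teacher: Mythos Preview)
Your proposal is correct and follows exactly the paper's approach: the paper's proof is a one-line application of Lemma \ref{uniform_conv_gamma} and Proposition \ref{tao_prelim}, using the normalization $\gamma_0^k(\theta_0)=z_0$ for the basepoint convergence. Your additional remarks (that each $\gamma_0^k$ is $C^1$ via Lemma \ref{chainruleformula}, and that the basepoint is actually constant rather than merely convergent) are accurate and helpful clarifications.
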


\begin{proof} This is a direct application of Lemma \ref{uniform_conv_gamma} and Proposition \ref{tao_prelim}. 
\end{proof}

\noindent In order to prove that $\Gamma_1$ is a $C^1$ curve, it remains to show that $\gamma_\infty'$ does not vanish, and that $\gamma_\infty([0,2\pi])=\Gamma_1$.

\begin{lem} The curve $\gamma_\infty$ satisfies $\gamma_\infty'(\theta)\not=0$ for all $\theta\in[0,2\pi]$.
\end{lem}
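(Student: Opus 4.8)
The plan is to read off a uniform (in $m$ and $\theta$) lower bound for $|(\gamma_0^m)'(\theta)|$ directly from the product formula in Lemma \ref{chainruleformula}, and then pass to the limit. Recall
\[
(\gamma_0^m)'(\theta)=i\phi(\gamma_0^m(\theta))\cdot \prod_{k=1}^{m-1}\frac{\phi(f^k\circ\gamma^m_0(\theta))}{f^k\circ\gamma^m_0(\theta)}\cdot \prod_{k=0}^{m-1}\frac{1}{\phi'(f^k\circ\gamma^m_0(\theta))}.
\]
First I would bound the leading factor: by (\ref{gamma_location}) we have $\gamma_0^m(\theta)\in V_2=A(\tfrac{2}{5}R_2,\tfrac{3}{5}R_2)$, so $|\gamma_0^m(\theta)|\geq \tfrac{2}{5}R_2$, and Lemma \ref{AkEst} (or (\ref{phi_identity})) gives $|\phi(\gamma_0^m(\theta))|\geq \beta_2\cdot\tfrac{2}{5}R_2>0$, a constant independent of $m$ and $\theta$.

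Next I would bound the two products away from $0$. By (\ref{firstabs}) and (\ref{secondabs}), for every $k\geq0$ and every $\theta$,
\[
\left| \frac{\phi(f^k\circ\gamma^m_0(\theta))}{f^k\circ\gamma^m_0(\theta)} - 1 \right|\leq C'\,2^{-\sqrt{k+N+2}/4},\qquad \left|\phi'(f^k\circ\gamma^m_0(\theta)) - 1\right| \leq C'\,2^{-\sqrt{k+N+2}/4}.
\]
Since $N$ is large (Remark \ref{N is large}), we may assume $C'\,2^{-\sqrt{k+N+2}/4}<\tfrac12$ for all $k\geq0$, and the right-hand sides are summable in $k$. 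Hence
\[
\left|\prod_{k=1}^{m-1}\frac{\phi(f^k\circ\gamma^m_0(\theta))}{f^k\circ\gamma^m_0(\theta)}\right|\geq \prod_{k=1}^{\infty}\bigl(1-C'2^{-\sqrt{k+N+2}/4}\bigr)=:c_1>0,
\]
and likewise $\bigl|\prod_{k=0}^{m-1}\phi'(f^k\circ\gamma^m_0(\theta))\bigr|\leq \prod_{k=0}^{\infty}\bigl(1+C'2^{-\sqrt{k+N+2}/4}\bigr)=:c_2<\infty$, both $c_1,c_2$ independent of $m$ and $\theta$. Combining these three estimates yields
\[
|(\gamma_0^m)'(\theta)|\geq \frac{\beta_2\cdot\tfrac{2}{5}R_2\cdot c_1}{c_2}=:c>0
\]
for all $m$ and all $\theta\in[0,2\pi]$.

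Finally, since $(\gamma_0^m)'$ converges uniformly to $\gamma_\infty'$ on $[0,2\pi]$ (Lemma \ref{uniform_conv_gamma}), the uniform lower bound persists in the limit: $|\gamma_\infty'(\theta)|\geq c>0$ for all $\theta$, so in particular $\gamma_\infty'(\theta)\neq0$. There is no real obstacle here beyond bookkeeping; the one point requiring care is ensuring $N$ is large enough that each perturbation factor has modulus strictly between $1-C'2^{-\sqrt{k+N+2}/4}$ and $1+C'2^{-\sqrt{k+N+2}/4}$ with the lower bound positive, so that the infinite product $c_1$ does not degenerate to $0$ — but this is already guaranteed by the standing largeness assumption on $N$.
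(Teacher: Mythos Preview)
Your proof is correct and uses the same ingredients as the paper's: the product formula of Lemma \ref{chainruleformula} together with the uniform estimates (\ref{firstabs}) and (\ref{secondabs}). The only difference is in the packaging. The paper first restricts to $\theta\in\mathcal{A}$, argues via the logarithmic series that the infinite products cannot vanish, obtains a uniform lower bound on the dense set $\mathcal{A}$, and then extends to all of $[0,2\pi]$ by continuity of $\gamma_\infty'$. You instead bound the \emph{partial} products in (\ref{simplifiedeqtn2}) directly and uniformly in both $m$ and $\theta$, and then pass to the limit via the uniform convergence of Lemma \ref{uniform_conv_gamma}. Your route is slightly more streamlined since it avoids the detour through $\mathcal{A}$, but both arguments are essentially the same computation.
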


\begin{proof} Consider the expression (\ref{simplifiedeqtn2}) for $\theta\in\mathcal{A}$.  If we suppose by way of contradiction that $\gamma_\infty'(\theta)=0$, then one of the infinite products in (\ref{simplifiedeqtn2}) must converge to $0$, and so either \begin{align}\label{firstnondivergence} \sum_{k=1}^\infty \log\left(\frac{\phi(f^k\circ\gamma^m_0(\theta))}{f^k\circ\gamma^m_0(\theta)}\right) \textrm{, or } \\ \label{secondnondivergence} \sum_{k=1}^\infty \log\left(  \phi'(f^k\circ\gamma^m_0(\theta)) \right) \end{align} must diverge. We will show that in fact both of the sums  (\ref{firstnondivergence}), (\ref{secondnondivergence}) converge. Indeed, we have \begin{equation}\label{notzero} \left| \sum_{k=1}^\infty \log\left(\frac{\phi(f^k\circ\gamma^m_0(\theta))}{f^k\circ\gamma^m_0(\theta)}\right) \right| \lesssim   \sum_{k=1}^\infty  \left|\frac{\phi(f^k\circ\gamma^m_0(\theta))}{f^k\circ\gamma^m_0(\theta)} - 1 \right|, \end{equation} and the right-hand side of (\ref{notzero}) converges by (\ref{firstabs}). Thus (\ref{firstnondivergence}) converges, and similarly we can use (\ref{secondabs}) to show that (\ref{secondnondivergence}) converges. Moreover, we deduce that the sums (\ref{firstnondivergence}), (\ref{secondnondivergence}) are bounded uniformly over $\theta\in\mathcal{A}$. Thus we have proven that the sums (\ref{firstnondivergence}) (\ref{secondnondivergence}) are bounded uniformly over a dense subset of $[0,2\pi]$, and hence $\gamma_\infty'$ is bounded away from $0$ uniformly over a dense subset of $[0,2\pi]$. Hence $\gamma_\infty'$ does not vanish on $[0,2\pi]$. 
\end{proof}

\begin{lem} The function $\gamma_\infty$ parametrizes $\Gamma_1$, in other words $\gamma_\infty([0,2\pi])=\Gamma_1$. 
\end{lem}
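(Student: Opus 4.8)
The plan is to show both inclusions $\gamma_\infty([0,2\pi]) \subseteq \Gamma_1$ and $\Gamma_1 \subseteq \gamma_\infty([0,2\pi])$, using the characterization from Theorem \ref{jordancurvetheorem} that $\Gamma_1$ meets each ray $\{\arg(z) = \theta\}$ in exactly one point, together with the fact (already established) that each $\gamma_0^m([0,2\pi])$ is a Jordan curve surrounding $0$ contained in $V_3$ by (\ref{gamma_location}).

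First I would verify that $\gamma_0^m([0,2\pi]) \subseteq \Gamma_{1,m}$ for each $m$. Indeed, by construction $f^j(\gamma_0^m([0,2\pi])) = \gamma_j^m([0,2\pi]) \subset V_{j+2}$ for $j = 0, 1, \dots, m$ by (\ref{gamma_location}), which is exactly the defining condition (\ref{Gammakn}) for membership in $\Gamma_{1,m}$ (with $k=1$). Since $\gamma_0^{m}$ converges uniformly to $\gamma_\infty$ and $\Gamma_{1,m} \supseteq \Gamma_{1,m+1} \supseteq \cdots$ is a nested sequence of compact sets (topological annuli compactly contained in one another) with $\bigcap_m \overline{\Gamma_{1,m}} = \Gamma_1$ (using that $w_{1,m} \to 0$, so the inner and outer boundaries coalesce — this is precisely what was proven in Theorem \ref{jordancurvetheorem}), any limit point of $\bigcup_m \gamma_0^m([0,2\pi])$ lies in $\Gamma_1$. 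More carefully: fix $\theta$; for each $m' \geq m$, $\gamma_0^{m'}(\theta) \in \Gamma_{1,m'} \subseteq \overline{\Gamma_{1,m}}$, so letting $m' \to \infty$ gives $\gamma_\infty(\theta) \in \overline{\Gamma_{1,m}}$; intersecting over all $m$ gives $\gamma_\infty(\theta) \in \bigcap_m \overline{\Gamma_{1,m}} = \Gamma_1$. This proves $\gamma_\infty([0,2\pi]) \subseteq \Gamma_1$.

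For the reverse inclusion, I would use that $\gamma_\infty$ is a closed curve (being a uniform limit of the closed curves $\gamma_0^m$, with $\gamma_\infty(0) = \gamma_\infty(2\pi)$) which surrounds the origin: each $\gamma_0^m$ has winding number $1$ about $0$ since it is a Jordan curve surrounding $0$ contained in the annulus $V_3$ away from $0$, and winding number is locally constant under uniform convergence of curves avoiding $0$, so $\gamma_\infty$ also has winding number $1$ about $0$. Hence $\gamma_\infty([0,2\pi])$ meets every ray from the origin. Since $\gamma_\infty([0,2\pi]) \subseteq \Gamma_1$ and $\Gamma_1$ meets each ray in exactly one point by Theorem \ref{jordancurvetheorem}, it follows that $\gamma_\infty([0,2\pi])$ contains that unique point of $\Gamma_1$ on each ray; as the rays exhaust $\mathbb{C} \setminus \{0\}$ and $\Gamma_1 \subset V_2 \not\ni 0$, we conclude $\Gamma_1 \subseteq \gamma_\infty([0,2\pi])$. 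Combining the two inclusions gives $\gamma_\infty([0,2\pi]) = \Gamma_1$.

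I expect the main obstacle to be the bookkeeping for the first inclusion: one must be careful that the curves $\gamma_0^m$ for different $m$ are genuinely nested inside the $\Gamma_{1,m}$'s (this uses the precomposition-by-translation normalizations from the earlier remarks to ensure the parametrizations are compatible), and that passing to the limit really lands in $\Gamma_1$ rather than merely in some larger set — this is where the vanishing of $w_{1,m}$ and the fact that $\Gamma_1$ is a Jordan curve (not just a nested intersection of annuli) are essential. The winding-number argument for the reverse inclusion is routine once one observes all curves avoid $0$ uniformly.
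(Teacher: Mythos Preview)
Your proof is correct. The main difference from the paper is in the first inclusion: the paper argues that $\gamma_\infty(\theta)\in\Gamma_1$ first only for $\theta$ in the dense set $\mathcal{A}$ (where the orbit $f^n(\gamma_\infty(\theta))$ is known explicitly) and then passes to the closure, whereas you observe more directly that $\gamma_0^m([0,2\pi])\subset\Gamma_{1,m}$ for every $m$ via (\ref{gamma_location}) and use the nested intersection $\bigcap_m\overline{\Gamma_{1,m}}=\Gamma_1$. Your route is cleaner here, since it avoids invoking $\mathcal{A}$ at all. For the reverse inclusion, your winding-number argument and the paper's contradiction argument (that $\gamma_0^m$ cannot miss a sector because it surrounds $0$) are really the same idea in different clothing. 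One cosmetic slip: $\gamma_0^m$ lies in $V_2$, not $V_3$ (you wrote both at different points); this does not affect the argument.
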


\begin{proof} It is straightforward to see that $\gamma_\infty([0,2\pi])\subset\Gamma_1$. Indeed, since each $\theta\in\mathcal{A}$ satisfies $f^n(\gamma_\infty(\theta)) \in \cup_kV_k$ for all $n$, we have that $\gamma_\infty(\theta) \in \Gamma_1$ for $\theta\in\mathcal{A}$. Since $\mathcal{A}$ is dense in $[0,2\pi]$ and $\Gamma_1$ is closed, it follows that $\gamma_\infty([0,2\pi])\subset\Gamma_1$. In order to show that $\gamma_\infty([0,2\pi])=\Gamma_1$, we will need to use the fact (proven in Theorem \ref{jordancurvetheorem}) that $\Gamma_1$ is a Jordan curve. Indeed, suppose by way of contradiction that $\gamma_\infty([0,2\pi])\subsetneq\Gamma_1$. Then $\gamma_\infty([0,2\pi])$ is a strict subset of $\Gamma_1$, and since $\gamma_\infty([0,2\pi])$ is closed (as $\gamma_\infty$ is continuous), it follows that there is an open interval $I\subset[0,2\pi]$ such that $\gamma_\infty([0,2\pi])\subset\Gamma_1\setminus\Gamma_1(I)$, where we use $\Gamma_1$ to also denote the parametrization of $\Gamma_1$. However, by Theorem \ref{jordancurvetheorem}, this means that $\gamma_\infty([0,2\pi])$ has empty intersection with a sector of the form $\{z\in\mathbb{C} : \theta_1 < \textrm{arg}(z) <  \theta_2\}$. But then by uniform convergence, this would mean that for all sufficiently large $m$ we have that $\gamma_0^m([0,2\pi])$ has empty intersection with $\{z\in\mathbb{C} : \theta_1 < \textrm{arg}(z) <  \theta_2\}$, and this is a contradiction since each $\gamma_0^m([0,2\pi])$ is a Jordan curve surrounding $0$.
\end{proof}

\noindent Thus we have proven Theorem \ref{C1}. We will deduce that $Z_1$ is $1$-dimensional, but first we need a few preliminary results.

% \begin{equation} \gamma_0^{m}(\theta_j), \gamma_1^{m}(\theta_j), ..., \gamma_m^{m}(\theta_j). \end{equation}

%Thus we see that, for fixed $j$, the sequences $\gamma_0^m(\theta_j)$ are constant and hence converge as $m\rightarrow\infty$. Since $m$ was arbitrary in the above argument and $n_m\rightarrow\infty$ as $m\rightarrow\infty$, we see that the functions $\gamma_0^m$ converge pointwise (as $m\rightarrow\infty$) on a dense subset of $[0,2\pi]$, and hence on all of $[0,2\pi]$. We call the limiting function $\gamma_\infty$. It remains to show that \begin{enumerate} \item $\gamma_\infty$ is $C^1$, \item $\gamma_\infty'$ does not vanish, and \item $\gamma_\infty([0,2\pi])=\Gamma_1$. \end{enumerate}

\begin{lem}
	\label{extra_remarks}
	For each $k \geq 1$, $\Gamma_k$ is a connected component of $\Julia(f_N)$. Moreover, the outer boundary of $\Omega_k$ is equal to the inner boundary of $\Omega_{k+1}$ is equal to $\Gamma_k$.
\end{lem}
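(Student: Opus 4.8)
\textbf{Proof plan for Lemma \ref{extra_remarks}.}

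The plan is to combine the structural facts already established about the curves $\Gamma_k$, the Fatou components $\Omega_k$, and the dynamics on the annuli $A_k$, $V_k$, $B_k$. First I would show that $\Gamma_k \subset \Julia(f_N)$: by definition (Definition \ref{gamma_definition}) every point $z \in \Gamma_k$ satisfies $f_N^j(z) \in V_{k+j+1}$ for all $j \geq 0$, hence $f_N^j(z) \in A_{k+j+1}$ for all $j$, so $z$ never enters any $B_l$; by Lemma \ref{BkFatou} and the fact that the postsingular set lies in $\cup_{l \geq 1} B_l$ (Lemma \ref{postcritical_set_B}), the orbit of $z$ stays away from the postsingular set and cannot lie in a Fatou component — more directly, $z \in X$ in the notation of Definition \ref{A_def}, and since $z$ only moves forward, $z \in Z_1$, and $Z_1 \subset \Julia(f_N)$ by Theorem \ref{boundary theorem} (or by observing that $Z_1$ has empty interior and is forward invariant into itself). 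Then I would argue $\Gamma_k$ is a full connected component of $\Julia(f_N)$: since $\Gamma_k$ is a Jordan curve (Theorem \ref{jordancurvetheorem}), it is connected, so it suffices to show it is relatively open and closed in $\Julia(f_N)$; closedness is automatic, and for the "maximal connected" part I would identify $\Gamma_k$ with the common boundary of the two Fatou components on either side of it.

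The second assertion is really the crux, and I would prove it by a sandwiching argument using the nested annuli $\Gamma_{k,n}$. By Definition \ref{gamma_definition} and Theorem \ref{Julia_place}, the outermost boundary component of $\Omega_k$ lies in $V_{k+1}$, and $\Gamma_k = \bigcap_n \Gamma_{k,n}$ where each $\Gamma_{k,n}$ is a topological annulus shrinking down to the Jordan curve $\Gamma_k$. I would show that the bounded complementary component $\widehat{\Gamma}_k$ (minus $\Gamma_k$ itself) is contained in $\Omega_k$: a point strictly inside $\Gamma_k$ is separated from $\infty$ by every $\Gamma_{k,n}$, so its forward orbit under $f_N$ must eventually leave $\cup_k V_k$ (otherwise it would lie in $\Gamma_k$), and by Lemma \ref{annulus_2}, Lemma \ref{hole_punched}, Corollary \ref{helpful_Vk_cor} and Lemma \ref{annulus_1} such a point maps into some $B_l \subset \Omega_l$; tracing back via $f_N(\Omega_k) = \Omega_{k+1}$ (equation (\ref{Omega_Maps_Next})) and the fact that $B_k \subset \Omega_k$ connects the annulus just inside $\Gamma_k$ to $B_k$, one concludes the inside of $\Gamma_k$ is in $\Omega_k$. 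Symmetrically, I would show the region just outside $\Gamma_k$ (between $\Gamma_k$ and the inner boundary of $A_{k+1}$, i.e. the part of $B_k$ and beyond) belongs to $\Omega_{k+1}$, using that $B_k \subset \Omega_k$ but also that the annulus between $\Gamma_k$ and $B_k$... — here I must be careful about orientation: $\Gamma_k \subset V_{k+1}$ while $B_k$ is the annulus between $A_k$ and $A_{k+1}$, so $B_k$ lies \emph{outside} $A_k$ and the inner boundary of $\Omega_{k+1}$ is approached from the $V_{k+1}$ side. The clean statement is: $\Gamma_k$ separates the plane into $\widehat{\Gamma}_k \setminus \Gamma_k$ (which meets $B_k \subset \Omega_k$, since $B_k$'s inner radius $4R_k$ is less than $\frac{2}{5}R_{k+1}$) and the unbounded complement (which meets $B_{k+1} \subset \Omega_{k+1}$). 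Hence $\Gamma_k \subset \partial \Omega_k \cap \partial \Omega_{k+1}$, and since $\Gamma_k$ is itself a connected component of $\Julia(f_N)$ it must equal the outer boundary of $\Omega_k$ and the inner boundary of $\Omega_{k+1}$.

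The main obstacle I anticipate is bookkeeping the topology precisely: verifying that $\Gamma_k$ genuinely separates $B_k$ from $B_{k+1}$ (this needs the radius estimates $4R_k < \frac{2}{5}R_{k+1}$ from Lemma \ref{Rkest} together with $\Gamma_k \subset V_{k+1} = A(\frac{2}{5}R_{k+1}, \frac{3}{5}R_{k+1})$), and that no \emph{other} boundary component of $\Omega_k$ intrudes between $\Gamma_k$ and $B_k$ — this is where one invokes that $\Omega_k$ is a single Fatou component containing the connected set $B_k$, so the component of $\widehat{\Gamma}_k \setminus \Gamma_k$ that is adjacent to $\Gamma_k$ and contains $B_k$ is entirely in $\Omega_k$, forcing $\Gamma_k$ to be exactly the outer boundary of $\Omega_k$ rather than some deeper buried curve. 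I would also need the fact that distinct $\Omega_k$'s are genuinely distinct (Lemma \ref{central_series_distinct}) to ensure the inner and outer boundaries are being correctly matched up between consecutive indices. These are all topological consequences of results already in hand, so I expect the argument to be short once the separation picture is nailed down.
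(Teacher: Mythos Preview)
Your overall strategy is sound and close to the paper's, but there is a circularity in your first step: you propose to show $\Gamma_k \subset \Julia(f_N)$ by invoking $Z_1 \subset \Julia(f_N)$ (or Theorem \ref{boundary theorem}), yet in the paper's logical order that fact is proved \emph{using} Lemma \ref{extra_remarks} --- the very next lemma reads ``By Lemma \ref{extra_remarks}, $\Gamma_{m-1}\subset\Julia(f_N)$''. Your fallback ``$Z_1$ has empty interior and is forward invariant'' does not suffice either, since nowhere-dense forward-invariant sets can lie inside Fatou components. The paper instead gives a direct argument: for $z \in \Gamma_k$ and any $\varepsilon > 0$, one pulls back a petal $P \subset A_n$ (via the conformal inverse branch of $f_N^n$ on $\Gamma_{k,n}$) into $B(z,\varepsilon)$; since every petal contains a zero of $f_N$ and $0$ is a repelling fixed point in $\Julia(f_N)$, this gives $B(z,\varepsilon) \cap \Julia(f_N) \neq \emptyset$.

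For the boundary claim your separation picture is correct in outline, but the paper's execution is cleaner and sidesteps the bookkeeping you flag as an obstacle. Rather than arguing abstractly that the region between $B_k$ and $\Gamma_k$ lies in $\Omega_k$, the paper shows recursively that the inner boundary of each $\Gamma_{k,n}$ lies in $\Omega_k$ (the inner boundary of $V_{k+1}$ is connected to $B_k$ through the Fatou set by Theorem \ref{Julia_place}, and this propagates under pullback), and these inner boundaries converge to $\Gamma_k$ by the proof of Theorem \ref{jordancurvetheorem}; symmetrically for the outer boundaries and $\Omega_{k+1}$. For the ``full component'' assertion the paper also argues differently: if the Julia component $K$ strictly contains $\Gamma_k$, some $\zeta \in K \setminus \Gamma_k$ has an iterate on $\partial V_{k+n}$, which lies in the Fatou set --- a contradiction. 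Your alternative route (``$\Gamma_k$ is the common boundary of two distinct Fatou components, hence isolated in $\Julia(f_N)$'') would also work once the boundary claim is established, so you can salvage your plan by reversing the order: prove the boundary statement first, then deduce both $\Gamma_k \subset \Julia(f_N)$ and its maximality from it.
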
	
\begin{proof} We first show that $\Gamma_k\subset \Julia(f_N)$. If $z\in\Gamma_k$ and $\varepsilon>0$, then for all sufficiently large $n$ there exists a petal $P\subset A_{n}$ such that $f_N^{-n}(P)\subset B(z,\varepsilon)$, where we use a branch of the inverse of $f_N: \Gamma_{k,n} \rightarrow V_{n}$ (see Figure \ref{C1covering}). Since any petal contains a $0$ of $f_N$, and $0\in \Julia(f_N)$, it follows that $B(z,\varepsilon)\cap \Julia(f_N)\not=\emptyset$. Thus as $\varepsilon$ is arbitrary, we have proven $\Gamma_k\subset \Julia(f_N)$.
	
	Next we show that $\Gamma_k\subset \Julia(f_N)$ is indeed a component of $\Julia(f_N)$. Let $K$ denote the component of $\Julia(f_N)$ which contains $\Gamma_k$. Since $\Gamma_k$ is connected, we have $\Gamma_k\subset K$. Suppose by way of contradiction that $\Gamma_k\subsetneq K$. Note that \[\Gamma_k:=\bigcap_{n=1}^\infty \Gamma_{k,n}=\bigcap_{n=1}^\infty\{\zeta \in V_k : f_N^n(\zeta) \in V_{k+n} \textrm{ for all } n\geq1. \}\] Thus the assumption  $\Gamma_k\subsetneq K$ implies that there must be some point $\zeta \in K\setminus\Gamma_k$ and $n$ such that $f_N^n(\zeta)$ is on the boundary of $V_{k+n}$. But the boundary of $V_{k+n}$ is mapped to the Fatou set, and this is a contradiction. Thus $\Gamma_k\subset \Julia(f_N)$ is indeed a component of $\Julia(f_N)$.
	
	Next, we show that $\Gamma_k$ coincides with the inner boundary of $\Omega_{k+1}$. Recall that $\Omega_{k+1}$ was defined to be the Fatou component containing $B_{k+1}$. Since we have proved $\Gamma_k$ is a Jordan curve component of $\Julia(f_N)$, it suffices to show that if $z\in\Gamma_k$ and $\varepsilon>0$, then $B(z,\varepsilon)\cap\Omega_{k+1}\not=\emptyset$. Let $z\in\Gamma_k$ and $\varepsilon>0$. As observed in the previous paragraph, the boundary of each $V_k$ belongs to the Fatou set. Moreover, $B_{k+1}$ and the outer boundary of $V_k$ both belong to $\Omega_{k+1}$ by Theorem \ref{Julia_place}. By similar reasoning, the outer boundary of $V_k$ and the outer boundary of $\Gamma_{k,1}$ belong to $\Omega_{k+1}$, and recursively we see that the outer boundary of $\Gamma_{k,n}$ belongs to  $\Omega_{k+1}$ for all $n\geq1$. Since the outer boundaries of $\Gamma_{k,n}$ limit on $\Gamma_k$ by the proof of Theorem \ref{jordancurvetheorem}, we see that $B(z,\varepsilon)\cap\Omega_{k+1}\not=\emptyset$ as needed.
	
	Lastly, it remains to show that $\Gamma_k$ coincides with the outer boundary of $\Omega_{k}$. It suffices to show that if $z\in\Gamma_k$ and $\varepsilon>0$, then $B(z,\varepsilon)\cap\Omega_{k}\not=\emptyset$. Our reasoning is similar to that given in the previous paragraph. Namely, note that outer boundary of $B_{k}$ and the inner boundary of $V_k$ both belong $\Omega_k$ by Theorem \ref{Julia_place}. Similarly, the inner boundary of $V_k$ and the inner boundary of $\Gamma_{k,1}$ belong to the same Fatou component $\Omega_k$. Recursively, we see that the inner boundaries of $\Gamma_{k,n}$ all belong to the same Fatou component $\Omega_k$ for all $n$. By the proof of Theorem \ref{jordancurvetheorem}, we see that $B(z,\varepsilon)\cap\Omega_{k}\not=\emptyset$ as needed.

	%Then as we reasoned before, for all sufficiently large $n$, $B(z,\varepsilon)$ must contain points which are mapped to 
	
	%\vspace{10mm}
	%	If $z \in \Gamma_k$, then $\{f_N^n\}_{n=1}^{\infty}$ can never be equicontinuous in a neighborhood of $z$. Indeed, one can show that for each $z \in \Gamma_k$ and every $\varepsilon>0$, there exists a point $w \in B(z,\varepsilon)$ and $m \geq 0$ such that $f_N^m(w) \in E$. Similarly we can show that for any $\varepsilon > 0$, there exists points $w_1,w_2 \in B(z,\varepsilon)$ and so that the exists an integer $m \geq 0$ so that $f^m_N(w_1) \in B_{k+m}$ and $f^m_N(w_2) \in B_{k+m+1}$. By Definition \ref{central_series} and Lemma \ref{central_series_distinct}, we deduce that $w_1 \in \Omega_{k}$ and $w_2 \in \Omega_{k+1}$.  The details are similar to Theorem 9.3 and Lemma 9.5 of \cite{BurPack}.
\end{proof}

\begin{lem} $Z_1\subset\Julia(f_N)$.
\end{lem}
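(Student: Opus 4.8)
The plan is to show that every point of $Z_1$ is eventually mapped onto one of the Jordan curves $\Gamma_k$, which are already known to be connected components of $\Julia(f_N)$ by Lemma \ref{extra_remarks}; complete invariance of the Julia set then finishes the argument. Fix $z\in Z_1$ and, using Definition \ref{Z1Z2}, choose $l\geq 0$ with $f_N^{l+j}(z)\in\bigcup_{k\geq 1}V_k$ for every $j\geq 0$. Since the annuli $V_k$ are pairwise disjoint, this determines a well-defined sequence of positive integers $(m_j)_{j\geq 0}$ with $f_N^{l+j}(z)\in V_{m_j}$.

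The first step is to prove $m_{j+1}=m_j+1$ for all $j$. On the one hand, Corollary \ref{helpful_Vk_cor} (applicable since $m_j\geq 1$) gives $f_N^{l+j+1}(z)\in f_N(V_{m_j})\subset B_{m_j}\cup A_{m_j+1}\cup B_{m_j+1}$; on the other hand $f_N^{l+j+1}(z)\in V_{m_{j+1}}$. Comparing the radii in Definition \ref{annulidefinition} and using $R_{k+1}\geq 4R_k^2>16R_k$ from (\ref{Rkest_eqn4}), an elementary computation shows that the open annulus $V_{m_{j+1}}$ can meet $B_{m_j}\cup A_{m_j+1}\cup B_{m_j+1}$ only if $m_{j+1}=m_j+1$. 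Hence $f_N^{l+j}(z)\in V_{m_0+j}$ for all $j\geq 0$ (and, incidentally, the orbit of $z$ moves forward at every iterate $\geq l$, consistently with $z\in Z$).

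Since the indices are consecutive, after replacing $l$ by $l+j_0$ for a suitable $j_0$ we may assume $m_0\geq 2$; set $k:=m_0-1\geq 1$. Then $f_N^l(z)\in V_{k+1}$ and $f_N^j(f_N^l(z))=f_N^{l+j}(z)\in V_{(k+1)+j}$ for every $j\geq 0$, which by Definition \ref{gamma_definition} says precisely that $f_N^l(z)\in\Gamma_{k,n}$ for all $n$, i.e.\ $f_N^l(z)\in\Gamma_k=\bigcap_n\Gamma_{k,n}$. By Lemma \ref{extra_remarks}, $\Gamma_k\subset\Julia(f_N)$, so $f_N^l(z)\in\Julia(f_N)$, and since $\Julia(f_N)$ is completely (in particular backward) invariant, $z\in\Julia(f_N)$. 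As $z\in Z_1$ was arbitrary, $Z_1\subset\Julia(f_N)$. The only step that is not pure bookkeeping is the radius estimate yielding $m_{j+1}=m_j+1$, and it is routine; I do not anticipate a genuine obstacle.
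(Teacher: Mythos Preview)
Your proof is correct and follows essentially the same route as the paper: land some iterate of $z$ on a curve $\Gamma_k$, invoke Lemma \ref{extra_remarks}, and appeal to complete invariance of $\Julia(f_N)$. The only minor difference is in justifying that the indices step by exactly one: the paper first increases $l$ using $z\in Z$ so the orbit no longer moves backwards and then reads off $m_{j+1}=m_j+1$, whereas you deduce it directly from Corollary \ref{helpful_Vk_cor} together with a radius comparison---a slightly more self-contained argument that also sidesteps the paper's implicit appeal to $\Gamma_{m-1}$ when $m=1$.
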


\begin{proof} Let $z\in Z_1$, so that by definition  there exists $l\geq0$ so that, for all $j\geq0$, $f^l_N(z)\in \cup_{k\geq1}V_k$. Since $z\in Z$, we may, by perhaps increasing $l$, further assume that $f^l_N(z)$ never moves backwards. Let $m\geq1$ be such that $f^l_N(z) \in V_m$. Since $f^l_N(z)$ doesn't move backwards and $f^l_N(z)\in \cup_{k\geq1}V_k$, we deduce that $f^{l+1}_N(z) \in V_{m+1}$. By similar reasoning we see that in fact $f^{l+j}_N(z) \in V_{m+j}$ for all $j\geq0$. Thus, by Definition \ref{gamma_definition}, $f^l_N(z)\in \Gamma_{m-1}$. By Lemma \ref{extra_remarks}, $\Gamma_{m-1}\subset\Julia(f_N)$, and so $f^l_N(z)\in\Julia(f_N)$. 
	
	% In particular, that means $z\in Z$ and hence there exists $l\geq0$ so that, for all $j\geq0$, $f^l_N(z)\in \cup_{k\geq1}V_k$ and $f
	
\end{proof}

\begin{lem}\label{Z_1_key} Let $\Gamma$ be a component of $Z_1$. Then there exist $p, n\geq1$ and a Jordan domain $B$ containing $\Gamma$ such that $f_N^n|_B$ is conformal, and $f_N^n(\Gamma)=\Gamma_p$. 
\end{lem}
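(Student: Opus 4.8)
The plan is to realize $\Gamma$ as a conformal preimage, under an iterate of $f_N$, of one of the curves $\Gamma_p$ of Definition~\ref{gamma_definition}. Fix $z\in\Gamma$. By the argument already used to prove $Z_1\subseteq\Julia(f_N)$, there are integers $l\geq0$ and $m\geq1$ with $f_N^l(z)\in\Gamma_{m-1}$ and $f_N^{l+j}(z)\in V_{m+j}$ for all $j\geq0$; take $l$ minimal with this property. Since $\Gamma\subseteq X$ all points of $\Gamma$ share an itinerary, so for each $j$ the set $f_N^j(\Gamma)$ is a connected subset of $\Julia(f_N)\cap A_{k(z,j)}$; by Theorem~\ref{Julia_place} (and Lemma~\ref{T} when $k(z,j)\leq0$) it therefore lies in a single petal or in $V_{k(z,j)}$, and since it contains $f_N^j(z)$ it is pinned down — in particular $f_N^l(\Gamma)\subseteq\Gamma_{m-1}$. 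If $l=0$ then $\Gamma=\Gamma_{m-1}$ is already one of the curves in question (and $C^1$ by Theorem~\ref{C1}); otherwise set $n:=l\geq1$ and $p:=m-1$ (a harmless adjustment of $l,m$ disposes of the borderline case $m=1$). Minimality of $l$ forces $f_N^{n-1}(z)$ to lie in a petal $P\subseteq A_q$ with $q\geq p$: were $f_N^{n-1}(z)\in V_q$ it would move forward and $l$ could be decreased; and $q\geq p$ by Lemma~\ref{other widehat help}.

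Next I would build $B$ by pulling back the Jordan domain $\Delta_n:=\widehat{\Gamma_{p,1}}$ (the annulus $\Gamma_{p,1}$ with its hole filled in) along the finite orbit $z,f_N(z),\dots,f_N^{n-1}(z)$, noting $\Delta_n\supseteq\Gamma_{p,1}\supseteq\Gamma_p$ and $\Delta_n\subseteq B(0,\tfrac{3}{5}R_{p+1})$. The first pullback is the delicate step, since $\Delta_n$ contains the bounded complementary component of $\Gamma_p$, hence a whole $B_k$, hence postcritical points, so it cannot be pulled back through a covering branch; but $f_N^{n-1}(z)\in P$, on whose slight enlargement $f_N$ is conformal with $f_N(P)\supseteq B(0,4R_{q+1})\supseteq\Delta_n$ by Lemma~\ref{petal_radius} (using $q\geq p$), so $\Delta_{n-1}:=(f_N|_P)^{-1}(\Delta_n)$ is a Jordan domain compactly contained in $P$, and in particular small. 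Thereafter the pullbacks are routine: at step $j<n-1$ the point $f_N^j(z)$ lies in a petal, or in $W=f_N^{-1}(A_{k+1})\cap V_k$ with $k\geq1$ (where $f_N$ is the degree-$n_k$ covering of Lemma~\ref{CoveringMapVk}), or in a $k\leq0$ component where $f_N$ is conformal (Lemma~\ref{T}), and in each case Lemma~\ref{other widehat help} shows $\Delta_{j+1}\subseteq A_{k(z,j+1)}$ fits inside the round disk that $f_N$ covers, so $\Delta_j:=$ the component of $f_N^{-1}(\Delta_{j+1})$ containing $f_N^j(z)$ is a Jordan domain inside that petal or $V$-piece on which $f_N$ is conformal. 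Since petals and the $V_k$ contain no critical points of $f_N$ (Propositions~\ref{zeros}, \ref{Fatou_critical_points} and Lemma~\ref{postcritical_set_B}), no $\Delta_j$ contains a critical point, so $f_N^n:B\to\Delta_n$ is conformal with $B:=\Delta_0$; an induction exactly as in Lemma~\ref{section_8_distortion} gives $f_N^j(\Gamma)\subseteq\Delta_j$ for all $j$, hence $\Gamma\subseteq B$.

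Finally I would verify $f_N^n(\Gamma)=\Gamma_p$. The inclusion $f_N^n(\Gamma)\subseteq\Gamma_p$ is the itinerary observation above. For the reverse, since $\Gamma_p\subseteq\Delta_n=f_N^n(B)$ the set $\Gamma^\ast:=(f_N^n|_B)^{-1}(\Gamma_p)$ is a Jordan curve in $B$; every $w\in\Gamma^\ast$ satisfies $f_N^{n+j}(w)\in V_{p+1+j}$ for $j\geq0$ and $f_N^j(w)\in\Delta_j\subseteq A$ for $j<n$, so $w\in X$, moves backwards only finitely often, and lies in $Z_1$. Thus $\Gamma^\ast$ is a connected subset of $Z_1$, hence in a single component; as $\Gamma\subseteq B$ and $f_N^n(\Gamma)\subseteq\Gamma_p$ we have $\Gamma\subseteq\Gamma^\ast$, so $\Gamma^\ast$ lies in the component of $Z_1$ containing $\Gamma$, i.e.\ $\Gamma^\ast=\Gamma$ and $f_N^n(\Gamma)=\Gamma_p$.

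The main obstacle is precisely that no Jordan domain around $\Gamma_p$ can be pulled back all at once, since it engulfs the interior of $\Gamma_p$ together with part of the postsingular set; the remedy is to route the first pullback step through a petal — which is what minimality of the stabilization time $l$ provides — after which every pulled-back domain is tiny and stays inside a petal or a $V$-piece, so branching is never met and the remaining bookkeeping (domain sizes, and the identification $\Gamma^\ast=\Gamma$) is routine given the estimates of Sections~\ref{Global Mapping}--\ref{Branched Cover}.
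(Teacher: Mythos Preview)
Your argument is correct and follows essentially the same route as the paper's: both exploit that the orbit passes through a petal (or a negative-index $A_k$-component) just before stabilizing in the $V_k$'s, so that a large Jordan domain containing $\Gamma_p$ can be pulled back conformally along the orbit. The paper shortcuts your step-by-step construction of $B$ by invoking the $\mathcal{C}_m$ machinery and Lemma~\ref{section_8_distortion} (which already package the petal pullback), and your $\Gamma^\ast$ argument for the equality $f_N^n(\Gamma)=\Gamma_p$ makes explicit what the paper's final line asserts more tersely.
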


\begin{proof} It will be convenient to denote $V:=\cup_{j\geq1}V_j$. Let $\Gamma$ be a component of $Z_1$, and let $z\in\Gamma$. Since $z\in Z$, there is a positive integer $m\geq0$ so that $z$ moves backwards precisely $m$ times. Thus there is an element $W_k^n\in\mathcal{C}_m$ containing $z$. Let us first assume $k\geq1$. By Lemma \ref{section_8_distortion}, there exists a Jordan domain $B$ containing $W_k^n$ such that $f_N^n(B)\rightarrow B(0,4R_{k+1})$ is conformal and $f_N^n(W_k^n)=A_k$. In particular, since $\partial A_k\subset\mathcal{F}(f_N)$, we deduce that $\Gamma\subset W_k^n$. In particular, we have that $f_N^n(\Gamma)\subset A_k$. By our choice of $W_k^n\in\mathcal{C}_m$, we have that $f_N^n(\Gamma)$ can only move forward. Moreover by Lemma \ref{Julia_place}, we have that either $f_N^n(\Gamma) \subset V_k$, or $f_N^n(\Gamma)$ is a subset of a petal $P_k\subset\mathcal{P}_k$. Since $z\in Z_1$, there exists a smallest $l\geq0$ and $p\geq1$ such that $f_N^{n+l}(z)\in V_p$ and $f_N^{n+l+j}(z)\in V$ for all $j\geq0$. Moreover, by Lemma \ref{petal_radius} there exists a Jordan domain $B'$ with $ W_n^k \subset B'\subset B$ such that $f_N^l: f_N^n(B') \rightarrow f_N^{n+l}(B')$ is conformal. Thus $f_N^{n+l}(\Gamma)\subset V_p$. Now consider an arbitrary $z'\in \Gamma$. Since $f_N^{n+l}(z')$ only moves forward and $f_N^{n+l+j}(z')\in V$ for all $j\geq0$, we have that $f_N^{n+l+j}(z')\in V_{p+j}$ for all $j\geq0$. Thus by Definition \ref{gamma_definition}, $f_N^{n+l}(z')\in \Gamma_{p-1}$. Since $z'\in \Gamma$ was arbitrary, we have that $f_N^{n+l}(\Gamma)\subset \Gamma_{p-1}$. Lastly, since $f_N^{n+l}: B' \rightarrow f_N^{n+l}(B')$ is conformal, we have that  $f_N^{n+l}(\Gamma) = \Gamma_{p-1}$, and hence the proof is finished in the case that $k\geq1$. If $k<1$, by Lemma \ref{section_8_distortion} we have a Jordan domain $B$ containing $W_k^n$ such that $f_N^n|_B$ is conformal, and $f_N^n(W_k^n)=A_k$ is mapped conformally onto $A_1$, whence the above reasoning applies.

	%\vspace{10mm}
	
	% Since $f_N^{n+l}(z)$ only moves forward and $f_N^{n+l+j}(z)\in V$ for all $j\geq0$, we have that $f_N^{n+l+j}(z)\in V_{p+j}$ for all $j\geq0$. Thus by Definition \ref{gamma_definition}, $f_N^{n+l+j}(z)\in \Gamma_{p+1}$. 

	%Moreover, if $l>0$, then by Lemma \ref{petal_radius} we have that $f_N^l: f_N^n(B) \rightarrow f_N^{n+l}(B)$ is conformal. Thus, as before, we have that $f_N^{n+l}(\Gamma)\subset V_p$. 
\end{proof}

\begin{cor}\label{Z_1iscountableunion} The set $Z_1$ is a countable disjoint union of $C^1$ Jordan curves. In particular, the Hausdorff dimension of $Z_1$ is equal to $1$.
\end{cor}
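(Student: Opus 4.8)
The plan is to let Lemma \ref{Z_1_key} do the heavy lifting: it already packages each component of $Z_1$ as a conformal pullback of one of the curves $\Gamma_p$, whose regularity we know from Theorems \ref{jordancurvetheorem} and \ref{C1}. So the proof is essentially an assembly of these facts together with a countability bookkeeping step.

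\textbf{Step 1: each component is a $C^1$ Jordan curve.} Let $\Gamma$ be a connected component of $Z_1$. By Lemma \ref{Z_1_key} there are integers $p,n\geq1$ and a Jordan domain $B\supset\Gamma$ such that $f_N^n|_B$ is conformal and $f_N^n(\Gamma)=\Gamma_p$. Since $f_N^n|_B$ is injective and holomorphic, $f_N^n(B)$ is open and contains $\Gamma_p=f_N^n(\Gamma)$, and the inverse branch $g:=(f_N^n|_B)^{-1}$ is holomorphic on $f_N^n(B)$ with $g'\neq0$ there. By Theorems \ref{jordancurvetheorem} and \ref{C1}, $\Gamma_p$ is a $C^1$ Jordan curve; fix a $C^1$ parametrization $\gamma:[0,2\pi]\to\Gamma_p$ with $\gamma'(\theta)\neq0$ for all $\theta$. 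Because $f_N^n$ maps $\Gamma$ bijectively onto $\Gamma_p$, we have $g(\Gamma_p)=\Gamma$, so $g\circ\gamma$ is a Jordan parametrization of $\Gamma$; it is $C^1$, and the chain rule gives $(g\circ\gamma)'(\theta)=g'(\gamma(\theta))\,\gamma'(\theta)\neq0$. Hence $\Gamma$ is a $C^1$ Jordan curve, and since distinct components of $Z_1$ are disjoint, $Z_1$ is a disjoint union of $C^1$ Jordan curves.

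\textbf{Step 2: the union is countable.} By Step 1 every component $\Gamma$ of $Z_1$ satisfies $f_N^n(\Gamma)=\Gamma_p$ for some $n,p\geq1$, so it suffices to bound, for each fixed pair $(n,p)$, the number of components $\Gamma$ with $f_N^n(\Gamma)=\Gamma_p$. Fix any point $q_0\in\Gamma_p$. Then $f_N^{-n}(q_0)$ is the zero set of the nonconstant entire function $f_N^n-q_0$, hence is discrete and countable. Every component $\Gamma$ with $f_N^n(\Gamma)=\Gamma_p$ contains a point of $f_N^{-n}(q_0)$ (since $q_0\in\Gamma_p=f_N^n(\Gamma)$), and distinct components are disjoint, so choosing such a point defines an injection from the collection of these components into the countable set $f_N^{-n}(q_0)$. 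As there are only countably many pairs $(n,p)$, $Z_1$ has countably many components.

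\textbf{Step 3: dimension.} Each component of $Z_1$ is a $C^1$ Jordan curve, so has Hausdorff dimension $1$; moreover $Z_1\neq\emptyset$, since $\Gamma_1\subset Z_1$ (every $z\in\Gamma_1$ satisfies $f_N^j(z)\in V_{2+j}$ for all $j\geq0$, so with $l=0$ it meets the defining condition of $Z_1$, and it never moves backwards). Applying countable stability of Hausdorff dimension ((\ref{countable_stability})) to the countable decomposition of $Z_1$ from Step 2 gives $\dim_H(Z_1)=\sup_\Gamma\dim_H(\Gamma)=1$. The only point needing a little care is Step 2, where one must use that the components map \emph{onto} the curves $\Gamma_p$ — supplied by Lemma \ref{Z_1_key} — in order to land a point in the fiber $f_N^{-n}(q_0)$; the rest is routine.
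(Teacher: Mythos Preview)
Your proof is correct and Steps 1 and 3 mirror the paper's argument (the paper likewise invokes Lemma \ref{Z_1_key} together with Theorem \ref{C1} to see each component is a $C^1$ Jordan curve, and then cites Lemma \ref{Hdim_Facts} for the dimension). Your explicit chain-rule verification and the non-emptiness check via $\Gamma_1\subset Z_1$ are details the paper leaves implicit.

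The genuine difference is in Step 2. The paper's one-line countability argument reads: ``Since any Jordan curve has non-empty interior, there can be at most countably many components of $Z_1$.'' Your argument instead fixes a basepoint $q_0\in\Gamma_p$ and injects the set of components with $f_N^n(\Gamma)=\Gamma_p$ into the countable fiber $f_N^{-n}(q_0)$, then sums over the countably many pairs $(n,p)$. Your route is more self-contained: it uses only that $f_N^n-q_0$ is nonconstant entire (hence has discrete zero set) and the surjectivity clause of Lemma \ref{Z_1_key}, and it makes no appeal to how the bounded Jordan domains sit relative to one another. The paper's phrasing, taken literally, is delicate---disjoint Jordan curves in the plane need not be countable in general (think of circles of radius $r$ for $r$ in a Cantor set), so some additional structural input is being used implicitly there. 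Your fiber-counting argument sidesteps that issue entirely.
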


\begin{proof} By Theorem \ref{C1} and Lemma \ref{Z_1_key}, each component of $Z_1$ is a conformal image of a $C^1$ Jordan curve, hence each component of $Z_1$ is a $C^1$ Jordan curve. Since any Jordan curve has non-empty interior, there can be at most countably many components of $Z_1$. Lastly, $\textrm{dim}(Z_1)=1$ follows from Lemma \ref{Hdim_Facts}.
\end{proof}

\section{Singleton Boundary Components}\label{singleton_components}

In this last Section, we analyze finally the set $Z_2$. Recall that we have proven that \[ \mathcal{J}(f) \subset E' \cup Y \cup Z_1 \cup Z_2, \] and so to prove (1) of Theorem \ref{main_theorem} it only remains to estimate the dimension of $Z_2$. In this Section we will prove that in fact $Z_2$ has dimension $0$ and consists of uncountably many singletons. We begin by constructing a sequence of covers for $Z_2$.

 Recall from Definition \ref{Z1Z2} that: \begin{equation} Z_2=\left\{ z \in Z : \textrm{ there exist arbitrarily large } n \textrm{ such that } f_N^n(z)\not\in\cup_{k\geq1}V_k \right\}. \end{equation} We first analyze $Z_2$ intersected with the closure of a Fatou component $\Omega_k$. 

\begin{lem}
	\label{Z2_in_Ak}
	Suppose that $z \in Z_2 \cap \overline{\Omega_k}$ for some $k\geq1$. Then $z \in A_k$. 
\end{lem}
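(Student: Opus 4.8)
The plan is to show that a point $z\in Z_2\cap\overline{\Omega_k}$ cannot lie in any of the annuli $B_j$, $A_j$ with $j\neq k$, nor in $E'$, leaving $A_k$ as the only possibility. First recall that $\overline{\Omega_k}$ is the closure of the Fatou component containing $B_k$, that $B_k\subset\Omega_k$, and that by Lemma \ref{extra_remarks} the inner boundary of $\Omega_k$ is $\Gamma_{k-1}$ and the outer boundary of $\Omega_k$ is $\Gamma_k$; both of these Jordan curves lie in $V_{k}$ and $V_{k+1}$ respectively. Consequently $\overline{\Omega_k}$ is contained in $V_k\cup B_k\cup V_{k+1}$, or more crudely in $B_{k-1}\cup A_k\cup B_k$ once one uses $V_k\subset A_k$, $\Gamma_{k-1}\subset A_k$, $\Gamma_k\subset A_{k+1}$ together with $f_N(\Gamma_k)=\Gamma_{k+1}$. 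So $z$ must lie in one of $B_{k-1}$, $A_k$, or $B_k$ (and possibly its boundary, but those boundary curves are exactly the $\Gamma$'s, which are in $Z_1$, not $Z_2$).

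Next I would eliminate the $B_j$ possibilities. Since $z\in Z_2\subset Z\subset X\subset\Julia(f_N)$, and since by Lemma \ref{BkFatou} each $B_j$ (for $j\geq 1$) lies in the Fatou set, $z$ cannot lie in the interior of any $B_j$ with $j\geq1$; by Lemma \ref{A_def} and the definition of $X$ we already know $f_N^n(z)\in A$ for all $n$, so in fact $z\in A=\cup_{j\in\Z}A_j$. Combining this with the containment $\overline{\Omega_k}\subset B_{k-1}\cup A_k\cup B_k$ and noting that $A\cap(B_{k-1}\cup B_k)$ can meet $\overline{\Omega_k}$ only along the boundary curves $\Gamma_{k-1}$ and $\Gamma_k$ — which are components of $Z_1$ and hence disjoint from $Z_2$ — forces $z\in A_k$.

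The one step that needs care is the assertion $\overline{\Omega_k}\subset B_{k-1}\cup A_k\cup B_k$, equivalently that $\Omega_k$ is "trapped" between the two Jordan curves $\Gamma_{k-1}$ and $\Gamma_k$. The cleanest route is: $\Gamma_{k-1}$ and $\Gamma_k$ are Jordan curves by Theorem \ref{jordancurvetheorem}, they are components of $\Julia(f_N)$ by Lemma \ref{extra_remarks}, and $\Gamma_{k-1}$ is the inner boundary of $\Omega_k$ while $\Gamma_k$ is the outer boundary of $\Omega_k$, again by Lemma \ref{extra_remarks}. Hence $\overline{\Omega_k}$ is exactly the closed topological annulus bounded by $\Gamma_{k-1}$ and $\Gamma_k$. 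Since $\Gamma_{k-1}\subset V_k\subset A_k$ and $\Gamma_k\subset V_{k+1}\subset A_{k+1}$, and since $B_{k-1}$ separates $A_{k-1}$ from $A_k$ while $B_k$ separates $A_k$ from $A_{k+1}$, the region between $\Gamma_{k-1}$ and $\Gamma_k$ is contained in $\widehat{\Gamma_k}\setminus\widehat{\Gamma_{k-1}}$, which sits inside $A_k\cup B_k\cup A_{k+1}$ but more precisely, since its outer boundary $\Gamma_k$ already lies in $A_{k+1}$ near the inner edge of $A_{k+1}$, in fact inside $B_{k-1}\cup A_k\cup B_k$ up to the boundary curves. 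I expect this topological trapping argument — pinning down that $\Omega_k$ lies between the two consecutive $\Gamma$-curves and translating that into the annulus language — to be the main (though routine) obstacle; the elimination of the $B_j$'s via Lemma \ref{BkFatou} and of $E'$ via forward-invariance of $E$ is immediate.
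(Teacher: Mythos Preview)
Your containment claims are incorrect, and this creates a genuine gap. First, $\overline{\Omega_k}\subset V_k\cup B_k\cup V_{k+1}$ is false: for instance $A(\tfrac{5}{4}R_k,4R_k)$ lies in $\Omega_k$ by Lemma~\ref{annulus_1} but is in neither $V_k$ nor $B_k$. The correct (and weaker) containment is $\overline{\Omega_k}\subset A_k\cup B_k\cup A_{k+1}$, which is what the paper uses. Second, your assertion that $\overline{\Omega_k}$ is \emph{exactly} the closed annulus between $\Gamma_{k-1}$ and $\Gamma_k$ contradicts one of the paper's main conclusions: $\Omega_k$ has infinite inner connectivity, so it has infinitely many bounded complementary components whose interiors are missing from $\overline{\Omega_k}$.

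With the correct containment, since $z\in X\subset A$ you only get $z\in A_k$ or $z\in A_{k+1}$, and the whole difficulty is ruling out $A_{k+1}$. Your proposed shortcut --- ``the boundary there is $\Gamma_k\subset Z_1$, hence disjoint from $Z_2$'' --- assumes precisely what must be proved, namely that $\partial\Omega_k\cap A_{k+1}\subset\Gamma_k$. A priori there could be other Julia points of $\overline{\Omega_k}$ in $A_{k+1}$ (think of preimages of petals landing inside $V_{k+1}$). The paper closes this by a dynamical argument: if $z\in\overline{\Omega_k}\cap A_{k+1}$, then $f_N^l(z)\in\overline{\Omega_{k+l}}$ for all $l$, and using Lemma~\ref{annulus_2} and Corollary~\ref{helpful_Vk_cor} one shows inductively that $f_N^l(z)\in V_{k+l+1}$ for every $l\ge 0$. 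This forces $z\in Z_1$, contradicting $z\in Z_2$. That inductive step is the actual content of the lemma, and your proposal does not supply it.
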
	
\begin{proof}
	Since $z \in Z_2 \subset X$, the orbit sequence of $z$ is $(k(z,n))_{n=0}^{\infty}$, and we have 
	$$f^n_N(z) \in A_{k(z,n)}$$
	for all $n \geq 0$. 
	
	Note that $\overline{\Omega_k} \subset A_k \cup B_k \cup  A_{k+1}$. Since $z \in X$, we must have $z \in A_k$ or $z \in A_{k+1}$. Suppose for the sake of contradiction that we had $z \in A_{k+1}$. Since $z \in \overline{\Omega_k}$, we have $f_N^l(z) \in \overline{ \Omega_{k+l}}$ for all $l \geq 0$. The outermost boundary component of $\Omega_{k}$ is $\Gamma_k$ by Lemma \ref{extra_remarks}, and $\Gamma_k \subset V_{k+1}$ by (\ref{Gammak}). Therefore, we must have 
	\begin{equation*}
	z \in A(\frac{1}{4}R_{k+1},\frac{3}{5}R_{k+1}).
	\end{equation*}
	By Lemma \ref{annulus_2}, $f_N(A(\frac{1}{4}R_{k+1},\frac{2}{5}R_{k+1})) \subset B_{k+1}$, so since $z \in X$ we must have $z \in V_{k+1}$. Since $f_N(V_{k+1}) \subset B_{k+1} \cup A_{k+2} \cup B_{k+2}$ by Corollary 
	\ref{helpful_Vk_cor}, we deduce that $f_N(z) \in A_{k+2} \cap \overline{\Omega_{k+1}}$.
	
	By repeating the reasoning above, we deduce that $f_N^l(z) \in V_{k+l+1}$ for all $l \geq 0$. This contradicts the fact that $z \in Z_2$, so we must have $z \in A_k$.
\end{proof}

\begin{lem}
	\label{Z2_Boundary} 
	Suppose that $z \in \overline{\Omega_k} \cap Z_2$ for some $k \geq 1$. Then $z \in \partial \Omega_k$.
\end{lem}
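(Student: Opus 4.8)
The plan is to argue by contradiction: suppose $z \in \overline{\Omega_k} \cap Z_2$ but $z \notin \partial\Omega_k$. Since $z \in \overline{\Omega_k}$, this forces $z \in \Omega_k$, i.e. $z$ lies in the Fatou component $\Omega_k$ itself. But $Z_2 \subset Z \subset X \subset \mathcal{J}(f_N)$ (this inclusion is exactly Lemma \ref{Julia_Place_X} together with the fact that $Z_2 \cap E' = \emptyset$, since points of $X$ have all iterates in $A$ while points of $E'$ eventually land in $E \subset D$ and stay there, disjoint from $A$). A point cannot lie in both the Fatou set and the Julia set, so this is immediately a contradiction.

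More carefully, I would phrase it as follows. First, recall $Z_2 \subset Z \subset X$, and by Lemma \ref{Julia_Place_X} we have $\mathcal{J}(f_N) \subset E' \cup X$; moreover $X \subset \mathcal{J}(f_N)$ as well. Indeed, if $z \in X$ then $f_N^n(z) \in A = \cup_{k \in \mathbb{Z}} A_k$ for all $n \geq 0$; since each $A_k$ is disjoint from every $B_j$ and from $E$, and since the $B_j$ ($j \geq 1$) lie in the Fatou set (Lemma \ref{BkFatou}), the orbit of $z$ never enters a region known to be in the Fatou set. To conclude $z \in \mathcal{J}(f_N)$ rigorously, I would instead invoke Lemma \ref{Z2_in_Ak}, which already tells us $z \in A_k$, combined with the fact established in Section \ref{Dimension Z} (Lemma \ref{extra_remarks} and the surrounding discussion) that $\overline{\Omega_k} \cap A_k$ is contained in $\partial\Omega_k$: the interior of $\Omega_k$ meets $A_k$ only in the annular neighborhoods of the two boundary curves $\Gamma_{k-1}$ and $\Gamma_k$ that bound $\Omega_k$, and those curves are precisely $\partial\Omega_k \cap A_k$.

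Here is the cleanest route. By Lemma \ref{Z2_in_Ak}, $z \in A_k$. The Fatou component $\Omega_k$ contains $B_k$ and, by Lemma \ref{extra_remarks}, its inner boundary is $\Gamma_{k-1} \subset V_k$ and its outer boundary is $\Gamma_k \subset V_{k+1}$. Thus $\Omega_k \cap A_k \subset A(\tfrac{1}{4}R_k, \tfrac{3}{5}R_k) \setminus \Gamma_{k-1}$, which consists of the portion of $A_k$ "inside" $\Gamma_{k-1}$ — but that portion is the bounded complementary side, which belongs to $\mathcal{J}(f_N) \cup (\text{other Fatou components})$, not to $\Omega_k$. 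Concretely, any point of $\Omega_k$ lying in $A_k$ must be within the region between the inner boundary of $B_k$ (at $|z| = 4R_k$) and $\Gamma_k \subset V_{k+1}$, or between $\Gamma_{k-1} \subset V_k$ and the outer boundary of $B_k$... wait — since $B_k = \overline{A(4R_k, \tfrac14 R_{k+1})}$ and $A_k = A(\tfrac14 R_k, 4R_k)$, the component $\Omega_k$ meets $A_k$ only near $\Gamma_{k-1}$ (on its $V_k$-side toward $B_k$) and near $\Gamma_k$; in both cases those intersection points lie on $\partial\Omega_k$ because $\Gamma_{k-1}, \Gamma_k$ are Jordan curve components of $\mathcal{J}(f_N)$ forming the boundary of $\Omega_k$. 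Since $z \in A_k \cap \overline{\Omega_k}$ and $z \notin \Gamma_{k-1} \cup \Gamma_k$ would put $z$ in the open region bounded away from these curves — hence outside $\overline{\Omega_k}$ — we must have $z \in \Gamma_{k-1} \cup \Gamma_k \subset \partial\Omega_k$.

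The main obstacle is making the topological bookkeeping of "$\Omega_k \cap A_k$" precise: one needs to pin down exactly which annular subregions of $A_k$ the Fatou component $\Omega_k$ can reach, using Lemma \ref{annulus_2} (points of $A(\tfrac14 R_k, \tfrac25 R_k)$ map into $B_k$, hence those in the Julia set must be in $V_k$ or petals), Corollary \ref{helpful_Vk_cor}, and Lemma \ref{extra_remarks}. I expect the argument to run: $z \in A_k \cap \overline{\Omega_k}$; if $z \notin \partial\Omega_k$ then $z \in \Omega_k$ (interior), so $z \in \mathcal{F}(f_N)$; but one shows $z$ must then lie on $\Gamma_{k-1}$ or $\Gamma_k$ (by tracking that $f_N^j(z)$ is forced into $V_{k+j}$ for all $j$, using the same iterated trapping argument as in Lemma \ref{Z2_in_Ak}), contradicting $z \in Z_2$. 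So really the proof reduces to: a point of $\overline{\Omega_k} \cap Z_2 \cap A_k$ that is not on the boundary would have an orbit trapped in $\cup_j V_j$, contradicting membership in $Z_2$ — essentially a restatement of the Lemma \ref{Z2_in_Ak} argument applied one level deeper. I would write it in two short paragraphs invoking these prior results.

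\begin{proof}
	By Lemma \ref{Z2_in_Ak}, we have $z \in A_k$. Suppose for the sake of contradiction that $z \notin \partial \Omega_k$; since $z \in \overline{\Omega_k}$, it follows that $z \in \Omega_k$, so in particular $z$ lies in the Fatou set of $f_N$.

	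By Lemma \ref{extra_remarks}, the inner boundary of $\Omega_k$ is $\Gamma_{k-1} \subset V_k$ and the outer boundary is $\Gamma_k \subset V_{k+1}$. Since $z \in \Omega_k \cap A_k$ and $A_k = A(\tfrac14 R_k, 4R_k)$, while $B_k = \overline{A(4R_k, \tfrac14 R_{k+1})} \subset \Omega_k$, the point $z$ must lie in the open region of $A_k$ bounded between $\Gamma_{k-1}$ and the inner boundary $\{|z|=4R_k\}$ of $B_k$; that is, $z \in A(\tfrac14 R_k, 4R_k)$ and $z$ lies on the $B_k$-side of $\Gamma_{k-1}$. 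In particular $z \notin \Gamma_{k-1} \cup \Gamma_k$. We now derive a contradiction with $z \in Z_2$. Since $z \in \overline{\Omega_k}$ we have $f_N^j(z) \in \overline{\Omega_{k+j}}$ for all $j \geq 0$. As in the proof of Lemma \ref{Z2_in_Ak}: if $f_N^j(z) \in A(\tfrac14 R_{k+j}, \tfrac25 R_{k+j})$, then by Lemma \ref{annulus_2} we would have $f_N^{j+1}(z) \in B_{k+j}$, contradicting $z \in X$; hence $f_N^j(z) \in A(\tfrac25 R_{k+j}, 4R_{k+j})$. Combined with $f_N^j(z) \in \overline{\Omega_{k+j}}$ and the fact that the outer boundary $\Gamma_{k+j}$ of $\Omega_{k+j}$ lies in $V_{k+j+1}$, we get $f_N^j(z) \in A(\tfrac25 R_{k+j}, \tfrac35 R_{k+j}) = V_{k+j}$ for all $j \geq 0$. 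This means $z$ never leaves $\cup_{l \geq 1} V_l$ under iteration, so $z \in Z_1$, contradicting $z \in Z_2 = Z \setminus Z_1$. Therefore $z \in \partial \Omega_k$.
\end{proof}
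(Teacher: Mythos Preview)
Your argument has a genuine gap at the step where you conclude $f_N^j(z) \in V_{k+j}$ from $f_N^j(z) \in A(\tfrac{2}{5}R_{k+j}, 4R_{k+j}) \cap \overline{\Omega_{k+j}}$. The location of the outer boundary $\Gamma_{k+j} \subset V_{k+j+1}$ says nothing about where $f_N^j(z)$ sits \emph{within} $A_{k+j}$: the set $\overline{\Omega_{k+j}} \cap A_{k+j}$ extends from $\Gamma_{k+j-1}$ all the way out to $|z|=4R_{k+j}$ (minus holes). From $f_N^{j+1}(z)\in A_{k+j+1}$ and Lemma~\ref{Julia_Place_AkAk1} you only get that $f_N^j(z)$ lies either in $V_{k+j}$ or in a petal $P\subset\mathcal{P}_{k+j}$, and nothing you have written rules out the petal case. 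Petals sit in $A(\tfrac{3}{5}R_{k+j},\tfrac{5}{4}R_{k+j})$ (Proposition~\ref{zeros}), well outside $V_{k+j}$, and each petal meets $\Omega_{k+j}$ nontrivially: by (\ref{petal_boundary}) points near $\partial P$ map into $B_{k+j+1}\subset\Omega_{k+j+1}$, hence lie in $\Omega_{k+j}$. So an orbit of a point in $\Omega_k\cap X$ could in principle pass through petals infinitely often, and your ``forced into $V_{k+j}$'' conclusion does not follow.

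The paper's proof avoids this petal-tracking problem by a completely different mechanism. Assuming $z\in\Omega_k$, it takes a ball $B(z,\varepsilon)\subset\Omega_k$ and invokes Theorem~1.2 of \cite{BergRipStalWD} (the absorbing-annulus property of multiply connected wandering domains), which guarantees that $f_N^n(B(z,\varepsilon))$ contains an annulus $A(|f_N^n(z)|^{1-\alpha},|f_N^n(z)|^{1+\alpha})$ for some fixed $\alpha>0$ and all large $n$. Since $|f_N^n(z)|\asymp R_{k+n}$ (by Lemma~\ref{Z2_in_Ak}) and $R_{k+n}$ grows super-exponentially, this annulus eventually reaches down into $B_{k+n-1}\subset\Omega_{k+n-1}$, contradicting $f_N^n(B(z,\varepsilon))\subset\Omega_{k+n}$. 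That external input is precisely what lets the paper conclude without ever having to decide whether individual iterates land in $V_{k+j}$ or in a petal.
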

\begin{proof}
	Recall that 
	\begin{equation}
	\label{recall_omega1}
	\{ z \in \mathbb{C} : 4R_k \leq |z| \leq R_{k+1}/4 \} \subset \Omega_k.
	\end{equation}
	Suppose for the sake of contradiction that $z \in \Omega_k$.  Then there exists $\varepsilon >0$ so that $B(z,\varepsilon) \subset \Omega_k.$ By Theorem 1.2 of \cite{BergRipStalWD}, there exists $m >0$ and $\alpha > 0$ so that for all $n \geq m$ we have 
	\begin{equation}
	\label{absorbing_annulus}
	A(|f_N^n(z)|^{1-\alpha}, |f_N^n(z)|^{1+\alpha}) \subset f_N^n(B(z,\varepsilon)) \subset \Omega_{k+n}.
	\end{equation}
	%Since $z\in \Omega_k$, we have by (\ref{recall_omega}) that $z\in A_k$ or $z\in A_{k+1}$. 
	By Lemma \ref{Z2_in_Ak} 
	\begin{align}
	\label{first_case} 
	\frac{1}{4}R_{k+j} \leq |f_N^j(z)| \leq 4 R_{k+j}
	\end{align} 
	holds for all $j\geq 0$. Notice that by Lemma \ref{Rkest},
	\begin{equation*}
	\frac{\frac{1}{4^{1+\alpha}}R_{k+n}^{1+\alpha}}{4^{1-\alpha}R_{k+n}^{1-\alpha}} = \frac{1}{16} R_{k+n}^{2\alpha} \xrightarrow{n \rightarrow \infty} \infty.
	\end{equation*}
	Then by perhaps increasing $m$ we have for all $n \geq m$ that 
	\begin{equation*}
	\frac{\frac{1}{4^{1+\alpha}}R_{k+n}^{1+\alpha}}{4^{1-\alpha}R_{k+n}^{1-\alpha}} > 1.
	\end{equation*}
	Therefore, for all $n \geq m$, the annulus $A(4^{1-\alpha}R_{k+n}^{1-\alpha}, \frac{1}{4^{(1+\alpha)}} R_{k+n}^{1+\alpha})$ is not empty and
	\begin{equation}
	A(4^{1-\alpha}R_{k+n}^{1-\alpha}, \frac{1}{4^{(1+\alpha)}} R_{k+n}^{1+\alpha}) \subset A(|f_N^n(z)|^{1-\alpha}, |f_N^n(z)|^{1+\alpha}) \subset \Omega_{k+n}. 
	\end{equation}	
	By perhaps increasing $m$ one last time, we can use Lemma \ref{Rkest} to deduce that for all $n \geq m$ we have
	\begin{equation}
	\label{absorbing_annulus_previous}
	4^{1-\alpha} R_{k+n} R_{k+n}^{-\alpha} < \frac{1}{4} R_{k+n}.
	\end{equation}
	By (\ref{absorbing_annulus}) and (\ref{absorbing_annulus_previous}), we deduce that $f_N^n(B(z,\varepsilon))$ contains a point $w \in B_{k+n-1} \subset \Omega_{k+n-1}$. This is a contradiction to the fact that $f_N^n(B(z,\varepsilon)) \subset \Omega_{k+n}$ for all $n \geq m$. 
\end{proof}	

%Choose some point $z$ with $|z| = 8R_1$. Then define %$(l_n)_{n=1}^{\infty}$ by 
%\begin{equation}
%l_n := |f^{n-1}_N(z)|.
%\end{equation}
%Since $B_k \subset \Omega_k$ and $f_N(B_k) \subset B_{k+1}$ for all $k \geq 1$, the circle of radius $l_k$ is a subset of $\Omega_k$ for all $k \geq 1$. We define the \textit{inner connectivity} of $\Omega_k$ to be the number of complementary components of $\Omega_k$ contained in $B(0,l_k)$, and we define the outer connectivity of $\Omega_k$ to be the number of complementary components of $\Omega_k$ contained in $\C \setminus B(0,l_k)$.

%\begin{lem}
%The inner connectivity of $\Omega_k$ is infinite for all $k \geq 1$, and the outer connectivity of $\Omega_k$ is $2$ for all $k \geq 1$.
%\end{lem}

%By Theorem 7.1 of \cite{RSEremenkoPoints}, each $\Omega_k$ has uncountably many complementary components which accumulate on the innermost boundary of $\Omega_k$. By the results of section 9, we know that $\Omega_k$ has countably many complementary components bounded by $C^1$ smooth Jordan curves. 

Recall that we introduced the petals $P_j \subset \mathcal{P}_j$ for all $j \geq 1$ in Definition \ref{petal_def}.

\begin{lem}
	\label{Z2_in_petals_gen}
	Suppose that $z \in Z_2 \cap A_k$, and suppose that the orbit sequence of $z$ is $(k(z,n))_{n=0}^{\infty} = (k,k+1,k+2,\dots).$ Then 
	\begin{equation}
	\label{Z2_in_petals_gen_eqn}
	z \in \bigcap_{l= 1}^{\infty} \left(\bigcup_{j \geq l} f_N^{-j}(\mathcal{P}_{k+j})  \cap A_k\right).
	\end{equation}
\end{lem}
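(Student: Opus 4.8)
The plan is to combine the forward-orbit hypothesis with Lemma~\ref{Julia_Place_AkAk1} and the defining property of $Z_2$ recalled at the start of this section; no estimates are needed. First I would unpack the assumption on the orbit sequence: saying the orbit sequence of $z$ equals $(k,k+1,k+2,\dots)$ means $k(z,j)=k+j$ for all $j\geq 0$, i.e. $f_N^j(z)\in A_{k+j}$ for every $j\geq 0$, and in particular $f_N^{j+1}(z)\in A_{k+j+1}$. Since $z\in A_k$ with $k\geq 1$, every index $k+j$ is $\geq 1$, so Lemma~\ref{Julia_Place_AkAk1} applies at level $k+j$ to the point $f_N^j(z)$ (which lies in $A_{k+j}$ and maps under $f_N$ into $A_{k+j+1}$), giving
$$f_N^j(z)\in V_{k+j}\cup\left(\bigcup_{i=1}^{n_{k+j}}B\left(w^{k+j}_i,\frac{R_{k+j}}{2^{n_{k+j}}}\right)\right)=V_{k+j}\cup\mathcal{P}_{k+j}\qquad\text{for all }j\geq 0,$$
where the last equality is Definition~\ref{petal_def}.

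Next I would invoke the characterization of $Z_2$ from the beginning of Section~\ref{singleton_components}: since $z\in Z_2$, there are arbitrarily large $n$ with $f_N^n(z)\notin\cup_{m\geq 1}V_m$. For any such $n$, in particular $f_N^n(z)\notin V_{k+n}$, so the dichotomy from the previous paragraph forces $f_N^n(z)\in\mathcal{P}_{k+n}$, i.e. $z\in f_N^{-n}(\mathcal{P}_{k+n})$. Together with $z\in A_k$ this yields $z\in f_N^{-n}(\mathcal{P}_{k+n})\cap A_k$ for arbitrarily large $n$.

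Finally I would translate this into the stated intersection. Fix $l\geq 1$ and choose $n\geq l$ as above; then
$$z\in f_N^{-n}(\mathcal{P}_{k+n})\cap A_k\subset\bigcup_{j\geq l}\left(f_N^{-j}(\mathcal{P}_{k+j})\cap A_k\right).$$
Since $l\geq 1$ was arbitrary, $z$ lies in the intersection over all $l\geq 1$, which is exactly~(\ref{Z2_in_petals_gen_eqn}).

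There is no genuine obstacle here; the only points needing a moment of care are bookkeeping ones: checking that Lemma~\ref{Julia_Place_AkAk1}'s hypotheses hold at \emph{every} step (each level $k+j$ is $\geq 1$ because $k\geq 1$, and $f_N^{j+1}(z)$ indeed lands in some $A_m$, which is precisely the content of the orbit-sequence assumption), and making sure the petals $\mathcal{P}_{k+j}$ appearing in~(\ref{Z2_in_petals_gen_eqn}) are the petals inside $A_{k+j}$, i.e. at the same level where Lemma~\ref{Julia_Place_AkAk1} is applied, so that no re-indexing error creeps in.
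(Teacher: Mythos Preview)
Your proof is correct and follows essentially the same approach as the paper: both use that the orbit sequence forces $f_N^j(z)\in A_{k+j}$ for all $j$, invoke Lemma~\ref{Julia_Place_AkAk1} to conclude $f_N^j(z)\in V_{k+j}\cup\mathcal{P}_{k+j}$, and then use the defining property of $Z_2$ (equivalently, $z\notin Z_1$) to find infinitely many $j$ with $f_N^j(z)\notin V_{k+j}$, forcing the petal alternative. The only cosmetic difference is that you apply Lemma~\ref{Julia_Place_AkAk1} to all $j$ first and then exclude the $V_{k+j}$ option, whereas the paper reverses the order.
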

\begin{proof}
	Since $z \in A_k$ and $(k(z,n))_{n=0}^{\infty} = (k,k+1,k+2,\dots)$, we have $f_N^l(z) \in A_{k+l}$ for all $l \geq 0$. Since $z \notin Z_1$, there exists infinitely many positive integers $j$ so that $f_N^j(z) \notin V_{k+j}$.  For those values $j$, we still must have have $f_N^{j+1}(z) \in A_{k+j+1}$, so Lemma \ref{Julia_Place_AkAk1} implies that $f_N^{j}(z) \in \mathcal{P}_{k+j}$. The inclusion (\ref{Z2_in_petals_gen_eqn}) follows immediately. 
\end{proof}

\begin{lem}
	\label{Boundary_Z}
	Suppose that $z \in \partial \Omega_k$ for some $k \geq 1$. Then $z \in Z$ and the orbit sequence of $z$ is either $(k(z,n))_{n=0}^{\infty} = (k,k+1,\dots)$ or $(k(z,n))_{n=0}^{\infty} = (k+1,k+2,\dots)$. In the latter case, we must have $z \in \Gamma_k \subset Z_1$. 
\end{lem}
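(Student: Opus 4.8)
The plan is to propagate $z$ under $f_N$ while tracking where it sits relative to the annuli $A_k$, $B_k$, using that the boundaries $\partial\Omega_j$ map forward and are confined to a narrow range of indices.

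First I would record two structural facts. Since $f_N(\Omega_j)=\Omega_{j+1}$ by (\ref{Omega_Maps_Next}), continuity gives $f_N(\partial\Omega_j)\subset\overline{\Omega_{j+1}}$, and since $\partial\Omega_j\subset\Julia(f_N)$ with $\Julia(f_N)$ forward invariant and disjoint from the Fatou component $\Omega_{j+1}$, we obtain $f_N(\partial\Omega_j)\subset\overline{\Omega_{j+1}}\cap\Julia(f_N)=\partial\Omega_{j+1}$; hence $f_N^n(z)\in\partial\Omega_{k+n}$ for every $n\ge0$. Next, by Lemma \ref{extra_remarks} the outer boundary of $\Omega_j$ is $\Gamma_j$ and its inner boundary is $\Gamma_{j-1}$, and by Definition \ref{gamma_definition} we have $\Gamma_j\subset\Gamma_{j,1}$ with $\Gamma_{j,1}$ compactly contained in $V_{j+1}$; so $\overline{\Omega_j}$ lies in the closed region between $\Gamma_{j-1}$ and $\Gamma_j$, giving $\overline{\Omega_j}\subset A_j\cup B_j\cup A_{j+1}$ (as in the proof of Lemma \ref{Z2_in_Ak}), and in particular $\overline{\Omega_j}$ meets only $A_j$ and $A_{j+1}$ among the annuli $A_\ell$. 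Moreover, since $\Gamma_j$ sits compactly inside $V_{j+1}=A(\tfrac25R_{j+1},\tfrac35R_{j+1})$ while the petals $\mathcal{P}_{j+1}$ surround the zeros of $f_N$ in $A_{j+1}$, which lie in $A(\tfrac35R_{j+1},\tfrac54R_{j+1})$ by Proposition \ref{zeros}, a crude radius comparison (with a gap uniform in $j$, coming from Lemma \ref{subannulus_Vk}) yields $\overline{\Omega_j}\cap\mathcal{P}_{j+1}=\emptyset$; combined with Theorem \ref{Julia_place} this gives $\partial\Omega_j\cap A_{j+1}\subset V_{j+1}$.

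With these in hand, I would determine the orbit sequence of $z$. By Lemma \ref{Julia_Place_X}, $z\in E'\cup X$; if $z\in E'$ then $f_N^m(z)\in E$ for some $m$, hence $f_N^n(z)\in E\subset\overline{B(0,\tfrac14R_1)}$ for all $n\ge m$, contradicting $f_N^n(z)\in\partial\Omega_{k+n}\subset\{|w|>\tfrac14R_1\}$ once $n$ is large enough that $k+n\ge2$. So $z\in X$, i.e. $f_N^n(z)\in A$ for all $n$. Since $f_N^n(z)\in\partial\Omega_{k+n}$ meets no $B_\ell$ and $\overline{\Omega_{k+n}}$ meets only $A_{k+n}$ and $A_{k+n+1}$, we get $k(z,n)\in\{k+n,\,k+n+1\}$ for all $n$; and $z\in\overline{\Omega_k}\cap A$ with $z\notin B_k$ forces $k(z,0)\in\{k,\,k+1\}$. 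If $k(z,0)=k$, then using $k(z,n+1)\le k(z,n)+1$ (Lemma \ref{other widehat help}) an induction shows $k(z,n)=k+n$ for all $n$ — once $k(z,n)=k+n$, the value $k(z,n+1)\in\{k+n+1,k+n+2\}$ must be $k+n+1$ — so $z$ never moves backwards and $z\in Z$, with orbit sequence $(k,k+1,\dots)$. If instead $k(z,0)=k+1$, I claim $z$ still never moves backwards: if $n\ge1$ were the first backward step, then $k(z,j)=k+j+1$ for $j<n$, so $f_N^{n-1}(z)\in A_{k+n}$ and $k(z,n)\le k(z,n-1)=k+n$, whence Lemma \ref{widehat help} places $f_N^{n-1}(z)$ in a petal of $\mathcal{P}_{k+n}\subset A_{k+n}$, contradicting $f_N^{n-1}(z)\in\partial\Omega_{k+n-1}$ and $\overline{\Omega_{k+n-1}}\cap\mathcal{P}_{k+n}=\emptyset$. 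So $k(z,n)=k+n+1$ for all $n$, giving $z\in Z$ with orbit sequence $(k+1,k+2,\dots)$. Finally, in this last case $f_N^n(z)\in A_{k+n+1}\cap\partial\Omega_{k+n}\subset V_{k+n+1}$ for every $n\ge0$; in particular $z\in V_{k+1}$ and $f_N^n(z)\in V_{k+n+1}$ for all $n$, which by Definition \ref{gamma_definition} says exactly $z\in\Gamma_k$, and since every point of $\Gamma_k$ has all iterates in $\bigcup_jV_j$ we have $\Gamma_k\subset Z_1$.

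The step I expect to be the main obstacle is making the geometry airtight: the confinement $\overline{\Omega_j}\subset A_j\cup B_j\cup A_{j+1}$ together with $\overline{\Omega_j}\cap\mathcal{P}_{j+1}=\emptyset$ and $\partial\Omega_j\cap A_{j+1}\subset V_{j+1}$. The subtlety is that $\Omega_j$ is multiply connected, so $\overline{\Omega_j}$ may contain complementary "holes," and one must argue via the compact containment of $\Gamma_{j,1}$ in $V_{j+1}$ (with a gap uniform in $j$) and the separation of $V_{j+1}$ from the petal radii; the rest is bookkeeping with orbit sequences and Lemmas \ref{other widehat help} and \ref{widehat help}.
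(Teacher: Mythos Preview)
Your proposal is correct and follows essentially the same approach as the paper. Both arguments confine $\partial\Omega_j$ to $A_j\cup A_{j+1}$, propagate via $f_N(\partial\Omega_j)\subset\partial\Omega_{j+1}$, and case-split on whether $z\in A_k$ or $z\in A_{k+1}$; the $A_k$ case is handled identically via Lemma~\ref{other widehat help}. The only minor difference is in the $A_{k+1}$ case: the paper argues directly that $z\in V_{k+1}$ (using the radial position of $\Gamma_k$ and Lemma~\ref{annulus_2}) and then applies Corollary~\ref{helpful_Vk_cor} to force $f_N(z)\in A_{k+2}$, iterating forward; you instead establish the petal disjointness $\overline{\Omega_j}\cap\mathcal{P}_{j+1}=\emptyset$ up front and then rule out backward steps by contradiction via Lemma~\ref{widehat help}. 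Both are valid, and your geometric containment concerns are handled exactly as the paper does (the outer boundary $\Gamma_j\subset V_{j+1}$ bounds $\overline{\Omega_j}$ from outside, keeping it well away from the zeros near $|z|=R_{j+1}$). The paper's route is slightly shorter since Corollary~\ref{helpful_Vk_cor} makes the petal discussion unnecessary.
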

\begin{proof}
	Let $z\in\partial \Omega_k$. First recall that $\partial \Omega_k \subset A_k \cup A_{k+1}$. Since $\Omega_k$ is a bounded Fatou component, we have $f(\partial \Omega_k) = \partial \Omega_{k+1}$ (see the paragraph above Theorem 3.2 in \cite{BergRipStalWD}). Therefore, we have $f^n_N(z) \in A_{k+n} \cup A_{k+n+1}$ for all $n \geq 0$. 
	
	Suppose that $z \in A_{k+1} \cap \partial \Omega_k.$ We argue similarly to Lemma \ref{Z2_in_Ak}. By Lemma \ref{extra_remarks} and (\ref{Gammak}), the outermost boundary of $\Omega_k$ is a subset of $V_{k+1}$. Therefore, we must have 
	\begin{equation*}
	z \in A(\frac{1}{4}R_{k+1},\frac{3}{5}R_{k+1}).
	\end{equation*}	
	Since $f_N(A(\frac{1}{4}R_{k+1},\frac{2}{5}R_{k+1})) \subset B_{k+1}$ and $z \in \Julia(f_N)$, by Lemma \ref{BkFatou} we must have $z \in V_{k+1}$. Since $f_N(V_{k+1}) \subset B_{k+1} \cup A_{k+2} \cup B_{k+2}$ and $f_N(z) \in \partial \Omega_{k+1}$, we obtain $f_N(z) \in A_{k+2}$.
	
	By iterating the reasoning above, we deduce that $f_N^l(z) \in V_{k+l+1}$ for all $l \geq 0$, so that $z \in \Gamma_k$ and $(k(z,n))_{n=0}^{\infty} = (k+1,k+2,\dots)$. 
	
	The other possibility is that $z \in A_k \cap \partial \Omega_k$. Since $f_N(z) \in \partial \Omega_{k+1}$, we must have $f_N(z) \in A_{k+1} \cup A_{k+2}$. By Lemma \ref{other widehat help}, we must have $f_N(z) \in A_{k+1}$. By repeating this reasoning, we deduce that $f_N^l(z) \in A_{k+l}$ for all $l \geq 0$, and $(k(z,n))_{n=0}^{\infty} = (k,k+1,\dots).$
\end{proof}	

\begin{cor}
	\label{Z2_in_petals}
	For all $k \geq 1$, we have   
	\begin{equation}
	\label{Z2_in_petals_eqn}
	Z_2 \cap \partial\Omega_k \subset \bigcap_{l= 1}^{\infty} \left(\bigcup_{j \geq l} f_N^{-j}(\mathcal{P}_{k+j})  \cap A_k\right).
	\end{equation}
\end{cor}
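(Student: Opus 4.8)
The plan is to assemble Corollary \ref{Z2_in_petals} directly from the lemmas immediately preceding it, with essentially no new computation. The statement concerns a point $z \in Z_2 \cap \partial\Omega_k$, and the two relevant structural facts about such $z$ have already been isolated: Lemma \ref{Boundary_Z} pins down the orbit sequence of any point on $\partial\Omega_k$, and Lemma \ref{Z2_in_petals_gen} describes where a point of $Z_2 \cap A_k$ with the ``straight'' orbit sequence $(k,k+1,k+2,\dots)$ must lie.

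First I would take an arbitrary $z \in Z_2 \cap \partial\Omega_k$ and invoke Lemma \ref{Boundary_Z}. That lemma tells us $z \in Z$ and that the orbit sequence of $z$ is either $(k(z,n))_{n=0}^{\infty} = (k, k+1, k+2, \dots)$ or $(k(z,n))_{n=0}^{\infty} = (k+1, k+2, \dots)$, and in the latter case $z \in \Gamma_k \subset Z_1$. But $z \in Z_2 = Z \setminus Z_1$, so the second alternative is impossible; hence the orbit sequence of $z$ is exactly $(k, k+1, k+2, \dots)$. In particular $z \in A_k$ (since $k(z,0) = k$), so $z \in Z_2 \cap A_k$ with orbit sequence $(k, k+1, k+2, \dots)$.

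Next I would apply Lemma \ref{Z2_in_petals_gen} to this $z$: its hypotheses are precisely that $z \in Z_2 \cap A_k$ and that the orbit sequence is $(k, k+1, k+2, \dots)$, both of which we have just verified. The conclusion of that lemma is exactly
\[
z \in \bigcap_{l=1}^{\infty} \left( \bigcup_{j \geq l} f_N^{-j}(\mathcal{P}_{k+j}) \cap A_k \right),
\]
which is the right-hand side of \eqref{Z2_in_petals_eqn}. Since $z \in Z_2 \cap \partial\Omega_k$ was arbitrary, the asserted inclusion follows.

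There is no real obstacle here — the corollary is a bookkeeping consequence of Lemmas \ref{Boundary_Z} and \ref{Z2_in_petals_gen}, with the only genuine content being the observation that the ``$\Gamma_k$'' branch of Lemma \ref{Boundary_Z} is excluded by the defining property $Z_2 \cap Z_1 = \emptyset$. The one point to state carefully is that $\partial\Omega_k \subset A_k \cup A_{k+1}$ (used implicitly in reading off $z \in A_k$ from the orbit sequence), but this is already recorded in the proof of Lemma \ref{Z2_in_Ak} and in Lemma \ref{Boundary_Z} itself, so it may simply be cited. Thus the proof is three or four lines long.
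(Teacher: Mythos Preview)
Your proof is correct and essentially the same as the paper's: both take $z \in Z_2 \cap \partial\Omega_k$, identify the orbit sequence as $(k,k+1,k+2,\dots)$, and then invoke Lemma~\ref{Z2_in_petals_gen}. The only cosmetic difference is that the paper cites Lemma~\ref{Z2_in_Ak} to place $z$ in $A_k$ and then reads off the orbit sequence from Lemma~\ref{Boundary_Z}, whereas you use Lemma~\ref{Boundary_Z} directly and rule out the $\Gamma_k$ alternative via $Z_1 \cap Z_2 = \emptyset$.
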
 
\begin{proof}
	Let $z \in Z_2 \cap \partial\Omega_k$. By Lemma \ref{Z2_in_Ak}, we have $z \in A_k$, and by Lemma \ref{Boundary_Z} the orbit sequence of $z$ must be $(k(z,n))_{n=0}^{\infty} = (k,k+1,k+2,\dots)$. The result now follows from Lemma \ref{Z2_in_petals_gen}.
\end{proof}

To estimate the Hausdorff dimension of $Z_2 \cap \partial \Omega_k$, we will need the following estimates on the expansion of $f_N$ on the petals $\mathcal{P}_k$ for all $k \geq 1$.	
%\begin{lem}
%\label{Derivative_Vk_Lem}	
%Suppose that $z \in V_k$ for some $k \geq 1$. Then we have 
%\begin{equation}
%\label{Derivative_Vk}
%|f_N'(z)| \geq n_k \left(\frac{2}{3}\right)^{n_k} \frac{R_{k+1}}{R_k}
%\end{equation}
%\end{lem}
%\begin{proof}
%First, we consider the case of $k = 1$. In this case, we have $f_N'(z) = q_N'(\phi_N^{-1}(z)) \cdot (\phi_N^{-1})'(z)$ for all $z \in V_1$. Since $q_N'(z) = c_N M_N z^{M_N - 1} + r_N$, we deduce using (\ref{phi_growth_9}) that
%		\begin{align*}
%		|q_N'(\phi_N^{-1}(z))| &\geq M_N c_N \left(\frac{1}{3}r_N\right)^{M_N - 1} - r_N \\
%		&= \frac{3 M_N}{r_N} \left( \frac{2}{3}\right)^{M_N} r_{N+1} - r_N \\
%		&\geq 3 M_N \left( \frac{2}{3}\right)^{M_N} \frac{r_{N+1}}{r_N}\left(1 - \frac{r_N^2}{3 M_Nr_{N+1}(\frac{2}{3})^{M_N}}\right)\\ 
%	(\textrm{Lemma } \ref{rkinequalities})	&\geq 2 M_N \left( \frac{2}{3}\right)^{M_N} \frac{r_{N+1}}{r_N} = 2n_1 \left( \frac{2}{3}\right)^{M_N} \frac{R_2}{R_1}
%		\end{align*}
%The case with $k \geq 2$ is similar. We compute that
%\begin{align*}
%|f_N'(\phi_N^{-1}(z))| \geq n_{k} C_k \left(\frac{1}{3}R_k\right)^{n_k - 1} = \frac{3}{R_k}n_k \left(\frac{2}{3}\right)^{n_k} R_{k+1} 
%= 3n_k \left(\frac{2}{3}\right)^{n_k} \frac{R_{k+1}}{R_k} \\
%\end{align*}
%Equation (\ref{Derivative_Vk}) now follows from the chain rule and (\ref{phi_derivative_9}).
%\end{proof}

\begin{lem}
	\label{Derivative_Petal}
	There exists $M$ so that for all $N \geq M$ and for all $k \geq 1$ and all $z \in \mathcal{P}_k$ we have
	\begin{equation}
	\label{Derivative_Petal_eqn}
	|f_N'(z)| \geq \frac{1}{4}n_k\frac{R_{k+1}}{R_k}.
	\end{equation}
\end{lem}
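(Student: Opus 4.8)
The plan is to compute $f_N'(z)$ for $z \in \mathcal{P}_k$ directly from the formula $f_N = C_k (\phi_N^{-1}(z))^{n_k}$, which is valid on $\mathcal{P}_k \subset A(\frac{5}{4}R_{k-1}, \frac{3}{4}R_k)$ by Lemma \ref{PrePowerMap} (for $k \geq 2$; the $k=1$ case uses $q_N$ similarly). By the chain rule,
\begin{equation*}
f_N'(z) = n_k C_k (\phi_N^{-1}(z))^{n_k - 1} (\phi_N^{-1})'(z).
\end{equation*}
So the two ingredients needed are a lower bound on $|\phi_N^{-1}(z)|$ and a lower bound on $|(\phi_N^{-1})'(z)|$, both valid for $z$ in a petal.

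First I would locate the petals. Each petal $P_k \subset \mathcal{P}_k$ is a ball $B(w_j^k, R_k/2^{n_k})$, where by Proposition \ref{zeros} the zero $w_j^k$ lies in $A(\frac{3}{5}R_k, \frac{5}{4}R_k)$; since the radius $R_k/2^{n_k}$ is negligible, every $z \in \mathcal{P}_k$ satisfies, say, $|z| \geq \frac{1}{2}R_k$ for $N$ large. Then Lemma \ref{AkEst} (with $\alpha_k, \beta_k$ as in Definition \ref{bigR} and Remark \ref{alpha_beta_close_1}) gives $|\phi_N^{-1}(z)| \geq \beta_k |z| \geq \frac{99}{100}\cdot\frac{1}{2}R_k$, so $|\phi_N^{-1}(z)| \geq \frac{1}{3}R_k$ comfortably. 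For the derivative of $\phi_N^{-1}$: the estimate (\ref{phi_derivative_9}) of Section \ref{Dimension Z} says $\frac{1}{2} \leq |\phi_N'(w)| \leq 2$ on $\cup_k V_k$, but the petals are not inside the $V_k$; however the same Cauchy-estimate argument applied to Theorem \ref{initial_identity} / Lemma \ref{AkEst} on a ball around $z$ contained in $A_k$ gives $|\phi_N'(w)| \in [\frac{1}{2}, 2]$ there too, hence $|(\phi_N^{-1})'(z)| = 1/|\phi_N'(\phi_N^{-1}(z))| \geq \frac{1}{2}$. (This is presumably exactly the content of a \texttt{Lemma} cited in the actual proof, analogous to (\ref{phi_derivative_9}); I would either invoke it or reprove it in one line from the Cauchy estimate.)

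Combining these, for $z \in \mathcal{P}_k$,
\begin{equation*}
|f_N'(z)| = n_k C_k |\phi_N^{-1}(z)|^{n_k - 1} |(\phi_N^{-1})'(z)| \geq n_k C_k \left(\tfrac{1}{3}R_k\right)^{n_k - 1} \cdot \tfrac{1}{2}.
\end{equation*}
Now $C_k (\frac{1}{3}R_k)^{n_k-1} = C_k (\frac{1}{2}R_k)^{n_k} \cdot \frac{2}{R_k}\cdot(\frac{2}{3})^{n_k-1} = R_{k+1}\cdot\frac{2}{R_k}(\frac{2}{3})^{n_k-1}$, which is not large enough as stated — so I would instead use the sharper bound $|\phi_N^{-1}(z)| \geq \frac{3}{5}\beta_k R_k \geq \frac{1}{2}R_k$ coming from $w_j^k \in A(\frac{3}{5}R_k,\frac{5}{4}R_k)$, giving $C_k|\phi_N^{-1}(z)|^{n_k-1} \geq C_k(\frac{1}{2}R_k)^{n_k}\cdot\frac{2}{R_k} = \frac{2R_{k+1}}{R_k}$, whence $|f_N'(z)| \geq n_k \cdot \frac{2R_{k+1}}{R_k}\cdot\frac{1}{2} = n_k\frac{R_{k+1}}{R_k} \geq \frac{1}{4}n_k\frac{R_{k+1}}{R_k}$. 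The main obstacle is bookkeeping: making sure the constant absorbs the factor $(|\phi_N^{-1}(z)|/(\frac{1}{2}R_k))^{n_k-1}$ (which is $\geq 1$ since $|\phi_N^{-1}(z)| \geq \frac{1}{2}R_k$, so it only helps) together with the $\frac{1}{2}$ from $|(\phi_N^{-1})'|$ and the error in the radius estimate, and handling $k=1$ via $q_N'(z) = C_1 n_1 z^{n_1-1} + R_1$ and Lemma \ref{AkEst} exactly as in Lemma \ref{the_k_equal_1_lemma}. None of this is delicate; it is a direct chain-rule computation with the $R_k$ recursion $R_{k+1} = C_k(\frac{1}{2}R_k)^{n_k}$.
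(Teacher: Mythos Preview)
There is a genuine gap: the formula $f_N(z) = C_k(\phi_N^{-1}(z))^{n_k}$ does \emph{not} hold on the petals. Lemma~\ref{PrePowerMap} gives this expression only on $A(\frac{5}{4}R_{k-1},\frac{3}{4}R_k)$, i.e.\ on the region where $|\phi_N^{-1}(z)|<R_k$; but by Proposition~\ref{critical_point_listing} the zeros $w_j^k$ satisfy $|\phi_N^{-1}(w_j^k)| = R_k\exp(\pi/(4n_k)) > R_k$, so the petals sit squarely in the interpolation annulus $R_k \le |\phi_N^{-1}(z)| \le R_k\exp(\pi/n_k)$, where $h_N$ is the quasiregular interpolant $g_{n_k,n_{k+1},R_k,C_k}$ of \cite{BurLaz}, not a power of $\phi_N^{-1}$. (Indeed, the power $C_k(\phi_N^{-1})^{n_k}$ has no zeros away from the origin, so it \emph{cannot} agree with $f_N$ on the petals.) Your chain-rule computation therefore has no starting point here.

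The paper avoids this by never differentiating the interpolant directly. It instead uses that $f_N$ is conformal on the larger ball $B(w_j^k,\lambda(\exp(\pi/n_k)-1)R_k)$ from Lemma~\ref{petal_radius}, applies the Koebe derivative distortion (Theorem~\ref{Derivative_Distortion_Estimate}) to compare $|f_N'(z)|$ on the petal with $|f_N'(w_j^k)|$ at the zero, and then plugs in the lower bound~(\ref{expansion}) for $|f_N'(w_j^k)|$ already obtained there via the Koebe $\tfrac14$-theorem. The factor $2^{n_k}$ in~(\ref{expansion}) then absorbs the distortion constant once $N$ is large. If you want to make your direct approach work, you would need an explicit lower bound on $|h_N'|$ in the interpolation zone, which amounts to redoing the estimates behind Lemma~\ref{small_ball_rescaled}.
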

\begin{proof}
	Let $w$ be a zero of $f_N$ contained inside of some connected component $P_k$ of $\mathcal{P}_k$. First note that there exists $M$ so that for all $N \geq M$ and all $k \geq 1$ the modulus of $B(w, \lambda(\exp(\pi/n_k) - 1)R_k) \setminus \overline{B(w, R_k/2^{n_{k}})}$ is bounded below $(2\pi)^{-1} \log 2$. Therefore, by Theorem \ref{Derivative_Distortion_Estimate} and Lemma \ref{petal_radius}, there exists a constant $P \geq 1$ that does not depend on $k$ or $N$ so that for all $z \in B(w, R_k/2^{n_{k}})$ we have 
	\begin{equation}
	\frac{1}{P} \leq \frac{|f_N'(z)|}{|f_N'(w)|} \leq P
	\end{equation}
	Therefore, by (\ref{expansion}) we have 
	\begin{align*}
	|f_N'(z)| &\geq \frac{1}{P} |f_N'(w)| \geq \frac{1}{P} \frac{\delta 2^{n_k}}{8\lambda\pi} n_k \frac{R_{k+1}}{R_k}.
	\end{align*}
	By Lemma \ref{Rkest} there exists $M \in N$  so that for all $N \geq M$, we have 
	\begin{equation}
	\label{Koebe_Distortion_P}
	\frac{1}{P} \frac{\delta 2^{n_k}}{8\lambda\pi} \geq \frac{1}{4}.
	\end{equation}
	Equation (\ref{Derivative_Petal_eqn}) follows immediately.
\end{proof}

\begin{cor}
	\label{Derivative_Product}
	Fix some $k \geq 2$. Suppose that $z \in f_N^{-j}(\mathcal{P}_{k+j}) \cap A_k$ for some $j \geq 1$. Then 
	\begin{equation}
	\label{Derivative_Product_eqn}
	|(f_N^j)'(z)| \geq  \frac{1}{4^j} \frac{R_{k+j}}{R_k} \prod_{l=0}^{j-1}n_{l+k}.
	\end{equation}
\end{cor}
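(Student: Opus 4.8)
The plan is to prove Corollary~\ref{Derivative_Product} by induction on $j$, using the chain rule together with the single-step expansion estimate from Lemma~\ref{Derivative_Petal} and the forward-orbit structure of points in $A_k$. First I would observe that the hypothesis $z \in f_N^{-j}(\mathcal{P}_{k+j}) \cap A_k$ forces the entire finite orbit $z, f_N(z), \dots, f_N^{j-1}(z)$ to lie in the appropriate annuli: indeed $f_N^{j}(z) \in \mathcal{P}_{k+j} \subset A_{k+j}$, and since $z \in A_k$, Lemma~\ref{other widehat help} (applied repeatedly) combined with the fact that $z \in X$-like behavior is not automatic here, so instead I would argue directly that $f_N^{l}(z)$ cannot jump forward by more than one index and cannot jump backward without $f_N^{l}(z)$ being in a petal; more cleanly, since the orbit starts in $A_k$ and ends (at time $j$) in $A_{k+j}$, and each step increases the index by at most $1$ by Lemma~\ref{other widehat help}, every step must increase the index by exactly $1$, so $f_N^{l}(z) \in A_{k+l}$ for $l = 0, 1, \dots, j$. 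Moreover, since $f_N^{l+1}(z) \in A_{k+l+1}$ for $l < j$ while $f_N^{l}(z) \in A_{k+l}$, Lemma~\ref{Julia_Place_AkAk1} tells us $f_N^{l}(z) \in V_{k+l} \cup \mathcal{P}_{k+l}$, and for the very last step $f_N^{j-1}(z) \in \mathcal{P}_{k+j-1}$ by hypothesis (it maps into $\mathcal{P}_{k+j}$, so in particular it lies in $f_N^{-1}(A_{k+j}) \cap A_{k+j-1} \subset V_{k+j-1} \cup \mathcal{P}_{k+j-1}$; but actually I only need the derivative bound at each step, not that each iterate is in a petal).

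The cleaner route: for each $l = 0, \dots, j-1$, the point $w_l := f_N^{l}(z)$ lies in $A_{k+l}$ and satisfies $f_N(w_l) \in A_{k+l+1}$, hence by Lemma~\ref{Julia_Place_AkAk1}, $w_l \in V_{k+l} \cup \mathcal{P}_{k+l}$. Then I would invoke the single-step lower bound $|f_N'(w_l)| \geq \tfrac14 n_{k+l} \tfrac{R_{k+l+1}}{R_{k+l}}$, which holds on $\mathcal{P}_{k+l}$ by Lemma~\ref{Derivative_Petal} and on $V_{k+l}$ (under the stronger hypothesis $f_N(w_l) \in V_{k+l+1}$) by Lemma~\ref{foliation_derivative_estimate} — but here $f_N(w_l) \in A_{k+l+1}$ need not be in $V_{k+l+1}$, so for the $V_{k+l}$ case I would instead directly compute $|f_N'(w_l)|$ using $f_N = C_{k+l}(\phi_N^{-1})^{n_{k+l}}$ on $V_{k+l}$, the radial location estimate (either Lemma~\ref{subannulus_Vk} is too restrictive, so I'd use that on $V_{k+l} = A(\tfrac25 R_{k+l}, \tfrac35 R_{k+l})$ one has $|\phi_N^{-1}(w_l)| \asymp R_{k+l}$ by Lemma~\ref{AkEst}), together with $(\ref{phi_derivative_9})$, to get $|f_N'(w_l)| \gtrsim n_{k+l} C_{k+l} R_{k+l}^{n_{k+l}-1} = n_{k+l} \cdot 2^{n_{k+l}} R_{k+l+1} / R_{k+l} \geq \tfrac14 n_{k+l} \tfrac{R_{k+l+1}}{R_{k+l}}$. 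So in both cases the single-step bound holds.

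Then I would apply the chain rule: $|(f_N^j)'(z)| = \prod_{l=0}^{j-1} |f_N'(f_N^l(z))| \geq \prod_{l=0}^{j-1} \tfrac14 n_{k+l} \tfrac{R_{k+l+1}}{R_{k+l}}$. The product of ratios telescopes to $R_{k+j}/R_k$, the factor $\tfrac14$ contributes $4^{-j}$, and the product of $n_{k+l}$ over $l = 0, \dots, j-1$ is $\prod_{l=0}^{j-1} n_{l+k}$, which is exactly the right-hand side of $(\ref{Derivative_Product_eqn})$. This gives the claimed inequality directly, with no induction strictly needed once the orbit structure is pinned down.

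The main obstacle is the bookkeeping of the orbit: rigorously establishing that $f_N^l(z) \in A_{k+l}$ for all $l \leq j$ and that each iterate falls into either $V_{k+l}$ or a petal $\mathcal{P}_{k+l}$ on which a uniform single-step derivative bound is available. Lemma~\ref{other widehat help} handles the "index increases by at most one" direction, and since the total index rises by $j$ over $j$ steps, each must rise by exactly one; Lemma~\ref{Julia_Place_AkAk1} then localizes each intermediate point. The only slightly delicate point is the derivative lower bound on $V_{k+l}$ when the image is merely in $A_{k+l+1}$ rather than in $V_{k+l+1}$, which is why I would do that estimate by hand from the explicit formula $f_N = C_{k+l}(\phi_N^{-1})^{n_{k+l}}$ rather than quoting Lemma~\ref{foliation_derivative_estimate}; this is routine given Lemma~\ref{AkEst} and $(\ref{phi_derivative_9})$. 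The hypothesis $k \geq 2$ ensures we stay away from the $k=1$ special behavior near $|z| = R_1$ where $f_N$ involves $q_N$ rather than a pure power.
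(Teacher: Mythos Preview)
Your proposal is correct and follows the same approach as the paper: localize each intermediate iterate $f_N^l(z)$ in $V_{k+l} \cup \mathcal{P}_{k+l}$ via Lemma~\ref{Julia_Place_AkAk1}, obtain a single-step lower bound $|f_N'(f_N^l(z))| \geq \tfrac14 n_{k+l} R_{k+l+1}/R_{k+l}$, and telescope through the chain rule. The paper's proof simply cites Lemmas~\ref{foliation_derivative_estimate} and~\ref{Derivative_Petal} for the two cases; you are in fact more careful in flagging that Lemma~\ref{foliation_derivative_estimate} is stated for $z \in \Gamma_{k,n}$ (which forces $f_N(z) \in V$), and your proposed direct computation from $f_N = C_{k+l}(\phi_N^{-1})^{n_{k+l}}$ is the right fix---it goes through using $|f_N'(w_l)| = n_{k+l}\,|f_N(w_l)|\,|(\phi_N^{-1})'(w_l)|/|\phi_N^{-1}(w_l)|$ together with $|f_N(w_l)| \geq \tfrac{2}{5}R_{k+l+1}$ (since $f_N(w_l) \in V_{k+l+1}\cup\mathcal{P}_{k+l+1}$), $|\phi_N^{-1}(w_l)| < R_{k+l}$, and \eqref{phi_derivative_9}.
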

\begin{proof}
	By repeatedly applying Lemma \ref{Julia_Place_AkAk1}, we see that for each $l = 1,\dots, j$, we have either $f^{-l}_N(P_{k+j})$ belongs to $\mathcal{P}_{k+j-l}$ or $V_{k+j-l}$. Therefore, if $z \in f^{-j}_N(P_{k+j}) \cap A_k$, we have by Lemmas \ref{foliation_derivative_estimate}  and \ref{Derivative_Petal} and the chain rule that
	\begin{align*}
	|(f^j_N)'(z)| &\geq \prod_{l=0}^{j-1} \frac{1}{4} n_{l+k} \frac{R_{l+k+1}}{R_{l+k}} =  \frac{1}{4^j} \frac{R_{k+j}}{R_k} \cdot \prod_{l=0}^{j-1} n_{l+k}.
	\end{align*}
	This is exactly what we wanted to show.
\end{proof}

\begin{thm}
	We have $\dim_H(Z_2 \cap \partial \Omega_k) = 0$.
\end{thm}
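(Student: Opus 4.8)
The plan is to use the covering of $Z_2 \cap \partial\Omega_k$ furnished by Corollary \ref{Z2_in_petals} together with the expansion estimates of Corollary \ref{Derivative_Product} to run a standard Hausdorff-dimension pull-back argument, showing the $t$-dimensional content of the cover goes to zero for \emph{every} $t>0$. First I would fix $t>0$ and record the covering: by Corollary \ref{Z2_in_petals}, for each $l\geq1$ the set $\bigcup_{j\geq l} (f_N^{-j}(\mathcal P_{k+j})\cap A_k)$ is a cover of $Z_2\cap\partial\Omega_k$. Each $f_N^{-j}(\mathcal P_{k+j})\cap A_k$ is a union of topological disks, one lying over each component of $\mathcal P_{k+j}$ along each admissible branch of $f_N^{-j}$; by Remark \ref{Modulus_lower_bound} and the conformality statements in Lemma \ref{section_8_distortion}/Lemma \ref{petal_radius}, these branches of $f_N^{-j}$ are conformal with bounded distortion (Koebe, Corollary \ref{Kobe}), so the diameter of each such component is comparable to $\mathrm{diam}(P_{k+j})\cdot|(f_N^j)'(z)|^{-1}$ for a representative point $z$ in the component. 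Since $\mathrm{diam}(\mathcal P_{k+j})$-pieces have diameter $\lesssim R_{k+j}/2^{n_{k+j}}$ by Definition \ref{petal_def}, and $|(f_N^j)'(z)|\geq 4^{-j}(R_{k+j}/R_k)\prod_{l=0}^{j-1}n_{l+k}$ by Corollary \ref{Derivative_Product}, each component has diameter
\begin{equation}
\lesssim \frac{R_{k+j}}{2^{n_{k+j}}}\cdot\frac{4^j R_k}{R_{k+j}\prod_{l=0}^{j-1}n_{l+k}} = \frac{4^j R_k}{2^{n_{k+j}}\prod_{l=0}^{j-1}n_{l+k}}.
\end{equation}

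Next I would count the components. Iterating Lemma \ref{component_counter} (and Lemma \ref{widehat help}, with Lemma \ref{T} handling any excursion to nonpositive indices), the number of components of $f_N^{-j}(\mathcal P_{k+j})\cap A_k$ is bounded by $n_{k+j}\cdot\prod_{l=0}^{j-1}n_{k+l+1}\cdot(\text{const})^j$ — each petal in $\mathcal P_{k+j}$ contributes $n_{k+j}$ choices, and each of the $j$ further pull-back steps multiplies the count by roughly $n_{k+\cdot}$ via Lemma \ref{component_counter}. So the $t$-sum over the cover at level $l$ is bounded by a sum over $j\geq l$ of
\begin{equation}
(\text{number of components at depth }j)\cdot(\text{diameter})^t \lesssim \sum_{j\geq l} \left(\prod n_{\cdot}\right)\cdot 4^j\cdot\left(\frac{4^j R_k}{2^{n_{k+j}}\prod_{l=0}^{j-1}n_{l+k}}\right)^t .
\end{equation}
The crucial point is that the factor $2^{-t\,n_{k+j}}$ in the denominator — coming from the tiny petal radius $R_k/2^{n_k}$ — decays super-exponentially in $j$ (since $n_{k+j}=2^{N+k+j-1}$), and this dominates the polynomial-in-$n$ numerator coming from the component count, for any fixed $t>0$, once $N$ is large enough. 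Hence the tail sum $\sum_{j\geq l}(\cdots)\to0$ as $l\to\infty$, so $H^t(Z_2\cap\partial\Omega_k)=0$, giving $\dim_H(Z_2\cap\partial\Omega_k)\leq t$; as $t>0$ was arbitrary, $\dim_H(Z_2\cap\partial\Omega_k)=0$.

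I would organize this so that the bounded-distortion step is invoked cleanly: the conformality of the relevant inverse branches and the uniform modulus lower bounds (so that Koebe constants are independent of $j,k,N$) are exactly what Lemma \ref{section_8_distortion}, Lemma \ref{petal_radius}, and Remark \ref{Modulus_lower_bound} provide, so I would cite these rather than re-derive distortion bounds. The main obstacle I anticipate is the bookkeeping in the component count through mixed forward-then-backward excursions: along a branch of $f_N^{-j}$ the finite orbit sequence may dip to indices $\leq 0$, and one must check that Lemma \ref{T} (conformality on the nested annuli $A_m$, $m\leq 0$, with the $2^N$-fold branching per level) does not introduce extra growth beyond what is already absorbed by the $2^{-t n_{k+j}}$ factor. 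This is the same phenomenon handled in Section \ref{Dimension Y} (compare Lemma \ref{hard_W} and Lemma \ref{holesum}), so I would model the estimate on those, and in fact the decay here is even faster because of the additional $2^{-t n_{k+j}}$ from the petal size, which has no analogue in the $Y$ estimate. A minor secondary point is ensuring the representative-point choice in the distortion estimate is legitimate, i.e.\ that each pull-back component has bounded eccentricity about any of its points, which again follows from Corollary \ref{Kobe} applied on the Jordan domains $B$ of Lemma \ref{section_8_distortion}.
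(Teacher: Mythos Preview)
Your proposal is correct and is essentially the paper's own argument: bound component diameters via Koebe distortion and Corollary \ref{Derivative_Product}, count components via Lemma \ref{component_counter}, and observe that the factor $2^{-t\,n_{k+j}}$ kills the tail sum for every $t>0$. Your anticipated obstacle does not in fact arise: if $W\subset A_k$ satisfies $f_N^j(W)\subset \mathcal P_{k+j}\subset A_{k+j}$, then by Lemma \ref{other widehat help} the index can increase by at most one per iterate, so the finite orbit sequence of $W$ is forced to be exactly $(k,k+1,\dots,k+j)$ with no backward dips or excursions to nonpositive indices, and the component count is the clean product $n_{k+1}\cdots n_{k+j}\cdot n_{k+j}$ as in the paper.
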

\begin{proof}
	Let $j \geq 1$ and $W$ be a connected component of $f^{-j}_N(P_{k+j}) \cap A_k$. If $z \in W$, then we have by Theorem \ref{BilipConformal} that there exists a constant $P'$ independent of $N$, $k$, and $j$ such that 
	\begin{equation}
	\label{small prepetal}
	\diam(W) \leq \frac{P'}{|(f_N^j)'(z)|} \diam(P_{k+j}) 
	\leq  \frac{P' 4^j}{\prod_{l=0}^{j-1} n_{l+k}} \frac{R_k}{R_{k+j}}  \frac{R_{k+j}}{2^{n_{k+j}}}
	\leq P' \frac{R_k}{2^{n_{k+j}}}.
	\end{equation}
	
	Fix some $t > 0$. For $j \geq 1$, we define
	$$G_j = \{W\,:W \subset A_k \textrm{ and } f_N^j(W) = P_{k+j} \textrm{ for some } P_{k+j} \subset \mathcal{P}_{k+j}\}.$$ 
	For each petal $P_{k+j}$, there are $n_{k+1}\cdots n_{k+j}$ many connected components $W \in G_j$ by Lemma \ref{component_counter}. Since there are $n_{k+j}$ many connected components of $\mathcal{P}_{k+j}$, and recalling that $L_k = n_1 \cdots n_k$, we count the number of connected components of $G_j$ as  
	$$n_{k+1} \cdots n_{k+j} \cdot n_{k+j} = 2^j n_k \cdots n_{k+j-1} \cdot n_{k+j} \leq 2^j L_{k+j}.$$
	Therefore, we have
	\begin{equation}
	\sum_{W\in G_j} \diam(W)^t \leq 2^j L_{k+j}(P')^t \frac{ R^t_k}{2^{t \cdot n_{k+j}}}.
	\end{equation}
	Therefore, for any fixed $l \geq 1$
	\begin{align}
	\label{key_sum_Z2}
	\sum_{j\geq l} \sum_{W \in G_j} \diam(W)^t \leq (P')^t \cdot R^t_k \sum_{j \geq l} 2^j L_{k+j} \left(\frac{1}{2^t}\right)^{n_{k+j}}
	\end{align} 
	This series converges by the Ratio test. Indeed,
	\begin{align}
	\label{small_sum_Z2}
	\lim_{j \rightarrow \infty} \frac{2^{j+1}}{2^j} \frac{L_{k+j+1}}{L_{k+j}}  \left(\frac{1}{2^t}\right)^{n_{k+j}}&= \lim_{j \rightarrow \infty} 4n_{k+j}  \left(\frac{1}{2^t}\right)^{n_{k+j}} =0.
	\end{align}
	Since (\ref{key_sum_Z2}) converges, we have that for any $\varepsilon >0$, if $l$ is sufficiently large then:
	$$\sum_{j\geq l}\sum_{W \in G_j} \diam(W)^t < \varepsilon.$$
	By Corollary \ref{Z2_in_petals}, we conclude that $H^t(Z_2 \cap \partial \Omega_k) = 0$. Since $t >0$ was arbitrary, we further conclude that $\dim_H(Z_2 \cap \partial \Omega_k) = 0$.
\end{proof}

Now that we know that $\dim_H(Z_2 \cap \partial \Omega_k) = 0$ for all $k \geq 0$, we move on to estimating $\dim_H(Z_2).$	
\begin{lem}
	\label{in_boundary}
	Suppose that $z \in A_k$ for some $k \geq 1$. Suppose further that the orbit sequence of $z$ is $k(z,n) = (k,k+1,k+2,\dots)$. Then $z \in \overline \Omega_k$. 
\end{lem}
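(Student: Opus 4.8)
\textbf{Proof proposal for Lemma \ref{in_boundary}.}

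The plan is to show that if $z \in A_k$ has orbit sequence $(k, k+1, k+2, \dots)$, then $z$ lies in the closure of $\Omega_k$ by producing, for each $\varepsilon > 0$, a point of $\Omega_k$ within distance $\varepsilon$ of $z$. The idea is to pull back a piece of $B_{k+n}$ (which lies in $\Omega_{k+n}$) under an appropriate conformal branch of $f_N^{-n}$ to obtain a set inside $\Omega_k$ accumulating on $z$; alternatively, and more cleanly, one can argue by contradiction: if $z \notin \overline{\Omega_k}$ then $z$ lies in the Julia set (since $z$ has an infinite orbit in $A$, hence $z \in X \subset E' \cup X$, and the orbit never enters a $B_j$, so $z \in \Julia(f_N)$ by Lemma \ref{Julia_Place_X} together with Lemma \ref{BkFatou}), but it is not in the boundary of $\Omega_k$, and we must rule this out.

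More concretely, here is the route I would take. First, since $f_N^n(z) \in A_{k+n}$ for all $n \geq 0$, Theorem \ref{Julia_place} and Lemma \ref{Julia_Place_AkAk1} tell us that for each $n$, either $f_N^n(z) \in V_{k+n}$ or $f_N^n(z)$ belongs to a petal $P_{k+n} \subset \mathcal{P}_{k+n}$. In the first case, $f_N^n(z)$ is separated from the inner boundary of $A_{k+n}$ by the annulus $A(\tfrac14 R_{k+n}, \tfrac25 R_{k+n})$ whose image lies in $B_{k+n} \subset \Omega_{k+n}$, and from the outer boundary by a similar buffer; more to the point, the outer boundary of $B_{k+n-1}$ (hence a piece of $\Omega_{k+n-1} = \Omega_k$'s $(n-1)$-st image under $f_N$) and the inner boundary of $V_{k+n}$ both lie in $\Omega_{k+n}$, and by Lemma \ref{extra_remarks} the curve $\Gamma_{k+n-1}$ (the outer boundary of $\Omega_{k+n-1}$) is the inner boundary of $\Omega_{k+n}$. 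So $f_N^n(z)$ is either close to $\Omega_{k+n}$ directly (in the petal case, since petals contain zeros which are surrounded by the level curves $\Gamma_1 \in \Gamma_{1}$-type circles mapping into $\Omega$) or lies in $V_{k+n}$ with $\Omega_{k+n}$ nearby on both sides. Then I would pull this back: by Lemma \ref{section_8_distortion} and Remark \ref{L''}, the relevant branch of $f_N^{-n}$ taking a neighborhood of $f_N^n(z)$ back to a neighborhood of $z$ is conformal with bounded distortion, and since $w_{k,m} \to 0$ (the width estimates preceding Theorem \ref{jordancurvetheorem}), the diameter of the pulled-back annuli shrinks to $0$, so the pulled-back points of $\Omega_k$ accumulate exactly on $z$.

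The cleanest packaging is the contradiction argument: suppose $z \notin \overline{\Omega_k}$. Then $z \in \Julia(f_N) \setminus \overline{\Omega_k}$, and in particular $z \notin E'$ (as $E' \subset D$ and $z \in A_k$), so $z \in X$ with orbit sequence $(k, k+1, \dots)$, hence $z \in Z$; moreover either $z \in Z_1$ — in which case $z \in \Gamma_{k-1} \subset V_k$ by the analysis in Section \ref{Dimension Z}, and by Lemma \ref{extra_remarks} $\Gamma_{k-1}$ is the outer boundary of $\Omega_{k-1}$, but also one checks $z \in \overline{\Omega_k}$ is impossible only if... — or $z \in Z_2$, in which case Corollary \ref{Z2_in_petals}-type reasoning applies. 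Rather than splitting cases, I would directly exhibit the accumulation: the key inclusion is that for all large $n$, the outer boundary of $V_{k+n}$ lies in $\Omega_{k+n}$ and the outer boundaries of the $\Gamma_{k+n,m}$ accumulate on $\Gamma_{k+n}$ from outside while lying in $\Omega_{k+n}$ (proof of Theorem \ref{jordancurvetheorem} and Lemma \ref{extra_remarks}); taking any such boundary circle, its preimage under the conformal branch of $f_N^{-n}$ fixing the orbit of $z$ is a Jordan curve in $\Omega_k$ whose diameter is $O(w_{k,n})$-controlled via the distortion bounds, hence passes within $\varepsilon$ of $z$ once $n$ is large. Since $\varepsilon$ is arbitrary, $z \in \overline{\Omega_k}$.

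The main obstacle is bookkeeping the branch of $f_N^{-n}$ along the orbit of $z$ and verifying it is conformal with uniformly bounded distortion on a fixed-modulus neighborhood — this requires knowing the orbit avoids critical values, which follows from Lemma \ref{postcritical_set_B} (the postsingular set lies in $\cup_{j \geq 1} B_j$, disjoint from the $A_j$ and from $\overline{B(0, 4R_1)}$), combined with Lemma \ref{section_8_distortion} and Lemma \ref{petal_radius} to handle both the $V_{k+n}$ and petal cases along the orbit. Once the branch is set up, the width estimates $w_{k,m} \to 0$ and the bounded-distortion Koebe estimates (Corollary \ref{Kobe}, Remark \ref{L''}) finish the argument immediately. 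I expect the write-up to be short: set up the branch, pull back a boundary piece of $\Omega_{k+n}$, estimate its diameter, let $n \to \infty$.
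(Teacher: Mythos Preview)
Your high-level idea---pull back a piece of the Fatou set along the inverse branch following $z$'s orbit---is right, but the mechanism you settle on in your ``cleanest packaging'' does not work in the main case this lemma is used for, namely $z \in Z_2$. The widths $w_{k,m}$ measure the thickness of the annuli $\Gamma_{k,m}$ around the curve $\Gamma_k$; pulling back outer boundaries of $\Gamma_{k+n,m}$ along the branch following $z$ produces curves near $z$ only if the orbit of $z$ stays in the $V$-annuli, i.e.\ only if $z \in \Gamma_{k-1} \subset Z_1$ (and that case is already Lemma \ref{extra_remarks}). When $z \in Z_2$, infinitely many iterates $f_N^{l}(z)$ land in petals $\mathcal{P}_{k+l}$, not in $V_{k+l}$. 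At such a step the branch of $f_N^{-1}$ following $z$ pulls back into that petal, whereas the boundary circles of $\Gamma_{k+l-1,m}$ pull back into $V_{k+l-1}$; so those pulled-back curves do not pass near $z$, and the width estimates are simply the wrong tool.

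The paper instead uses the petal structure directly, which you mention but discard. By Lemma \ref{Z2_in_petals_gen} there are infinitely many $l$ with $f_N^{l}(z)$ in some petal $P_{k+l}$. Let $W_l$ be the component of $f_N^{-l}(P_{k+l})$ containing $z$; the derivative bound of Corollary \ref{Derivative_Product} together with Theorem \ref{BilipConformal} gives $\diam(W_l) \lesssim R_k \cdot 2^{-n_{k+l}} \to 0$ (this is exactly estimate (\ref{small prepetal})). Inside $P_{k+l}$, Lemma \ref{petal_radius} furnishes a point $w$ with $|f_N(w)| = 3R_{k+l+1}$, so $f_N^2(w) \in B_{k+l+2} \subset \Omega_{k+l+2}$ by Lemma \ref{annulus_1}; hence the preimage of $w$ in $W_l$ lies in $\Omega_k$. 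Thus every ball $B(z,\varepsilon)$ meets $\Omega_k$. The shrinking neighborhoods you need come from pulling back \emph{petals}, not $\Gamma$-boundaries.
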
	
\begin{proof}
	Since $z \in Z_2 \cap A_k$, by Lemma \ref{Z2_in_petals_gen} we have
	\begin{equation*}
	z \in \bigcap_{l= 1}^{\infty} \left(\bigcup_{j \geq l} f_N^{-j}(\mathcal{P}_{k+j}) \cap A_k\right).
	\end{equation*}
	Therefore there exists a sequence $(l_j)$ of increasing integers so that $f_N^{l_j}(z) \in P_{k+l_j}$ for some petal $P_{k+l_j} \in \mathcal{P}_{k+l_j}$. Let $W_{l_j}$ denote the connected component of $f_N^{-l_j}(P_{k+l_j})$ which contains $z$. By (\ref{small prepetal}), $\diam(W_{l_j}) \rightarrow 0$ as $j \rightarrow \infty$.
	
	Let $\varepsilon >0$ be given. Then there exists $l_j$ such that $W_{l_j} \subset B(z,\varepsilon)$. By (\ref{conformal_2}), there exists a point $w \in P_{k+l_j}$ so that $|f_N(w)| = 3R_{k+l_j+1}$. By Lemma \ref{annulus_1}, $f^2_N(w) \in B_{k+l_j+2}$, so that $f^2_N(w) \in \Omega_{k+l_j+2}$. Therefore, the element of $f_N^{-l_j}(w)$ that belongs to $W_{l_j}$ belongs to $\Omega_k$. Therefore, $B(z,\varepsilon) \cap \Omega_k$ is not empty, and since $\varepsilon >0$ was arbitrary, it follows that $z \in \overline{\Omega_k}.$
\end{proof}

\begin{cor}
	We have
	\begin{equation}
	\label{Z2_Countable_Stab}
	Z_2 \subset \bigcup_{j \geq 0} f_N^{-j} \left(\bigcup_{k=1}^{\infty} Z_2 \cap \partial \Omega_k \right). 
	\end{equation}
	Moreover, we have $\dim_H(Z_2) = 0$.
\end{cor}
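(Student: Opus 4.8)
The plan is to combine the inclusion \eqref{Z2_Countable_Stab} with the dimension facts from the appendix. First I would establish the inclusion in \eqref{Z2_Countable_Stab}: let $z \in Z_2$. Since $Z_2 \subset X$, the orbit sequence $(k(z,n))_{n=0}^\infty$ is defined and by \eqref{orbit_sequence} it is eventually only moving forward, so there exists $j_0 \geq 0$ such that $f_N^{j_0}(z) \in A_k$ for some $k \geq 1$ with orbit sequence from time $j_0$ onward equal to $(k,k+1,k+2,\dots)$. (We may take $j_0$ large enough that we are past the last backwards move and past the last visit to the polynomial-like region, using Lemma \ref{Boundary_Z} and Lemma \ref{Z2_in_Ak} to see the tail lands in some $A_k$ with $k\geq 1$.) By Lemma \ref{in_boundary}, $f_N^{j_0}(z) \in \overline{\Omega_k}$, and since $z \in Z_2$ is in the Julia set while $\Omega_k$ is a Fatou component, $f_N^{j_0}(z) \in \partial\Omega_k$; moreover $f_N^{j_0}(z)$ still lies in $Z_2$ because $Z_2$ is forward-invariant (the defining property ``$f_N^n(z) \notin \cup V_k$ for arbitrarily large $n$'' is preserved under applying $f_N$, and $Z = X \setminus (\text{finitely many backward moves})$ is forward-invariant). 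Hence $f_N^{j_0}(z) \in Z_2 \cap \partial\Omega_k$, which gives \eqref{Z2_Countable_Stab}.

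Next, I would invoke the previous theorem, which states $\dim_H(Z_2 \cap \partial\Omega_k) = 0$ for every $k \geq 1$. By countable stability of Hausdorff dimension (equation \eqref{countable_stability} in the appendix),
\[
\dim_H\left( \bigcup_{k=1}^\infty Z_2 \cap \partial\Omega_k \right) = \sup_{k \geq 1} \dim_H(Z_2 \cap \partial\Omega_k) = 0.
\]
Then, since Hausdorff dimension is preserved under preimages by the entire (hence locally Lipschitz, with countably many branches) map $f_N$ — this is the content of \eqref{dimension_entire} in the appendix applied to each $f_N^{-j}$ — each set $f_N^{-j}\left(\bigcup_k Z_2 \cap \partial\Omega_k\right)$ has Hausdorff dimension $0$. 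Applying countable stability once more over $j \geq 0$,
\[
\dim_H(Z_2) \leq \dim_H\left( \bigcup_{j\geq 0} f_N^{-j}\left(\bigcup_{k=1}^\infty Z_2 \cap \partial\Omega_k\right)\right) = \sup_{j \geq 0} \dim_H\left(f_N^{-j}\left(\bigcup_{k=1}^\infty Z_2 \cap \partial\Omega_k\right)\right) = 0.
\]
Since dimension is nonnegative, $\dim_H(Z_2) = 0$.

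The only genuinely non-routine step is verifying the inclusion \eqref{Z2_Countable_Stab}, specifically checking that every point of $Z_2$ eventually iterates into $\partial\Omega_k \cap Z_2$ for some $k$; this requires carefully assembling Lemmas \ref{Z2_in_Ak}, \ref{Z2_Boundary}, \ref{Boundary_Z}, and \ref{in_boundary} and confirming that the tail of the orbit sequence is of the ``pure forward'' type $(k,k+1,\dots)$ rather than $(k+1,k+2,\dots)$ — but the latter case is ruled out because by Lemma \ref{Boundary_Z} it would force $z \in \Gamma_k \subset Z_1$, contradicting $z \in Z_2$. Everything downstream of the inclusion is a mechanical application of the standard dimension lemmas in the appendix, so I do not anticipate any obstacle there. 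One should also double-check that the uncountability claim in Theorem \ref{main_theorem}(3) — that $Z_2$ has uncountably many singleton components — is handled: the singleton property follows since the nested sets $W_{l_j}$ in the proof of Lemma \ref{in_boundary} have diameters shrinking to $0$, so each point of $Z_2$ is its own component of $\mathcal{J}(f_N)$, and uncountability follows from a standard coding argument on the petals (each petal at level $k+j$ offers $\geq 2$ choices of preimage branch), though this may be recorded separately.
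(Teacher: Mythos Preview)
Your proposal is correct and follows essentially the same route as the paper: choose an iterate $m$ past the last backward move so that $f_N^m(z)\in A_k$ with orbit sequence $(k,k+1,\dots)$, apply Lemma~\ref{in_boundary} to get $f_N^m(z)\in\overline{\Omega_k}$, upgrade to $\partial\Omega_k$, and then finish with \eqref{countable_stability} and \eqref{dimension_entire}. One small imprecision: you justify the passage from $\overline{\Omega_k}$ to $\partial\Omega_k$ by asserting that ``$z\in Z_2$ is in the Julia set,'' but $Z_2\subset\mathcal{J}(f_N)$ has not been established independently at this point---the paper instead invokes Lemma~\ref{Z2_Boundary} directly, which is precisely the statement that $\overline{\Omega_k}\cap Z_2\subset\partial\Omega_k$; you should cite that lemma here rather than appealing to Julia set membership.
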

\begin{proof}
	Since $z \in Z_2$, there exists $m \geq 0$ and $k \geq 1$ so that $f_N^m(z) \in A_k$ and the orbit sequence of $f_N^m(z)$ is strictly increasing and given by $(k, k+1,\dots)$. It follows that $f_N^m(z) \in \overline{\Omega_k}$ by Lemma \ref{in_boundary}. Thus, by Lemma \ref{Z2_Boundary}, we have that $f_N^m(z) \in \partial \Omega_k$. Therefore, (\ref{Z2_Countable_Stab}) holds. 
	
	By (\ref{dimension_entire}) and (\ref{countable_stability}), it follows from (\ref{Z2_Countable_Stab}) that $\dim_H(Z_2) = 0$.
\end{proof}

\begin{cor}
	\label{singletons}
	The set $Z_2$ is totally disconnected. In particular, every connected component of $Z_2$ is a point. 
\end{cor}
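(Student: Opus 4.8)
The plan is to derive total disconnectedness of $Z_2$ directly from the dimension bound $\dim_H(Z_2)=0$ established in the preceding corollary. The key general fact I would invoke is that a connected metric space containing more than one point must have Hausdorff dimension at least $1$. This is standard and is exactly the type of statement collected in the appendix (cf. the facts about Hausdorff dimension referenced for Corollary \ref{Z_1iscountableunion}); I would cite it rather than reprove it, but it is worth recalling the one-line reason: if $C$ is connected and $a,b\in C$ with $a\neq b$, then the $1$-Lipschitz map $x\mapsto \dist(x,a)$ maps $C$ onto the interval $[0,\dist(a,b)]$ by the intermediate value property of connected sets, so $\mathcal H^1(C)\geq \dist(a,b)>0$ and hence $\dim_H(C)\geq 1$.

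The argument then proceeds as follows. Suppose, for contradiction, that some connected component $C$ of $Z_2$ contains two distinct points. Since $C\subset Z_2$, monotonicity of Hausdorff dimension gives $\dim_H(C)\leq \dim_H(Z_2)=0$. On the other hand, by the fact recalled above, $\dim_H(C)\geq 1$, a contradiction. Therefore every connected component of $Z_2$ is a single point, i.e. $Z_2$ is totally disconnected.

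I do not anticipate any genuine obstacle here: the entire content has already been packed into the dimension estimate $\dim_H(Z_2)=0$, and the corollary is a purely formal consequence. The only point requiring a modicum of care is to make sure the cited general fact is stated in a form applicable to an arbitrary subset of $\mathbb C$ (no completeness or compactness hypothesis is needed — the distance-function argument works for any connected subset of a metric space), and to phrase the conclusion so that it feeds cleanly into item (3) of Theorem \ref{main_theorem}, where these singleton components of $Z_2$ are identified with the singleton complementary components of the multiply connected wandering domains $\Omega_k$.

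\begin{proof}
	By the previous Corollary, $\dim_H(Z_2)=0$. Suppose some connected component $C$ of $Z_2$ contained two distinct points $a\neq b$. Since $C$ is connected and the map $z\mapsto\dist(z,a)$ is continuous on $C$ with $\dist(a,a)=0$ and $\dist(b,a)>0$, its image is an interval containing $[0,\dist(a,b)]$. As this map is $1$-Lipschitz, it does not increase Hausdorff measure, so $\mathcal H^1(C)\geq\mathcal H^1([0,\dist(a,b)])=\dist(a,b)>0$, whence $\dim_H(C)\geq 1$. But $C\subset Z_2$ forces $\dim_H(C)\leq\dim_H(Z_2)=0$, a contradiction. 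Hence every connected component of $Z_2$ is a single point.
\end{proof}
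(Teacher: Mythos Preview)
Your proposal is correct and follows essentially the same approach as the paper: both deduce total disconnectedness directly from $\dim_H(Z_2)=0$ via the fact that any non-singleton connected subset of $\mathbb{C}$ has Hausdorff dimension at least $1$. Your version merely supplies the one-line justification of that fact (the $1$-Lipschitz distance map onto an interval), which the paper takes for granted.
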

\begin{proof}
	The Hausdorff dimension of any non-singleton connected set is bounded below by $1$. Since $\dim_H(Z_2) = 0 < 1$, $Z_2$ cannot have any non-singleton connected components.
\end{proof}

\begin{cor}
	\label{Singleton_C1_Decomp}
	Let $k \geq 1$. Then $\partial \Omega_k$ consists of countably many $C^1$ smooth Jordan curves and uncountably many singleton components. The singleton components coincide with $\Omega_k \cap Z_2$ and the $C^1$ smooth components coincide with $\Omega_k \cap Z_1$. 
\end{cor}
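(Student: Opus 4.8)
The plan is to assemble Corollary \ref{Singleton_C1_Decomp} directly from the structural results already established in this section and the previous two. First I would recall that $\partial\Omega_k \subset \Julia(f_N)$, and by Lemma \ref{Boundary_Z}, every point $z\in\partial\Omega_k$ lies in $Z$ with orbit sequence either $(k,k+1,\dots)$ or $(k+1,k+2,\dots)$; in the latter case $z\in\Gamma_k\subset Z_1$. Conversely, every point of $\partial\Omega_k$ lies in $X$ (its whole orbit stays in $\cup_{j\in\Z}A_j$, being trapped in the $A_j\cup B_j$ chain), and since it moves backwards only finitely often (in fact never, by the orbit sequence descriptions), $\partial\Omega_k\subset Z = Z_1\sqcup Z_2$. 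Thus $\partial\Omega_k = (\partial\Omega_k\cap Z_1)\sqcup(\partial\Omega_k\cap Z_2)$.

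Next I would identify $\partial\Omega_k\cap Z_1$ precisely. By Lemma \ref{extra_remarks}, $\Gamma_{k-1}$ and $\Gamma_k$ are respectively the inner and outer boundary components of $\Omega_k$, and both are $C^1$ Jordan curves by Theorem \ref{C1}. Any other boundary component $\Gamma$ of $\Omega_k$ that meets $Z_1$ is a component of $Z_1$, hence by Corollary \ref{Z_1iscountableunion} and Lemma \ref{Z_1_key} it is a $C^1$ Jordan curve that maps conformally, under some iterate $f_N^n$, onto some $\Gamma_p$. The collection of all such components contained in $\partial\Omega_k$ is countable since each has nonempty interior (so at most countably many disjoint ones fit inside the plane, or one can note each is the conformal pullback of $\Gamma_p$ along a branch of $f_N^{-n}$ and count branches). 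On the other side, $\partial\Omega_k\cap Z_2$ is by the previous theorem a set of Hausdorff dimension $0$; by Corollary \ref{singletons}, $Z_2$ is totally disconnected, so every component of $\partial\Omega_k\cap Z_2$ is a singleton.

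The remaining point is that $\partial\Omega_k\cap Z_1$ equals exactly $\Omega_k\cap Z_1$ in the statement (here "$\Omega_k$" in the corollary must mean $\overline{\Omega_k}$, i.e., the intersection of $Z_i$ with the closure): by Lemma \ref{Z2_Boundary}, $\overline{\Omega_k}\cap Z_2\subset\partial\Omega_k$, and an analogous (indeed easier) argument shows $\overline{\Omega_k}\cap Z_1\subset\partial\Omega_k$ — a point of $Z_1$ in $\overline{\Omega_k}$ cannot lie in the open set $\Omega_k\subset\Fatou(f_N)$ since $Z_1\subset\Julia(f_N)$. Hence $\overline{\Omega_k}\cap Z_i = \partial\Omega_k\cap Z_i$ for $i=1,2$, and since $\Julia(f_N)\cap\overline{\Omega_k} = \partial\Omega_k$ while $\partial\Omega_k\subset E'\cup Y\cup Z_1\cup Z_2$ with $\partial\Omega_k$ disjoint from $E'$ (contained in $\cup B_k\subset\Fatou$ after finitely many steps — more carefully, $E'$ is the grand orbit of the Cantor repeller and $\partial\Omega_k$ never enters $D$) and from $Y$ (no backward moves), we get $\partial\Omega_k = (\partial\Omega_k\cap Z_1)\sqcup(\partial\Omega_k\cap Z_2)$, completing the identification.

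Finally, to see there are \emph{uncountably} many singleton components, I would exhibit a Cantor set inside $\partial\Omega_k\cap Z_2$: starting in $A_k$ with orbit sequence $(k,k+1,\dots)$, Lemma \ref{Z2_in_petals_gen} and Lemma \ref{component_counter} give, at each of infinitely many chosen "petal times," a branching into $\geq n_{k+j}\geq 2$ disjoint pullback components whose diameters shrink to $0$ by (\ref{small prepetal}); the resulting nested intersection is a Cantor set, each point of which has orbit sequence $(k,k+1,\dots)$ and escapes $\cup V_j$ infinitely often, hence lies in $Z_2$, and lies in $\overline{\Omega_k}=\partial\Omega_k$ by Lemma \ref{in_boundary}. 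The main obstacle is bookkeeping: making sure the branching-plus-contraction argument genuinely produces uncountably many \emph{distinct} points (which follows from the disjointness of distinct petal-pullback components given by Lemma \ref{component_counter} together with the diameter bound), and being careful that "$\Omega_k$" in the statement is read as the closure so that the decomposition into $\partial\Omega_k\cap Z_1$ and $\partial\Omega_k\cap Z_2$ is exhaustive.
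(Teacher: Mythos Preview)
Your proposal is correct and follows essentially the same route as the paper for the decomposition $\partial\Omega_k = (\partial\Omega_k\cap Z_1)\sqcup(\partial\Omega_k\cap Z_2)$ via Lemma~\ref{Boundary_Z}, and for identifying the $C^1$ and singleton components via Corollary~\ref{Z_1iscountableunion} and Corollary~\ref{singletons}. You also correctly flag the notational slip in the statement (``$\Omega_k\cap Z_i$'' should be read as $\partial\Omega_k\cap Z_i$, since $Z_i\subset\Julia(f_N)$).

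The one substantive difference is the uncountability of singleton components. The paper dispatches this in a single line by citing Theorem~7.1 of \cite{RSEremenkoPoints}, a general result guaranteeing that a multiply connected wandering domain with infinitely many complementary components has uncountably many singleton complementary components. Your Cantor-set construction---branching over the $n_{k+j}\geq 2$ petal choices at infinitely many steps, with diameters controlled by (\ref{small prepetal}) and membership in $\partial\Omega_k$ secured by Lemma~\ref{in_boundary} and Lemma~\ref{Z2_Boundary}---is a valid, self-contained alternative. It buys independence from the external reference at the cost of a short extra argument; the paper's citation is terser but relies on machinery developed elsewhere.
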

\begin{proof}
	Since $\partial \Omega_k \subset Z$ by Lemma \ref{Boundary_Z}, we have $$\partial \Omega_k = \left(\partial \Omega_k \cap Z_1\right) \sqcup \left(\partial \Omega_k \cap Z_2\right).$$
	Every component of $\Omega_k \cap Z_1$ is a $C^1$ smooth Jordan curve by Corollary \ref{Z_1iscountableunion}, and every component of $\Omega_k \cap Z_2$ is a singleton by Corollary \ref{singletons}. There are uncountably many such components by Theorem 7.1 of \cite{RSEremenkoPoints}.
\end{proof}

\begin{cor}
	\label{Singleton_C1_Decomp_Gen}
	Let $\Omega$ be a Fatou component of $f_N$. Then $\partial \Omega$ consists of uncountably many singleton components and countably many $C^1$ smooth Jordan curves. 
\end{cor}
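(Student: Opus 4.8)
The plan is to reduce the statement to Corollary~\ref{Singleton_C1_Decomp} by showing that every Fatou component $\Omega$ is carried onto one of the model components $\Omega_k$ by an iterate of $f_N$ that is unbranched near $\partial\Omega$, and then pulling the boundary structure back.

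\emph{Step 1: every Fatou component eventually maps onto some $\Omega_k$.} Fix $z\in\Omega$. By the remark following Lemma~\ref{Julia_Place_X} (which is established over the course of Sections~\ref{Dimension Z}--\ref{singleton_components}), $\Julia(f_N)=E'\cup X$; since $E'\subset\Julia(f_N)$ and $z\in\mathcal{F}(f_N)$, no iterate of $z$ lies in $E$, and since $z\notin X$ there is an $n$ with $f_N^n(z)\notin A$. By Lemma~\ref{decomposition}, $f_N^n(z)\in B_j$ for some $j\in\Z$; if $j\ge1$ then $f_N^n(z)\in B_j\subset\Omega_j$, and if $j\le0$ then $f_N^{\,n+1-j}(z)\in B_1\subset\Omega_1$ by Definition~\ref{negative_indices}. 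Thus some iterate of $z$ lies in some $\Omega_k$, and since $f_N$ has no asymptotic values (Lemma~\ref{branch_covering_hn}) it maps Fatou components onto Fatou components, so $f_N^m(\Omega)=\Omega_k$ for some $m\ge0$ and $k\ge1$. The absence of asymptotic values, together with boundedness of $\Omega_k$ (Theorem~1 of \cite{Baker}), also forces $\Omega$ to be bounded, so $f_N^m\colon\Omega\to\Omega_k$ is a proper holomorphic map --- hence a branched covering of finite degree --- extending continuously to $\overline\Omega\to\overline{\Omega_k}$ with $f_N^m(\partial\Omega)=\partial\Omega_k$.

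\emph{Step 2: $f_N^m$ is unbranched near $\partial\Omega$, and transferring the structure.} The critical values of $f_N^m$ lie in $\bigcup_{i=0}^{m-1}f_N^i(\mathrm{CV}(f_N))$; by Lemma~\ref{branch_covering_hn}, Proposition~\ref{Fatou_critical_points} and Lemma~\ref{crit_values_avoid_A1_fN} we have $\mathrm{CV}(f_N)\subset\bigcup_{j\ge1}B_j$, and this set is forward invariant by Lemma~\ref{Bk}, so all critical values of $f_N^m$ lie in $\bigcup_j B_j\subset\mathcal{F}(f_N)$, which is disjoint from $\partial\Omega_k\subset\Julia(f_N)$. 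Hence $f_N^m$ has no critical point on the compact set $\partial\Omega$ and is a local biholomorphism on a neighbourhood of it. Now let $\Gamma$ be a component of $\partial\Omega$. Its image $f_N^m(\Gamma)$ is connected and contained in $\partial\Omega_k$, so it lies in a single component $C$ of $\partial\Omega_k$; by Corollary~\ref{Singleton_C1_Decomp}, $C$ is a singleton or a $C^1$ Jordan curve. If $C=\{w\}$, then $\Gamma\subset(f_N^m)^{-1}(w)$ is finite and connected, so $\Gamma$ is a singleton. If $C$ is a $C^1$ Jordan curve, then near each $z_0\in\Gamma$ the local biholomorphism $f_N^m$ carries a neighbourhood of $z_0$ in $\partial\Omega$ onto a neighbourhood of $f_N^m(z_0)$ in $\partial\Omega_k$, and hence carries $\Gamma$ onto the $C^1$ arc $C$; thus $\Gamma$ is a $C^1$ embedded one-manifold, and being a compact connected subset of $\overline\Omega$ it is a $C^1$ Jordan curve, mapped onto $C$ by $f_N^m$ (its image is open and closed in the connected set $C$). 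Therefore every component of $\partial\Omega$ is a singleton or a $C^1$ Jordan curve. Since $f_N^m$ has finite degree, each curve component of $\partial\Omega_k$ has finitely many preimage curve components in $\partial\Omega$, and each singleton component of $\partial\Omega_k$ has a preimage singleton component; as $\partial\Omega_k$ has countably many curve components and uncountably many singleton components, so does $\partial\Omega$.

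\emph{Main obstacle.} The essential point is Step~1: that \emph{every} Fatou component, not merely the $\Omega_k$, is eventually iterated into the family $(\Omega_k)$. This rests on the global identity $\Julia(f_N)=E'\cup X$ and on Lemma~\ref{decomposition}, i.e.\ on the fact that the annuli $A_k$, $B_k$ together with $E$ exhaust $\C$; once this is available, properness of $f_N^m\colon\Omega\to\Omega_k$ and the absence of branching along $\partial\Omega$ are routine, and the $C^1$ regularity and topological type then transfer essentially formally. A minor point to verify carefully is that $f_N^m|_\Gamma$ surjects onto the whole component $C$, so that nothing in the description of $\partial\Omega_k$ is lost under pullback.
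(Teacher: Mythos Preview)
Your argument is essentially correct, but the paper takes a shorter route. Rather than picking an interior point of $\Omega$ and chasing it into some $B_j$, the paper looks at the \emph{outermost} boundary component $\Gamma$ of $\Omega$ (which exists since every Fatou component is bounded once one m.c.w.d.\ exists). This $\Gamma$ is a non-degenerate continuum in $\Julia(f_N)\subset E'\cup Y\cup Z_1\cup Z_2$, and since $E'\cup Y\cup Z_2$ has Hausdorff dimension strictly below $1$ it cannot contain $\Gamma$; hence $\Gamma\subset Z_1$. Lemma~\ref{Z_1_key} then supplies $n,p\ge1$ and a Jordan domain $B\supset\Gamma$ on which $f_N^n$ is \emph{conformal} with $f_N^n(\Gamma)=\Gamma_p$. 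Since $B$ contains the outer boundary of the bounded set $\Omega$, it contains all of $\overline\Omega$, so $f_N^n|_{\overline\Omega}$ is a conformal bijection onto $\overline{\Omega_p}$ and the boundary structure of Corollary~\ref{Singleton_C1_Decomp} transfers instantly.

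What the paper's approach buys is precisely the injectivity of the transfer map, which eliminates your Step~2 analysis of a finite-degree branched cover. In your version two points deserve more care. First, your Step~1 uses $\Julia(f_N)=E'\cup X$, in particular $Y\subset\Julia(f_N)$; the paper asserts this in the remark after Lemma~\ref{Julia_Place_X} but never isolates a proof --- it can be extracted from the coverings $\widehat\cC_m$ of Section~\ref{Dimension Y} (each element meets $\Julia(f_N)$ and their diameters tend to~$0$), but that is extra work. Second, in Step~2 the claim that a component $\Gamma$ of $\partial\Omega$ mapping into a curve component $C\subset\partial\Omega_k$ is itself a full Jordan curve (rather than a proper arc) ultimately rests on the fact that each $Z_1$-curve is a Julia component with Fatou set on both sides (Lemma~\ref{extra_remarks} and Lemma~\ref{Z_1_key}); once you invoke those you are essentially running the paper's argument anyway, just for each boundary component separately rather than once for the outer one.
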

\begin{proof}
	Note that every Fatou component of $f_N$ is bounded. Let $\Gamma$ denote the connected component of the boundary of $\Omega$ that separates $\Omega$ from $\infty$. Since $\Gamma \subset \Julia(f_N)$, by Lemma \ref{Julia_Place_X} we have $\Gamma \subset E' \cup X = E' \cup Y \cup Z_1 \cup  Z_2$. Since $\Gamma$ is a non-singleton connected set, we must have $\Gamma \subset Z_1$. By Lemma \ref{Z_1_key}, there exists $p,n \geq 1$ and a Jordan domain $B$ containing $\Gamma$ such that $f_N^n|_B$ is conformal and $f^n_N(\Gamma) = \Gamma_p$. Consequently, we have $f^n_N(\Omega) = \Omega_p$, and $f^n_N|_{\Omega}$ is conformal. The result now follows from Corollary \ref{Singleton_C1_Decomp}.
\end{proof}

%\section*{Appendix}
%\label{Appendix}

\appendix
\section{ }
\label{appendix}

In this Appendix we collect several classical Theorems and Definitions used throughout the paper, and we will briefly prove a technical result (needed in Section \ref{Expanding Zeros}) on the behavior of the interpolating map of \cite{BurLaz} near its zeros. We begin with the statements of some classical distortion theorems for conformal mappings.

%\begin{definition}
%\label{general_powers_def} 
%Let $c\in\mathbb{C}^\star$, $(M_j)_{j=1}^\infty \in \mathbb{N}$ be increasing, and $(r_j)_{j=1}^\infty \in \mathbb{R}^+$. We set  $r_0:=0$,  $M_0:=1$. Suppose that 
%\begin{equation}
%\label{growth_condition}
%r_{j+1} \geq  \exp\left(\pi\big/M_j\right) \cdot r_j \textrm{ for all } j\in\mathbb{N}, \textrm{ and } r_j\xrightarrow{j\rightarrow\infty}\infty. 
%\end{equation} 
%Set 
%\begin{equation}\label{c_defn} c_1:=c\textrm{, and } c_j:=c_{j-1}\cdot r_{j-1}^{M_{j-1}-M_{j}}=c\cdot\prod_{k=2}^{j}r_{k-1}^{M_{k-1}-M_{k}} \textrm{ for } j\geq2. \end{equation}  
%We then define: 
%\begin{equation}
%\label{h_formula}
% h(z):= \begin{cases} 
%	c_j\cdot z^{M_j} & \textrm{ if } \hspace{2mm} r_{j-1}\cdot\exp(\pi/M_{j-1}) \leq |z| \leq r_j \\
%	g_{M_{j}, M_{j+1}, r_j, c_j}(z) & \textrm{ if } \hspace{2mm}  r_j \leq |z| \leq r_j\cdot \exp(\pi/M_j). 
%	\end{cases} 
%\end{equation} 
%over all $j\in\mathbb{N}$. 
%\end{definition}

%For the definition of the quasiregular mapping $g$ in (\ref{h_formula}), we refer the reader to Section $3$ of \cite{BurLaz}. 

\begin{thm}[Koebe $1/4$ Theorem]
	\label{Kobe_Quarter}
	Let $D \subset \C$ be a domain and let $z_0 \in D$, and suppose that $f: D \rightarrow f(D)$ is conformal. Then 
	\begin{equation}
	\label{Kobe_Quarter_Ineq}
	\frac{1}{4} |f'(z_0)| \leq \frac{\dist(f(z_0), \partial(f(D)))}{\dist(z_0,\partial D)} \leq 4 |f'(z_0)|. 
	\end{equation}
\end{thm}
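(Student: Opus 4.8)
The plan is to reduce the statement to the classical disk version of the Koebe $1/4$ theorem: if $g\colon\mathbb{D}\to\mathbb{C}$ is univalent with $g(0)=0$ and $g'(0)=1$, then $g(\mathbb{D})\supseteq B(0,1/4)$. This is the standard consequence of Bieberbach's inequality $|a_2|\leq 2$, which in turn follows from the area theorem; I would either cite it outright (appropriate here, since this appendix collects classical results) or include the one-line deduction from the area theorem.

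Granting the disk version, I would prove the left-hand inequality first. Set $d:=\dist(z_0,\partial D)$, so that $B(z_0,d)\subseteq D$, and define the normalized map
\begin{equation}
g(w):=\frac{f(z_0+dw)-f(z_0)}{d\,f'(z_0)},\qquad w\in\mathbb{D}.
\end{equation}
Then $g$ is univalent on $\mathbb{D}$ with $g(0)=0$ and $g'(0)=1$, so $g(\mathbb{D})\supseteq B(0,1/4)$. Unwinding the normalization, $f(B(z_0,d))\supseteq B\!\left(f(z_0),\tfrac14 d\,|f'(z_0)|\right)$, and since $f(B(z_0,d))\subseteq f(D)$ this forces $\dist(f(z_0),\partial f(D))\geq\tfrac14 d\,|f'(z_0)|$, i.e. the lower bound in \eqref{Kobe_Quarter_Ineq}.

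For the upper bound I would apply the lower bound already proven to the conformal map $f^{-1}\colon f(D)\to D$ at the point $f(z_0)$, using $(f^{-1})'(f(z_0))=1/f'(z_0)$. This yields
\begin{equation}
\dist\!\big(z_0,\partial D\big)\;\geq\;\frac14\,\dist\!\big(f(z_0),\partial f(D)\big)\cdot\frac{1}{|f'(z_0)|},
\end{equation}
which rearranges exactly to $\dfrac{\dist(f(z_0),\partial f(D))}{\dist(z_0,\partial D)}\leq 4\,|f'(z_0)|$. Combining the two bounds gives \eqref{Kobe_Quarter_Ineq}. The only genuine input is the disk-case Koebe theorem; everything else is a change of variables and the symmetry trick of passing to $f^{-1}$, so I do not anticipate any real obstacle beyond deciding how much of the classical disk statement to reproduce versus cite.
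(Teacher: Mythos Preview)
Your argument is correct and is the standard textbook proof: normalize to reduce the lower bound to the classical disk-case Koebe $1/4$ theorem, then obtain the upper bound by applying the same inequality to $f^{-1}$. The paper itself gives no proof of this statement; it is simply recorded in the appendix as a classical distortion theorem, so there is nothing to compare against beyond noting that your write-up supplies exactly the kind of derivation one would cite for this result.
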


%If $f: D \rightarrow f(D)$ is conformal, and $K$ is relatively compact in $D$, then the \textit{conformal distortion} of $f$ on $K$ is
%\begin{equation}
%\mathcal{D}|_{K} := \sup_{z,w \in K} \frac{|f'(z)|}{|f'(w)|}.
%\end{equation}

\begin{thm}
	\label{Derivative_Distortion_Estimate}
	Let $D$ be a simply connected domain and let $f:D \rightarrow f(D)$ be a conformal mapping. Let $U$ be a relatively compact subset of $D$. Then there is a constant $C$ which depends only on the modulus of $D \setminus \overline{U}$ such that 
	\begin{equation}
	\label{Derivative_Distortion_Estimate_Eqn}
	\frac{1}{C} \leq \sup_{z,w \in U} \frac{|f'(z)|}{|f'(w)|} \leq C.   
	\end{equation}
\end{thm}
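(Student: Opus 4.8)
\textbf{Proof proposal for Theorem \ref{Derivative_Distortion_Estimate}.}

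The plan is to reduce the statement to the standard Koebe distortion theorem for normalized univalent functions on the unit disk, using the conformal invariance of modulus to control geometry. First I would fix a point $w_0 \in U$ and normalize: replace $f$ by the conformal map $g(z) := (f(z) - f(w_0))/f'(w_0)$, so that $g(w_0) = 0$ and $g'(w_0) = 1$; this does not change the ratio $|f'(z)|/|f'(w)|$, which equals $|g'(z)|/|g'(w)|$. Next, since $D$ is simply connected and not all of $\C$ (it has a relatively compact proper subset, so its complement is nonempty), I would let $\varphi: \D \to D$ be a Riemann map with $\varphi(0) = w_0$, and consider $h := g \circ \varphi : \D \to \C$, which is univalent. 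The key geometric input is that the modulus of $D \setminus \overline{U}$ being bounded below by some $m > 0$ forces $\varphi^{-1}(U)$ to be contained in a disk $\overline{B(0,\rho)}$ for some $\rho = \rho(m) < 1$: indeed $\D \setminus \varphi^{-1}(\overline{U})$ is a topological annulus in $\D$ separating $\varphi^{-1}(U)$ from the unit circle with modulus $\geq m$, and by the standard estimate comparing such an annulus to a round annulus (or by Teichm\"uller's modulus estimate), its "inner" component $\varphi^{-1}(\widehat{U})$ must lie well inside $\D$, with a bound depending only on $m$.

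With $\rho = \rho(m)$ in hand, I would apply the Koebe distortion theorem to the univalent function $h$ on $\D$: for $|\zeta| \leq \rho$ one has the two-sided bound
\begin{equation}
\label{koebe_dist}
\frac{1-\rho}{(1+\rho)^3} \leq |h'(\zeta)| \leq \frac{1+\rho}{(1-\rho)^3},
\end{equation}
using $h'(0) = g'(w_0)\varphi'(0) = \varphi'(0)$ together with the Koebe bounds applied to $h/\varphi'(0)$; more carefully, one applies the distortion theorem to the normalized map $h(\zeta)/h'(0)$ to get $|h'(\zeta)/h'(0)| \in [(1-\rho)(1+\rho)^{-3}, (1+\rho)(1-\rho)^{-3}]$. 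Then for any $z, w \in U$, writing $z = \varphi(\zeta)$, $w = \varphi(\eta)$ with $|\zeta|, |\eta| \leq \rho$, the chain rule gives
\begin{equation*}
\frac{|g'(z)|}{|g'(w)|} = \frac{|h'(\zeta)|\,|\varphi'(\eta)|}{|h'(\eta)|\,|\varphi'(\zeta)|},
\end{equation*}
and each of the four factors $|h'(\zeta)/h'(0)|$, $|h'(\eta)/h'(0)|$, $|\varphi'(\zeta)/\varphi'(0)|$, $|\varphi'(\eta)/\varphi'(0)|$ lies in the fixed interval from \eqref{koebe_dist} (the factors involving $\varphi$ are controlled by applying Koebe distortion to $\varphi$ itself, again only depending on $\rho$). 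Taking $C := \big((1+\rho)/(1-\rho)^3\big)^2 \cdot \big((1+\rho)^3/(1-\rho)\big)^2$ (or any convenient combination) yields the claimed two-sided estimate with $C = C(m)$ depending only on the modulus lower bound.

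The main obstacle — really the only nontrivial point — is the step extracting the bound $\rho(m) < 1$ on $\varphi^{-1}(U)$ from the modulus hypothesis; this is where one must invoke a quantitative modulus estimate (e.g. that a separating annulus of modulus $\geq m$ inside $\D$ confines its bounded complementary component to a disk of radius depending on $m$, which follows from Gr\"otzsch's or Teichm\"uller's extremal problem). Everything else is a routine application of the Koebe distortion theorem and the chain rule. I would remark that this is a completely standard lemma in the theory (it is essentially the statement that conformal maps have bounded distortion on compact subsets, with the bound governed by the modulus), so the write-up can be brief, citing a reference such as Pommerenke for both the distortion theorem and the modulus estimate if a fully self-contained treatment is not desired.
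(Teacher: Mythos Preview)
The paper does not prove this theorem; it is stated in the Appendix among a list of ``classical Theorems and Definitions used throughout the paper'' with no argument given. Your proof is correct and is exactly the standard one: pull back to the unit disk by a Riemann map, use conformal invariance of modulus together with the Gr\"otzsch extremal estimate to trap $\varphi^{-1}(\overline{U})$ inside $\overline{B(0,\rho(m))}$, and then apply the classical Koebe distortion bounds on $\overline{B(0,\rho)}$ to both $h$ and $\varphi$.

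Two small remarks. First, your justification that $D \neq \C$ (``it has a relatively compact proper subset, so its complement is nonempty'') is not right as stated --- $\C$ has plenty of relatively compact proper subsets --- but the case $D = \C$ is trivial since an entire univalent map is affine, so $|f'|$ is constant. Second, for the Gr\"otzsch step you are implicitly using that $D \setminus \overline{U}$ is a topological annulus, i.e.\ that $\overline{U}$ (or its filled hull) is connected; this is how the result is applied everywhere in the paper (with $U$ a Jordan domain or a ball), so the assumption is harmless, but it is worth making explicit since otherwise ``the modulus of $D \setminus \overline{U}$'' is not a single number.
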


\noindent Next, we need the following consequence of the Koebe distortion theorem. The statement below is Theorem 2.9 of \cite{McMullenBook}. 

\begin{thm}
	\label{BilipConformal}
	Let $U$ and $D$ be simply connected domains with $U$ compactly contained in $D$. Let $f:D \rightarrow f(D)$ be conformal. Then there exists a constant $C$ that depends only on the modulus of $D \setminus \overline{U}$ such that for any $x,y,z \in U$, we have
	\begin{equation}
	\label{BilipConformal_eqn}
	\frac{1}{C}|f'(x)| \leq \frac{|f(y)-f(z)|}{|y-z|} \leq C |f'(x)|.
	\end{equation}
\end{thm}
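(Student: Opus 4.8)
The plan is to prove Theorem \ref{BilipConformal} as the standard statement that a conformal map, restricted to a relatively compact subdomain, is bi-Lipschitz up to rescaling by $|f'|$, and to obtain it from the Koebe distortion theorem after uniformizing $D$. First I would fix a lower bound $m>0$ for the modulus of the ring domain $D\setminus\overline U$ (which is a genuine topological annulus since $U$ is a simply connected domain compactly contained in $D$), choose a Riemann map $\psi\colon\mathbb D\to D$ with $\psi(0)\in U$, and set $U':=\psi^{-1}(U)$. Since the modulus is a conformal invariant, $\mathbb D\setminus\overline{U'}$ is a ring domain of modulus $\ge m$ whose inner complementary component $\overline{U'}$ contains $0$. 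Comparing with the Grötzsch ring via the extremal property of the radial slit (the slit $[0,s]$ maximizes the modulus of $\mathbb D\setminus K$ among continua $K$ joining $0$ to a point at distance $s$) and the monotonicity of the modulus under inclusion of ring domains, one gets $\max_{w\in\overline{U'}}|w|\le s_0$ for some $s_0=s_0(m)<1$; that is, $U'\subset B(0,s_0)$.

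Next, the map $g:=f\circ\psi$ is univalent on $\mathbb D$, and so is $\psi$. Applying the Koebe distortion theorem to $g$ and to $\psi$ on the subdisk $B(0,s_0)$ — equivalently, invoking Theorem \ref{Derivative_Distortion_Estimate} with $D=\mathbb D$ and $U=B(0,s_0)$ for the derivative bounds, together with the integrated form (Koebe $1/4$ theorem, Theorem \ref{Kobe_Quarter}, plus distortion) for the lower Lipschitz bound — I obtain a constant $C_0=C_0(s_0)$ such that for all $w,w'\in B(0,s_0)$,
\[
C_0^{-1}|g'(0)|\le|g'(w)|\le C_0|g'(0)|,\qquad C_0^{-1}|\psi'(0)|\le|\psi'(w)|\le C_0|\psi'(0)|,
\]
\[
C_0^{-1}|w-w'|\,|g'(0)|\le|g(w)-g(w')|\le C_0|w-w'|\,|g'(0)|,
\]
and the analogous bi-Lipschitz estimate for $\psi$.

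Finally I would transfer these estimates back. Writing $\widetilde x=\psi^{-1}(x)$, $\widetilde y=\psi^{-1}(y)$, $\widetilde z=\psi^{-1}(z)$, all of which lie in $B(0,s_0)$, the displayed inequalities give $|f(y)-f(z)|=|g(\widetilde y)-g(\widetilde z)|\asymp|\widetilde y-\widetilde z|\,|g'(0)|$ and $|y-z|=|\psi(\widetilde y)-\psi(\widetilde z)|\asymp|\widetilde y-\widetilde z|\,|\psi'(0)|$, whence $|f(y)-f(z)|/|y-z|\asymp|g'(0)|/|\psi'(0)|$ with implied constants depending only on $s_0$. On the other hand $|f'(x)|=|g'(\widetilde x)|\,|\psi'(\widetilde x)|^{-1}\asymp|g'(0)|/|\psi'(0)|$ by the derivative bounds above. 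Combining the two comparisons yields $|f(y)-f(z)|/|y-z|\asymp|f'(x)|$ with constants depending only on $s_0$, hence only on $m$, which is exactly (\ref{BilipConformal_eqn}).

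The one genuinely nontrivial point — the place where the hypothesis "depends only on the modulus" is really used — is the confinement $U'=\psi^{-1}(U)\subset B(0,s_0(m))$ with $s_0(m)<1$: this relies on the Grötzsch/Teichmüller extremal estimates for moduli of ring domains. Everything after that step is a routine application of the Koebe distortion theorem and of results already available in the excerpt (Theorems \ref{Kobe_Quarter} and \ref{Derivative_Distortion_Estimate}), so I expect no further obstacle.
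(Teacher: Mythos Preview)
Your proof is correct. Note, however, that the paper does not actually prove this result: it is simply recorded in the appendix with the attribution ``The statement below is Theorem 2.9 of \cite{McMullenBook}.'' So there is no proof in the paper to compare against; you have supplied one where the authors chose to cite.

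Your argument follows the standard route and is sound. The uniformization step and the use of the Gr\"otzsch extremal property to obtain $U'\subset B(0,s_0(m))$ are exactly the right way to convert the hypothesis ``constant depends only on the modulus'' into a concrete Koebe-type setup; your identification of this as the only nontrivial point is accurate. The remaining steps---derivative distortion for $g$ and $\psi$ on $B(0,s_0)$, and transferring back via $f=g\circ\psi^{-1}$---are routine.

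One small remark: the lower bi-Lipschitz bound $|g(w)-g(w')|\gtrsim |g'(0)|\,|w-w'|$ for $w,w'\in B(0,s_0)$ is slightly glossed over (``Koebe $1/4$ theorem plus distortion''). The cleanest justification is that $g$ is a hyperbolic isometry from $\mathbb D$ onto $g(\mathbb D)$; hyperbolic geodesics between points of $B(0,s_0)$ stay in $B(0,s_0)$ (hyperbolic convexity), and along them the hyperbolic density of $g(\mathbb D)$ is $\asymp 1/|g'(0)|$ by your derivative bounds. This yields the two-sided estimate with constants depending only on $s_0$. Alternatively one applies Koebe $1/4$ to $\zeta\mapsto g(w+(1-s_0)\zeta)$ and chains over a bounded number of steps. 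Either way the gap is cosmetic, not substantive.
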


\noindent Using the BiLipschitz estimate (\ref{BilipConformal_eqn}), we obtain the following Corollary. 

\begin{cor}[Koebe Distortion Theorem]
	\label{Kobe}
	Let $D$ be simply connected, let $U$ be open and compactly contained in $D$, and let $K$ be a compact subset of $\overline{U}$. Suppose $f: D \rightarrow f(D)$ is conformal. Then there is a constant $C$ which depends only on the modulus of $D \setminus \overline{U}$ so that 
	\begin{equation}
	\label{Kobe_Ineq}
	\frac{1}{C}\frac{\diam(K)}{\diam(U)} \leq \frac{\diam(f(K))}{\diam(f(U))} \leq C \frac{\diam(K)}{\diam(U)}.
	\end{equation}	
\end{cor}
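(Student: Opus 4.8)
The plan is to deduce this immediately from the bi-Lipschitz estimate of Theorem \ref{BilipConformal}, which is exactly the tool suggested by the phrasing of the Corollary. First I would apply that theorem with the given $U$ and $D$, obtaining a constant $C_0$, depending only on the modulus of $D \setminus \overline{U}$, such that
\begin{equation*}
\frac{1}{C_0}|f'(x)| \;\leq\; \frac{|f(y)-f(z)|}{|y-z|} \;\leq\; C_0\,|f'(x)| \qquad \text{for all } x,y,z \in U.
\end{equation*}
Since $U$ is compactly contained in $D$, both $f$ and $f'$ extend continuously to the compact set $\overline{U} \subset D$, so the displayed estimate in fact holds for all $x,y,z \in \overline{U}$ with the same constant $C_0$; moreover $\diam(U) = \diam(\overline{U})$ and, because $f(\overline{U})$ is compact and equals $\overline{f(U)}$, also $\diam(f(U)) = \diam(f(\overline{U}))$. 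Thus all four diameters in the statement are attained by pairs of points in $K$ or in $\overline{U}$. If $\diam(K) = 0$ the inequality is trivial, so I would assume $\diam(K) > 0$ and fix once and for all a point $x_0 \in \overline{U}$.

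For the upper bound, pick $y_K, z_K \in K$ with $|f(y_K) - f(z_K)| = \diam(f(K))$ and $y_U, z_U \in \overline{U}$ with $|y_U - z_U| = \diam(U)$. The two sides of the estimate give $\diam(f(K)) \leq C_0 |f'(x_0)|\,\diam(K)$ and $\diam(f(U)) = \diam(f(\overline U)) \geq C_0^{-1}|f'(x_0)|\,\diam(U)$; dividing, the factor $|f'(x_0)|$ cancels and yields $\diam(f(K))/\diam(f(U)) \leq C_0^{2}\,\diam(K)/\diam(U)$. The lower bound is the same argument with the roles reversed: choose $y_K, z_K \in K$ realizing $\diam(K)$ and $y_U, z_U \in \overline{U}$ realizing $\diam(f(U))$, apply the lower bi-Lipschitz bound to the pair in $K$ and the upper one to the pair in $\overline{U}$, and divide, getting $\diam(f(K))/\diam(f(U)) \geq C_0^{-2}\,\diam(K)/\diam(U)$. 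Setting $C := C_0^2$, which depends only on the modulus of $D\setminus\overline U$, then completes the proof.

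There is no real obstacle here: the entire analytic content sits in Theorem \ref{BilipConformal} (itself a standard consequence of the Koebe distortion theorem), and the only point requiring a little care is that Theorem \ref{BilipConformal} is stated for the open set $U$ while the diameters of $U$ and $f(U)$ are most naturally realized on $\overline{U}$. This is handled by the continuity extension noted above, or alternatively by first passing to an auxiliary open set $U'$ with $U \Subset U' \Subset D$ whose complement still has modulus bounded below in terms of that of $D \setminus \overline{U}$.
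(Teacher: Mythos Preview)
Your proof is correct and follows exactly the approach indicated in the paper, which simply states that the corollary is obtained ``using the BiLipschitz estimate (\ref{BilipConformal_eqn})'' without giving further details. Your argument spells this out carefully, including the minor bookkeeping about passing to $\overline{U}$, and arrives at $C = C_0^2$ as expected.
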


\noindent We can also deduce the following corollary using (\ref{BilipConformal_eqn}), but we first need the following definitions.

\begin{definition}  Let $f: D \rightarrow f(D)$ be a conformal mapping, and $B = B(z_0,r)$ be compactly contained in $D$. We define the \textit{inner radius} of $f(B)$ by:
	\begin{equation}
	\label{inner_radius}
	r_{f(B),f(z_0)} := \sup \{t: B(f(z_0), t) \subset f(B)\}.
	\end{equation}
	We similarly define the \textit{outer radius} of $f(B)$ by: 
	\begin{equation}
	\label{outer_radius}
	R_{f(B),f(z_0)} := \inf \{t: f(B) \subset B(f(z_0), t)\}.
	\end{equation}
\end{definition}

\begin{cor}
	\label{shape}
	Let $D$ be a simply connected domain and let $f:D \rightarrow f(D)$ be conformal. Let $B=B(z_0,r)$ be a disk compactly contained inside of $D$. Then there exists a constant $C$ which depends only on the modulus of $D \setminus \overline{B}$ so that 
	\begin{equation}
	\label{shape_ineq}
	C^{-1} |f'(z_0)| r \leq r_{f(B),f(z_0)} \leq R_{f(B),f(z_0)} \leq C |f'(z_0)| r.
	\end{equation}
\end{cor}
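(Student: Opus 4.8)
The plan is to deduce Corollary \ref{shape} directly from the BiLipschitz estimate of Theorem \ref{BilipConformal}, after two elementary reductions identifying the inner and outer radii of $f(B)$ with extremal values of $|f(w)-f(z_0)|$ taken over the circle $\partial B$.

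First I would record the geometric reductions. Since $f$ is conformal (in particular injective and continuous) on $D$ and $\overline B$ is a compact subset of $D$, the restriction $f|_{\overline B}$ is a homeomorphism onto $\overline{f(B)}$, so that $\partial f(B)=f(\partial B)$ and $f(z_0)$ has positive distance to $\mathbb{C}\setminus f(B)$. Consequently $r_{f(B),f(z_0)}=\dist\!\big(f(z_0),\mathbb{C}\setminus f(B)\big)=\min_{w\in\partial B}|f(w)-f(z_0)|$, using compactness of $\partial B$. Likewise $R_{f(B),f(z_0)}=\sup_{w\in B}|f(w)-f(z_0)|=\max_{w\in\partial B}|f(w)-f(z_0)|$, where the last equality is the maximum modulus principle applied to the holomorphic map $z\mapsto f(z)-f(z_0)$. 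The middle inequality $r_{f(B),f(z_0)}\le R_{f(B),f(z_0)}$ is immediate from the chain of inclusions $B(f(z_0),r_{f(B),f(z_0)})\subset f(B)\subset B(f(z_0),R_{f(B),f(z_0)})$.

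Next I would invoke Theorem \ref{BilipConformal} with $U=B$, which is compactly contained in the simply connected domain $D$; this supplies a constant $C$ depending only on the modulus of $D\setminus\overline B$ with $C^{-1}|f'(x)|\le |f(y)-f(z)|/|y-z|\le C|f'(x)|$ for all $x,y,z\in B$. Because $f$ is continuous on $D\supset\overline B$, this inequality persists for $x,y,z\in\overline B$ with the same $C$, by approximating points of $\partial B$ from inside $B$. Specializing to $x=z$ being $z_0$ and $y=w$ with $w\in\partial B$, so that $|y-z|=r$, yields $C^{-1}r\,|f'(z_0)|\le |f(w)-f(z_0)|\le C\,r\,|f'(z_0)|$ for every $w\in\partial B$. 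Combining this with the two reductions of the previous paragraph gives $C^{-1}r\,|f'(z_0)|\le r_{f(B),f(z_0)}$ and $R_{f(B),f(z_0)}\le C\,r\,|f'(z_0)|$, which together with $r_{f(B),f(z_0)}\le R_{f(B),f(z_0)}$ is exactly the assertion of Corollary \ref{shape}.

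There is essentially no serious obstacle here: the analytic content is entirely contained in Theorem \ref{BilipConformal}, and the two points that require a line of care are (i) passing the BiLipschitz inequality from $B$ to $\overline B$ so that it can be evaluated on $\partial B$ without changing the constant, and (ii) the identity $\partial f(B)=f(\partial B)$, which one justifies by noting that $f|_{\overline B}$ is a homeomorphism onto its image since $\overline B$ is compact and $f$ is continuous and injective on the open set $D$.
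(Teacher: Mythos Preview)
Your proof is correct and follows exactly the approach the paper indicates: the corollary is stated immediately after the sentence ``We can also deduce the following corollary using (\ref{BilipConformal_eqn})'' and is given without proof, so deducing it from Theorem~\ref{BilipConformal} by identifying the inner and outer radii with extremal values of $|f(w)-f(z_0)|$ on $\partial B$ is precisely what is intended.
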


\begin{rem}
	\label{Bounded_conformal_distortion}
	In this paper, we will frequently encounter the following situation. Let $f: \C \rightarrow \C$ be an entire function, and let $D_n$ be a sequence of simply connected domains in $\C$. Let $U_n$ be open and relatively compact in $D_n$ and let $K_n$ be a compact subset of $U_n$. Suppose that $f$, when restricted to $D_n$, is conformal, and suppose that the modulus of $D_n \setminus \overline{U_n}$ is bounded below by some fixed constant $\delta > 0$ that does not depend on $n$. Then there exists a single constant $C$ so that equation (\ref{Kobe_Ineq}) holds for all pairs of domains $U_n$ and $K_n$. A similar assertion is true for (\ref{shape_ineq}).
\end{rem}

We now recall some basic facts about Hausdorff dimension, following \cite{Mattila}.
\begin{definition}
	\label{Hausdorff}
	Let $A \subset \C$, be a set. We define the \textit{$\alpha$-Hausdorff measure} of $A$ to be the quantity
	\begin{equation}
	\label{H_measure}
	H^{\alpha}(A) := \lim_{\delta \ra 0} H_{\delta}^{\alpha} (A) := \lim_{\delta \ra 0}\left( \inf \left \{\sum_{i=1}^{\infty} \diam(U_i)^{\alpha} \,: \, A \subset \bigcup_{i=1}^{\infty} U_i, \, \diam(U_i) < \delta\right \}\right),
	\end{equation}
	where the infimum is taken over all countable covers of $A$ by sets $\{U_i\}_{i=1}^{\infty}$.
\end{definition}

\noindent One can verify by directly using Definition \ref{Hausdorff} that if $H^{t}(A) < \infty$, then $H^{s}(A) = 0$ for all $s > t$, and similarly, if $H^{t}(A) > 0$, then $H^s(A)  = \infty$ for all $s < t$. Therefore, the following definition is well defined.

\begin{definition}
	\label{Hdim}
	Let $A \subset \C$ be a set. The \textit{Hausdorff dimension} of $A$ is defined to be
	\begin{equation}
	\label{HDim_Def}
	\dim_H(A) := \sup \{t: \, H^t(A) = \infty\} = \inf\{t: H^t(A) = 0\}.
	\end{equation}
\end{definition} 
\noindent We also use the following well-known facts about Hausdorff dimension.
\begin{lem}
	\label{Hdim_Facts}
	Let $A \subset \C$ be a set, and let $s \geq 0$. Then:
	\begin{enumerate}
		\item $H^s(A) = 0$ if and only if for all $\varepsilon > 0$, there exists sets $E_i \subset \C$, $i =1,2,\dots$ such that $A \subset \cup_{i=1}^{\infty} E_i$ and 
		\begin{equation}
		\label{estimate_sum}
		\sum_{i=1}^{\infty} \diam(E_i)^s < \varepsilon.
		\end{equation}
		\item Suppose that $A = \cup_{i=1}^{\infty} A_i$ for some sets $A_i \subset \C$. Then
		\begin{equation}
		\label{countable_stability}
		\dim_H(A) = \dim_H(\cup_{i=1}^{\infty} A_i) = \sup_{i \geq 1} \dim_H(A_i).
		\end{equation}
		\item Let $S \subset \C$ be a set and let $f: \C \rightarrow \C$ be an entire function. Then 
		\begin{equation}
		\label{dimension_entire}
		\dim_H(S) = \dim_H(f(S)) = \dim_H(f^{-1}(S)).
		\end{equation}
	\end{enumerate}	
\end{lem}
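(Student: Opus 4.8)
The final statement to prove is Lemma \ref{Hdim_Facts}, the collection of standard facts about Hausdorff dimension: (1) the characterization of $H^s(A)=0$ via coverings with small diameter sum, (2) countable stability of $\dim_H$, and (3) invariance of $\dim_H$ under entire maps and their preimages.

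\medskip

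The plan is to reduce each item to the definitions and to the well-known local Lipschitz behavior of holomorphic maps. First, for item (1): the ``only if'' direction is immediate from Definition \ref{Hausdorff}, since $H^s(A)=0$ forces $H^s_\delta(A)=0$ for every $\delta>0$, and any cover witnessing $H^s_\delta(A)<\varepsilon$ works (after discarding the $\delta$-constraint, which can only help). For the ``if'' direction, given a cover $\{E_i\}$ with $\sum \diam(E_i)^s<\varepsilon$, note that necessarily each $\diam(E_i)<\varepsilon^{1/s}$, so the same cover bounds $H^s_\delta(A)$ for $\delta=\varepsilon^{1/s}$; letting $\varepsilon\to 0$ gives $H^s(A)=0$. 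For item (2): the inequality $\dim_H(\cup A_i)\geq \sup_i\dim_H(A_i)$ is monotonicity of $H^t$ in the set, which is clear from the definition of $H^t$ as an infimum over covers. The reverse inequality is the substantive half: fix $t>\sup_i\dim_H(A_i)$, so $H^t(A_i)=0$ for all $i$; given $\varepsilon>0$, use item (1) to cover each $A_i$ by sets with $s$-diameter sum $<\varepsilon/2^i$; the union of these covers is a countable cover of $A$ with total $t$-diameter sum $<\varepsilon$, so $H^t(A)=0$ and hence $\dim_H(A)\leq t$; let $t\searrow\sup_i\dim_H(A_i)$.

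\medskip

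The main work is item (3). The key point is that an entire function $f$ is locally Lipschitz: for any bounded set $K$ there is a constant $L=L(K)$ with $|f(z)-f(w)|\leq L|z-w|$ for $z,w\in K$, by the Cauchy estimate applied on a slightly larger disk. A locally Lipschitz map does not increase Hausdorff dimension: cover $S$ (which we may first decompose into countably many bounded pieces $S\cap B(0,n)$ and invoke item (2)) by sets $U_i$; then $\{f(U_i)\}$ covers $f(S)$ with $\diam(f(U_i))\leq L\diam(U_i)$, so $\sum\diam(f(U_i))^t\leq L^t\sum\diam(U_i)^t$, giving $H^t(f(S))\lesssim H^t(S)$ for every $t$, hence $\dim_H(f(S))\leq\dim_H(S)$. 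The same argument applied to local branches of $f^{-1}$ — where $f'\neq 0$ these are holomorphic, hence locally Lipschitz, and the zero set of $f'$ is countable so contributes nothing to dimension — yields $\dim_H(f^{-1}(S))\leq\dim_H(S)$; but also $S=f(f^{-1}(S))$ on the relevant range, so $\dim_H(S)\leq\dim_H(f^{-1}(S))$, and combining with $\dim_H(f(S))\leq\dim_H(S)$ and $f^{-1}(f(S))\supset S$ gives all three quantities equal. The only mild subtlety — the ``main obstacle'' if one insists on full rigor — is handling unboundedness and the critical points of $f$; both are dispatched by the countable-stability item (2), which is why it is proved first. Since these are entirely standard facts (see \cite{Mattila}), I would present the argument tersely, citing \cite{Mattila} for the general statements and only spelling out the local-Lipschitz reduction.
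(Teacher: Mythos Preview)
The paper does not actually prove this lemma: it is stated in the appendix as a collection of well-known facts, with an implicit reference to \cite{Mattila}, and no argument is given. So there is no ``paper's own proof'' to compare against; your sketch is correct and entirely standard, and in fact goes well beyond what the paper does. Two minor points worth tightening if you keep the write-up: in item (1) your ``if'' argument needs $s>0$ to conclude $\diam(E_i)<\varepsilon^{1/s}$ (the $s=0$ case is trivial separately), and in item (3) the step ``$S=f(f^{-1}(S))$ on the relevant range'' relies on Picard's little theorem to ensure $f(\C)$ omits at most one point, which you should name explicitly rather than leave as ``on the relevant range''.
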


We will now record some useful lemmas about branched coverings that are topological in nature. The following are Propositions 3.1 and 3.2 of \cite{RempeSixsmith}.

\begin{lem}
	\label{preimage_helper}
	Let $f:X \rightarrow Y$ be a branched covering map between two non-compact, simply connected Riemann surfaces. Suppose that $U \subset Y$ is a simply connected domain and let $U'$ be a connected component of $f^{-1}(U)$ such that $U'$ contains only finitely many critical points of $f$. Then $f: U' \rightarrow U$ is a proper map, and $U'$ is simply connected. Additionally, if the boundary of $U$ is a Jordan curve in $Y$ that contains no critical values of $f$, then the boundary of $U'$ is a Jordan curve in $X$.
\end{lem}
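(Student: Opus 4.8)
The plan is to verify the three assertions in turn; the crucial point is that $g:=f|_{U'}\colon U'\to U$ is proper, after which the simple connectivity of $U'$ and the statement about $\partial U'$ follow from the argument principle (or Riemann--Hurwitz) and a boundary-lifting argument. For properness, let $C\subset U'$ be the finite set of critical points of $f$ lying in $U'$ and put $D:=f(C)\subset U$, a finite set. Since $f$ is a branched covering, it restricts to an honest covering over the complement of its discrete critical-value set $CV(f)$; as every point of $U'$ lying over $U\setminus D$ is a regular point of $f$, one checks that $g$ restricts to a covering map $U'\setminus g^{-1}(D)\to U\setminus D$. The base $U\setminus D$ is connected with finitely generated fundamental group, and the finiteness of $C$ is exactly what prevents the number of sheets from being infinite, so this covering has a well-defined finite degree $d$. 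Knowing $d<\infty$, properness of $g$ is immediate: a compact $K\subseteq U$ meets $D$ in finitely many points, $K\setminus D$ is covered by finitely many evenly covered sets, and together with small branched neighbourhoods of the points of $D$ this exhibits $g^{-1}(K)$ as a compact subset of $U'$. (In the applications in this paper $f$ is the polynomial $q_N$, which is globally proper, and then properness of $g$ follows at once from the fact that $U'$ is relatively closed in $f^{-1}(U)$.) In particular $g$ is surjective, since a proper open map has image both closed and open in the connected set $U$, so $g\colon U'\to U$ is a proper branched covering of finite degree $d\geq 1$.

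To see that $U'$ is simply connected I would work with the filled domain $\widehat{U'}$, the union of $U'$ with its bounded complementary components (Notation~\ref{widehat}); it suffices to show $\widehat{U'}=U'$. One has $\partial\widehat{U'}\subseteq\partial U'\subseteq f^{-1}(\partial U)$, hence $f(\partial\widehat{U'})\subseteq\partial U$; since $\widehat{U'}$ is a bounded domain and $f$ has no poles there, the argument principle shows that for $w$ outside $\overline U$ the number of solutions of $f(z)=w$ in $\widehat{U'}$ is locally constant and vanishes near the point lying over the end of $Y$, so $f(\widehat{U'})\subseteq\overline U$; as $f(\widehat{U'})$ is open and connected it therefore lies in $U$, giving $\widehat{U'}\subseteq f^{-1}(U)$ and thus $\widehat{U'}\subseteq U'$. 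Equivalently, one may apply Riemann--Hurwitz: $\chi(U')=d\,\chi(U)-\sum_{p\in U'}(e_p-1)=d-\sum_{p\in U'}(e_p-1)$ with $e_p$ the local degree of $f$ at $p$, while $\chi(U')\leq 1$ for a connected open subset of the planar surface $X$ with equality exactly when $U'$ is simply connected, so it remains to see (again via the global structure of $f$) that the total ramification of $g$ is at most $d-1$.

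Finally, assume $\partial U$ is a Jordan curve containing no critical values of $f$. Then $g$ is an unbranched covering in a neighbourhood of $g^{-1}(\partial U)$, so it extends continuously to $\overline{U'}$, and I would pull back a parametrization of $\partial U$: since $g$ is proper of degree $d$ and $\partial U$ is a simple closed curve disjoint from $CV(f)$, the set $g^{-1}(\partial U)\cap\partial U'$ is a finite disjoint union of simple closed curves, each mapped onto $\partial U$ by a finite covering of the circle. If there were more than one such curve then $\widehat{U'}\setminus\overline{U'}$ would be nonempty, contradicting the simple connectivity of $U'$ proved above; hence $\partial U'$ is a single Jordan curve. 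The main obstacle is the first step: showing $g$ is proper, i.e. ruling out that a part of $U'$ escapes toward a transcendental-type singularity of $f^{-1}$ lying over $U$ and that the degree is finite. This is precisely where the hypothesis that $U'$ contains only finitely many critical points is used — together with the assumption that $f$ is a branched covering, so that it is a genuine covering away from its discrete critical-value set — and once finiteness of the degree is secured, Steps 2 and 3 are routine.
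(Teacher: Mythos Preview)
The paper does not prove this lemma: it appears in the Appendix among several classical results quoted without proof, with a citation to Propositions~3.1 and~3.2 of Rempe--Sixsmith. So there is no in-paper argument to compare against; I will assess your proposal on its own terms.

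Your three-step outline (properness, simple connectivity, Jordan boundary) is the right decomposition, and Step~3 is fine once Steps~1--2 are in hand. But Steps~1 and~2 each have a real gap.

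\textbf{Properness.} You correctly identify that $g$ restricts to a covering $U'\setminus g^{-1}(D)\to U\setminus D$, but the sentence ``the finiteness of $C$ is exactly what prevents the number of sheets from being infinite'' is the whole content of the step, and you do not justify it. The argument that works is combinatorial: the monodromy action of each generator $\gamma_d\in\pi_1(U\setminus D)$ on the fiber has only finitely many non-fixed points (one for each preimage of $d$ in $U'$ with local degree $>1$, hence at most $|C|$), and a transitive action of a finitely generated group in which every generator moves only finitely many points is necessarily an action on a finite set. This is elementary once stated, but it is the heart of the matter and cannot be omitted.

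\textbf{Simple connectivity.} Your $\widehat{U'}$/argument-principle approach assumes $\widehat{U'}$ is bounded, which you do not justify and which can fail (when $X=\mathbb{C}$ and $f$ is transcendental, $U'$ may well be unbounded). The fix is to work with each bounded complementary component $K$ of $U'$ separately: $K$ is compact, $f(\partial K)\subset\partial U$, and since $U$ and $Y$ are simply connected no component of $Y\setminus\overline U$ is relatively compact in $Y$; degree theory then gives $f(K)\subset\overline U$. Summing over all $K$ gives $f(\widehat{U'})\subset\overline U$, and then openness of $f$ on the open set $\widehat{U'}$ rules out any point of $\widehat{U'}$ mapping to $\partial U$, so $f(\widehat{U'})\subset U$ and hence $\widehat{U'}=U'$. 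Your Riemann--Hurwitz alternative is explicitly left unfinished (``it remains to see\dots that the total ramification of $g$ is at most $d-1$''), and that missing inequality is exactly equivalent to $\chi(U')=1$, i.e.\ to what you are trying to prove; abstract branched covers of a disk with $\chi<1$ do exist (e.g.\ the degree-$4$ Klein-four cover of a disk by an annulus), so this cannot be closed without using that $U'$ sits inside the simply connected $X$ --- which is precisely what the $\widehat{U'}$ argument exploits.
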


\begin{lem}
	\label{branched_covering_helper}
	Suppose that $f: \C \rightarrow \C$ is an entire function, and suppose that $U \subset \C$ is simply connected. Suppose that $U$ contains no asymptotic values of $f$ and that the critical values of $f$ inside of $U$ form a discrete set. Then $f$ is a branched covering from each connected component of $f^{-1}(U)$ onto $U$.
\end{lem}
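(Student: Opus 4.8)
The plan is to deduce the statement from the classical fact that a nonconstant entire function, restricted to the preimage of a domain over which it has no singular values, is an unbranched covering map, and then to fill in the (discrete) set of critical values one point at a time. First I would set $S := CV(f) \cap U$, which is discrete by hypothesis, so that $U \setminus S$ is again a domain containing no critical values and (since $U$ does not) no asymptotic values. Over $U \setminus S$ the map $f$ is a local homeomorphism, and the key step is to upgrade this to a covering map via path lifting: given a path $\gamma$ in $U \setminus S$ and a point $x$ of $W \cap f^{-1}(\gamma(0))$ (where $W$ is a fixed connected component of $f^{-1}(U)$), I would show $\gamma$ lifts to a path in $\C$ starting at $x$. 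If such a lift failed to extend to the terminal point of $\gamma$, the lift would eventually leave every compact subset of $\C$ while its image remained on the compact arc $\gamma$, producing an asymptotic path and hence an asymptotic value in $U \setminus S \subset U$, contrary to hypothesis. Path lifting together with the local homeomorphism property, and connectedness of $W$, gives that $f$ maps $W \cap f^{-1}(U \setminus S)$ onto $U \setminus S$ as a covering map; in particular every point of $U \setminus S$ has an evenly covered neighborhood for $f|_W$.

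It then remains to describe $f|_W$ near a point $y \in S$. I would pick a round disk $D \subset U$ centered at $y$ with $\overline{D} \cap S = \{y\}$, so that $D^{*} := D \setminus \{y\}$ is a punctured disk evenly covered by the previous step. Each component $\Omega$ of $W \cap f^{-1}(D^{*})$ is then a connected covering space of $D^{*}$, hence (as $\pi_1(D^{*}) \cong \mathbb{Z}$) either a punctured disk on which $f$ is conformally conjugate to $w \mapsto y + w^{d}$ for some finite $d \geq 1$, or a simply connected cover on which $f$ has infinite degree. The main obstacle is ruling out the infinite-degree case and showing that each punctured-disk component is compactified by a single point of $W$. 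I would argue this by lifting a radial path in $D^{*}$ heading to $y$ into $\Omega$: either the lift escapes to $\infty$, exhibiting $y$ as an asymptotic value (contradiction), or it converges to some $p \in \C$ with $f(p) = y \in U$; since $W$ is relatively closed in $f^{-1}(U)$ and $p$ is a limit of points of $W$, we get $p \in W$, and then the local normal form of $f$ at $p$ forces a small loop of $\Omega$ around $p$ to map to a noncontractible loop of $D^{*}$ — impossible if $\Omega$ is simply connected, and in the finite case pinning down $p$ as the unique fill-in point. Distinct points of $f^{-1}(y) \cap W$ yield disjoint components by disjointness of local charts.

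Putting the two cases together: every component of $W \cap f^{-1}(D)$ has the form $\Omega \cup \{x_\Omega\}$, a topological disk on which $f$ is conjugate to $w \mapsto y + w^{d}$, which is exactly the branched-covering normal form over $y$, while over $U \setminus S$ the map $f|_W$ is an honest covering. Hence $f|_W : W \to U$ is a branched covering map, and since $W$ was an arbitrary component of $f^{-1}(U)$ the lemma follows. (This is essentially the argument of Proposition 3.2 of \cite{RempeSixsmith}, which could also simply be cited.)
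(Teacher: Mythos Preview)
Your argument is correct and is essentially the standard proof (indeed, the one in Proposition~3.2 of \cite{RempeSixsmith} that you cite at the end). The paper itself does not prove this lemma at all: it is stated in the appendix as a known result, with the sentence ``The following are Propositions~3.1 and~3.2 of \cite{RempeSixsmith}'' preceding it, and no proof is given. So there is nothing to compare against beyond noting that you have supplied the details the paper chose to omit by citation.
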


\noindent The following version of the Riemann-Hurwitz formula is from \cite{RiemannHurwitz}.

\begin{thm}
	\label{RH}
	Let $V$ and $W$ be domains in $\C$ and suppose the connectivity (the number of complementary components) of $V$ is $m$ and the connectivity of $W$ is $n$. Let $f:V \rightarrow W$ be a proper, branched covering map of degree $k$ that has $r$ many critical points. Then 
	\begin{equation}
	\label{RH_eqn}
	m-2 = k(n-2) + r
	\end{equation} 
\end{thm}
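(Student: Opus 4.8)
The plan is to deduce this from the classical Riemann--Hurwitz formula for branched coverings of compact bordered surfaces, using the elementary fact that a finitely connected planar domain $\Omega \subset \C$ of connectivity $c$ is homotopy equivalent to a wedge of $c-1$ circles, so that its Euler characteristic is $\chi(\Omega) = 2 - c$ (a disk has connectivity $1$ and $\chi = 1$, an annulus has connectivity $2$ and $\chi = 0$, and so on).

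First I would pass to compact cores. Since $W$ has connectivity $n$, choose a compact bordered surface $W_0 \subset W$ with $n$ boundary curves that is a deformation retract of $W$ and contains all the critical values of $f$ in its interior; then $\chi(W_0) = \chi(W) = 2 - n$. Put $V_0 := f^{-1}(W_0)$. Properness of $f$ forces $V_0$ to be compact, and because $f$ restricts to an unbranched proper covering $V \setminus V_0 \to W \setminus W_0$ of a finite disjoint union of half-open annuli (properness again rules out an $\R$-factor in these covers), $V_0$ is a compact bordered surface that is a deformation retract of $V$; hence $\chi(V_0) = \chi(V) = 2 - m$, and $f \colon V_0 \to W_0$ is a proper degree-$k$ branched covering all of whose branch points lie in the interior of $V_0$.

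Next I would run the standard triangulation count on $f \colon V_0 \to W_0$: fix a triangulation of $W_0$ that is adapted to the boundary and has every critical value among its vertices, and lift it through $f$. Each face and each edge has exactly $k$ preimages, while a vertex $w$ has $k - \sum_{f(v) = w} (\deg_v f - 1)$ preimage vertices; summing over all vertices, the total number of vertices upstairs is $k\,V(W_0) - r$, where $r = \sum_v (\deg_v f - 1)$ is the total ramification of $f$. Under the hypothesis that the critical points of $f$ are simple (local degree $2$), which is the situation in all applications in this paper, $r$ equals the number of critical points. Therefore
\[
\chi(V_0) = \bigl(k\,V(W_0) - r\bigr) - k\,E(W_0) + k\,F(W_0) = k\,\chi(W_0) - r,
\]
and substituting $\chi(V_0) = 2 - m$ and $\chi(W_0) = 2 - n$ yields $2 - m = k(2 - n) - r$, which rearranges to (\ref{RH_eqn}).

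I expect the main obstacle to be the first step rather than the bookkeeping: one must check with some care that $V$ and $W$ genuinely admit compact-core deformation retracts with the prescribed numbers of boundary curves, and that $f$ restricts to a proper branched covering between them --- this is exactly where properness of $f$ and the finiteness of the connectivities $m,n$ are used. Once that reduction is in place the remainder is the textbook Euler-characteristic computation; alternatively one can simply invoke the CW-complex version of Riemann--Hurwitz for open surfaces as stated in \cite{RiemannHurwitz}.
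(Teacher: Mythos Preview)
The paper does not prove this statement; it is quoted in the appendix as a known result from \cite{RiemannHurwitz} and used as a black box (in Lemma~\ref{polynomial_like_polynomial} and Lemma~\ref{component_lemma}). Your sketch supplies an actual proof where the paper simply cites one, and the route you take --- pass to compact cores, lift a triangulation, count vertices/edges/faces --- is the standard derivation of Riemann--Hurwitz for bordered surfaces. The reduction to compact cores via properness is handled correctly, and your identification $\chi(\Omega) = 2 - c$ for a planar domain of connectivity $c$ is exactly what converts the Euler-characteristic identity into the connectivity formula~(\ref{RH_eqn}).

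One point worth flagging: the statement says ``$r$ many critical points,'' and you correctly observe that the triangulation argument naturally produces the total ramification $\sum_v(\deg_v f - 1)$ rather than a raw count of critical points. You resolve this by noting that in every application in the paper the critical points are simple, so the two quantities agree. That is accurate for this paper's purposes, but strictly speaking the formula as stated is only correct if $r$ is read as ``critical points counted with multiplicity'' (i.e.\ total ramification); you might make that reading explicit rather than leaving it as a side remark.
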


\noindent We also make use of polynomial-like mappings, see \cite{Hub}.
\begin{definition}
	\label{polynomial-like}
	Let  $\Omega$, $\Omega' \subset \C$ be Jordan domains and suppose that $\Omega$ is compactly contained inside of $\Omega'$. A holomorphic mapping $f: \Omega \rightarrow \Omega'$ is called a \textit{degree $d$ polynomial-like mapping} if it is a proper, degree $d$, branched covering map. Given a polynomial-like mapping, we denote its \textit{filled Julia set} by 
	$$K_f = \bigcap_{n=1}^{\infty} f^{-n}(\Omega).$$
\end{definition}

\noindent We make frequent use out of the following Lemma.

\begin{lem}
	\label{component_lemma}
	Suppose that $f:X \rightarrow Y$ is a degree $d$ branched covering map between two simply connected planar domains with only finitely many critical points. Let $U \subset Y$ be a Jordan domain. Suppose that $\overline{U}$ does not contain any critical values of $f$. Then there are $d$ many connected components of $f^{-1}(U) \subset X$, each of which is a Jordan domain that is mapped conformally onto $U$ by $f$.
\end{lem}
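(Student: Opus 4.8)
\textbf{Plan for the proof of Lemma \ref{component_lemma}.}
The statement is a standard consequence of covering-space theory combined with the Riemann--Hurwitz formula, so I would organize the proof into three short steps. First, I would observe that since $\overline{U}$ contains no critical values of $f$, the set $f^{-1}(\overline{U})$ contains no critical points of $f$, and hence $f$ restricted to $f^{-1}(U)$ is an honest (unbranched) covering map onto $U$. Because $U$ is simply connected (being a Jordan domain), every covering of $U$ is trivial; thus each connected component $V$ of $f^{-1}(U)$ is mapped by $f$ homeomorphically — and therefore, $f$ being holomorphic and locally injective, conformally — onto $U$. This already gives that each component is a Jordan domain (a conformal image of the Jordan domain $U$, with Jordan boundary coming from the fact that $\partial V$ maps properly onto the Jordan curve $\partial U$ and contains no critical points; one can also invoke Lemma \ref{preimage_helper} directly here).

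Second, I would count the number of components. Let $V_1,\dots,V_m$ be the connected components of $f^{-1}(U)$; each $f|_{V_i}\colon V_i\to U$ is a degree-one proper map. Fix a point $w\in U$. Since $f$ has degree $d$ as a branched cover $X\to Y$ and $w$ is not a critical value, $f^{-1}(w)$ consists of exactly $d$ points, and each of these lies in exactly one of the $V_i$ (and exactly one point per $V_i$, since $f|_{V_i}$ has degree one). Hence $m = d$. This is the cleanest route and avoids Riemann--Hurwitz entirely; alternatively one could apply Theorem \ref{RH} with $n=1$, $r=0$ to each component to re-derive that the covering is trivial, but the direct fiber-counting argument is shorter.

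Third, I would assemble the pieces: $f^{-1}(U)$ has exactly $d$ connected components $V_1,\dots,V_d$, each a Jordan domain, and $f|_{V_i}$ is a conformal bijection onto $U$ for each $i$. I do not anticipate a genuine obstacle here — the only point requiring a little care is justifying that each component is a \emph{Jordan} domain rather than merely a simply connected domain; for this I would cite Lemma \ref{preimage_helper} (with $X$, $Y$ the given simply connected planar domains and noting that each component contains finitely many, in fact zero, critical points, and $\partial U$ is a Jordan curve containing no critical values), which gives exactly the conclusion that the boundary of each component is a Jordan curve in $X$. The remaining assertions (conformality, the count $d$) then follow as above. The whole argument is a few lines once Lemma \ref{preimage_helper} is invoked.
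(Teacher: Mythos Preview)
Your proposal is correct and follows essentially the same route as the paper: both invoke Lemma \ref{preimage_helper} to get that each component is a Jordan domain, then argue degree one on each component and count fibers to obtain $d$ components. The only minor difference is that the paper deduces degree one on each component via the Riemann--Hurwitz formula (Theorem \ref{RH}) rather than via the triviality of coverings of simply connected domains; you already note this alternative yourself.
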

\begin{proof}
	Since $f: X \rightarrow Y$ only has finitely many critical points in $X$, every connected component $U'$ of $f^{-1}(U)$ is a Jordan domain and $f: U' \rightarrow U$ is proper, finitely branched covering map by Lemma \ref{preimage_helper} and Lemma \ref{branched_covering_helper}. Since $\overline{U}$ contains no critical values of $f:X \rightarrow Y$, the mapping $f: U' \rightarrow U$ has no critical points. Since $U'$ and $U$ are each Jordan domains, it follows from Theorem \ref{RH} that $f: U' \rightarrow U$ is conformal. Since $f$ is degree $d$, it follows that we must have $d$ many connected components of $f^{-1}(U)$.
\end{proof}

Next, we state the main result of \cite{BurLaz}, which is central to the proof of Theorem \ref{main_theorem}. We refer the reader to \cite{BurLaz} for a detailed discussion and proof. 

\begin{definition}\label{permissible} Let $(M_j)_{j=1}^\infty \in \mathbb{N}$ be increasing, and $(r_j)_{j=1}^\infty \in \mathbb{R}^+$. We say that $(M_j)_{j=1}^\infty$, $(r_j)_{j=1}^\infty$ are \emph{permissible} if \begin{gather} r_{j+1} \geq  \exp\left(\pi\big/M_j\right) \cdot r_j \textrm{ for all } j\in\mathbb{N}\textrm{, } r_j\xrightarrow{j\rightarrow\infty}\infty\textrm{, and } \sup_j\frac{M_{j+1}}{M_j}<\infty.  \end{gather} 
\end{definition}

\begin{thm}\label{mainthm} Let  $(M_j)_{j=1}^\infty$, $(r_j)_{j=1}^\infty$ be permissible, $r_0:=0$ and $c\in\mathbb{C}^\star:=\mathbb{C}\setminus\{0\}$.  Set
	\begin{equation}\label{mainthm_c}  c_1:=c\textrm{, and } c_j:=c_{j-1}\cdot r_{j-1}^{M_{j-1}-M_{j}} = c \cdot \prod_{k=2}^{j} r^{M_{k-1} - M_{k}}_{k-1} \textrm{ for } j\geq2.   \end{equation} 
	Then there exists an entire function $f: \mathbb{C} \rightarrow\mathbb{C}$ and a quasiconformal homeomorphism $\phi: \mathbb{C} \rightarrow \mathbb{C}$ such that \begin{gather} f\circ\phi(z)=c_jz^{M_j} \textrm{ for } r_{j-1}\cdot\exp(\pi/M_{j-1}) \leq |z| \leq r_j\textrm{, } j\in\mathbb{N}.  \end{gather} Moreover, if $\sum_{j=1}^\infty M_j^{-1}<\infty$, then $|\phi(z)/z - 1|\rightarrow 0$ as $z\rightarrow\infty$. The only singular values of $f$ are the critical values $(\pm c_jr_j^{M_j})_{j=1}^\infty$.
\end{thm}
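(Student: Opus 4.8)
The plan is to obtain $f$ by quasiconformal surgery: first build an explicit quasiregular model $h\colon\mathbb{C}\to\mathbb{C}$ that already realizes the desired mapping behavior on the ``power annuli'', and then straighten it with the Measurable Riemann Mapping Theorem.

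\textbf{Step 1 (the quasiregular model).} For $j\ge 1$ put $A_j:=\{\,r_{j-1}\exp(\pi/M_{j-1})\le|z|\le r_j\,\}$ (interpreted as the disc $\{|z|\le r_1\}$ when $j=1$, since $r_0=0$) and $I_j:=\{\,r_j\le|z|\le r_j\exp(\pi/M_j)\,\}$. The permissibility condition $r_{j+1}\ge\exp(\pi/M_j)\,r_j$ ensures that the $A_j$ and $I_j$ have pairwise disjoint interiors and, together with $r_j\to\infty$, exhaust $\mathbb{C}$. On $A_j$ set $h(z):=c_jz^{M_j}$; the normalization $c_{j+1}=c_jr_j^{M_j-M_{j+1}}$ is chosen precisely so that $|c_jz^{M_j}|=|c_{j+1}z^{M_{j+1}}|$ on $\{|z|=r_j\}$, so the two power maps attached to $A_j$ and $A_{j+1}$ have matching moduli on the two boundary circles of $I_j$. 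On each $I_j$ one interpolates between $c_jz^{M_j}$ and $c_{j+1}z^{M_{j+1}}$: in logarithmic coordinates the boundary behaviors are real-affine maps whose imaginary parts wind $M_j$, resp.\ $M_{j+1}$, times, and $h$ is defined on a fundamental sector of $I_j$ so as to map it quasiconformally onto a standard half-annulus, thereby inserting $M_{j+1}-M_j$ new preimages of $0$ equidistributed on a circle inside $I_j$ (these become the zeros, and nearby critical points, of $f$ listed in Proposition~\ref{critical_point_listing}). The main technical obstacle is to carry out this interpolation with dilatation bounded by a constant depending only on $\sup_jM_{j+1}/M_j$ — hence uniformly in $j$ — which is exactly the role of the permissibility hypothesis. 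One records along the way that $h$ is a branched covering of $\mathbb{C}$, that its critical values are the points $\pm c_jr_j^{M_j}$, and that $h$ has no asymptotic values (any curve tending to $\infty$ eventually lies inside a power annulus $A_j$, on which $h$ behaves like $z^{M_j}$, and so is mapped to a curve tending to $\infty$).

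\textbf{Step 2 (straightening).} The Beltrami coefficient $\mu:=h_{\bar z}/h_z$ is supported on $\bigcup_jI_j$ and satisfies $\|\mu\|_\infty<1$ by Step 1. By the Measurable Riemann Mapping Theorem there is a quasiconformal homeomorphism $\phi\colon\mathbb{C}\to\mathbb{C}$ with $\phi_{\bar z}/\phi_z=\mu$ and $\phi(0)=0$, and then $f:=h\circ\phi^{-1}$ has vanishing Beltrami coefficient a.e., hence is entire, and satisfies $f\circ\phi(z)=h(z)=c_jz^{M_j}$ on each $A_j$, which is the asserted identity on $r_{j-1}\exp(\pi/M_{j-1})\le|z|\le r_j$. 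Since $\phi$ is a global homeomorphism, the critical points of $f$ are the $\phi$-images of those of $h$, the critical values of $f$ coincide with those of $h$, i.e.\ $\{\pm c_jr_j^{M_j}\}_{j\ge1}$, and $f$ has no asymptotic values because $h$ does not; therefore $SV(f)=\overline{\{\pm c_jr_j^{M_j}:j\ge1\}}$.

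\textbf{Step 3 (asymptotics of $\phi$).} Now assume additionally $\sum_jM_j^{-1}<\infty$. The modulus of $I_j$ equals $\tfrac{1}{2\pi}\log\exp(\pi/M_j)=\tfrac{1}{2M_j}$, so, since $\mu$ vanishes off $\bigcup_jI_j$,
\[
\iint_{\{|z|>r_1\}}\frac{|\mu(z)|}{1-|\mu(z)|^{2}}\,\frac{dx\,dy}{|z|^{2}}
\;\le\;\frac{\|\mu\|_\infty}{1-\|\mu\|_\infty^{2}}\sum_{j\ge1}\iint_{I_j}\frac{dx\,dy}{|z|^{2}}
\;\lesssim\;\sum_{j\ge1}\frac{1}{M_j}\;<\;\infty .
\]
By the Teichm\"uller--Wittich--Belinskii theorem (Theorem~6.1 of \cite{MR0344463}) the finiteness of this integral forces $\phi$ to be conformal at $\infty$, i.e.\ $\phi(z)/z$ tends to a nonzero constant $a$ as $z\to\infty$. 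Replacing $\phi$ by $z\mapsto\phi(z)/a$ (equivalently $f$ by $w\mapsto f(aw)$) leaves $h=f\circ\phi$, the Beltrami coefficient $\mu$, and the singular-value set unchanged, while it achieves $|\phi(z)/z-1|\to0$. This completes the proof outline; the substance is concentrated in Step 1, namely the explicit interpolation on the $I_j$ with a $j$-independent dilatation bound and the verification of the singular-value structure of $h$.
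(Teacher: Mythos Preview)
The paper does not give its own proof of this theorem: it is quoted from \cite{BurLaz} in the Appendix, and the text explicitly says ``We refer the reader to \cite{BurLaz} for a detailed discussion and proof.'' So there is no in-paper proof to compare against. That said, your outline is exactly the approach of \cite{BurLaz} as described and used elsewhere in the paper: build a quasiregular model $h$ equal to $c_jz^{M_j}$ on the power annuli, interpolate across the thin shells $I_j$ with uniformly bounded dilatation (this is where the hypothesis $\sup_jM_{j+1}/M_j<\infty$ enters), straighten via the Measurable Riemann Mapping Theorem, and obtain the asymptotic normalization of $\phi$ from the Teichm\"uller--Wittich--Belinskii theorem using $\sum_jM_j^{-1}<\infty$ (the paper itself invokes exactly this argument just before Theorem~\ref{shishikura}).

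Your sketch is correct in structure and you are honest about where the real work lies: the explicit interpolation on each $I_j$ with a dilatation bound independent of $j$, together with the identification of the critical points, zeros, and critical values of $h$ (cf.\ the paper's Proposition~\ref{critical_point_listing} and Lemmas~\ref{small_ball}--\ref{small_ball_rescaled}, which are pulled from the same source). One small remark: your count of ``$M_{j+1}-M_j$ new preimages of $0$'' in $I_j$ is the correct general formula by the argument principle; in the paper's specific parameters $M_j=2^j$ this equals $M_j$, which is why Proposition~\ref{critical_point_listing} lists $M_j$ zeros near $|z|=r_j$. Also, the paper (following \cite{BurLaz}) shows $h$ has no asymptotic values by a direct analysis of the model rather than your heuristic ``any curve to $\infty$ eventually lies inside a power annulus'', which is not literally true since a path to $\infty$ can cross infinitely many $I_j$; but the conclusion is correct.
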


Finally, we record the following important Lemma which is used in Section \ref{Expanding Zeros}. 
\begin{lem}
	\label{small_ball}
	Let $g_{n,2n}$ be the function in Proposition 3.13 of \cite{BurLaz}, and let $w$ be a zero of $g$ contained inside $A(1,\exp(\pi/n)).$ There exists constants $0 < \lambda < 1/8$ and $\delta > 0$, which do not depend on $n$, so that 
	\begin{equation}\label{both_inclusions} B(0,\delta) \subset g_{n,2n}(B(w,\lambda(\exp(\pi/n)-1))) \subset B(0,1/2). \end{equation}
	Moreover, $g_{n,2n}$ is injective on $B(w,\lambda(\exp(\pi/n)-1))$.
\end{lem}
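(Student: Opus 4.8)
\textbf{Proof proposal for Lemma \ref{small_ball}.}

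The plan is to deduce this from the corresponding statement for the model functions $g_{n,2n}$ in \cite{BurLaz}. Proposition 3.13 of \cite{BurLaz} describes $g_{n,2n}$ as a quasiregular interpolation between $z^n$ and $z^{2n}$ on the annulus $A(1,\exp(\pi/n))$, constructed by a fixed gluing recipe whose dilatation does not depend on $n$; its zeros are equidistributed on a circle of modulus comparable to $1$ inside that annulus, and near each such zero $w$ the map $g_{n,2n}$ agrees (up to a rotation and a bounded quasiconformal change of coordinates) with a normalized linear model. The first step is to record precisely what the derivative of $g_{n,2n}$ looks like at a zero $w$: since $g_{n,2n}$ behaves like $c\cdot z^n$ (with $|c|$ comparable to $1$) on the relevant part of the annulus, the chain rule gives $|g_{n,2n}'(w)| \asymp n$, with comparability constants independent of $n$. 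This is the quantitative input that forces the radius parameter to scale like $(\exp(\pi/n)-1)\asymp 1/n$ rather than being a fixed constant.

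Next I would fix the scale. The quantity $\exp(\pi/n)-1$ is comparable to $1/n$ for all $n\geq1$, so $B(w,\lambda(\exp(\pi/n)-1))$ is a disk of radius $\asymp \lambda/n$ about $w$. Because $g_{n,2n}$ is quasiregular with dilatation bounded uniformly in $n$, and because the zeros are uniformly separated on their circle (separation $\asymp 1/n$), one can choose $\lambda\in(0,1/8)$ small enough — independent of $n$ — that the disk $B(w,\lambda(\exp(\pi/n)-1))$ is contained in a region where $g_{n,2n}$ restricted to this disk is injective: this follows from the fact that the local model at $w$ is injective and the bounded-dilatation gluing cannot destroy injectivity on a sufficiently small, scale-invariant neighborhood. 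Injectivity plus the absence of critical points there makes $g_{n,2n}|_{B(w,\lambda(\exp(\pi/n)-1))}$ a (conformal, or at worst uniformly quasiconformal) homeomorphism onto its image. Then the two inclusions in \eqref{both_inclusions} follow from distortion estimates: the Koebe-type statements in this Appendix (Theorem \ref{Kobe_Quarter}, Corollary \ref{shape}, Remark \ref{Bounded_conformal_distortion}) applied to $g_{n,2n}$ on this disk give that the image $g_{n,2n}(B(w,\lambda(\exp(\pi/n)-1)))$ contains a round disk of radius $\asymp |g_{n,2n}'(w)|\cdot \lambda/n \asymp \lambda$ about $0=g_{n,2n}(w)$, and is contained in a round disk of radius $\asymp |g_{n,2n}'(w)|\cdot \lambda/n \asymp \lambda$ about $0$; choosing $\lambda$ and then $\delta>0$ small enough (both independent of $n$) yields $B(0,\delta)\subset g_{n,2n}(B(w,\lambda(\exp(\pi/n)-1)))\subset B(0,1/2)$.

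The main obstacle is making the uniformity in $n$ completely rigorous at the level of the model maps: one must check that the local behavior of $g_{n,2n}$ near each of its zeros is, after rescaling the $z$-coordinate by $n$ (equivalently by $\exp(\pi/n)-1$), a family of maps converging — or at least staying within a compact family in an appropriate topology — to a fixed normalized model, so that injectivity radii and derivative-comparability constants can be chosen once and for all. This is exactly the kind of scale-invariance that the construction in \cite{BurLaz} was designed to have (the gluing is done in logarithmic coordinates where all scales look the same), so I would invoke Proposition 3.13 of \cite{BurLaz} for the structure of $g_{n,2n}$ near its zeros and then run the distortion estimates above; everything else is routine. (The fact that this Lemma is stated here for $g_{n,2n}$ but applied in Section \ref{Expanding Zeros} to $f_N$ near its zeros $w_j^k$ in $A_k$ is handled separately in the proof of Lemma \ref{petal_radius}, via the rescaling $z\mapsto z/R_k$ and the mapping formula $f_N=h_N\circ\phi_N^{-1}$ together with Lemma \ref{AkEst}; so here we only need the clean model statement.)
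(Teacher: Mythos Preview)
Your overall strategy matches the paper's: the derivative of $g_{n,2n}$ at a zero has magnitude $\asymp n$, the radius $\lambda(\exp(\pi/n)-1)\asymp \lambda/n$, so the image disk has radius $\asymp \lambda$, and one chooses $\lambda$ then $\delta$ uniformly in $n$. However, there is a genuine technical gap in how you execute this. The distortion results you invoke (Theorem~\ref{Kobe_Quarter}, Corollary~\ref{shape}, Remark~\ref{Bounded_conformal_distortion}) are stated and proved for \emph{conformal} maps, whereas $g_{n,2n}$ is only quasiregular on the relevant disk; in particular, $g_{n,2n}'(w)$ need not exist in the classical sense, and the Koebe constants do not apply without modification. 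Your parenthetical ``at worst uniformly quasiconformal'' flags the issue but does not resolve it, and the compactness/normalization argument you sketch (``after rescaling \ldots staying within a compact family'') is not carried out.

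The paper handles precisely this point by replacing the conformal distortion tools with their quasiconformal analogues. For the inner inclusion $B(0,\delta)\subset g_{n,2n}(B_\lambda)$ it appeals to Theorem~1.8 of \cite{MR777305}, a Jacobian-based lower bound for quasiconformal images, together with the estimate $J_g\geq C n^2$ on $B_\lambda$ (which plays the role of your $|g'(w)|\asymp n$). For the outer inclusion it uses that quasiconformal maps are quasisymmetric (Theorem~3.6.2 of \cite{AstalaIwaniecMartin}) to control the oscillation of $g_{n,2n}$ on $\partial B_\lambda$, reducing to a single radial direction where one can compute directly. Injectivity on $B_\lambda$ is not argued abstractly but simply read off from Definition~3.11 of \cite{BurLaz}, which gives injectivity on the larger ball $B(w,(\exp(\pi/2n)-1)/2)$. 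So your outline is correct in spirit, but to make it a proof you would need to swap in the quasiconformal versions of the distortion estimates rather than the conformal ones stated in this appendix.
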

\begin{proof} It is possible to prove this result directly from the definition of $g_{n,2n}$ (Definition 3.11 of \cite{BurLaz}), but it will be more straightforward if we use some general results about quasiconformal mappings. We will let $B_\lambda:=B(w,\lambda(\exp(\pi/n)-1))$, and denote the Jacobian of $g$ by $J_g$.
	
	Let $w$ be a zero of $g$ contained inside $A(1,\exp(\pi/n))$. It follows from Definition 3.11 of \cite{BurLaz} that $g$ is injective (and hence quasiconformal) in \[ B\left(w,\frac{\exp(\pi/2n)-1}{2}\right). \] We first show that the first inclusion in (\ref{both_inclusions}) holds for small $\lambda$. To this end, we appeal to Theorem 1.8 of \cite{MR777305} which implies that there is a constant $c$ depending only on $K(g_{n,2n})$ (in particular $c$ does not depend on $n$ or $\lambda$) so that: \begin{equation}\label{both_inclusions_proof} d(g(w), \partial g(B_\lambda)) \geq c \cdot \lambda(\exp(\pi/2n)-1) \cdot \exp\left( \frac{1}{2m(B_\lambda)}\int_{B_\lambda}\log(J_g) \right). \end{equation} It is readily calculated that there is a constant $C$ independent of $n$ and $\lambda$ such that $J_g(z) \geq C\cdot n^2$ for $z\in B_\lambda$. Thus from (\ref{both_inclusions_proof}) we conclude that \[ d(g(w), \partial g(B_\lambda)) \geq c\lambda(\exp(\pi/2n)-1)Cn \geq c\lambda\cdot\pi/2n\cdot Cn = c\lambda\pi C/2 \] We conclude that the first inclusion in (\ref{both_inclusions}) holds for $\delta$ which depends only on $\lambda$. 
	
	Next we show that for sufficiently small $\lambda$ the second inclusion in (\ref{both_inclusions}) also holds. Indeed, since quasiconformal mappings are quasisymmetric (see Theorem 3.6.2 of \cite{AstalaIwaniecMartin}), there exists a constant $\eta$ depending only on $K(g_{n,2n})$ (and in particular $\eta$ does not depend on $n$) such that for all $\lambda\leq(\exp(\pi/2n)-1)/4$ we have: \begin{equation}\label{quasisymmetry} \sup_{\theta\in[0,2\pi]} \frac{\left|g(w)-g(w+e^{i\theta}\lambda(\exp(\pi/n)-1))\right|}{\left| g(w) - g(w+\lambda(\exp(\pi/n)-1)))\right|} \leq \eta. \end{equation} It is readily seen from the definition of $g_{n, 2n}$ that \[ \sup_n\left| g(w) - g(w+\lambda(\exp(\pi/n)-1)))\right| \xrightarrow{\lambda\rightarrow0} 0, \] so that by (\ref{quasisymmetry}) we conclude that the second inclusion in (\ref{both_inclusions}) holds for all sufficiently small $\lambda$.
\end{proof}

\begin{lem}
	\label{small_ball_rescaled}
	Let $g = g_{n,2n,x,c}$ be the function in Proposition 3.19 of \cite{BurLaz}, and let $w$ be a zero of $g$ contained inside $A(x,\exp(\pi/n)\cdot x).$ There exists constants $0 < \lambda < 1/8$ and $\delta > 0$, which do not depend on $n$ or $x$, so that 
	\begin{equation}
	\label{both_inclusions2}
	B(0,\delta \cdot c x^n) \subset g(B(w,\lambda(\exp(\pi/n)-1)x)) \subset B(0,1/2 \cdot c x^n).
	\end{equation}
	Moreover, $g$ is injective on $B(w,\lambda(\exp(\pi/n)-1)x)$.
\end{lem}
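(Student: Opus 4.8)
The plan is to deduce Lemma \ref{small_ball_rescaled} from Lemma \ref{small_ball} by a straightforward rescaling argument, exploiting the fact that $g_{n,2n,x,c}$ is (by Proposition 3.19 of \cite{BurLaz}) obtained from $g_{n,2n}$ by pre- and post-composing with affine maps. Concretely, one expects the relation
\[
g_{n,2n,x,c}(z) = c\,x^n \cdot g_{n,2n}(z/x),
\]
or something equivalent up to the normalizations in \cite{BurLaz}; I would first pin down this exact identity from Proposition 3.19. Granting it, the zeros of $g_{n,2n,x,c}$ in $A(x,\exp(\pi/n)\cdot x)$ are exactly $x\cdot\widetilde w$ where $\widetilde w$ ranges over the zeros of $g_{n,2n}$ in $A(1,\exp(\pi/n))$, so writing $w = x\widetilde w$ we have $B(w,\lambda(\exp(\pi/n)-1)x) = x\cdot B(\widetilde w,\lambda(\exp(\pi/n)-1))$.

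Next I would simply apply the maps. Since scaling the domain by $x^{-1}$ turns $B(w,\lambda(\exp(\pi/n)-1)x)$ into $B(\widetilde w,\lambda(\exp(\pi/n)-1))$, Lemma \ref{small_ball} gives
\[
B(0,\delta) \subset g_{n,2n}\!\left(B(\widetilde w,\lambda(\exp(\pi/n)-1))\right) \subset B(0,1/2),
\]
with $\lambda,\delta$ independent of $n$. Multiplying through by $c x^n$ and using the identity above yields precisely
\[
B(0,\delta\, c x^n) \subset g_{n,2n,x,c}\!\left(B(w,\lambda(\exp(\pi/n)-1)x)\right) \subset B(0,\tfrac12 c x^n),
\]
which is \eqref{both_inclusions2}. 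Injectivity of $g_{n,2n,x,c}$ on that ball is immediate from injectivity of $g_{n,2n}$ on $B(\widetilde w,\lambda(\exp(\pi/n)-1))$, since precomposition and postcomposition by injective affine maps preserve injectivity. Crucially, $\lambda$ and $\delta$ do not acquire any dependence on $x$ because the affine conjugacy is an exact similarity — it contributes no distortion — so the constants from Lemma \ref{small_ball} carry over verbatim.

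The only genuine obstacle is bookkeeping: making sure the normalization in Proposition 3.19 of \cite{BurLaz} really is the clean similarity-scaling I wrote above, rather than an affine map with an additional rotation or a shifted center, and checking that the constant $c$ enters as a pure multiplicative factor on the target side. If the actual statement in \cite{BurLaz} has $g_{n,2n,x,c}(z) = c\,x^{n} g_{n,2n}(z/x)$ only after an additional unimodular rotation in the domain, this changes nothing, since rotations are isometries and map balls to balls of the same radius; I would note this explicitly. Thus the proof is essentially one line once the scaling identity is quoted, and I would keep the write-up correspondingly short: invoke Proposition 3.19 for the identity, invoke Lemma \ref{small_ball}, and rescale.

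\begin{proof}
	By Proposition 3.19 of \cite{BurLaz}, $g = g_{n,2n,x,c}$ is obtained from $g_{n,2n}$ by the affine rescaling $g(z) = c\,x^{n}\,g_{n,2n}(z/x)$ (up to a unimodular rotation in the domain, which does not affect any of the statements below, as rotations are isometries preserving balls centered at the relevant points). Consequently, the zeros of $g$ lying in $A(x,\exp(\pi/n)\cdot x)$ are exactly the points $w = x\widetilde w$, where $\widetilde w$ is a zero of $g_{n,2n}$ lying in $A(1,\exp(\pi/n))$, and
	\begin{equation}
	\label{ball_rescale}
	B\bigl(w,\lambda(\exp(\pi/n)-1)x\bigr) = x\cdot B\bigl(\widetilde w,\lambda(\exp(\pi/n)-1)\bigr).
	\end{equation}
	Let $\lambda \in (0,1/8)$ and $\delta > 0$ be the constants furnished by Lemma \ref{small_ball}, which do not depend on $n$. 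By \eqref{ball_rescale} and the definition of $g$,
	\begin{equation*}
	g\bigl(B(w,\lambda(\exp(\pi/n)-1)x)\bigr) = c\,x^{n}\cdot g_{n,2n}\bigl(B(\widetilde w,\lambda(\exp(\pi/n)-1))\bigr).
	\end{equation*}
	Applying Lemma \ref{small_ball} to the right-hand side and multiplying the inclusions \eqref{both_inclusions} through by $c\,x^{n}$ yields
	\begin{equation*}
	B(0,\delta\, c x^{n}) \subset g\bigl(B(w,\lambda(\exp(\pi/n)-1)x)\bigr) \subset B(0,\tfrac{1}{2}\, c x^{n}),
	\end{equation*}
	which is \eqref{both_inclusions2}, with $\lambda$ and $\delta$ independent of $n$ and $x$. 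Finally, since $g_{n,2n}$ is injective on $B(\widetilde w,\lambda(\exp(\pi/n)-1))$ by Lemma \ref{small_ball}, and $g$ is the composition of $g_{n,2n}$ with injective affine maps on the domain and range, $g$ is injective on $B(w,\lambda(\exp(\pi/n)-1)x)$.
\end{proof}
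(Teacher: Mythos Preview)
Your proposal is correct and follows essentially the same approach as the paper: the paper's proof simply records the identity $g_{n,2n,x,c} = (z \mapsto c x^n z) \circ g_{n,2n} \circ (z\mapsto z/x)$ and says the inclusions \eqref{both_inclusions2} follow immediately from \eqref{both_inclusions}. Your write-up is slightly more detailed (and your hedge about a possible unimodular rotation is unnecessary, as the identity holds exactly), but the argument is the same.
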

\begin{proof}
	This follows immediately from the definition of $g_{n,2n,x}$ and Lemma \ref{small_ball}. Indeed, we have 
	$$g_{n,2n,x,c} = (z \mapsto c x^n z) \circ g_{n,2n} \circ (z\mapsto \frac{z}{x}).$$
	The inclusions (\ref{both_inclusions2}) now follow from (\ref{both_inclusions})
\end{proof}

\bibliographystyle{alpha}
\bibliography{bibfile_dimone}

\end{document}